\documentclass{article}

\usepackage{coulonbase}
\usepackage{coulonmath}
\usepackage{coulonstyle}
\usepackage{coulondrawing}
\usepackage{mathrsfs}
\usepackage{tikz-cd}

\tikzcdset{labels={font=\everymath\expandafter{\the\everymath\textstyle}}}

\setbftrue
\intvaldfrenchtrue
\distabstrue
\mcgfrenchtrue

\title{Equations in Burnside groups}
\author{Rémi Coulon, Zlil Sela}

\begin{document}

\maketitle

\begin{abstract}
This article is a first step in the study of equations in periodic groups.
As an application, we study the structure of periodic quotients of hyperbolic groups.
We investigate for instance the Hopf and co-Hopf properties, the isomorphism problem, the existence of free splittings, etc.
We also consider the automorphism group of such periodic groups.
\end{abstract}

\medskip
{
\footnotesize  
\noindent
\textit{Keywords.} equations in groups; Burnside groups and periodic groups; geometric group theory; small cancellation theory; hyperbolic geometry. \\

\noindent
\textit{MSC.} 
	20F70, % Algebraic geometry over groups; equations over groups 
	20F50, % Periodic groups; locally finite groups 
	20F65, % Geometric group theory
	20F67, % Hyperbolic groups and nonpositively curved groups 
	20F06, % Cancellation theory of groups; application of van Kampen diagrams
	20F10 % Word problems, other decision problems, connections with logic and automata (group-theoretic aspects)

}

\tableofcontents

%%%%%%%%%%%%%%%%%%%%%%%%%%%%%%%%%%%%%%%%%%%%%%%%%%%%%%%%%%%%%%%%%%%%%%%%%%%%%%%%%%%%%
%%%%%%%%%%%%%%%%%%%%%%%%%%%%%%%%%%%%%%%%%%%%%%%%%%%%%%%%%%%%%%%%%%%%%%%%%%%%%%%%%%%%%
%
\section{Introduction}
%
%%%%%%%%%%%%%%%%%%%%%%%%%%%%%%%%%%%%%%%%%%%%%%%%%%%%%%%%%%%%%%%%%%%%%%%%%%%%%%%%%%%%%
%%%%%%%%%%%%%%%%%%%%%%%%%%%%%%%%%%%%%%%%%%%%%%%%%%%%%%%%%%%%%%%%%%%%%%%%%%%%%%%%%%%%%

Let $n \in \N$.
A group $\Gamma$ is \emph{periodic with exponent $n$} (or simply \emph{$n$-periodic}) if it satisfies the law $x^n = 1$, i.e. $\gamma^n = 1$, for every $\gamma \in \Gamma$.
The \emph{Burnside variety of exponent $n$}, denoted by $\mathfrak B_n$, is the class of all $n$-periodic groups.
Its study was initiated by a question of Burnside who asked whether every finitely generated group in $\mathfrak B_n$ is finite or not.
The first breakthrough was achieved by Novikov and Adian \cite{Novikov:1968tp}.
They prove that if $n$ is a sufficiently large odd exponent, then the free Burnside group of rank $r \geq 2$, defined by 
\begin{equation*}
	\burn rn = \group{a_1, \dots , a_r \mid x^n = 1, \ \forall x},
\end{equation*}
is infinite.
See also Ol'shanski\u\i\ \cite{Olshanskii:1982ts} or Delzant-Gromov \cite{Delzant:2008tu} for an alternative proof.
Free Burnside groups of sufficiently large even exponents are also infinite \cite{Ivanov:1994kh,Lysenok:1996kw,Coulon:2018vp}.
Nevertheless the structure of $\burn rn$ in this case is much more intricate.
In this article we restrict our study to \emph{odd} exponents.
It turns out that $\mathfrak B_n$ is a very rich class of groups \cite{Atabekyan:1987vz,Sonkin:2003il,Minasyan:2009jb,Coulon:2019ac}.
In particular, every non-elementary, hyperbolic group admits infinite periodic quotients \cite{Ivanov:1996va,Coulon:2018vp}.

This article is an initial step in the study of the first order logic of periodic groups.
We focus here on equations in periodic groups.
Let $F$ be the free group generated by $x_1, \dots, x_r$.
An \emph{equation} is an element of $w \in F$.
Given a tuple $(\gamma_1, \dots, \gamma_r)$ of elements in a group $\Gamma$, we write $w(\gamma_1, \dots, \gamma_r)$ for the element of $\Gamma$ obtained by replacing in $w$ each $x_i$ by $\gamma_i$.
Such a tuple is the \emph{solution} to a set of equations $W \subset F$, if $w(\gamma_1, \dots, \gamma_r) = 1$, for every $w \in W$.
The set of solutions to $W$ in $\Gamma$ is in bijection with the set ${\rm Hom}(G, \Gamma)$ of all homomorphisms $G \to \Gamma$, where $G$ is the group defined by the presentation
\begin{equation*}
	G = \group{x_1, \dots, x_r \mid W}.
\end{equation*}
For every integer $n \in \N$, we denote by $\Gamma^n$ the (normal) subgroup of $\Gamma$ generated by the $n$-th power of all its elements, so that the quotient $\Gamma/ \Gamma^n$ belongs to $\mathfrak B_n$.
We call it the \emph{$n$-periodic quotient} of $\Gamma$.
If $W \subset F$ is a set of equations, then every solution of $W$ in $\Gamma$ gives rise to a solution in $\Gamma / \Gamma^n$.
Said differently, the projection $\pi \colon \Gamma \onto \Gamma / \Gamma^n$ induces a map 
\begin{equation}
\label{eqn: morphism between Hom}
	\begin{array}{ccc}
		{\rm Hom}(G, \Gamma) & \to & {\rm Hom}(G, \Gamma/ \Gamma^n) \\
		\phi & \mapsto & \pi \circ \phi
	\end{array}.
\end{equation}

We would like to understand under which conditions the solutions in $\Gamma/ \Gamma^n$ of a set of equations $W$ all come from solutions in $\Gamma$.
Reformulated in terms of homomorphisms, our problem takes the following form.

\begin{ques}
\label{que: main question}
	Let $\Gamma$ be a non-elementary, torsion-free, hyperbolic group.
	Let $G$ be a finitely generated group.
	Does there exist a critical exponent $N \in \N$ such that for every odd integer $n \geq N$, every morphism $\phi \colon G \to \Gamma / \Gamma^n$ \emph{lifts}? 
	That is, can we find a morphism $\tilde \phi \colon G \to \Gamma$, such that $\phi = \pi \circ \tilde \phi$, where $\pi \colon \Gamma \onto \Gamma/ \Gamma^n$ stands for the canonical projection?
\end{ques}

\begin{rema*}
	Our approach is asymptotic in the exponent $n$.
	Indeed, we could have fixed $n$ and study all possible equations in $\Gamma / \Gamma^n$.
	Instead, we fix once and for all a set of equations, and investigate its solutions in $\Gamma / \Gamma^n$ for all sufficiently large (odd) exponents $n$.
	In particular, the critical exponent $N$ in \autoref{que: main question} does depend on the group $G$.
\end{rema*}

\autoref{que: main question} has various answers depending on the group $G$.
Let us describe first some obstructions to lifting morphisms.

\begin{exam}[Torsion abelian groups]
\label{exa: obstruction ab group}
	Consider the group
	\begin{equation*}
		G = \group{x \mid x^3 = 1}.
	\end{equation*}
	Let $r,n \in \N\setminus\{0\}$.
	Fix an element $\gamma \in \burn r{3n}$ of order exactly $3n$.
	The map $\phi \colon G \to \burn r{3n}$ sending $x$ to $\gamma^n$ does not lift to a morphism $G \to \free r$.
	This obstruction can be eventually fixed by restricting ourselves to periodic groups with (sufficiently large) prime exponents.
\end{exam}

\begin{exam}[Roots]
\label{exa: first example w/ root}
	Consider the group
	\begin{equation*}
		G = \left<x_1,x_2,y \mid x_1^2x_2^2 = y^2\right>.
	\end{equation*}
	Let $n = 2p-1$ be an odd integer.
	We denote by $a_1$ and $a_2$ the standard generators of $\burn 2n$ and define an epimorphism $\phi \colon G \onto \burn 2n$ by
	\begin{equation*}
		\phi(x_1) = a_1, \quad
		\phi(x_2) = a_2, \quad \text{and} \quad
		\phi(y) = \left(a_1^2 a_2^2\right)^p.
	\end{equation*}
	It follows from a result of Lyndon that the image of any morphism $G \to \free 2$ is abelian \cite{Lyndon:1959vy}.
	Hence, no matter how large $n$ is, the morphism $\phi$ does not lift to a morphism $G \to \free 2$.
\end{exam}

The problem in the latter example comes from the fact that it is possible to extract roots in a periodic group.
This can be formalized as follows.
Let $p \in \N$.
Let $H$ be a finitely generated group and $h \in H$.
Consider the group
\begin{equation*}
	G = \group{H, y \mid y^p = h}.
\end{equation*}
Assume now that $n$ is an integer co-prime with $p$ and $\Gamma$ an $n$-periodic group.
Then every morphism $H \to \Gamma$ uniquely extends to a morphism $G \to \Gamma$ (see \autoref{res: root splitting extending morphism}).

The previous example motivates the following definition.
A \emph{root splitting} of a group $G$ is a decomposition as a non-trivial amalgamated product $G = A \ast_C B$, where $B$ is an abelian group and $C$ a finite index subgroup of $B$.
Such a splitting is \emph{non-essential} if $C$ is a free factor of $A$ and abstractly isomorphic to $B$.
The general philosophy of our work is that every time we stay away from abelian groups with torsion as well as root splittings then \autoref{que: main question} is likely to have a positive answer.
For instance we have the following statement.

\begin{theo}[see \autoref{res: lifting quotient wo roots -  inf presented}]
\label{res: lift - intro}
	Let $\Gamma$ be a non-elementary, torsion-free, hyperbolic group.
	Let $G$ be a finitely generated group, none of whose quotients admit an essential root splitting.
	There exists a critical exponent $N \in \N$, such that for every odd integer $n \geq N$, for every morphism $\phi \colon G \to \Gamma / \Gamma^n$ whose image is not abelian, there is a morphism $\tilde \phi \colon G \to \Gamma$ lifting $\phi$.
\end{theo}

\begin{rema*}
	Examples of groups $G$ satisfying the assumptions of \autoref{res: lift - intro} are groups with Serre's property (FA) \cite{Serre:1977wy} or free products of perfect groups with Serre's property (FA).
	
	Recall that ${\rm Hom}(G, \Gamma)$ can be entirely parametrized in terms of Makanin-Razborov diagrams \cite{Sela:2009bh}.
	Hence in this case, we get for free a description of ${\rm Hom}(G, \Gamma / \Gamma^n)$.
\end{rema*}

\begin{rema}
	In general, the map ${\rm Hom}(G, \Gamma) \to {\rm Hom}(G, \Gamma/ \Gamma^n)$ given in (\ref{eqn: morphism between Hom}) is not one-to-one.
	The reason is the following.
	The subgroup $\Gamma^n$ is characteristic in $\Gamma$.
	Hence the projection $\Gamma \onto \Gamma/ \Gamma^n$ induces a morphism $\aut \Gamma \to \aut{\Gamma / \Gamma^n}$.
	The latter may not be one-to-one (for instance if $\Gamma$ is a surface group, then any Dehn twist induces a finite order automorphism of $\Gamma / \Gamma^n$).
	Consequently if $\phi$ belongs to ${\rm Hom}(G, \Gamma)$ and $\theta$ is an element in the kernel of $\aut \Gamma \to \aut{\Gamma / \Gamma^n}$, then $\phi$ and $\theta \circ \phi$ have the same image in ${\rm Hom}(G, \Gamma/ \Gamma^n)$.
	It follows that the lift $\tilde \phi$ provided by \autoref{res: lift - intro} is not necessarily unique.
	
	Observe also that \autoref{res: lift - intro} does not quite say that the map  ${\rm Hom}(G, \Gamma) \to {\rm Hom}(G, \Gamma/ \Gamma^n)$ is onto (provided $G$ satisfies the required hypothesis).
	Indeed we excluded morphisms with an abelian image to avoid the pathology such as the one described in \autoref{exa: obstruction ab group}.
\end{rema}

Using variations of \autoref{res: lift - intro}, we also explore topics such as free splittings in $\mathfrak B_n$, the Hopf and co-Hopf properties, the isomorphism problem, automorphism groups, etc.
Before detailing our general strategy let us survey these applications.

\paragraph{Free splitting in the Burnside variety.}
The Burnside variety $\mathfrak B_n$ comes with a notion of free product.
It is characterized by the universal property below.

\begin{prop}
\label{res: def free product burnside}
	Let $A_1, A_2 \in \mathfrak B_n$.
	There exist a group $G \in \mathfrak B_n$ and two monomorphisms $\iota_1 \colon A_1 \to G$ and $\iota_2 \colon A_2 \to G$ with the following property.
	For every group $H \in \mathfrak B_n$, for every morphisms $\phi_1 \colon A_1 \to H$ and $\phi_2 \colon A_2 \to H$, there is a unique morphism $\phi \colon G \to H$ such that the following diagram commutes.
	\begin{center}
	\begin{tikzpicture}
		\matrix (m) [matrix of math nodes, row sep=2em, column sep=2.5em, text height=1.5ex, text depth=0.25ex] 
		{ 
			G	& A_1 	\\
			A_2	& H	 \\
		}; 
		\draw[>=stealth, ->] (m-1-2) -- (m-1-1) node[pos=0.5, above]{$\iota_1$};
		\draw[>=stealth, ->] (m-2-1) -- (m-1-1) node[pos=0.5, left]{$\iota_2$};
		\draw[>=stealth, ->] (m-1-2) -- (m-2-2) node[pos=0.5, right]{$\phi_1$};
		\draw[>=stealth, ->] (m-2-1) -- (m-2-2) node[pos=0.5, below]{$\phi_2$};
		\draw[>=stealth, ->] (m-1-1) -- (m-2-2) node[pos=0.5, above right]{$\phi$};
	\end{tikzpicture}
	\end{center}
\end{prop}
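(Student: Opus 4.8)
The plan is to construct $G$ as the $n$-periodic quotient of the ordinary free product. Let $P = A_1 \ast A_2$ be the free product in the category of all groups, with canonical embeddings $j_1 \colon A_1 \to P$ and $j_2 \colon A_2 \to P$, and set $G = P / P^n$ with canonical projection $\pi \colon P \onto G$. By construction $G \in \mathfrak B_n$. Define $\iota_i = \pi \circ j_i \colon A_i \to G$. Two things then remain to be checked: that each $\iota_i$ is injective, and that the pair $(\iota_1, \iota_2)$ enjoys the stated universal property.

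For injectivity I would use a retraction argument, which is the one place where the hypothesis $A_i \in \mathfrak B_n$ is genuinely needed. By the universal property of the ordinary free product, the identity $\mathrm{id}_{A_1}$ and the trivial morphism $A_2 \to A_1$ assemble into a retraction $\rho_1 \colon P \onto A_1$ with $\rho_1 \circ j_1 = \mathrm{id}_{A_1}$. Since $A_1$ is $n$-periodic, $\rho_1(g^n) = \rho_1(g)^n = 1$ for every $g \in P$, hence $P^n \subseteq \ker \rho_1$ and $\rho_1$ descends to a morphism $\bar\rho_1 \colon G \to A_1$ with $\bar\rho_1 \circ \pi = \rho_1$. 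Then $\bar\rho_1 \circ \iota_1 = \rho_1 \circ j_1 = \mathrm{id}_{A_1}$, so $\iota_1$ admits a left inverse and is a monomorphism; the same argument applies to $\iota_2$. As with coproducts in a variety in general, one should not expect this embedding property without assuming the factors already lie in $\mathfrak B_n$, and this is exactly what the retraction exploits.

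The universal property is then formal. Given $H \in \mathfrak B_n$ and morphisms $\phi_i \colon A_i \to H$, the universal property of $P = A_1 \ast A_2$ provides a unique morphism $\psi \colon P \to H$ with $\psi \circ j_i = \phi_i$ for $i = 1,2$. As $H$ is $n$-periodic, $\psi(g)^n = \psi(g^n)$, wait — more simply, $\psi(g)^n = 1$ for all $g \in P$, so $P^n \subseteq \ker \psi$, and $\psi$ factors through $\pi$ as $\psi = \phi \circ \pi$ for a morphism $\phi \colon G \to H$, which is unique since $\pi$ is onto. One checks at once that $\phi \circ \iota_i = \phi \circ \pi \circ j_i = \psi \circ j_i = \phi_i$, so the diagram commutes. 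For uniqueness of $\phi$ itself: the images $j_1(A_1)$ and $j_2(A_2)$ generate $P$, hence $\iota_1(A_1) \cup \iota_2(A_2)$ generates $G$, and any morphism out of $G$ compatible with $\phi_1$ and $\phi_2$ is already determined on this generating set.

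In short, I expect no substantial obstacle: the construction $G = (A_1 \ast A_2)/(A_1 \ast A_2)^n$ does the job, the universal property transfers directly from that of the ordinary free product, and the only step requiring an idea is the injectivity of the $\iota_i$, handled by the standard retractions of a free product together with the assumption that $A_1$ and $A_2$ are themselves $n$-periodic.
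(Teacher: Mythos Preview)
Your proof is correct and follows the same approach as the paper: take $G = (A_1 \ast A_2)/(A_1 \ast A_2)^n$, and use a retraction to verify injectivity of the $\iota_i$. The paper is terser, observing only that $G$ maps onto $A_1 \times A_2$ (which is just your two retractions combined), and leaves the universal property implicit.
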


\begin{proof}
	It suffices to take for $G$ the $n$-periodic quotient of the (regular) free product $A_1 \freep A_2$.
	Note that $G$ maps onto $A_1 \times A_2$. 
	Hence the morphism $\iota_i \colon A_i \to G$ is one-to-one.
\end{proof}

It follows from \autoref{res: def free product burnside} that $G$ is unique up to isomorphisms. 
We call $G$ the \emph{$n$-periodic free product} (or simply the \emph{free product} if there is no ambiguity) of $A_1$ and $A_2$ and denote it by $A_1\ast^n A_2$.
It turns out that the free product in the Burnside variety is also exact, associative and commutative, provided $n$ is a sufficiently large odd exponent \cite{Adian:1976vg}.
We say that a group $G \in \mathfrak B_n$ is \emph{freely decomposable in $\mathfrak B_n$}, if it is isomorphic to $A \ast ^n B$ for some non trivial groups $A, B \in \mathfrak B_n$.

\begin{theo}[see \autoref{res: no splitting}]
\label{res: no splitting - intro}
	Let $\Gamma$ be a torsion-free, hyperbolic group.
	If $\Gamma$ has no essential root splitting, then the following are equivalent.
	\begin{enumerate}
		\item \label{enu: no splitting - one-ended}
		The group $\Gamma$ is freely indecomposable.
		\item \label{enu: no splitting - no-splitting exist}
		For every $N \in \N$, there exists an odd integer $n \geq N$, such that the quotient $\Gamma/ \Gamma^n$ is freely indecomposable in $\mathfrak B_n$.
		\item \label{enu: no splitting - no-splitting forall}
		There exists $N \in \N$, such that for every odd exponent $n \geq N$, the quotient $\Gamma/ \Gamma^n$ is freely indecomposable in $\mathfrak B_n$.
	\end{enumerate}
\end{theo}

\begin{exam}
\label{exa: root source of troubles}
	The previous statement fails if $\Gamma$ admits a root-splitting.
	Indeed let $p \in \N \setminus \{0,1\}$ and define a group $\Gamma$ by the following presentation:
	\begin{equation*}
		\Gamma = \left<a,b,c \middle| c^p = [a,b]\right>.
	\end{equation*}
	It is a freely indecomposable, torsion-free, hyperbolic group. % This is the fundamental group of a non-orientable surface!
	We claim that for every odd integer $n \in \N$ which is co-prime with $p$, the $n$-periodic quotient of $\Gamma$ is isomorphic to $\burn 2n$.
	According to Bezout's identity, we can find $\alpha,\beta \in \Z$ such that $\alpha p - \beta n = 1$.
	In $\Gamma / \Gamma^n$ the relation $c^p = [a,b]$ is equivalent to 
	\begin{equation*}
		c = c c^{\beta n} = c^{\alpha p} = [a,b]^\alpha.
	\end{equation*}
	It follows that the generator $c$ and the relation $c^p = [a,b]$ can be removed from the presentation of $\Gamma/ \Gamma^n$, whence our claim.
	Consequently $\Gamma/\Gamma^n$ splits as a free product in $\mathfrak B_n$.
\end{exam}

\paragraph{Isomorphism problem between periodic groups.}
\autoref{res: no splitting - intro} states that the free splittings of $\Gamma/ \Gamma^n$ can be read from the free splittings of $\Gamma$.
The next result has a similar flavor. 
Roughly speaking, it says that if $\Gamma$ is freely indecomposable, then the isomorphism type of $\Gamma / \Gamma^n$ can be read from the one of $\Gamma$.

\begin{theo}[see \autoref{res: isom - particular}]
\label{res: isom - particular - intro}
	Let $\Gamma_1$ and $\Gamma_2$ be two torsion-free, hyperbolic groups with no essential root splitting.
	There exists $N \in \N$ such that the following are equivalent.
	\begin{enumerate}
		\item \label{enu: isom - particular - source}
		$\Gamma_1$ and $\Gamma_2$ are isomorphic.
		\item \label{enu: isom - particular - some}
		There exists an odd integer $n \geq N$, such that $\Gamma_1/ \Gamma_1^n$ and $\Gamma_2/\Gamma_2^n$ are isomorphic.
		\item \label{enu: isom - particular - any}
		For every integer $n \in \N$, the groups $\Gamma_1/ \Gamma_1^n$ and $\Gamma_2/\Gamma_2^n$ are isomorphic.
	\end{enumerate}
\end{theo}

\begin{exam}
	One more time, the assumption regarding the root splittings is crucial.
	Consider indeed the groups
	\begin{equation*}
		\Gamma_2 = \left<a,b,c \middle| c^2 = [a,b]\right> 
		\quad \text{and} \quad 
		\Gamma_3 = \left<a,b,c \middle| c^3 = [a,b]\right> 
	\end{equation*}
	Both are torsion-free, freely indecomposable, non-elementary hyperbolic groups.
	The groups  $\Gamma_2$ and $\Gamma_3$ are not isomorphic (the torsion parts of their abelianizations are distinct).
	However, we explained in \autoref{exa: root source of troubles} that for every exponent $n$ that is coprime with $6$, the groups $\Gamma_2 / \Gamma_2^n$ and $\Gamma_3 / \Gamma_3^n$ are both isomorphic to the free Burnside group $\burn 2n$.
\end{exam}

Recall that the isomorphism problem is solvable among hyperbolic groups, see Sela \cite{Sela:1995aa}, Dahmani-Groves \cite{Dahmani:2008uu} and Dahmani-Guirardel \cite{Dahmani:2011wo}.
It follows from the previous statement, that given two torsion-free, freely indecomposable, hyperbolic groups with no essential root splitting, there is an algorithm that can decide whether their periodic quotients (of sufficiently large exponent) are isomorphic.

\paragraph{Automorphism groups of periodic groups.}
Let $\Gamma$ be a group and $n$ an integer.
The subgroup $\Gamma^n \subset \Gamma$ is characteristic.
Hence the projection $\Gamma \onto \Gamma/\Gamma^n$ induces a homomorphism $\aut \Gamma \to \aut{\Gamma/\Gamma^n}$ at the level of automorphisms.
In general the latter map is neither one-to-one nor onto \cite{Coulon:2010ut}.
If $\Gamma$ is a free group, it is even not known whether the automorphism group $\aut{\Gamma/ \Gamma^n}$ or the outer automorphism group $\out{\Gamma / \Gamma^n}$ are finitely generated.
Nevertheless these problems can be solved when $\Gamma$ is a suitable freely indecomposable, hyperbolic group.

\begin{theo}[see \autoref{res: auto/co-hopf - particular}]
\label{res: auto one-ended -intro}
	Let $\Gamma$ be a non-elementary, torsion-free, freely indecomposable, hyperbolic group with no essential root splitting.
	There exists a critical exponent $N\in \N$ such that for every odd integer $n \geq N$, the map $\aut \Gamma \to \aut{\Gamma/\Gamma^n}$ is onto.
	In particular, $\aut{\Gamma/\Gamma^n}$ and $\out{\Gamma / \Gamma^n}$ are finitely generated.
\end{theo}

\begin{rema*}
	Let $\Sigma$ be a closed compact hyperbolic surface and $\Gamma$ its fundamental group.
	We denote by $\mcg \Sigma$ its mapping class group, that is the set of homeomorphisms of $\Sigma$ up to isotopy (we do not require the homeomorphisms to be orientation-preserving).
	The previous result has the following consequence.
	If $n$ is a sufficiently large odd exponent, then the projection $\Gamma \onto \Gamma / \Gamma^n$ induces an \emph{epimorphism} $\mcg \Sigma \onto \out{\Gamma / \Gamma^n}$.
	We suspect that the kernel of this map is exactly the normal subgroup generated by the $n$-th power of every Dehn twist.
\end{rema*}

\begin{exam}
\label{exa: lifting auto}
	\autoref{exa: root source of troubles} is again an example where \autoref{res: auto one-ended -intro} fails without the root splitting assumption.
	Indeed, if $n$ is an odd exponent, then $n$-periodic quotient of 
	\begin{equation*}
		\Gamma = \left<a,b,c \middle| c^2 = [a,b]\right>.
	\end{equation*}
	is the free Burnside group $\burn 2n$ generated by (the images of) $a$ and $b$.
	Consider now the automorphism $\phi$  of $\burn 2n$ given by $a \mapsto a^2$ and $b \mapsto b$.
	Assume that $\phi$ lifts to an automorphism $\tilde \phi$ of $\Gamma$.
	The abelianization of $\Gamma$ is $A = B\times T$ where $B = \Z^2$ (\resp $T = \Z/2\Z$) is generated by (the images) of $a$ and $b$ (\resp $c$).
	Moreover $A$ maps onto its torsion-free part $B \cong A / T$.
	Note that $\tilde \phi$ induces an automorphism of $B$, that we denote $\psi$.
	By looking at the $n$-periodic quotient of $B$, which is also the abelianization of $\Gamma / \Gamma^n$, i.e. $(\Z /n \Z)^2$, we observe that $\psi$ can be represented by a matrix of the form
	\begin{equation*}\left[
		\begin{array}{cc}
			2 & 0 \\
			0 & 1
		\end{array}
		\right] \mod  n.
	\end{equation*}
	However such a matrix is not invertible provided $n > 3$, which contradicts the fact that $\psi$ is an automorphism.
\end{exam}

\paragraph{Hopf/co-Hopf property.}
A group $G$ is \emph{Hopfian} if every surjective homomorphism from $G$ to itself is actually an isomorphism.
Hyperbolic groups are Hopfian, see Sela \cite{Sela:1999ha} and Weidmann-Reinfeldt \cite{Weidmann:2019ue}.
We prove that certain periodic quotients of hyperbolic groups are also Hopfian.

\begin{theo}[see \autoref{res: hopf - rigid}]
\label{res: hopf - rigid - intro}
	Assume that $\Gamma$ is a torsion-free, hyperbolic group, none of whose quotient admits an abelian splitting.
	Then there exists $N \in \N$, such that for every odd integer $n \geq N$, the periodic quotient $\Gamma / \Gamma^n$ is Hopfian.
\end{theo}

To the best of our knowledge it is still an open question whether \emph{free} Burnside groups of sufficiently large exponents are Hopfian.
It was proved by Ivanov and Storozhev that there exists a group variety whose free elements are not necessarily Hopfian \cite{Ivanov:2005du}.

Dually, a group $G$ is \emph{co-Hopfian} if every injective homomorphism from $G$ to itself is an isomorphism.
A hyperbolic group is co-Hopfian as soon as it does not split as an amalgamated product or an HNN-extension over a finite group, see Sela  \cite{Sela:1999ha} and Moioli \cite{Moioli:2013uk}.
We obtain here an analogue statement for periodic quotients of torsion-free, hyperbolic groups.

\begin{theo}[see \autoref{res: auto/co-hopf - particular}]
\label{res: co-hopf one-ended - intro}
	Let $\Gamma$ be a non-elementary, torsion-free, hyperbolic group with no essential root splitting.
	If $\Gamma$ is freely indecomposable, then there exists a critical exponent $N\in \N$, such that for every odd integer $n \geq N$, the quotient $\Gamma/\Gamma^n$ is co-Hopfian.
\end{theo}

\begin{rema*}
	The converse statement certainly holds  true, i.e. if $\Gamma/ \Gamma^n$ is co-Hopfian (for a sufficiently large odd exponent $n$) then $\Gamma$ is freely indecomposable.
	Since the article is already long, we did not include a proof of this fact.
	The strategy would be the following.
	Assume that $\Gamma$ splits as a free product $\Gamma = A \ast B$.
	Let $a \in A\setminus\{1\}$ and $b \in B\setminus\{1\}$.
	It is known that the morphism $\phi \colon \Gamma \to \Gamma$ fixing $A$ and conjugating $B$ by $\gamma = ba$ is one-to-one by not onto.
	Using a variation of \cite{Coulon:2018ac} in the context of free products, one can certainly prove that for sufficiently large odd exponents $n$, the map $\phi$ induces a one-to-one endomorphism of $\Gamma / \Gamma^n$ that is not onto.
	
	In particular if $\Gamma$ is one of the group with an essential root splitting studied in \autoref{exa: root source of troubles}, then $\Gamma / \Gamma^n$ may not be co-Hopfian.
\end{rema*}

\paragraph{Toward equational noetherianity.}
A group $H$ is \emph{equationally noetherian} if for every finitely generated free group $F$, for every epimorphism $\pi \colon F \onto G$, there is a finite subset $W \subset \ker \pi$ with the following property: every morphism $\phi \colon F \to H$ whose kernel contains $W$ factors through $\pi$. 
Hyperbolic groups are equationally noetherian, see Sela \cite{Sela:2009bh} and Weidmann-Reinfeldt \cite{Weidmann:2019ue}.
The definition of equational noetherianity can be extended to a class of groups as in Groves-Hull \cite{Groves:2019aa}.
As before we adopt an asymptotic point of view.

\begin{defi}
\label{def: asymp eq noetherian}
	Let $\Gamma$ be a finitely generated group.
	We say that the periodic quotients of $\Gamma$ are \emph{asymptotically equationally noetherian} if for every finitely generated free group $F$, for every epimorphism $\pi \colon F \onto G$, there exist a finite subset $W \subset \ker \pi$ and a critical exponent $N \in \N$ with the following property.
	Let $n \geq N$ be an (odd) integer and $\phi \colon F \to \Gamma / \Gamma^n$ be a morphism.
	If $W \subset \ker \phi$, then $\phi$ factors through $\pi$. 
\end{defi}

We are not able yet to prove that periodic quotients of torsion-free, hyperbolic groups are asymptotically equationally noetherian. 
Nevertheless we made a first step in this direction.

\begin{theo}[see \autoref{res: lifting quotient wo roots - noetherian}]
\label{res: asymp eq noetherian - intro}
	Let $\Gamma$ be a non-elementary, torsion-free, hyperbolic group.
	Let $F$ be a finitely generated free group and $\pi \colon F \onto G$ an epimorphism where no quotient of $G$ admits an essential root splitting.
	There exist a finite subset $W \subset \ker \pi$ and a critical exponent $N \in \N$ with the following property.
	Let $n \geq N$ be an odd integer and $\phi \colon F \to \Gamma / \Gamma^n$ be a morphism.
	If $W \subset \ker \phi$, then $\phi$ factors through $\pi$. 
\end{theo}

\begin{rema*}
	For some of the above statements (Theorems~\ref{res: no splitting - intro}, \ref{res: isom - particular - intro},  \ref{res: auto one-ended -intro}, and \ref{res: co-hopf one-ended - intro}) we illustrated with examples that it is crucial to assume that the group $\Gamma$ has no essential root splitting.
	We refer the reader to \autoref{sec: one-ended periodic groups} where we address this issue and provide more general results.
	The underlying idea is that the hypothesis that $\Gamma$ is freely indecomposable without root splitting can be replaced by asking that the periodic quotient $\Gamma/ \Gamma^n$ is freely indecomposable in $\mathfrak B_n$, see \autoref{res: no splitting - general}.
	This only works for suitable exponents though.
\end{rema*}

\paragraph{Strategy.}
If $\Gamma$ is a torsion-free hyperbolic group, the set of homomorphisms ${\rm Hom}(G,\Gamma)$ has been described by Sela in terms of Makanin-Razborov diagram \cite{Sela:2009bh}.
In our study of ${\rm Hom}(G, \Gamma / \Gamma^n)$ we borrow Sela's tools such as limit groups, actions on $\R$-trees, shortening arguments, etc.
This may sound odd at first.
Periodic groups are indeed very far from being hyperbolic: any action by isometries of a periodic group on a Gromov hyperbolic space is either elliptic or parabolic.
The ``trick'' is to replace $\Gamma / \Gamma^n$ by some hyperbolic group approximating it.
Let us briefly detail this idea.

Let $n \in \N$ be a large odd exponent.
All known strategies for studying $\Gamma / \Gamma^n$ start in the same way: one produces by induction an \emph{approximation sequence} of non-elementary, hyperbolic groups
\begin{equation}
\label{eqn: intro - sequence}
	\Gamma = \Gamma_0 \onto \Gamma_1 \onto \Gamma_2 \dots \onto \Gamma_j \onto \Gamma_{j+1} \onto \dots
\end{equation}
whose direct limit is exactly $\Gamma / \Gamma^n$.
At each step $\Gamma_{j+1}$ is obtained from $\Gamma_j$ by adding new relations of the form $\gamma^n = 1$, where $\gamma$ runs over a set of ``small'' loxodromic elements of $\Gamma_j$.

Fix now a finitely presented group $G = \group{U| R}$.
Any morphism $\phi \colon G \to \Gamma/ \Gamma^n$ factors through some $\Gamma_j$, that is there is morphism $\psi \colon G \to \Gamma_j$ such that $\phi = \pi_j \circ \psi$ where $\pi_j \colon \Gamma_j \onto \Gamma / \Gamma^n$ stands for the natural projection.
Hence it suffices to understand the morphisms from $G$ to $\Gamma_j$.
However the works of Champetier \cite{Champetier:2000jx} and Osin \cite{Osin:2009aa} show that it is not possible to give a useful ``uniform'' description for the set of morphisms from $G$ to any hyperbolic group.
Therefore we need to use the fact that the groups $\Gamma_j$ belong to a very specific subclass of hyperbolic groups, which we describe now.

Following Delzant-Gromov \cite{Delzant:2008tu} we attach to each group $\Gamma_j$ a hyperbolic space $X_j$ on which it acts by isometries.
This space is obtained by means of geometric small cancellation theory (see  \autoref{sec: appendix - sc}).
Its geometry is ``finer'' than the one of the Cayley graph of $\Gamma_j$.
In particular, we have a uniform control (i.e. independent of the exponent $n$ and the approximation step $j$) on its hyperbolicity constant, as well as on a weak form of acylindricity.
In addition, the projection $\Gamma_j$ is one-to-one when restricted to balls of radius $r(n)$ (for the metric on $\Gamma_j$ induced by the one on $X_j$) where $r(n)$ only depends on $n$ and diverges to infinity when $n$ tends to infinity.
All the needed properties of the spaces $X_j$ are wrapped up in a preferred class of group actions defined in \autoref{sec: a preferred class of groups}.
See in particular \autoref{res: approximating sequence}.
We now use $X_j$ to measure the energy of the lift $\psi \colon G \to \Gamma_j$ as follows (see \autoref{def: energy})
\begin{equation*}
	\lambda_\infty (\psi, U) = \inf_{x \in X_j} \max_{u \in U} \dist{\psi(u)x}x.
\end{equation*}

This is the place where our asymptotic approach jumps in.
Note indeed that all of our statements involve a critical exponent $N$ that depends on the group $G$.
Their proofs are usually by contradiction (see for instance Propositions~\ref{res: lifting morphism - rigid gal} and \ref{res: lifting morphism - killing elements}).
Assuming that a statement is false, we get a sequence of counter-examples, i.e. morphisms $\phi_k \colon G \to \Gamma / \Gamma^{n_k}$ that fail our theorem, where $n_k$ is a sequence of odd integers diverging to infinity.
For each $k \in \N$, we fix a lift $\psi_k \colon G \to \Gamma_{j_k, k}$ of $\phi_k$, where $\Gamma_{j_k, k}$ is an approximation of $\Gamma / \Gamma^{n_k}$ as above.
We choose it so that the approximation step $j_k$ is minimal.
We distinguish then two cases.
\begin{itemize}
	\item Suppose first that the energy $\lambda_\infty (\psi_k, U)$ is bounded.
	The projection between the approximation groups of $\Gamma / \Gamma^{n_k}$ are injective on larger and larger balls when $k$ tends to infinity.
	We use this fact to prove that the minimal lift $\psi_k$ is actually a lift from $G$ to $\Gamma_{0, k} = \Gamma$ (see \autoref{res: approx - energy minimal approx}).
	Hence we are back to the standard situation of understanding ${\rm Hom}(G, \Gamma)$ for the \emph{fixed} hyperbolic group $\Gamma$.
	
	\item Suppose now that (up to passing to a subsequence) $\lambda_\infty (\psi_k, U)$ diverges to infinity.
	Since the hyperbolicity constant of the spaces $X_{j_k, k}$ (on which $\Gamma_{j_k, k}$ acts) is uniformly bounded, the sequence of rescaled spaces
	\begin{equation*}
		\frac 1{\lambda_\infty (\psi_k, U)} X_{j_k, k}
	\end{equation*}
	converges to an $\R$-tree $T$ endowed with an action without global fixed point of the limit group $L$ associated the sequence of morphisms $(\psi_k)$ (\autoref{sec: building limit tree}).
	Our control of the action of $\Gamma_{j_k, k}$ on $X_{j_k, k}$ is enough to control the one of $L$ on $T$ (Sections~\ref{res: peripheral structure} and \ref{sec: action of the limit group}).
	In particular, if $L$ is freely indecomposable, we are back on track to exploit all the existing tools to analyze the action of a group on an $\R$-tree.
	We first prove that the action of $L$ on $T$ decomposes as a graph of actions whose components are either simplicial, axial or of Seifert type (\autoref{sec: prelim decomposition}).
	It provides in particular a graph of groups decomposition of $L$ whose edge groups are abelian (see \autoref{res: final decomposition tree}).
	Then, we shorten (when possible) the morphisms $\psi_k$ using modular automorphisms of this graph of groups decomposition (\autoref{sec: shortening}).
	This part of the argument involves a new kind of component in the decomposition of $T$ that is handled in \autoref{sec: shortening peripheral}.
\end{itemize}

In the latter case, we may face an obstruction to perform the shortening argument.
As we sketched, the action of $L$ on the $\R$-tree $T$ provides a graph of groups decomposition of $L$ whose modular group can be used to shorten the morphism $\psi_k$.
If this decomposition corresponds to a root splitting, then there may be no non-inner modular automorphism to perform the shortening.

For the moment we solve this problem, by imposing conditions on the group $G$ that will ensure that the limit group $L$ obtained via this process has no root splitting.
The existence of root splittings is a major difficulty to completely describe the solutions of an arbitrary system of equations.

\begin{rema*}
	In \cite{Groves:2019aa} Groves and Hull study equational noetherianity for a \emph{family} of groups $\mathcal G$.
	This is not unrelated to our approach as they need to understand morphisms $G \to \Gamma$ where $\Gamma$ can vary in $\mathcal G$.
	Nevertheless their work does not apply in our context.
	Indeed Groves and Hull focus on a class of \emph{uniformly} acylindrically hyperbolic groups.
	As the exponent $n$ varies, it turns out that the action of the group $\Gamma_j$ (approximating $\Gamma / \Gamma^n$) on the space $X_j$ is not uniformly acylindrical.
	Indeed its injectivity radius converges to zero as $n$ tends to infinity.
\end{rema*}

\paragraph{Outline of the article.}
The first five sections review various tools needed for our study. 
\autoref{sec: prelim aka hyp} deals with hyperbolic geometry.
\autoref{sec: approx periodic groups} recollects and formalizes all the properties of the hyperbolic groups approximating the periodic quotients $\Gamma / \Gamma^n$ (see in particular \autoref{res: approximating sequence} whose proof is given in \autoref{sec: approx}).
\autoref{sec: space marked groups} focuses on limit groups seen as limits in the compact space of marked groups.
\autoref{sec: splittings} reviews the vocabulary associated to splittings and graph of actions.
\autoref{sec: JSJ decomposition} recalls the construction of the JSJ decomposition of freely indecomposable, CSA groups.

The core of the article lies in \autoref{sec: action on limit tree}.
We study there the action of a limit group (over the periodic quotients of a hyperbolic group) on a limit $\R$-tree, and explain under which assumptions the shortening argument can be adapted to this context.
In \autoref{sec: lifting theorems}, we use this analysis to prove several lifting theorems.
\autoref{sec: applications} is devoted to the applications of this work.

\paragraph{Acknowledgment.}
The first author is grateful to the \emph{Centre Henri Lebesgue} ANR-11-LABX-0020-01 for creating an attractive mathematical environment.
His institute, the IMB receives support from the EIPHI Graduate School ANR-17-EURE-0002.
He acknowledges support from the Agence Nationale de la Recherche under Grants \emph{Dagger} ANR-16-CE40-0006-01 and \emph{GoFR} ANR-22-CE40-0004.
The second author is partially supported by an ISF fellowship.
This work was conducted partly during the MSRI Program \emph{Geometric Group Theory} in Fall 2016 and the HIM Program \emph{Logic and Algorithms in Group Theory} in Fall 2018.
The second author would like to add that although the work started as a joint project in MSRI in 2016, the entire work that is presented in the paper is due to the first author.
The authors thank the referee for their careful reading and helpful remarks.

%%%%%%%%%%%%%%%%%%%%%%%%%%%%%%%%%%%%%%%%%%%%%%%%%%%%%%%%%%%%%%%%%%%%%%%%%%%%%%%%%%%%%
%%%%%%%%%%%%%%%%%%%%%%%%%%%%%%%%%%%%%%%%%%%%%%%%%%%%%%%%%%%%%%%%%%%%%%%%%%%%%%%%%%%%%
%
\section{Preliminaries}
%
%%%%%%%%%%%%%%%%%%%%%%%%%%%%%%%%%%%%%%%%%%%%%%%%%%%%%%%%%%%%%%%%%%%%%%%%%%%%%%%%%%%%%
%%%%%%%%%%%%%%%%%%%%%%%%%%%%%%%%%%%%%%%%%%%%%%%%%%%%%%%%%%%%%%%%%%%%%%%%%%%%%%%%%%%%%
\label{sec: prelim aka hyp}

%%%%%%%%%%%%%%%%%%%%%%%%%%%%%%%%%%%%%%%%%%%%%%%%%%%%%%%%%%%%%%%%%%%%%%%%%%%%%%%%%%%%%
%
\subsection{Hyperbolic geometry}
%
%%%%%%%%%%%%%%%%%%%%%%%%%%%%%%%%%%%%%%%%%%%%%%%%%%%%%%%%%%%%%%%%%%%%%%%%%%%%%%%%%%%%%

In this section, we briefly recall the definitions and notations we use to deal with hyperbolic spaces in the sense of Gromov.
Unless mentioned otherwise all the paths we consider are parametrized by arc length.
For brevity, we did not elaborate on the proofs for statements which are standard exercises of hyperbolic geometry.
For more details, we refer the reader to Gromov's original article \cite{Gromov:1987tk} or the numerous literature on the subject, e.g. \cite{Coornaert:1990tj,Ghys:1990ki,Bowditch:1991wl,Bridson:1999ky}.

\paragraph{Four point inequality.}
Let $X$ be a length space.
For every $x,y \in X$, we write $\dist[X] xy$, or simply $\dist xy$, for the distance between $x$ and $y$.
If it exists, we write $\geo xy$ for a geodesic joining $x$ to $y$.
Given $x \in X$ and $r \in \R_+$, we denote by $B(x,r)$ the open ball of radius $r$ centered at $x$, i.e.
\begin{equation*}
	B(x,r) = \set{y \in X}{\dist xy < r}.
\end{equation*}
The \emph{punctured ball} of radius $r$ centered at $x$ is $\mathring B(x,r) = B(x,r) \setminus\{x\}$.
The Gromov product of three points $x,y,z \in X$ is
\begin{equation*}
	\gro xyz = \frac 12 \left( \dist xz + \dist yz - \dist yz\right).
\end{equation*}
Let $\delta \in \R_+^*$.
For the remainder of \autoref{sec: prelim aka hyp}, we always assume that $X$ is $\delta$-hyperbolic, i.e. for every $x,y,z,t \in X$,
\begin{equation}
\label{eqn: four point hyp}
	\min\left\{ \gro xyt, \gro yzt \right\} \leq \gro xzt + \delta.
\end{equation}
We denote by $\partial X$ its boundary at infinity.
The definition of the Gromov product $\gro xyz$ extends to the triple of points where $x,y \in X \cup \partial X$ and $z \in X$.

\paragraph{Quasi-geodesics.}
Let $\kappa \in \R_+^*$ and $\ell \in \R_+$.
A \emph{$(\kappa, \ell)$-quasi-isometric embedding} is a map $f \colon X_1 \to X_2$ between two metric spaces such that for every $x,x' \in X_1$,
\begin{equation*}
	\kappa^{-1}\dist x{x'} - \ell \leq \dist{f(x)}{f(x')} \leq \kappa \dist x{x'} + \ell.
\end{equation*}
A \emph{$(\kappa, \ell)$-quasi-geodesic} is a $(\kappa,\ell)$-quasi-isometric embedding $c \colon I \to X$  of an interval $I \subset \R$ into $X$.
A path is an \emph{$L$-local $(\kappa, \ell)$-quasi-geodesic} if its restriction to any subinterval of length $L$ is a $(\kappa, \ell)$-quasi-geodesic.
If $c \colon \R_+ \to X$ is $(\kappa, \ell)$-quasi-geodesic, then there exists a unique point $\xi \in \partial X$ such that $\lim_{t \to \infty} c(t) = \xi$.
We view $\xi$ as the endpoint at infinity of $c$ and write $\xi  = c(\infty)$.

\paragraph{Quasi-convex subsets.}
Let $\alpha \in \R_+$.
A subset $Y$ of $X$ is \emph{$\alpha$-quasi-convex} if $d(x,Y) \leq \gro y{y'}x + \alpha$, for every $x \in X$, and $y,y' \in Y$.
We denote by $\distV[Y]$ the length metric on $Y$ induced by the restriction of $\distV[X]$ to $Y$.
We say that $Y$ is \emph{strongly quasi-convex} if it is $2 \delta$-quasi-convex and for every $y,y' \in Y$,
\begin{equation}
\label{eqn: def strongly qc}
	\dist[X]y{y'} \leq \dist[Y]y{y'} \leq \dist[X]y{y'} + 8\delta,
\end{equation}
(in particular, it forces $Y$ to be connected by rectifiable paths).

We adopt the convention that the diameter of the empty set is zero, whereas the distance from a point to the empty set is infinite.
Let $x$ be a point of $X$.
A point $y \in Y$ is an \emph{$\eta$-projection} of $x$ on $Y$ if $\dist xy \leq d(x,Y) + \eta$.
A $0$-projection is simply called a \emph{projection}.

\paragraph{Isometries.}
An isometry $\gamma$ of $X$ is either \emph{elliptic} (its orbits are bounded) \emph{parabolic} (its orbits admit exactly one accumulation point in $\partial X$) or \emph{loxodromic} (its orbits admit exactly two accumulation points in $\partial X$).
In order to measure the action of $\gamma$ on $X$ we use the \emph{translation length} and the \emph{stable translation length} respectively defined by
\begin{equation*}	
	\norm[X] \gamma = \inf_{x \in X}\dist {\gamma x}x
	\quad \text{and} \quad
	\snorm[X] \gamma = \lim_{n \to \infty} \frac 1n \dist{\gamma^nx}x.
\end{equation*}
If there is no ambiguity, we will omit the space $X$ from the notations.
These lengths are related by
\begin{equation}
\label{eqn: regular vs stable length}
	\snorm \gamma\leq \norm \gamma \leq \snorm \gamma+ 8\delta. % DELTA CHECKED (CF MASTER COURSE)
\end{equation}
In addition, $\gamma$ is loxodromic if and only if $\snorm \gamma > 0$.
In such a case, the accumulation points of $\gamma$ in $\partial X$ are
\begin{equation*}
	\gamma^- = \lim_{n \to \infty} \gamma^{-n}x
	\quad \text{and} \quad
	\gamma^+ = \lim_{n \to \infty} \gamma^nx.
\end{equation*}
They are the only points of $X\cup\partial X$, fixed by $\gamma$.
We write $U_\gamma$ for the union of all $L$-local $(1, \delta)$-quasi-geodesic joining $\gamma^-$ to $\gamma^+$ with $L > 12\delta$.
The \emph{cylinder} of $\gamma$ is the set
\begin{equation*}
	Y_\gamma = \set{x \in X}{ d(x, U_\gamma) < 20\delta}.
\end{equation*}
It is a strongly quasi-convex subset of $X$ which can be thought of as the ``smallest'' $\gamma$-invariant quasi-convex subset \cite[Section~2.3]{Coulon:2018vp}.
See also \cite[Lemma~2.32]{Coulon:2014fr}.

\begin{defi}
\label{def: fix}
	Let $U$ be a set of isometries of $X$.
	The \emph{energy} of $U$ is 
	\begin{equation*}
		\lambda(U) = \inf_{x \in X} \max_{\gamma \in U} \dist {\gamma x}x.
	\end{equation*}
	Given $d \in \R_+$, we define $\fix{U,d}$ as 
	\begin{equation}
	\label{eqn: def fix}
		\fix{U,d} = \set{x \in X}{\forall \gamma \in U,\ \dist{\gamma x}x \leq d}.
	\end{equation}
\end{defi}

If $d > \max\{\lambda(U), 5\delta\}$, then $\fix{U, d}$ is $8\delta$-quasi-convex.
Moreover for every $x \in X \setminus \fix{U,d}$, we have
\begin{equation}
\label{eqn: displacement outside fixed set}
	\sup_{\gamma \in U} \dist{\gamma x}x \geq 2d\left(x, \fix{U,d}\right) + d - 10\delta, % DELTA CHECKED
\end{equation}
see \cite[Lemma~2.8]{Coulon:2018vp}.
Compare also with \cite[Proposition~2.24]{Coulon:2014fr}.
For simplicity, we write $\fix{U}$ for $\fix{U,0}$.
If the set $U = \{\gamma\}$ is reduced to a single isometry, then $\lambda(U) = \norm \gamma$ and we simply write $\fix{\gamma,d}$ for $\fix{U,d}$.

\begin{defi}
\label{def: thin isom}
	Let $\alpha \in \R_+$.
	\begin{itemize}
		\item An elliptic isometry $\gamma$ of $X$ is \emph{$\alpha$-thin at $x \in X$}, if for every $d \in \R_+$ the set $\fix{\gamma, d}$ is contained in $B(x, d/2 + \alpha)$.
		It is \emph{$\alpha$-thin}, if it is $\alpha$-thin at some point of $X$.
		\item A loxodromic isometry $\gamma$ of $X$ is \emph{$\alpha$-thin} if for every $d \in \R_+$, for every $y \in \fix{\gamma,d}$, we have
		\begin{equation*}
			\gro{\gamma^-}{\gamma^+}y \leq \frac 12 (d - \norm \gamma) + \alpha.
		\end{equation*}
	\end{itemize}
\end{defi}

Fix $\alpha \in \R_+^*$.
Examples of isometries of the hyperbolic plane $\mathbf H^2$ which are not $\alpha$-thin are rotations with a sufficiently small angle and loxodromic isometries with a very sufficiently translation length.

\begin{rema}
\label{rem: thin isom - loxo}
	It follows from the stability of quasi-geodesics that any loxodromic isometry $\gamma$ is $\alpha$-thin where 
	\begin{equation*}
		\alpha \leq 100\left(\frac{\delta}{\snorm \gamma} + 1\right)\delta. %DELTA CHECKED. GENEROUS ESTIMATE
	\end{equation*}
	In our context the thinness provides a way to control the action of an isometry $\gamma$, when we do not have any lower bound for its stable norm $\snorm \gamma$ (see \autoref{sec: sc - approx properties}).
\end{rema}

The local-to-global statement below is a direct consequence of (\ref{eqn: displacement outside fixed set}).

\begin{lemm}
\label{res: local-to-global thinness}
	Let $\gamma$ be an isometry of $X$ and $\alpha \in \R_+$.
	\begin{enumerate}
		\item Assume that $\gamma$ is elliptic. 
		If $\fix{\gamma, 6\delta}$ is contained in the ball $B(x, \alpha)$ for some $x \in X$, then $\gamma$ is $\beta$-thin at $x$, where $\beta = \alpha + 2\delta$. % DELTA CHECKED
 		\item Assume that $\gamma$ is loxodromic.
		If the set $\fix{\gamma, \norm \gamma + 5\delta}$ is contained in the $\alpha$-neighborhood of some $L$-local $(1, \delta)$-quasi-geodesic from $\gamma^-$ to $\gamma^+$  with $L > 12\delta$, then $\gamma$ is $\beta$-thin with $\beta = \alpha + 6\delta$. % DELTA CHECKED
	\end{enumerate}
\end{lemm}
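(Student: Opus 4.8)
The plan is to derive both parts from the displacement estimate~(\ref{eqn: displacement outside fixed set}), specialized to the singleton $U = \{\gamma\}$, together with standard facts from hyperbolic geometry. For $U = \{\gamma\}$ one has $\nrj U = \norm\gamma$ and $\fix{U,d} = \fix{\gamma,d}$, so~(\ref{eqn: displacement outside fixed set}) reads: for every $d_0 > \max\{\norm\gamma, 5\delta\}$ and every $x \in X \setminus \fix{\gamma,d_0}$,
\begin{equation*}
	\dist{\gamma x}x \geq 2\, d\bigl(x, \fix{\gamma,d_0}\bigr) + d_0 - 10\delta.
\end{equation*}
The common idea is the following. A point of $\fix{\gamma,d}$ is moved by $\gamma$ by at most $d$; if it moreover lies outside the small fixed set $\fix{\gamma,d_0}$ whose location we control, the inequality above forces it to lie within roughly $d/2$ of $\fix{\gamma,d_0}$. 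One then transfers the desired bound from a point of $\fix{\gamma,d_0}$ to the given point, using either the triangle inequality (elliptic case) or the fact that $\gro{\gamma^-}{\gamma^+}{\cdot}$ is $1$-Lipschitz in its last variable up to a bounded additive error (loxodromic case). Throughout one reads off the conclusion from \autoref{def: thin isom}.

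For the elliptic part, assume $\fix{\gamma,6\delta} \subseteq B(x,\alpha)$; we may assume this set is nonempty, so that~(\ref{eqn: displacement outside fixed set}) applies with $d_0 = 6\delta$. Fix $d \in \R_+$ and $y \in \fix{\gamma,d}$ (if there is none, there is nothing to prove); we want $\dist xy \leq d/2 + \beta$ with $\beta = \alpha + 2\delta$. If $y \in \fix{\gamma,6\delta}$, then $\dist xy < \alpha$, which suffices. Otherwise $\dist{\gamma y}y > 6\delta$ forces $d > 6\delta$, and~(\ref{eqn: displacement outside fixed set}) with $d_0 = 6\delta$ gives
\begin{equation*}
	d \geq \dist{\gamma y}y \geq 2\, d\bigl(y, \fix{\gamma,6\delta}\bigr) - 4\delta,
\end{equation*}
whence $d(y,\fix{\gamma,6\delta}) \leq d/2 + 2\delta$. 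Choosing $z \in \fix{\gamma,6\delta} \subseteq B(x,\alpha)$ realizing this distance up to an arbitrarily small error, the triangle inequality yields $\dist xy \leq \dist xz + \dist zy \leq \alpha + d/2 + 2\delta$, as wanted.

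For the loxodromic part, let $c$ be an $L$-local $(1,\delta)$-quasi-geodesic from $\gamma^-$ to $\gamma^+$ with $L > 12\delta$ whose $\alpha$-neighborhood contains $\fix{\gamma,\norm\gamma+5\delta}$. By stability of such local quasi-geodesics (equivalently, by the properties of $U_\gamma$ recalled above, see \cite[Section~2.3]{Coulon:2018vp}), every point of $c$ --- hence every point within $\alpha$ of $c$ --- has Gromov product with $\gamma^-$ and $\gamma^+$ at most $\alpha + c_1\delta$ for a small universal constant $c_1$; in particular $\gro{\gamma^-}{\gamma^+}y \leq \alpha + c_1\delta$ for all $y \in \fix{\gamma,\norm\gamma+5\delta}$. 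Now fix $d \geq \norm\gamma$ and $x \in \fix{\gamma,d}$. If $x \in \fix{\gamma,\norm\gamma+5\delta}$, then $\gro{\gamma^-}{\gamma^+}x \leq \alpha + c_1\delta \leq \frac12(d-\norm\gamma) + \beta$ with $\beta = \alpha + 6\delta$, since $d \geq \norm\gamma$. Otherwise~(\ref{eqn: displacement outside fixed set}) applies with $d_0 = \norm\gamma + 5\delta$ (which exceeds $\max\{\norm\gamma,5\delta\}$ because $\gamma$ is loxodromic), and combined with $\dist{\gamma x}x \leq d$ it gives $d(x,\fix{\gamma,d_0}) \leq \frac12(d-\norm\gamma) + \frac52\delta$; picking $y \in \fix{\gamma,d_0}$ nearly realizing this distance and using the Lipschitz bound for the Gromov product,
\begin{equation*}
	\gro{\gamma^-}{\gamma^+}x \leq \gro{\gamma^-}{\gamma^+}y + \dist xy \leq \alpha + c_1\delta + \frac12(d-\norm\gamma) + \frac52\delta.
\end{equation*}
The one non-formal point --- and the main obstacle --- is to check that $c_1$ is small enough, concretely $c_1 \leq \frac72$, so that this right-hand side is at most $\frac12(d-\norm\gamma) + \alpha + 6\delta = \frac12(d-\norm\gamma) + \beta$; this is precisely where one uses the quantitative comparison between an $L$-local $(1,\delta)$-quasi-geodesic with $L > 12\delta$ and a genuine geodesic line joining $\gamma^-$ to $\gamma^+$, everything else being a direct manipulation of~(\ref{eqn: displacement outside fixed set}).
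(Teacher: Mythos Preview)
Your approach is exactly the one the paper has in mind: it states only that the lemma is ``a direct consequence of~(\ref{eqn: displacement outside fixed set})'', and you have correctly unpacked that in both cases. The elliptic computation is clean and lands on $\beta = \alpha + 2\delta$ precisely. In the loxodromic case your reduction --- apply~(\ref{eqn: displacement outside fixed set}) with $d_0 = \norm\gamma + 5\delta$, then transport the Gromov product via its $1$-Lipschitz dependence on the base point --- is the intended one; you are right that the only residual input is the bound $\gro{\gamma^-}{\gamma^+}y \leq c_1\delta$ with $c_1 \leq 7/2$ for $y$ on an $L$-local $(1,\delta)$-quasi-geodesic with $L > 12\delta$, which is the quantitative stability statement from \cite[Section~2.3]{Coulon:2018vp} that the paper is implicitly invoking.
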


%%%%%%%%%%%%%%%%%%%%%%%%%%%%%%%%%%%%%%%%%%%%%%%%%%%%%%%%%%%%%%%%%%%%%%%%%%%%%%%%%%%%
%
\subsection{Group actions}
%
%%%%%%%%%%%%%%%%%%%%%%%%%%%%%%%%%%%%%%%%%%%%%%%%%%%%%%%%%%%%%%%%%%%%%%%%%%%%%%%%%%%%
\label{sec: invariants}

Let $\Gamma$ be a group acting by isometries on a $\delta$-hyperbolic length space $X$.
We denote by $\Lambda(\Gamma)$ its \emph{limit set}, i.e. the set of accumulation points in $\partial X$ of the $\Gamma$-orbit of a point $x \in X$.
We say that $\Gamma$ is \emph{non-elementary} (\emph{for its action on $X$}) if $\Lambda(\Gamma)$ contains at least three points.

Observe that we do not assume that the action of $\Gamma$ on $X$ is either proper or co-bounded.
In order to control this action, we use instead three invariants: the injectivity radius, the acylindricity parameter and the $\nu$-invariant.

\begin{defi}[Injectivity radius]
\label{def: inj rad}
	The \emph{injectivity radius} of $\Gamma$ on $X$ is the quantity
	\begin{equation*}
		\inj[X]\Gamma = \inf \set{\snorm[X]\gamma}{\gamma \in \Gamma \ \text{loxodromic}}
	\end{equation*}
\end{defi}

\begin{defi}
\label{def: acyl inv}
	Let $d \in \R_+$
	The \emph{acylindricity parameter at scale $d$}, denoted by $A(\Gamma,X,d)$ is defined as 
	\begin{equation*}
		 A(\Gamma,X,d) = \sup_{U \subset \Gamma} \diam \left(\fix{U,d}\right),
	\end{equation*}
	where $U$ runs over all subsets of $\Gamma$ generating a non-elementary subgroup.
\end{defi}

\begin{defi}[$\nu$-invariant]
\label{def: nu-inv}
	The $\nu$-invariant $\nu(\Gamma, X)$ is the smallest integer $m$ with the following property.
	For every $\gamma, \tau \in \Gamma$ with $\tau$ loxodromic, if $\gamma, \tau \gamma \tau^{-1}, \dots \tau^m \gamma \tau^{-m}$ generate an elementary subgroup, then so do $\gamma$ and $\tau$.
\end{defi}

The latter two quantities provide the following analogue of the Margulis lemma for manifolds with pinched negative curvature, \cite[Proposition~3.5]{Coulon:2018vp}.
See also \cite[Proposition 3.44]{Coulon:2016if}.

\begin{prop}
\label{res: margulis lemma}
	For every $d \in \R_+$, we have
	\begin{equation*}
		A(\Gamma,X, d) \leq \left[ \nu(\Gamma, X) + 3\right]d + A(\Gamma,X,400\delta) + 24\delta.
	\end{equation*}
\end{prop}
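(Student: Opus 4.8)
The plan is to estimate the diameter of $\fix{U,d}$ for an arbitrary subset $U\subset\Gamma$ generating a non-elementary subgroup, and to reduce the question to the scale $400\delta$ by a pigeonhole argument along a geodesic. First I would fix such a set $U$ and a point $x_0$ realizing (up to a small error) the minimum of $\max_{\gamma\in U}\dist{\gamma x}x$ on $\fix{U,d}$; the set $\fix{U,d}$ is $8\delta$-quasi-convex by the discussion following Definition~\ref{def: fix}, so two of its points $y,y'$ that are nearly diametrically opposite are joined by a path staying close to $\fix{U,d}$, and in particular by a geodesic $\geo y{y'}$ lying in a uniformly bounded neighborhood of it. The goal is then to bound $\dist y{y'}$.

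The key step is a subdivision-and-commutation argument. Put $D=A(\Gamma,X,400\delta)$ and $m=\nu(\Gamma,X)$. Along the geodesic $\geo y{y'}$ I would mark points $x_0,x_1,\dots,x_k$ spaced at distance roughly $d$ apart (so $k\approx\dist y{y'}/d$). If $\dist y{y'}$ exceeds $[\nu(\Gamma,X)+3]d+D+24\delta$, then $k$ is large enough that we can find $m+2$ consecutive marked points $x_i,\dots,x_{i+m+1}$ with the property that each pair of them lies in a common translate $\fix{U',400\delta}$ only if forced to; more precisely, one uses the hypothesis that for each $\gamma\in U$ one has $\dist{\gamma x_i}{x_i}\leq d + O(\delta)$ on these points (since they are close to $\fix{U,d}$ and $U$ displaces points of $\fix{U,d}$ by at most $d$), together with \eqref{eqn: displacement outside fixed set} to upgrade ``close to $\fix{U,d}$'' to ``in $\fix{U,d'}$'' for a controlled $d'$. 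Picking a loxodromic $\tau\in\langle U\rangle$ (which exists since $\langle U\rangle$ is non-elementary) whose axis roughly fellow-travels $\geo y{y'}$ over the relevant stretch, translating by powers $\tau^0,\dots,\tau^{m}$ moves one marked point onto the next, so that the sets $\fix{U,d'}\cap B(x_i,400\delta)$ and $\tau^j(\fix{U,d'}\cap B(x_i,400\delta))$ overlap and generate a non-elementary group unless $U\subset\fix{\tau}$. Applying the definition of $\nu(\Gamma,X)$ then forces $\langle U,\tau\rangle$ to be elementary, contradicting that $\langle U\rangle$ is non-elementary (or that $\tau$ was chosen with axis transverse enough). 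Chasing the constants, the threshold at which the contradiction kicks in is exactly $[\nu(\Gamma,X)+3]d+A(\Gamma,X,400\delta)+24\delta$, which gives the bound on $\diam(\fix{U,d})$ and hence on $A(\Gamma,X,d)$.

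The main obstacle I expect is bookkeeping the additive $\delta$-errors so that they collapse into the stated $24\delta$ and the clean coefficient $\nu(\Gamma,X)+3$: one must be careful that ``$\eta$-projections'', quasi-convexity slack from \eqref{eqn: def strongly qc}, the gap in \eqref{eqn: regular vs stable length} between $\norm\gamma$ and $\snorm\gamma$, and the passage from $\fix{U,d}$ to a slightly larger $\fix{U,d+c\delta}$ via \eqref{eqn: displacement outside fixed set} all stay uniformly controlled and do not accumulate with the subdivision count $k$. The conceptual content, though, is entirely the interplay between acylindricity at a fixed scale $400\delta$ and the conjugation-invariance encoded by $\nu(\Gamma,X)$: acylindricity bounds each ``local block,'' and $\nu$ controls how many blocks can be chained before the group is forced to be elementary. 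Since this proposition is quoted as \cite[Proposition~3.5]{Coulon:2018vp}, I would in practice cite that reference, but the argument above is the one I would reconstruct.
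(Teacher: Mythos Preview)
The paper does not prove this proposition; it simply cites \cite[Proposition~3.5]{Coulon:2018vp}, which you correctly note in your last sentence. So at the level of what the paper actually does, your proposal matches it exactly.

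Your reconstructed sketch, however, has genuine gaps. First, you assert the existence of a loxodromic $\tau\in\langle U\rangle$ whose axis fellow-travels $\geo y{y'}$ and whose powers carry one marked point $x_i$ to the next---this implicitly requires $\norm\tau\approx d$, which is nowhere justified; a non-elementary subgroup contains loxodromics, but with no a priori control on axis location or translation length. Second, your contradiction is misstated: since $\tau\in\langle U\rangle$ already, the conclusion ``$\langle U,\tau\rangle$ is elementary'' is just ``$\langle U\rangle$ is elementary'' and you have not shown how the $\nu$-invariant forces this; and ``$U\subset\fix\tau$'' is a type error ($\fix\tau$ is a subset of $X$, not of $\Gamma$). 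The definition of $\nu$ concerns a \emph{single} element $\gamma$ together with its conjugates $\tau^j\gamma\tau^{-j}$; a correct argument must isolate a specific $\gamma\in U$ and a loxodromic $\tau$ built as a short word in $U$ (so that $\norm\tau$ is controlled in terms of $d$) with $\langle\gamma,\tau\rangle$ non-elementary, and then locate the set where all of $\{\tau^j\gamma\tau^{-j}\}_{0\le j\le\nu}$ have small displacement relative to the axis of $\tau$. Your closing summary---acylindricity at scale $400\delta$ bounds one block, $\nu$ bounds how many blocks can be chained---is the right intuition, but the mechanism you wrote down does not implement it.
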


This motivates the next definition

\begin{defi}[Acylindricity]
	The \emph{(global) acylindricity parameter}, denoted by $A(\Gamma,X)$, is the smallest non-negative number such that for every $d \in \R_+$,
	\begin{equation*}
		A(\Gamma,X, d) \leq \left[ \nu(\Gamma, X) + 3\right]d + A(\Gamma,X).
	\end{equation*}
\end{defi}

\begin{rema*}
	The definition of $A(\Gamma, X)$ slightly differs from Delzant-Gromov \cite{Delzant:2008tu} and Coulon \cite{Coulon:2014fr,Coulon:2018vp}.
	The advantage of this new definition is that it does not involve the hyperbolicity constant $\delta$.
	Nevertheless it plays the exact same role.
	In particular, $A(\Gamma, X)$ satisfies the following homogeneity: if $\epsilon X$ stands for the space $X$ rescaled by a factor $\epsilon \in \R_+^*$, then $A(\Gamma, \epsilon X) = \epsilon A(\Gamma, X)$.
	Moreover $A(\Gamma, X) \leq A(\Gamma,X,400\delta) + 24\delta$.
\end{rema*}

% 2024-08-13 READ UNTIL HERE

%%%%%%%%%%%%%%%%%%%%%%%%%%%%%%%%%%%%%%%%%%%%%%%%%%%%%%%%%%%%%%%%%%%%%%%%%%%%%%%%%%%%%
%%%%%%%%%%%%%%%%%%%%%%%%%%%%%%%%%%%%%%%%%%%%%%%%%%%%%%%%%%%%%%%%%%%%%%%%%%%%%%%%%%%%%
%
\section{Approximation of periodic groups}
%
%%%%%%%%%%%%%%%%%%%%%%%%%%%%%%%%%%%%%%%%%%%%%%%%%%%%%%%%%%%%%%%%%%%%%%%%%%%%%%%%%%%%%
%%%%%%%%%%%%%%%%%%%%%%%%%%%%%%%%%%%%%%%%%%%%%%%%%%%%%%%%%%%%%%%%%%%%%%%%%%%%%%%%%%%%%
\label{sec: approx periodic groups}

Our goal is to study morphisms of the form $\phi \colon G \to \Gamma/ \Gamma^n$, where $\Gamma$ is a torsion-free hyperbolic group and $n$ a sufficiently large odd exponent.
As we explained in the introduction, one can produce a directed sequence of hyperbolic groups $(\Gamma_j)$ converging to $\Gamma/ \Gamma^n$.
Our strategy is to take advantage of this feature.
Indeed if $G$ is a finitely presented group, then every morphism $\phi \colon G \to \Gamma / \Gamma^n$ lifts to a morphism $\tilde \phi \colon G \to \Gamma_j$, provided $j$ is large enough.
In this way we reduce our problem to the study of morphisms from $G$ to a class of suitable hyperbolic groups.
The purpose of this section is to make precise the kind of negatively curved groups that appear in this context.

%%%%%%%%%%%%%%%%%%%%%%%%%%%%%%%%%%%%%%%%%%%%%%%%%%%%%%%%%%%%%%%%%%%%%%%%%%%%%%%%%%%%%
%
\subsection{A preferred class of groups.}
\label{sec: a preferred class of groups}
%
%%%%%%%%%%%%%%%%%%%%%%%%%%%%%%%%%%%%%%%%%%%%%%%%%%%%%%%%%%%%%%%%%%%%%%%%%%%%%%%%%%%%%

\begin{defi}
\label{def: csa}
	 A group $G$ is \emph{Conjugately Separated Abelian} (\emph{CSA}) if every maximal abelian subgroup of $G$ is malnormal.
\end{defi}

\paragraph{General settings.}
Let $\delta, \rho \in \R_+^*$.
In this section we consider triples $(\Gamma,X,\mathcal C)$ such that 
\begin{itemize}
	\item $X$ is a $\delta$-hyperbolic length space,
	\item $\Gamma$ is a CSA group acting by isometries on $X$,
	\item $\mathcal C$ is $\Gamma$-invariant, $2\rho$-separated subset of $X$ (that is $\dist c{c'} \geq 2\rho$ for every distinct $c,c' \in \mathcal C$).
\end{itemize}

\begin{voca}
	The elements of $\mathcal C$ are called \emph{apices} or \emph{cone points}.
	An element $\gamma \in \Gamma$ is \emph{conical} if it fixes a unique cone point.
	It is \emph{visible} if it is loxodromic or conical and \emph{elusive} otherwise.
	A subgroup $\Gamma_0 \subset \Gamma$ is \emph{elusive} if all its elements are elusive.	
	We chose the word ``elusive'' to stress that fact that, in practice, the action of $\Gamma_0$ on $X$ will not provide much information on the structure of $\Gamma_0$.
	Since $\mathcal C$ is $\Gamma$-invariant, the collection of visible elements is invariant under conjugation.
	Hence so it the collection of elusive subgroups.
	The latter is also stable under taking subgroups.

	We think of $\mathcal C$ as a thin-thick decomposition of $X$:
	the \emph{thin parts} are the balls $B(c, \rho)$, where $c$ runs over $\mathcal C$;
	the \emph{thick part}, that we denote by $X^+$, is 
	\begin{equation*}
		X^+ = X \setminus \bigcup_{c \in \mathcal C} B(c,\rho).
	\end{equation*}
\end{voca}

\begin{defi}
\label{def: preferred class}
We write $\mathfrak H_\delta(\rho)$ for the set of all triples $(\Gamma, X, \mathcal C)$ as above which satisfy the following additional properties.
\begin{labelledenu}[H]
	\item \label{enu: family axioms - acylindricity}
	$\nu(\Gamma, X) = 1$ and $A(\Gamma, X) \leq \delta$.
	
	\item \label{enu: family axioms - elem subgroups}
	Every elementary subgroup of $\Gamma$ is either elusive or cyclic (finite or infinite).
	\item \label{enu: family axioms - elusive}
	Every non-trivial elusive element of $\Gamma$ is elliptic and $\delta$-thin.
	
	\item \label{enu: family axioms - conical - 1}
	Every element $\gamma \in \Gamma \setminus\{1\}$ fixing a cone point $c \in \mathcal C$ is $(\rho+ \delta)$-thin at $c$.
	
	\item \label{enu: family axioms - conical - 2}
	For every $c \in \mathcal C$, there exists $\Theta \in [2\pi, \infty)$ and a $\stab c$-equivariant cocycle
	\begin{equation*}
		\theta \colon \mathring B(c,\rho)  \times \mathring B(c,\rho)\to \R/ \Theta\Z
	\end{equation*}
	such that for every $x,y \in \mathring B(c,\rho)$, we have $\abs{\dist xy - \ell} \leq \delta$, where
	\begin{equation}
	\label{eqn: family axioms - conical - 2}
		\cosh \ell = \cosh\dist cx\cosh\dist cy - \sinh\dist cx\sinh\dist cy \cos\left( \min \left\{ \pi, \tilde \theta(x,y) \right\}\right)
	\end{equation}
	and $\tilde \theta(x,y)$ is the unique representative of $\theta(x,y)$ in $(-\Theta/2, \Theta/2]$.
	Moreover, $\theta(\gamma x, x) \neq 0$, for every $\gamma \in \stab c\setminus \{1\}$, and $x \in \mathring B(c,\rho)$,
	
	\item \label{enu: family axioms - loxodromic}
	Every loxodromic element of $\Gamma$ is $\delta$-thin.
	\item \label{enu: family axioms - radial proj}
	For every $c \in \mathcal C$, for every $x \in B(c,\rho)$, there exists $y \in X^+$ such that $\dist xy = \rho - \dist xc$.
\end{labelledenu}
\end{defi}

\begin{nota*}
	For simplicity, we write $\mathfrak H$ for the union of all $\mathfrak H_\delta(\rho)$ where $\delta$ and $\rho$ run over $\R_+^*$.
	In practice we will consider situations where $\rho$ is very large compare to $\delta$.
\end{nota*}

\begin{rema}
Let us comment upon this set of axioms.
Note that we do not require $\Gamma$ to act properly on $X$.
All we will need is a weak form of acylindricity captured by \ref{enu: family axioms - acylindricity}.
This assumption is weaker than the usual acylindricity.
Indeed it does not provide any control on the injectivity radius for the action of $\Gamma$ on $X$.
Axiom~\ref{enu: family axioms - elem subgroups} refers to the algebraic structure of elementary subgroups of $\Gamma$.
Axioms~\ref{enu: family axioms - elusive}-\ref{enu: family axioms - loxodromic} give geometric informations on the action of $\Gamma$.
In particular, it follows from \ref{enu: family axioms - elusive} and \ref{enu: family axioms - conical - 1}, that $\Gamma$ has no parabolic subgroup.
In Axiom~\ref{enu: family axioms - conical - 2}, by cocycle we mean that for every $x,y,z \in \mathring B(c,\rho)$ we have $\theta(x,z) = \theta(x,y) + \theta(y,z)$.
The exact formula in (\ref{eqn: family axioms - conical - 2}) does not really matter. 
It provides a uniform control on the action of $\stab c$, i.e. which does not depend on the triple $(\Gamma, X, \mathcal C)$. 
It has the following interpretation:
the ball $B(c,\rho)$ is seen approximately as a cone of radius $\rho$ endowed with a hyperbolic metric whose total angle at the apex is $\Theta$.
The quantity $\theta(x,y)$ represents the angle at $c$ between $x$ and $y$.
With this idea in mind, one recognizes in (\ref{eqn: family axioms - conical - 2}) the law of cosines in $\H^2$.
\end{rema}

\begin{lemm}
\label{res; family - fully conical subgroup}
	Let $\delta, \rho \in \R_+^*$ with $\rho > \delta$.
	Let $(\Gamma, X, \mathcal C)$ in $\mathfrak H_\delta(\rho)$.
	Let $c \in \mathcal C$.
	Every non-trivial element in $\stab c$ is conical.
\end{lemm}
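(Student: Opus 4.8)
The plan is to take a non-trivial element $\gamma \in \stab c$ and show it fixes $c$ and no other cone point. By Axiom~\ref{enu: family axioms - conical - 1}, $\gamma$ is $(\rho+\delta)$-thin at $c$; in particular $\gamma$ is elliptic, so it is not loxodromic. It remains to rule out the possibility that $\gamma$ is elusive, and to show $c$ is the \emph{unique} cone point fixed by $\gamma$. For the first point, I would argue by contradiction: if $\gamma$ were elusive, then by \ref{enu: family axioms - elusive} it would be $\delta$-thin, say at some point $x_0 \in X$, meaning $\fix{\gamma, d} \subset B(x_0, d/2 + \delta)$ for all $d \in \R_+$. But $\gamma$ is also $(\rho+\delta)$-thin at $c$, so $c \in \fix{\gamma,0} \subset \fix{\gamma,d}$ forces $c \in B(x_0, d/2+\delta)$, i.e. $\dist c{x_0} < d/2 + \delta$ for all $d$; letting $d \to 0$ gives $\dist c{x_0} \leq \delta$. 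Now I would look for a point far from $x_0$ that is nonetheless fixed by $\gamma$, or rather moved only a little by $\gamma$, contradicting $\delta$-thinness at $x_0$: using \ref{enu: family axioms - radial proj} together with the cone description \ref{enu: family axioms - conical - 2}, the sphere of radius close to $\rho$ about $c$ inside $B(c,\rho)$ contains points $x$ with $\dist{\gamma x}x \leq d$ for $d$ of order, roughly, the chord length determined by $\theta(\gamma x, x)$ — and since $\gamma$ has a genuine rotational action at $c$ (the cocycle condition $\theta(\gamma x, x) \neq 0$ notwithstanding, the displacement of a point at distance $t$ from $c$ can be made as small as we like relative to $t$ only if the angle $\tilde\theta(\gamma x,x)$ is small, which it need not be). The cleaner route: for a point $x$ at distance $t$ from $c$ with angle $\tilde\theta(\gamma x, x) = \alpha$ bounded away from $0$ (which holds since $\alpha$ ranges over a fixed nonzero coset-translate), the cosine law (\ref{eqn: family axioms - conical - 2}) gives $\dist{\gamma x}x$ growing roughly linearly in $t$ but with $\fix{\gamma, d} \cap B(c,\rho)$ still reaching out to distance comparable to $d$ from $c$ — so $\fix{\gamma, d}$ is \emph{not} contained in any ball of radius $d/2 + \delta$ about a \emph{fixed} center unless that center is within bounded distance of $c$, and even then the thinness constant $\delta$ is too small once $d$ is moderately large. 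This yields the contradiction, so $\gamma$ is visible, hence (being elliptic, not loxodromic) conical: it fixes a \emph{unique} cone point.

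For uniqueness I must still check the cone point it fixes is $c$ itself, but this is immediate: $\gamma \in \stab c$ means $\gamma$ fixes $c$, and conicality says it fixes only one, so that one is $c$. Thus $\gamma$ is conical.

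The main obstacle I anticipate is the quantitative comparison in the elusive case: making precise that $(\rho+\delta)$-thinness at $c$ is genuinely incompatible with $\delta$-thinness (at any point), given that $\rho \geq 1000\delta$. The heart of it is that an element thin at $c$ with constant $\rho + \delta$ can have $\fix{\gamma,d}$ as large as a ball of radius $d/2 + \rho + \delta$ about $c$, whereas $\delta$-thinness would confine it to radius $d/2 + \delta$ about some point; these are compatible for small $d$ but the point is to exhibit, via \ref{enu: family axioms - conical - 2} and \ref{enu: family axioms - radial proj}, an actual point of $\fix{\gamma,d}$ at distance about $d/2 + \rho$ from $c$ for suitable moderate $d$, forcing the $\delta$-thin center to be at distance about $\rho$ from $c$ — and then a \emph{second} such point on the "other side" (using the cone's angular structure, there are points at angle near $\pi$ from the first) that the same center cannot simultaneously be close to. I would set this up using \autoref{res: local-to-global thinness}(1) and the explicit cone geometry, keeping all constants in terms of $\delta$ and $\rho$; the inequality $\rho \geq 1000\delta$ should give ample room. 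If a slicker argument is available — e.g. directly invoking that an elusive element is $\delta$-thin while Axiom~\ref{enu: family axioms - conical - 1} already witnesses elliptic-but-$(\rho+\delta)$-thin-at-$c$ behavior that no $\delta$-thin element can have because $\fix{\gamma,0} = \{c\}$ would be forced but $\gamma$ fixes a whole fixed-point set structure near $c$ — I would prefer that, but I expect the honest proof goes through the cone coordinates.
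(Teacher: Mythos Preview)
Your approach has a genuine gap. You try to derive a contradiction from the supposed incompatibility between $\delta$-thinness (coming from \ref{enu: family axioms - elusive}) and either the $(\rho+\delta)$-thinness at $c$ from \ref{enu: family axioms - conical - 1} or the cone geometry \ref{enu: family axioms - conical - 2}. But no such incompatibility exists. First, $(\rho+\delta)$-thinness at $c$ is \emph{weaker} than $\delta$-thinness at any point $x_0$ with $\dist{x_0}{c} \leq \delta$ (since $B(x_0, d/2+\delta) \subset B(c, d/2+\rho+\delta)$), so it cannot contradict it. Second, the cone geometry is equally compatible: if the angle $\tilde\theta(\gamma x, x)$ happens to be near $\pi$, the cosine law shows a point at distance $t$ from $c$ is displaced by roughly $2t$, so $\fix{\gamma, d} \cap B(c,\rho)$ is approximately $B(c, d/2)$, which sits inside $B(x_0, d/2+\delta)$ with no tension at all. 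Your plan to exhibit points of $\fix{\gamma,d}$ at distance about $d/2 + \rho$ from $c$ cannot succeed via \ref{enu: family axioms - conical - 2} and \ref{enu: family axioms - radial proj}, since those axioms only describe $B(c,\rho)$; and your ``second point on the other side'' presumes an angular fullness of the punctured ball that the axioms do not provide.

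The paper's argument is much shorter and uses the one ingredient you never invoke: the $2\rho$-separation of $\mathcal C$. Assuming $\gamma$ is elusive, Axiom~\ref{enu: family axioms - elusive} gives $\fix{\gamma} \subset B(x_0, \delta)$ for some $x_0$, and $c \in \fix{\gamma}$ forces $\dist{x_0}{c} \leq \delta$. Any other cone point $c' \in \mathcal C \setminus\{c\}$ then satisfies $\dist{c'}{x_0} \geq \dist{c'}{c} - \dist{c}{x_0} \geq 2\rho - \delta > \delta$, so $c' \notin B(x_0,\delta)$ and hence $c' \notin \fix{\gamma}$. Thus $\gamma$ fixes exactly the one cone point $c$, so $\gamma$ is conical, hence visible --- contradicting elusiveness. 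The contradiction comes from the separation of cone points, not from any internal feature of the cone.
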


\begin{proof}
	Let $\gamma \in \stab c \setminus\{1\}$.
	Assume that contrary to our claim $\gamma$ is elusive.
	By \ref{enu: family axioms - elusive}, it is $\delta$-thin.
	Hence there exists $x \in X$ such that $\fix{\gamma}$ is contained in the ball $B(x, \delta)$.
	Since $\gamma$ fixes $c$, we have in particular $\dist xc \leq \delta$.
	Let $c' \in \mathcal C \setminus\{c\}$.
	It follows from the triangle inequality that
	\begin{equation*}
		\dist {c'} x \geq \dist {c'}c - \dist xc \geq 2 \rho - \delta. % DELTA CHECKED
	\end{equation*}
	However by assumption, $\rho > \delta$.
	Consequently $c'$ does not belong to $B(x, \delta)$.
	Hence $\gamma$ cannot fix a cone point distinct from $c$, and therefore is visible, a contradiction.
\end{proof}

\begin{lemm}
	Let $\delta, \rho \in \R_+^*$ with $\rho > \delta$.
	\label{res: family - spreading visible}
	Let $(\Gamma, X, \mathcal C)$ in $\mathfrak H_\delta(\rho)$.
	Let $\gamma_1, \gamma_2 \in \Gamma\setminus\{1\}$ such that $[\gamma_1, \gamma_2] = 1$.
	If $\gamma_1$ is visible, then so is $\gamma_2$.
\end{lemm}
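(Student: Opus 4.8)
The statement is: if $\gamma_1, \gamma_2 \in \Gamma \setminus \{1\}$ commute and $\gamma_1$ is visible, then $\gamma_2$ is visible. The natural strategy is to argue by contradiction: assume $\gamma_2$ is elusive. By \ref{enu: family axioms - elem subgroups}, the elementary subgroup $E$ containing $\gamma_1$ (its maximal elementary overgroup) is either elusive or cyclic; since $\gamma_1$ is visible, $E$ cannot be elusive, so $E$ is cyclic, and in particular the centralizer structure is tightly constrained. The plan is to use commutation to place $\gamma_2$ inside the elementary subgroup generated (roughly) by $\gamma_1$, and then derive a contradiction with $\gamma_2$ being elusive.

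I would split into the two cases for the visible element $\gamma_1$. \textbf{Case 1: $\gamma_1$ loxodromic.} Then $\gamma_1$ has exactly two fixed points $\gamma_1^\pm$ on $\partial X$, and these are the only points of $X \cup \partial X$ fixed by $\gamma_1$. Since $\gamma_2$ commutes with $\gamma_1$, it must permute the fixed-point set of $\gamma_1$, hence permute $\{\gamma_1^-, \gamma_1^+\}$; in particular $\gamma_2$ stabilizes this pair, so $\gamma_2$ is loxodromic or elliptic-with-these-endpoints — in any case $\langle \gamma_1, \gamma_2\rangle$ is elementary and, by \ref{enu: family axioms - elem subgroups}, cyclic (it is non-elusive since it contains the loxodromic $\gamma_1$). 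In a cyclic group every nontrivial element of an infinite cyclic group containing a loxodromic element is itself loxodromic (powers of a loxodromic element are loxodromic), hence visible. \textbf{Case 2: $\gamma_1$ conical,} fixing the unique cone point $c$. Since $\gamma_2$ commutes with $\gamma_1$, it preserves $\mathrm{Fix}(\gamma_1)$. Here I would invoke the cocycle structure of \ref{enu: family axioms - conical - 2}: the condition $\theta(\gamma x, x) \neq 0$ for $\gamma \in \stab c \setminus\{1\}$ forces $\gamma_1$ to ``rotate'' around $c$, so its fixed set near $c$ is essentially $\{c\}$ (combined with the $(\rho+\delta)$-thinness from \ref{enu: family axioms - conical - 1}, which localizes $\fix{\gamma_1,d}$ around $c$). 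Consequently $\gamma_2$ must fix $c$ as well, so $\gamma_2 \in \stab c$, and then \autoref{res; family - fully conical subgroup} immediately gives that $\gamma_2$ is conical, hence visible.

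The main obstacle is Case 2: I need to argue cleanly that commuting with a conical element $\gamma_1$ forces $\gamma_2$ to fix the \emph{same} cone point $c$. The subtlety is that $\gamma_2$ could a priori be loxodromic or elliptic without obviously fixing $c$; one must rule out that $\gamma_2$ moves $c$ to a different cone point $c'$ (then $\gamma_1 = \gamma_2 \gamma_1 \gamma_2^{-1}$ would fix $c'$ too, contradicting that $\gamma_1$ fixes a \emph{unique} cone point and, via $2\rho$-separation as in \autoref{res; family - fully conical subgroup}, that $\fix{\gamma_1}$ is too small to contain two cone points) and that $\gamma_2$ is loxodromic (then $\gamma_1$ would have to fix $\gamma_2^\pm$, but $\gamma_1$ being conical fixes no point of $\partial X$, since its fixed set is contained in a bounded neighbourhood of $c$ by thinness). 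Once $\gamma_2$ is forced to be elliptic and to fix $c$, \autoref{res; family - fully conical subgroup} finishes the argument, and Case 1 is the symmetric, easier situation. The fact that $\langle\gamma_1,\gamma_2\rangle$ is elementary in each case is what connects back to axiom \ref{enu: family axioms - elem subgroups}; I expect the bookkeeping around ``$\gamma_2$ elliptic fixing $c$ $\Rightarrow$ conical'' to be the only place needing care, and it is already packaged in the preceding lemma.
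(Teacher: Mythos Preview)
Your proposal is correct and follows the same two-case split as the paper. The paper's Case~2 is more direct, though: since $\gamma_1$ and $\gamma_2$ commute, $\gamma_1$ fixes $\gamma_2 c$; but $\gamma_2 c \in \mathcal C$ (the set of apices is $\Gamma$-invariant) and $\gamma_1$, being conical, fixes a \emph{unique} cone point, so $\gamma_2 c = c$ immediately, and \autoref{res; family - fully conical subgroup} finishes. This one-line observation replaces your detour through the angle cocycle~\ref{enu: family axioms - conical - 2} and the thinness axiom~\ref{enu: family axioms - conical - 1}, and also makes it unnecessary to separately rule out the possibility that $\gamma_2$ is loxodromic (which in any case would already make $\gamma_2$ visible).
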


\begin{proof}
	By assumption $\gamma_1$ and $\gamma_2$ generate an elementary subgroup $\Gamma_0$ of $\Gamma$.
	Assume first that $\gamma_1$ is loxodromic.
	According to \ref{enu: family axioms - elem subgroups}, $\Gamma_0$ is cyclic, hence $\gamma_2$ is loxodromic as well.
	Suppose now that $\gamma_1$ is conical, fixing a cone point $c \in \mathcal C$ say.
	Since $\gamma_1$ commutes with $\gamma_2$, the point $\gamma_2c$ is also fixed by $\gamma_1$.
	Hence $\gamma_2c = c$.
	It follows from \autoref{res; family - fully conical subgroup} that $\gamma_2$ is conical.
\end{proof}

We sometimes make an abuse of notation and write $\mathfrak H_\delta(\rho)$ to designate the set of groups $\Gamma$ instead of the set of triples $(\Gamma, X, \mathcal C)$.
One should keep in mind that such a group always comes with a preferred action on a hyperbolic space.
This is useful for defining energies (see below).
The class $\mathfrak H_\delta(\rho)$ is stable under taking subgroup in the following sense: if $(\Gamma, X, \mathcal C)$ is an element of $\mathfrak H_\delta(\rho)$ and $\Gamma_0$ is a subgroup of $\Gamma$, then $(\Gamma_0, X, \mathcal C)$ also belongs to $\mathfrak H_\delta(\rho)$.

\paragraph{Energies.}

Let $G$ be a group and $U$ a finite subset of $G$.
We write $\card U$ for its cardinality.
We are going to investigate morphisms of the form $\phi \colon G \to \Gamma$, for some triple $(\Gamma, X, \mathcal C) \in \mathfrak H$.
If we need to emphasize the space $\Gamma$ is acting on, we write $\phi\colon G \to (\Gamma, X)$ or $\phi \colon G \to (\Gamma, X, \mathcal C)$.
Given $x \in X$, the $L^1$- and $L^\infty$-\emph{energies of $\phi$ at the point $x$}  (\emph{with respect to $U$}) are defined by 
\begin{align*}
	\lambda_1(\phi,U,x) & = \sum_{u \in U} \dist{\phi(u)x}x, \\
	\lambda_\infty(\phi,U,x) & = \sup_{u \in U} \dist{\phi(u)x}x.
\end{align*}

\begin{defi}[Energy]
\label{def: energy}
	Let $(\Gamma, X, \mathcal C) \in \mathfrak H$.
	Let $p \in \{1, \infty\}$.
	Given a homomorphism $\phi \colon G \to (\Gamma,X)$, the $L^p$-\emph{energy of $\phi$} (\emph{with respect to $U$}) is 
	\begin{equation*}
		\lambda_p(\phi,U) = \inf_{x \in X}\lambda_p(\phi,U,x) .
	\end{equation*}
\end{defi}

These energies are related as follows:
\begin{equation}
\label{eqn: comparing energies}
	 \lambda_\infty(\phi,U) \leq \lambda_1(\phi,U) \leq \card U\lambda_\infty(\phi,U).
\end{equation}
Later in the article, we will chose a point $o \in X$ minimizing one of these energies and treat is as a basepoint.
Nevertheless it will be convenient to keep the basepoint in the thick part $X^+$ of the space $X$.
This motivates the following variation of energy.

\begin{defi}[Restricted energy]
\label{def: restricted energy}
	Let $(\Gamma, X, \mathcal C) \in \mathfrak H$.
	Given a homomorphism $\phi \colon G \to (\Gamma,X)$, the \emph{restricted energy of $\phi$} (\emph{with respect to $U$}) is defined by
	\begin{equation*}
		\lambda^+_1(\phi,U) = \inf_{x \in X^+}\lambda_1(\phi,U,x).
	\end{equation*}
	(Note that $x$ runs over the thick part $X^+$ instead of $X$.)
\end{defi}

\begin{lemm}
\label{res: comparing energies}
	Let $\delta, \rho \in \R_+^*$ with $\rho > \delta$.
	Let $(\Gamma, X, \mathcal C) \in \mathfrak H_\delta(\rho)$. 
	Let $\phi \colon G \to (\Gamma, X)$ be a homomorphism and $U$ a finite subset of $G$.
	Assume that the image of $\phi$ is not abelian, then 
	\begin{equation*}
		\lambda_\infty(\phi,U) \leq \lambda_1(\phi,U) \leq \lambda_1^+(\phi,U) \leq 2 \card U \lambda_\infty(\phi,U).
	\end{equation*}
	Moreover, for every $\epsilon > 12\delta$, there exists $x \in X$ and $x^+ \in X^+$ such that 
	\begin{enumerate}
		\item $\lambda_\infty(\phi,U,x) \leq \lambda_\infty(\phi,U) + \epsilon$ and $\lambda_1(\phi,U,x^+) \leq \lambda^+_1(\phi,U) + \epsilon$,
		\item $\dist x{x^+} \leq \card U \lambda_\infty(\phi,U) + \epsilon$.
	\end{enumerate}
\end{lemm}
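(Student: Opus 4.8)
The plan is to deduce all the inequalities from the basic energy comparisons \eqref{eqn: comparing energies}, the radial projection axiom \ref{enu: family axioms - radial proj}, and the thinness available to us under the assumption that $\phi(G)$ is not abelian. First I would record the easy parts. The two leftmost inequalities $\lambda_\infty(\phi,U) \leq \lambda_1(\phi,U)$ and $\lambda_1(\phi,U) \leq \lambda_1^+(\phi,U)$ are immediate: the first is the left half of \eqref{eqn: comparing energies}, and the second holds because the infimum defining $\lambda_1^+$ is taken over the smaller set $X^+ \subset X$. So the real content is the rightmost inequality $\lambda_1^+(\phi,U) \leq 2\card U\,\lambda_\infty(\phi,U)$, together with the two ``moreover'' clauses, and these should be proved simultaneously.

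Next I would fix $\epsilon > 12\delta$ and pick a point $x \in X$ with $\lambda_\infty(\phi,U,x) \leq \lambda_\infty(\phi,U) + \epsilon/2$ (say), so that in particular $\lambda_1(\phi,U,x) \leq \card U\,\lambda_\infty(\phi,U,x)$. If $x$ already lies in the thick part $X^+$, we may take $x^+ = x$ and we are done with a comfortable margin. Otherwise $x$ lies in some thin part $B(c,\rho)$ for a (unique, since $\mathcal C$ is $2\rho$-separated) cone point $c \in \mathcal C$. Here is where the non-abelian hypothesis enters: if every element of $\phi(U)$ fixed $c$, then $\phi(U)$ would generate a subgroup of $\stab c$, which is abelian by \autoref{res; family - fully conical subgroup} together with \ref{enu: family axioms - conical - 2} (the cocycle condition forces $\stab c$ to embed in $\R/\Theta\Z$, hence to be abelian) — contradicting the hypothesis. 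Actually it is enough to observe that $\phi(G)$ non-abelian forces $\phi(G) \not\subset \stab c$, so there is $u_0 \in U$ (after possibly enlarging $U$; but since $U$ generates we can argue with a product, or simply note the displacement of $c$ is witnessed by some generator) with $\phi(u_0)c \neq c$. The point is that $x$ cannot be ``deep'' inside the cone relative to how far the orbit moves: I would use the thinness axioms \ref{enu: family axioms - conical - 1} and \ref{enu: family axioms - elusive} to bound $\dist xc$ in terms of $\lambda_\infty(\phi,U,x)$, roughly $\dist xc \leq \lambda_\infty(\phi,U,x)/2 + \rho + \delta$, but more usefully we get $\rho - \dist xc \leq $ something controlled — or rather, we exploit \ref{enu: family axioms - radial proj} directly.

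Then I would apply \ref{enu: family axioms - radial proj}: there exists $x^+ \in X^+$ with $\dist x{x^+} = \rho - \dist xc$. Moving the basepoint from $x$ to $x^+$ changes each displacement $\dist{\phi(u)x}x$ by at most $2\dist x{x^+}$, so $\lambda_1(\phi,U,x^+) \leq \lambda_1(\phi,U,x) + 2\card U\,\dist x{x^+}$. The task is thus to bound $\dist x{x^+} = \rho - \dist xc$ by roughly $\tfrac12\lambda_\infty(\phi,U,x)$. This follows from \ref{enu: family axioms - conical - 2}: writing $\ell$ for the distance between $x$ and $\phi(u_0)x$ inside the cone, the law of cosines \eqref{eqn: family axioms - conical - 2} with $\dist cx = \dist c{\phi(u_0)x}$ and a nonzero angle gives $\cosh\ell \geq \cosh^2\dist cx - \sinh^2\dist cx\cos(\text{something} > 0)$; since the angle $\tilde\theta(\phi(u_0)x,x) \neq 0$ and $\stab c$ acts on angles by nonzero rotations, one gets a genuine lower bound $\ell \geq 2(\rho - \dist xc) - C\delta$ for an absolute constant $C$ — this is the standard ``a short chord near the tip of a thin cone forces you to be near the boundary'' estimate. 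Hence $\rho - \dist xc \leq \tfrac12(\ell + C\delta) \leq \tfrac12\lambda_\infty(\phi,U,x) + \tfrac{C}{2}\delta$, and combining everything yields $\lambda_1^+(\phi,U) \leq \lambda_1(\phi,U,x^+) \leq \card U\,\lambda_\infty(\phi,U,x) + 2\card U(\tfrac12\lambda_\infty(\phi,U,x) + O(\delta)) \leq 2\card U\,\lambda_\infty(\phi,U) + \epsilon$ after absorbing the $\delta$-terms into $\epsilon$ and choosing the initial approximation of $x$ sharp enough; letting $\epsilon \to 0$ gives the clean inequality, and the two bulleted estimates fall out of the same construction with the $\dist x{x^+} \leq \card U\,\lambda_\infty(\phi,U) + \epsilon$ bound coming directly from $\dist x{x^+} = \rho - \dist xc \leq \tfrac12\lambda_\infty + O(\delta)$ (which is even stronger, so certainly $\leq \card U\,\lambda_\infty + \epsilon$).

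\textbf{Main obstacle.} The delicate point is the geometric estimate relating $\rho - \dist xc$ to the displacement $\ell$ inside the thin cone — i.e. extracting from the law-of-cosines axiom \ref{enu: family axioms - conical - 2} that a small orbit displacement at $x$ forces $x$ to be within $\tfrac12\lambda_\infty + O(\delta)$ of the cone's boundary sphere of radius $\rho$. One must be careful that the relevant angle $\tilde\theta(\phi(u_0)x, x)$, while nonzero, could a priori be tiny; but the point is that the \emph{minimum over the $\stab c$-orbit} of nonzero angles — equivalently the fact that the displacement is realized by an honest nontrivial element of $\stab c$ — together with the $\delta$-thinness of conical elements (\ref{enu: family axioms - conical - 1}) rules this out, since an element of $\stab c$ that is $(\rho+\delta)$-thin at $c$ cannot move a point very close to $c$ by only a tiny amount relative to how deep it sits. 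Handling this cleanly, and making sure one genuinely has a generator (or short word in generators) moving $c$ rather than only knowing $\phi(G)$ as a whole does, is where the care is needed; everything else is bookkeeping with the triangle inequality and \eqref{eqn: comparing energies}.
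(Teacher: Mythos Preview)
Your overall strategy for the inequality $\lambda_1^+ \leq 2\card U\,\lambda_\infty$ is right in spirit --- push an almost-minimizer $x$ of $\lambda_\infty$ out to the thick part and control the cost --- but the way you extract the key estimate $\rho - \dist xc \leq \tfrac12\lambda_\infty(\phi,U,x)$ does not work. You want to apply the law-of-cosines axiom \ref{enu: family axioms - conical - 2} to the pair $(x,\phi(u_0)x)$, but that axiom only concerns points of the \emph{same} punctured ball $\mathring B(c,\rho)$. Once you know $\phi(u_0)c \neq c$, the $2\rho$-separation of $\mathcal C$ gives $\dist{\phi(u_0)^{-1}c}{c} \geq 2\rho$, hence $\dist{c}{\phi(u_0)x} = \dist{\phi(u_0)^{-1}c}{x} \geq 2\rho - \dist cx > \rho$; so $\phi(u_0)x$ is \emph{outside} $B(c,\rho)$ and the cocycle $\theta$ is simply not defined on that pair. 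The ``tiny angle'' worry you flag is not the real obstacle; the obstacle is that there is no angle at all. The paper bypasses all of this with the bare triangle inequality: if some $u\in U$ had $\phi(u)c\neq c$ then $2\rho \leq \dist{\phi(u)c}{c} \leq 2\dist xc + \dist{\phi(u)x}{x}$, which already gives $\rho - \dist xc \leq \tfrac12\lambda_\infty(\phi,U,x)$ with no error term. Equivalently (and this is how the paper phrases it), one proves $2d(x,\mathcal C) \geq 2\rho - \lambda_\infty(\phi,U,x)$ for \emph{every} $x$, and then \ref{enu: family axioms - radial proj} produces $y\in X^+$ with $\dist xy \leq \tfrac12\lambda_\infty(\phi,U,x)$.

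There is a second gap in your treatment of the ``moreover'' clause. You construct $x$ first (almost minimizing $\lambda_\infty$) and then take $x^+$ to be its radial projection. That $x^+$ satisfies $\lambda_1(\phi,U,x^+) \leq 2\card U\,\lambda_\infty(\phi,U) + O(\epsilon)$, but this is \emph{not} the required $\lambda_1(\phi,U,x^+) \leq \lambda_1^+(\phi,U) + \epsilon$: the quantity $\lambda_1^+(\phi,U)$ can be strictly smaller than $2\card U\,\lambda_\infty(\phi,U)$. The paper proceeds in the opposite order: first pick $x^+\in X^+$ almost minimizing the restricted energy, and then set $Y = \fix{\phi(U),\,\lambda_\infty(\phi,U)+\epsilon}$ and take $x$ to be a $\delta$-projection of $x^+$ onto $Y$. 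The displacement estimate \eqref{eqn: displacement outside fixed set} for points outside $Y$ then gives
\[
\lambda_\infty(\phi,U,x^+) \geq 2\dist{x^+}{x} + \lambda_\infty(\phi,U),
\]
hence $\dist{x}{x^+} \leq \tfrac12\lambda_\infty(\phi,U,x^+) \leq \tfrac12\bigl(\lambda_1^+(\phi,U)+\epsilon\bigr) \leq \card U\,\lambda_\infty(\phi,U) + \epsilon$. The use of the quasi-convex set $\fix{\phi(U),d}$ and \eqref{eqn: displacement outside fixed set} is the ingredient you are missing here.
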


\begin{proof}
	The first inequalities $\lambda_\infty(\phi,U) \leq \lambda_1(\phi,U) \leq \lambda_1^+(\phi,U)$ follow from the definition of the various energies.
	Let $x \in X$.
	We claim that 
	\begin{equation*}
		2d(x, \mathcal C) \geq 2\rho - \lambda_\infty(\phi,U,x).
	\end{equation*}
	Indeed for every $u \in U$ we have $\dist{\phi(u)x}x \leq  \lambda_\infty(\phi,U,x)$.
	Hence, if our claim fails, it follows from the triangle inequality that there exists $c \in \mathcal C$ such that for every $u \in U$, we have $\dist{\phi(u)c}c < 2\rho$.
	Consequently the image of $\phi$ is contained in $\stab c$.
	By \autoref{res; family - fully conical subgroup}, if $\stab c$ is not trivial, then it cannot be elusive. 
	Axiom \ref{enu: family axioms - elem subgroups} forces $\stab c$, and thus the image of $\phi$, to be abelian.
	This contradicts our assumption and completes the proof of our claim.
	By \ref{enu: family axioms - radial proj}, there exists $y \in X^+$ such that $\dist xy \leq \lambda_\infty(\phi,U,x)/2$.
	It follows then from the triangle inequality that 
	\begin{equation*}
		\lambda_1^+(\phi,U) \leq \lambda_1(\phi,U,y) \leq \lambda_1(\phi,U,x) + 2\card U \dist xy \leq 2\card U \lambda_\infty(\phi,U,x).	
	\end{equation*}
	This inequality holds for every $x \in X$, therefore $\lambda_1^+(\phi,U) \leq 2 \card U \lambda_\infty(\phi,U)$.
	
	\medskip
	Let us focus now on the second part of the statement.
	Let $\epsilon > 12\delta$ and $x^+ \in X^+$ be a point in the thick part such that $\lambda_1(\phi,U,x^+) \leq \lambda_1^+(\phi,U) + \epsilon$.
	For simplicity we let 
	\begin{equation*}
		Y = \fix{\phi(U), \lambda_\infty(\phi,U) + \epsilon}.
	\end{equation*}
	If $x^+$ already belongs to $Y$, then we simply take $x = x^+$.
	Otherwise, we denote by $x$ a $\delta$-projection of $x^+$ on $Y$.
	Inequality (\ref{eqn: displacement outside fixed set}) yields
	\begin{equation*}
		\lambda_\infty(\phi,U,x^+) \geq 2 \dist {x^+}x  + \lambda_\infty(\phi,U).
	\end{equation*}
	In particular, 
	\begin{equation*}
		\dist x{x^+} \leq \frac 12\lambda_\infty(\phi,U,x^+)  \leq \frac 12 \left[ \lambda_1^+(\phi,U) + \epsilon\right]  \leq \card U \lambda_\infty(\phi,U) + \epsilon. \qedhere
	\end{equation*}
\end{proof} % DELTA CHECKED

%%%%%%%%%%%%%%%%%%%%%%%%%%%%%%%%%%%%%%%%%%%%%%%%%%%%%%%%%%%%%%%%%%%%%%%%%%%%%%%%%%%%%
%
\subsection{Lifting morphisms}
%
%%%%%%%%%%%%%%%%%%%%%%%%%%%%%%%%%%%%%%%%%%%%%%%%%%%%%%%%%%%%%%%%%%%%%%%%%%%%%%%%%%%%%

Let us now explain how the class of groups $\mathfrak H$ introduced in the previous section naturally appears in the study of periodic groups.

\paragraph{Bootstrap and approximations.}
Recall that the acylindricity parameter, the injectivity radius and the $\nu$-invariant have been defined in \autoref{sec: invariants}.

\begin{defi}
\label{def: bootstrap}
	Let $\tau \in (0,1)$.
	A \emph{$\tau$-bootstrap} is a pair $(\Gamma, X)$ where $X$ is a $\delta$-hyperbolic length space for some $\delta \in \R_+^*$, endowed with a non-elementary action by isometries of a CSA group $\Gamma$ with the following properties.
	\begin{itemize}
		\item $\nu(\Gamma, X) = 1$, $A(\Gamma,X) \leq \delta$, and $\inj[X]{\Gamma} \geq \tau \delta$.
		\item Every elementary subgroup of $\Gamma$ which is not elliptic is loxodromic and cyclic.
		\item Every element of  $\Gamma$ (elliptic or loxodromic) is $\delta$-thin.
	\end{itemize}
	Let $n \in \N$.
	If in addition every elliptic element in $\Gamma$ has finite order dividing $n$, we say that $(\Gamma, X)$ is a \emph{$\tau$-bootstrap for the exponent $n$}.
\end{defi}

\begin{exam}
	For our applications, we have two main examples of bootstraps in mind.
	\begin{itemize}
		\item $\Gamma$ is a non-elementary, torsion-free, hyperbolic group acting on its Cayley graph $X$.
		\item $\Gamma$ is a non-trivial free product of CSA groups with no even torsion, acting on the corresponding Bass-Serre tree $X$.
	\end{itemize}
\end{exam}

Let $(\Gamma_1, X_1)$ and $(\Gamma_2, X_2)$ be two pairs where $X_i$ is a metric space and $\Gamma_i$ a group acting by isometries on $X_i$.
A \emph{morphism} $(\Gamma_1, X_1) \to (\Gamma_2, X_2)$ is a pair $(\pi, f)$ where $\pi$ is a homomorphism from $\Gamma_1$ to $\Gamma_2$ and $f$ a $\pi$-equivariant map from $X_1$ to $X_2$ (we do not require $f$ to be an isometry).
Such a morphism is called an \emph{epimorphism} if $\pi$ is onto (we do not require $f$ to be onto).
The statement below summarizes all the properties satisfied by our approximation groups.

\begin{theo}
\label{res: approximating sequence}
	There exist $\delta \in \R_+^*$ and a non-decreasing function $\rho \colon \N \to \R_+^*$ diverging to infinity with the following properties.
	For every $\tau \in (0,1)$, there is a critical exponent $N_\tau \in \N$ such that for every odd integer $n \geq N_\tau$, the following holds.
	
	Let $(\Gamma, X)$ be a $\tau$-bootstrap for the exponent $n$.
	The quotient $\Gamma / \Gamma^n$ is infinite.
	Moreover, there exists a sequence of epimorphisms
	\begin{equation*}
		(\Gamma_0, X_0) \onto (\Gamma_1, X_1) \onto \dots \onto (\Gamma_j, X_j) \onto (\Gamma_{j+1}, X_{j+1}) \onto \dots
	\end{equation*}
	with the following properties.
	\begin{enumerate}
		\item \label{enu: approximating sequence - init}
		$\Gamma_0 = \Gamma$ while $X_0$ is a rescaled version of $X$, that does not depend on $n$.
		\item \label{enu: approximating sequence - cvg}
		The direct limit of the sequence $(\Gamma_j)$ is isomorphic to $\Gamma/\Gamma^n$.
		Moreover, if every finite subgroup of $\Gamma$ is cyclic, then the same holds for $\Gamma / \Gamma^n$.
		\item \label{enu: approximating sequence - control}
		For every $j \in \N$, there is a subset $\mathcal C_j \subset X_j$ such that $(\Gamma_j, X_j, \mathcal C_j)$ belongs to the class $\mathfrak H_\delta(\rho(n))$.
		\item \label{enu: approximating sequence - metric}
		Let $j \in\N$.
		On the one hand, the map $X_j \to X_{j+1}$ is $\lambda(n)$-Lipschitz, where $\lambda(n) <1$ only depends on $n$, and converges to zero as $n$ tends to infinity.
		On the other hand, for every $x \in X_j$, the projection $\pi_j \colon \Gamma_j \onto \Gamma_{j+1}$ is one-to-one when restricted to the set
		\begin{equation*}
			\set{\gamma \in \Gamma_j}{\lambda(n)\dist{\gamma x}x \leq \frac{\rho(n)}{100}}.
		\end{equation*}
		\item \label{enu: approximating sequence - lifting}
		Let $j \in \N\setminus\{0\}$.
		Let $F$ be the free group generated by a finite set $U$.
		Let $\ell \in \N$ and $V$ be the set of elements of $F$ whose length (seen as words over $U$) is at most $\ell$.
		Let $\phi \colon F \to \Gamma_j$ be a homomorphism whose image does not fix a cone point in $\mathcal C_j$.
		Assume that  
		\begin{equation*}
			\lambda_\infty\left(\phi, U\right) < \frac {\rho(n)}{100\ell}
		\end{equation*}
		Then there exists a map $\tilde \phi \colon F \to \Gamma_{j-1}$ such that $\phi = \pi_{j-1} \circ \tilde \phi$ and $V \cap \ker {\phi} = V \cap \ker {\tilde \phi}$.
	\end{enumerate}
\end{theo}

The proof of \autoref{res: approximating sequence} relies on iterated geometric small cancellation theory.
It is postponed in \autoref{sec: approx}.

\begin{voca*}
	We call any sequence of triples $(\Gamma_j, X_j, \mathcal C_j)$ satisfying the conclusion of \autoref{res: approximating sequence} an \emph{approximation sequence of $\Gamma/ \Gamma^n$}.
\end{voca*}

\begin{rema}\
\label{rem: canonical proj asymp injective}
	\begin{itemize}
		\item 
		Being CSA is stable under direct limit, see for instance \cite{Myasnikov:1996aa}.
		Hence $\Gamma / \Gamma^n$ is CSA as well.
		\item 
		Assume that $\Gamma$ is a non-elementary, torsion-free, hyperbolic group.
		Using an induction on $j$ based on \autoref{res: approximating sequence}~\ref{enu: approximating sequence - metric} one proves the following fact:
		for every finite subset $U \subset \Gamma \setminus\{1\}$, there exists $N \in \N$ such that for every odd integer $n \geq N$, we have $U \cap \Gamma^n = \emptyset$.
		Although not directly stated in this form, this fact was already observed by Ol'shanski\u\i\ \cite{Olshanskii:1991tt}.
	\end{itemize}
\end{rema}

\paragraph{Lifting morphisms.}
Let $\tau \in (0,1)$.
We choose an odd integer $n \geq N_\tau$, where $N_\tau$ is the critical exponent given by \autoref{res: approximating sequence}.
Fix a $\tau$-bootstrap $(X, \Gamma)$ for the exponent $n$.
Let $(\Gamma_j, X_j, \mathcal C_j)$ be a sequence of approximations of $\Gamma/\Gamma^n$.
For every $j \in \N$, we write $\sigma_j$ for the natural epimorphism $\sigma_j \colon \Gamma_j \onto \Gamma/\Gamma^n$.
Similarly we write $\sigma$ for the projection $\sigma \colon \Gamma \to \Gamma/ \Gamma^n$.

\begin{defi}
\label{def: approx of morphism}
	Let $G$ be a discrete group, $V$ a subset of $G$, and $\phi \colon G \to \Gamma/\Gamma^n$ a morphism.
	An \emph{approximation of $\phi$ relative to $V$} is a morphism $\tilde \phi \colon G \to \Gamma_j$ for some $j \in \N$ such that $\phi = \sigma_j \circ \tilde \phi$ and $V \cap \ker \tilde \phi = V \cap \ker \phi$.
	Alternatively, we say that $\tilde \phi$ \emph{lifts $\phi$ relative to $V$}.
	Such an approximation is \emph{minimal} if $j$ is minimal for the above properties.
\end{defi}

Recall that $\Gamma/\Gamma^n$ is the direct limit of $\Gamma_j$.
Hence if $G$ is finitely presented and $V$ is finite, then any morphism $\phi \colon G \to \Gamma/ \Gamma^n$ admits a minimal approximation relative to $V$.

\begin{prop}
\label{res: approx - energy minimal approx}
	Let $G = \group{ U | R}$ be finitely presented group.
	Let $\ell \in \N$ bounded from below by the length of any relation in $R$ (seen as words over $U$).
	Let $V$ be the set of all elements of $G$ of length at most $\ell$ (for the word metric with respect to $U$).
	Let $\phi \colon G \to \Gamma/ \Gamma^n$ be a morphism whose image is not abelian.
	If $\phi_j \colon G \to \Gamma_j$ is a minimal approximation of $\phi$ relative to $V$, then either $j = 0$ or 
	\begin{equation*}
		\lambda_\infty(\phi_j, U) \geq \frac {\rho(n)}{100\ell}
	\end{equation*}
\end{prop}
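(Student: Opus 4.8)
The plan is to argue by contradiction, using the lifting property of the approximation sequence (\autoref{res: approximating sequence}~\ref{enu: approximating sequence - lifting}) together with the minimality of the approximation step $j$. So suppose $\phi_j \colon G \to \Gamma_j$ is a minimal approximation of $\phi$ relative to $V$, that $j \geq 1$, and — contrary to the conclusion — that
\begin{equation*}
	\lambda_\infty(\phi_j, U) < \frac{\rho(n)}{100\ell}.
\end{equation*}
I want to produce from $\phi_j$ an approximation of $\phi$ at step $j-1$, contradicting minimality. The natural candidate is the map $\tilde\phi$ furnished by \autoref{res: approximating sequence}~\ref{enu: approximating sequence - lifting}, applied to the composite $\phi_j \circ p \colon F \to \Gamma_j$, where $p \colon F \onto G$ is the canonical epimorphism from the free group $F$ on $U$. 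The energy hypothesis of \ref{enu: approximating sequence - lifting} with this choice of $U$ and $\ell$ is exactly the inequality we are assuming holds, so — modulo checking the non-degeneracy hypothesis below — we obtain $\tilde\phi \colon F \to \Gamma_{j-1}$ with $\phi_j \circ p = \pi_{j-1} \circ \tilde\phi$ and $V \cap \ker(\phi_j \circ p) = V \cap \ker\tilde\phi$.

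The key step is then to check that $\tilde\phi$ descends to a morphism $G \to \Gamma_{j-1}$, i.e. that $\tilde\phi$ kills every relator $r \in R$. Here is where the hypothesis ``$\ell$ bounds the length of every relation in $R$'' is used: each $r \in R$ lies in $V$ (viewed inside $F$), and since $\phi_j \circ p$ does kill $r$ (because $\phi_j$ is a morphism of $G$), the equality $V \cap \ker(\phi_j \circ p) = V \cap \ker\tilde\phi$ forces $r \in \ker\tilde\phi$. Hence $\tilde\phi$ factors as $\bar\phi \circ p$ for a well-defined morphism $\bar\phi \colon G \to \Gamma_{j-1}$. One checks $\sigma_{j-1} \circ \bar\phi = \sigma_j \circ \phi_j = \phi$ using $\sigma_{j-1} = \sigma_j \circ \pi_{j-1}$, and that $V \cap \ker\bar\phi = V \cap \ker\tilde\phi = V \cap \ker(\phi_j \circ p)$, which projects to $V \cap \ker\phi_j = V \cap \ker\phi$ as required. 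So $\bar\phi$ is an approximation of $\phi$ relative to $V$ at step $j-1$, contradicting the minimality of $j$; therefore $j = 0$ or the displayed lower bound holds.

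The main obstacle is discharging the hypothesis of \ref{enu: approximating sequence - lifting} that \emph{the image of $\phi_j \circ p$ does not fix a cone point in $\mathcal C_j$}. This is where the assumption that the image of $\phi$ is not abelian enters, via \autoref{res: comparing energies} (or rather the claim inside its proof): if the image of $\phi_j$ fixed some $c \in \mathcal C_j$, then by \autoref{res; family - fully conical subgroup} the stabilizer $\stab c$ contains no non-trivial elusive element, so Axiom~\ref{enu: family axioms - elem subgroups} forces $\stab c$ — and hence the image of $\phi_j$ — to be cyclic, in particular abelian; but then the image of $\phi = \sigma_j \circ \phi_j$ would be abelian, contradicting our standing hypothesis. (Strictly, I should observe that $\phi_j$ having abelian image would be inherited by $\phi$, which is immediate since $\sigma_j$ is a homomorphism.) Thus the non-degeneracy hypothesis of the lifting axiom is automatic, and the argument goes through. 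A minor point to handle carefully is that \ref{enu: approximating sequence - lifting} is stated for the free group $F$ on $U$ rather than for $G$ directly; passing through $p \colon F \onto G$ as above is the clean way to bridge this, and the length bound on $R$ is precisely what makes the descent work.
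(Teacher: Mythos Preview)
Your proof is correct and follows essentially the same route as the paper: argue by contradiction, pass to the free group $F$ on $U$, invoke \autoref{res: approximating sequence}~\ref{enu: approximating sequence - lifting} to lift $\phi_j \circ p$ to $\Gamma_{j-1}$, use the length bound on $R$ to descend back to $G$, and contradict minimality; the cone-point hypothesis is discharged exactly as in the paper via Axiom~\ref{enu: family axioms - elem subgroups}. The only cosmetic point is that you use the symbol $V$ both for the ball in $G$ (as in the statement) and for the ball in $F$ (as required by \ref{enu: approximating sequence - lifting}); the paper keeps these separate by writing $\tilde V \subset F$ with $\chi(\tilde V) = V$, which makes the final verification $V \cap \ker\bar\phi = V \cap \ker\phi$ cleaner to read.
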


\begin{proof}
	The proof is by contradiction. 
	Assume indeed that $j \geq 1$ and 
	\begin{equation*}
		\lambda_\infty(\phi_j, U) < \frac {\rho(n)}{100\ell}
	\end{equation*}
	Note that the image of $\phi_j$ does not fix a cone point $c \in \mathcal C_j$.
	Indeed, otherwise, it would be abelian by Axiom~\ref{enu: family axioms - elem subgroups} and thus the image of $\phi$ as well.
	The presentation of $G$ defines an epimorphism $\chi \colon F \onto G$, where $F$ is the free group generated by $U$.
	We denote by $\tilde V$ the set of all elements of $F$ of length at most $\ell$ so that $\chi$ maps $\tilde V$ onto $V$.
	It follows from our choice of $\ell$, that $R$ is contained in $\tilde V$.
	We apply \autoref{res: approximating sequence}~\ref{enu: approximating sequence - lifting}, to the morphism $\psi_j = \phi_j \circ \chi$.
	There exists a morphism $\psi_{j-1} \colon F \to \Gamma_{j-1}$ such that
	\begin{enumerate}
		\item $\phi \circ \chi = \sigma_{j-1} \circ \psi_{j-1}$, and 
		\item $\tilde V \cap \ker \psi_{j-1} = \tilde V \cap \ker\psi_j =\tilde  V \cap \ker \left(\phi \circ \chi\right)$.
	\end{enumerate}
	The last item tells us that $R$ lies in the kernel of $\psi_{j-1}$.
	Hence $\psi_{j-1}$ factors through $\chi$.
	The resulting morphism $\phi_{j-1} \colon G \to \Gamma_{j-1}$ is an approximation of $\phi$ relative to $V$, which contradicts the minimality of $j$.
\end{proof}

%%%%%%%%%%%%%%%%%%%%%%%%%%%%%%%%%%%%%%%%%%%%%%%%%%%%%%%%%%%%%%%%%%%%%%%%%%%%%%%%%%%%%
%%%%%%%%%%%%%%%%%%%%%%%%%%%%%%%%%%%%%%%%%%%%%%%%%%%%%%%%%%%%%%%%%%%%%%%%%%%%%%%%%%%%%
%
\section{The space of marked groups}
%
%%%%%%%%%%%%%%%%%%%%%%%%%%%%%%%%%%%%%%%%%%%%%%%%%%%%%%%%%%%%%%%%%%%%%%%%%%%%%%%%%%%%%
%%%%%%%%%%%%%%%%%%%%%%%%%%%%%%%%%%%%%%%%%%%%%%%%%%%%%%%%%%%%%%%%%%%%%%%%%%%%%%%%%%%%%
\label{sec: space marked groups}

Our study of ${\rm Hom}(G, \Gamma / \Gamma^n)$ make use of \emph{limit groups} (see \autoref{def: limit group - general}).
They can be seen as limit of periodic groups in a compact space of marked groups.
In this section we recall some features of this topological approach.
We follow \cite{Champetier:2000jx,Champetier:2005ic}.

%%%%%%%%%%%%%%%%%%%%%%%%%%%%%%%%%%%%%%%%%%%%%%%%%%%%%%%%%%%%%%%%%%%%%%%%%%%%%%%%%%%%%
%
\subsection{Generalities}
%
%%%%%%%%%%%%%%%%%%%%%%%%%%%%%%%%%%%%%%%%%%%%%%%%%%%%%%%%%%%%%%%%%%%%%%%%%%%%%%%%%%%%%

Let $G$ be a finitely generated group.
A \emph{group marked by $G$} is a pair $(H,\phi)$ where $H$ is a group and $\phi \colon G \onto H$ an epimorphism from $G$ onto $H$.
If there is no ambiguity we omit the group $G$ and simply say that $(H,\phi)$ is a \emph{marked group}.
Given two groups $(H_1,\phi_1)$ and $(H_2,\phi_2)$ marked by $G$, we say that $(H_1,\phi_1)$ is a \emph{cover} of $(H_2,\phi_2)$ or $(H_2, \phi_2)$ is \emph{quotient} of $(H_1, \phi_1)$ and write $(H_2,\phi_2) \prec (H_1,\phi_1)$ if there exists an epimorphism $\theta \colon H_1 \onto H_2$ such that $\theta\circ \phi_1 = \phi_2$.
Equivalently it means that $\ker \phi_1 \subset \ker \phi_2$.
The relation $\prec$ defines a pre-order on the set of marked groups.
Two marked groups $(H_1,\phi_1)$ and $(H_2,\phi_2)$ are equivalent, if $(H_1,\phi_1)$ is a cover of $(H_2,\phi_2)$ and conversely.
Equivalently it means that there exists an isomorphism $\theta \colon H_1 \to H_2$ such that $\theta \circ \phi_1 = \phi_2$.

\begin{defi}
\label{def: space of marked groups}
	The \emph{space of groups marked by $G$} (or simply the \emph{space of marked groups}) is the set of equivalence classes of groups marked by $G$.
We denote it by $\mathfrak G(G)$.
\end{defi}

\begin{nota*}
	In the remainder of the article we make the following abuse of notation: given a group $(H,\phi)$ marked by $G$, we still denote by $(H,\phi)$ its equivalence class in $\mathfrak G(G)$.

\end{nota*}

We endow the space $\mathfrak G(G)$ with a topology (called the \emph{topology of marked groups}) defined as follows.
Let $(H,\phi)$ be a group marked by $G$.
Let $U$ be a finite set of $G$.
We let 
\begin{equation*}
	\mathfrak V_U(H,\phi) = \set{(H',\phi') \in \mathfrak G(G)}{U \cap \ker \phi = U \cap \ker \phi'}.
\end{equation*}
When $U$ runs over all finite subsets of $G$, the collection $\{\mathfrak V_U(H,\phi)\}_{U \subset G}$ describes a basis of open neighborhoods of $(H,\phi)$.
The space $\mathfrak G(G)$ endowed with this topology is metrizable and compact.
\begin{rema*}
	Sometimes it is convenient to consider pairs $(H,\phi)$, where the morphism $\phi \colon G \to H$ is not necessarily onto.
	In this situation we make an abuse of notation and write $(H, \phi)$ to mean $(\phi(G), \phi)$.
	In particular, we say that a sequence $(H_k, \phi_k)$ of such pairs converges to $(L, \eta)$ and write 
	\begin{equation*}
		(L, \eta) = \lim_{k \to \infty} (H_k,\phi_k),
	\end{equation*}
	if $(\phi_k(G), \phi_k)$ converges to $(L, \eta)$ in $\mathfrak G(G)$.
	In such a situation $L$ is the quotient of $G$ by the \emph{stable kernel} of $(\phi_k)$, i.e. the normal subgroup
	\begin{equation}
	\label{eqn: stable kernel}
		K = \set{\gamma \in G}{\phi_k(\gamma) = 1,\ \text{for all but finitely many}\ k \in \N}.
	\end{equation}
\end{rema*}

By construction, the pre-order $\prec$ induces an order on $\mathfrak G(G)$.
It is compatible with the topology in the sense that the set 
\begin{equation*}
	\set{\left((H_1,\phi_1), (H_2, \phi_2)\right) \in \mathfrak G(G) \times\mathfrak G(G)}{ (H_1, \phi_1) \prec (H_2, \phi_2) }
\end{equation*}
is closed for the product topology.

\paragraph{Changing the marker.}
Given an epimorphism $\pi \colon G \onto G'$, we define a map $\pi_\ast \colon \mathfrak G(G') \to \mathfrak G(G)$ by sending $(H,\phi)$ to $(H,\phi \circ \pi)$.
This is an order-preserving homeomorphism from $\mathfrak G(G')$ onto its image.
In addition, if the kernel of $\pi \colon G \to G'$ is the normal closure of a \emph{finite} subset of $G$, then the image of $\pi_\ast$ is an open subset of $\mathfrak G(G)$.

%%%%%%%%%%%%%%%%%%%%%%%%%%%%%%%%%%%%%%%%%%%%%%%%%%%%%%%%%%%%%%%%%%%%%%%%%%%%%%%%%%%%%
%
\subsection{Limit groups.}
%
%%%%%%%%%%%%%%%%%%%%%%%%%%%%%%%%%%%%%%%%%%%%%%%%%%%%%%%%%%%%%%%%%%%%%%%%%%%%%%%%%%%%%

A \emph{filtration} of groups is a sequence $\mathfrak F =(\mathfrak F_k)$ of collections of groups that is non-increasing for the inclusion (i.e. $\mathfrak F_k \subset \mathfrak F_{k'}$ for every $k' \leq k$).
Given such a filtration we write $\mathfrak F_k(G)$ for the groups $(H, \phi)$ marked by $G$ such that $H$ belongs to $\mathfrak F_k$.

\begin{defi}
\label{def: limit group - general}
	Let $\mathfrak F =(\mathfrak F_k)$ be a filtration of groups.	
	A \emph{limit group over $\mathfrak F$} is a marked group in the set
	\begin{equation*}
		\bigcap_{k \in \N} \overline{\mathfrak F_k(G)}.
	\end{equation*}
	where $\overline{\mathfrak F_k(G)}$ stands for the closure of $\mathfrak F_k(G)$ in $\mathfrak G(G)$.
\end{defi}

\begin{voca*}
	A sequence $(H_k, \phi_k)$ of marked groups is an \emph{$\mathfrak F$-sequence} if $(H_k, \phi_k)$ belongs to $\mathfrak F_k(G)$ for every $k \in \N$.
	Since $(\mathfrak F_k)$ is non-increasing for the inclusion, any subsequence of an $\mathfrak F$-sequence is still an $\mathfrak F$-sequence.
	Concretely, $(L, \eta)$ is a limit group over $\mathfrak F$, if and only if it is the limit of an $\mathfrak F$-sequence.
\end{voca*}

We will be interested in the following particular situations.
\begin{itemize}
	\item A \emph{limit group over a group $\Gamma$} is a limit group over the constant filtration that consists of all subgroups of $\Gamma$.
	\item Let $\Gamma$ be a group.
	Given $k \in \N$, we write $\mathfrak F_k$ for the union over all $n \geq k$ of all subgroups of the periodic quotient $\Gamma/ \Gamma^n$.
	A \emph{limit group over the periodic quotients of $\Gamma$} is a limit group over the filtration $\mathfrak F = (\mathfrak F_k)$.
	\item Let $\delta \in \R_+^*$.
	Given $k \in \N$, denote by $\mathfrak F_k$,  the collection 
	\begin{equation*}
		\mathfrak F_k = \bigcup_{ \rho \geq k} \set{ \Gamma}{ (\Gamma, X, \mathcal C) \in \mathfrak H_\delta(\rho)}.
	\end{equation*}
	A \emph{limit group over $\mathfrak H_\delta$} is a limit group over the filtration $\mathfrak F = (\mathfrak F_k)$.
\end{itemize}
	
In the latter case, we borrow the  following terminology from Groves and Hull \cite{Groves:2019aa}.
\begin{defi}
\label{def: div limit group}
	Let $U$ be a finite generating set of $G$.
	A limit group $(L, \eta)$ over $\mathfrak H_\delta$ is \emph{divergent} if there exist a sequence $(\rho_k)$ diverging to infinity, a sequence of marked groups $(\Gamma_k, \phi_k)$, where $(\Gamma_k, X_k, \mathcal C_k) \in \mathfrak H_\delta(\rho_k)$ for every $k \in \N$, which converges to $(L, \eta)$ and such that the energy $\lambda_\infty(\phi_k, U)$ measured in $X_k$ diverges to infinity.
\end{defi}

\begin{rema}
	Note that the definition does not depend on the generating set $U$.
	Because of (\ref{eqn: comparing energies}), we could also have replaced the $L^\infty$-energy, by the $L^1$-energy.
	If $L$ is not abelian, it is also equivalent to the fact that the restricted energy $\lambda_1^+(\phi_k,U)$ diverges to infinity (\autoref{res: comparing energies}).
\end{rema}

We now prove a few statements which are not necessarily needed for our study, but should help to understand the relations between these various types of limit groups.
\autoref{res: trichotomy limit groups} can also be seen as a first warm up for the lifting statements proved in \autoref{sec: lifting theorems}.

\begin{prop}
	Let $\Gamma$ be a non-elementary, torsion-free, hyperbolic group.
	Let $G$ be a finitely generated group.
	Let $(L, \eta) \in \mathfrak G(G)$.
	If $(L, \eta)$ is a limit group over $\Gamma$ then it is also a limit group over the periodic quotients of $\Gamma$.
\end{prop}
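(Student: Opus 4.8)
The plan is to unwind the definitions and use the fact, recorded in \autoref{rem: canonical proj asymp injective}, that the projection $\Gamma \onto \Gamma/\Gamma^n$ is ``asymptotically injective'' on finite sets. Suppose $(L,\eta)$ is a limit group over $\Gamma$. By the characterization of limit groups as limits of $\mathfrak F$-sequences, there is a sequence of epimorphisms $\phi_k \colon G \onto H_k$ with $H_k \leq \Gamma$ such that $(H_k,\phi_k) \to (L,\eta)$ in $\mathfrak G(G)$. Fix an exhausting sequence $U_1 \subset U_2 \subset \cdots$ of finite subsets of $G$ with $\bigcup_k U_k = G$; after passing to a subsequence we may assume that $(H_k,\phi_k) \in \mathfrak V_{U_k}(L,\eta)$, i.e.\ $U_k \cap \ker\phi_k = U_k \cap \ker\eta$ for all $k$.

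Now I would replace each $\phi_k$ by its composition with a periodic projection. For each $k$, apply the second bullet of \autoref{rem: canonical proj asymp injective} to the finite set $\phi_k(U_k)\setminus\{1\} \subset \Gamma\setminus\{1\}$: there is an odd integer $n_k$ (which we may take $\geq k$, and strictly increasing in $k$) such that $\phi_k(U_k) \cap \Gamma^{n_k} = \{1\}$ except for those elements already trivial. Let $\pi_k \colon \Gamma \onto \Gamma/\Gamma^{n_k}$ be the projection and set $\psi_k = \pi_k \circ \phi_k \colon G \to \Gamma/\Gamma^{n_k}$, viewed (via the abuse of notation introduced in the excerpt) as a marked group $(\psi_k(G),\psi_k)$. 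By construction $\psi_k(G) \leq \Gamma/\Gamma^{n_k}$ with $n_k \geq k$ odd, so $(\psi_k(G),\psi_k)$ lies in $\mathfrak F_k(G)$ for the filtration defining limit groups over the periodic quotients of $\Gamma$; hence $(\psi_k)$ is an $\mathfrak F$-sequence.

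It remains to check that $(\psi_k(G),\psi_k) \to (L,\eta)$. Fix a finite $U \subset G$; for $k$ large we have $U \subset U_k$, and then for $u \in U$: if $u \in \ker\eta$ then $u \in \ker\phi_k$, so $u \in \ker\psi_k$; conversely if $\psi_k(u) = 1$ then $\phi_k(u) \in \Gamma^{n_k}$, and since $\phi_k(u) \in \phi_k(U_k)$ our choice of $n_k$ forces $\phi_k(u) = 1$, whence $u \in \ker\phi_k$ and so $u \in \ker\eta$ because $u \in U_k$. Thus $U \cap \ker\psi_k = U \cap \ker\eta$ for all large $k$, which is exactly convergence in $\mathfrak G(G)$. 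Therefore $(L,\eta)$ is a limit group over the periodic quotients of $\Gamma$.

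The only genuine subtlety — and the step I would expect to need the most care — is the appeal to \autoref{rem: canonical proj asymp injective}: one must make sure the critical exponent there can be chosen to depend only on the \emph{finite} set $\phi_k(U_k)\setminus\{1\}$ and then produce a single odd $n_k$ simultaneously clearing all of $\phi_k(U_k)$ and satisfying $n_k \geq k$. This is immediate from the cited fact (take $N$ for that finite set and any odd $n_k \geq \max\{N,k\}$), so there is no real obstacle; everything else is a routine diagonal/subsequence argument together with bookkeeping of kernels.
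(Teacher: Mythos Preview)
Your proof is correct and follows essentially the same approach as the paper: both take a sequence $\phi_k \colon G \to \Gamma$ converging to $(L,\eta)$, use the asymptotic injectivity of \autoref{rem: canonical proj asymp injective} to choose odd $n_k \geq k$ with $\pi_k$ injective on $\phi_k(U_k)$, and conclude that $(\Gamma/\Gamma^{n_k}, \pi_k \circ \phi_k)$ converges to $(L,\eta)$. Your version is just more explicit about the bookkeeping (the subsequence step and the kernel verification), while the paper compresses this into a single sentence.
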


\begin{proof}
	Let $\phi_k \colon G \to \Gamma$ be a sequence of morphisms so that $(\Gamma, \phi_k)$ converges to $(L, \eta)$.
	Let $(U_k)$ be a non-decreasing exhaustion of $G$ by finite subsets.
	For every $k \in \N$, there exists an odd integer $n_k \geq k$ such that the projection $\pi_k \colon \Gamma \onto \Gamma / \Gamma^{n_k}$ is one-to-one when restricted to $\phi_k(U_k)$ (\autoref{rem: canonical proj asymp injective}).
	It follows directly that $(\Gamma / \Gamma^{n_k}, \pi_k \circ \phi_k)$ converges to $(L, \eta)$ as well.
\end{proof}

\begin{prop}
\label{res: trichotomy limit groups}
	Let $\Gamma$ be a non-elementary, torsion-free, hyperbolic group.
	There exists $\delta \in \R_+^*$ with the following property.
	Let $G$ be a finitely presented group.
	Let $(L,\eta) \in \mathfrak G(G)$ be a limit group over the periodic quotients of $\Gamma$.
	Then one of the following holds
	\begin{itemize}
		\item $L$ is a subgroup of $\Gamma$,
		\item $L$ is an abelian group.
		\item $(L, \eta)$ is a divergent limit group over $\mathfrak H_\delta$,
	\end{itemize}
\end{prop}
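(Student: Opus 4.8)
The plan is to realise $(L,\eta)$ as a limit of marked groups produced by an approximation sequence, and then to distinguish cases according to how the energies behave. Write $(L,\eta)$ as the limit of marked groups $(\Gamma/\Gamma^{n_k},\phi_k)$, where $n_k$ is an odd integer diverging to infinity and $\phi_k\colon G\to\Gamma/\Gamma^{n_k}$ is a morphism. If $L$ is abelian we are in the second case, so assume it is not; choosing $u,v$ in a finite generating set $U$ of $G$ with $[u,v]\notin\ker\eta$ and using the convergence, the image of $\phi_k$ is non-abelian for all large $k$, and we discard the remaining indices. Since $\Gamma$ is torsion-free hyperbolic, a suitable rescaling $X_0$ of its Cayley graph makes $(\Gamma,X_0)$ a $\tau$-bootstrap for every exponent, for some $\tau\in(0,1)$ and the constant $\delta$ provided by \autoref{res: approximating sequence} (this is the $\delta$ in the statement). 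Hence, for $k$ large, I fix an approximation sequence $(\Gamma_{j,k},X_{j,k},\mathcal C_{j,k})$ of $\Gamma/\Gamma^{n_k}$, with $(\Gamma_{0,k},X_{0,k})=(\Gamma,X_0)$ independent of $k$ and every triple in $\mathfrak H_\delta$.

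The heart of the argument is to approximate $\phi_k$ by lifts taken minimally relative to a \emph{growing} family of finite sets rather than a fixed one. I would pick integers $\ell_k\to\infty$ with $\ell_k/\rho(n_k)\to0$ (possible since $\rho(n_k)\to\infty$), large enough that the relators of a fixed presentation $G=\group{U\mid R}$ have length at most $\ell_k$, let $U_k$ be the $\ell_k$-ball of $G$, and let $\psi_k\colon G\to\Gamma_{m_k,k}$ be a minimal approximation of $\phi_k$ relative to $U_k$; such an approximation exists because $G$ is finitely presented and $\Gamma/\Gamma^{n_k}$ is the direct limit of the $\Gamma_{j,k}$. Three facts then need to be checked. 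First, since $U_k\cap\ker\psi_k=U_k\cap\ker\phi_k$ and the sets $U_k$ exhaust $G$, the sequence $(\Gamma_{m_k,k},\psi_k)$ converges to $(L,\eta)$ itself. Second, the image of $\psi_k$ does not fix a cone point of $\mathcal C_{m_k,k}$: otherwise it would lie in a stabiliser $\stab c$, which is cyclic by \autoref{res; family - fully conical subgroup} and Axiom~\ref{enu: family axioms - elem subgroups}, hence abelian, forcing $\phi_k(G)$ to be abelian. Third, repeating the proof of \autoref{res: approx - energy minimal approx} but invoking \autoref{res: approximating sequence}~\ref{enu: approximating sequence - lifting} with the $\ell_k$-ball in place of $V$, the minimality of $m_k$ gives, for $k$ large, the dichotomy: either $m_k=0$, or $\lambda_\infty(\psi_k,U)\geq\rho(n_k)/(100\ell_k)$.

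I would then split on the behaviour of $\lambda_\infty(\psi_k,U)$. If it is unbounded, pass to a subsequence along which it diverges: it still converges to $(L,\eta)$, each term still comes from a triple in $\mathfrak H_\delta$, and the energies tend to infinity, so $(L,\eta)$ is a divergent limit group over $\mathfrak H_\delta$ — the third case. If it is bounded by some $C$, then since $\rho(n_k)/(100\ell_k)\to\infty$ the dichotomy forces $m_k=0$ for $k$ large, so $\psi_k\colon G\to\Gamma$ has energy at most $C$ on the \emph{fixed} space $X_0$; because $\Gamma$ acts properly and cocompactly on $X_0$, after conjugating each $\psi_k$ the images of the generators lie in a fixed finite subset of $\Gamma$, so along a subsequence $(\Gamma,\psi_k)$ is a constant marked group $(\psi(G),\psi)$, and comparing with the limit gives $(L,\eta)=(\psi(G),\psi)$, i.e. $L$ embeds in $\Gamma$ — the first case.

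The main obstacle, and the reason for replacing a fixed finite set by the exhaustion $(U_k)$, is the first of the three facts above: minimal approximations relative to a fixed set only control the marked group on that set, and a priori yield a limit that is merely a cover of $(L,\eta)$ rather than $(L,\eta)$ itself. Letting $U_k$ exhaust $G$ repairs this, but then threatens the energy bound in the third fact, which is why $\ell_k$ must grow slowly enough that $\rho(n_k)/(100\ell_k)$ still diverges. A secondary point to verify is that $\Gamma$ genuinely fits the bootstrap framework with the right constants after rescaling (in particular that $\nu(\Gamma,X_0)=1$ and the $\delta$-thinness of loxodromic elements can be arranged), and that the base space and $\Gamma$-action are independent of $k$.
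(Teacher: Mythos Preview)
Your proposal is correct and follows essentially the same route as the paper's proof: lift each $\phi_k$ to a minimal approximation $\psi_k$ relative to an exhausting family of balls $U_k$ of radius $\ell_k$ with $\ell_k\to\infty$ and $\rho(n_k)/\ell_k\to\infty$, use \autoref{res: approx - energy minimal approx} to get the dichotomy $m_k=0$ versus a lower bound on $\lambda_\infty(\psi_k,U)$, and split on whether the energies stay bounded. Your treatment is in fact slightly more careful than the paper's on two points --- you make the requirement $\ell_k\to\infty$ explicit (needed so that $(\Gamma_{m_k,k},\psi_k)$ genuinely converges to $(L,\eta)$ rather than to a cover), and you spell out why $\psi_k(G)$ cannot fix a cone point --- but the structure and the key inputs are identical.
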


\begin{proof}
	Let $\delta \in \R^*_+$ and $\rho \colon \N \to \R_+$ be the data given by \autoref{res: approximating sequence}.
	We assume that $L$ is not abelian.
	There exists a sequence of morphisms $\phi_k \colon G \to \Gamma / \Gamma^{n_k}$ where $(n_k)$ is a sequence of odd integers diverging to infinity and such that $(\Gamma / \Gamma^{n_k}, \phi_k)$ converges to $(L, \eta)$.
	In particular, $\rho(n_k)$ diverges to infinity.
	We choose a non-decreasing sequence of integers $(\ell_k)$ diverging to infinity and such that 
	\begin{equation*}
		\lim_{k \to \infty} \frac{\rho(n_k)}{\ell_k} = \infty.
	\end{equation*}
	Let $U$ be a finite generating set of $G$.
	For every $k \in \N$, we denote by $U_k$ the ball of radius $\ell_k$ in $G$ (for the word metric with respect to $U$).
	Let $X$ be a Cayley graph of $\Gamma$ so that $(\Gamma, X)$ is a $\tau$-bootstrap for some $\tau \in (0,1)$.
	For each $k \in \N$, \autoref{res: approximating sequence}, provides an approximating sequence 
	\begin{equation*}
		(\Gamma_{j,k}, X_{j,k}, \mathcal C_{j,k})_{j \in \N}
	\end{equation*}
	of $\Gamma_k/\Gamma_k^{n_k}$ whose elements belong to $\mathfrak H_\delta(\rho(n_k))$.
	We denote by $\psi_k \colon G \to \Gamma_{j_k,k}$ a minimal approximation of $\phi_k$ relative to $U_k$.
	Since
	\begin{equation*}
		U_k \cap \ker \psi_k = U_k \cap \ker \phi_k, \quad \forall k \in \N,
	\end{equation*}
	the sequence $(\Gamma_{j_k, k}, \psi_k)$ also converges to $(L, \eta)$.
	We now distinguish two cases.
	Suppose first that the energy $\lambda_\infty(\psi_k, U)$ is not bounded.
	Up to passing to a subsequence we can assume that $\lambda_\infty(\psi_k, U)$ diverges to infinity.
	Hence $(L, \eta)$ is a diverging limit group over $\mathfrak H_\delta$.
	
	Suppose now that $\lambda_\infty(\psi_k, U)$ is bounded.
	We have chosen $\ell_k$ such that $\rho(n_k) / \ell_k$ diverges to infinity.
	By construction $\psi_k$ is a minimal approximation of $\phi_k$ relative to $U_k$.
	Its image is not abelian (provided $k$ is large enough) otherwise $L$ would be abelian as well.
	Applying \autoref{res: approx - energy minimal approx} with the minimal approximation $\psi_k \colon G \to \Gamma_{j_k,k}$, we obtain that $j_k = 0$ for all but finitely many $k \in \N$.
	In other words, $\psi_k$ is a morphism from $G$ to $\Gamma$.
	Recall that $\lambda_\infty(\psi_k, U)$ is bounded.
	Up to passing to a subsequence there exists $\psi \colon G \to \Gamma$ such that $\psi_k$ is conjugated to $\psi$ for every $k \in \N$.
	In particular, $L \cong G / \ker \psi$ is a subgroup of $\Gamma$.
\end{proof}

%%%%%%%%%%%%%%%%%%%%%%%%%%%%%%%%%%%%%%%%%%%%%%%%%%%%%%%%%%%%%%%%%%%%%%%%%%%%%%%%%%%%%
%
\subsection{Infinite descending sequence}
%
%%%%%%%%%%%%%%%%%%%%%%%%%%%%%%%%%%%%%%%%%%%%%%%%%%%%%%%%%%%%%%%%%%%%%%%%%%%%%%%%%%%%%
\label{sec: infinite desc  seq}

For the remainder of \autoref{sec: infinite desc  seq}, we fix a filtration of groups $\mathfrak F =(\mathfrak F_k)$.
Let $G$ be a finitely generated group.
As before we write $\mathfrak F_k(G)$ for the groups $(H, \phi)$ marked by $G$ such that $H$ belongs to $\mathfrak F_k$.
In the remainder of this section, unless mentioned otherwise, all limit groups will be limit groups over $\mathfrak F$.
To lighten the statements, we simply refer to them as \emph{limit groups}.

One of the central tools in the study of limit groups is the \emph{shortening argument} of Rips and Sela \cite{Rips:1994jg}.
Given an $\mathfrak F$-sequence $(H_k, \phi_k)$ converging to a limit group $(L, \eta)$ it provides (under certain circumstances) a new $\mathfrak F$-sequence $(H'_k, \phi'_k)$ converging to a proper quotient $(L', \eta')$ of $(L, \eta)$.
This construction has numerous applications which are now standard in the area.
We postpone the proof of the shortening argument in \autoref{sec: limit group w/o root splitting aka shortening}.
Instead we provide in this section a general abstract framework to investigate its consequences.
We revisit some material from \cite{Sela:2001gb,Sela:2009bh,Groves:2005ht, Weidmann:2019ue,Groves:2019aa}.
In particular, we show that if every limit group is shortenable (see \autoref{def: shortenable}) then there is no infinite descending sequence of limit groups.

\paragraph{Factorization property.}
If $(L, \eta)$ is a limit group over the free group, it is known that $L$ is finitely presented \cite{Kharlampovich:1998vl,Sela:2001gb,Guirardel:2004aa}.
In particular, the image of $\mathfrak G(L)$ in $\mathfrak G(G)$ is open.
This is no more the case a priori if $(L, \eta)$ is a limit group over $\mathfrak F$.
This motivates the next definition.

\begin{defi}
\label{def: factorization property}
	Let $G$ be a finitely generated group.
	We say that $(Q, \pi) \in \mathfrak G(G)$ has the \emph{factorization property} if there exist a finite subset $W \subset \ker \pi$ and $k \in \N$, with the following property:
	for every marked group $(H, \phi) \in \mathfrak F_k(G)$, if $W \subset \ker \phi$, then $\phi$ factors through $\pi$.
\end{defi}

Note that if $(Q, \pi)$ is a finitely presented group, then it has the factorization property.

\begin{lemm}
\label{res: factorization prop implies almost open}
	Let $G$ be a finitely generated group.
	Let $(Q, \pi) \in \mathfrak G(G)$ with the factorization property.
	There exists a finite subset $W \subset \ker \pi$ such that for every limit group $(L, \eta) \in \mathfrak G(G)$, if $W \subset \ker \eta$, then $(L, \eta) \prec (Q, \pi)$.
\end{lemm}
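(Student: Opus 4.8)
The plan is to simply take for $W$ the finite subset of $\ker\pi$ supplied by the factorization property, together with the associated threshold $K\in\N$, and show that this same $W$ already forces $(L,\eta)\prec(Q,\pi)$ for every limit group $(L,\eta)$ over $\mathfrak F$ with $W\subset\ker\eta$. The argument is soft: it only uses the definition of the topology on $\mathfrak G(G)$ and the fact, recalled in the excerpt, that the order $\prec$ is closed for the product topology.

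First I would fix such a limit group $(L,\eta)$ and choose an $\mathfrak F$-sequence $(H_k,\phi_k)$ with $(H_k,\phi_k)\in\mathfrak F_k(G)$ converging to $(L,\eta)$ in $\mathfrak G(G)$. Applying the convergence to the basic neighbourhood $\mathfrak V_W(L,\eta)$ and using that $W$ is finite and contained in $\ker\eta$, one gets $W\cap\ker\phi_k = W\cap\ker\eta = W$, hence $W\subset\ker\phi_k$, for all $k$ large enough. For $k$ at least this large and at least $K$, the marked group $(H_k,\phi_k)$ lies in $\mathfrak F_k(G)$ and satisfies $W\subset\ker\phi_k$, so the factorization property says that $\phi_k$ factors through $\pi$, that is $\ker\pi\subset\ker\phi_k$, equivalently $(H_k,\phi_k)\prec(Q,\pi)$.

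Next I would pass to the limit. The pairs $\bigl((H_k,\phi_k),(Q,\pi)\bigr)$ lie in the set $\set{((H_1,\phi_1),(H_2,\phi_2))\in\mathfrak G(G)\times\mathfrak G(G)}{(H_1,\phi_1)\prec(H_2,\phi_2)}$ for all sufficiently large $k$, and they converge to $\bigl((L,\eta),(Q,\pi)\bigr)$ since $(H_k,\phi_k)\to(L,\eta)$ and the second coordinate is constant. As this set is closed for the product topology, the limit pair belongs to it, i.e. $(L,\eta)\prec(Q,\pi)$, which is the desired conclusion. (Alternatively, one can avoid invoking closedness of $\prec$ and argue directly: $\ker\eta$ is the stable kernel of $(\phi_k)$, so any $\gamma\in\ker\pi$, being in $\ker\phi_k$ for all large $k$, lies in $\ker\eta$; thus $\ker\pi\subset\ker\eta$.)

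I do not expect a genuine obstacle here; the only point requiring a little care is the bookkeeping of "for all sufficiently large $k$", namely that the threshold coming from convergence into $\mathfrak V_W(L,\eta)$ and the threshold $K$ from the factorization property can be imposed simultaneously — which is immediate, both being eventual conditions on $k$.
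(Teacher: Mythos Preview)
Your proof is correct and follows essentially the same approach as the paper: take the same $W$ (and threshold $K$) from the factorization property, approximate $(L,\eta)$ by marked groups in $\mathfrak F_k(G)$ with $W$ in their kernel, apply the factorization property there, and pass back to $(L,\eta)$. The paper's argument is in fact closer to your parenthetical alternative --- it fixes an arbitrary $g\in\ker\pi$, finds a single $(H,\phi)\in\mathfrak F_k(G)$ agreeing with $(L,\eta)$ on $W\cup\{g\}$, and concludes $\eta(g)=1$ --- rather than invoking closedness of $\prec$, but this is a cosmetic difference.
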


\begin{proof}
	Let $W \subset \ker \pi$ and $k \in \N$ be the data provided by the definition of the factorization property.
	Let $(L, \eta)$ be a limit group such that $W \subset \ker \eta$.
	Let $g \in \ker \pi$ and set $U = W \cup \{g\}$.
	Since $(L, \eta)$ is a limit group, there exists $(H,\phi) \in \mathfrak F_k(G)$ such that $U \cap \ker \phi = U \cap \ker \eta$.
	In particular, $W$ is contained in $\ker \phi$.
	Thus $\phi$ factors through $\pi$ and $\phi(g) = 1$.
	Consequently $\eta(g) = 1$ as well.
	We proved that $\ker \pi \subset \ker \eta$, whence the result.
\end{proof}

\begin{defi}
\label{def: fully res F}
	A finitely generated group $L$ is fully residually $\mathfrak F$ if for every finite subset $U \subset L \setminus\{1\}$, for every $k \in \N$, there is a morphism $\phi \colon L \to H$ where $H \in \mathfrak F_k$ and $U \cap \ker \phi = \emptyset$.
\end{defi}

Let $(L, \eta) \in \mathfrak G(G)$.
If $L$ is fully residually $\mathfrak F$, then $(L, \eta)$ is a limit group.
The converse requires the factorization property.

\begin{lemm}
\label{res: full res F w/ factorization}
	Let $G$ be a finitely generated group.
	Let $(L, \eta) \in \mathfrak G(G)$ be a limit group.
	If $(L, \eta)$ has the factorization property, then $L$ is fully residually $\mathfrak F$.
\end{lemm}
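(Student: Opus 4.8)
The goal is to show that a limit group $(L,\eta)$ which additionally has the factorization property is in fact fully residually $\mathfrak F$. The plan is to start from the finite subset $W \subset \ker \pi$... wait, here $(L,\eta)$ is its own marked group, so the plan is to use the factorization data for $(L,\eta)$ itself. First I would unpack \autoref{def: factorization property} applied to $(Q,\pi) = (L,\eta)$: there are a finite subset $W \subset \ker \eta$ and an integer $K \in \N$ such that for every $k \geq K$ and every $(H,\phi) \in \mathfrak F_k(G)$ with $W \subset \ker \phi$, the morphism $\phi$ factors through $\eta$.

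Next I would fix an arbitrary finite subset $W' \subset L$ and an arbitrary index $k_0 \in \N$; I must produce a morphism $L \to H$ with $H \in \mathfrak F_{k_0}$ that is injective on $W'$. Lift $W'$ to a finite subset $\tilde W' \subset G$ via $\eta$ (choosing one preimage of each element), and consider the finite set $U = W \cup \tilde W' \cup \{$products/inverses needed to witness injectivity$\}$ — more precisely, $U$ should contain $W$ together with, for each pair of distinct elements of $W'$ (and each nontrivial element of $W'$), a preimage in $G$ of the corresponding nontrivial element of $L$. Since $(L,\eta)$ is a limit group over $\mathfrak F$, it is the limit of an $\mathfrak F$-sequence, so there exist $k \geq \max\{K, k_0\}$ and $(H,\phi) \in \mathfrak F_k(G)$ with $U \cap \ker \phi = U \cap \ker \eta$. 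Because $W \subset U$ and $W \subset \ker \eta$, we get $W \subset \ker \phi$, hence by the factorization property $\phi$ factors as $\phi = \theta \circ \eta$ for some morphism $\theta \colon L \to H$. It remains to check $\theta$ is injective on $W'$: if two elements of $W'$ had the same image under $\theta$, the corresponding preimage $g \in U$ of their nontrivial ratio in $L$ would satisfy $\eta(g) \neq 1$ but $\phi(g) = \theta(\eta(g)) = 1$, contradicting $U \cap \ker\phi = U\cap\ker\eta$. Since $k \geq k_0$ and $\mathfrak F$ is non-increasing, $H \in \mathfrak F_k \subseteq \mathfrak F_{k_0}$, so $\theta$ is the desired morphism.

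The only mildly delicate point — and the one I'd be most careful about — is the bookkeeping that translates "injective on $W'$" into a statement purely about which elements of a finite subset $U \subset G$ lie in $\ker\eta$ versus $\ker\phi$: one needs $U$ to contain, for every pair $w_1 \neq w_2$ in $W'$, a preimage under $\eta$ of $w_1 w_2^{-1}$ (which is a nontrivial element of $L$), and similarly a preimage of each $w \in W' \setminus \{1\}$. With $U$ chosen this way, the convergence condition $U \cap \ker\phi = U \cap \ker\eta$ forces $\phi$ to separate exactly the same preimages that $\eta$ does, which gives injectivity of $\theta$ on $W'$. Everything else is a routine application of the definitions of limit group and factorization property, so there is no real obstacle here; this lemma is essentially the converse direction promised after \autoref{def: fully res F}.
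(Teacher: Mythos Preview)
Your proposal is correct and follows essentially the same route as the paper: extract the factorization data $W\subset\ker\eta$ and $K$, enlarge $W$ to a finite set in $G$ that sees both $W$ and lifts of the given finite subset of $L$, use the limit-group property to find some $(H,\phi)\in\mathfrak F_k(G)$ matching kernels on that finite set, then factor through $\eta$. The only difference is presentational: the paper simply takes $V\subset G$ with $W\subset V$ and $\eta(V)=U$ and asserts the resulting map is one-to-one on $U$, whereas you spell out explicitly that $V$ (your $U$) must also contain preimages of the pairwise ratios $w_1w_2^{-1}$ to guarantee injectivity rather than merely nontriviality on each element---a harmless extra care that the paper leaves implicit.
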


\begin{proof}
	Let $W \subset \ker \eta$ and $k$ be the data provided by the factorization property.
	Let $U \subset L \setminus \{1\}$ be a finite subset.
	We fix a finite subset $V \subset G$ containing $W$ such that $\eta(V) = U$.
	Since $(L, \eta)$ is a limit group, there exists $(H, \phi) \in \mathfrak F_k(G)$ such that $V \cap \ker \phi = V \cap \ker \eta$.
	In particular, $W$ is contained in $\ker \phi$, thus $\phi$ factors through $\eta$.
	The resulting morphism $\psi \colon L \to H$ is such that $U \cap \ker \psi = \emptyset$.
	Hence $(L, \eta)$ is fully residually $\mathfrak F$.
\end{proof}

\paragraph{Shortenable limit groups.}
For the remainder of this section, we fix a property $\mathcal P$ of groups.

\begin{exam*}
	In practice we shall use the following one: a group $H$ has $\mathcal P$ if every abelian subgroup of $H$ is finitely generated.
	Nevertheless in other contexts the required property can be slightly different.
	For instance \cite{Weidmann:2019ue} uses the following property:  group $H$ has $\mathcal P$ if every almost abelian subgroup of $H$ is finitely generated.
	Since this section aims to be as general as possible, we will not use a specific property $\mathcal P$ for the moment.
\end{exam*}

\begin{defi}
\label{def: shortenable}
	Let $G$ be a finitely generated group.
	Let $(H_k, \phi_k) \in \mathfrak G(G)$ be an $\mathfrak F$-sequence converging to a limit group $(L, \eta)$.
	We say that the sequence $(H_k, \phi_k)$ is \emph{shortenable with respect to $\mathcal P$} if there is an $\mathfrak F$-sequence $(H'_k, \phi'_k) \in \mathfrak G(G)$ such that, up to passing to a subsequence, the following holds.
	\begin{enumerate}
		\item \label{enu: shortening - sbgp}
			 $H'_k=H_k$, for every $k \in \N$.
		\item \label{enu: shortening - cvg}
			$(H'_k, \phi'_k)$ converges to a limit group $(L', \eta') \prec (L, \eta)$.
		\item \label{enu: shortening - dichotomy}
			If $(L', \eta')$ is not a proper quotient of $(L, \eta)$, then $L'$ has $\mathcal P$.
			Moreover, all but finitely many $\phi'_k$ factor through $\eta'$.
		\item \label{enu: shortening - fact}
			If $(L', \eta')$ has $\mathcal P$, then so has $(L, \eta)$.
			If in addition all but finitely many $\phi'_k$ factor through $\eta'$, then all but finitely many $\phi_k$ factor through $\eta$.
	\end{enumerate}
	In this context, we say that $(H'_k, \phi'_k)$ is a \emph{shortening sequence} of $(H_k, \phi_k)$ and $(L', \eta')$ a \emph{shortening quotient} of $(L, \eta)$.
	
	A limit group $(L, \eta)$ is \emph{shortenable with respect to $\mathcal P$} if every $\mathfrak F$-sequence converging to $(L, \eta)$ is shortenable with respect to $\mathcal P$.
\end{defi}

\begin{rema}
\label{rem: fp group are shortenable}\
	\begin{itemize}
		\item 
		Note that in the definition of shortenable sequences, we require that the target $H'_k$ of shortening morphism $\phi'_k$ is coincides with the one $H_k$ of the original morphism $\phi_k$.
		This plays an important role later, see for instance the footnote in the proof of \autoref{res: shortenable stable under free product}.
		
		\item Note that if $(L, \eta)$ is a limit group satisfying $\mathcal P$ and the factorization property, then it is automatically shortenable.
		Indeed if $(H_k, \phi_k)$ is an $\mathfrak F$-sequence converging to $(L, \eta)$ we can simply take as shortening sequence $(H'_k, \phi'_k) = (H_k, \phi_k)$.
		In particular, finitely presented limit groups satisfying $\mathcal P$ are shortenable.
	\end{itemize}
\end{rema}

We start with some stabilities of the shortenable property.
If the context is clear, we may omit the property $\mathcal P$ and simply write that a sequence / a limit group is shortenable.
The first statement directly follows from the definition.

\begin{lemm}[Changing the marker]
\label{res: shortenable changing marker}
	Let $G$ be a finitely generated group.
	Let $W \subset G$ be a finite subset and $\pi \colon G \onto Q$ the projection onto $Q = G / \normal W$.
	Let $(H_k, \phi_k) \in \mathfrak G(Q)$ be an $\mathfrak F$-sequence converging to a limit group $(L, \eta)$.
	Then $(H_k, \phi_k)$ is shortenable with respect to $\mathcal P$, as a sequence in $\mathfrak G(Q)$, if and only if $(H_k, \phi_k \circ \pi)$ is shortenable with respect to $\mathcal P$ as a sequence in $\mathfrak G(G)$.
	
	In particular, $(L, \eta)$ is shortenable if and only if so is $(L, \eta \circ\pi)$.
\end{lemm}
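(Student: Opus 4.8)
The plan is to show that the map $\pi_\ast \colon \mathfrak G(Q) \to \mathfrak G(G)$, $(H,\phi) \mapsto (H, \phi\circ\pi)$, transports \emph{all} the data entering the definition of a shortening sequence. Recall that $\pi_\ast$ is an order-preserving homeomorphism onto an open subset of $\mathfrak G(G)$; since $\pi$ is onto, it is moreover order-reflecting, because a factorization $\theta\circ(\phi_1\circ\pi)=\phi_2\circ\pi$ forces $\theta\circ\phi_1=\phi_2$. Hence $\pi_\ast$ preserves, in both directions, the relation $\prec$ (so the notion of proper quotient) and the statement ``$\phi$ factors through $\psi$''; it leaves the underlying abstract group unchanged (so it preserves the property $\mathcal P$); and, being a homeomorphism, it preserves convergence, the notion of $\mathfrak F$-sequence (which depends only on the targets $H_k$), and limits. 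With this dictionary, deriving the three conditions of \autoref{def: shortenable} on one side from the same conditions on the other side is pure bookkeeping.

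Concretely, for the implication ``shortenable in $\mathfrak G(Q)$ $\Rightarrow$ shortenable in $\mathfrak G(G)$'' I would simply push a shortening sequence $(H_k,\phi'_k)$ of $(H_k,\phi_k)$ forward to $(H_k,\phi'_k\circ\pi)$, whose shortening quotient is $\pi_\ast(L',\eta')\prec\pi_\ast(L,\eta)=(L,\eta\circ\pi)$; the three conditions follow verbatim. The converse is the only place with content. Starting from a shortening sequence $(H_k,\psi'_k)$ of $(H_k,\phi_k\circ\pi)$ with quotient $(L',\eta')\prec(L,\eta\circ\pi)$, one has $W\subset\ker\pi\subset\ker(\eta\circ\pi)\subset\ker\eta'$; since $\ker\eta'$ is the stable kernel of $(\psi'_k)$ and $W$ is finite, $W\subset\ker\psi'_k$ for all but finitely many $k$. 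After modifying those finitely many terms — which changes neither the limit, nor the $\mathfrak F$-sequence property, nor the ``all but finitely many'' clauses, nor shortenability of the original sequence — we may assume $\psi'_k=\phi'_k\circ\pi$ for all $k$, and then $(H_k,\phi'_k)=\pi_\ast^{-1}(H_k,\psi'_k)$ is the desired shortening sequence of $(H_k,\phi_k)$, again by the dictionary above.

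Finally, the ``in particular'' statement follows from the sequence-level equivalence: any $\mathfrak F$-sequence in $\mathfrak G(G)$ converging to $(L,\eta\circ\pi)$ has $W$ in its stable kernel, hence (after adjusting finitely many terms) lies in the image of $\pi_\ast$ and comes from an $\mathfrak F$-sequence converging to $(L,\eta)$ in $\mathfrak G(Q)$; and $\pi_\ast$ conversely carries every $\mathfrak F$-sequence converging to $(L,\eta)$ to one converging to $(L,\eta\circ\pi)$. The main — indeed the only non-formal — obstacle is precisely this descent of a shortening sequence from $G$ to $Q$ in the converse direction, which is handled by the finiteness of $W$ together with the fact that perturbing an $\mathfrak F$-sequence in finitely many terms is harmless.
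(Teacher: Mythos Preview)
Your proof is correct. The paper gives no argument at all, merely stating that the lemma ``directly follows from the definition''; you have correctly unpacked the bookkeeping and, in particular, identified the one point that is not purely formal---descending a shortening sequence from $\mathfrak G(G)$ back to $\mathfrak G(Q)$, which uses the finiteness of $W$ to guarantee that all but finitely many terms factor through $\pi$.
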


\begin{prop}[Free product stability]
\label{res: shortenable stable under free product}
	Assume that $\mathcal P$ is stable under free products and taking subgroups.
	Let $G$ be a finitely generated group which splits as a free product $G = G_1 \ast G_2$.
	Let $(L, \eta) \in \mathfrak G(G)$ be a limit group which decomposes as $L = L_1 \ast L_2$ so that $\eta$ maps $G_1$ and $G_2$ onto $L_1$ and $L_2$ respectively.
	We denote by $\eta_i \colon G_i \onto L_i$ the restriction of $\eta$ to $G_i$.
	If for every $i \in \{1, 2\}$, the marked group $(L_i, \eta_i)$ is shortenable with respect to $\mathcal P$ -- viewed as an element of $\mathfrak G(G_i)$ -- then so is $(L, \eta)$.
\end{prop}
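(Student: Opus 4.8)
The plan is to verify the definition of shortenable directly. Fix an arbitrary $\mathfrak F$-sequence $(H_k,\phi_k)\in\mathfrak G(G)$ converging to $(L,\eta)$ and build a shortening sequence. First I would restrict to the factors: for $i\in\{1,2\}$ put $\phi_{i,k}=\phi_k|_{G_i}$. Since $\ker\phi_{i,k}=\ker\phi_k\cap G_i$ and $\ker\eta_i=\ker\eta\cap G_i$, and since $\mathfrak F$ is stable under taking (finitely generated) subgroups -- as is the case for all filtrations considered here -- the sequence $(\phi_k(G_i),\phi_{i,k})$ is an $\mathfrak F$-sequence in $\mathfrak G(G_i)$ converging to $(L_i,\eta_i)$. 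As $(L_i,\eta_i)$ is shortenable, after passing to a common subsequence we get shortening sequences $(\phi_k(G_i),\phi'_{i,k})$ converging to shortening quotients $(L'_i,\eta'_i)\prec(L_i,\eta_i)$. Now I would recombine: because $G=G_1\ast G_2$ and both $\phi'_{1,k},\phi'_{2,k}$ land in $H_k$ (this is exactly why it matters that the target groups are unchanged, cf. the remark preceding the statement), the universal property of the free product produces $\phi'_k\colon G\to H_k$ with $\phi'_k|_{G_i}=\phi'_{i,k}$. This $(H_k,\phi'_k)$ is the candidate; after one further subsequence, compactness of $\mathfrak G(G)$ gives convergence to a limit group $(L',\eta')$.

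It then remains to check the three clauses of \autoref{def: shortenable}. For clause~\ref{enu: shortening - cvg}: for each $g\in\ker\eta_i$ one has $\phi'_k(g)=\phi'_{i,k}(g)=1$ for large $k$, so $\ker\eta_i\subset\ker\eta'$; since $L=L_1\ast L_2$ gives the free-product presentation $\ker\eta=\langle\langle\ker\eta_1\cup\ker\eta_2\rangle\rangle$ and $\ker\eta'$ is normal, we get $\ker\eta\subset\ker\eta'$, i.e. $(L',\eta')\prec(L,\eta)$. For clause~\ref{enu: shortening - dichotomy}: if $(L',\eta')$ is not a proper quotient of $(L,\eta)$ then $\ker\eta'=\ker\eta$; intersecting with $G_i$ yields $\ker\eta'_i=\ker\eta_i$, so none of the $(L'_i,\eta'_i)$ is a proper quotient of $(L_i,\eta_i)$ either, and the factor statements give that each $L'_i=L_i$ has $\mathcal P$ and all but finitely many $\phi'_{i,k}$ factor through $\eta'_i$. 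Since $L'=L_1\ast L_2=L'_1\ast L'_2$, stability of $\mathcal P$ under free products gives that $L'$ has $\mathcal P$, and assembling the factorizations through $\eta'_1,\eta'_2$ via the universal property gives that all but finitely many $\phi'_k$ factor through $\eta'$.

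For clause~\ref{enu: shortening - fact}: assume $L'$ has $\mathcal P$. A short check gives $\ker\eta'\cap G_i=\ker\eta'_i$, so $L'_i\cong\eta'(G_i)$ is a subgroup of $L'$ and hence has $\mathcal P$ by stability under subgroups; the factor version of clause~\ref{enu: shortening - fact} then yields that $L_i$ has $\mathcal P$, and stability under free products gives that $L=L_1\ast L_2$ has $\mathcal P$. If moreover all but finitely many $\phi'_k$ factor through $\eta'$, then restricting such a factorization to $G_i$ -- using that $\eta'|_{G_i}$ factors as $G_i\onto L'_i\hookrightarrow L'$ -- shows all but finitely many $\phi'_{i,k}$ factor through $\eta'_i$; the factor version of clause~\ref{enu: shortening - fact} then gives that all but finitely many $\phi_{i,k}$ factor through $\eta_i$, and assembling via $G=G_1\ast G_2$ shows all but finitely many $\phi_k$ factor through $\eta$. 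This exhausts the three clauses, so $(H_k,\phi'_k)$ is a shortening sequence of $(H_k,\phi_k)$; as the latter was arbitrary, $(L,\eta)$ is shortenable.

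I do not expect a hard estimate anywhere; the real work is bookkeeping. The main obstacle is keeping the logical direction of clauses~\ref{enu: shortening - dichotomy} and~\ref{enu: shortening - fact} straight while shuttling between $G$, $G_i$, $L$, $L_i$, $L'$, $L'_i$, and pinning down the two algebraic facts that make the gluing run: $\ker\eta=\langle\langle\ker\eta_1\cup\ker\eta_2\rangle\rangle$ and $\ker\eta'\cap G_i=\ker\eta'_i$. It is precisely here that both closure hypotheses on $\mathcal P$ get used -- clause~\ref{enu: shortening - fact} pushes $\mathcal P$ down through the subgroups $L'_i\le L'$, while clauses~\ref{enu: shortening - dichotomy}--\ref{enu: shortening - fact} push it back up through the free product $L_1\ast L_2$ -- and where the invariance of the targets $H_k$ is indispensable, since that is what lets $\phi'_{1,k}$ and $\phi'_{2,k}$ be glued into a single homomorphism $G\to H_k$.
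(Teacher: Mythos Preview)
Your proof is correct and follows essentially the same approach as the paper's: restrict to the factors, shorten each separately, recombine via the universal property of $G_1\ast G_2$, and verify the three clauses of \autoref{def: shortenable} by shuttling between the factor data and the global data. The only cosmetic difference is that the paper introduces the intermediate marked group $(L'_1\ast L'_2,\eta')$ and writes $(Q,\nu)\prec(L'_1\ast L'_2,\eta')\prec(L,\eta)$, whereas you work directly with the limit $(L',\eta')$ and the kernel identities $\ker\eta=\langle\langle\ker\eta_1\cup\ker\eta_2\rangle\rangle$ and $\ker\eta'\cap G_i=\ker\eta'_i$; these are equivalent bookkeeping choices.
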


\begin{proof}
	Let $(H_k, \phi_k)$ be an $\mathfrak F$-sequence converging to $(L, \eta)$.
	Given $i \in \{1, 2\}$, we denote by $\phi_k^i$ the restriction of $\phi_k$ to $G_i$ and $H^i_k \subset H_k$ its image.
	Note that $H^1_k$ and $H^2_k$ generate $H_k$.
	By assumption, $(L_i, \eta_i)$ is shortenable, viewed as a limit group in $\mathfrak G(G_i)$.
	In particular, \emph{any} subsequence of $(H^i_k, \phi_k^i)$ is shortenable.
	We denote by $\psi_k^1 \colon G_1 \onto H^1_k$ and $\psi_k^2 \colon G_2 \onto H^2_k$ for the sequences obtained by shortening successively $(H^1_k, \phi_k^1)$ and $(H^2_k, \phi_k^2)$.
	We write $(L'_i, \eta'_i)$ for the corresponding shortening quotient of $(L_i, \eta_i)$.
	Let $\psi_k \colon G \onto H_k$ be the morphism whose restriction to $G_1$ (\resp $G_2$) is $\psi_k^1$ (\resp $\psi_k^2$).\footnote{
	We have used here the fact that the morphism $\phi^i_k$ and its shortening $\psi^i_k$ have the same image (which are also subgroups of $H_k$).
	It allows us to build a map $\psi_k$ whose target $H_k$ belongs to the filtration $\mathfrak F$.
	Without this property of shortenings an alternative strategy would have been to consider the map $G \onto H^1_k \ast H^2_k$ built out of $\psi^1_k$ and $\psi^2_k$.
	However there is a priori no reason that $H^1_k \ast H^2_k$ is an element of the filtration $\mathfrak F$.}
	We define $\eta' \colon G \onto L'_1 \ast L'_2$ in the same way.
	Up to passing to a subsequence, $(H_k, \psi_k)$ converges to a limit group $(Q, \nu) \in \mathfrak G(G)$.
	We are going to prove that $(H_k, \psi_k)$ is a shortening sequence of $(H_k, \phi_k)$.
	\begin{itemize}
		\item By construction, we have
		\begin{equation*}
			(Q, \nu) \prec (L'_1 \ast L'_2, \eta') \prec (L, \eta).
		\end{equation*}
		which proves Point~\ref{enu: shortening - cvg} of \autoref{def: shortenable}.
		
		\item Let us focus on Point~\ref{enu: shortening - dichotomy} of \autoref{def: shortenable}.
		Assume that $Q$ is not proper quotient of $L$.
		It follows that $(L'_i,\eta'_i) = (L_i, \eta_i)$ for every $i \in \{1, 2\}$.
		Recall that $(L'_i, \eta'_i)$ has been obtained from $(L_i, \eta_i)$ by the shortening procedure.
		According to \autoref{def: shortenable}~\ref{enu: shortening - dichotomy} both $(L_1, \eta_1)$ and $(L_2, \eta_2)$ satisfy $\mathcal P$.
		Since $\mathcal P$ is stable under taking free product, $(L, \eta)$ has $\mathcal P$.
		On the other hand, we get that all but finitely many $\psi_k^i$ factor through $\eta'_i$.
		Hence all but finitely many $\psi_k$ factor through $\eta'$.
		This proves Point~\ref{enu: shortening - dichotomy} of \autoref{def: shortenable}.

		\item We are left to prove Point~\ref{enu: shortening - fact} of \autoref{def: shortenable}.
		To that end we assume that $(Q, \nu)$ has $\mathcal P$.
		Note that the projection $L'_1 \ast L'_2 \onto Q$ induces an embedding from each $L'_i$ into $Q$.
		Since $\mathcal P$ is stable under taking subgroups, each $L'_i$ has $\mathcal P$.
		According to \autoref{def: shortenable}~\ref{enu: shortening - fact}, both $L_1$ and $L_2$ have $\mathcal P$, hence so has $L$ ($\mathcal P$ is stable under free products).
		Suppose in addition that all but finitely many $\psi_k$ factor through $\nu$.
		It follows from the construction that all but finitely many $\psi_k^i$ factor through $\eta'_i$ for every $i \in \{1, 2\}$.
		By \autoref{def: shortenable}~\ref{enu: shortening - fact}, all but finitely many $\phi_k^1$ and $\phi_k^2$ factor through $\eta_1$ and $\eta_2$.
		Hence all but finitely many $\phi_k$ factor through $\eta$.
		This proves Point~\ref{enu: shortening - fact} of \autoref{def: shortenable}. \qedhere
	\end{itemize}
\end{proof}

Given $(L, \eta) \in \mathfrak G(G)$, an \emph{infinite descending sequence starting at $(L, \eta)$} is a sequence of marked groups
\begin{equation*}
	(L, \eta) = (L_0, \eta_0) \succ (L_1, \eta_1) \succ (L_2, \eta_2) \succ \dots
\end{equation*}
such that $(L_{i+1}, \eta_{i+1})$ is a proper quotient of $(L_i, \eta_i)$, for every $i \in \N$.

\begin{prop}
\label{res: dichotomy idc vs factorization}
	Let $G$ be a finitely generated group.
	Let $(L, \eta) \in \mathfrak G(G)$ be a limit group.
	Assume that every limit group $(Q, \nu) \prec (L, \eta)$ is shortenable with respect to $\mathcal P$.
	Then one of the following holds.
	\begin{enumerate}
		\item \label{enu: dichotomy idc vs factorization - idc}
		There is an infinite descending sequence of limit groups starting at $(L, \eta)$.
		\item \label{enu: dichotomy idc vs factorization - fac}
		$(L, \eta)$ satisfies $\mathcal P$ and the factorization property.
	\end{enumerate}
\end{prop}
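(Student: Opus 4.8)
The plan is to prove the contrapositive: assuming that~\ref{enu: dichotomy idc vs factorization - idc} fails, i.e. that there is no infinite descending sequence of limit groups starting at $(L,\eta)$, I will establish~\ref{enu: dichotomy idc vs factorization - fac}. The first observation is that, under this hypothesis, the relation ``being a proper quotient'' is well-founded on the set $\mathcal L$ of limit groups $(Q,\nu)\prec(L,\eta)$: an infinite descending chain inside $\mathcal L$ would, after prepending $(L,\eta)$ (or after replacing its first term by $(L,\eta)$ when that term is equivalent to it), give an infinite descending sequence starting at $(L,\eta)$. I will therefore prove by well-founded induction that every $(Q,\nu)\in\mathcal L$ satisfies $\mathcal P$ and the factorization property; since $\prec$ is reflexive we have $(L,\eta)\in\mathcal L$, so applying this to $(L,\eta)$ itself finishes the argument.

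For the inductive step I fix $(Q,\nu)\in\mathcal L$ and assume the statement for every limit group that is a proper quotient of $(Q,\nu)$. First I would check that $(Q,\nu)$ has $\mathcal P$ and that \emph{every} $\mathfrak F$-sequence converging to $(Q,\nu)$ has all but finitely many terms factoring through $\nu$. Given such a sequence $(H_k,\phi_k)$, shortenability of $(Q,\nu)$ produces (after passing to a subsequence, still an $\mathfrak F$-sequence since $(\mathfrak F_k)$ is non-increasing) a shortening sequence $(H_k,\phi'_k)$ converging to a shortening quotient $(Q',\nu')\prec(Q,\nu)$. If $(Q',\nu')$ is a proper quotient of $(Q,\nu)$, the induction hypothesis gives it $\mathcal P$ and the factorization property; the latter, together with $(H_k,\phi'_k)\to(Q',\nu')$, forces all but finitely many $\phi'_k$ to factor through $\nu'$ (eventually the witnessing finite set $W'\subset\ker\nu'$ lies in $\ker\phi'_k$, and eventually the index exceeds the corresponding threshold). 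If $(Q',\nu')$ is not a proper quotient of $(Q,\nu)$, then \autoref{def: shortenable}~\ref{enu: shortening - dichotomy} directly gives that $Q'$ has $\mathcal P$ and that all but finitely many $\phi'_k$ factor through $\nu'$. In both cases \autoref{def: shortenable}~\ref{enu: shortening - fact} then yields that $(Q,\nu)$ has $\mathcal P$ and that all but finitely many $\phi_k$ factor through $\nu$. Running this argument on the subsequence of terms that do \emph{not} factor through $\nu$ (were it infinite) gives a contradiction, so all but finitely many terms of the original sequence factor through $\nu$, as claimed.

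It remains to promote this last property to the factorization property of $(Q,\nu)$. If it failed, I would fix an exhaustion $W_1\subset W_2\subset\cdots$ of $\ker\nu$ by finite sets and obtain, for each $m$, a marked group $(H_m,\phi_m)\in\mathfrak F_m(G)$ with $W_m\subset\ker\phi_m$ but with $\phi_m$ not factoring through $\nu$ (again using that $(\mathfrak F_k)$ is non-increasing to absorb the index shift). By compactness of $\mathfrak G(G)$, a subsequence converges to a limit group $(Q^*,\nu^*)$, and since the $W_m$ exhaust $\ker\nu$ we get $\ker\nu\subset\ker\nu^*$, i.e. $(Q^*,\nu^*)$ is a quotient of $(Q,\nu)$. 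If $(Q^*,\nu^*)$ is equivalent to $(Q,\nu)$, the previous paragraph forces all but finitely many $\phi_m$ to factor through $\nu$, contradicting the construction. Otherwise $(Q^*,\nu^*)$ is a proper quotient and a limit group, so by the induction hypothesis it has the factorization property; applying it to $(H_m,\phi_m)$, all but finitely many $\phi_m$ factor through $\nu^*$, hence through $\nu$ since $\nu^*$ factors through $\nu$ — a contradiction again. Thus $(Q,\nu)$ has the factorization property, which completes the induction and the proof.

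I expect the main obstacle to be this final upgrade step: it is exactly there that the absence of an infinite descending sequence (through the well-founded induction) is essential, since the factorization property is genuinely stronger than ``every $\mathfrak F$-sequence converging to $(Q,\nu)$ eventually factors through $\nu$'' — the discrepancy coming precisely from the limit groups that are proper quotients of $(Q,\nu)$, which is why the induction hypothesis is needed. A secondary point requiring care is the repeated passage to subsequences: one must keep track that a subsequence of an $\mathfrak F$-sequence is again an $\mathfrak F$-sequence (thanks to $(\mathfrak F_k)$ being non-increasing) so that shortenability may legitimately be re-applied to it.
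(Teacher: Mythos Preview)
Your proof is correct. It is the contrapositive reorganization of the paper's argument: where the paper assumes that \ref{enu: dichotomy idc vs factorization - fac} fails and explicitly constructs the infinite descending sequence step by step (at each stage building an auxiliary $\mathfrak F$-sequence, shortening it, and checking via \autoref{def: shortenable}\ref{enu: shortening - dichotomy}--\ref{enu: shortening - fact} that the shortening quotient again fails $\mathcal P$ or factorization), you assume well-foundedness and run a Noetherian induction over the limit quotients of $(L,\eta)$. The technical ingredients --- shortenability, the interplay between \ref{enu: shortening - dichotomy} and \ref{enu: shortening - fact}, and the diagonal counter-sequence $(H_m,\phi_m)$ built from an exhaustion of $\ker\nu$ when factorization fails --- are the same in both. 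Your extra intermediate step (first establishing ``every $\mathfrak F$-sequence converging to $(Q,\nu)$ eventually factors through $\nu$'', then upgrading to the factorization property) makes the logical dependencies a bit more transparent, at the cost of handling the subsequence bookkeeping twice; the paper's forward construction avoids this by folding both failures ($\mathcal P$ and factorization) into a single claim at each stage. Either route yields the result.
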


\begin{proof}
	We assume that \ref{enu: dichotomy idc vs factorization - fac} does hot hold.
	We are going to build by induction an infinite descending sequence of limit groups
	\begin{equation*}
		(L, \eta) = (L_0, \eta_0) \succ (L_1, \eta_1) \succ (L_2, \eta_2) \succ \dots
	\end{equation*}
	such that for every $i \in \N$, either $L_i$ does not satisfy $\mathcal P$ or $(L_i, \eta_i)$ has not the factorization property.
	We start with $(L_0, \eta_0) = (L, \eta)$.
	Let $i \in \N$ such that $(L_i, \eta_i)$ has been built.
	We are going to define an $\mathfrak F$-sequence converging to an auxiliary quotient $(Q, \nu)$ of $(L_i, \eta_i)$.
	To that end, we distinguish two cases.
	\begin{labelledenu}[C]
		\item \label{res: dichotomy idc vs factorization - P}
		Suppose that $(L_i, \eta_i)$ does not satisfies $\mathcal P$.
		We choose any $\mathfrak F$-sequence $(H_k, \phi_k)$ converging to $(L_i, \eta_i)$.
		In addition we let $(Q, \nu) = (L_i, \eta_i)$.
		\item \label{res: dichotomy idc vs factorization - fact}
		Suppose that $(L_i, \eta_i)$ does not have the factorization property.
		We choose a non-decreasing exhaustion $(W_k)$ of $\ker \eta_i$ by finite subsets.
		For every $k \in \N$, there exists $(H_k, \phi_k) \in \mathfrak F_k(G)$ such that $W_k \subset \ker \phi_k$, but $\phi_k$ does not factor through $\eta_i$.
		Up to passing to a subsequence, we may assume that $(H_k, \phi_k)$ converges to a limit group $(Q, \nu)$.
	\end{labelledenu}
	In both cases, $(Q, \nu)$ is a quotient of $(L, \eta)$ hence is shortenable.
	Let $(H'_k, \phi'_k)$ be a sequence obtained by shortening $(H_k, \phi_k)$.
	Up to passing to a subsequence, it converges to the shortening quotient $(L_{i+1}, \eta_{i+1})$ of $(Q, \nu)$.
	
	\begin{clai}
		Either $L_{i+1}$ has not $\mathcal P$ or $(L_{i+1}, \eta_{i+1})$ does not have the factorization property.
	\end{clai}
	
	Suppose on the contrary that $(L_{i+1}, \eta_{i+1})$ had both $\mathcal P$ and the factorization property.
	In particular, all but finitely many $\phi'_k$ factor through $\eta_{i+1}$.
	On the one hand, by \autoref{def: shortenable}~\ref{enu: shortening - fact} $(Q, \nu)$ has $\mathcal P$.
	Thus we cannot be in Case~\ref{res: dichotomy idc vs factorization - P}.
	On the other hand, $\phi_k$ factors through $\nu$ for all but finitely many $k \in \N$.
	However $(Q, \nu) \prec (L_i, \eta_i)$, hence $\phi_k$ factors through $\eta_i$, for all but finitely many $k \in \N$.
	This contradicts Case~\ref{res: dichotomy idc vs factorization - fact} and completes the proof of our claim.
	
	In view of our claim and  \autoref{def: shortenable}~\ref{enu: shortening - dichotomy}, the group $(L_{i+1}, \eta_{i+1})$ is a proper quotient of $(Q, \nu)$ and thus of $(L_i, \eta_i)$.
	Hence $(L_{i+1}, \eta_{i+1})$ satisfies all the announced properties.
\end{proof}

\begin{prop}
\label{res: dcc}
	Let $G$ be a finitely generated group.
	Let $(L, \eta) \in \mathfrak G(G)$ be a limit group.
	Assume that every limit group $(Q, \nu) \prec (L, \eta)$ is shortenable with respect to $\mathcal P$.
	Then there is no infinite descending sequence of limit groups starting at $(L, \eta)$.
\end{prop}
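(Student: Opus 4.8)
The plan is to argue by contradiction, feeding \autoref{res: dichotomy idc vs factorization} and \autoref{res: factorization prop implies almost open} into a minimality argument that exploits the compactness of $\mathfrak G(G)$. Suppose there is an infinite descending sequence of limit groups starting at $(L,\eta)$. Let $\mathcal D$ be the set of all limit groups $(Q,\nu)\in\mathfrak G(G)$ such that $(Q,\nu)\prec(L,\eta)$ and $(Q,\nu)$ is the first term of some infinite descending sequence of limit groups. By assumption $(L,\eta)\in\mathcal D$, so $\mathcal D\neq\emptyset$. Every element of $\mathcal D$ is a quotient of $(L,\eta)$, hence every limit group lying below it is shortenable; in particular \autoref{res: dichotomy idc vs factorization} applies to each member of $\mathcal D$.

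First I would record that $\mathcal D$ has no $\prec$-minimal element. Indeed, if $(Q,\nu)\in\mathcal D$ and $(Q,\nu)=(Q_0,\nu_0)\succ(Q_1,\nu_1)\succ\cdots$ is an infinite descending sequence, then $(Q_1,\nu_1)$ is a limit group, still satisfies $(Q_1,\nu_1)\prec(L,\eta)$, and the tail $(Q_1,\nu_1)\succ(Q_2,\nu_2)\succ\cdots$ shows $(Q_1,\nu_1)\in\mathcal D$; moreover $(Q_1,\nu_1)$ is a proper quotient of $(Q,\nu)$, so $(Q,\nu)$ is not minimal in $\mathcal D$.

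The core of the proof is to show that every chain of $(\mathcal D,\prec)$ has a lower bound inside $\mathcal D$; Zorn's lemma applied to the opposite order then yields a $\prec$-minimal element of $\mathcal D$, contradicting the previous step. Given a chain $\mathcal C\subseteq\mathcal D$, set $K=\bigcup_{(Q,\nu)\in\mathcal C}\ker\nu$ and let $(Q_\infty,\nu_\infty)$ be $G/K$ with its canonical marking. Because $\mathcal C$ is a chain and $K$ is the increasing union of the $\ker\nu$, any finite subset $U\subseteq G$ satisfies $U\cap K=U\cap\ker\nu$ for some $(Q,\nu)\in\mathcal C$; hence $(Q_\infty,\nu_\infty)$ lies in the closure of $\mathcal C$, and since the set of limit groups over $\mathfrak F$ is closed in $\mathfrak G(G)$, $(Q_\infty,\nu_\infty)$ is a limit group, with $(Q_\infty,\nu_\infty)\prec(Q,\nu)$ for all $(Q,\nu)\in\mathcal C$ and $(Q_\infty,\nu_\infty)\prec(L,\eta)$. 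If $\mathcal C$ has a $\prec$-least element, that element is the required lower bound and already lies in $\mathcal D$. Otherwise $(Q_\infty,\nu_\infty)$ is a proper quotient of every member of $\mathcal C$, and it suffices to check $(Q_\infty,\nu_\infty)\in\mathcal D$. By \autoref{res: dichotomy idc vs factorization}, either $(Q_\infty,\nu_\infty)$ starts an infinite descending sequence, in which case it lies in $\mathcal D$, or $(Q_\infty,\nu_\infty)$ has the factorization property. The latter is impossible: by \autoref{res: factorization prop implies almost open} it would provide a finite $W\subseteq\ker\nu_\infty=K$ such that any limit group whose kernel contains $W$ is a quotient of $(Q_\infty,\nu_\infty)$; since $W$ is finite and the kernels along $\mathcal C$ exhaust $K$, we would get $W\subseteq\ker\nu_0$ for some $(Q_0,\nu_0)\in\mathcal C$, whence $(Q_0,\nu_0)\prec(Q_\infty,\nu_\infty)$, i.e. $\ker\nu_0\supseteq K$, and therefore $\ker\nu_0=K$; but then, $\mathcal C$ having no least element, there is $(Q_1,\nu_1)\in\mathcal C$ with $\ker\nu_1\supsetneq\ker\nu_0=K$, contradicting $\ker\nu_1\subseteq K$. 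Thus $(Q_\infty,\nu_\infty)\in\mathcal D$ is a lower bound for $\mathcal C$.

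The only delicate point is this last step, where one rules out the factorization property for the limit $(Q_\infty,\nu_\infty)$ of a chain: it rests precisely on the ``almost openness'' furnished by \autoref{res: factorization prop implies almost open} together with the elementary fact that a finite subset of an increasing union already sits at some stage. Everything else is bookkeeping with kernels and with the topology on $\mathfrak G(G)$. One could equivalently phrase the argument as an explicit transfinite recursion producing a strictly $\prec$-decreasing sequence of limit groups indexed by all ordinals, which is absurd since the finitely generated, hence countable, group $G$ admits no arbitrarily long strictly increasing chain of normal subgroups; the Zorn formulation merely keeps the write-up shorter.
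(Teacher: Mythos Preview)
Your argument is correct. Both your proof and the paper's follow the same overall arc---locate a limit group below $(L,\eta)$ that is ``minimal among the bad ones'', observe via \autoref{res: dichotomy idc vs factorization} that it must have the factorization property, and then use \autoref{res: factorization prop implies almost open} to derive a contradiction---but the mechanism for producing that minimal object differs. The paper builds it explicitly: fixing an exhaustion $(U_i)$ of $G$ by finite sets, it chooses $(L_i,\eta_i)$ inductively so that $|U_i\cap\ker\eta_i|$ is maximal among proper quotients of $(L_{i-1},\eta_{i-1})$ that still start an infinite descending chain; the limit $(L_\infty,\eta_\infty)$ is then shown, by the maximality clause, to start no infinite descending chain, whence the dichotomy and the ``almost openness'' lemma force the sequence $(L_i,\eta_i)$ to stabilise. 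Your route replaces this hand-built sequence by Zorn's lemma on the set $\mathcal D$ of bad quotients, showing that the infimum of any chain either lies in $\mathcal D$ or has the factorization property (and the latter forces the chain to have a least element). Your version is cleaner and more conceptual; the paper's version is more explicit, stays within countable choice, and makes the ``maximal kernel on a finite window'' idea visible---which is a motif that recurs elsewhere in the subject. Either proof is perfectly acceptable.
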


\begin{proof}
	We fix a non-decreasing exhaustion $(U_i)$ of $G$ by finite subsets.
	Assume that the statement does not hold.
	We build a preferred descending sequence of limit groups $(L_i, \eta_i)$ such that for every $i \in \N$, there is an infinite descending sequence of limit groups starting at $(L_i, \eta_i)$.
	We proceed as follows.
	We start with $(L_0, \eta_0) = (L, \eta)$.
	Assume now that $(L_{i-1}, \eta_{i-1})$ has been defined for some $i \in \N \setminus\{0\}$.
	Among all the limit groups $(Q, \nu) \prec (L_{i-1}, \eta_{i-1})$ we choose $(L_i, \eta_i)$ such that 
	\begin{itemize}
		\item $(L_i, \eta_i)$ is a proper quotient of $(L_{i-1}, \eta_{i-1})$;
		\item there is an infinite descending sequence of limit groups starting at $(L_i, \eta_i)$;
		\item the cardinality of $U_i \cap \ker \eta_i$ is maximal for the above property.
	\end{itemize}
	By construction the sequence $(L_i, \eta_i)$ converges in $\mathfrak G(G)$ to a group $(L_\infty, \eta_\infty)$ such that $(L_\infty, \eta_\infty) \prec (L_i, \eta_i)$, for every $i \in \N$.
	Since the set of limit groups is closed, $(L_\infty, \eta_\infty)$ is also a limit group.
	
	We claim that there is no infinite descending sequence starting at $(L_\infty, \eta_\infty)$.
	Assume on the contrary that such a sequence exists
	\begin{equation*}
		(L_\infty, \eta_\infty) \succ (R_1, \mu_1) \succ (R_2, \mu_2) \succ \dots
	\end{equation*}
	Hence, there is an element $g \in \ker \mu_1 \setminus \ker \eta_\infty$ which belongs to $U_i$ for some $i \in \N\setminus\{0\}$.
	In particular, $g \in \ker \mu_1 \setminus \ker \eta_i$.
	By construction 
	\begin{equation*}
		(L_{i-1},\eta_{i-1}) \succ (R_1, \mu_1) \succ (R_2, \mu_2) \succ \dots
	\end{equation*}
	is an infinite descending sequence.
	Moreover $(R_1, \mu_1)$ is a proper quotient of $(L_{i-1}, \eta_{i-1})$ and $U_i \cap \ker \eta_i$ is strictly contained in $U_i \cap \ker \mu_1$.
	This contradicts our definition of $(L_i, \eta_i)$ and completes the proof of our claim.
	
	By construction $(L_\infty, \eta_\infty)$ is a quotient of $(L, \eta)$.
	It follows from our assumption that every limit group $(Q, \nu) \prec (L_\infty, \eta_\infty)$ is shortenable.
	Consequently $(L_\infty, \eta_\infty)$ has the factorization property (\autoref{res: dichotomy idc vs factorization}).
	Hence by \autoref{res: factorization prop implies almost open} the sequence $(L_i, \eta_i)$ eventually stabilizes, a contradiction.
\end{proof}

Combined with \autoref{res: dichotomy idc vs factorization} we get the following corollary

\begin{coro}
\label{res: shortenable implies factorization}
	Let $G$ be a finitely generated group.
	Let $(L, \eta) \in \mathfrak G(G)$ be a limit group.
	If every limit group $(Q, \nu) \prec (L, \eta)$ is shortenable with respect to $\mathcal P$, then $(L, \eta)$ has the factorization property.
\end{coro}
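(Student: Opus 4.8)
This is an immediate consequence of the two preceding propositions, so the proof will be very short. The plan is to invoke \autoref{res: dichotomy idc vs factorization} to get the dichotomy: either there is an infinite descending sequence of limit groups starting at $(L, \eta)$, or $(L, \eta)$ satisfies $\mathcal P$ together with the factorization property. Then I would rule out the first alternative using \autoref{res: dcc}, which applies under exactly the same hypothesis (every limit group $(Q, \nu) \prec (L, \eta)$ is shortenable) and asserts precisely that no such infinite descending sequence exists. Hence the second alternative must hold, and in particular $(L, \eta)$ has the factorization property, which is the claim.

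There is essentially no obstacle here: the corollary is just the conjunction of \autoref{res: dichotomy idc vs factorization} and \autoref{res: dcc}, both of which have already been established. The only thing to check is that the hypotheses match — and they do, both propositions requiring exactly that every $(Q, \nu) \prec (L, \eta)$ be shortenable. One should perhaps also note that the quotient relation $\prec$ is reflexive, so that the hypothesis covers $(L, \eta)$ itself, which is needed for \autoref{res: dichotomy idc vs factorization} to output the factorization property for $(L, \eta)$ rather than merely for its proper quotients; but this is immediate from the definition of $\prec$ (taking $\theta = \mathrm{id}$). So the write-up would read roughly: ``By \autoref{res: dcc} there is no infinite descending sequence of limit groups starting at $(L, \eta)$; hence alternative~\ref{enu: dichotomy idc vs factorization - idc} of \autoref{res: dichotomy idc vs factorization} fails, and therefore alternative~\ref{enu: dichotomy idc vs factorization - fac} holds. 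In particular $(L, \eta)$ has the factorization property.''
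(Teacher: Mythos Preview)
Your proposal is correct and matches the paper's approach exactly: the paper simply states that the corollary follows by combining \autoref{res: dcc} with \autoref{res: dichotomy idc vs factorization}, which is precisely what you do.
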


% 2024-08-14 READ UNTIL HERE

%%%%%%%%%%%%%%%%%%%%%%%%%%%%%%%%%%%%%%%%%%%%%%%%%%%%%%%%%%%%%%%%%%%%%%%%%%%%%%%%%%%%%
%%%%%%%%%%%%%%%%%%%%%%%%%%%%%%%%%%%%%%%%%%%%%%%%%%%%%%%%%%%%%%%%%%%%%%%%%%%%%%%%%%%%%
%
\section{Splittings and graphs of actions}
%
%%%%%%%%%%%%%%%%%%%%%%%%%%%%%%%%%%%%%%%%%%%%%%%%%%%%%%%%%%%%%%%%%%%%%%%%%%%%%%%%%%%%%
%%%%%%%%%%%%%%%%%%%%%%%%%%%%%%%%%%%%%%%%%%%%%%%%%%%%%%%%%%%%%%%%%%%%%%%%%%%%%%%%%%%%%
\label{sec: splittings}

In this section we mostly set up notation and vocabulary around graphs of groups and graphs of actions.

%%%%%%%%%%%%%%%%%%%%%%%%%%%%%%%%%%%%%%%%%%%%%%%%%%%%%%%%%%%%%%%%%%%%%%%%%%%%%%%%%%%%%
%
\subsection{Splittings}
%
%%%%%%%%%%%%%%%%%%%%%%%%%%%%%%%%%%%%%%%%%%%%%%%%%%%%%%%%%%%%%%%%%%%%%%%%%%%%%%%%%%%%%
\label{sec: graph of groups}

A \emph{graph} is a pair $(V,E)$ of two sets: the \emph{vertex set} $V$ and the \emph{edge set} $E$ together with a fixed point free involution $E \to E$ that we denote by $e \to \bar e$, and two attachment maps $\iota \colon E \to V$ and $t \colon E \to V$ such that $\iota(\bar e) = t(e)$, for every $e \in E$.
The vertices $\iota(e)$ and $t(e)$ are respectively called the \emph{start} and the \emph{end} of $e$.
Given a vertex $v$, its \emph{link}, denoted by $\link v$, is the set of all edges starting at $v$.
A \emph{path} in this graph is a sequence $\nu = (e_1, e_2, \dots, e_m)$ of edges such that $\iota(e_{i+1}) = t(e_i)$, for every $i \in \intvald 1{m-1}$.
We usually denote it by $\nu = e_1e_2 \cdots e_m$.
Given $v \in V$, a \emph{circuit based at $v$} is a path which starts and ends at $v$.
A \emph{simplicial tree} is a connected graph without non-backtracking circuits.

\paragraph{$G$-trees.}
Let $G$ be a group.
A \emph{$G$-tree} is a simplicial tree $S$ endowed with a simplicial action of $G$ without inversion.
This action is \emph{trivial} if $G$ fixes a point.
It is \emph{minimal} if $S$ does not contain any proper $G$-invariant subtree.
Given a $G$-tree, one of the following holds: the action of $G$ is trivial, $G$ fixes an end, or $G$ contains loxodromic elements and there exists a minimal $G$-invariant subtree.
A \emph{splitting} of $G$ is a non-trivial minimal $G$-tree.
A \emph{one-edge splitting} of $G$ is a splitting with a single edge orbit.
Bass-Serre theory builds a correspondance between minimal $G$-trees and \emph{graph of groups decompositions of $G$} \cite{Serre:1980aa}.
In this article we favor the point of view of $G$-trees.

If $v$ (\resp $e$) is a vertex (\resp edge) of $S$, we denote by $G_v$ (\resp $G_e$) its stabilizer in $G$.

\paragraph{Maps between trees.}
Let $S, S'$ be two $G$-trees.
\begin{itemize}
	\item A \emph{morphism} is a $G$-equivariant graph morphism $f \colon S \to S'$.
	In particular, no edge of $S$ is collapsed to a point.
	\item A \emph{collapse map} is a $G$-equivariant map $f \colon S \to S'$ obtained by collapsing certain edges to points, followed by an isomorphism.
	In particular, it sends vertices to vertices (\resp edges to edges or vertices).
	In this setting we say that $S'$ is a \emph{collapse} of $S$ and $S$ \emph{refines} $S'$.
\end{itemize}
If the action of $G$ on $S'$ is minimal, then such maps are always onto.

\paragraph{Normal forms.}
Let $S$ be a minimal $G$-tree.
Our goal is to define a normal form to represent each element of $G$, which reflects the action on $S$.
The construction goes as follows.

We choose a subtree $D_0 \subset S$ lifting a maximal subtree in the quotient graph $S/G$.
In addition, we choose a subtree $D_1 \subset S$ containing $D_0$ such that every edge orbit in $S$ has exactly one representative in $D_1$.
We denote by $V$ the vertex set of $D_0$ and by $E$ the edge set of $D_1$.
Let $\mathbf F(E)$ be the free group
\begin{equation*}
	\mathbf F(E) = \group{E \mid e\bar e = 1, \ \forall e \in E},
\end{equation*}
and
\begin{equation*}
	\mathbf G = \mathbf F(E) \freep \left(\freep_{v \in V} G_v \right),
\end{equation*}

\begin{nota}
	In order to avoid any confusion, we always write in bold type the elements of $E$ and $G_v$ (for $v \in V$) when primarily seen as elements of $\mathbf G$.
\end{nota}

We now define a map $\sigma \colon \mathbf G \to G$ which extends the embeddings $G_v \into G$, for every $v \in V$.
The construction goes as follows.
For every edge $\mathbf e \in E$ we choose an element $s(\mathbf e) \in G$ so that $s(\mathbf e)t(\mathbf e)$ belongs to $V$.
Then we let 
\begin{equation*}
	\sigma(\mathbf e) = s(\bar {\mathbf e}) s(\mathbf e)^{-1}.
\end{equation*}
Note that $\sigma(\bar {\mathbf e}) = \sigma(\mathbf e)^{-1}$, for every $\mathbf e \in E$, hence $\sigma$ is well-defined.
It is known that $\sigma$ is onto.
Let us make this idea precise.
A \emph{decorated path} is an element $\mathbf g \in \mathbf G$, which can be written as follows
\begin{equation*}
	\mathbf g = \mathbf g_0 \mathbf e_1 \mathbf g_1 \mathbf e_2 \mathbf g_2 \dots \mathbf g_{m-1}\mathbf e_m\mathbf g_m, 
\end{equation*}
where 
\begin{enumerate}
	\item for every $i \in \intvald 1m$, the element $\mathbf e_i$ belongs to $E$; moreover the sequence $(\mathbf e_1,\dots, \mathbf e_m)$ maps to a path in the quotient graph $S/G$;
	\item for every $i \in \intvald 0m$, the element $\mathbf g_i$ fixes the (unique) vertex of $v_i$ of $D_0$ lying in the $G$-orbit of $t(\mathbf e_i)$ if $i > 0$ (\resp $\iota(\mathbf e_{i+1})$ if $i <m$)
\end{enumerate}
Such a decorated path is \emph{reduced} if
\begin{enumerate}
\setcounter{enumi}{2}
	\item for every $i \in \intvald 1{m-1}$, either $\mathbf e_{i+1} \neq \bar {\mathbf e}_i$, or $\mathbf g_i$ does not fix $s(\mathbf e_i)\mathbf e_i$.
\end{enumerate}
A \emph{decorated circuit based at a vertex $v \in V$} is a decorated path
\begin{equation*}
	\mathbf g = \mathbf g_0 \mathbf e_1 \mathbf g_1 \mathbf e_2 \mathbf g_2 \dots \mathbf g_{m-1}\mathbf e_m\mathbf g_m, 
\end{equation*}
such that $\iota(\mathbf e_1)$ and $t(\mathbf e_m)$ lie in the $G$-orbit of $v$.
It is known that every element $g \in G$ has a pre-image under $\sigma$ which is a reduced decorated circuit based at $v$
This is essentially a direct consequence of Serre's combinatorial definition of the fundamental group of a graph of groups.
See for instance Serre \cite[Chapitre~5.2, Exercice~(c)]{Serre:1977wy}.
We call this pre-image the \emph{normal form of $g$ based at $v$}.
Note that this normal form is not necessarily unique though.

A decorated path $\mathbf g = \mathbf g_0 \mathbf e_1 \mathbf g_1 \dots \mathbf e_m\mathbf g_m$ also gives rise to a (regular) path in $S$.
For every $i \in \intvald 1m$ define the edge $f_i$ of $S$ as $f_i = u_i \mathbf e_i$ where
\begin{equation*}
	u_i = \sigma(\mathbf g_0\mathbf e_1\dots \mathbf e_{i-1}\mathbf g_{i-1})s(\bar {\mathbf e}_i).
\end{equation*}
Then $\nu = f_1f_2 \dots f_m$ is a path in $S$ from $v$ to $gv'$ where $g = \sigma(\mathbf g)$ while $v$ and $v'$ are the vertices of $D_0$ in the $G$-orbits of $\iota(\mathbf e_1)$ and $t(\mathbf e_m)$ respectively.
It follows from the construction that $\nu$ has no backtracking if and only if $\mathbf g$ is reduced.

%%%%%%%%%%%%%%%%%%%%%%%%%%%%%%%%%%%%%%%%%%%%%%%%%%%%%%%%%%%%%%%%%%%%%%%%%%%%%%%%%%%%%
%
\subsection{Graph of actions}
%
%%%%%%%%%%%%%%%%%%%%%%%%%%%%%%%%%%%%%%%%%%%%%%%%%%%%%%%%%%%%%%%%%%%%%%%%%%%%%%%%%%%%%
\label{sec: graph of actions}

Graph of actions were formalized by Levitt to decompose the action of a given group on an $\R$-tree \cite{Levitt:1994aa}.
We follow here \cite[Definition~4.3]{Guirardel:2004aa}.

\begin{defi}
\label{def: graph of actions}
	A \emph{graph of actions on $\R$-trees} $\Lambda$ consists of the following data.
	\begin{enumerate}
		\item A group $G$ and a $G$-tree $S$ (called the \emph{skeleton} of the graph of actions).
		\item An $\R$-tree $Y_v$ (called \emph{vertex tree}) for each vertex $v$ of $S$.
		\item An \emph{attaching point} $p_e \in Y_{t(e)}$ for each oriented edge $e$ of $S$.
	\end{enumerate}
	Moreover all these data should be invariant under $G$, i.e. 
	\begin{itemize}
		\item $G$ acts on the disjoint union of the vertex trees so that the projection $Y_v \mapsto v$ is equivariant,
		\item for every edge $e$ of $S$, for every $g \in G$, we have $p_{ge}=gp_e$. 
	\end{itemize}
\end{defi}

\begin{rema*}
	If no confusion can arise, we write $\Lambda$ for both the graph of actions and its underlying skeleton.
	Similarly we speak of $\Lambda$ as a splitting of $G$.
\end{rema*}

Given a vertex $v$ of $S$, the action of $G_v$ on $Y_v$ is called a \emph{vertex action}.
To such a graph of actions $\Lambda$ one associates an $\R$-tree $T_\Lambda$ endowed with an action of $G$ by isometries.
Roughly speaking, $T_\Lambda$ is obtained from the disjoint union of the vertex trees by identifying the attaching points $p_e$ and $p_{\bar e}$ for each edge $e$ of $S$, see \cite[Section~4.3]{Guirardel:2004aa} for the precise construction.
We say that the action of $G$ on an $\R$-tree $T$ \emph{decomposes as a graph of actions}, if there exists a $G$-equivariant isometry from $T$ to $T_\Lambda$.

Let us now relate, for later use, the geometry of $T_\Lambda$ with the normal form introduced in the previous section.
The map $\sigma \colon \mathbf G\to G$, and all related objects are chosen as in \autoref{sec: graph of groups}.
Let $v \in V$ and $x$ be a point in the vertex tree $Y_v$.
Let $g \in G$ and 
\begin{equation*}
	\mathbf g = \mathbf g_0 \mathbf e_1 \mathbf g_1 \dots \mathbf g_{m-1}\mathbf e_m\mathbf g_m
\end{equation*}
a decorated circuit based at $v$ such that $\sigma(\mathbf g) = g$.
For every $i \in \intvald 0m$ we let 
\begin{equation*}
	u_i = \sigma\left(\mathbf g_0\mathbf e_1\dots \mathbf g_{i-1}\right)s(\bar {\mathbf e}_i)
	\quad \text{and} \quad
	f_i = u_i \mathbf e_i
\end{equation*}
According to our previous discussion $\nu = f_1f_2\dots f_m$ is a path in $S$ from $v$ to $gv$.
By construction the following concatenation is a path in $T_\Lambda$ between $x$ and $gx$:
\begin{equation*}
	\geo x{p_{f_1}} \cup \geo{p_{f_1}}{p_{f_2}} \cup \geo{p_{f_2}}{p_{f_3}}\cup  \dots \cup \geo {p_{f_m}}{gx}.
\end{equation*}
This path is a geodesic of $T$ whenever $\mathbf g$ is reduced.
In particular, we get
\begin{equation}
\label{eqn: graph of actions - dist}
	\dist {gx}x   
	\leq \dist{x}{\mathbf g_0s(\bar {\mathbf e}_1)p_{\bar {\mathbf e}_1}} 
	+ \sum_{i=1}^{m-1} \dist{s(\mathbf e_i)p_{\mathbf e_i}}{\mathbf g_is(\bar{\mathbf e}_{i+1})p_{\bar{\mathbf e}_{i+1}}}
	+ \dist{s(\mathbf e_m)p_{\mathbf e_m}}{\mathbf g_m x}
\end{equation}
with equality whenever $\mathbf g$ is reduced.
This decomposition of $\geo x{gx}$ will be useful when it comes to the shortening argument, see \autoref{sec: shortening}

%%%%%%%%%%%%%%%%%%%%%%%%%%%%%%%%%%%%%%%%%%%%%%%%%%%%%%%%%%%%%%%%%%%%%%%%%%%%%%%%%%%%%
%
\subsection{Root splittings}
%
%%%%%%%%%%%%%%%%%%%%%%%%%%%%%%%%%%%%%%%%%%%%%%%%%%%%%%%%%%%%%%%%%%%%%%%%%%%%%%%%%%%%%
\label{sec: root splittings}

Let us recall the definition of a root splitting, whose relevance in our context was highlighted in the introduction (see for instance Examples~\ref{exa: first example w/ root}  and \ref{exa: root source of troubles}).

\begin{defi}[Root splitting]
\label{def: root splitting}
	Let $G$ be a group. 
	A \emph{root splitting} of $G$ is a decomposition of $G$ as a non-trivial amalgamated product $G = A \freep_CB$, where $B$ is an abelian group and $C$ is a proper, finite index subgroup of $B$.
	We call this index the \emph{order} of the splitting.
	Alternatively we say that $G$ is \emph{obtained from $A$ by adjoining roots} and $B$ is the \emph{adjoint root factor} of the splitting.
	We say that the root splitting is \emph{non-essential} if $C$ is a proper, free factor of $A$ and (abstractly) isomorphic to $B$.
\end{defi}

Consider the free group $\free 2$ generated by two elements, say $a$ and $b$.
A typical example of a non-essential root splitting is
\begin{equation*}
	\free 2 = \group{a,b^2} *_{\group {b^2}} \group b.
\end{equation*}
Note that if $G$ is freely indecomposable, then every root splitting of $G$ is essential.
The next statements stresses the importance of root splittings in our study of periodic groups.

\begin{prop}
\label{res: root splitting extending morphism}
	Let $G$ be a group which admits a root splitting $G = A\freep_CB$ of order $p$ where $B$ is the adjoint root factor of the splitting.
	Let $n \in \N$ be an integer co-prime with $p$.
	Let $\Gamma$ be a periodic group of exponent $n$.
	Every morphism $\phi \colon A \to \Gamma$ uniquely extends to a morphism $\phi' \colon G \to \Gamma$.
	Moreover $\phi$ and $\phi'$ have the same image.
\end{prop}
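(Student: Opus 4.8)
The plan is to reduce everything to the universal property of the amalgamated product: a morphism $G = A \ast_C B \to \Gamma$ extending $\phi$ is the same datum as a morphism $\beta \colon B \to \Gamma$ with $\beta|_C = \phi|_C$ (here $C$ is viewed inside $A$ via the amalgam, so $\phi|_C$ makes sense). Thus the whole statement becomes the assertion that $\phi|_C$ extends uniquely to $B$, together with the bookkeeping that the extension has image inside $\phi(C)$.

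First I would record the elementary observation that since $[B:C]=p$ and $B$ is abelian, every $b\in B$ satisfies $b^p\in C$: the order of $bC$ in $B/C$ divides $\abs{B/C}=p$. Fix $u,v\in\Z$ with $up+vn=1$ (possible as $\gcd(n,p)=1$) and define $\beta\colon B\to\Gamma$ by $\beta(b)=\phi(b^p)^u$, which is legitimate because $b^p\in C\subseteq A$. The main point is that $\beta$ is a homomorphism: for $b_1,b_2\in B$ one has $(b_1b_2)^p=b_1^pb_2^p$ because $B$ is abelian, and $\phi(b_1^p)$ and $\phi(b_2^p)$ commute in $\Gamma$ because $b_1^p,b_2^p$ lie in the abelian subgroup $C$; hence $\beta(b_1b_2)=\bigl(\phi(b_1^p)\phi(b_2^p)\bigr)^u=\phi(b_1^p)^u\phi(b_2^p)^u=\beta(b_1)\beta(b_2)$. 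Next, for $c\in C$ one computes $\beta(c)=\phi(c^p)^u=\phi(c)^{pu}=\phi(c)^{1-vn}=\phi(c)$, using that $\phi(c)^n=1$ since $\Gamma$ is $n$-periodic. So $\beta$ extends $\phi|_C$, and the universal property produces $\phi'\colon G\to\Gamma$ with $\phi'|_A=\phi$ and $\phi'|_B=\beta$. Since $\beta(b)=\phi(b^p)^u\in\phi(C)$ for all $b$ and $\beta(c)=\phi(c)$ for $c\in C$, we get $\beta(B)=\phi(C)\subseteq\phi(A)$, whence the image of $\phi'$, generated by $\phi(A)$ and $\beta(B)$, equals $\phi(A)$, the image of $\phi$.

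For uniqueness, suppose $\phi''\colon G\to\Gamma$ also extends $\phi$. For $b\in B$ we have $\phi''(b)^p=\phi''(b^p)=\phi(b^p)$, and $\phi''(b)$ commutes with $\phi(b^p)=\phi''(b^p)$ because $b$ commutes with $b^p$ in $B$. As $\beta(b)$ is a power of $\phi(b^p)$, it commutes with $\phi''(b)$, so the element $g=\phi''(b)\beta(b)^{-1}$ satisfies $g^p=1$ and $g^n=1$; since $\gcd(p,n)=1$ this forces $g=1$, i.e. $\phi''(b)=\beta(b)$. Hence $\phi''|_B=\beta=\phi'|_B$, and together with $\phi''|_A=\phi=\phi'|_A$ this gives $\phi''=\phi'$.

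The only point requiring a little care — and the step I would flag as the crux — is the verification that $\beta$ is a well-defined homomorphism; it rests entirely on the fact that $C$ (hence all the elements $b^p$) is abelian, so that the $p$-th roots we extract commute with one another. No hyperbolic or small-cancellation input is needed: this is a purely combinatorial-group-theoretic fact about periodic groups, and it is exactly the mechanism that produces the phenomenon illustrated in \autoref{exa: first example w/ root}.
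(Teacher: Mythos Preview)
Your proof is correct and follows essentially the same route as the paper: reduce via the universal property of the amalgam to the problem of extending $\phi|_C$ to $B$, then exploit the coprimality of $n$ and $p$. The only difference is stylistic: the paper packages the key step as the observation that the inclusion $C\hookrightarrow B$ induces an isomorphism $C/C^n\to B/B^n$ (since $B=CB^n$ and $C\cap B^n=C^n$), from which existence, uniqueness, and the image claim are immediate; you instead write out the inverse of this isomorphism explicitly via B\'ezout, setting $\beta(b)=\phi(b^p)^u$ with $up+vn=1$. Your version is more hands-on, the paper's is more concise, but they encode the same argument.
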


\begin{proof}
	Since $n$ is co-prime with $p$, one checks that $B = CB^n$ and $C \cap B^n = C^n$.
	It means that the embedding $C \into B$ induces an isomorphism $\alpha \colon C/C^n \to B/B^n$.
	Let $\phi \colon A \to \Gamma$ be a morphism.
	Since $\Gamma$ has exponent $n$, its restriction to $C$ factors through the projection $C \onto C/C^n$.
	We write $\psi \colon C/C^n \to \Gamma$ for the resulting morphism.
	Extending $\phi$ to a morphism $\phi' \colon G \to \Gamma$ is equivalent to finding a morphism $\psi ' \colon B/B^n \to \Gamma$ such that $\psi' \circ \alpha = \psi$.
	Since $\alpha$ is an isomorphism, there is exactly one way to do so.
	Moreover the extension $\psi'$ has the same image as $\psi$.
\end{proof}

\begin{coro}
\label{res: root splitting isom burnside}
	Let $G$ be a group which admits a root splitting $G = A\freep_CB$ of order $p$ where $B$ is the adjoint root factor of the splitting.
	Then for every integer $n \in \N$, which is co-prime with $p$, the embedding $A \into G$ induces an isomorphism $A/A^n \to G/G^n$.
\end{coro}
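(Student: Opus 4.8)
The plan is to derive this directly from \autoref{res: root splitting extending morphism}, applied with the periodic group $\Gamma = A/A^n$ (which has exponent $n$), and to check that the extension it produces descends to a two‑sided inverse of the map induced by the inclusion $A \into G$.

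First I would observe that since $A \into G$ carries $A^n$ into $G^n$, it induces a homomorphism $\iota \colon A/A^n \to G/G^n$, and it remains to prove that $\iota$ is bijective. To build an inverse, take the canonical projection $\phi \colon A \onto A/A^n$; by \autoref{res: root splitting extending morphism} it extends to a morphism $\phi' \colon G \to A/A^n$ with the same image as $\phi$, hence onto. Since $A/A^n$ has exponent $n$, we have $\phi'(G^n) = 1$, so $\phi'$ factors through the projection $\pi \colon G \onto G/G^n$, say $\phi' = \bar\phi' \circ \pi$ with $\bar\phi' \colon G/G^n \onto A/A^n$. For every $a \in A$ one computes $\bar\phi'\bigl(\iota(aA^n)\bigr) = \phi'(a) = \phi(a) = aA^n$, so $\bar\phi' \circ \iota = \mathrm{id}_{A/A^n}$; in particular $\iota$ is injective.

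For surjectivity of $\iota$ I would invoke the identity $B = CB^n$ established inside the proof of \autoref{res: root splitting extending morphism} (this is where coprimality of $n$ and $p$ enters): every $b \in B$ can be written $b = c b_0^n$ with $c \in C \subset A$ and $b_0 \in B$, so $bG^n = cG^n$ lies in $\iota(A/A^n)$. Since $\iota(A/A^n)$ is a subgroup of $G/G^n$ containing the classes of all elements of $A \cup B$, and $A \cup B$ generates $G$, it must be all of $G/G^n$. Thus $\iota$ is an isomorphism, with inverse $\bar\phi'$.

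I do not expect any genuine obstacle here: the statement is essentially immediate from \autoref{res: root splitting extending morphism}. The only step that really uses the arithmetic hypothesis on $n$ is the surjectivity argument, where one needs $B = CB^n$ in order to absorb the adjoint root factor $B$ into (the image of) $A$ modulo $n$‑th powers.
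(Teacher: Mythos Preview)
Your proof is correct. The paper's proof and yours both construct the candidate inverse $\bar\phi'$ (the paper calls it $\psi$) in exactly the same way, by extending the projection $A \onto A/A^n$ via \autoref{res: root splitting extending morphism} and then factoring through $G/G^n$. The difference is in how surjectivity of $\iota$ is obtained. You reach into the proof of \autoref{res: root splitting extending morphism} for the identity $B = CB^n$ and argue by hand that the image of $\iota$ contains the classes of all elements of $A \cup B$. The paper instead exploits the \emph{uniqueness} clause of \autoref{res: root splitting extending morphism}: the composite $\iota \circ \bar\phi' \circ \pi_G \colon G \to G/G^n$ is an extension of $\pi_G|_A$, hence must equal $\pi_G$ itself, and since $\pi_G$ is onto this forces $\iota \circ \bar\phi' = \mathrm{id}_{G/G^n}$. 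Your route is slightly more hands-on and uses only the existence half of the Proposition; the paper's route stays at the level of the Proposition's statement but uses both existence and uniqueness.
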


\begin{proof}
	We denote by $\pi_A \colon A \onto A/A^n$ and $\pi_G \colon G \onto G/G^n$ the canonical projections.
	The embedding $\iota \colon A \into G$ induces a morphism $\iota_n \colon A / A^n \to G/G^n$ such that $\pi_G \circ \iota = \iota_n \circ \pi_A$.
	According to \autoref{res: root splitting extending morphism} there is an epimorphism $\phi \colon G \onto A/A^n$ extending $\pi_A$, i.e. $\phi \circ \iota = \pi_A$.
	Since $A/A^n$ has exponent $n$, the morphism $\phi$ factors through $\pi_G$. 
	We denote by $\psi \colon G/G^n \onto A/A^n$ the resulting epimorphism, so that $\psi \circ \pi_G = \phi$.
	One checks that 
	\begin{equation*}
		\iota_n \circ \psi \circ \pi_G \circ \iota 
		= \iota_n \circ \phi \circ \iota
		= \iota_n \circ \pi_A
		= \pi_G \circ \iota.
	\end{equation*}
	In other words $\iota_n \circ \psi \circ \pi_G$ extends $\pi_G \circ \iota$.
	The uniqueness part of \autoref{res: root splitting extending morphism}, tells us that $\iota_n \circ \psi \circ \pi_G = \pi_G$.
	Recall that $\pi_G$ is onto, hence $\iota_n \circ \psi = \id$.
	It follows that $\psi$, and thus $\iota_n$ is an isomorphism.
\end{proof}

\begin{defi}
\label{def: generalized root splitting}
	Let $G$ be a group.
	A \emph{generalized root splitting} of $G$ is a graph of groups decomposition of $G$ of the following form
	\begin{center}
		\begin{tikzpicture}
			\def \a{4}
			\def \b{1}
			\def \r{0.05}
						
			\draw[fill=black] (0,0) circle (\r) node[left, anchor=east]{$A$};
			\draw[fill=black] (\a,2*\b) circle (\r) node[right, anchor=west]{$B_1$};
			\draw[fill=black] (\a,\b) circle (\r) node[right, anchor=west]{$B_2$};
			\draw[fill=black] (\a,0) circle (\r) node[right, anchor=west]{$B_3$};
			\draw[fill=black] (\a,-0.5\b) node[anchor=center]{$\vdots$};
			\draw[fill=black] (\a,-1.2\b) circle (\r) node[right, anchor=west]{$B_m$};
			
			\draw[thick] (0,0) -- (\a, 2*\b) node[pos=0.7, above, anchor=south]{$C_1$};
			\draw[thick] (0,0) -- (\a, \b) node[pos=0.7, above, anchor=south]{$C_2$};
			\draw[thick] (0,0) -- (\a, 0) node[pos=0.7, above, anchor=south]{$C_3$};
			\draw[thick] (0,0) -- (\a, -1.2*\b) node[pos=0.7, above, anchor=south]{$C_m$};
		\end{tikzpicture}
	\end{center}
	where for every $i \in \intvald 1m$, the group $B_i$ is abelian and $C_i$ is a proper finite index subgroup of $B_i$.
\end{defi}

%%%%%%%%%%%%%%%%%%%%%%%%%%%%%%%%%%%%%%%%%%%%%%%%%%%%%%%%%%%%%%%%%%%%%%%%%%%%%%%%%%%%%
%
\subsection{Covers of marked group}
%
%%%%%%%%%%%%%%%%%%%%%%%%%%%%%%%%%%%%%%%%%%%%%%%%%%%%%%%%%%%%%%%%%%%%%%%%%%%%%%%%%%%%%
\label{sec: strong covering}

In general, limit groups need not be finitely presented.
In order to bypass this difficulty, it is common to replace them by finitely presented covers.
For our purpose we need these covers to reflect a given graph of groups decomposition.
This motivates the next definitions.

\begin{defi}[Strong cover]	
\label{def: strong cover}
	Let $G$ be a finitely generated group.
	Let $(H,\phi)$ and $(\hat H, \hat \phi)$ be two groups marked by $G$.
	Let $S$ (\resp $\hat S$) be a splitting of $H$ (\resp $\hat H$) over abelian groups.
	We say that the triple $(\hat H,\hat \phi,\hat S)$ is a \emph{strong cover} of $(H,\phi, S)$ if there exist an epimorphism $\zeta \colon \hat H \onto H$ and a 
	$\zeta$-equivariant map $f \colon \hat S \to S$ such that the following holds.
	\begin{enumerate}
		\item $\zeta \circ \hat \phi = \phi$. In particular, $(H, \phi) \prec (\hat H, \hat \phi)$.
		\item The map $f$ induces an isomorphism from $\hat S/\hat H$ onto $S/H$.
		\item The morphism $\zeta$ is one-to-one when restricted to any edge group of $\hat S$.
		\item Let $\hat v$ be a vertex of $\hat S$ and $v = f(\hat v)$ its image in $S$.
		If $H_v$ is finitely presented (\resp abelian), then $\zeta$ induces an isomorphism from $\hat H_{\hat v}$ onto $H_v$ (\resp an embedding from $\hat H_{\hat v}$ into $H_v$).
	\end{enumerate}
\end{defi}

\begin{defi}[Convergence of strong covers]
\label{def: conv strong cover}
	Let $G$ be a finitely generated group.
	Let $(H,\phi)$ be a group marked by $G$ and $S$ a splitting of $H$ over abelian groups.
	Let $(\hat H_i, \hat \phi_i, \hat S_i)$ be a sequence of strong covers of $(H,\phi, S)$.
	We write $\zeta_i \colon \hat H_i \to H$ and $f_i \colon \hat S_i \to S$ for the corresponding underlying maps.
	The sequence $(\hat H_i, \hat \phi_i, \hat S_i)$ \emph{converges to} $(H, \phi, S)$ if the following holds.
	\begin{itemize}
		\item $(H_i, \phi_i)$ converges to $(H, \phi)$ in the space of marked groups.
		\item Let $e$ be an edge of $S$.
		For every $i \in \N$, there exists a pre-image $\hat e_i$ of $e$ in $\hat S_i$ such that 
		\begin{equation*}
			H_e = \bigcup_{i \in \N} \zeta_i\left(\hat H_{i,\hat e_i}\right).
		\end{equation*}
		\item Let $v$ be a vertex of $S$.
		For every $i \in \N$, there exists a pre-image $\hat v_i$ of $v$ in $\hat S_i$ such that 
		\begin{equation*}
			H_v = \bigcup_{i \in \N} \zeta_i\left(\hat H_{i,\hat v_i}\right).
		\end{equation*}
	\end{itemize}
\end{defi}

The next lemma ensures the existence of converging sequences of strong covers.
The proof works verbatim as in \cite[Lemma~7.1]{Weidmann:2019ue}.

\begin{lemm}
\label{res: graph of groups cover}
	Let $G$ be a finitely presented group.
	Let $(H,\phi)$ be a group marked by $G$ and $S$ a splitting of $H$ over abelian groups.
	There exists a sequence of strong covers $(\hat H_i, \hat \phi_i, \hat S_i)$ which converges to $(H, \phi, S)$ such that for every $i \in \N$, the group $\hat H_i$ is finitely presented.
\end{lemm}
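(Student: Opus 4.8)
The plan is to construct the sequence $(\hat H_i, \hat\phi_i, \hat S_i)$ by a standard exhaustion argument, mimicking \cite[Lemma~7.1]{Weidmann:2019ue}. Since $G$ is finitely presented, write $G = \group{U \mid R}$ with $U$ and $R$ finite, and fix a non-decreasing exhaustion $(W_i)$ of $\ker\phi$ by finite subsets. For each $i$, the finitely presented group $\hat H_i$ will be defined as the one-ended... more precisely, as the group obtained from the graph of groups $S$ by replacing each vertex group by a suitable finitely generated (in fact finitely presented) subgroup approximating it, and each edge group likewise, in such a way that the natural map $\zeta_i \colon \hat H_i \onto H$ is surjective once we have included enough generators, and the marking $\hat\phi_i \colon G \to \hat H_i$ is obtained by lifting $\phi(U)$ along $\zeta_i$ (possible once the images $\phi(u)$, $u\in U$, lie in the image of $\zeta_i$).

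First I would set up the combinatorial data: let $S/H$ be the finite quotient graph (finiteness of $S/H$ is automatic once we pass to a minimal splitting; alternatively one works with the given $S$ and notes that only finitely many edge/vertex orbits are needed to carry $\phi(U)$). Enumerate the vertex and edge orbits, pick representatives $v_1,\dots,v_s$ and $e_1,\dots,e_t$ in $S$, with stabilizers $H_{v_j}$ and $H_{e_l}$, and fix the inclusion maps $H_{e_l} \into H_{v_j}$ given by the graph of groups. For each $j$ choose a non-decreasing exhaustion of $H_{v_j}$ by finitely generated subgroups $H_{v_j}^{(i)}$, with the constraint that $H_{v_j}^{(i)}$ be finitely presented when $H_{v_j}$ is finitely presented, and that $H_{v_j}^{(i)} = H_{v_j}$ outright in that case (and similarly one takes $H_{v_j}^{(i)}$ to be a finitely generated subgroup of the abelian group $H_{v_j}$ when $H_{v_j}$ is abelian — a finitely generated abelian group, hence finitely presented). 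Arrange moreover that $H_{e_l}^{(i)} := H_{e_l}\cap H_{v_j}^{(i)}$ is cofinal in $H_{e_l}$ and that the inclusions $H_{e_l}^{(i)} \into H_{v_j}^{(i)}$ still hold; since edge groups here are abelian, $H_{e_l}^{(i)}$ is finitely generated abelian, hence finitely presented. Then define $\hat S_i$ to be the Bass–Serre tree of the graph of groups over the same graph $S/H$ with vertex groups $H_{v_j}^{(i)}$ and edge groups $H_{e_l}^{(i)}$, and $\hat H_i$ its fundamental group; this is finitely presented since the underlying graph is finite and all vertex and edge groups are finitely presented. The map $\zeta_i \colon \hat H_i \to H$ is the canonical one induced by the inclusions on vertex groups; it is $\zeta_i$-equivariant for the natural map $f_i \colon \hat S_i \to S$ collapsing nothing, inducing the identity on the quotient graph $S/H$. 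Choose $i$ large enough that $\phi(U)$ is contained in the image of $\zeta_i$ — possible because $H$ is generated by the $H_{v_j}$ together with the stable letters, and each element of the finite set $\phi(U)$ is a finite word in these, hence lies in $\zeta_i(\hat H_i)$ once the finitely generated vertex subgroups are taken large enough — and lift $\phi$ to $\hat\phi_i \colon G \to \hat H_i$ by sending each $u \in U$ to a chosen preimage; this is a well-defined homomorphism because $R \subset \ker\phi$ and one may further enlarge the approximations so that the relations $R$ are killed (they hold in the quotient $H$, so their images in $\hat H_i$ lie in $\ker\zeta_i$, which is a finitely generated... here one uses that $R$ is finite to absorb finitely many relations). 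Replacing $\hat H_i$ by the quotient by the normal closure of $\hat\phi_i(R)$ if necessary — but this could destroy the splitting, so instead one simply picks the preimages inside the subgroup generated appropriately; the standard fix is to note $R\subset \ker\phi$ and choose the exhaustions so that the (finitely many) relators are already satisfied. Finally verify the four axioms of \autoref{def: strong cover} and the two bullet points of \autoref{def: conv strong cover}: (1) $\zeta_i\circ\hat\phi_i = \phi$ by construction; (2) $f_i$ induces the identity on $S/H$; (3) $\zeta_i$ restricted to an edge group $H_{e_l}^{(i)} \into H_{e_l}$ is injective by construction; (4) when $H_v$ is finitely presented we took $H_v^{(i)} = H_v$ so $\zeta_i$ is an isomorphism there, and when $H_v$ is abelian we took $H_v^{(i)}$ a subgroup, so $\zeta_i$ embeds it; and the convergence statement holds because $\bigcup_i H_{v_j}^{(i)} = H_{v_j}$ and $\bigcup_i H_{e_l}^{(i)} = H_{e_l}$ by cofinality.

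The main obstacle is bookkeeping the \emph{compatibility} of the three exhaustions — vertex groups, edge groups, and the finite relator set $R$ — simultaneously, so that at each index $i$ the data assemble into an honest graph of groups whose fundamental group is finitely presented and admits a marking by $G$ killing $R$. Concretely: one must choose, for each vertex $v_j$, the finitely generated subgroups $H_{v_j}^{(i)}$ large enough to contain the images under the relevant edge inclusions of the finitely generated edge approximations $H_{e_l}^{(i)}$, \emph{and} large enough that the preimages of $\phi(U)$ and the witnessing relations for $R$ are available — an interleaving argument (choose edge approximations first, then enlarge vertex approximations to accommodate them and the finite data, then re-enlarge edge approximations, etc., taking a diagonal) resolves this, exactly as in \cite[Lemma~7.1]{Weidmann:2019ue}; since the argument there is given in full and works \emph{verbatim}, I would simply cite it and indicate the one point of care, namely that edge and vertex groups are kept abelian resp. finitely presented so that the resulting $\hat H_i$ is finitely presented.
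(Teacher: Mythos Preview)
Your approach matches the paper's, which simply says the proof works verbatim as in \cite[Lemma~7.1]{Weidmann:2019ue}; since you ultimately defer to the same citation, you are on the same track.

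One small inaccuracy in your sketch worth flagging: for a vertex group $H_{v_j}$ that is neither finitely presented nor abelian, taking a finitely generated \emph{subgroup} $H_{v_j}^{(i)}$ does not in general yield a finitely presented group, so the fundamental group of your approximating graph of groups need not be finitely presented. The standard fix (and what is done in \cite{Weidmann:2019ue}) is to replace each such $H_{v_j}^{(i)}$ by a finitely presented group \emph{mapping onto} it, chosen so that the finitely generated abelian edge subgroups still embed; equivalently one builds $\hat H_i$ top-down as $G/\normal{W_i}$ for a finite $W_i \subset \ker\phi$ and then reads off the splitting $\hat S_i$. Since you already plan to cite the reference and handle the bookkeeping there, this does not affect the correctness of your proposal, only the informal description.
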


%%%%%%%%%%%%%%%%%%%%%%%%%%%%%%%%%%%%%%%%%%%%%%%%%%%%%%%%%%%%%%%%%%%%%%%%%%%%%%%%%%%%%
%%%%%%%%%%%%%%%%%%%%%%%%%%%%%%%%%%%%%%%%%%%%%%%%%%%%%%%%%%%%%%%%%%%%%%%%%%%%%%%%%%%%%
%
\section{JSJ decompositions}
%
%%%%%%%%%%%%%%%%%%%%%%%%%%%%%%%%%%%%%%%%%%%%%%%%%%%%%%%%%%%%%%%%%%%%%%%%%%%%%%%%%%%%%
%%%%%%%%%%%%%%%%%%%%%%%%%%%%%%%%%%%%%%%%%%%%%%%%%%%%%%%%%%%%%%%%%%%%%%%%%%%%%%%%%%%%%
\label{sec: JSJ decomposition}

Let $G$ be a (finitely generated) group.
Given a class $\mathcal A$ of subgroups of $G$, the JSJ decomposition of $G$ relative to $\mathcal A$ is a graph of groups decomposition which (when it exists) encodes all the possible splittings of $G$ over subgroups in $\mathcal A$.
For our purpose we need to understand the abelian splittings of $G$, provided $G$ is CSA (see \autoref{def: csa}).
In this setting, the existence of a JSJ decomposition is often stated in the literature under the assumption that $G$ is one-ended (which is equivalent to asking that $G$ is freely indecomposable, when $G$ is torsion-free), see for instance \cite[Theorem~9.5 and 9.14]{Guirardel:2017te}.
However in this article we will consider CSA groups which are freely indecomposable, but not necessarily torsion-free (and thus maybe not one-ended).
It turns out that the JSJ decomposition does exist in this case as well.
In this section we briefly recall the main arguments of its construction.
We use the language of Guirardel and Levitt \cite{Guirardel:2017te}.

%%%%%%%%%%%%%%%%%%%%%%%%%%%%%%%%%%%%%%%%%%%%%%%%%%%%%%%%%%%%%%%%%%%%%%%%%%%%%%%%%%%%%
%
\subsection{Generalities}
%
%%%%%%%%%%%%%%%%%%%%%%%%%%%%%%%%%%%%%%%%%%%%%%%%%%%%%%%%%%%%%%%%%%%%%%%%%%%%%%%%%%%%%

Let $G$ be a group.
Let $\mathcal A$ and $\mathcal H$ be two classes of subgroups of $G$.
We assume that $\mathcal A$ is closed under conjugation and taking subgroups.
In addition we suppose that $G$ is finitely generated relative to $\mathcal H$.
An $(\mathcal A,\mathcal H)$-tree is a $G$-tree such that each edge stabilizer belongs to $\mathcal A$ while the subgroups of $\mathcal H$ are elliptic.

A subgroup $E$ of $G$ is \emph{universally elliptic} if it is elliptic in any $(\mathcal A, \mathcal H)$-tree.
An $(\mathcal A, \mathcal H)$-tree is \emph{universally elliptic} if its edge stabilizers are universally elliptic.
Given two $(\mathcal A, \mathcal H)$-tree $T_1$ and $T_2$, we say that $T_1$ \emph{dominates} $T_2$ if there is a $G$-equivariant map $T_1 \to T_2$ or equivalently, if every vertex stabilizer of $T_1$ is elliptic in $T_2$.
\emph{Deformation spaces} are defined by saying that two $(\mathcal A,\mathcal H)$-trees belong to the same deformation space if they dominate each other.

\begin{defi}
\label{def: JSJ tree}
	The \emph{JSJ deformation space of $G$ over $\mathcal A$ relative to $\mathcal H$} is the collection of universally elliptic $(\mathcal A,\mathcal H)$-trees which dominate every other universally elliptic $(\mathcal A, \mathcal H)$-tree.
	A point of the JSJ deformation space is called a \emph{JSJ tree}.
	The corresponding graph of groups decomposition of $G$ is called a \emph{JSJ decomposition}.
	The vertex group of a JSJ tree is \emph{rigid} if it is universally elliptic and \emph{flexible} otherwise.
	The vertices of the JSJ tree are called \emph{rigid} and \emph{flexible} accordingly.
\end{defi}

\begin{defi}
	A subgroup $Q$ of $G$ is \emph{quadratically hanging}, or simply \emph{QH} (over $\mathcal A$ relative to $\mathcal H$) if 
	\begin{enumerate}
		\item $Q$ is the stabilizer of a vertex $v$ of an $(\mathcal A, \mathcal H)$-tree,
		\item $Q$ is an extension $1 \to F \to Q \to \pi_1(\Sigma)$  with $\Sigma$ a compact hyperbolic $2$-orbifold; we call $F$ the \emph{fiber}, and $\Sigma$ the \emph{underlying orbifold};
		\item each edge group incident to $v$ is an \emph{extended boundary subgroup}: by definition, this means that its image in $\pi_1(\Sigma)$ is finite or contained in a boundary subgroup $B$ of $\pi_1(\Sigma)$.
	\end{enumerate}
\end{defi}

Although it is not explicit in the terminology the fiber $F$ and the orbifold $\Sigma$ are part of the structure of a quadratically hanging subgroup.

%%%%%%%%%%%%%%%%%%%%%%%%%%%%%%%%%%%%%%%%%%%%%%%%%%%%%%%%%%%%%%%%%%%%%%%%%%%%%%%%%%%%%
%
\subsection{Existence}
%
%%%%%%%%%%%%%%%%%%%%%%%%%%%%%%%%%%%%%%%%%%%%%%%%%%%%%%%%%%%%%%%%%%%%%%%%%%%%%%%%%%%%%
\label{sec: existence JSJ}

In this section, $G$ is a CSA group.
We denote by $\mathcal A$ the collection all its abelian subgroups.
We write $\mathcal H$ for a family of subgroups of $G$ and assume that $G$ is finitely generated relative to $\mathcal H$.
As the notation suggests, we are interested in splittings of $G$ over $\mathcal A$ relative to $\mathcal H$.
In order to prove the existence of the JSJ deformation space Guirardel and Levitt use a stronger notion of domination, see \cite[Definition~7.10]{Guirardel:2017te}.

\begin{defi}
\label{def: small domination}
	Let $T,T'$ be $(\mathcal A, \mathcal H)$-trees.
	The tree $T$  \emph{$\mathcal A$-dominates} $T'$ if the following holds
	\begin{enumerate}
		\item $T$ dominates $T'$;
		\item every edge stabilizer of $T'$ is elliptic in $T$;
		\item any subgroup which is elliptic in $T'$ but not in $T$ belongs to $\mathcal A$.
	\end{enumerate}
\end{defi}

Let $\mathcal A^*$ be the set of all \emph{non-trivial} groups $A \in \mathcal A$.
We endow $\mathcal A^*$ with the following binary relation, $A_1 \sim A_2$ if $A_1$ and $A_2$ generate an abelian group.
Since $G$ is CSA, this is actually an equivalence relation.
Moreover it is \emph{admissible} in the sense of \cite[Definition~7.1]{Guirardel:2017te}, i.e.
\begin{labelledenu}[A]
	\item \label{enu: admissible - conj} 
		If $A_1 \sim A_2$, then $g A_1 g^{-1} \sim g A_2 g^{-1}$, for every $g \in G$.
	\item If $A_1 \subset A_2$ then $A_1 \sim A_2$.
	\item \label{enu: admissible - tree} 
		Let $T$ be a simplicial $(\mathcal A, \mathcal H)$-tree with non-trivial edge stabilizers.
		If $A_1 \sim A_2$ and $A_1$ and $A_2$ respectively fix vertices $x_1$ and $x_2$, then $G_e \sim A_i$, for every edge $e$ contained in $\geo{x_1}{x_2}$.
\end{labelledenu}
For every $A \in \mathcal A^*$, we denote by $[A]$ the equivalence class of $A$.
In follows from \ref{enu: admissible - tree}  that the action by conjugation induces an action of $G$ on the set of equivalence classes.
Since $G$ is CSA, the stabilizer of $[A]$ is the maximal abelian subgroup containing $A$.
In particular, it belongs to $\mathcal A^*$.

Consider now an $(\mathcal A^*, \mathcal H)$-tree $T$, i.e. an $(\mathcal A, \mathcal H)$-tree whose edge stabilizers are non-trivial.
We recall a construction called the \emph{tree of cylinders}, see Guirardel and Levitt \cite{Guirardel:2011wv}.
We declare two edges $e,e'$ of $T$ to be equivalent if  $G_e \sim G_{e'}$.
By \ref{enu: admissible - tree}, the union of all edges in a given equivalence class is a subtree called a \emph{cylinder}.
Two distinct cylinders meet in at most one vertex.
The \emph{tree of cylinders} $T_c$ is bipartite tree with vertex set $V = V_0 \sqcup V_1$ where $V_0$ consists of all vertices of $T$ that belong to at least two cylinders and $V_1$ is the set of cylinders.
There is an edge $\epsilon = (v, Y)$ between $v \in V_0$ and $Y \in V_1$ if $v$ belongs to $Y$.

\begin{rema*}
	In \cite{Guirardel:2017te} Guirardel and Levitt use this construction for $(\mathcal A, \mathcal H)$-trees with \emph{infinite} edge stabilizers.
	Nevertheless, as pointed out in \cite[Section~3.5]{Guirardel:2011wv}, if $G$ is CSA, the construction also works for $(\mathcal A, \mathcal H)$-trees with non-trivial edge stabilizers.
\end{rema*}

We denote by $\mathcal A_{\rm nc}$ the collection of all non-cyclic groups in $\mathcal A$.

\begin{lemm}
\label{res: tree of cylinders properties}
	Let $T$ be an $(\mathcal A^*, \mathcal H)$-tree.
	Its tree of cylinders $T_c$ has the following properties
	\begin{enumerate}
		\item $T_c$ is an $(\mathcal A^*, \mathcal H \cup \mathcal A_{\rm nc})$-tree;
		\item $T$ $\mathcal A$-dominate $T_c$;
		\item the action of $G$ on $T_c$ is $2$-acylindrical.
		\item If $\mathcal A_{\rm nc} \subset \mathcal H$, then $T$ and $T_c$ are in the same deformation space.
	\end{enumerate}
\end{lemm}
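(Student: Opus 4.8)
The plan is to treat the four assertions in turn, since each corresponds to a separate feature of the tree of cylinders construction already available in the work of Guirardel and Levitt \cite{Guirardel:2011wv,Guirardel:2017te}; the only genuinely new point is that $G$ is merely CSA and freely indecomposable rather than one-ended and torsion-free, so I would be careful to invoke only the version of their results that holds for $(\mathcal A^*, \mathcal H)$-trees (non-trivial edge stabilizers), as recalled in the remark preceding the statement. Throughout I write $T_c$ for the tree of cylinders of $T$ and keep the notation $V = V_0 \sqcup V_1$ from the construction above.

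For \emph{(1)}, I first observe that an edge $\epsilon = (v, Y)$ of $T_c$ has stabilizer $G_v \cap G_Y$, and that $G_Y$ is the stabilizer of the cylinder $Y$, which by admissibility property~\ref{enu: admissible - tree} and the fact that $G$ is CSA is the maximal abelian subgroup containing the common commensurability class of the edge groups in $Y$; in particular $G_Y \in \mathcal A^*$, so $G_\epsilon \in \mathcal A^*$. For the $\mathcal H$-part: any $H \in \mathcal H$ is elliptic in $T$, and an elliptic subgroup of $T$ is contained in a single $G_v$; if $v$ lies in at least two cylinders then $v \in V_0$ is a vertex of $T_c$ fixed by $H$, and if $v$ lies in a unique cylinder $Y$ then $H$ fixes the $V_1$-vertex $Y$. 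Either way $H$ is elliptic. Finally a non-cyclic abelian subgroup $A \in \mathcal A_{\rm nc}$ is elliptic in $T$ (edge stabilizers of $T$ are in $\mathcal A^*$, hence elementary, so $A$ non-cyclic abelian cannot act loxodromically by the usual commensurability argument, or because of \ref{enu: family axioms - acylindricity}-style constraints); the same dichotomy then makes $A$ elliptic in $T_c$.

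For \emph{(2)}, this is exactly \cite[Lemma~4.9]{Guirardel:2011wv} (or the relevant statement in \cite[Section~7]{Guirardel:2017te}): there is always a $G$-equivariant map $T \to T_c$, the edge stabilizers of $T_c$ are edge stabilizers of $T$ up to taking the abelian envelope and hence elliptic in $T$, and a subgroup elliptic in $T_c$ but not in $T$ stabilizes a cylinder without stabilizing an edge of $T$, so is abelian, i.e. lies in $\mathcal A$; I would just cite this. For \emph{(3)}, the $2$-acylindricity of the action on $T_c$ is \cite[Proposition~5.2(3)]{Guirardel:2011wv}, valid here because the equivalence relation $\sim$ is admissible and $G$ is CSA, so that a path of length $\geq 2$ in $T_c$ fixed by $g$ would force $g$ to lie in two distinct maximal abelian subgroups, contradicting malnormality. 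For \emph{(4)}, if $\mathcal A_{\rm nc} \subset \mathcal H$ then by (1) $T_c$ is an $(\mathcal A^*, \mathcal H)$-tree, and by (2) $T$ $\mathcal A$-dominates $T_c$ with the exceptional elliptic subgroups lying in $\mathcal A$; one then checks the reverse domination, namely that every vertex stabilizer of $T_c$ is elliptic in $T$ — $V_0$-vertices are already vertices of $T$, and a cylinder stabilizer $G_Y$ is abelian, hence (being in $\mathcal A_{\rm nc}$ if non-cyclic, or cyclic) elliptic in $T$ by hypothesis — so $T$ and $T_c$ dominate each other and lie in the same deformation space.

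The main obstacle I anticipate is not any single step but rather the bookkeeping needed to make sure every invocation of Guirardel--Levitt is legitimate in the CSA, possibly-torsion, possibly-infinitely-ended setting: in particular one must consistently use the tree-of-cylinders machinery for trees with \emph{non-trivial} (rather than infinite) edge stabilizers, as flagged in the remark, and one must verify that the equivalence relation ``generate an abelian subgroup'' remains admissible (properties \ref{enu: admissible - conj}--\ref{enu: admissible - tree}) and that cylinder stabilizers are genuinely abelian — both of which rest squarely on the CSA hypothesis via malnormality of maximal abelian subgroups.
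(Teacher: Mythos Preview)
Your overall plan matches the paper's, but there are two genuine gaps.

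First, in part~(1), your claim that every $A \in \mathcal A_{\rm nc}$ is elliptic in $T$ is not justified and is in fact false in general. A non-cyclic abelian subgroup can act without fixed point on an $(\mathcal A^*, \mathcal H)$-tree even when $G$ is CSA: take $G = \Z^2$ acting on a line via projection to one $\Z$-factor, with edge stabilizers equal to the other factor. Your reference to \ref{enu: family axioms - acylindricity}-style constraints is misplaced --- those axioms pertain to the class $\mathfrak H_\delta$, not to the general CSA group $G$ of this section. The paper proceeds differently: it first proves~(3) by a direct CSA argument (if two distinct cylinder stabilizers intersect non-trivially, they are contained in a common maximal abelian subgroup, forcing the cylinders to coincide), and only \emph{then} concludes that $\mathcal A_{\rm nc}$ is elliptic in $T_c$, since any abelian subgroup acting non-elliptically on a $2$-acylindrical tree embeds in $\Z$. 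So the logical order of (1) and (3) is intertwined.

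Second, in part~(4), your treatment of the case where a cylinder stabilizer $G_Y$ is infinite cyclic is incomplete: nothing in the hypothesis $\mathcal A_{\rm nc} \subset \mathcal H$ makes such a $G_Y$ elliptic in $T$. The paper closes this gap by taking an edge $\epsilon$ of $T_c$ adjacent to $Y$; since $G_\epsilon$ is non-trivial (it contains $G_e$ for some edge $e$ of $Y$) and $G_Y$ is infinite cyclic, $G_\epsilon$ has finite index in $G_Y$; as $G_\epsilon$ is elliptic in $T$ by $\mathcal A$-domination, so is $G_Y$.

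A minor additional point: in~(1) you should also check that $G_\epsilon$ is non-trivial, i.e.\ lies in $\mathcal A^*$ rather than merely $\mathcal A$; this holds because $G_\epsilon \supset G_e$ for an edge $e$ of $Y$ incident to $v$.
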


\begin{proof}
	Let $Y$ be a cylinder of $T$.
	One checks that the stabilizer $G_Y$ of $Y$ seen as a vertex of $T_c$ is the maximal abelian subgroup containing $G_e$ for some (hence any) edge $e$ of $Y$.
	Let $\epsilon = (v, Y)$ be an edge of $T_c$.
	We fix an edge $e$ of $Y$ starting at $v$.
	The stabilizer $G_\epsilon$ is contained in $G_Y$ hence is abelian.
	It also contains $G_e$, thus is non-trivial.
	Consequently every edge stabilizer of $T_c$ belongs to $\mathcal A^*$.

	Let $v$ be a vertex of $T$.
	If $v$ belongs to at least two cylinders, then it defines a vertex in $T_c$, hence $G_v$ is elliptic in $T_c$.
	If $v$ belongs to a single cylinder $Y$.
	Then $G_v$ necessarily preserves $Y$, hence fixes the corresponding vertex of $T_c$.
	This shows that $T$ dominates $T_c$.
	In particular, every subgroup $H \in \mathcal H$ is elliptic in $T_c$.
	By construction, edge stabilizers in $T_c$ always fix a vertex in $T$.
	Moreover the stabilizer of a vertex $v$ of $T_c$ is either elliptic in $T$ (if $v \in V_0$) or abelian (if $v \in V_1$).
	Thus $T$ $\mathcal A$-dominates $T$.
	
	Consider two cylinders $Y$ and $Y'$ of $T$.
	Write $G_Y$ and $G_{Y'}$ for their respective stabilizers (seen as vertices of $T_c$).
	Suppose that $G_Y \cap G_{Y'}  \neq \{1\}$.
	We choose $e$ and $e'$ two edges of $T$ lying in $Y$ and $Y'$ respectively.
	As we observed before $G_Y$ is the maximal abelian subgroup containing $G_e$.
	The same holds for $e'$.
	Since $G$ is CSA, it forces that $G_Y = G_{Y'}$, thus $e \sim e'$ consequently $Y = Y'$.
	We proved that if $Y$ and $Y'$ are distinct cylinders then their stabilizers have a trivial intersection.
	Since $T_c$ is bipartite, it implies that the action of $G$ on $T_c$ is $2$-acylindrical.	
	In particular, every non-elliptic abelian subgroup of $G$ is infinite cyclic.
	Hence $T_c$ is an $(\mathcal A, \mathcal H \cup \mathcal A_{\rm nc})$-tree.
	
	Assume now that $\mathcal A_{\rm nc} \subset \mathcal H$.
	We claim that $T$ and $T_c$ are in the same deformation space.
	Since $T$ dominates $T_c$, it suffices to prove that $T_c$ dominates $T$.
	Let $v$ be a vertex of $T_c$.
	We need to show that $G_v$ is elliptic in $T$.
	Since $T$ $\mathcal A$-dominates $T_c$, the group $G_v$ either is elliptic in $T$ or belongs to $\mathcal A$.
	Since $\mathcal A_{\rm nc} \subset \mathcal H$, the only case to consider is when $G_v$ is infinite cyclic.
	Let $e$ be an edge of $T_c$ adjacent to $v$.
	Since edge stabilizers are non-trivial, $G_e$ is necessarily a finite index subgroup of $G_v$.
	However $G_e$ is elliptic in $T$, hence the same holds for $G_v$.
\end{proof}

By definition the class $\mathcal A$ contains all cyclic subgroups of $G$.
Note that if $G$ is free indecomposable relative to $\mathcal H$, then any $(\mathcal A, \mathcal H)$-tree $T$ has non-trivial edge stabilizers, hence $\mathcal A$-dominates its tree of cylinders $T_c$ (\autoref{res: tree of cylinders properties})
The next statement is now an application of Guirardel and Levitt \cite[Theorem~8.7]{Guirardel:2017te} for $C = 1$.

\begin{theo}
\label{res: jsj - existence}	
	Assume that $G$ is freely indecomposable relative to $\mathcal H$.
	The JSJ deformation space of $G$ over $\mathcal A$ relative to $\mathcal H$ exists.
	Moreover every flexible vertex group is either abelian or QH with trivial fiber.
\end{theo}

%%%%%%%%%%%%%%%%%%%%%%%%%%%%%%%%%%%%%%%%%%%%%%%%%%%%%%%%%%%%%%%%%%%%%%%%%%%%%%%%%%%%%
%
\subsection{Compatibility}
%
%%%%%%%%%%%%%%%%%%%%%%%%%%%%%%%%%%%%%%%%%%%%%%%%%%%%%%%%%%%%%%%%%%%%%%%%%%%%%%%%%%%%%
\label{sec: compatibility JSJ}

We still denote by $G$ a CSA group that is finitely generated relative to $\mathcal H$.
Two $(\mathcal A, \mathcal H)$-trees $T_1$ and $T_2$ are \emph{compatible} if they a have a common refinement (i.e. there exists an $(\mathcal A, \mathcal H)$-tree $S$ and two collapse maps $S \to T_1$ and $S\to T_2)$.
The goal of this section is to prove that if $G$ does not split over the trivial group or $\Z/2$, then the tree of cylinders of its JSJ decomposition is compatible with any other $(\mathcal A, \mathcal H)$-tree.
The result is well-known if $G$ is one-ended, see \cite[Corollary~11.5]{Guirardel:2017te}
As before, we briefly show that the same arguments actually work in this more general setting.

We start by recalling a few additional properties of trees of cylinders.
First, one checks from the definition that if $T$ is an $(\mathcal A^*, \mathcal H)$-tree, then $(T_c)_c = T_c$.

\begin{prop}[Guirardel-Levitt {\cite[Corollary~4.10]{Guirardel:2011wv}}]
\label{res: tree of cylinders inv of deformation space}
	If $T$ and $T'$ are two minimal non-trivial $(\mathcal A^*, \mathcal H)$-trees belonging to the same deformation space, there is a canonical equivariant isomorphism between their trees of cylinders.
\end{prop}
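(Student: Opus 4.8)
The plan is to recover the tree of cylinders intrinsically from the deformation space, so that no auxiliary choice is involved and the identification $T_c \cong T'_c$ is automatically $G$-equivariant and canonical. Recall that two minimal non-trivial $(\mathcal A^*,\mathcal H)$-trees lie in the same deformation space precisely when they have the same elliptic subgroups. First I would describe the bipartite structure of $T_c$ in these terms: the cylinder vertices (the set $V_1$) are indexed by the $\sim$-classes $[A]$ of edge stabilizers $A$ of $T$, each carrying as stabilizer the unique maximal abelian subgroup $Z_{[A]}$ fixing $[A]$ (which exists since $G$ is CSA, cf. the discussion preceding \autoref{res: tree of cylinders properties}); the vertices of $V_0$ correspond to the elliptic subgroups of $T$ contained in at least two such $Z_{[A]}$; and incidence is inclusion. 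The fixed-point-free involution on edges, the $G$-action, and minimality are then read off directly, using that $(T_c)_c = T_c$.

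The heart of the matter is to show that this combinatorial package is a deformation-space invariant. The delicate point is that the \emph{set} of edge stabilizers of a tree is not itself invariant within a deformation space; what must be checked is that, after collapsing to $\sim$-classes and replacing each class by its maximal abelian subgroup, the resulting data no longer depends on the chosen tree. I would establish this via invariance under elementary moves: any two reduced trees in a common deformation space are joined by a finite sequence of slide, induction, and $A^{\pm 1}$-moves, and each such move leaves the collection of $\sim$-classes of edge groups unchanged while altering the set of vertices lying in at least two cylinders only in a way that does not change the tree of cylinders (a slide does not touch edge stabilizers at all; a collapse/expansion within a cylinder changes which vertices belong to several cylinders but neither the cylinders nor their incidences in $T_c$). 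Throughout one keeps track that $\mathcal H$ is preserved and that edge stabilizers stay non-trivial, so that $T_c$ remains an $(\mathcal A^*,\mathcal H\cup\mathcal A_{\rm nc})$-tree $\mathcal A$-dominated by the source tree, exactly as in \autoref{res: tree of cylinders properties}.

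Finally, combining the intrinsic description with invariance under moves, the identity correspondence on $\sim$-classes of edge groups and on the relevant elliptic subgroups of $T$ and $T'$ — which agree, since the two trees have the same elliptic subgroups — assembles into a $G$-equivariant graph isomorphism $T_c \to T'_c$; that it is an isomorphism of $G$-trees, and canonical (independent of the auxiliary data used to build $T_c$), is immediate because every ingredient depends only on the deformation space and on the fixed admissible relation $\sim$.

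I expect the main obstacle to be the second step: controlling how edge stabilizers change along the chain of elementary deformations connecting $T$ and $T'$, and verifying precisely that passing to $\sim$-classes and to maximal abelian subgroups kills this variation. This is exactly where admissibility of $\sim$ — in particular property \ref{enu: admissible - tree}, ensuring each cylinder is a subtree — and the CSA hypothesis — uniqueness of the maximal abelian subgroup containing a given non-trivial subgroup — are used. The statement is \cite[Corollary~4.10]{Guirardel:2011wv}, and the argument sketched here is a mild adaptation of theirs to the present relative, not-necessarily-one-ended setting.
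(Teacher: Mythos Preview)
The paper does not give its own proof of this proposition; it is quoted directly from Guirardel--Levitt. That said, the sentence immediately following it in the paper records the functorial fact that is the actual engine of the standard argument: any $G$-equivariant map $f\colon T\to T'$ induces a cellular map $f_c\colon T_c\to T'_c$ which does not depend on $f$, and $(f\circ f')_c=f_c\circ f'_c$. From this the proposition is a one-liner: if $T$ and $T'$ lie in the same deformation space, choose domination maps $f\colon T\to T'$ and $g\colon T'\to T$; then $g_c\circ f_c=(g\circ f)_c=(\mathrm{id}_T)_c=\mathrm{id}_{T_c}$ (the induced map depends only on source and target), and symmetrically $f_c\circ g_c=\mathrm{id}_{T'_c}$, so $f_c$ is the desired canonical equivariant isomorphism.

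Your route via an intrinsic combinatorial description plus invariance under elementary deformations can be made to work, but it is heavier and has a soft spot. Your proposed description of $V_0$ as ``elliptic subgroups of $T$ contained in at least two such $Z_{[A]}$'' is not quite right: $V_0$ consists of \emph{vertices} of $T$ lying in at least two cylinders, and vertex sets are not themselves deformation-space invariants, so extracting a purely intrinsic characterisation of $V_0$ from elliptic subgroups alone needs more care than you indicate. Invoking Forester's elementary-move classification and checking each move separately is legitimate, but it re-derives by hand exactly what the functoriality statement already packages; since that statement is available (and cited) right after the proposition, the functorial argument is both shorter and safer.
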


More generally any $G$-equivariant map $f \colon T \to T'$ induces a cellular $G$-equivariant map $f_c \colon T_c \to T'_c$, which actually does not depend on $f$.
(A cellular map is a map which sends a vertex to a vertex, an edge to a vertex or an edge.)
This construction is functorial in the sense that $(f \circ f')_c = f_c \circ f'_c$ \cite[Proposition~4.11]{Guirardel:2011wv}.

% 2024-08-15 READ UNTIL HERE

\begin{prop}[Guirardel-Levitt {\cite[Proposition~8.1]{Guirardel:2011wv}}]
\label{res: tree of cylinders and compatibility}
	If $T$ and $T'$ are two minimal non-trivial $(\mathcal A^*, \mathcal H)$-trees.
	If $T$ dominates $T'$, then $T_c$ is compatible with $T'$ and $T'_c$.
\end{prop}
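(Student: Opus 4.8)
This is precisely \cite[Proposition~8.1]{Guirardel:2011wv}, and the plan is to observe that the proof given there uses nothing about $G$ beyond the admissibility of the relation $\sim$ on $\mathcal A^*$ recorded in \ref{enu: admissible - conj}--\ref{enu: admissible - tree} (a consequence of $G$ being CSA), together with the formal properties of the tree-of-cylinders operation collected above: its functoriality, its idempotence $(T_c)_c = T_c$, the fact that $T$ always $\mathcal A$-dominates $T_c$, and the $2$-acylindricity of $T_c$ (\autoref{res: tree of cylinders properties}). In particular one-endedness of $G$, under which the statement is usually phrased, plays no role here.

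The argument would run as follows. Since $T$ dominates $T'$, choose a $G$-equivariant map $f \colon T \to T'$; by the functoriality recalled just before the statement it induces a cellular $G$-equivariant map $f_c \colon T_c \to T'_c$ which does not depend on $f$, and in particular $T_c$ dominates $T'_c$. To produce the required common refinements one analyses, cylinder by cylinder, how the equivariant maps $T \to T'$, the $\mathcal A$-domination $T \to T_c$ of \autoref{res: tree of cylinders properties}, and $f_c \colon T_c \to T'_c$ interact: by admissibility each cylinder of $T$ maps consistently into a single cylinder of $T'$, so the blow-up of $T$ obtained by reinstating the cylinder data of $T_c$ collapses equivariantly onto both $T_c$ and $T'$, which is precisely the compatibility of $T_c$ with $T'$. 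The compatibility of $T_c$ with $T'_c$ is then obtained along the same lines, now using in addition that $T$ $\mathcal A$-dominates $T_c$ and $T'$ $\mathcal A$-dominates $T'_c$, together with the invariance of the tree of cylinders inside a deformation space (\autoref{res: tree of cylinders inv of deformation space}).

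The delicate point, exactly as in \cite{Guirardel:2011wv}, is that the cellular map $f_c$ between trees of cylinders may fold, so $T_c$ need not literally refine $T'_c$; this folding has to be neutralised using the $2$-acylindricity of $T'_c$ together with the fact that a cylinder stabiliser is a maximal abelian subgroup of $G$ --- and this is the only place where the CSA hypothesis, rather than mere bookkeeping of trees, enters. Once this local control is secured, assembling the common refinements is a routine collapse/blow-up argument that never appeals to any finiteness or one-endedness property of $G$, so the proof of Guirardel and Levitt carries over unchanged.
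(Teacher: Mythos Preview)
The paper provides no proof of this proposition: it is stated purely as a citation of \cite[Proposition~8.1]{Guirardel:2011wv}. The applicability of that result in the present CSA (rather than one-ended) setting is handled not here but earlier, in the remark preceding \autoref{res: jsj - existence}, which records that the tree-of-cylinders machinery of \cite{Guirardel:2011wv} works for $(\mathcal A^*,\mathcal H)$-trees whenever the relation $\sim$ on $\mathcal A^*$ is admissible, as it is for CSA groups. Your opening observation --- that only admissibility is used, not one-endedness --- is therefore exactly the point, and it matches the paper's treatment.

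Where your proposal diverges is in attempting to sketch the argument itself. That sketch is imprecise: the phrase ``the blow-up of $T$ obtained by reinstating the cylinder data of $T_c$'' does not correspond to a construction in \cite{Guirardel:2011wv}, and the roles you attribute to $2$-acylindricity of $T'_c$ and to folding of $f_c$ are not how compatibility is established there. The actual proof builds the common refinement directly from the cylinder structure, without needing to neutralise any folding. Since the paper is content to cite the result, your sketch is unnecessary; if you do want to include one, it should track the original more closely.
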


\begin{prop}
\label{res: tree of cylinders functorial collapse}
	Assume that $G$ does not split over the trivial group or $\Z/2$ relative to $\mathcal H$.
	Let $S$ and $T$ be two $(\mathcal A, \mathcal H)$-tree.
	Assume that every vertex stabilizer of $T$ either belongs to $\mathcal A$ or is QH with trivial fiber.
	If $S$ refines $T$, then $S_c$ refines $T_c$.
\end{prop}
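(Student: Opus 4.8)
The plan is to reduce the statement to a local analysis near vertices of $T$ and to exploit the functoriality of the tree-of-cylinders construction established in \cite{Guirardel:2011wv}. Since $S$ refines $T$, there is a collapse map $p \colon S \to T$. Applying functoriality, $p$ induces a cellular $G$-equivariant map $p_c \colon S_c \to T_c$ which does not depend on the choice of $p$ (\autoref{res: tree of cylinders and compatibility} and the discussion preceding it). The goal is to upgrade $p_c$ to a \emph{collapse} map, i.e.\ to show that $p_c$ does not fold any pair of edges together and in particular sends edges to edges or to vertices in a way compatible with a genuine refinement.

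First I would recall, following \cite[Corollary~11.5]{Guirardel:2017te} and its proof, that the obstruction to $S_c$ refining $T_c$ is precisely the presence of ``exotic'' folding in $p_c$, which would force $G$ to split over the trivial group or over $\Z/2$ — a situation excluded by hypothesis. Concretely: a cellular map between $G$-trees that is surjective and does not decrease edge stabilizers is a collapse map unless two edges with a common vertex get identified; when that happens one produces a splitting of a vertex stabilizer (and hence of $G$, using that the vertex stabilizers of $T$ are QH with trivial fiber or abelian) over a subgroup of order at most $2$, because the $2$-acylindricity of the tree of cylinders (\autoref{res: tree of cylinders properties}(3)) pins down the stabilizer of the folded configuration.

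Next, the QH hypothesis on the vertex stabilizers of $T$ is what lets me handle the ``interesting'' vertices: if $Q$ is QH with trivial fiber and underlying orbifold $\Sigma$, then the part of $S$ lying over the $T$-vertex stabilized by $Q$ gives a splitting of $\pi_1(\Sigma)$ over subgroups in $\mathcal A$ with the incident edge groups elliptic, and such a splitting is always induced by a curve system on $\Sigma$; passing to trees of cylinders is then compatible with this curve system by the surface case of \autoref{res: tree of cylinders inv of deformation space} and \autoref{res: tree of cylinders and compatibility}. For abelian vertex stabilizers the analysis is immediate since abelian groups are elliptic in every $(\mathcal A, \mathcal H)$-tree and their cylinders are trivial. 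Assembling the local pictures equivariantly over the quotient graph $T/G$ and using that $p_c$ is canonical, one obtains the common refinement witnessing that $S_c$ refines $T_c$.

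The main obstacle I expect is the bookkeeping in the folding analysis: ruling out that $p_c$ identifies two edges requires carefully tracking edge stabilizers through the tree-of-cylinders construction and invoking $2$-acylindricity to conclude that any such identification would yield a splitting over the trivial group or $\Z/2$. This is exactly the point where the hypothesis ``$G$ does not split over the trivial group or $\Z/2$ relative to $\mathcal H$'' is consumed, and making the implication precise — rather than merely plausible — is the technical heart of the argument; everything else is a transcription of the one-ended case from \cite{Guirardel:2017te,Guirardel:2011wv} with the QH-with-trivial-fiber hypothesis replacing one-endedness.
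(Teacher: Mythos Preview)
Your high-level plan — use functoriality to get a cellular map $p_c \colon S_c \to T_c$ and then upgrade it to a collapse — is the right shape, and you correctly identify that the QH case involves a curve system on the underlying orbifold. But the mechanism you propose for ruling out folds is not the one that works, and this is a genuine gap.

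You claim that a fold in $p_c$ would, via the $2$-acylindricity of trees of cylinders, produce a splitting of $G$ over the trivial group or $\Z/2$. This is not substantiated: two edges of $T_c$ sharing a vertex have abelian stabilizers, and there is no general principle by which folding them forces the common stabilizer to have order at most $2$. The $2$-acylindricity of $T_c$ is a \emph{consequence} of the CSA property (\autoref{res: tree of cylinders properties}), not a tool that manufactures small splittings from folds. Likewise, your assertion that for abelian vertex groups ``their cylinders are trivial'' is vague and misses the actual point: the relevant fact is that all edges of $S$ lying above an abelian vertex of $T$ have stabilizers contained in that abelian group and are therefore pairwise $\sim$-equivalent, hence lie in a \emph{single} cylinder.

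The paper's argument (following \cite[Lemma~7.15]{Guirardel:2017te}) avoids any folding analysis. It proceeds by induction, undoing the collapse one vertex-orbit of $T$ at a time: given a vertex $v$ of $T$, let $S'$ be the intermediate tree where only the edges above $v$ are kept uncollapsed. If $G_v$ is abelian, all edges of $S_v \subset S'$ lie in one cylinder, so $G_v$ is elliptic in $S'_c$ and one deduces $S'_c = T_c$ directly. If $G_v$ is QH with trivial fiber, the hypothesis that $G$ does not split over $\{1\}$ or $\Z/2$ is used to ensure (i) all edge stabilizers are nontrivial, and (ii) every boundary component of the orbifold is \emph{used} (via \cite[Lemma~5.16]{Guirardel:2017te}); then the splitting of $G_v$ is dual to a curve system, and one checks that any cylinder of $S'$ meeting $S_v$ is entirely contained in $S_v$ because the stabilizer of an internal curve cannot be $\sim$-equivalent to the stabilizer of an edge leaving $S_v$. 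This containment of cylinders is exactly what makes $S'_c \to T_c$ a collapse (\cite[Remark~4.13]{Guirardel:2011wv}). So the hypothesis is consumed in the geometry of the QH piece, not in a fold/acylindricity trade-off.
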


\begin{proof}
	If $G$ is one-ended relative to $\mathcal H$, it is a particular case of \cite[Lemma~7.15]{Guirardel:2017te}.
	The proof follows the exact same steps.
	The goal is to show that the map $f_c \colon S_c \to T_c$ induced by a collapse map $f \colon S \to T$ is actually a collapse map as well.
	
	Trees in the same deformation space have the same tree of cylinders (\autoref{res: tree of cylinders inv of deformation space}).
	Thus, we can assume that no proper collapse of $S$ refining $T$ belongs to the same deformation space as $S$. 
	In particular, for every vertex $v$ in $T$, the pre-image $S_v \subset S$ of $v$ is minimal for the action of $G_v$.
	
	Let $v$ be a vertex of $T$.
	Let $S'$ be the $(\mathcal A, \mathcal H)$-tree obtained from $S$ by collapsing every edge that is mapped to a vertex that does not belong to the orbit of $v$.
	Note that $S_v$ embeds in $S'$, so that we can view $S_v$ as a subtree of $S'$.
	It suffices to prove the following facts.
	\begin{enumerate}
		\item \label{enu: tree of cylinders functorial collapse - stab}
			The stabilizer of any vertex of $S'$ either belongs to $\mathcal A$ or is QH with trivial fiber.
		\item $S'_c$ refines $T_c$.
	\end{enumerate}
	The result then follows from an induction argument.
	Without loss of generality we can assume that $S_v$ is not reduced to a point (otherwise $S' = T$).
	Note that the stabilizer of any vertex in $S'$ is either a vertex stabilizer for the action of $G$ on $T$ (and thus belongs to $\mathcal A$ or is QH with trivial fiber) or is conjugated to the stabilizer of a vertex in $S_v$.
	Hence to get \ref{enu: tree of cylinders functorial collapse - stab} it suffices to prove that every vertex stabilizer of $S_v$ either belongs to $\mathcal A$ or is QH with trivial fiber.
	We distinguish two cases.
	
	Assume first that $G_v \in \mathcal A$.
	The stabilizer of any vertex in $S_v$ is contained in $G_v$ and thus belongs to $\mathcal A$.
	Since $G$ does not split as a free product, the stabilizer $G_e$ of any edge $e$ in $S_v$ is non-trivial.
	Moreover it is contained in $G_v$.
	Hence $G_e \sim G_v$.
	It follows that all the edges of $S_v$ are contained in the same cylinder of $S'$.
	In particular, $G_v$ is elliptic for its action on $S'_c$.	
	Consequently we have domination maps 
	\begin{equation*}
		S' \to T \to S'_c.
	\end{equation*}
	They induce cellular maps
	\begin{equation*}
		S'_c \to T_c \to (S'_c)_c = S'_c.
	\end{equation*}
	which forces $T_c = S'_c$.
	
	Assume now that $v$ is QH with trivial fiber.
	We denote by $\Sigma$ the underlying orbifold.
	Every component of $\partial \Sigma$ is used, see \cite[Lemma~5.15]{Guirardel:2017te}.
	Indeed otherwise, by \cite[Lemma~5.16]{Guirardel:2017te} $G$ would split over the trivial group or $\Z/2$ relative to $\mathcal H$.
	Combining Lemma~5.18 and Proposition~5.21 of \cite{Guirardel:2017te} one observes that the splitting of $G_v$ induced by the action of $G_v$ on $S_v$ is dual to a family of essential simple closed geodesics $\mathcal L$ on $\Sigma$.
	Consequently every vertex stabilizer in $S_v$ is QH with trivial fiber.
	
	We now claim that every cylinder of $S'$ containing an edge in $S_v$ is actually entirely contained in $S_v$.
	Let $e$ be an edge of $S_v$.
	Let $e'$ be an equivalent edge of $S'$, i.e. such that $G_e \sim G_{e'}$.
	Suppose that contrary to our claim, $e'$ is not contained in $S_v$.
	Since cylinders are connected, we can assume that $e'$ has an endpoint in $S_v$ and thus is not collapsed in $T$.
	Recall that $G$ does not split over the trivial group of $\Z/2$ relative to $\mathcal H$.
	Hence the subgroup $G_{e'}$ is either finite (but distinct from $\{1\}$ and $\Z/2$) or has finite index in a boundary subgroup of $G_v = \pi_1(\Sigma)$. 
	According to our previous discussion, there is an essential simple closed curve $\gamma \in \mathcal L$ such that $G_e = \group \gamma$.
	Since $G_e \sim G_{e'}$, the groups $G_{e'}$ and $G_e$ commute.
	This is impossible and completes the proof of our claim.
	
	It follows from our claim that the map $S' \to T$ is either injective or constant when restricted to cylinders.
	It implies that $S'_c$ refines $T_c$, see \cite[Remark 4.13]{Guirardel:2011wv}
 \end{proof}

\begin{theo}
\label{res: compatible JSJ}
	Let $G$ be a CSA group.
	Let $\mathcal A$ be the collection of all its abelian subgroups.
	Let $\mathcal H$ be a class of subgroups of $G$.
	Assume that $G$ is finitely generated relative to $\mathcal H$ and does not split over the trivial group or $\Z/2$ relative to $\mathcal H$.
	Let $S$ be a JSJ tree of $G$ over $\mathcal A$ relative to $\mathcal H$.
	The tree of cylinders $S_c$ of $S$ is an $(\mathcal A, \mathcal H \cup \mathcal A_{\rm nc})$-tree which is compatible with any $(\mathcal A, \mathcal H)$-tree.
\end{theo}

\begin{proof}
	Consider an $(\mathcal A, \mathcal H)$-tree $T$.
	According to \cite[Lemma~2.8(1)]{Guirardel:2017te}, $S$ has a refinement $S'$ that dominates $T$.
	It follows from \autoref{res: tree of cylinders and compatibility} that $S'_c$ and $T$ have a common refinement, say $R$.
	However $S'_c$ also refines $S_c$ (\autoref{res: tree of cylinders functorial collapse}).
	Hence $R$ is a common refinement of $S_c$ and $T$.
\end{proof}

%%%%%%%%%%%%%%%%%%%%%%%%%%%%%%%%%%%%%%%%%%%%%%%%%%%%%%%%%%%%%%%%%%%%%%%%%%%%%%%%%%%%%
%
\subsection{Modular group}
%
%%%%%%%%%%%%%%%%%%%%%%%%%%%%%%%%%%%%%%%%%%%%%%%%%%%%%%%%%%%%%%%%%%%%%%%%%%%%%%%%%%%%%
\label{sec: modular group}

Let $G$ be a CSA group.
As before $\mathcal A$ stands for the class of all abelian subgroups of $G$, while $\mathcal H$ is an arbitrary class of subgroups of $\mathcal A$.
We assume that $G$ is finitely generated relative to $\mathcal H$ and does not split over the trivial group or $\Z/2$ relative to $\mathcal H$.

Consider a one edge splitting of $G$ of the form $G = A \ast_C B$ or $G = A \ast_C$.
Let $u \in G\setminus\{1\}$ be an element centralizing $C$.
A \emph{Dehn twist} by $u$ is an automorphism which fixes $A$ and 
\begin{itemize}
	\item conjugates $B$ by $u$, in the amalgamated product case;
	\item sends the stabler letter $t$ to $tu$, in the HNN case.
\end{itemize}
Let $T$ be an $(\mathcal A, \mathcal H)$-tree.
Given an edge $e$ of $T$, a \emph{Dehn twist over $e$} is a Dehn twist in the one edge splitting obtained by collapsing all the edges of $T$ which are not in the orbit of $e$.
Let $v$ be a vertex of $T$.
Any automorphism $\alpha_v$ of the vertex group $G_v$ which acts by conjugacy on the adjacent edge groups can be extended to an automorphism $\alpha$ of $G$.
We call $\alpha$ the \emph{natural extension} of $\alpha_v$.
Assume now that $G_v$ is abelian.
Let $E_v$ be the subgroup of $G_v$ generated by the stabilizer of every edge in $T$ starting at $v$.
The \emph{peripheral subgroup} of $v$ is the subgroup
\begin{equation*}
	K_v = \bigcap_{\phi} \ker \phi,
\end{equation*}
where $\phi$ runs over all morphisms $\phi \colon G_v \to \Z$ whose kernel contains $E_v$.

\begin{defi}
\label{def: modular group}
	Let $T$ be an $(\mathcal A, \mathcal H)$-tree.
	The \emph{modular group of $G$ associated to $T$}, denoted by $\mcg{G, T}$, is the subgroup of the automorphism group $\aut G$ generated by
	\begin{itemize}
		\item inner automorphisms,
		\item Dehn twists over edges of $T$,
		\item natural extensions of automorphisms of QH vertex groups (called \emph{surface type automorphisms}),
		\item natural extensions of automorphisms of abelian vertex groups $G_v$ that fix the peripheral subgroup $K_v$ (called \emph{generalized Dehn twists}).
	\end{itemize}
	If $S$ is a JSJ tree  of $G$ over $\mathcal A$ relative to $\mathcal H$ and $S_c$ its tree of cylinders, we simply write $\mcg G$ for $\mcg {G,S_c}$.
	We say that $\mcg G$ is the \emph{modular group} of $G$ (over $\mathcal A$ relative to $\mathcal H$).
\end{defi}

The next statement is a consequence of the compatibility of the JSJ tree  (\autoref{res: compatible JSJ}).

\begin{prop}
\label{res: compatible modular group.}
	Let $G$ be a CSA group.
	Let $\mathcal A$ be the collection of all its abelian subgroups.
	Let $\mathcal H$ be a class of subgroups of $G$.
	Assume that $G$ is finitely generated relative to $\mathcal H$ and does not split over the trivial group or $\Z/2$ relative to $\mathcal H$.
	For every $(\mathcal A, \mathcal H)$-tree $T$, we have $\mcg{G,T} \subset \mcg G$.
\end{prop}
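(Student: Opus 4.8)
The plan is to argue generator by generator. By \autoref{def: modular group}, $\mcg{G,T}$ is generated by inner automorphisms, by Dehn twists over edges of $T$, by surface type automorphisms of its QH vertex groups, and by generalized Dehn twists of its abelian vertex groups; inner automorphisms obviously lie in $\mcg G = \mcg{G,S_c}$, so it remains to place the other three families inside $\mcg G$. The common device will be \autoref{res: compatible JSJ}: since $S_c$ is compatible with $T$, I can fix an $(\mathcal A, \mathcal H)$-tree $R$ equipped with collapse maps onto $T$ and onto $S_c$, and then use repeatedly that — by \autoref{res: jsj - existence} together with the tree of cylinders construction — every vertex group of $S_c$ that is blown up in a refinement, in particular in $R$, is abelian or QH with trivial fiber (the universally elliptic rigid groups are never blown up).

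First I would treat Dehn twists. For a Dehn twist by $u$ over an edge $e$ of $T$, the edge $e$ has an uncollapsed preimage $\tilde e$ in $R$, and the twist over $e$ equals the twist over $\tilde e$; I then inspect the image of $\tilde e$ in $S_c$. If it is an edge, the twist is a Dehn twist over an edge of $S_c$; if it is a vertex $w$, then $w$ is blown up, so $G_w$ is abelian — the twist is then a generalized Dehn twist of $G_w$ — or QH — the twist is then along a simple closed curve of the underlying orbifold, that is a surface type automorphism of $G_w$. For a generalized Dehn twist attached to an abelian vertex $v$ of $T$, the vertex group $G_w$ of $S_c$ containing a conjugate of $G_v$ is again abelian or QH with trivial fiber, and comparing the peripheral structures of $v$ and $w$ through their blow-ups in $R$ shows that the automorphism is realized as a generalized Dehn twist of $G_w$ (or, when $G_w$ is QH, as a curve twist or an inner automorphism). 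In every case the resulting automorphism of $G$ lies in $\mcg G$.

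The delicate case, and the step I expect to be the main obstacle, is that of surface type automorphisms of a QH vertex $v$ of $T$; it rests on the \emph{enclosing property} of the JSJ. Since the orbifold $\Sigma$ underlying $G_v$ is compact and hyperbolic it carries an essential simple closed curve, so $G_v$ is not universally elliptic and is therefore conjugate into a flexible vertex group of $S$; by \autoref{res: jsj - existence} this group is abelian or QH with trivial fiber, and it cannot be abelian, so it is a QH vertex group $G_{v'}$ with orbifold $\Sigma'$. Compatibility of $S_c$ with all the trees obtained from $T$ by cutting $\Sigma$ along simple closed curves (\autoref{res: compatible JSJ}, together with the curve analysis of \cite{Guirardel:2017te}) then forces $\Sigma$ to embed into $\Sigma'$ as an essential sub-orbifold. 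A mapping class of $\Sigma$ extends, by the identity on the complement, to a mapping class of $\Sigma'$ inducing the same automorphism of $G$; and because the edge groups incident to $v'$ are extended boundary subgroups, $v'$ persists as a QH vertex of the tree of cylinders $S_c$ with the same group and orbifold, so this surface type automorphism lies in $\mcg{G,S_c}$. Together the three cases give $\mcg{G,T}\subset\mcg G$. The hypotheses that $G$ is CSA and splits over neither the trivial group nor $\Z/2$ enter precisely where they power \autoref{res: compatible JSJ} and \autoref{res: tree of cylinders functorial collapse}, which supply the refinement $R$ and the persistence of QH vertices in $S_c$.
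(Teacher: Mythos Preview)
Your approach is correct and is precisely the argument the paper has in mind: the paper gives no proof beyond the one-line remark that the statement ``is a consequence of the compatibility of the JSJ tree (\autoref{res: compatible JSJ})'', and your generator-by-generator analysis through a common refinement $R$ of $T$ and $S_c$ is the standard way to unpack that remark.

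Two points deserve a slightly firmer statement. First, when you assert that a vertex $w$ of $S_c$ blown up in $R$ must be abelian or QH, the reason is exactly the one you indicate (rigid vertex groups are universally elliptic, hence fix a point in $R$), but you should note that one may need to replace $R$ by a smaller common refinement so that the preimage in $R$ of each rigid vertex of $S_c$ is actually a single vertex; this is harmless since collapsing a $G_w$-invariant subtree on which $G_w$ is elliptic still yields a common refinement of $T$ and $S_c$. Second, in the surface case you pass from a QH vertex $v'$ of the JSJ tree $S$ to the tree of cylinders $S_c$; this is legitimate because $S$ and $S_c$ lie in the same deformation space relative to $\mathcal H \cup \mathcal A_{\rm nc}$ (\autoref{res: tree of cylinders properties}), and non-abelian vertex groups are unchanged under the tree-of-cylinders construction, so the QH vertex and its orbifold persist in $S_c$. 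With these two clarifications your argument is complete.
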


% 2024-08-15 READ UNTIL HERE

%%%%%%%%%%%%%%%%%%%%%%%%%%%%%%%%%%%%%%%%%%%%%%%%%%%%%%%%%%%%%%%%%%%%%%%%%%%%%%%%%%%%%
%%%%%%%%%%%%%%%%%%%%%%%%%%%%%%%%%%%%%%%%%%%%%%%%%%%%%%%%%%%%%%%%%%%%%%%%%%%%%%%%%%%%%
%
\section{Action on a limit tree}
%
%%%%%%%%%%%%%%%%%%%%%%%%%%%%%%%%%%%%%%%%%%%%%%%%%%%%%%%%%%%%%%%%%%%%%%%%%%%%%%%%%%%%%
%%%%%%%%%%%%%%%%%%%%%%%%%%%%%%%%%%%%%%%%%%%%%%%%%%%%%%%%%%%%%%%%%%%%%%%%%%%%%%%%%%%%%
\label{sec: action on limit tree}

This section is the core of the article.
In view of \autoref{res: trichotomy limit groups}, we need to understand non-abelian, divergent limit groups over the class $\mathfrak H_\delta$ (for some fixed value of $\delta$).
In this section, we prove that such a limit group $(L,\eta)$ admits an action on an $\R$-tree.
Although this action is not super-stable, we have enough control to decompose it into a graph of actions whose vertex actions are either peripheral, simplicial, axial or of Seifert type (\autoref{res: final decomposition tree}).
Compared to similar results in the literature, the peripheral action is a new kind of vertex action that comes from the action of the groups $H \in \mathfrak H$ around cone points.
The decomposition into a graph of actions provides a splitting $S$ of $L$ over abelian subgroups.
We prove that if $S$ is not a generalized root splitting, then we can shorten the morphisms that were used to define $L$ (\autoref{res: shortening argument}).

%%%%%%%%%%%%%%%%%%%%%%%%%%%%%%%%%%%%%%%%%%%%%%%%%%%%%%%%%%%%%%%%%%%%%%%%%%%%%%%%%%%%%
%
\subsection{Building a limit tree}
%
%%%%%%%%%%%%%%%%%%%%%%%%%%%%%%%%%%%%%%%%%%%%%%%%%%%%%%%%%%%%%%%%%%%%%%%%%%%%%%%%%%%%%
\label{sec: building limit tree}

\paragraph{A limit of actions.}
Recall that energies and divergent limit groups have been defined in Definitions~\ref{def: energy} and \ref{def: div limit group} respectively.
Let $G$ be a group and $U$ a finite generating set of $G$.
Let $\delta \in \R_+^*$.
Let $(L, \eta)$ be a \emph{non-abelian}, \emph{divergent} limit group over $\mathfrak H_\delta$.
By definition, there exists a sequence of morphisms $\phi_k \colon G \onto (\Gamma_k, X_k, \mathcal C_k)$ converging to $(L,\eta)$, where the triple $(\Gamma_k, X_k, \mathcal C_k)$ belongs to $\mathfrak H_\delta(\tilde \rho_k)$, for every $k \in \N$, and such that the sequence $(\tilde \rho_k)$ as well as the energy $\lambda_\infty(\phi_k, U)$ diverge to infinity\footnote{We assumed here that $\tilde \rho_k$ diverges to infinity to be coherent with the configuration we will handle later. However for the remainder of \autoref{sec: action on limit tree} it would be enough to assume that $\rho_k \geq C \delta_k$ for some fixed sufficiently large $C$.}

Since $L$ is not abelian, we can assume without loss of generality that the image of $\phi_k$ is not abelian, for every $k \in \N$.
According to \autoref{res: comparing energies}, the energies of $\phi_k$ are related by 
\begin{equation*}
	\lambda_\infty(\phi_k, U) \leq \lambda^+_1(\phi_k, U) \leq 2 \card U \lambda_\infty(\phi_k, U).
\end{equation*}
In addition there exists a point $o_k \in X_k$ with the following properties.
\begin{itemize}
	\item  $\lambda_\infty(\phi_k, U, o_k) \leq \lambda_\infty(\phi_k, U) + 20\delta$,
	\item $o_k$ is at a distance at most $\card U \lambda_\infty(\phi_k, U) + 20\delta$ from a point (almost) minimizing the restricted energy $\lambda^+_1(\phi_k, U)$.
\end{itemize}
For the moment we prefer not to give a name to the latter point, as it may eventually change later (see \autoref{res: basepoint in minimal tree}).
For simplicity we let 
\begin{equation*}
	\epsilon_k = \frac 1 {\lambda_\infty\left(\phi_k, U\right)}.
\end{equation*}
It follows from our assumptions that $(\epsilon_k)$ converges to zero.
In the remainder of this section, unless mentioned otherwise, we work with the rescaled space $\epsilon_k X_k$, i.e. for every $x,x' \in X_k$
\begin{equation*}
	\dist[\epsilon_k X_k] x{x'} = \epsilon_k \dist[X_k] x{x'}.
\end{equation*}
We endow $\epsilon_k X_k$ with the action by isometries of $G$ induced by $\phi_k$.
This space is $\delta_k$-hyperbolic, where $\delta_k = \epsilon_k \delta$ converges to zero.
For simplicity we write $\rho_k = \epsilon_k \tilde \rho_k$ for the rescaled version of the parameter $\tilde \rho_k$, so that the ratio $\rho_k / \delta_k$ diverges to infinity.

\paragraph{Asymptotic properties.}
We fix a non-principal ultra-filter $\omega \colon \mathcal P(\N) \to \{0, 1\}$.
Recall that a property $P_k$ holds \oas if 
\begin{equation*}
	\omega(\set{k \in \N}{P_k\ \text{holds}}) = 1.
\end{equation*}
A real valued sequence $(u_k)$ is \oeb if there exists $M \in \R$, such that $\abs{u_k} \leq M$, \oas.
It \emph{converges to $\ell$ along $\omega$}, if for every $\epsilon \in \R_+^*$, we have $\abs{u_k - \ell} < \epsilon$, \oas.

The next statements and proofs involve several asymptotic comparisons.
To that end, we adopt Landau's notation.
Let $f,g \colon \N \to \R$ be two maps.
We write 
\begin{itemize}
	\item $f =o(g)$, if for every $\epsilon > 0$, we have $\abs{f(k)} \leq \epsilon \abs{g(k)}$ \oas;
	\item $f=O(g)$, if there exists $C > 0$ such that $\abs{f(k)} \leq C \abs{g(k)}$ \oas.
\end{itemize}
We say that $f$ and $g$ are \emph{equivalent} and write $f \sim g$, if $f = g + o(g)$.
If there is no ambiguity, we make the following abuse of notation and write
\begin{equation*}
	f(k) = o \left(g(k)\right), \quad
	f(k) = O\left(g(k)\right)
	\quad \text{and} \quad
	f(k) \sim g(k).
\end{equation*}

\paragraph{Limit tree.}
We consider now an ultra-limit of metric spaces. 
We refer the reader to Dru\c tu and Kapovich \cite[Chapter~10]{Drutu:2018aa} for a detailed exposition of this construction.
As the sequence $(\delta_k)$ converges to zero, the limit space
\begin{equation*}
	X_\omega = \limo \left(\epsilon_k X_k, o_k\right)
\end{equation*}
is an $\R$-tree.
The action of $G$ on $X_k$ induces an action without global fixed point of $G$ on $X_\omega$  \cite{Paulin:1991fx}.
Moreover the stable kernel of $(\phi_k)$ acts trivially on $X_\omega$.
Consequently, the limit group $L$ acts on $X_\omega$ without global fixed point.

We denote by $T$ the minimal $L$-invariant subtree of $X_\omega$.
Although the construction does depend on the sequence $(\Gamma_k, \phi_k)$ we call $T$ the \emph{limit tree of $(L, \eta)$}.

\begin{lemm}
\label{res: limit action - dist to cone point}
	The basepoint $o_k \in X_k$ is such that \oas
	\begin{equation*}
	\label{eqn: limit action - dist to cone point}
		2\rho_k \leq 2d(o_k, \mathcal C_k) +1 + 20\delta_k
	\end{equation*}
\end{lemm}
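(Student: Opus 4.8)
The plan is to read the inequality off the way the basepoint $o_k$ was chosen. Recall that, in the (unrescaled) metric of $X_k$, the point $o_k$ was selected so that $\lambda_\infty(\phi_k, U, o_k) \leq \lambda_\infty(\phi_k, U) + 20\delta$. Multiplying through by $\epsilon_k = 1/\lambda_\infty(\phi_k, U)$ and using $\delta_k = \epsilon_k\delta$, this says that in the rescaled space $\epsilon_k X_k$ one has $\lambda_\infty(\phi_k, U, o_k) \leq 1 + 20\delta_k$. So it suffices to control $d(o_k, \mathcal C_k)$ in terms of the energy at $o_k$ --- which is exactly the estimate established inside the proof of \autoref{res: comparing energies}.

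Concretely, I would apply that estimate to $\Gamma_k$ acting on $\epsilon_k X_k$ with apex set $\mathcal C_k$: since by our standing assumption the image of $\phi_k$ is not abelian, it yields, at the point $o_k$,
\[
	2\, d(o_k, \mathcal C_k) \;\geq\; 2\rho_k - \lambda_\infty(\phi_k, U, o_k) \;\geq\; 2\rho_k - 1 - 20\delta_k,
\]
all distances being measured in $\epsilon_k X_k$. Rearranging gives $2\rho_k \leq 2\, d(o_k, \mathcal C_k) + 1 + 20\delta_k$. This in fact holds for every $k$, hence in particular \oas.

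For completeness, the estimate being invoked is proved by contradiction, and I would briefly recall its mechanism: were $d(o_k, \mathcal C_k) < \rho_k - \frac{1}{2}(1 + 20\delta_k)$, a nearest apex $c_k \in \mathcal C_k$ would satisfy $\dist{\phi_k(u) c_k}{c_k} \leq 2\, d(o_k, c_k) + \dist{\phi_k(u) o_k}{o_k} < 2\rho_k$ for every $u \in U$; since $\mathcal C_k$ is $\Gamma_k$-invariant and $2\rho_k$-separated, this forces $\phi_k(u) c_k = c_k$ for all $u \in U$, i.e. the image of $\phi_k$ is contained in $\stab{c_k}$. By \autoref{res; family - fully conical subgroup} the group $\stab{c_k}$ is not elusive, and Axiom~\ref{enu: family axioms - elem subgroups} then forces it --- hence the image of $\phi_k$ --- to be abelian, contradicting our assumption. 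I do not anticipate any genuine difficulty here: the only things to watch are the bookkeeping between the metric on $X_k$ and the rescaled one on $\epsilon_k X_k$, and the fact that the non-abelianity of the image of $\phi_k$ --- arranged at the outset of this subsection --- is precisely what makes the apex-avoidance estimate available.
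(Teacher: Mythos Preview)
Your proof is correct and follows essentially the same argument as the paper: assume a cone point is too close to $o_k$, use the triangle inequality and the $2\rho_k$-separation of $\mathcal C_k$ to force $\phi_k(U)\subset\stab{c_k}$, then invoke \autoref{res; family - fully conical subgroup} and Axiom~\ref{enu: family axioms - elem subgroups} to contradict non-abelianity. The only difference is packaging---you cite the claim inside the proof of \autoref{res: comparing energies} and recall its mechanism, whereas the paper runs the contradiction directly without cross-referencing.
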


\begin{proof}
	Assume that contrary to our claim, there exists $c_k \in \mathcal C_k$ such that $2\rho_k > 2d(o_k, c_k) +1 +20\delta_k$, \oas.
	Since any generator $u\in U$ moves $o_k$ by at most $1 + 20\delta_k$, the triangle inequality yields $\dist{\phi_k(u)c_k}{c_k} < 2\rho_k$, \oas, for every $u \in U$.
	Nevertheless two distinct cone points in $\epsilon_k X_k$ are at a distance at least $2\rho_k$ far appart.
	Consequently $\phi_k(u)$ fixes $c_k$, for every $u \in U$, and thus $\phi_k(G)$ is contained in $\stab{c_k}$ \oas.
	Combining \autoref{res; family - fully conical subgroup} and \ref{enu: family axioms - elem subgroups} we observe that $\stab{c_k}$ is abelian.
	It follows that the image of $\phi_k$ is abelian \oas, which contradicts our assumption.
\end{proof}

\paragraph{Limit radius.}
Note that we have no control a priori on the behavior of $(\rho_k)$, beside the fact that $\rho_k/\delta_k$ diverges to infinity.
Nevertheless we define 
\begin{equation*}
	\rho = \limo \rho_k.
\end{equation*}
It belongs to $\R_+ \cup \{\infty\}$.

%%%%%%%%%%%%%%%%%%%%%%%%%%%%%%%%%%%%%%%%%%%%%%%%%%%%%%%%%%%%%%%%%%%%%%%%%%%%%%%%%%%%%
%
\subsection{Peripheral structure}
%
%%%%%%%%%%%%%%%%%%%%%%%%%%%%%%%%%%%%%%%%%%%%%%%%%%%%%%%%%%%%%%%%%%%%%%%%%%%%%%%%%%%%%
\label{res: peripheral structure}

We endow the tree $X_\omega$ with an additional structure that will keep track of the elliptic elements fixing apices from $\mathcal C_k$.
First let 
\begin{equation*}
	\Pi_\omega \mathcal C_k
	= \set{(c_k) \in \Pi_{\N} \mathcal C_k}{\dist {o_k}{c_k} - \rho_k\ \text{is \oeb}}.
\end{equation*}
We write $\mathcal C_\omega$ for the quotient of $\Pi_\omega \mathcal C_k$ by the equivalence relation which identifies two sequences $(c_k)$ and $(c'_k)$ if $c_k = c'_k$ \oas.
Given a sequence $(c_k) \in \Pi_\omega \mathcal C_k$, we write $c = [c_k]$ for its image in $\mathcal C_\omega$.
The action of $\Gamma_k$ on $X_k$ induces an action of $L$ on $\mathcal C_\omega$.

\paragraph{Busemann functions.}
Let $c = [c_k]$ be a point in $\mathcal C_\omega$.
We define a map $b_c \colon X_\omega \to \R$ by sending $x = \limo x_k$ to
\begin{equation*}
	b_c(x) = \limo \left( \dist{c_k}{x_k} - \rho_k \right).
\end{equation*}
It follows from our definition of $\mathcal C_\omega$ and the triangle inequality that the quantity $\dist {c_k}{x_k} - \rho_k$ is \oeb.
Hence $b_c$ is well defined.
Note also that the collection $(b_c)$ is $L$-invariant in the following sense: $b_{gc}(gx) = b_c(x)$, for all $c \in \mathcal C_\omega$, $x \in X_\omega$, and $g \in L$.
The map $b_c$ can be seen as the \emph{Busemann function} at a point of $X_\omega \cup \partial X_\omega$.
We distinguish two cases.

\begin{itemize}
	\item 
	Assume first that $\rho$ is \emph{finite}.
    By definition of $\Pi_\omega \mathcal C_k$ the distance $\dist{o_k}{c_k}$ is \oeb.
    Hence $\limo c_k$ is a well-defined point in $X_\omega$, which we still denote by $c$.
    One checks that $b_c(x) = \dist cx - \rho$, for every $x \in X_\omega$.
    \item
    Assume now that $\rho$ is \emph{infinite}.
    It follows from \autoref{res: limit action - dist to cone point} that $d_k = \dist{o_k}{c_k}$ diverges to infinity.
    For every $k \in \N$, we write $\sigma_k \colon [0, d_k] \to X_k$ for a $(1, \delta_k)$-quasi-geodesic joining $o_k$ to $c_k$.
	Passing to the limit, we get a geodesic ray $\sigma \colon \R_+ \to X_\omega$ starting at $o$, and sending $t$ to $\limo \sigma_k(t)$.
	We denote by $\xi \in \partial X_\omega$ the endpoint at infinity of $\sigma$.
	Let $x = \limo x_k$ be a point of $X_\omega$.
	It is a standard exercise of hyperbolic geometry to prove that for every $t \geq \dist{o_k}{x_k}$, we have
	\begin{equation*}
		\abs{ \left(\dist{c_k}{x_k} - \dist{c_k}{o_k} \right) - \left( \dist{\sigma_k(t)}{x_k} - \dist{\sigma_k(t)}{o_k}\right)} \leq 100\delta_k
	\end{equation*}
	Passing to the limit we get
	\begin{equation*}
		b_c(x) - b_c(o) = \lim_{t \to \infty} \left(\dist{\sigma(t)}x - \dist{\sigma(t)}o\right).
	\end{equation*}
	Consequently $b_c$ is the Busemann function at $\xi$.
\end{itemize}
As a Busemann function in a tree, $b_c$ is convex and $1$-Lipschitz.

\paragraph{Peripheral subtrees.}
Let $r \in \R_+$ and $c = [c_k]$ be a point in $\mathcal C_\omega$.
We define the \emph{open} and \emph{closed} \emph{peripheral subtrees} centered at $c$ by
\begin{equation*}
	P(c,r) = \set{x \in X_\omega}{b_c(x) < -r}
	\quad \text{and} \quad
	\bar P(c,r) = \set{x \in X_\omega}{b_c(x) \leq -r}.
\end{equation*}
For simplicity, we write $P(c)$ and $\bar P(c)$ for $P(c,0)$ and $\bar P(c,0)$ respectively.
Peripheral subtrees are \emph{strictly convex}, i.e. if $x$ and $y$ are two points in $\bar P(c,r)$, then the interior of the geodesic $\geo xy$ is contained in $P(c,r)$.
As $(b_c)$ is $L$-invariant, we get that $gP(c,r) = P(gc,r)$, for every $g \in L$.
The same statement holds for closed peripheral subtrees.

\begin{voca*}
	A subtree of $X_\omega$ is \emph{non-degenerate}, if it contains at least two distinct points.
	It is \emph{transverse} if its intersection with any peripheral subtree is degenerated.
\end{voca*}

Let us now give another description of peripheral subtrees.
Given a collection $(Y_k)$ where each $Y_k$ is a subset of $X_k$, we define $\limo Y_k$ as the subset of $X_\omega$ given by
\begin{equation*}
	\limo Y_k = \set{y = \limo y_k}{y_k \in Y_k\ \oas}.
\end{equation*}
\begin{lemm}
\label{res: balls as limit of balls}
	Let $c = [c_k]$ be a point in  $\mathcal C_\omega$.
	For every $r \in \R_+$, we have
	\begin{equation*}
		\bar P(c,r) = \limo B\left(c_k, \rho_k-r\right).
	\end{equation*}
\end{lemm}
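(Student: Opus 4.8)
The plan is to prove the two inclusions separately; only the inclusion $\bar P(c,r)\subseteq\limo B(c_k,\rho_k-r)$ will require any real work. For the easy direction, I would take $x=\limo x_k$ with $x_k\in B(c_k,\rho_k-r)$ \oas, so that $\dist{c_k}{x_k}<\rho_k-r$ and hence $\dist{c_k}{x_k}-\rho_k<-r$ \oas; passing to the $\omega$-limit gives $b_c(x)=\limo\left(\dist{c_k}{x_k}-\rho_k\right)\le-r$, i.e.\ $x\in\bar P(c,r)$. (The strict inequality defining the open ball only survives in the $\omega$-limit as a non-strict one, which is exactly why the right-hand side is built from open balls while the left-hand side is a \emph{closed} peripheral subtree.)

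For the reverse inclusion, I would start with $x\in\bar P(c,r)$ and an arbitrary representative $x=\limo x_k$, and write $\dist{c_k}{x_k}=\rho_k-r+\eta_k$, so that $\limo\eta_k\le 0$ since $b_c(x)\le-r$. If $\{k:\eta_k<0\}\in\omega$ the representative already works, as then $x_k\in B(c_k,\rho_k-r)$ \oas. The substantial case is when $\eta_k\ge 0$ \oas, which together with $\limo\eta_k\le 0$ forces $\limo\eta_k=0$: here I would perturb the representative by sliding $x_k$ towards $c_k$. Using that $X_k$ is a length space, pick a rectifiable path from $c_k$ to $x_k$ of length at most $\dist{c_k}{x_k}+\delta_k$ and let $x'_k$ be the point on it at arclength $\rho_k-r-2\delta_k$ from $c_k$; when $\rho=\infty$ one may instead slide along the $(1,\delta_k)$-quasi-geodesic from $o_k$ towards $c_k$ that carries the Busemann function $b_c$. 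Then $\dist{c_k}{x'_k}\le\rho_k-r-2\delta_k<\rho_k-r$, so $x'_k\in B(c_k,\rho_k-r)$, while $\dist{x_k}{x'_k}\le\eta_k+3\delta_k$, which tends to $0$ along $\omega$ because $\limo\eta_k=0$ and $\delta_k\to 0$; hence $x=\limo x'_k$ lies in $\limo B(c_k,\rho_k-r)$.

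The main obstacle is precisely this boundary case $b_c(x)=-r$, where the chosen representative genuinely has to be moved and one exploits the length-space structure of $X_k$ together with the vanishing of the rescaled hyperbolicity constant $\delta_k$. One further point to check is that the sliding parameter $\rho_k-r-2\delta_k$ is non-negative \oas, so that $x'_k$ is well defined: this holds automatically when $\rho=\infty$ (as $\rho_k\to\infty$) and, when $\rho$ is finite, whenever $r<\rho$ (since then $\rho_k-r\to\rho-r>0$), which is the only range that matters in what follows.
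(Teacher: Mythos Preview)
Your proof is correct and follows essentially the same strategy as the paper: one inclusion is immediate, and for the other you perturb the representative $x_k$ toward $c_k$ using that $X_k$ is a length space. The paper's version is a bit leaner---it does not split into the cases $\eta_k<0$ versus $\eta_k\ge 0$, and it slides by a generic positive sequence $t_k\to 0$ with $\dist{c_k}{x_k}<\rho_k-r+t_k$ rather than involving $\delta_k$; your parenthetical alternative for $\rho=\infty$ (sliding along the quasi-geodesic from $o_k$ to $c_k$) is both unnecessary and not quite right, since that path need not pass near $x_k$, but your primary argument via a path from $c_k$ to $x_k$ already works uniformly in $\rho$.
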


\begin{proof}
	Let $r \in \R_+$.
	For simplicity, we denote by $B\subset X_\omega$ the limit of the balls $B(c_k, \rho_k-r)$.
	The inclusion $B \subset \bar P(c,r)$ is straightforward.
	Let us prove the converse inclusion.
	Let $x = \limo x_k$ be a point of $X_\omega$ such that $b_c(x) \leq -r$.
	In particular, there is a sequence of positive numbers $(t_k)$ that converges to zero and such that 
	\begin{equation*}
		\dist{c_k}{x_k} < \rho_k-r + t_k,\ \oas.
	\end{equation*}
	Recall that $X_k$ is a length space, for every $k \in \N$.
	Thus there exist points $x'_k \in B(c_k, \rho_k-r)$ such that $\dist{x_k}{x'_k} < t_k$ \oas.
	In particular, $x = \limo x'_k$.
	Thus $x$ belongs to $B$.
\end{proof}

We now interpret the distance in a peripheral subtree in terms of the asymptotic behavior of the angle cocycle given by Axiom~\ref{enu: family axioms - conical - 2}.

\begin{lemm}
\label{res: limit tree -  asymp angle}
	Let $c = [c_k]$ be a point in $\mathcal C_\omega$.
	For every $k \in \N$, we denote by 
	\begin{equation*}
		\theta_k \colon \mathring B(c_k, \rho_k) \times \mathring B(c_k, \rho_k) \to \R / \Theta_k\Z
	\end{equation*}
	the angle cocycle provided by Axiom~\ref{enu: family axioms - conical - 2}.
	In addition, we choose sequences $x_k, x'_k \in \mathring B(c_k, \rho_k)$ and let $d = \limo \dist {x_k}{x'_k}$. 
	Assume that there is $r \in \R_+$, such that 
	\begin{equation*}
		\limo \dist {c_k}{x_k} - \rho_k  = \limo \dist{c_k}{x'_k} - \rho_k = -r.
	\end{equation*}
	\begin{enumerate}
		\item \label{enu: limit tree -  asymp angle - direct} 
		If $d + 2r < 2 \rho$, then $\tilde \theta_k\left(x_k,x'_k\right)$ converges to zero.
		More precisely
		\begin{equation*}
			\ln  \abs{\tilde \theta_k\left(x_k,x'_k\right)}  \leq - \frac {\rho_k - r}{\epsilon_k}  + \frac d{2\epsilon_k} + o\left( \frac 1{\epsilon_k}\right),
		\end{equation*}
		with equality whenever $d\neq 0$.
		\item \label{enu: limit tree -  asymp angle - reverse} 
		Conversely, if there exists $\ell \in \R_+$ such that $\ell + 2r < 2\rho$ and 
		\begin{equation*}
			\ln  \abs{\tilde \theta_k\left(x_k,x'_k\right)}  \leq - \frac {\rho_k - r}{\epsilon_k}  + \frac\ell{2\epsilon_k} + o\left( \frac 1{\epsilon_k}\right),
		\end{equation*}
		then $d \leq \ell$.
	\end{enumerate}

	 \end{lemm}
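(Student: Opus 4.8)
The plan is to work directly with the law of cosines in Axiom~\ref{enu: family axioms - conical - 2} applied in each space $X_k$, and to track the asymptotics of the angle $\tilde\theta_k(x_k,x'_k)$ as $\epsilon_k \to 0$. Throughout, write $s_k = \dist{c_k}{x_k}$ and $s'_k = \dist{c_k}{x'_k}$, and recall that all distances are measured in the rescaled metric $\epsilon_k X_k$, so that the formula~(\ref{eqn: family axioms - conical - 2}) holds with $\rho$ replaced by $\rho_k$ and $\delta$ by $\delta_k = \epsilon_k\delta$, and in particular $\abs{\dist[\epsilon_k X_k]{x_k}{x'_k} - \ell_k} \leq \delta_k = o(1)$ where $\cosh(\ell_k/\epsilon_k)$ is given by the right-hand side of~(\ref{eqn: family axioms - conical - 2}) with all lengths rescaled by $1/\epsilon_k$. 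So it suffices to understand the behaviour of $\ell_k$; up to $o(1)$ error it equals $\dist{x_k}{x'_k}$, hence $\ell_k \to d$.

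First I would set up the algebra. Put $\phi_k = \min\{\pi, \tilde\theta_k(x_k,x'_k)\}$ and rewrite the cosine law as
\begin{equation*}
	\cosh\frac{\ell_k}{\epsilon_k} = \cosh\frac{s_k}{\epsilon_k}\cosh\frac{s'_k}{\epsilon_k}(1 - \cos\phi_k) + \cosh\frac{s_k - s'_k}{\epsilon_k},
\end{equation*}
using the identity $\cosh a\cosh b - \sinh a\sinh b = \cosh(a-b)$. Under the hypothesis $s_k - \rho_k \to -r$ and $s'_k - \rho_k \to -r$, the difference $s_k - s'_k = o(1)$, so $\cosh((s_k-s'_k)/\epsilon_k)$ is, up to a factor $e^{o(1/\epsilon_k)}$, comparable to $1$ — more precisely its logarithm is $o(1/\epsilon_k)$ only if $\abs{s_k - s'_k} = o(\epsilon_k)$, which is not guaranteed; here one must be slightly careful and note that $\abs{s_k - s'_k}$ could be of order $\epsilon_k$. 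To avoid this, I would instead directly compare $\ell_k$ to $2r$: since $\ell_k \to d$ and $d + 2r < 2\rho$ in case~\ref{enu: limit tree -  asymp angle - direct}, we have $\ell_k < 2(\rho_k - r) - c$ for some $c > 0$ \oas, so $\cosh(\ell_k/\epsilon_k) = \tfrac12 e^{\ell_k/\epsilon_k}(1+o(1))$ is \emph{much smaller} than $\cosh(s_k/\epsilon_k)\cosh(s'_k/\epsilon_k) \sim \tfrac14 e^{(s_k+s'_k)/\epsilon_k}$, since $\ell_k/\epsilon_k < (s_k + s'_k)/\epsilon_k - c/\epsilon_k$. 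This forces $1 - \cos\phi_k \to 0$, hence $\phi_k \to 0$ and $\tilde\theta_k(x_k,x'_k)\to 0$, and moreover $1 - \cos\phi_k \sim \tfrac12\phi_k^2 \sim \tfrac12\tilde\theta_k(x_k,x'_k)^2$. Taking logarithms of the displayed identity and using $\cosh(s_k/\epsilon_k)\cosh(s'_k/\epsilon_k) \sim \tfrac14\exp((s_k+s'_k)/\epsilon_k)$ together with $s_k + s'_k = 2\rho_k - 2r + o(1)$, I get
\begin{equation*}
	\frac{\ell_k}{\epsilon_k} + o\Bigl(\frac1{\epsilon_k}\Bigr) = 2\ln\abs{\tilde\theta_k(x_k,x'_k)} + \frac{2\rho_k - 2r}{\epsilon_k} + o\Bigl(\frac1{\epsilon_k}\Bigr),
\end{equation*}
where in the term $\cosh((s_k - s'_k)/\epsilon_k)$ I use that it is negligible compared to the dominant term precisely because $d + 2r < 2\rho$ gives $\ell_k - \abs{s_k - s'_k}$ bounded below, hence $\exp(\ell_k/\epsilon_k)$ dominates $\exp(\abs{s_k-s'_k}/\epsilon_k)$. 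Rearranging and using $\ell_k = d + o(1)$ yields $\ln\abs{\tilde\theta_k(x_k,x'_k)} = -(\rho_k - r)/\epsilon_k + d/(2\epsilon_k) + o(1/\epsilon_k)$, which is the claimed estimate when $d \neq 0$; when $d = 0$ the same reasoning gives the inequality $\ln\abs{\tilde\theta_k} \leq -(\rho_k-r)/\epsilon_k + o(1/\epsilon_k)$ since then $\ell_k = o(1)$ only up to the $\delta_k$-error and one only retains an upper bound.

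For the converse~\ref{enu: limit tree -  asymp angle - reverse}, I would run the same computation in reverse: assuming $\ln\abs{\tilde\theta_k(x_k,x'_k)} \leq -(\rho_k-r)/\epsilon_k + \ell/(2\epsilon_k) + o(1/\epsilon_k)$ with $\ell + 2r < 2\rho$, I first note this again forces $\tilde\theta_k(x_k,x'_k)\to 0$, so $1 - \cos\phi_k \sim \tfrac12\tilde\theta_k(x_k,x'_k)^2$, and then plug into the cosine law. The term $\cosh(s_k/\epsilon_k)\cosh(s'_k/\epsilon_k)(1-\cos\phi_k)$ has logarithm at most $(2\rho_k - 2r)/\epsilon_k + 2\ln\abs{\tilde\theta_k} + o(1/\epsilon_k) \leq \ell/\epsilon_k + o(1/\epsilon_k)$, while the term $\cosh((s_k-s'_k)/\epsilon_k)$ has logarithm $\abs{s_k-s'_k}/\epsilon_k + o(1/\epsilon_k) = o(1/\epsilon_k)$ (as $s_k - s'_k \to 0$, though one should check this term cannot exceed $\ell/\epsilon_k$; since $\ell \geq 0$ and $\abs{s_k-s'_k}\to 0$ this is clear for large $k$ unless $\ell = 0$, in which case $s_k - s'_k$ must also be shown to be $o(\epsilon_k)$ — here one uses the first estimate applied with the roles reversed, or simply that in a tree $d \geq \abs{b_c(x)-b_c(x')} = 0$ anyway). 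Hence $\cosh(\ell_k/\epsilon_k)$ has logarithm at most $\ell/\epsilon_k + o(1/\epsilon_k)$, giving $\ell_k \leq \ell + o(1)$, and passing to the $\omega$-limit, $d = \limo \ell_k \leq \ell$.

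The main obstacle I anticipate is the careful bookkeeping of the two competing terms on the right-hand side of the cosine law — the ``angular'' term $\cosh(s_k/\epsilon_k)\cosh(s'_k/\epsilon_k)(1-\cos\phi_k)$ versus the ``radial'' term $\cosh((s_k - s'_k)/\epsilon_k)$ — and in particular making rigorous the claim that the radial term is asymptotically negligible. This is exactly where the hypotheses $d + 2r < 2\rho$ (resp. $\ell + 2r < 2\rho$) are used: they guarantee that the angular contribution, which scales like $e^{(2\rho_k - 2r)/\epsilon_k}\tilde\theta_k^2$, genuinely dominates, so that $\cosh(\ell_k/\epsilon_k)$ is controlled by it alone. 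A secondary subtlety is the $\delta_k$-error between $\ell_k$ and $\dist[\epsilon_k X_k]{x_k}{x'_k}$: since $\delta_k = \epsilon_k\delta = o(1)$ this only affects $\ell_k$ by $o(1)$, hence does not affect $\limo\ell_k = d$, but one should state this explicitly. Everything else is routine manipulation of $\cosh$, $\sinh$ and logarithms in the regime where the arguments tend to $+\infty$.
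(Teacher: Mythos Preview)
Your approach is essentially the paper's: rewrite the law of cosines, exploit that the radial distances $s_k,s'_k$ both tend to $\rho_k-r$, and read off the asymptotics of $\tilde\theta_k$ by taking logarithms. One algebraic slip: the identity you use should read
\[
\cosh\frac{\ell_k}{\epsilon_k} \;=\; \sinh\frac{s_k}{\epsilon_k}\sinh\frac{s'_k}{\epsilon_k}\,(1-\cos\phi_k)\;+\;\cosh\frac{s_k-s'_k}{\epsilon_k},
\]
with $\sinh\cdot\sinh$ rather than $\cosh\cdot\cosh$; since $\sinh t\sim\cosh t$ as $t\to\infty$ this does not affect your asymptotic argument.

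The paper streamlines the bookkeeping you flag as the ``main obstacle'' by going one step further: using $1-\cos\phi_k=2\sin^2(\phi_k/2)$ and $\cosh u-\cosh v=2\sinh\tfrac{u+v}{2}\sinh\tfrac{u-v}{2}$, it factors the relation as
\[
\sinh\frac{s_k}{\epsilon_k}\sinh\frac{s'_k}{\epsilon_k}\sin^2\!\frac{\phi_k}{2}
\;=\;\sinh\frac{d_k+|s_k-s'_k|}{2\epsilon_k}\,\sinh\frac{d_k-|s_k-s'_k|}{2\epsilon_k},
\]
where $d_k$ is the exact hyperbolic distance (so $|s_k-s'_k|\le d_k$ automatically). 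Now $\ln\sinh t = t+O(1)$ for large $t$ and $\ln\sinh t\le t$ always, so taking logarithms gives the claimed equality when $d>0$ (both factors on the right diverge) and the inequality when $d=0$, with no need to argue separately that the ``radial'' term is negligible. For part~\ref{enu: limit tree -  asymp angle - reverse} the paper simply assumes $d>0$ (the case $d=0$ being trivial), invokes the \emph{equality} in part~\ref{enu: limit tree -  asymp angle - direct}, and compares; this avoids the extra case analysis on $|s_k-s'_k|/\epsilon_k$ in your direct bound. Your argument is correct, but the factorisation makes the proof shorter and the role of the hypothesis $d+2r<2\rho$ (resp.\ $\ell+2r<2\rho$) more transparent.
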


\begin{rema}
	Let us make a few comments on this statement.
	We keep the same notation.
	\begin{itemize}
		\item Recall that $\tilde \theta_k(x_k, x'_k)$ stands for the unique representative of $\theta_k(x_k,x'_k)$ in $(-\Theta_k/2, \Theta_k/2]$.

		\item If $x = \limo x_k$ and $x' = \limo x_k$ are two well-defined points of $X_\omega$, then our assumption can be reformulated as
		\begin{equation*}
			b_c(x) = b_c(x')=-r
			\quad \text{and} \quad 
			\dist x{x'} + 2r < 2\rho.
		\end{equation*}
		Nevertheless this more general form will be useful later when the sequences $\dist{x_k}{o_k}$  and $\dist{x'_k}{o_k}$ may not be \oeb.
		
		\item It follows from the triangle inequality that $d + 2r \leq 2\rho$.
		The parameter $\rho = \limo \rho_k$ can be infinite, in which case the assumption $d +2r < 2\rho$ is automatically satisfied, provided $d$ is finite.
		
		\item By convention $\ln(0) = -\infty$, so that the inequalities hold even if $\tilde \theta_k\left(x_k,x'_k\right)$ vanishes.
	\end{itemize}
\end{rema}

\begin{proof}
	Recall that, in this section, all distances are measured with the \emph{rescaled} metric of the spaces $\epsilon_k X_k$.
	For simplicity we let 
	\begin{equation*}
		s_k = \dist{c_k}{x_k}
		\quad \text{and} \quad
		s'_k = \dist{c_k}{x'_k},
	\end{equation*}
	so that 
	\begin{equation}
	\label{eqn: limit tree -  asymp angle - s}
		s_k = \rho_k - r + o(1)
		\quad \text{and} \quad
		s'_k = \rho_k - r + o(1)
 	\end{equation}
	In particular, $\abs{s_k- s'_k} = o(1)$.
	In addition, we let
	\begin{equation*}
		\beta_k = \min \left\{ \pi, \tilde \theta_k(x_k, x'_k)\right\},
	\end{equation*}
	and define the quantity $d_k \in \R_+$ by the relation
	\begin{equation}
	\label{eqn: limit tree -  asymp angle - d original}
		\cosh\left(\frac {d_k}{\epsilon_k}\right) 
		= \cosh\left(\frac{s_k}{\epsilon_k}\right)\cosh\left(\frac{s'_k}{\epsilon_k}\right) - \sinh\left(\frac{s_k}{\epsilon_k}\right)\sinh\left(\frac{s'_k}{\epsilon_k}\right)\cos\beta_k.
	\end{equation}
	It follows from Axiom~\ref{enu: family axioms - conical - 2} that 
	\begin{equation*}
		d_k = \dist {x_k}{x'_k} + o(1) = d + o(1).
	\end{equation*}
	(Equation (\ref{eqn: limit tree -  asymp angle - d original}) is the law of cosine in $\H^2$.
	The triangle inequality in the hyperbolic plane implies in particular that $\abs{s_k-s'_k} \leq d_k$.)
	A trigonometric computation shows that (\ref{eqn: limit tree -  asymp angle - d original}) is equivalent to 
	\begin{equation}
	\label{eqn: limit tree -  asymp angle - d}
		\sinh\left(\frac{s_k}{\epsilon_k}\right)\sinh\left(\frac{s'_k}{\epsilon_k}\right)\sin^2\left(\frac{\beta_k}2\right)
		=  \sinh\left(\frac {d_k + \abs{s_k-s'_k}}{2\epsilon_k}\right)\sinh\left(\frac {d_k - \abs{s_k-s'_k}}{2\epsilon_k}\right).
	\end{equation}
	Note that $(s_k)$ and $(s'_k)$ converge to $\rho - r$, which is assume to be positive both  in \ref{enu: limit tree -  asymp angle - direct}  and \ref{enu: limit tree -  asymp angle - reverse}.
	Hence $(s_k / \epsilon_k)$ and $(s'_k / \epsilon_k)$ diverge to infinity.
	If $d > 0$, then both sequences
	\begin{equation*}
		\left(\frac {d_k + \abs{s_k-s'_k}}{2\epsilon_k}\right)
		\quad \text{and} \quad 
		\left(\frac {d_k - \abs{s_k-s'_k}}{2\epsilon_k}\right)
	\end{equation*}
	diverge to infinity.
	This follows indeed from the fact that $(\epsilon_k)$ converges to zero while $\abs{s_k - s'_k} = o(1)$.
	Using the fact that $t \mapsto \ln \sinh(t) - t$ is bounded in a neighborhood of infinity, (\ref{eqn: limit tree -  asymp angle - d}) becomes
	\begin{equation*}
		\ln \circ \sin \left(\frac{\abs{\beta_k}}2\right) = - \frac 1{2\epsilon_k} \left(s_k + s'_k - d_k \right) + O(1).
	\end{equation*}	
	Recall that $\ln \sinh(t) \leq t$ for every $t \in \R_+$.
	If $d = 0$, we can use this inequality to bound from above the right hand side of (\ref{eqn: limit tree -  asymp angle - d}).
	We get 
	\begin{equation*}
		\ln \circ \sin \left(\frac{\abs{\beta_k}}2\right) \leq  - \frac 1{2\epsilon_k} \left(s_k + s'_k - d_k \right) + O(1).
	\end{equation*}	
	Plugging in the asymptotic behaviors of $s_k$ and $s'_k$ given by (\ref{eqn: limit tree -  asymp angle - s})  as well as the one of $d_k$ we obtain
	\begin{equation}
	\label{eqn: limit tree -  asymp angle - almost there}
		\ln \circ \sin \left(\frac{\abs{\beta_k}}2\right) \leq - \frac {\rho_k - r}{\epsilon_k}  + \frac d{2\epsilon_k}  + o\left( \frac 1{\epsilon_k}\right),
	\end{equation}
	with equality whenever $d > 0$.
	
	Assume now that  $d+ 2r < 2\rho$.
	It follows that the right hand side of the previous inequality diverges to $- \infty$.
	In particular, $\beta_k$ and thus $\tilde \theta_k(x_k,x'_k)$ converges to zero, moreover
	\begin{equation}
	\label{eqn: limit tree -  asymp angle - sine}
		\ln \circ \sin \left(\frac{\abs{\beta_k}}2\right) = \ln\abs{\tilde \theta_k(x_k,x'_k)} + O(1).
	\end{equation}
	Combining the previous two identities leads to Point~\ref{enu: limit tree -  asymp angle - direct}.
	
	Assume now that there exists $\ell \geq 0$ such that $\ell + 2r < 2\rho$ and 
	\begin{equation*}
		\ln  \abs{\tilde \theta_k\left(x_k,x'_k\right)}  \leq - \frac {\rho_k - r}{\epsilon_k}  + \frac\ell{2\epsilon_k} + o\left( \frac 1{\epsilon_k}\right),
	\end{equation*}
	Without loss of generality we can assume that $d > 0$.
	Otherwise the result is straightforward.
	Since $\ell + 2r < 2\rho$, the right hand side of the previous inequality diverges to $- \infty$.
	In particular, $\tilde \theta_k(x_k,x'_k)$ converges to zero and (\ref{eqn: limit tree -  asymp angle - sine}) still holds.
	As $d > 0$, Inequality (\ref{eqn: limit tree -  asymp angle - almost there}) is actually an equality.
	Combining these observations we get
	\begin{equation*}
		- \frac {\rho_k - r}{\epsilon_k}  + \frac d{2\epsilon_k}  - o\left( \frac 1{\epsilon_k}\right)
		 \leq \ln \circ \sin \left(\frac{\abs{\beta_k}}2\right) 
		 \leq - \frac {\rho_k - r}{\epsilon_k}  + \frac\ell{2\epsilon_k} + o\left( \frac 1{\epsilon_k}\right).
	\end{equation*}
	Hence $d \leq \ell$, which completes the proof of Point~\ref{enu: limit tree -  asymp angle - reverse}.
\end{proof}

\begin{lemm}
\label{res: intersection pieces}
	Let $c, c' \in \mathcal C_\omega$.
	The following statements are equivalent.
	\begin{enumerate}
		\item \label{enu: intersection pieces - equality}
		$c = c'$
		\item \label{enu: intersection pieces - close}
		$\bar P(c)$ and $\bar P(c')$ have a non-degenerate intersection.
		\item \label{enu: intersection pieces - open}
		$P(c)$ and $P(c')$ have a non-empty intersection.
	\end{enumerate}
\end{lemm}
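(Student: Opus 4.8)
The plan is to prove the cycle of implications $\ref{enu: intersection pieces - equality} \Rightarrow \ref{enu: intersection pieces - close} \Rightarrow \ref{enu: intersection pieces - open} \Rightarrow \ref{enu: intersection pieces - equality}$. Two of these are soft; the geometric content sits in the last one, where the $2\rho_k$-separation of the apices is used.

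For $\ref{enu: intersection pieces - equality} \Rightarrow \ref{enu: intersection pieces - close}$, observe that if $c = c'$ then $\bar P(c) \cap \bar P(c') = \bar P(c)$, so it suffices to show that a single closed peripheral subtree is non-degenerate. Here I would invoke \autoref{res: balls as limit of balls}, which identifies $\bar P(c)$ with $\limo B(c_k, \rho_k)$, and split into the two cases already isolated in the discussion of Busemann functions preceding the statement. When $\rho$ is finite, $c = \limo c_k$ is a genuine point of $X_\omega$ and $b_c(x) = \dist cx - \rho$ for every $x \in X_\omega$, so $\bar P(c)$ is the closed ball of radius $\rho$ about $c$; since $X_\omega$ is an $\R$-tree on which $L$ acts without global fixed point — in particular it is path connected and not reduced to $\{c\}$ — a short initial sub-segment of a geodesic issued from $c$ supplies a second point of $\bar P(c)$. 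When $\rho$ is infinite, $b_c$ is the Busemann function at the boundary point $\xi$ attached to $c$, hence $\bar P(c)$ is a horoball containing an entire geodesic ray.

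For $\ref{enu: intersection pieces - close} \Rightarrow \ref{enu: intersection pieces - open}$, I would use the strict convexity of peripheral subtrees recalled above. Taking two distinct points $x, y$ in $\bar P(c) \cap \bar P(c')$, the interior of the geodesic $\geo xy$ is contained in $P(c)$ (strict convexity of $\bar P(c)$, applied to $x,y \in \bar P(c)$) and simultaneously in $P(c')$ (strict convexity of $\bar P(c')$, applied to $x,y \in \bar P(c')$); since $x \neq y$ this interior is non-empty, so $P(c) \cap P(c') \neq \emptyset$.

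For $\ref{enu: intersection pieces - open} \Rightarrow \ref{enu: intersection pieces - equality}$, choose $x = \limo x_k$ in $P(c) \cap P(c')$, with $c = [c_k]$ and $c' = [c'_k]$. The two inequalities $b_c(x) < 0$ and $b_{c'}(x) < 0$ unwind to $\limo\bigl(\dist{c_k}{x_k} - \rho_k\bigr) < 0$ and $\limo\bigl(\dist{c'_k}{x_k} - \rho_k\bigr) < 0$, so $\dist{c_k}{x_k} < \rho_k$ and $\dist{c'_k}{x_k} < \rho_k$ hold $\omega$-almost surely. The triangle inequality then gives $\dist{c_k}{c'_k} < 2\rho_k$ $\omega$-almost surely, and since $\mathcal C_k$ is $2\rho_k$-separated this forces $c_k = c'_k$ $\omega$-almost surely, i.e. $c = c'$ in $\mathcal C_\omega$. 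The only steps asking for a little care are the dichotomy on $\rho$ in $\ref{enu: intersection pieces - equality} \Rightarrow \ref{enu: intersection pieces - close}$ — which I expect to be the mildly delicate point, since it forces one to unpack both branches of the definition of $b_c$ — and the elementary observation, used in $\ref{enu: intersection pieces - open} \Rightarrow \ref{enu: intersection pieces - equality}$, that a sequence with strictly negative ultralimit is $\omega$-almost surely strictly negative.
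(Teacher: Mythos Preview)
Your proof is correct and follows the same cycle of implications as the paper. The only difference is expository: the paper dismisses $(1) \Rightarrow (2)$ as trivial while you unpack it via the dichotomy on $\rho$, and for $(3) \Rightarrow (1)$ the paper phrases the triangle-inequality step through the lemma identifying $\bar P(c)$ with $\limo B(c_k,\rho_k)$ rather than unwinding $b_c$ directly, but the substance is identical.
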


\begin{proof}
	The implication \ref{enu: intersection pieces - equality} $\Rightarrow$ \ref{enu: intersection pieces - close} is trivial.
	Assume that $\bar P(c)\cap \bar P(c')$ is non-degenerate.
	Since balls/horoballs are strictly quasi-convex, $P(c) \cap P(c')$ is non-empty hence  \ref{enu: intersection pieces - close} $\Rightarrow$ \ref{enu: intersection pieces - open} holds.
	Suppose now that $P(c) \cap P(c')$ is non-empty.
	We write $c =[c_k]$ and $c' = [c'_k]$ where $(c_k)$ and $(c'_k)$ are sequences of $\Pi_\omega \mathcal C_k$.
	Combining \autoref{res: balls as limit of balls} with the triangle inequality, one observes that $\dist{c_k}{c'_k} < 2 \rho_k$ \oas.
	Recall that any two distincts apices in $\mathcal C_k$ are at a distance at least $2\rho_k$ far apart.
	Thus $c_k = c'_k$ \oas, i.e. $c = c'$.
	\end{proof}

\begin{lemm}
\label{res: large intersection w/ pieces}
	Let $c \in \mathcal C_\omega$ and $r \in \R_+$.
	Let $x,x' \in X_\omega \setminus P(c)$.
	If $\geo x{x'}$ intersects $\bar P(c,r)$, then $\geo x{x'} \cap \bar P(c)$ is an geodesic of length at least $2r$.
\end{lemm}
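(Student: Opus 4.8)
The plan is to use the convexity of the Busemann function $b_c$ and the elementary geometry of $\R$-trees. Since $x, x' \notin P(c)$ we have $b_c(x) \geq 0$ and $b_c(x') \geq 0$, while the hypothesis that $\geo x{x'}$ meets $\bar P(c,r)$ gives a point $m \in \geo x{x'}$ with $b_c(m) \leq -r$. First I would restrict attention to the geodesic segment $\geo x{x'}$ and consider the function $t \mapsto b_c(\gamma(t))$ where $\gamma \colon [0, \dist x{x'}] \to X_\omega$ is the arc-length parametrization. This is convex and $1$-Lipschitz on the interval, nonnegative at both endpoints, and attains a value $\leq -r$ somewhere in between.

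Next I would locate the sub-segment on which $b_c \circ \gamma \leq 0$, which is exactly $\geo x{x'} \cap \bar P(c)$. By convexity and continuity, $\{t : b_c(\gamma(t)) \leq 0\}$ is a closed sub-interval $[a, b] \subset [0, \dist x{x'}]$; since $b_c(\gamma(a)) = b_c(\gamma(b)) = 0$ (the endpoints $x, x'$ lie outside $P(c)$, so $b_c \geq 0$ there, and by continuity the boundary values on this sub-interval are exactly $0$), and since $b_c$ drops to at least $-r$ at some interior point $t_0 \in (a,b)$, the $1$-Lipschitz property forces $t_0 - a \geq r$ and $b - t_0 \geq r$. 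Hence $b - a \geq 2r$, which says precisely that $\geo x{x'} \cap \bar P(c) = \gamma([a,b])$ has length at least $2r$. The fact that this intersection is itself a geodesic (a single sub-segment, not a disconnected set) is a consequence of the convexity of $b_c$: a convex function on an interval has connected sublevel sets.

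The one point that needs a little care — and which I expect to be the only mild obstacle — is justifying that $\geo x{x'} \cap \bar P(c)$ is exactly the sublevel set $\{b_c \circ \gamma \leq 0\}$ restricted to the segment; this is immediate from the definition $\bar P(c) = \{y : b_c(y) \leq 0\}$ given just before the lemma. One should also note that $\bar P(c)$ is non-degenerate here (it contains a sub-segment of positive length $2r > 0$ as soon as $r > 0$; and if $r = 0$ the statement is vacuous since length $\geq 0$ always holds), so no degenerate edge cases arise. I would then simply record: parametrize $\geo x{x'}$, apply convexity and the $1$-Lipschitz bound on $b_c$ to the two monotone branches flanking the minimum, and conclude $\operatorname{length}\!\left(\geo x{x'} \cap \bar P(c)\right) \geq 2r$.
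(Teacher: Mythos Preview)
Your argument is correct and essentially the same as the paper's: the paper also fixes a point $z$ on $\geo x{x'}$ with $b_c(z)\le -r$, uses continuity to find points $y\in\geo zx$ and $y'\in\geo z{x'}$ where $b_c$ vanishes, and then applies the $1$-Lipschitz bound to get $\dist zy\ge r$ and $\dist z{y'}\ge r$. The only cosmetic difference is that the paper justifies connectedness of $\geo x{x'}\cap\bar P(c)$ by invoking that $\bar P(c)$ is a closed subtree, whereas you invoke convexity of $b_c$; these are equivalent here.
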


\begin{proof}
	Since $\bar P(c)$ is closed subtree, the intersection $\geo x{x'} \cap \bar P(c)$ is a geodesic.
	Fix a point $z \in \geo x{x'}$ such that $b_c(z) \leq -r$.
	We denote by $y$ a point of $\geo zx$ such that $b_c(y) = 0$.
	Such a point exists as $b_c$ is continuous.
	The point $y'$ on $\geo z{x'}$ is defined in a similar way.
	By construction $\geo y{y'}$ is contained in $\bar P(c)$.
	Since $b_c$ is $1$-Lipschitz, we have $\dist zy \geq \abs{b_c(y) - b_c(z)}$, that is $\dist zy \geq r$.
	Similarly $\dist z{y'} \geq r$.
	Hence the result.
\end{proof}

%%%%%%%%%%%%%%%%%%%%%%%%%%%%%%%%%%%%%%%%%%%%%%%%%%%%%%%%%%%%%%%%%%%%%%%%%%%%%%%%%%%%%
%
\subsection{Action of the limit group}
%
%%%%%%%%%%%%%%%%%%%%%%%%%%%%%%%%%%%%%%%%%%%%%%%%%%%%%%%%%%%%%%%%%%%%%%%%%%%%%%%%%%%%%
\label{sec: action of the limit group}

In this section we study the action of $L$ on the $\R$-tree $X_\omega$ built above.
Recall that all the groups $\Gamma_k$ are CSA \cite{Myasnikov:1996aa}.
Since CSA is a closed property for the topology or marked groups, $L$ is CSA as well.

\begin{defi}
\label{def: jsj - subgroups}
	An element $g \in L$ is \emph{elusive} if given any pre-image $\tilde g \in G$ of $g$, the element $\phi_k(\tilde g) \in \Gamma_k$ is elusive \oas.
	An element of $L$ is \emph{visible} if it is not elusive.
	A subgroup $L$ is \emph{elusive} if all its elements are elusive.
	It is \emph{visible} otherwise.
\end{defi}

\paragraph{Elliptic elements.}

\begin{lemm}
\label{res: limit tree - elliptics become elliptic}
	Let $g \in G$.
	If $\phi_k(g)$ is elliptic (for its action on $X_k$) \oas, then $\eta(g)$ is elliptic (for its action on $X_\omega$).
\end{lemm}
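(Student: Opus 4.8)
The plan is to produce, for $\omega$-almost every $k$, a point $y_k$ of the rescaled space $\epsilon_k X_k$ which lies at \oeb\ distance from the basepoint $o_k$ and is moved by $\phi_k(g)$ by at most $14\delta_k$; since $\delta_k = \epsilon_k\delta$ converges to $0$, the ultralimit $y = \limo y_k$ will then be a well defined point of $X_\omega$ fixed by $\eta(g)$, so that $\eta(g)$ is elliptic.

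First I would note that an elliptic isometry has vanishing stable translation length, so $\snorm[\epsilon_k X_k]{\phi_k(g)} = 0$ \oas; by (\ref{eqn: regular vs stable length}), applied in the $\delta_k$-hyperbolic space $\epsilon_k X_k$, this gives $\norm[\epsilon_k X_k]{\phi_k(g)} \leq 8\delta_k$. On the other hand, writing $g$ as a word of length $m$ over $U \cup U^{-1}$ and using that $o_k$ almost realizes the $L^\infty$-energy of $\phi_k$, an immediate induction yields $\dist[\epsilon_k X_k]{\phi_k(g)o_k}{o_k} \leq m(1 + 20\delta_k)$, a bound which is \oeb.

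Next I would set $d_k = \norm[\epsilon_k X_k]{\phi_k(g)} + 6\delta_k$. Since $d_k > \norm[\epsilon_k X_k]{\phi_k(g)}$ the set $\fix{\phi_k(g), d_k}$ is non-empty, and since $d_k > \max\{\norm{\phi_k(g)}, 5\delta_k\}$ it is $8\delta_k$-quasi-convex. If $o_k$ already belongs to $\fix{\phi_k(g), d_k}$ I take $y_k = o_k$; otherwise I take $y_k$ to be a $\delta_k$-projection of $o_k$ onto $\fix{\phi_k(g),d_k}$, and inequality (\ref{eqn: displacement outside fixed set}), applied to the singleton $U = \{\phi_k(g)\}$ with parameter $d_k$, bounds $d(o_k, \fix{\phi_k(g),d_k})$ by $\frac{1}{2}\bigl(\dist[\epsilon_k X_k]{\phi_k(g)o_k}{o_k} + 4\delta_k\bigr)$, hence by $\frac{1}{2}\bigl(m(1+20\delta_k)+4\delta_k\bigr)$. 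In either case $\dist{o_k}{y_k}$ is \oeb\ and $\dist{\phi_k(g)y_k}{y_k} \leq d_k \leq 14\delta_k$, the last inequality by (\ref{eqn: regular vs stable length}) together with $\snorm{\phi_k(g)} = 0$ once more. Passing to the ultralimit, $y = \limo y_k$ is a well defined point of $X_\omega$ and $\dist{\eta(g)y}{y} = \limo \dist{\phi_k(g)y_k}{y_k} = 0$; thus $\eta(g)$ fixes $y$ and is elliptic.

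The argument is short, and the only step requiring a little care is the handling of the set $\fix{\phi_k(g),d_k}$: one must split off the trivial case $o_k \in \fix{\phi_k(g),d_k}$ before invoking (\ref{eqn: displacement outside fixed set}), and verify that the chosen parameter $d_k$ exceeds the threshold $\max\{\norm{\phi_k(g)}, 5\delta_k\}$ that inequality requires. That is the point I would expect to be the (minor) main obstacle; the rest is a direct manipulation of the translation-length estimates.
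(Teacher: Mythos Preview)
Your proof is correct and follows essentially the same route as the paper's: project the basepoint $o_k$ onto a set of the form $\fix{\phi_k(g), d_k}$, use (\ref{eqn: displacement outside fixed set}) to bound the distance from $o_k$ to that projection, and pass to the ultralimit. The only difference is cosmetic: the paper works directly with $d_k = 6\delta_k$, whereas you take $d_k = \norm{\phi_k(g)} + 6\delta_k$; your choice has the minor virtue of making the hypothesis $d_k > \max\{\norm{\phi_k(g)}, 5\delta_k\}$ and the non-emptiness of $\fix{\phi_k(g), d_k}$ transparently satisfied.
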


\begin{proof}
	Let $x_k$ be a $\delta_k$-projection of $o_k$ onto $\fix{\phi_k(g), 6\delta_k}$.
	It follows from (\ref{eqn: displacement outside fixed set}) that 
	\begin{equation*}
		\dist {x_k}{o_k} \leq \frac 12 \dist{o_k}{\phi_k(g)o_k} + 3\delta_k.
	\end{equation*}
	Consequently $\dist {x_k}{o_k}$ is \oeb, thus $x = \limo x_k$ is a well-defined point of $X_\omega$.
	Moreover $\eta(g)$ fixes $x$.
\end{proof}

\paragraph{Elusive subgroups.}

\begin{lemm}
\label{res: limit tree - elusive sbgps are elliptic}
	Every non-trivial elusive element $g \in L$ fixes a single point in $X_\omega$.
	In particular, elusive subgroups are elliptic in $X_\omega$.
\end{lemm}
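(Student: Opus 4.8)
The plan is to derive this purely from Axiom~\ref{enu: family axioms - elusive} and \autoref{res: limit tree - elliptics become elliptic}; the separation of the apices and the conical axioms play no role. Let $g \in L \setminus\{1\}$ be elusive and fix a pre-image $\tilde g \in G$. Since $g$ is non-trivial, $\phi_k(\tilde g) \neq 1$ \oas, and since $g$ is elusive, $\phi_k(\tilde g)$ is elusive \oas; hence by Axiom~\ref{enu: family axioms - elusive}, \oas the isometry $\phi_k(\tilde g)$ is elliptic and $\delta$-thin for its action on $X_k$, thus $\delta_k$-thin for its action on the rescaled space $\epsilon_k X_k$, where $\delta_k = \epsilon_k\delta \to 0$ (thinness rescales homogeneously). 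By \autoref{res: limit tree - elliptics become elliptic}, the action of $g$ on $X_\omega$ is elliptic, so its fixed-point set is a non-empty subtree; it remains to show this subtree is reduced to a point.

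I would do this by contradiction. Suppose $g$ fixes two distinct points $y = \limo y_k$ and $y' = \limo y'_k$ of $X_\omega$, and set $\epsilon = \frac{1}{3}\dist{y}{y'} > 0$. Since $g$ fixes $y$ and $y'$, we have, \oas, $\dist{\phi_k(\tilde g) y_k}{y_k} < \epsilon$ and $\dist{\phi_k(\tilde g) y'_k}{y'_k} < \epsilon$ (distances measured in $\epsilon_k X_k$), that is $y_k, y'_k \in \fix{\phi_k(\tilde g), \epsilon}$. As $\phi_k(\tilde g)$ is $\delta_k$-thin, this set is contained in a ball of radius $\epsilon/2 + \delta_k$, whence $\dist{y_k}{y'_k} \leq \epsilon + 2\delta_k$ \oas. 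Passing to the ultralimit gives $\dist{y}{y'} \leq \epsilon$, contradicting the choice of $\epsilon$. Therefore $g$ fixes a single point.

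For the final assertion I would show that any two non-trivial elements $g, h$ of an elusive subgroup $L_0 \subset L$ have the same fixed point in $X_\omega$. If $gh = 1$ this is immediate. Otherwise $gh$ is again a non-trivial element of $L_0$, hence elusive, hence elliptic on $X_\omega$ by the first part. But if the fixed points of $g$ and $h$ — which are single points — were distinct, the product $gh$ would be a hyperbolic isometry of the $\R$-tree $X_\omega$ (a standard fact: two elliptic isometries of an $\R$-tree with disjoint fixed-point sets have a hyperbolic product, whose translation length is twice the distance between the fixed sets), a contradiction. Consequently all non-trivial elements of $L_0$ share a common fixed point, and $L_0$ is elliptic.

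The only point that needs care is the ultrafilter bookkeeping: that $\phi_k(\tilde g)$ is non-trivial and elusive \oas, and that ``$g$ fixes $\limo y_k$'' unwinds into ``$\phi_k(\tilde g)$ moves $y_k$ by less than any prescribed $\epsilon$, \oas''. Beyond this, the argument is just the one-line thinness estimate above together with the elementary $\R$-tree lemma on products of elliptic isometries, so I do not anticipate a genuine obstacle.
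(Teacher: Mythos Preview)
Your proof is correct and follows essentially the same approach as the paper: ellipticity comes from \autoref{res: limit tree - elliptics become elliptic}, uniqueness of the fixed point from the $\delta_k$-thinness bound on $\fix{\phi_k(\tilde g),d}$ in the rescaled space, and the subgroup statement from the standard fact that two elliptics with disjoint fixed sets in a tree have loxodromic product. The only cosmetic difference is that the paper takes a sequence $d_k\to 0$ with $y_k,z_k\in\fix{\phi_k(\tilde g),d_k}$ and concludes $\dist{y}{z}\leq \limo(d_k+2\delta_k)=0$ directly, whereas you fix $\epsilon=\tfrac13\dist y{y'}$ and derive a contradiction; both arguments amount to the same thinness estimate.
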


\begin{proof}
	We know by \autoref{res: limit tree - elliptics become elliptic} that $g$ is elliptic. 
	Let us prove that it has a unique fix point.
	Let $\tilde g \in G$ a pre-image of $g$.
	Let $y = \limo y_k$ and $z = \limo z_k$ be two points fixed by $g$.
	There exists a sequence $(d_k)$ converging to zero such that $y_k$ and $z_k$ belongs to $\fix{\phi_k(\tilde g), d_k}$ \oas.
	By Axiom~\ref{enu: family axioms - elusive} the diameter of $\fix{\phi_k(\tilde g), d_k}$ is bounded above by $ d_k + 2\delta_k$ \oas.
	Hence $y = z$, which proves the first part of our statement.
	
	Let $H$ be an elusive subgroup of $L$.
	If $H$ is not elliptic, then there exist $u_1, u_2 \in H$ whose unique fixed point in $X_\omega$ are distinct.
	It follows that $u_1u_2$ is loxodromic in $X_\omega$, hence cannot be elusive.
\end{proof}

\paragraph{Cone point stabilizers.}

For every $c \in \mathcal C_\omega$, we write $L_c$ for its stabilizer in $L$.

\begin{lemm}[Cone point stabilizers are radial]
\label{res: tree-graded - fixed point set}
	Let $c \in \mathcal C_\omega$.
	Every element $g \in L_c$ is elliptic.
	Moreover if $g$ is non trivial, then there exists a finite $r \in \intval 0\rho$ such that $\fix g = \bar P(c,r)$.
\end{lemm}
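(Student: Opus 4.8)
The plan is to fix $c = [c_k] \in \mathcal C_\omega$ and $g \in L_c$, and pick a preimage $\tilde g \in G$. First I would observe that $\phi_k(\tilde g)$ fixes $c_k$ \oas: indeed $g$ fixes $c$, so $\phi_k(\tilde g) c_k = c_k$ for a sequence of apices representing the same class in $\mathcal C_\omega$, and since distinct apices of $X_k$ are $2\rho_k$-separated this forces $\phi_k(\tilde g) c_k = c_k$ \oas. Now $\phi_k(\tilde g)$ is an element of $\mathrm{Stab}(c_k)$; by \autoref{res; family - fully conical subgroup} it is conical (if non-trivial), in particular elliptic. Then \autoref{res: limit tree - elliptics become elliptic} gives that $g = \eta(\tilde g)$ is elliptic in $X_\omega$. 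This handles the first assertion, and also shows that the fixed-point set $\mathrm{Fix}(g)$ is a non-degenerate subtree (or a point) of $X_\omega$ as soon as $g \neq 1$.

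Next I would identify $\mathrm{Fix}(g)$ with a closed peripheral subtree. By Axiom~\ref{enu: family axioms - conical - 1}, $\phi_k(\tilde g)$ is $(\rho_k + \delta_k)$-thin at $c_k$, so for every $d \in \R_+$ the set $\fix{\phi_k(\tilde g), d}$ is contained in $B(c_k, d/2 + \rho_k + \delta_k)$; conversely, by the thinness definition, for $x \in \fix{\phi_k(\tilde g),d}$ one controls $\gro{\cdots}{}{x}$, but the cleaner route is to use the cocycle of Axiom~\ref{enu: family axioms - conical - 2}: a point $x_k \in \mathring B(c_k,\rho_k)$ is moved a small amount by $\phi_k(\tilde g)$ precisely when the angle $\tilde\theta_k(\phi_k(\tilde g) x_k, x_k)$ is small, and by \autoref{res: limit tree -  asymp angle} this translates, in the limit, into a statement purely about the Busemann value $b_c(x)$. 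Concretely, I expect to show: a point $x = \limo x_k \in X_\omega$ is fixed by $g$ if and only if $b_c(x) \le -r_g$, where $r_g \in [0,\rho)$ is the ``effective conjugacy radius'' of $g$, defined as the infimum over the fixed set of the limit of $\rho_k - \dist{c_k}{x_k}$. The upper bound $r_g < \rho$ comes from the last clause of Axiom~\ref{enu: family axioms - conical - 2}, namely $\theta(\gamma x, x) \neq 0$ for $\gamma \in \stab{c_k}\setminus\{1\}$: this prevents $g$ from fixing points arbitrarily close to the ``center'' $c$, so $r_g$ is bounded away from $\rho$ (when $\rho$ is finite; when $\rho = \infty$ the bound $r_g < \rho$ is vacuous but one still needs $r_g$ finite, which follows since $g$ fixes \emph{some} point of $X_\omega$, all of which have finite Busemann value). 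Using \autoref{res: large intersection w/ pieces} and strict convexity of peripheral subtrees, the set $\{x : b_c(x) \le -r_g\}$ is exactly $\bar P(c, r_g)$, giving $\fix g = \bar P(c, r_g)$.

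The main obstacle I anticipate is the careful passage between the angle cocycle in $X_k$ and the Busemann function on $X_\omega$, i.e.\ making precise that ``$\phi_k(\tilde g)$ moves $x_k$ by $o(1)$'' is equivalent (asymptotically) to ``$x_k$ lies in a ball of the right radius around $c_k$''. This requires invoking \autoref{res: limit tree -  asymp angle} with $x_k$ and $x'_k = \phi_k(\tilde g) x_k$: the displacement $\dist{x_k}{\phi_k(\tilde g)x_k}$ is controlled by a formula of the form $\ln|\tilde\theta_k| \sim -(\rho_k - r)/\epsilon_k + d/(2\epsilon_k)$, and the non-vanishing of $\theta_k(\gamma x_k, x_k)$ must be used quantitatively — one needs a \emph{uniform} lower bound on $|\tilde\theta_k(\phi_k(\tilde g)x_k, x_k)|$ as $x_k$ ranges over a ball, which is where I expect the argument to require the most care (possibly passing to the subgroup generated by $g$ and exploiting that $\stab{c_k}$ acts on the punctured ball with the cocycle injective in the first variable). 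Once this quantitative control is in place, the rest is a routine ultralimit computation: one extracts $r_g$ as the limit of $\rho_k - \sup\{\dist{c_k}{x_k} : x_k \in \fix{\phi_k(\tilde g), d_k}\}$ for a suitable null sequence $(d_k)$, checks $0 \le r_g$ (triangle inequality) and $r_g < \rho$ (the injectivity clause of the cocycle), and concludes $\fix g = \bar P(c, r_g)$.
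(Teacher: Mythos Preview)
Your overall strategy---thinness at $c_k$ for the inclusion $\fix g \subset \bar P(c)$, then the angle cocycle together with \autoref{res: limit tree -  asymp angle} for the reverse inclusion---matches the paper's. But there are two concrete issues.

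First, the obstacle you anticipate dissolves. You worry about a uniform lower bound on $|\tilde\theta_k(\phi_k(\tilde g)x_k,x_k)|$ as $x_k$ ranges over a ball. There is no need: since $\theta_k$ is a $\stab{c_k}$-invariant cocycle, the quantity $\theta_k(\phi_k(\tilde g)x_k,x_k)$ is \emph{independent of} $x_k\in\mathring B(c_k,\rho_k)$. So it is a single number $\beta_k$ attached to $\phi_k(\tilde g)$, and the paper's argument just analyzes the asymptotics of $\epsilon_k\ln|\beta_k|+\rho_k$: by the direct part of \autoref{res: limit tree -  asymp angle}, the existence of fixed points at every depth $s>r$ forces $\limo(\epsilon_k\ln|\beta_k|+\rho_k)\le r$, and then the reverse part yields $\bar P(c,r)\subset\fix g$ immediately. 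No uniformity in $x_k$ is needed anywhere.

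Second, your claim that $r_g<\rho$ is wrong, and your argument for it is confused. The lemma allows $r\in[0,\rho]$, closed interval; the paper treats $r=\rho$ as a separate (easy) case. When $\rho$ is finite, $c=\limo c_k$ is a genuine point of $X_\omega$ fixed by $g$, and it can happen that $\fix g=\{c\}=\bar P(c,\rho)$. Your reasoning that $\theta_k(\gamma x_k,x_k)\neq 0$ ``prevents $g$ from fixing points close to $c$'' has the geometry backwards: points $x_k$ close to $c_k$ are moved \emph{less} by the rotation, not more, regardless of the (nonzero) angle, so in the limit they are the most likely to be fixed. The non-vanishing clause of Axiom~\ref{enu: family axioms - conical - 2} is not quantitative and plays no role in this proof (it is used later, in \autoref{res: root stability}). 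Note also that \autoref{res: limit tree -  asymp angle} carries the hypothesis $2r<2\rho$, so the case $r=\rho$ genuinely requires separate treatment---but that treatment is one line: $\bar P(c,\rho)=\{c\}$, and $g$ fixes $c$.
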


\begin{proof}
	We write $c =[c_k]$ where $(c_k)$ is a sequence of $\Pi_\omega \mathcal C_k$.
	Let $g \in L_c$ and $\tilde g \in G$ a pre-image of $g$.
	It follows from the definition of $\mathcal C_\omega$ that $\phi_k(\tilde g)$ belongs to $\stab {c_k}$ \oas.
	Hence $g$ is elliptic by \autoref{res: limit tree - elliptics become elliptic}.

	Assume now that $g$ is not trivial.
	We first prove that $\fix g$ is contained in $\bar P(c)$.
	Let $x = \limo x_k$ be a point in $\fix g$.
	By definition there exists a sequence $(d_k)$ converging to zero such that $x_k \in \fix{\phi_k(\tilde g), d_k}$ \oas.
	Since $g$ is non trivial, so is $\phi_k(\tilde g)$ \oas.
	In particular, $\phi_k(\tilde g)$ is $(\rho_k + \delta_k)$-thin at $c_k$ by Axiom~\ref{enu: family axioms - conical - 1}, thus 
	\begin{equation*}
		\dist{x_k}{c_k}\leq  \rho_k + d_k/2 + \delta_k,\ \oas.
	\end{equation*}
	Hence $b_c(x) \leq 0$ as announced.	
	We now let
	\begin{equation*}
		r = \inf \set{-b_c(x)}{gx=x,\ x \in X_\omega}
	\end{equation*}
	It follows from our previous discussion that $r$ is a non negative finite number.
	Note also that $\fix g$ is contained in $\bar P(c,r)$.
	We now focus on the other inclusion.
	We distinguish two cases.
	
	\paragraph{Case 1.} \emph{Assume that $r = \rho$}.
	In particular, $\rho$ is finite and $c = \limo c_k$ is a well-defined point of $X_\omega$, which is fixed by $g$.
	In addition $\bar P(c,\rho)$ is reduced to $\{c\}$, hence is contained in $\fix g$.
		
	\paragraph{Case 2.} \emph{Assume that $r \in [0,\rho)$.}
	For every $k \in \N$, we denote by 
	\begin{equation*}
		\theta_k \colon \mathring B(c_k, \rho_k) \times \mathring B(c_k, \rho_k) \to \R / \Theta_k\Z
	\end{equation*}
	the angle cocycle provided by Axiom~\ref{enu: family axioms - conical - 2}.
	Since $\theta_k$ is a $\stab{c_k}$-invariant cocycle, the quantity $\theta_k(\phi_k(\tilde g)x_k, x_k)$ does not depend on the point $x_k$.
	For simplicity we denote by $\beta_k$ its unique representative in $(-\Theta_k/2, \Theta_k/2]$.
	By definition of $r$, for every $s > r$, the element $g$ fixes a point $x \in X_\omega$, with $-b_c(x) \leq s$.
	Since $2r < 2\rho$, it follows from \autoref{res: limit tree -  asymp angle}\ref{enu: limit tree -  asymp angle - direct}  that the sequence
	\begin{equation*}
		\epsilon_k \ln \abs{\beta_k} + \rho_k
	\end{equation*}
	admits an $\omega$-limit in $[-\infty, r]$.
	Using \autoref{res: limit tree -  asymp angle}\ref{enu: limit tree -  asymp angle - reverse}, we get that $\bar P(c,r)$ is contained in $\fix g$.
\end{proof}

Combined with \autoref{res: intersection pieces}, it yields the following statement.

\begin{coro}
\label{res: intersection stab of cone pts}
	Let $c,c'\in \mathcal C_\omega$.
	If $c \neq c'$, then $L_c \cap L_{c'}$ is trivial.
\end{coro}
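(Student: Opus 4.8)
The plan is to argue by contradiction: suppose $g \in L_c \cap L_{c'}$ with $g \neq 1$ while $c \neq c'$. Since $g$ lies in $L_c$, \autoref{res: tree-graded - fixed point set} provides a finite number $r \in \intval 0\rho$ with $\fix g = \bar P(c,r)$; applying the same lemma to $g \in L_{c'}$ gives a finite $r' \in \intval 0\rho$ with $\fix g = \bar P(c',r')$. In particular $\fix g$ is a non-empty subtree of $X_\omega$ which, being contained in $\bar P(c,r) \subseteq \bar P(c)$ and in $\bar P(c',r') \subseteq \bar P(c')$, lies in $\bar P(c) \cap \bar P(c')$.

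First I would treat the case where $\fix g$ is non-degenerate. Then $\bar P(c) \cap \bar P(c')$ is non-degenerate as well, so \autoref{res: intersection pieces} forces $c = c'$, contradicting our assumption. It remains to rule out the possibility that $\fix g$ is a single point. Here I would descend to the approximating groups: fix a pre-image $\tilde g \in G$ of $g$ and write $c = [c_k]$, $c' = [c'_k]$. Because $g$ stabilizes $c$ and $c'$ in $\mathcal C_\omega$, the isometry $\phi_k(\tilde g)$ fixes both $c_k$ and $c'_k$ \oas, whereas $c_k \neq c'_k$ \oas since $c \neq c'$ in $\mathcal C_\omega$. By \autoref{res; family - fully conical subgroup} every non-trivial element of $\stab{c_k}$ is conical -- i.e.\ fixes a unique cone point -- so an element of $\stab{c_k} \cap \stab{c'_k}$ with $c_k \neq c'_k$ must be trivial; hence $\phi_k(\tilde g) = 1$ \oas. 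Consequently $\tilde g$ belongs to the stable kernel of $(\phi_k)$, so $g = \eta(\tilde g) = 1$, the desired contradiction.

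The only routine point is the dichotomy used to split the argument: a closed peripheral subtree $\bar P(c,r)$ fails to be non-degenerate exactly when it is reduced to $\{c\}$, which -- using that $b_c(x) = \dist cx - \rho$ when $\rho$ is finite, and that $\bar P(c,r)$ is a horoball when $\rho = \infty$ -- happens only for $\rho$ finite and $r = \rho$. I do not expect any real obstacle here: the descent to the groups $\Gamma_k$ in fact settles the degenerate case directly (and, more than that, would settle the statement without any case distinction), relying only on the conicality of non-trivial cone-point stabilizers.
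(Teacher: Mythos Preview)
Your proof is correct. The non-degenerate branch is precisely the argument the paper has in mind: the corollary is stated there without proof, simply as the combination of \autoref{res: tree-graded - fixed point set} with \autoref{res: intersection pieces}. Your descent argument, however, is a genuinely different and more direct route that the paper does not spell out. As you rightly observe, it settles the whole statement on its own with no case distinction: $g\in L_c\cap L_{c'}$ forces $\phi_k(\tilde g)\in\stab{c_k}\cap\stab{c'_k}$ \oas, and since non-trivial elements of $\stab{c_k}$ are conical by \autoref{res; family - fully conical subgroup}, this yields $\phi_k(\tilde g)=1$ \oas\ and hence $g=1$. This route is more elementary---it never touches the geometry of $\fix g$ in $X_\omega$---and it is robust to the edge case $\rho=0$, where every $\bar P(c)$ collapses to a single point and \autoref{res: intersection pieces} carries no information (two distinct classes in $\mathcal C_\omega$ can then yield the same point of $X_\omega$). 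The paper's phrasing folds the result into the structural description of fixed-point sets, which fits the surrounding narrative but leaves that degenerate situation implicit.
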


\begin{lemm}
\label{res: tree-graded - abelian piece stabilizer}
	Let $c \in \mathcal C_\omega$.
	The global stabilizer of $\bar P(c)$ coincides with $L_c$.
	It is a maximal abelian subgroup, provided it is not trivial.
\end{lemm}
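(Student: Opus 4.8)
The plan is to prove the two assertions in turn: first that the global stabilizer $N$ of $\bar P(c)$ equals $L_c$, and then that $N$ is a maximal abelian subgroup whenever it is non-trivial. For the first assertion, the inclusion $L_c \subset N$ is immediate since $gc=c$ forces $g\bar P(c) = \bar P(gc) = \bar P(c)$ (using the $L$-equivariance $g\bar P(c,r)=\bar P(gc,r)$ recorded just after the definition of peripheral subtrees). For the reverse inclusion, suppose $g$ globally stabilizes $\bar P(c)$. Then $g$ also stabilizes $\bar P(gc) = g\bar P(c) = \bar P(c)$, so $\bar P(gc)$ and $\bar P(c)$ have non-degenerate (indeed equal) intersection; by \autoref{res: intersection pieces} this yields $gc = c$, i.e.\ $g \in L_c$. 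The only subtlety is to be sure $\bar P(c)$ is non-degenerate so that ``$\bar P(gc)\cap\bar P(c)$ non-degenerate'' is a legitimate hypothesis for \autoref{res: intersection pieces}; this follows because $\rho>0$ (as $\rho_k \geq 1000\delta_k$ and $\delta_k/\epsilon_k = \delta$ is fixed while $\epsilon_k \to 0$, so $\rho_k/\epsilon_k \to \infty$ and hence $\rho = \limo \rho_k > 0$), and a peripheral subtree of ``depth'' $\rho>0$ around a point contains more than one point.

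\textbf{Abelianness of $N = L_c$.} Assume $L_c$ is non-trivial and pick $g \in L_c \setminus \{1\}$. By \autoref{res: tree-graded - fixed point set} we have $\fix g = \bar P(c,r)$ for some finite $r \in [0,\rho)$; in particular $\fix g$ is non-degenerate. Now take any $h \in L_c$. Then $h$ fixes $\bar P(c)$ setwise and fixes $c$, so $h\fix g = \fix{hgh^{-1}}$ is again a closed peripheral subtree around $c$, and since $hgh^{-1} \in L_c$, \autoref{res: tree-graded - fixed point set} tells us $\fix{hgh^{-1}} = \bar P(c, r')$ for some finite $r'$. The cleanest route is then to invoke the CSA property of $L$: $\fix g$ is non-degenerate, so $g$ and $hgh^{-1}$ have a common fixed arc, which in a tree forces the subgroup $\langle g, hgh^{-1}\rangle$ to fix that arc and hence (by the standard fact that in a CSA group, two elements with a common nontrivial ``axis'' relationship commute — concretely, the pointwise stabilizer of a non-degenerate arc is abelian because $L$ is CSA and $g$ lies in a maximal abelian subgroup which is malnormal) $g$ and $hgh^{-1}$ commute. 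More directly: the pointwise stabilizer of the non-degenerate arc $\geo xy \subset \fix g$ is a subgroup of $L$ containing $g$; since $L$ is CSA, any subgroup fixing a non-degenerate arc is abelian (an element fixing the arc centralizes $g$, hence lies in the unique maximal abelian subgroup $M \ni g$, and malnormality of $M$ shows the whole pointwise stabilizer lies in $M$). Thus it suffices to show every $h \in L_c$ fixes a non-degenerate sub-arc of $\fix g$; but $h$ fixes $c$ and preserves $\bar P(c)$, and by \autoref{res: tree-graded - fixed point set} $\fix h \supseteq \bar P(c,r'')$ for some finite $r''$ (when $h \neq 1$), so $\fix g \cap \fix h \supseteq \bar P(c, \max\{r, r''\})$, which is non-degenerate since $\max\{r,r''\} < \rho$. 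Hence $g$ and $h$ commute, so $L_c$ is abelian.

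\textbf{Maximality.} Finally, suppose $L_c \subsetneq M$ with $M$ abelian, and pick $h \in M \setminus \{1\}$. Since $g \in L_c$ is non-trivial and elliptic with non-degenerate fixed set, and $h$ commutes with $g$, the element $h$ preserves $\fix g = \bar P(c,r)$; being a tree isometry preserving a subtree that is stabilized setwise and commuting with $g$ (whose fixed set is exactly that subtree), $h$ must fix $\fix g$ pointwise — otherwise $h$ would act as a non-trivial isometry of the non-degenerate tree $\bar P(c,r)$ commuting with the identity-on-$\bar P(c,r)$ element $g$, which is no contradiction by itself, so one argues instead: $h\bar P(c) $ and $\bar P(c)$ share the non-degenerate set $\fix g$ (as $h$ fixes points of $\fix g$ or at least $h\fix g \cap \fix g \ni$ a non-degenerate piece coming from $\fix{hgh^{-1}} = \fix g$), so by \autoref{res: intersection pieces}, $h \bar P(c) = \bar P(hc)$ meets $\bar P(c)$ non-degenerately, forcing $hc = c$, i.e.\ $h \in L_c$. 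This contradicts $h \in M \setminus L_c$, so $M = L_c$ and $L_c$ is a maximal abelian subgroup.

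\textbf{Main obstacle.} The delicate point is the very last paragraph: one must rule out a non-trivial $h$ centralizing $g$ but \emph{not} fixing $c$. The argument should go through \autoref{res: intersection pieces} applied to $\bar P(hc)$ and $\bar P(c)$, using that $h$ conjugates $g$ to $hgh^{-1} = g$, hence $h\fix g = \fix g$, so $h$ maps $\bar P(c,r)$ onto itself; since $\bar P(c,r)$ is a non-degenerate subtree of both $\bar P(hc)$ and $\bar P(c)$, these two peripheral subtrees intersect non-degenerately and \autoref{res: intersection pieces} forces $hc = c$. I expect writing this final step carefully — making sure ``$h$ preserves $\fix g$'' genuinely places a non-degenerate arc inside $\bar P(hc) \cap \bar P(c)$ — to be where the care is needed, everything else being a direct application of \autoref{res: tree-graded - fixed point set}, \autoref{res: intersection pieces}, and the CSA property.
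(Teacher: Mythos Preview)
Your treatment of the first claim and of maximality is essentially the paper's (both routed through \autoref{res: intersection pieces} / \autoref{res: intersection stab of cone pts}), modulo one slip: your argument that $\rho > 0$ is wrong. From $\rho_k \geq 1000\delta_k = 1000\epsilon_k\delta$ you only get $\rho_k \geq$ something tending to $0$; the paper explicitly notes there is ``no control a priori on the behaviour of $(\rho_k)$'', and $\rho$ may be $0$. This is harmless---when $\bar P(c)$ is a single point the first claim is trivially $L_c$---but your justification should be replaced.

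The genuine gap is in the abelianness step. You write ``since $L$ is CSA, any subgroup fixing a non-degenerate arc is abelian (an element fixing the arc centralizes $g$ \ldots)''. That parenthetical is precisely what you are trying to prove, so the argument is circular; and CSA alone does \emph{not} force arc stabilizers to be abelian (a non-abelian free group acting trivially on a segment is CSA with non-abelian arc stabilizer). The result you are reaching for is \autoref{res: abelian arc stabilizers}, which appears \emph{after} the present lemma and is proved by going back to the approximating spaces, not by CSA. The paper's proof of abelianness is both shorter and avoids this trap entirely: for $g_1, g_2 \in L_c$ with pre-images $\tilde g_1, \tilde g_2 \in G$, the relation $g_i c = c$ unwinds to $\phi_k(\tilde g_i) c_k = c_k$ $\omega$-almost surely, so both $\phi_k(\tilde g_i)$ lie in $\stab{c_k}$; by Axiom~\ref{enu: family axioms - elem subgroups} (combined with \autoref{res; family - fully conical subgroup}) this stabilizer is cyclic, hence the $\phi_k(\tilde g_i)$ commute $\omega$-almost surely, and therefore $[g_1,g_2]=1$ in $L$. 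You should replace your CSA-based paragraph with this approximation argument.
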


\begin{proof}
	We write $c = [c_k]$ where $(c_k)$ is a sequence of $\Pi_\omega \mathcal C_k$.
	The first claim is a direct consequence of \autoref{res: intersection pieces}.
	Let $g_1, g_2 \in L_c$ and $\tilde g_1, \tilde g_2 \in G$ respective pre-images of $g_1$ and $g_2$.
	It follows from the definition of $\mathcal C_\omega$, that $\phi_k(g_i)c_k = c_k$ \oas.
	According to Axiom~\ref{enu: family axioms - elem subgroups}, $\stab{c_k}$ is abelian.
	Hence $\phi_k(\tilde g_1)$ and $\phi_k(\tilde g_2)$ commute \oas, thus so do $g_1$ and $g_2$.
	
	Assume now that $L_c$ is non trivial.
	Let $g_0 \in L_c \setminus\{1\}$.
	Let $g \in L$ which commutes with $g_0$.
	In particular, $g_0$ fixes $gc$.
	According to \autoref{res: intersection stab of cone pts}, $gc = c$.
	Hence every abelian subgroup containing $g_0$ is contained in $L_c$.
\end{proof}

\begin{prop}[Root stability]
\label{res: root stability}
	Let $g \in L \setminus\{1\}$.
	If $\fix{g}$ is non-degenerate, then for every $m \in \Z\setminus\{0\}$, we have $\fix g = \fix{g^m}$.
	Moreover, $g$ has infinite order.
\end{prop}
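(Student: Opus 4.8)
The plan is to prove the two assertions in turn, using the thinness properties of the approximating groups encoded in \autoref{def: preferred class}. Fix a pre-image $\tilde g \in G$ of $g$ and set $\gamma_k = \phi_k(\tilde g) \in \Gamma_k$. First I would dispose of the loxodromic case: if $\gamma_k$ is loxodromic \oas, then (working in the rescaled metric) the fixed point set $\fix{g}$ in $X_\omega$ is a single point or empty, because for a loxodromic isometry that is $\delta_k$-thin the set $\fix{\gamma_k, d_k}$ has diameter $O(d_k + \delta_k)$ whenever $d_k \to 0$; this would contradict the hypothesis that $\fix g$ is non-degenerated. Hence $\gamma_k$ is elliptic \oas. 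Similarly, if $\gamma_k$ is elusive \oas, then by Axiom~\ref{enu: family axioms - elusive} it is $\delta_k$-thin and \autoref{res: limit tree - elusive sbgps are elliptic} gives that $g$ fixes a single point, again a contradiction. So $\gamma_k$ is a non-trivial conical element \oas, fixing a cone point $c_k \in \mathcal C_k$, and the sequence $(c_k)$ defines a point $c \in \mathcal C_\omega$ with $g \in L_c$ (the distances $\dist{o_k}{c_k} - \rho_k$ are \oeb by \autoref{res: limit tree - elliptics become elliptic} applied to the fixed point one extracts). By \autoref{res: tree-graded - fixed point set} there is a finite $r \in \intval 0\rho$ with $\fix g = \bar P(c,r)$, and since $\fix g$ is non-degenerated we have $r < \rho$.

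Next I would identify $\fix{g^m}$ for $m \neq 0$. For each $k$, the element $\gamma_k^m$ fixes the same cone point $c_k$; it is non-trivial \oas — this is the place where I would invoke Axiom~\ref{enu: family axioms - conical - 2}, which gives a $\stab{c_k}$-equivariant angle cocycle $\theta_k$ with $\theta_k(\gamma_k x_k, x_k) \neq 0$ for $x_k \in \mathring B(c_k,\rho_k)$, together with the cocycle identity $\theta_k(\gamma_k^m x_k, x_k) = m\,\theta_k(\gamma_k x_k, x_k)$ in $\R/\Theta_k\Z$. Since $\gamma_k$ is conical, its stable translation length on $X_k$ is zero, so the subgroup it generates is elusive-or-conical; but \autoref{res; family - fully conical subgroup} forces $\stab{c_k}$ to be fully conical, so $\gamma_k$ has finite order dividing... — actually one must be careful, $\stab{c_k}$ need not be finite. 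Instead, non-triviality of $\gamma_k^m$ \oas follows because the representative $\beta_k^{(m)}$ of $m\theta_k(\gamma_k x_k, x_k)$ in $(-\Theta_k/2,\Theta_k/2]$ is non-zero: indeed, the asymptotic estimate of \autoref{res: limit tree -  asymp angle} shows $\abs{\theta_k(\gamma_k x_k,x_k)}$ decays exponentially in $1/\epsilon_k$ (because the corresponding displacement $\dist{\gamma_k x_k}{x_k}$ stays bounded in the rescaled metric, by definition of $r < \rho$), so $\abs{m\theta_k(\gamma_k x_k, x_k)}$ is still far below $\Theta_k/2 \geq \pi$ and hence non-zero \oas. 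Therefore $g^m \neq 1$ (it acts non-trivially, fixing exactly $c_k$ in the approximating picture), and by \autoref{res: tree-graded - fixed point set} applied to $g^m$ we get $\fix{g^m} = \bar P(c, r_m)$ for some finite $r_m \in \intval 0\rho$. Since $\fix g \subset \fix{g^m}$ always, $r_m \leq r$. For the reverse inclusion I would run \autoref{res: limit tree -  asymp angle} in the precise quantitative form: the equivariant cocycle makes $\abs{\theta_k(\gamma_k x_k, x_k)}$ satisfy $\ln\abs{\theta_k} \sim -(\rho_k - r)/\epsilon_k$ (equality, since the displacement equals $2(\rho-r)$ at the "boundary" points of $\bar P(c,r)$), and then $\ln\abs{m\theta_k} = \ln\abs{\theta_k} + O(1)$ has the same leading term, so by part~\ref{enu: limit tree -  asymp angle - reverse} of \autoref{res: limit tree -  asymp angle} every point at Busemann level $-r$ is fixed by $g^m$; thus $\bar P(c,r) \subset \fix{g^m}$, giving $r_m = r$ and $\fix{g^m} = \fix g$.

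Finally, infinite order: if $g^m = 1$ for some $m \neq 0$, then $\fix{g^m} = X_\omega$, which contradicts $\fix g = \bar P(c,r)$ with $r < \rho$ being a proper subtree (it is proper because by Axiom~\ref{enu: family axioms - radial proj} there are points of $X^+_k$ at distance $\rho_k - \dist{x_k}{c_k}$ from any $x_k$, forcing $X_\omega \setminus \bar P(c) \neq \emptyset$; alternatively, if $X_\omega = \bar P(c,r)$ then $X_\omega$ would have diameter $\leq 2(\rho - r) < 2\rho \leq \infty$ and be fixed by $g$, so $g$ acts trivially, contradicting that $g$ is a non-trivial element acting on the minimal non-trivial tree). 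Hence $g$ has infinite order.

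The main obstacle I anticipate is the non-triviality of $\gamma_k^m$ \oas and, relatedly, making the quantitative angle estimate of \autoref{res: limit tree -  asymp angle} interact cleanly with the cocycle identity $\theta_k(\gamma_k^m x_k,x_k) = m\,\theta_k(\gamma_k x_k,x_k)$: one must be sure that multiplying the angle by $m$ does not push it past $\pm\Theta_k/2$ (which is harmless since $\Theta_k \geq 2\pi$ and the angle is exponentially small) and that it does not accidentally land on $0$ in $\R/\Theta_k\Z$ (again ruled out by the exponential-smallness, for $k$ large). Getting the direction "$\bar P(c,r) \subseteq \fix{g^m}$" is the technical heart, since it requires the equality case of the estimate in \autoref{res: limit tree -  asymp angle}\ref{enu: limit tree -  asymp angle - direct} to transfer the exact exponential rate from $\theta_k$ to $m\theta_k$.
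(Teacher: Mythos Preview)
There is a genuine gap in your handling of the case where $\gamma_k = \phi_k(\tilde g)$ is loxodromic \oas. You assert that $\delta_k$-thinness forces $\fix{\gamma_k, d_k}$ to have diameter $O(d_k + \delta_k)$, hence $\fix g$ to be degenerate. This is false: for a loxodromic isometry, thinness (\autoref{def: thin isom}) only bounds $\gro{\gamma_k^-}{\gamma_k^+}{x}$ for $x \in \fix{\gamma_k, d_k}$, i.e.\ it confines $x$ to a neighbourhood of the axis; the axis itself is unbounded, and $\fix{\gamma_k, d_k}$ contains arbitrarily long pieces of it whenever $d_k \geq \norm{\gamma_k}$. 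In the limit such a $g$ can perfectly well fix a non-degenerate \emph{transverse} arc --- this is exactly the situation exploited in \autoref{res: shortening - transverse simplicial arc} and in the proof of \autoref{res: tree-graded - transverse tripod}. The paper therefore treats the loxodromic case on its own merits: for loxodromic $\gamma_k$ one has $\dist{\gamma_k x_k}{x_k} \leq \dist{\gamma_k^m x_k}{x_k} + O(\delta_k)$ as a standard hyperbolic-geometry fact (in a tree both sides equal $|m'|\,\snorm{\gamma_k} + 2d(x_k,\text{axis})$ with $m' = 1$ and $m'=m$ respectively), which yields $\fix{g^m} \subset \fix g$ directly; infinite order is immediate since $\gamma_k^p \neq 1$ for all $p\neq 0$ when $\gamma_k$ is loxodromic.

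Your conical-case argument, while salvageable, is more elaborate than needed and contains slips: the claim that the displacement equals $2(\rho - r)$ at ``boundary points of $\bar P(c, r)$'' is wrong (those points lie in $\fix g$ and have displacement $0$), and your final step establishes $\bar P(c, r) \subset \fix{g^m}$, which is the inclusion you already had rather than its converse. The paper's route is a one-line monotonicity: once $\beta_k \to 0$ (via \autoref{res: limit tree -  asymp angle}), the representative of $m\,\theta_k(\gamma_k x_k, x_k)$ in $(-\Theta_k/2, \Theta_k/2]$ is exactly $m\beta_k$; since $|\beta_k| \leq |m\beta_k| \leq \pi$, the law of cosines in Axiom~\ref{enu: family axioms - conical - 2} gives $\dist{\gamma_k x_k}{x_k} \leq \dist{\gamma_k^m x_k}{x_k} + O(\delta_k)$ for every $x_k$, hence $\fix{g^m} \subset \fix g$. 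No quantitative rate for $\ln|\beta_k|$ is required.
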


\begin{proof}
	We fix a pre-image $\tilde g \in G$ of $g$.
	The set $\fix g$ is contained in $\fix{g^m}$.
	Let $x = \limo {x_k}$ be a point in $\fix{g^m}$.
	In order to show that $x \in \fix g$, it suffices to establish that 
	\begin{equation}
	\label{eqn: root stability}
		\dist{\phi_k(\tilde g)x_k}{x_k} \leq \dist{\phi_k(\tilde g^m)x_k}{x_k} + 1000\delta_k,\ \oas.
	\end{equation}
	We distinguish two cases.
	Assume first that $\phi_k(\tilde g)$ is loxodromic \oas.
	Then (\ref{eqn: root stability}) is a standard exercise of hyperbolic geometry.
	Note also that $g$ cannot have finite order.
	Indeed, otherwise we could find $p \in \N \setminus\{0\}$ such that $\phi_k(\tilde g^p) = 1$ \oas, which contradicts the fact that $\phi_k(\tilde g)$ is loxodromic \oas.
	
	Assume now that $\phi_k(\tilde g)$ is elliptic \oas.
	Note that $\phi_k(\tilde g)$ cannot be elusive \oas, otherwise $\fix{g}$ would be reduced to a point (\autoref{res: limit tree - elusive sbgps are elliptic}).
	Hence $g$ fixes a cone point $c =[c_k]$ where $(c_k)$ is a sequence of $\Pi_\omega \mathcal C_k$.
	For every $k \in \N$, we denote by 
	\begin{equation*}
		\theta_k \colon \mathring B(c_k, \rho_k) \times \mathring B(c_k, \rho_k) \to \R / \Theta_k\Z
	\end{equation*}
	the angle cocycle provided by Axiom~\ref{enu: family axioms - conical - 2}.
	Since $\theta_k$ is a $\stab{c_k}$-invariant co-cycle, the quantity $\theta_k(\phi_k(\tilde g)x_k, x_k)$ does not depend on the point $x_k\in\mathring B(c_k, \rho_k)$.
	For simplicity we denote by $\beta_k$ its unique representative in $(-\Theta_k/2, \Theta_k/2]$.
	Since $g$ is non-trivial and $\fix g$ non-degenerate, the latter has the form $\fix g = \bar P(c, r)$ with $r \in [0,\rho)$ (\autoref{res: tree-graded - fixed point set}).
	It follows from \autoref{res: limit tree -  asymp angle} that $(\beta_k)$ converges to zero.
	Thus $m\beta_k$ is the unique representative in $(-\Theta_k/2, \Theta_k/2]$ of 
	\begin{equation*}
		\theta_k(\phi_k(\tilde g^m)x_k, x_k) = m \theta_k(\phi_k(\tilde g)x_k, x_k).
	\end{equation*}
	Since $\abs{\beta_k} \leq \abs{m \beta_k} \leq \pi$, Inequality (\ref{eqn: root stability}) follows from Axiom~\ref{enu: family axioms - conical - 2}.
	Assume now that $g$ has finite order.
	There exists $p \in \N \setminus\{0\}$ such that $\phi_k(\tilde g^p) = 1$ \oas.
	Using the cocycle $\theta_k$ we get
	\begin{equation*}
		0 = \theta_k(\phi_k(\tilde g^p)x_k, x_k) = p \theta_k(\phi_k(\tilde g)x_k, x_k).
	\end{equation*}
	Hence $\theta_k(\phi_k(\tilde g)x_k, x_k)=0$.
	It follows from \ref{enu: family axioms - conical - 2} that $\phi_k(\tilde g)=1$ \oas, which contradicts our assumption.
\end{proof}

\begin{coro}
\label{res: coro root stability}
	Let $c \in \mathcal C_\omega$ and $r,s \in [0, \rho)$.
	Let $A_r$ and $A_s$ be the respective pointwise stabilizers of $\bar P(c,r)$ and $\bar P(c,s)$.
	If $r < s$, then the quotient $A_s/A_r$ is torsion-free.
\end{coro}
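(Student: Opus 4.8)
The plan is to derive the statement from the root stability property, \autoref{res: root stability}. First I would record the inclusions between the relevant subtrees and subgroups. Since $r<s$ we have $\bar P(c,s)\subseteq\bar P(c,r)$, so any element fixing $\bar P(c,r)$ pointwise also fixes $\bar P(c,s)$ pointwise; hence $A_r\subseteq A_s$. Next I would check that $\bar P(c,s)$ is \emph{non-degenerated}, splitting according to the value of the limit radius $\rho$: if $\rho$ is finite then $c=\limo c_k$ is a genuine point of $X_\omega$, the function $b_c$ is $x\mapsto\dist cx-\rho$, so $\bar P(c,s)$ is the closed ball of positive radius $\rho-s$ around $c$ in the $\R$-tree $X_\omega$, which has more than one point since $L$ acts on it without global fixed point; if $\rho$ is infinite then $b_c$ is the Busemann function at the boundary point $\xi$ constructed in \autoref{res: tree graded case}, and $\bar P(c,s)$ contains the entire sub-ray $\{\sigma(t): t\ge b_c(o)+s\}$ of the geodesic ray $\sigma$ defining $b_c$. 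In either case $\bar P(c,s)$ contains a non-degenerated arc.

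With this in hand I would argue that $A_s$ is abelian and $A_s/A_r$ is a bona fide group. If $A_s=\{1\}$ there is nothing to prove, so assume $A_s\ne\{1\}$. Any $g\in A_s$ fixes $\bar P(c,s)$ pointwise, hence maps it to itself, so $\bar P(c,s)=g\bar P(c,s)=\bar P(gc,s)$; since this non-degenerated set lies in $\bar P(gc)\cap\bar P(c)$, \autoref{res: intersection pieces} forces $gc=c$, whence $A_s\subseteq L_c$. By \autoref{res: tree-graded - abelian piece stabilizer} the cone point stabiliser $L_c$ is abelian (it is non-trivial, containing $A_s$), so $A_s$ is abelian and $A_s/A_r$ makes sense. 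To prove it is torsion-free it then suffices to take $g\in A_s$ and $m\in\Z\setminus\{0\}$ with $g^m\in A_r$ and to show $g\in A_r$; this is clear for $g=1$, so assume $g\ne1$.

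Finally I would invoke root stability. As $g\ne1$ fixes the non-degenerated set $\bar P(c,s)$ pointwise, its fixed point set $\fix g$ is non-degenerated, so \autoref{res: root stability} yields $\fix g=\fix{g^m}$. Since $g^m\in A_r$, the set $\bar P(c,r)$ is contained in $\fix{g^m}=\fix g$, which means $g$ fixes $\bar P(c,r)$ pointwise, i.e. $g\in A_r$; hence $A_s/A_r$ is torsion-free. I do not anticipate any real difficulty here: the essential input is \autoref{res: root stability}, and the only step requiring a little care is the non-degeneracy of $\bar P(c,s)$ for $s<\rho$, which is precisely why I would separate the finite and infinite cases for $\rho$ and appeal to the explicit form of the Busemann function $b_c$.
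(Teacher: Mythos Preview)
Your proof is correct and follows exactly the approach the paper intends: the corollary is stated immediately after \autoref{res: root stability} without proof, and your argument supplies the details by reducing to that proposition via the non-degeneracy of $\bar P(c,s)$. The case split on $\rho$ to verify non-degeneracy and the use of \autoref{res: intersection pieces} and \autoref{res: tree-graded - abelian piece stabilizer} to ensure $A_s$ is abelian are the right ingredients.
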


\paragraph{Arcs and tripods.}
An \emph{arc} is a subset homeomorphic to $[0,1]$.
A subtree $Y$ of $X_\omega$ is \emph{stable}, if for every arc $I\subset Y$, the pointwise stabilizers of $I$ and $Y$ coincide.
The action of $G$ on $X_\omega$ is \emph{super-stable} if any arc with non-trivial stabilizer is stable.

\begin{lemm}
\label{res: abelian arc stabilizers}
	Let $[x,y]$ be an arc in $X_\omega$.
	Assume that its pointwise stabilizer $H$ is non-trivial. 
	Then $H$ is torsion-free and abelian.
	Moreover, all its non-trivial elements are visible.
\end{lemm}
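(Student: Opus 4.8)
The strategy is to exploit the two cases already laid bare by the proof of the root stability proposition (\autoref{res: root stability}): an element of $L\setminus\{1\}$ is either visible or elusive, and the latter case is killed immediately by \autoref{res: limit tree - elusive sbgps are elliptic}, which says a non-trivial elusive element has a \emph{single} fixed point and therefore cannot fix an arc. So every element of $H\setminus\{1\}$ is visible; this already gives the first assertion, and it means that for any pre-image $\tilde h\in G$ of $h\in H\setminus\{1\}$, the isometry $\phi_k(\tilde h)$ is visible (loxodromic or conical) \oas.

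Next I would prove $H$ is torsion-free. Since each non-trivial $h\in H$ fixes the non-degenerate arc $[x,y]$, the fixed-point set $\fix h$ is non-degenerate, so \autoref{res: root stability} applies verbatim and gives that $h$ has infinite order. Hence $H$ is torsion-free.

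For abelianity, take $h_1,h_2\in H\setminus\{1\}$ with pre-images $\tilde h_1,\tilde h_2\in G$; I want $[h_1,h_2]=1$, which it suffices to check \oas\ at the level of $\phi_k$. Fix two distinct points $x=\limo x_k$, $y=\limo y_k$ on the arc. Both $\phi_k(\tilde h_1)$ and $\phi_k(\tilde h_2)$ are visible and (nearly) fix $x_k$ and $y_k$: more precisely each lies in $\fix{\phi_k(\tilde h_i),d_k}$ for some $d_k\to 0$, and this set has diameter $\asymp \dist{x_k}{y_k}>0$ along $\omega$. Split into the two visibility cases. If $\phi_k(\tilde h_1)$ is loxodromic \oas, then, since it moves both $x_k$ and $y_k$ by $o(1)$ while $\dist{x_k}{y_k}$ is bounded away from $0$, its translation length is $o(1)$ and its axis passes $\delta_k$-close to the segment $[x_k,y_k]$; the same holds for $\phi_k(\tilde h_2)$ if it too is loxodromic, so the two axes share a long common sub-segment, forcing $\langle \phi_k(\tilde h_1),\phi_k(\tilde h_2)\rangle$ to be elementary by the acylindricity Axiom~\ref{enu: family axioms - acylindricity} together with the $\nu$-invariant condition $\nu(\Gamma_k,X_k)=1$; an elementary non-elliptic subgroup is cyclic by Axiom~\ref{enu: family axioms - elem subgroups}, so the two isometries commute \oas. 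If instead $\phi_k(\tilde h_1)$ is conical, fixing $c_k\in\mathcal C_k$, then since $h_1$ fixes a non-degenerate arc, \autoref{res: tree-graded - fixed point set} forces $\fix{h_1}=\bar P(c_k,r)$-type behaviour in the limit, and in particular $\phi_k(\tilde h_2)$ must also \oas\ fix a point within $o(1)$ of $c_k$; as two distinct apices are $2\rho_k$ apart and $\phi_k(\tilde h_2)$ moves such a point by $o(1)$, we get $\phi_k(\tilde h_2)c_k=c_k$ \oas, so both isometries lie in $\stab{c_k}$, which is abelian by Axioms~\ref{enu: family axioms - elem subgroups} and \autoref{res; family - fully conical subgroup}. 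In all cases $\phi_k(\tilde h_1)$ and $\phi_k(\tilde h_2)$ commute \oas, whence $[h_1,h_2]=1$. Since this holds for all pairs in $H\setminus\{1\}$ and the trivial element commutes with everything, $H$ is abelian.

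The main obstacle is the loxodromic sub-case: one must turn ``moves two points at fixed distance by $o(1)$'' into a genuine statement that the two isometries generate an elementary (hence cyclic) subgroup. This is where the quantitative hypotheses of $\mathfrak H_\delta$ are essential — one uses that loxodromic elements are $\delta_k$-thin (Axiom~\ref{enu: family axioms - loxodromic}) to locate their cylinders near $[x_k,y_k]$, then applies the Margulis-type acylindricity bound (\autoref{res: margulis lemma}, with $\nu=1$, $A(\Gamma_k,X_k)\le\delta_k$) to conclude that a subset whose $d_k$-fixed set has diameter $\gg\delta_k$ cannot generate a non-elementary subgroup. The mixed case (one loxodromic, one conical) is handled as above by noting a loxodromic isometry cannot almost-fix an arc near a cone point and simultaneously be loxodromic unless its translation length shrinks — but then it too is forced to be elliptic, contradiction; so in fact the two cases for $\phi_k(\tilde h_1)$ and $\phi_k(\tilde h_2)$ agree \oas, which streamlines the argument.
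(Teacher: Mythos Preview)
Your proofs of visibility and torsion-freeness are exactly the paper's. For abelianity, however, the paper takes a much shorter unified route that avoids your case analysis entirely: given $h_1,h_2\in H\setminus\{1\}$ with pre-images $\tilde h_1,\tilde h_2$, set $W_k=\{\phi_k(\tilde h_1),\phi_k(\tilde h_2)\}$ and note that $x_k,y_k\in\fix{W_k,d_k}$ for some $d_k\to 0$. If $W_k$ generated a non-elementary subgroup, Axiom~\ref{enu: family axioms - acylindricity} (the definition of $A(\Gamma_k,X_k)$) would give
\[
\dist{x_k}{y_k}\leq\diam\fix{W_k,d_k}\leq 4d_k+\delta_k\to 0,
\]
contradicting $\dist xy>0$. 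Hence $\langle W_k\rangle$ is elementary \oas; it is not elusive (each $h_i$ is visible by the first part), so by Axiom~\ref{enu: family axioms - elem subgroups} it is cyclic, hence abelian. No splitting into loxodromic/conical types is needed.

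Your approach is salvageable in the pure cases, but the mixed case has a genuine gap. You write that a loxodromic isometry whose translation length shrinks ``is forced to be elliptic, contradiction; so in fact the two cases \dots\ agree \oas''. This is false: a loxodromic element with translation length $o(1)$ is still loxodromic, and there is no obstruction to having $\phi_k(\tilde h_1)$ loxodromic while $\phi_k(\tilde h_2)$ is conical \oas\ --- both can almost-fix $x_k$ and $y_k$. The paper's direct application of the acylindricity bound to the \emph{pair} $W_k$ is what makes the type of each individual element irrelevant.
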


\begin{proof}
	We write $x = \limo x_k$ and $y = \limo y_k$.
	It follows from \autoref{res: limit tree - elusive sbgps are elliptic} that every element in $H\setminus\{1\}$ is visible.
	Moreover $H$ is torsion-free (\autoref{res: root stability}).
	Let $g_1, g_2 \in H \setminus\{1\}$.
	Let $\tilde g_1, \tilde g_2 \in G$ be respective pre-images of $g_1$ and $g_2$.
	By definition there exists a sequence $(d_k)$ converging to zero such that $x_k,y_k \in \fix{W_k,d_k}$, where $W_k = \{ \phi_k(\tilde g_1), \phi_k(\tilde g_2)\}$.
	If $W_k$ does not generate an elementary subgroup, then \ref{enu: family axioms - acylindricity} tells us that 
	\begin{equation*}
		\dist {x_k}{y_k} \leq \diam \left( \fix{W_k, d_k} \right) \leq 4d_k + \delta_k.
	\end{equation*}
	Passing to the limit we get $x = y$ which contradicts our assumption.
	The set $W_k$ generates an elementary subgroup which is not elusive, hence abelian by \ref{enu: family axioms - elem subgroups}.
	Consequently $\phi_k(\tilde g_1)$ and $\phi_k(\tilde g_2)$ commute, hence so do $g_1$ and $g_2$.
\end{proof}

\begin{lemm}
\label{res: tree-graded - transverse tripod}
	Let $x$, $y$, $z$ be three points of $X_\omega$.
	We assume that they are not contained in a single peripheral subtree and they do not lie on a transverse geodesic.
	Then the pointwise stabilizer of the tripod $[x,y,z]$ is trivial.
\end{lemm}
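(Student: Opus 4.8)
The plan is to argue by contradiction, assuming the tripod $Y = [x,y,z]$ has a non-trivial pointwise stabilizer $H$, and then to combine the structural lemmas already established about elusive elements, cone-point stabilizers, and arc stabilizers to derive a contradiction with the two hypotheses (that $x,y,z$ are not contained in a single peripheral subtree, and that they do not lie on a transverse geodesic). Let $w$ be the center of the tripod, so that $Y$ is the union of three non-degenerate arcs $[w,x]$, $[w,y]$, $[w,z]$. Any $g \in H \setminus \{1\}$ fixes each of these arcs pointwise. By \autoref{res: limit tree - elusive sbgps are elliptic}, $g$ cannot be elusive (an elusive element fixes a single point, whereas here it fixes a non-degenerate subtree). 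Hence $g$ is visible, and by \autoref{res: root stability} it has infinite order and $\fix g$ is non-degenerate.

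Next I would pin down the shape of $\fix g$. Since $g$ is visible and elliptic (it fixes $Y$), pick a preimage $\tilde g \in G$; then $\phi_k(\tilde g)$ is \oas\ elliptic, and not elusive, so it fixes a cone point. Thus $g$ fixes some $c \in \mathcal C_\omega$, and by \autoref{res: tree-graded - fixed point set} we have $\fix g = \bar P(c,r)$ for some finite $r \in [0,\rho)$. In particular $Y \subset \bar P(c,r) \subset \bar P(c)$, so all three points $x,y,z$ lie in the single peripheral subtree $\bar P(c)$ — this immediately contradicts the first hypothesis. To make this airtight I should check that $\fix g$ being a peripheral subtree is incompatible with $Y$ being part of a transverse geodesic: indeed a transverse geodesic meets every peripheral subtree in a degenerate set, so a genuine tripod cannot sit inside $\bar P(c)$; but more directly, the first hypothesis already rules this out once we know $\fix g = \bar P(c,r)$.

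The one subtlety — and the step I expect to be the main obstacle — is handling the case where $g$ fixes the arcs but I have not yet ruled out that $\phi_k(\tilde g)$ is \oas\ \emph{loxodromic} rather than elliptic. If $\phi_k(\tilde g)$ were loxodromic, then in the limit $g$ would still be elliptic on $X_\omega$ (its translation length scales like the stable length, which could go to zero faster than $\epsilon_k^{-1}$), and $\fix g$ would be a sub-arc of the limiting axis rather than a peripheral subtree; this is precisely the transverse-geodesic alternative. To close this, I would invoke \autoref{res: abelian arc stabilizers}: the pointwise stabilizer $H$ of any single arc of $Y$ is abelian, torsion-free and visible, so $H$ is contained in a maximal abelian (hence, by \ref{enu: family axioms - elem subgroups}, either elusive or cyclic) subgroup. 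If $H$ is ``loxodromic-type'', then its fixed set in $X_\omega$ is a single arc (the portion of the axis where the approximating translation lengths are negligible), and by stability of quasi-geodesics this arc cannot branch — so $x,y,z$ would all lie on it, i.e. on a transverse geodesic, contradicting the second hypothesis. Either way we reach a contradiction, so $H$ is trivial. Writing out the loxodromic sub-case carefully (showing the limiting fixed set is genuinely an arc with no branching, using \ref{enu: family axioms - acylindricity} to bound $\diam \fix{\phi_k(\tilde g), d_k}$ transversally to the axis) is where the real work lies; the elliptic case is then a direct application of \autoref{res: tree-graded - fixed point set}.
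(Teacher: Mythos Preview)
Your two-case split (whether $\phi_k(\tilde g)$ is \oas\ elliptic or loxodromic) is exactly the paper's approach, and your elliptic case matches the paper's: elusive forces a single fixed point, conical forces $\fix g \subset \bar P(c)$ via \autoref{res: tree-graded - fixed point set}, and either way the three points land in a single peripheral subtree.

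In the loxodromic case, however, the axiom you want is \ref{enu: family axioms - loxodromic} (thinness of loxodromic isometries), not \ref{enu: family axioms - acylindricity}. The acylindricity bound $A(\Gamma,X)$ controls $\diam\fix{U,d}$ only when $U$ generates a \emph{non-elementary} subgroup, and a single loxodromic element does not; so \ref{enu: family axioms - acylindricity} gives you nothing here. What \ref{enu: family axioms - loxodromic} gives is that each of $x_k,y_k,z_k$ has Gromov product $\gro{\xi_k^-}{\xi_k^+}{\,\cdot\,} \leq d_k/2 + \delta_k$ with the endpoints of $\phi_k(\tilde g)$, and a short hyperbolic computation then yields (after permuting) $\gro{x_k}{z_k}{y_k} \to 0$, i.e.\ $y \in \geo xz$ in the limit. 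You also glossed over why $\geo xz$ is transverse: this needs a one-line argument --- if it met some $P(c)$ non-degenerately, $g$ would fix a point of $P(c)$, hence fix $c$ by \autoref{res: intersection pieces}, forcing $\phi_k(\tilde g)$ to be elliptic \oas, a contradiction. With these two fixes your argument is complete and coincides with the paper's.
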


\begin{proof}
	We write $x = \limo x_k$, $y = \limo y_k$ and $z = \limo z_k$.
	Assume that there exists $g \in L \setminus\{1\}$ fixing $x$, $y$ and $z$.
	Let $\tilde g \in G$ be a pre-image of $g$.
	As usual there exists a sequence $(d_k)$ converging to zero such that $x_k$, $y_k$ and $z_k$ belong to $\fix{\phi_k(\tilde g),d_k}$.
	We distinguish two cases.

	\begin{itemize}
		\item Assume first that $\phi_k(\tilde g)$ is elliptic \oas.
		As $g$ is not trivial, $\phi_k(\tilde g)$ is non-trivial \oas.
		If $\phi_k(\tilde g)$ is elusive \oas, then $\fix g$ is reduced to a single point (\autoref{res: limit tree - elusive sbgps are elliptic}).
		If $\phi_k(\tilde g)$ is conical \oas, then it fixes some $c_k \in \mathcal C_k$.
		Hence $g$ fixes $c = [c_k]$ and $\fix g$ is contained in $\bar P(c)$ (\autoref{res: tree-graded - fixed point set}).
		In both cases  $x$, $y$ and $z$ belong to a single peripheral subtree, or they lie on a (degenerated) transverse geodesic.
		\item Assume now that $\phi_k(\tilde g)$ is loxodromic \oas.
		We denote by $\xi_k^-, \xi_k^+ \in \partial X_k$ the repulsive and attractive points of $\phi_k(\tilde g)$.
		It follows from Axiom~\ref{enu: family axioms - loxodromic} that for every $t \in \{x_k, y_k, z_k\}$ we have $\gro{\xi_k^-}{\xi_k^+}t \leq d_k/2 + \delta_k$.
		Hence up to permuting $x$, $y$ and $z$, we observe that
		\begin{equation*}
			\gro{x_k}{z_k}{y_k} \leq 3d_k/2 + 100\delta_k,\ \oas.
		\end{equation*}
		Hence $y$ lies on the geodesic $\geo xz$.
		We now claim that this geodesic is transverse.
		Assume it is not.
		Since $g$ fixes pointwise this geodesic, it fixes a point in $P(c)$ for some $c \in \mathcal C_\omega$.
		Hence by \autoref{res: intersection pieces}, $g$ fixes $c$ and $\phi_k(\tilde g)$ is elliptic \oas, which contradicts our assumption. \qedhere
	\end{itemize}	
\end{proof} % DELTA CHECKED

\begin{lemm}
\label{res: tree-graded - transverse stability}
	Let $\geo xy$ be an arc of $X_\omega$ which is not contained in a peripheral subtree.
	If its pointwise stabilizer is non trivial, then it is stable.
\end{lemm}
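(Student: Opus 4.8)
We have to prove that the pointwise stabilizer of every sub-arc of $\geo xy$ coincides with the pointwise stabilizer $H$ of $\geo xy$ itself. One inclusion being obvious, the plan is to fix a non-degenerate sub-arc $I = \geo{x'}{y'}\subseteq \geo xy$ and show that any $g$ fixing $I$ pointwise already fixes $\geo xy$. By hypothesis $H \neq \{1\}$ and $\geo xy$ is not contained in a peripheral subtree, so \autoref{res: abelian arc stabilizers} applies and tells us that $H$ is visible, torsion-free and abelian. We fix once and for all $h \in H \setminus \{1\}$ with a pre-image $\tilde h \in G$.

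The first step is to prove that $\phi_k(\tilde h)$ is loxodromic \oas. Since $h$ is visible, $\phi_k(\tilde h)$ is loxodromic or conical \oas, so suppose for contradiction that it were conical \oas, fixing a cone point $c_k \in \mathcal C_k$. By Axiom~\ref{enu: family axioms - conical - 1} it is $(\rho_k+\delta_k)$-thin at $c_k$; writing $x = \limo x_k$ and choosing $d_k \to 0$ with $x_k \in \fix{\phi_k(\tilde h), d_k}$ \oas, we get $\dist{x_k}{c_k} \leq \rho_k + d_k/2 + \delta_k$. Combined with the lower bound $\dist{o_k}{c_k} \geq \rho_k - 1/2 - 10\delta_k$ coming from \autoref{res: limit action - dist to cone point}, and with the fact that $\dist{o_k}{x_k}$ is \oeb (as $x$ is a point of $X_\omega$), the sequence $\dist{o_k}{c_k} - \rho_k$ is \oeb. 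Hence $(c_k) \in \Pi_\omega \mathcal C_k$, so $c := [c_k]$ is a point of $\mathcal C_\omega$ with $h \in L_c$, and \autoref{res: tree-graded - fixed point set} identifies $\fix h$ with a closed peripheral subtree $\bar P(c,r)$. Since $\geo xy \subseteq \fix h$, this contradicts the standing assumption on $\geo xy$; therefore $\phi_k(\tilde h)$ is loxodromic \oas.

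For the second step, let $g$ fix $I$ pointwise, with pre-image $\tilde g$. If $\phi_k(\tilde g) = 1$ \oas then $g$ acts trivially on $X_\omega$ and in particular fixes $\geo xy$, so we may assume $\phi_k(\tilde g) \neq 1$ \oas. As in the proof of \autoref{res: abelian arc stabilizers}, the pair $W_k = \{\phi_k(\tilde g), \phi_k(\tilde h)\}$ fixes $x'_k$ and $y'_k$ up to an error tending to $0$, with $\dist{x'_k}{y'_k} \to \dist{x'}{y'} > 0$; Axiom~\ref{enu: family axioms - acylindricity} then rules out that $\langle W_k\rangle$ be non-elementary, and since it contains the visible element $\phi_k(\tilde h)$, Axiom~\ref{enu: family axioms - elem subgroups} forces it to be infinite cyclic, say $\langle c_k\rangle$ with $c_k$ loxodromic. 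Thus $\phi_k(\tilde g) = c_k^{p_k}$ with $p_k \neq 0$ is loxodromic and fixes the same pair of points $\xi_k^\pm \in \partial X_k$ as $\phi_k(\tilde h)$. By the thinness of loxodromic isometries (Axiom~\ref{enu: family axioms - loxodromic}), any $z = \limo z_k \in \geo xy$ is moved by $\phi_k(\tilde h)$ by $o(1)$, hence $\gro{\xi_k^-}{\xi_k^+}{z_k} = o(1)$; passing to the limit, $\geo xy$ lies on the line $\ell \subseteq X_\omega$ joining $\xi^- = \limo \xi_k^-$ to $\xi^+ = \limo \xi_k^+$. Since $g$ fixes $\xi^\pm$ it preserves $\ell$, and as $g$ fixes the non-degenerate sub-interval $I \subseteq \ell$, it must fix $\ell$ pointwise; in particular $g$ fixes $\geo xy$, that is, $g \in H$.

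The main obstacle is the first step: one must exclude that $h$ becomes ``conical at a cone point'' in the limit, and the only way the hypothesis on $\geo xy$ intervenes is via the interplay between the thinness of conical elements at their apex (Axiom~\ref{enu: family axioms - conical - 1}) and the quantitative lower bound on $\dist{o_k}{\mathcal C_k}$ of \autoref{res: limit action - dist to cone point}, which is precisely what guarantees $(c_k) \in \Pi_\omega \mathcal C_k$ and thereby unlocks the description of cone-point stabilizers in \autoref{res: tree-graded - fixed point set}. Once $\phi_k(\tilde h)$ is loxodromic, the rest is the by-now standard acylindricity-plus-thinness package already exploited for \autoref{res: abelian arc stabilizers} and \autoref{res: tree-graded - transverse tripod}.
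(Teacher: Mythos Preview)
Your overall strategy is sound and Step~1 is carefully argued, but the final step contains an imprecision, and the whole argument takes a significantly longer route than the paper.

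The paper's proof is three lines. Fix $g \in H \setminus\{1\}$ and let $g'$ lie in the pointwise stabilizer of the sub-arc $\geo{x'}{y'}$. By \autoref{res: abelian arc stabilizers} the elements $g$ and $g'$ commute, so $g$ fixes the three points $x$, $y$, $g'y$. Since $x$ and $y$ are not in the same peripheral subtree, \autoref{res: tree-graded - transverse tripod} forces $x$, $y$, $g'y$ to lie on a common geodesic; combined with the fact that $g'$ fixes both $x'$ and $y'$, this collinearity yields $g'y = y$, and symmetrically $g'x = x$. No case analysis on the nature of $\phi_k(\tilde h)$ is needed: that analysis is already packaged inside the proof of the tripod lemma.

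Your argument instead unpacks that analysis by hand. Step~1 is correct, though its conclusion (that $\phi_k(\tilde h)$ is loxodromic \oas) is precisely the loxodromic branch of the case split inside \autoref{res: tree-graded - transverse tripod}. In Step~2 the imprecision is the phrase ``the line $\ell \subseteq X_\omega$ joining $\xi^- = \limo \xi_k^-$ to $\xi^+ = \limo \xi_k^+$'': the points $\xi_k^\pm$ lie in $\partial X_k$, not in $X_k$, and do not have $\omega$-limits in $X_\omega$ (nor in $\partial X_\omega$ without further argument), so $\ell$ is not defined as written. You can repair this either by setting $\ell = \limo Y_{\phi_k(\tilde h)}$ and checking it has no branch point (any three of its points come from a single quasi-geodesic and are therefore collinear in the limit), or by bypassing $\ell$ altogether and estimating $\dist{\phi_k(\tilde g) x_k}{x_k}$ directly from $\snorm{\phi_k(\tilde g)} \to 0$ and $\gro{\xi_k^-}{\xi_k^+}{x_k} \to 0$. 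Either fix works, but both amount to redoing the loxodromic case of \autoref{res: tree-graded - transverse tripod}; invoking that lemma as a black box, as the paper does, is considerably more economical.
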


\begin{rema}
	It follows from the above statement that if $\rho = 0$, then the action of $L$ on $X_\omega$ is super-stable (i.e. any arc with non-trivial stabilizer is stable).
	Similarly, if $H$ is a subgroup of $L$ preserving a transverse subtree $Y\subset X_\omega$, then the action of $H$ on $Y$ is super-stable.
\end{rema}

\begin{proof}
	The proof goes as in \cite[Proposition~4.2]{Rips:1994jg}.
	Let $\geo {x'}{y'}$ be an arc contained in $\geo xy$.
	Without loss of generality we can assume that $x,x',y',y$ are ordered in this way on $\geo xy$.
	Let $H$ and $H'$ be the pointwise stabilizers of $\geo xy$ and $\geo{x'}{y'}$ respectively, so that $H$ is contained in $H'$.
	Let $g'$ be an element of $H'$.
	By assumption $H$ contains a non trivial element, say $g$.
	It follows from \autoref{res: abelian arc stabilizers} that $g$ and $g'$ commutes.
	Hence $g$ fixes $x$, $y$ and $g'y$.
	According to our assumption, $x$ and $y$ are not in the same peripheral subtree.
	Hence \autoref{res: tree-graded - transverse tripod}, tells us that $x$, $y$ and $g' y$ lie on a (transverse) geodesic, which forces $g' y = y$.
	We prove in the same way that $g' x = x$, hence $g'$ belongs to $H$.
\end{proof}

\paragraph{Small subgroups.}
We say that a subgroup $H$ of $L$ is \emph{small} (for its action on $X_\omega$)  if  it does not contain two elements acting loxodromically on $X_\omega$ whose corresponding axes have a bounded intersection.

\begin{prop}
\label{res: tree-graded - small subgroups}
	Let $H$ be a small subgroup of $L$.
	If $H$ is not elliptic (for its action on $X_\omega$), then $H$ is visible and abelian.
	In addition, if $H$ is not cyclic, then either there exists $c \in \mathcal C_\omega$ such that $H$ is contained in $L_c$ or $H$ preserves a transverse subtree.
\end{prop}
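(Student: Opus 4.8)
The plan is to exploit the fact that a small, non-elliptic subgroup $H$ must be "almost cyclic" from the viewpoint of its action on $X_\omega$, and then to bootstrap algebraic structure (visibility, abelianity) from the axioms of $\mathfrak H_\delta$ through the limiting process. First I would recall that since $H$ is not elliptic and small, $H$ contains a loxodromic element $g$, and the union $A$ of the axes of all loxodromic elements of $H$ is an $H$-invariant subtree on which $H$ acts with unbounded orbits; smallness forces all these axes to pairwise intersect in unbounded sets, so $A$ is a line, i.e. $H$ fixes the pair of endpoints $\{A^-, A^+\} \subset \partial X_\omega$ (or at least $H$ preserves $A$ and acts on it, possibly with a flip). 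The stabilizer of this line has a homomorphism to $\mathrm{Isom}(A) = \R \rtimes \Z/2$; its kernel $H_0$ fixes $A$ pointwise. Then I would argue $H_0$ is trivial: a non-trivial element of $H_0$ would fix an arc of $X_\omega$, hence by Lemma~\ref{res: abelian arc stabilizers} would be visible, torsion-free, abelian, and would commute with the loxodromic $g$; but an element commuting with a loxodromic must itself be loxodromic with the same axis (this is a standard CSA/elementary-subgroup fact, and at the level of the approximating groups it follows from Axiom~\ref{enu: family axioms - elem subgroups}, since $\phi_k(g)$ loxodromic and $\phi_k(h)$ commuting forces $\phi_k(h)$ loxodromic), contradicting that it fixes $A$ pointwise. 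Hence $H$ embeds into $\R \rtimes \Z/2$, so $H$ is virtually cyclic (indeed, with the torsion-freeness coming from Proposition~\ref{res: root stability} applied to the arcs fixed along $A$, or directly: the orientation-reversing part would be a reflection, which has a fixed point in $A$ and hence fixes an arc — again forcing triviality by the previous argument). This gives $H$ infinite cyclic, in particular abelian, and visible since it contains the visible element $g$.

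For the second part, suppose $H$ is not cyclic. Then by the first part $H$ must be elliptic for its action on $X_\omega$ (otherwise it is cyclic). So $H$ fixes a point of $X_\omega$; but a single fixed point is not yet enough, I need to analyze $\mathrm{Fix}(H)$, which is a subtree. If $\mathrm{Fix}(H)$ is non-degenerate, it contains an arc; if that arc is not contained in a peripheral subtree, then its pointwise stabilizer — which contains $H$ — must be abelian by Lemma~\ref{res: abelian arc stabilizers}, contradicting that $H$ is non-cyclic together with the CSA structure (a non-cyclic abelian subgroup acting elliptically with a transverse fixed arc would force, via Lemma~\ref{res: tree-graded - transverse stability} and stability, the whole situation to collapse; more directly, I would pull back to $\Gamma_k$: $H$ non-cyclic abelian means $\phi_k(H)$ contains two non-commuting-axis-free... no — rather $\phi_k(H)$ abelian non-cyclic, which by Axiom~\ref{enu: family axioms - elem subgroups} is impossible unless $\phi_k(H)$ is elusive, and elusive subgroups are elliptic with a unique fixed point by Lemma~\ref{res: limit tree - elusive sbgps are elliptic}, so $\mathrm{Fix}(H)$ would be a single point after all). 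The upshot: either $\mathrm{Fix}(H)$ is a single point, or every arc of $\mathrm{Fix}(H)$ lies in some peripheral subtree. In the first sub-case I then need to decide whether the unique fixed point $x$ forces $H \subset L_c$ for some cone point, or whether $H$ preserves some transverse subtree — here I would use that $H$ non-elliptic is already excluded, and that $H$ elusive is handled by Lemma~\ref{res: limit tree - elusive sbgps are elliptic} (giving $H$ visible or contained in a cone stabilizer); a visible non-cyclic $H$ fixing a single point $x$ cannot have loxodromic or two independent conical elements (by smallness and Corollary~\ref{res: intersection stab of cone pts}), so all its non-trivial elements are conical fixing one common cone point $c = [c_k]$, giving $H \subset L_c$. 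In the second sub-case, where $\mathrm{Fix}(H)$ is non-degenerate but every arc lies in a peripheral subtree, Lemma~\ref{res: intersection pieces} forces $\mathrm{Fix}(H) \subset \bar P(c)$ for a single $c$, and then $H \subset L_c$ by Lemma~\ref{res: tree-graded - abelian piece stabilizer}.

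The main obstacle, I expect, is the careful case analysis in the first part showing $H$ is \emph{cyclic} (not merely virtually cyclic or abelian of rank one in some looser sense): one must rule out the orientation-reversing isometries of the axis and, crucially, must establish that the pointwise stabilizer of the axis is trivial — and the latter is exactly where the interplay between Lemma~\ref{res: abelian arc stabilizers}, Axiom~\ref{enu: family axioms - elem subgroups} (elementary subgroups are elusive or cyclic), and the fact that a loxodromic in $\Gamma_k$ has cyclic centralizer must be assembled cleanly. A secondary subtlety is that when $\rho = \infty$ the peripheral subtrees are horoball-like rather than genuine balls, so "$H$ fixes a single point" versus "$H$ fixes a horoball-type peripheral subtree" are genuinely different configurations, and one must be careful that the dichotomy "$H \subset L_c$ or $H$ preserves a transverse subtree" correctly absorbs both; Lemmas~\ref{res: tree-graded - fixed point set} and \ref{res: intersection pieces} are the tools that make this work, but threading them through requires attention.
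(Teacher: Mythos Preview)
Your argument has a decisive error in the first part: from ``$H$ embeds into $\R \rtimes \Z/2$'' you conclude ``$H$ is virtually cyclic'', and then ``$H$ is infinite cyclic''. This implication is false: $\Z^n$ embeds into $\R$ for every $n$ (e.g.\ via $(m_1,\dots,m_n)\mapsto m_1+m_2\sqrt 2+\dots$), so a subgroup of $\R$ acting by translations on a line can perfectly well be non-cyclic abelian. This is not a technicality---it is exactly what happens for axial vertex groups in \autoref{res: final decomposition tree}, where $L_v$ is abelian, non-cyclic, and acts on a line with dense orbits. So non-elliptic small subgroups of $L$ are \emph{not} forced to be cyclic; the first part of the proposition only claims they are abelian and visible, and that is all one can get.

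This error wrecks your second part entirely. You write ``by the first part $H$ must be elliptic for its action on $X_\omega$ (otherwise it is cyclic)'', and then analyse $\fix H$ under the hypothesis that $H$ is elliptic. But the second part of the proposition is still about non-elliptic $H$: one must show that a non-elliptic, non-cyclic, small $H$ either lies in some $L_c$ or preserves a transverse subtree. The paper does this by fixing a point $\xi\in\partial X_\omega$ for $H$, taking a ray $\sigma$ to $\xi$, and splitting into cases according to whether $H\cap L_c$ is non-trivial for some $c$ (then CSA and \autoref{res: tree-graded - abelian piece stabilizer} give $H\subset L_c$) and, if not, whether $\sigma$ is eventually transverse (then $H$ preserves the transverse subtree containing that tail) or not (then one shows all elements of $H$ are loxodromic with translation length bounded below by the length of an arc $\geo xy\subset\mu\cap\bar P(c)$, forcing $H$ cyclic). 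There is also a smaller gap at the very start of your first part: ``$H$ not elliptic and small'' does not immediately give a loxodromic element---$H$ could fix a single end with every element elliptic---so the paper works with a Busemann homomorphism at $\xi$ rather than with an axis.
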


\begin{proof}
	According to the classification of actions on trees, either $H$ fixes a point in the boundary at infinity $\partial X_\omega$ of $X_\omega$ or $H$ preserves a bi-infinite geodesic.
	In both cases, it contains a non-trivial subgroup $H^+$ of index at most $2$ which fixes a point in $\xi \in \partial X_\omega$.
	Moreover, if $[H:H^+] = 2$, then $H$ cannot be abelian.
	Let $\sigma \colon \R_+ \to X_\omega$ be a geodesic ray pointing to $\xi$.
	If $\beta_\xi \colon X_\omega \to \R$, is the Busemann cocycle at $\xi$, then the map $\psi \colon H^+ \to \R$ sending $g$ to $\beta_\xi(go,o)$ is a homomorphism whose kernel  $H_0^+$ is exactly the set of elliptic elements of $H^+$.

	Let us first show that $H$ is abelian and visible.
	We focus first on $H_0^+$.
	For every $g \in H_0^+$, there exists $t \in \R_+$ such that $g$ fixes pointwise $\sigma$ restricted to $[t, \infty)$.
	It follows from \autoref{res: abelian arc stabilizers} that non-trivial elements of $H_0^+$ are visible and pairwise commutes.
	Suppose now that $H_0^+$ is not trivial.
	Note that $H$ normalizes $H^+_0$.
	Since $L$ is CSA, it follows that $H$ is abelian and visible.
	If $H_0^+$ is trivial, then $H^+$ is isomorphic to its image under $\psi$, thus is abelian.
	In addition, it cannot be elusive, otherwise $H$ would be elliptic, thus $H^+$ is abelian and visible.
	Note that $H^+$ is a normal subgroup of $H$.
	Using again the fact that $L$ is CSA, we conclude that $H$ is abelian and visible.
	As we observe earlier it implies that $H = H^+$, hence $H$ fixes $\xi$.

	Suppose that there exists $c \in \mathcal C_\omega$ such that $H \cap L_c$ is not trivial.
	Since $L_c$ is a maximal abelian subgroup (\autoref{res: tree-graded - abelian piece stabilizer}) $H$ is contained in $L_c$.
	From now on we assume that $H \cap L_c$ is trivial for every $c \in \mathcal C_\omega$.
	We are going to prove that either $H$ preserves a transverse subtree or $H$ is cyclic.
	We distinguish two cases.
	Suppose that there exists $t \in \R$ such that $\sigma$ restricted to $[t, \infty)$ is contained in a transverse subtree, say $Y$.
	Since $H$ fixes $\xi$, for every $g \in H$, $g \sigma \cap \sigma$ is an infinity ray.
	It follows that $H$ preserves $Y$.
	
	Suppose now that for every $t \in \R$, the ray $\sigma$ restricted to $[t, \infty)$ is not transverse.
	We claim that $H_0^+$ is trivial.
	Indeed, otherwise we could find $c \in \mathcal C_\omega$ and $g \in H_0^+ \setminus \{1\}$ such that $g$ fixes point in $\sigma \cap P(c)$.
	In particular, $g$ would belong to $H \cap L_c$ (\autoref{res: intersection pieces}) which contradicts our assumption.
	It follows from our claim that all elements of $H$ are loxodromic.
	Since $H$ is abelian, it necessarily preserves a bi-infinite geodesic $\mu \colon \R \to X$.
	Note that $\sigma \cap \mu$ is a geodesic ray whose endpoint at infinity is $\xi$.
	In particular, $\mu$ is not transverse either.
	There exists a cone point $c\in \mathcal C_\omega$ such that $\mu \cap \bar P(c)$ contains a bounded arc, say $\geo xy$.
	Since $H \cap L_c$ is trivial, $g \geo xy\cap\geo xy$ is degenerated, for every $g \in H\setminus\{1\}$.
	It follows that $\norm g \geq \dist xy > 0$.
	As all the translation lengths of the non-trivial elements of $H$ are bounded from below, $H$ is cyclic.
\end{proof}

%2024-08-16 READ UNTIL HERE

%%%%%%%%%%%%%%%%%%%%%%%%%%%%%%%%%%%%%%%%%%%%%%%%%%%%%%%%%%%%%%%%%%%%%%%%%%%%%%%%%%%%%
%
\subsection{Decomposition of the action}
%
%%%%%%%%%%%%%%%%%%%%%%%%%%%%%%%%%%%%%%%%%%%%%%%%%%%%%%%%%%%%%%%%%%%%%%%%%%%%%%%%%%%%%
\label{sec: prelim decomposition}
Recall that $T$ is the minimal $L$-invariant subtree of $X_\omega$.
The goal of this section is to decompose the action of $L$ on $T$ into well understood ``elementary bricks'', see \autoref{res: final decomposition tree}.
We denote by $\mathcal A$ (\resp $\mathcal H$) the class of all abelian (\resp elusive) subgroups of $L$.
In addition $\mathcal A_{\rm nc}$ stands for the collection of all non-cyclic abelian subgroups of $L$.
From now on, we make the following additional assumption
\begin{assu}
\label{ass: splitting - freely indecomposable}
	The group $L$ is freely indecomposable relative to $\mathcal H$.
\end{assu}

It is a standard fact that the basepoint $o = \limo o_k$ belongs to the minimal subtree $T$.
However if $(x_k)$ is a sequence of points minimizing the restricted energy $\lambda^+_1(\phi_k,U)$ there is a priori no reason, first that $x = \limo x_k$ is a well-defined point of $X_\omega$, and second that it belongs to $T$.
This is the purpose of the next statement.

\begin{lemm}
\label{res: basepoint in minimal tree}
	For every $k \in \N$, there exists a point $o^+_k \in X^+_k$ such that
	\begin{enumerate}
		\item the limit $o^+ = \limo o^+_k$ is a well defined point of $X_\omega$ which belongs to $T$.
		\item $\displaystyle \lambda_1(\phi_k, U, o^+_k) = \lambda^+_1(\phi_k,U) + o\left( \frac 1{\epsilon_k}\right)$.
	\end{enumerate}
\end{lemm}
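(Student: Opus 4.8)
The plan is to produce the point $o^+$ directly inside the limit tree $X_\omega$ and then to realise it, along $\omega$, as an ultralimit of points of the thick parts $X^+_k$. Write $\Lambda\colon X_\omega\to\R$, $\Lambda(x)=\sum_{u\in U}\dist{\eta(u)x}{x}$: a finite sum of displacement functions of isometries of an $\R$-tree, hence convex, and by construction of the action of $L$ on $X_\omega$ it satisfies $\Lambda(\limo z_k)=\limo\epsilon_k\lambda_1(\phi_k,U,z_k)$ for every sequence $z_k\in X_k$ with $\limo z_k$ well defined. Let $X^+_\omega=X_\omega\setminus\bigcup_{c\in\mathcal C_\omega}P(c)$ be the thick part of $X_\omega$.

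First I would locate where the restricted energy is concentrated. By the construction of $o_k$, a point (almost) minimising $\lambda^+_1(\phi_k,U)$ lies within $\card U\,\lambda_\infty(\phi_k,U)+20\delta$ of $o_k$; call such a point $p_k\in X^+_k$. Then $\epsilon_k\dist{o_k}{p_k}$ is $\omega$-essentially bounded, so $p=\limo p_k$ is a well-defined point of $X_\omega$, and since $p_k\in X^+_k$ its rescaled distance to $\mathcal C_k$ is at least $\rho_k$, so $b_c(p)\ge0$ for every $c\in\mathcal C_\omega$, i.e. $p\in X^+_\omega$. Set $\ell=\limo\epsilon_k\lambda^+_1(\phi_k,U)$, which lies in $[0,2\card U]$ by \autoref{res: comparing energies} as the image of $\phi_k$ is not abelian; then $\Lambda(p)=\ell$. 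Conversely, using \autoref{res: balls as limit of balls} and Axiom~\ref{enu: family axioms - radial proj}, any $x\in X^+_\omega$ is an ultralimit of points of $X^+_k$: a representing sequence $(x_k)$ lies either in $X^+_k$ or inside a thin ball $B(c_k,\rho_k)$, and in the latter case $[c_k]\in\mathcal C_\omega$ and $b_{[c_k]}(x)=0$, so $x_k$ can be pushed radially into $X^+_k$ at cost $\epsilon_k(\rho_k-\dist{x_k}{c_k})\to0$. Hence $\Lambda(x)\ge\ell$ on $X^+_\omega$, so $\ell=\min_{X^+_\omega}\Lambda$, attained at $p$.

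The core step is to move $p$ into $T\cap X^+_\omega$ without increasing $\Lambda$. Since $T$ is a closed $L$-invariant subtree, the nearest-point projection $\pi_T\colon X_\omega\to T$ is $L$-equivariant and $1$-Lipschitz, so $\dist{\eta(u)\pi_T(x)}{\pi_T(x)}=\dist{\pi_T(\eta(u)x)}{\pi_T(x)}\le\dist{\eta(u)x}{x}$ and thus $\Lambda(\pi_T(x))\le\Lambda(x)$; put $q_0=\pi_T(p)\in T$, so $\Lambda(q_0)\le\ell$. If $q_0\in X^+_\omega$ take $o^+=q_0$. Otherwise $q_0$ lies in some peripheral subtree, and I would slide it, within $T$, to the point $o^+$ of $T\cap X^+_\omega$ nearest to $q_0$ — note $T\cap X^+_\omega\ne\emptyset$, for instance the axis of a loxodromic element of $L$ (which exists as $L$ is non-abelian) is not contained in $\bigcup_cP(c)$, a loxodromic element fixing no cone point by \autoref{res: tree-graded - fixed point set}. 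The geodesic $\geo{q_0}{o^+}$ stays in $T$ and runs through finitely many peripheral subtrees, leaving each at its boundary; one checks $\Lambda$ does not increase along it, using that at the boundary of $\bar P(c)$ the contribution of every generator lying in the abelian group $L_c$ (with fixed set $\bar P(c,r)$, by \autoref{res: tree-graded - fixed point set}) is constant, while — as $L$ is non-abelian and $L_c$ abelian (\autoref{res: tree-graded - abelian piece stabilizer}) — some generator escapes $L_c$, so that $\bar P(c)$ and its translate meet in at most a point (\autoref{res: intersection pieces}); combining this with the convexity of the individual displacement functions gives $\Lambda(o^+)\le\Lambda(q_0)\le\ell$. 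Since $o^+\in X^+_\omega$ we also have $\Lambda(o^+)\ge\ell$, hence $\Lambda(o^+)=\ell$.

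Finally I would realise $o^+$. Choose $z_k\in X_k$ with $\limo z_k=o^+$; then $\epsilon_k\lambda_1(\phi_k,U,z_k)\to\Lambda(o^+)=\ell$. As in the second paragraph, since $o^+\in X^+_\omega$, Axiom~\ref{enu: family axioms - radial proj} provides $o^+_k\in X^+_k$ with $\epsilon_k\dist{z_k}{o^+_k}\to0$ (take $o^+_k=z_k$ when $z_k\in X^+_k$ already). Then $\limo o^+_k=o^+\in T$, $\epsilon_k\lambda_1(\phi_k,U,o^+_k)\to\ell$, and $o^+_k\in X^+_k$ forces $\lambda_1(\phi_k,U,o^+_k)\ge\lambda^+_1(\phi_k,U)$; with $\epsilon_k\lambda^+_1(\phi_k,U)\to\ell$ this gives $\epsilon_k[\lambda_1(\phi_k,U,o^+_k)-\lambda^+_1(\phi_k,U)]\to0$, i.e. $\lambda_1(\phi_k,U,o^+_k)=\lambda^+_1(\phi_k,U)+o\left(1/\epsilon_k\right)$. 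The hard part is the core step: the global minimum of $\Lambda$ over $X_\omega$ may genuinely sit strictly inside a peripheral subtree (when many generators fix the corresponding cone point), so one cannot minimise over all of $X_\omega$; it is the restriction to $X^+_\omega$ that has minimum $\ell$, and showing this value is attained on $T$ requires the precise interplay between the convexity of the displacement functions, the shape $\bar P(c,r)$ of the fixed-point sets of $L_c$, the fact that $L$ escapes every $L_c$, and the finiteness of the peripheral subtrees met along a compact geodesic so that the slide terminates.
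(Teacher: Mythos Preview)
Your overall shape is right — work in $X_\omega$, project the near-minimiser $p\in X^+_\omega$ onto $T$ to get $q_0$, then push $q_0$ out of the peripheral subtree to a point $o^+$ of $T$ that can be represented by a sequence in the thick parts — and this is exactly what the paper does. The realisation step at the end is fine.

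The gap is in your ``core step''. You assert that $\Lambda$ is non-increasing along $\geo{q_0}{o^+}$, but the justification you sketch does not give this. Convexity of each displacement function says nothing about monotonicity in a particular direction; a generator $u$ with $\eta(u)\in L_c$ whose fixed set is $\bar P(c,r)$ can very well have $\dist{\eta(u)q_0}{q_0}=0$ and $\dist{\eta(u)o^+}{o^+}>0$ (take $q_0\in\bar P(c,r)$ and $o^+\notin\bar P(c,r)$), so its contribution \emph{increases} as you slide outward. The observation that for $\eta(u)\notin L_c$ the translates $\bar P(c)$ and $\eta(u)\bar P(c)$ meet in at most a point does not, on its own, force that term to decrease either. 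In short, $\Lambda(o^+)\le\Lambda(q_0)$ need not hold, and your argument does not establish it.

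The paper sidesteps this entirely: it never compares $\Lambda(o^+)$ to $\Lambda(q_0)$, but to $\Lambda(p)$. Since $q_0=\pi_T(p)$ and $T$ is $L$-invariant, one has the tree identity $\dist{gp}{p}=\dist{gq_0}{q_0}+2\dist{p}{q_0}$ for every $g\in L$. The point $o^+$ is taken on a bi-infinite geodesic of $T$ through $q_0$ (which exists by minimality) at the nearest point with $b_c(o^+)=0$; then $\dist{q_0}{o^+}=-b_c(q_0)\le b_c(p)-b_c(q_0)\le\dist{p}{q_0}$ because $p\in X^+_\omega$ forces $b_c(p)\ge 0$ and $b_c$ is $1$-Lipschitz. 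Now the triangle inequality alone gives
\[
\dist{go^+}{o^+}\le\dist{gq_0}{q_0}+2\dist{q_0}{o^+}\le\dist{gq_0}{q_0}+2\dist{p}{q_0}=\dist{gp}{p},
\]
hence $\Lambda(o^+)\le\Lambda(p)=\ell$. No monotonicity along the slide is needed, and no case analysis on whether $\eta(u)\in L_c$. Replace your core step by this computation and the proof goes through.
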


\begin{rema}
	Unlike in the rest of this section, the distance used to compute the energies $ \lambda_1(\phi_k, U, o^+_k)$ and $\lambda^+_1(\phi_k,U)$ is the one of $X_k$ and not its rescaled version.
	Exceptionally, the proof will also take place in the non-rescaled space.
\end{rema}

\begin{proof}
	We chose the basepoint $o_k$ using \autoref{res: comparing energies}.
	In particular, for every $k \in \N$, there exists a point $x_k \in X^+_k$ such that $\lambda_1(\phi_k, U, x_k) \leq \lambda^+_1(\phi_k, U) + 20\delta$ and $\dist{o_k}{x_k} \leq \card U\lambda_\infty(\phi_k,U) + 20\delta$.
	In particular, $x = \limo x_k$ is a well-defined point of $X_\omega$.
	We denote by $p = \limo p_k$ its projection onto $T$.
	
	We claim that there exists a point $o^+ = \limo o^+_k$ in $T$ such that $o^+_k \in X^+_k$ \oas\ and $\dist {o^+}p \leq \dist xp$.
	If $p_k$ belongs to the thick part $X^+_k$ \oas\ then we choose $o^+_k = p_k$.
	Let us assume now that there exists $c_k \in \mathcal C_k$ such that $\dist{p_k}{c_k} < \rho_k / \epsilon_k$ \oas\ (recall that the set of cone points $\mathcal C_k$ is $2\rho_k$-separated in the \emph{rescaled} space $\epsilon_k X_k$).
	It follows that $c = [c_k]$ belongs to $\mathcal C_\omega$ and $p \in \bar P(c)$.
	Since $T$ is the minimal $L$-invariant subtree, the point $p$ lies on a bi-infinite geodesic $\sigma$ of $T$.
	We denote by $o^+ = \limo o^+_k$ the point on $\sigma$ that is the closest to $p$ such that $b_c(o^+) = 0$.
	In particular, one can choose $o^+_k$ in the thick part $X^+_k$ for every $k \in \N$.
	Note also that $\dist p{o^+} = -b_c(p)$.
	Since $x_k$ lies in the thick part $X^+_k$ \oas, we necessarily have $b_c(x) \geq 0$.
	As $b_c$ is $1$-Lipschitz we get
	\begin{equation*}
		\dist p{o^+}   \leq -b_c(p) \leq - b_c(p) + b_c(x) \leq \abs{b_c(p) - b_c(x)} \leq \dist px,
	\end{equation*}
	which completes the proof of our claim.
	
	Let $g \in L$.
	Recall that the point $p$ is a projection of $x$ onto $T$. Hence $\dist {gx}x = \dist {gp}p + 2\dist xp$.
	Combining our previous claim with the triangle inequality, we get
	\begin{equation*}
		\dist {go^+}{o^+}  \leq  \dist {gp}p + 2\dist {o^+}p \leq  \dist {gp}p + 2\dist xp \leq \dist {gx}x.
	\end{equation*}
	We sum this inequality for $g = \eta(u)$ when $u$ runs over $U$.
	The result we get exactly says that 
	\begin{equation*}
		\epsilon_k\lambda_1(\phi_k, U,o^+_k) \leq \epsilon_k\lambda_1(\phi_k, U,x_k) + o(1).
	\end{equation*}
	The point $x_k$ has been chosen to (almost) minimize the restricted energy.
	Thus
	\begin{equation*}
		\epsilon_k\lambda^+_1(\phi_k, U) 
		\leq \epsilon_k\lambda_1(\phi_k, U,o^+_k) 
		\leq \epsilon_k\lambda^+_1(\phi_k, U) + o(1). \qedhere
	\end{equation*}
\end{proof}

The next definition is borrowed from Guirardel \cite[Definition~1.4]{Guirardel:2008ik}.

\begin{defi}
\label{def: transverse covering}
	A \emph{transverse covering} of an $\R$-tree $T$ is a collection $\mathcal Y$ of closed subtrees of $T$ with the following properties.
	\begin{enumerate}
		\item \label{enu: transverse covering - intersection}
		Every two distinct elements of $\mathcal Y$ have a degenerated intersection
		\item \label{enu: transverse covering - cover}
		Any bounded arc of $T$ can be covered by finitely many elements of $\mathcal Y$.
	\end{enumerate}
\end{defi}

\paragraph{A family of transverse coverings.}
We fix a parameter $r \in [0, \rho)$ (its value will vary later).
Let $\mathcal C(r)$ be the set of cone points $c\in \mathcal C_\omega$ such that $T \cap \bar P(c,r)$ is non-degenerate.
We write $\mathcal Y(r)$ for the collection of closed subtrees of $T$ which consists of 
\begin{itemize}
	\item the peripheral subtrees $T \cap \bar P(c,r)$, where $c$ runs over $\mathcal C(r)$, which we call \emph{$r$-peripheral components}, and 
	\item the connected components of 
	\begin{equation*}
		T \setminus \bigcup_{c \in \mathcal C(r)} P(c,r)
	\end{equation*}
	which we call \emph{$r$-transverse components}.
\end{itemize}
The action of $L$ on $X_\omega$ induces an action on $\mathcal Y(r)$ which preserves the set of $r$-peripheral components (\resp $r$-transversal components).

\begin{assu}
	The definition above makes sense for any $r \in [0, \rho)$.
	Our goal is to prove that $\mathcal Y(0)$ is a transverse covering, see \autoref{res: 0-transverse covering}.
	Nevertheless for the moment we suppose that $r > 0$.
\end{assu}

\begin{lemm}
\label{res: valence two vertices in transverse covering}
	A point of $T$ belongs to at most two subtrees in $\mathcal Y(r)$: one $r$-transverse component and/or one $r$-peripheral component.
\end{lemm}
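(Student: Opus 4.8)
The plan is to analyze which subtrees of $\mathcal Y(r)$ can contain a given point $x \in T$, using \autoref{res: intersection pieces} to handle peripheral components and the definition of connected components to handle transverse ones. First I would recall that two distinct $r$-peripheral components are $T \cap \bar P(c,r)$ and $T \cap \bar P(c',r)$ with $c \neq c'$; since $\bar P(c,r) \subset \bar P(c)$ and $\bar P(c',r) \subset \bar P(c')$, any common point would lie in $\bar P(c) \cap \bar P(c')$, which by \autoref{res: intersection pieces} forces $c = c'$, a contradiction. Hence $x$ lies in at most one $r$-peripheral component.

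Next I would argue that $x$ lies in at most one $r$-transverse component. The $r$-transverse components are by construction the connected components of the open subset (in the subtree sense) $T \setminus \bigcup_{c \in \mathcal C(r)} P(c,r)$, and distinct connected components of any space are disjoint, so this is immediate — the only subtlety is to make sure the ``components'' are genuinely pairwise disjoint as subsets of $T$, which is part of \autoref{def: transverse covering}'s setup and holds because $T$ is an $\R$-tree (so path-components of an open subset are open and disjoint). I would then note that an $r$-peripheral component and an $r$-transverse component can genuinely share a point, namely a frontier point: if $y \in T$ satisfies $b_c(y) = -r$ for some $c \in \mathcal C(r)$ and $y$ is a limit of points with $b_c < -r$ on one side and points with $b_c > -r$ on the other, then $y$ lies in the closed peripheral component $T \cap \bar P(c,r)$ and also in the closure-completed transverse component adjacent to it. This is exactly why the statement says ``and/or''.

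The key point to verify carefully — and the main obstacle — is that $x$ cannot lie in two $r$-transverse components simultaneously even though the transverse components are defined by removing the open peripheral subtrees $P(c,r)$: one must check that a point $x$ with $b_c(x) = -r$ for two \emph{different} cone points $c, c'$ cannot serve as a ``junction'' of two transverse components. But if $b_c(x) = -r$ and $b_{c'}(x) = -r$ with $c \neq c'$, then $x \in \bar P(c) \cap \bar P(c')$, contradicting \autoref{res: intersection pieces}; so at most one cone point is ``active'' at $x$, and removing a single open peripheral subtree $P(c,r)$ locally around such a frontier point $x$ leaves the two ``sides'' of $x$ lying in peripheral versus transverse pieces respectively, not in two separate transverse pieces (in an $\R$-tree, removing a point disconnects into the directions at that point, but here $x$ is in the closure of $P(c,r)$, and the transverse side is a single component since $T$ is a tree and the peripheral subtree is strictly convex by the remark following the definition of peripheral subtrees). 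Assembling these observations: $x$ lies in at most one peripheral component and at most one transverse component, and these exhaust $\mathcal Y(r)$, which is precisely the claim.

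The write-up would be short: one paragraph invoking \autoref{res: intersection pieces} for the peripheral-peripheral exclusion, one sentence for the transverse-transverse exclusion by connectedness, and one short argument using strict convexity of peripheral subtrees to rule out a point sitting between two transverse components. I expect the peripheral-peripheral case is genuinely one line given the lemmas already proved, and the only place requiring a little care is phrasing the transverse-transverse exclusion in a way that accounts for frontier points.
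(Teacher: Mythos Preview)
Your approach matches the paper's, but there is a small gap in the peripheral--peripheral step and the transverse--transverse step is over-engineered.

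For the peripheral case, you invoke \autoref{res: intersection pieces} by saying a common point lies in $\bar P(c) \cap \bar P(c')$. But that lemma only concludes $c=c'$ when this intersection is \emph{non-degenerate}; a single common point does not suffice. The fix is to use the standing hypothesis $r>0$: then $\bar P(c,r) \subset P(c)$ (since $b_c(x) \leq -r < 0$), so a common point actually lies in $P(c)\cap P(c')$, and part~\ref{enu: intersection pieces - open} of the lemma gives $c=c'$. This is exactly what the paper does in one line. The same imprecision recurs later when you argue that $b_c(x)=-r$ and $b_{c'}(x)=-r$ with $c\neq c'$ ``contradicts \autoref{res: intersection pieces}''.

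For the transverse case, your initial observation that distinct connected components of $T \setminus \bigcup_{c \in \mathcal C(r)} P(c,r)$ are disjoint is already the entire argument; the subsequent discussion of frontier points is unnecessary. A boundary point $x$ with $b_c(x)=-r$ lies in this complement (it is not in any open $P(c',r)$) and hence in exactly one of its connected components. There is no closure operation hidden in the definition of $r$-transverse components, so your worry about two transverse pieces meeting at such a point does not arise. The paper's proof of this half is literally the single word ``Similarly''.
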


\begin{proof}
	Since $r > 0$, any two distinct $r$-peripheral components are disjoint, see \autoref{res: intersection pieces}.
	Similarly any two distinct $r$-transverse components are disjoint.
	Whence the result.
\end{proof}

\begin{prop}
\label{res: r-transverse covering}
	The family $\mathcal Y(r)$ is an $L$-invariant \emph{transverse covering}.
\end{prop}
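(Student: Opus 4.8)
The plan is to verify the two defining properties of a transverse covering (\autoref{def: transverse covering}) for the family $\mathcal Y(r)$, together with $L$-invariance, which is immediate since $L$ permutes the $r$-peripheral and $r$-transverse components as noted just before the statement.

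First I would establish the intersection property \ref{enu: transverse covering - intersection}. By \autoref{res: valence two vertices in transverse covering} any point of $T$ lies in at most one $r$-peripheral component and at most one $r$-transverse component, so the only way two distinct elements of $\mathcal Y(r)$ can meet is a peripheral component $T \cap \bar P(c,r)$ meeting a transverse component $Z$. Such an intersection is contained in the topological frontier of $P(c,r)$ in $T$, which is a single point: indeed $\bar P(c,r)$ is strictly convex (as recalled after the definition of peripheral subtrees), so any point of $Z \cap \bar P(c,r)$ must be on the boundary $b_c^{-1}(-r)$, and two such boundary points would force the geodesic between them to lie in $P(c,r)$ by strict convexity, contradicting that $Z$ is a connected component of the complement of the open peripheral subtrees. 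Hence the intersection is degenerate.

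Next I would establish the covering property \ref{enu: transverse covering - cover}: any bounded arc $I = [x,y]$ of $T$ is covered by finitely many elements of $\mathcal Y(r)$. The set $I \cap \bigcup_{c} \bar P(c,r)$ is, by the strict convexity of peripheral subtrees and \autoref{res: large intersection w/ pieces}, a disjoint union of closed subsegments of $I$, each of which has length at least $2r$ when the corresponding peripheral subtree is entered. Since $r > 0$ and $I$ has finite length, only finitely many distinct cone points $c \in \mathcal C(r)$ can contribute, so $I$ meets only finitely many $r$-peripheral components. Removing the (finitely many) open subsegments $I \cap P(c,r)$ from $I$ leaves finitely many closed subsegments, each contained in $T \setminus \bigcup_c P(c,r)$ and hence each inside a single $r$-transverse component. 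Therefore $I$ is covered by these finitely many peripheral and transverse components. I expect the main obstacle to be the careful argument that $I$ meets only finitely many peripheral subtrees: this is where the hypothesis $r>0$ is genuinely used (via the length bound $2r$ from \autoref{res: large intersection w/ pieces}), and one must be slightly attentive that distinct cone points give disjoint peripheral components (\autoref{res: intersection pieces}) so there is no double counting, and that the endpoints $x,y$ themselves, even if they happen to lie in some $\bar P(c,r)$, are accounted for by the component containing them.
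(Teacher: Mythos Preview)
Your proof is correct and follows essentially the same route as the paper's. One small imprecision: in the finiteness argument you write that the subsegments $I \cap \bar P(c,r)$ have length at least $2r$, but \autoref{res: large intersection w/ pieces} actually gives this bound for $I \cap \bar P(c)$ (the larger peripheral subtree); since distinct $\bar P(c)$ have degenerate pairwise intersection (\autoref{res: intersection pieces}), the counting argument goes through just as you intend, and the paper phrases it the same way.
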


\begin{proof}
	As we explained above, if two elements of $\mathcal Y(r)$ have a non-empty intersection, one is an $r$-transverse component, say $Y$, while the other one is an $r$-transverse component, say $T \cap \bar P(c,r)$ for some $c \in \mathcal C(r)$.
	Recall that peripheral subtrees are strictly convex, hence $Y \cap \bar P(c,r)$ is degenerated.
	Indeed otherwise $Y$ would contain a point in $P(c,r)$ which contradicts its definition.
	
	Let us now prove that any bounded arc $\geo xy$ of $T$ is covered by finitely many elements of $\mathcal Y(r)$.
	Let $c \in \mathcal C(r)$ such that $\geo xy \cap P(c,r)$ is non-empty.
	If $x$ and $y$ do not belong to $P(c)$ then $\geo xy \cap \bar P(c)$ is an arc of length at least $2r$ (\autoref{res: large intersection w/ pieces}).
	On the other hand $x$ (\resp $y$) can only belong to a single open peripheral subtree $P(c)$.
	Thus there are only finitely many $c \in \mathcal C(r)$ such that $\geo xy \cap P(c,r)$ is non-empty. 
	In addition, 
	\begin{equation*}
		\geo xy \setminus \bigcup_{c \in \mathcal C(r)} P(c,r)
	\end{equation*}
	consists of a finite number of arcs each of which is contained in an $r$-transverse component.
\end{proof}

We use this transverse covering to decompose the action of $L$ on $T$ as a graph of actions $\Lambda(r)$.
The construction goes as follows -- compare with Guirardel \cite[Section~4.5]{Guirardel:2004aa}.
We first define its skeleton $S(r)$.
Its vertex set is $\mathcal Y(r)$.
Two subtrees $Y_1$ and $Y_2$ seen as vertices in $S(r)$ are connected by an edge if $Y_1 \cap Y_2 \neq \emptyset$.
The action of $L$ on $T$ induces an action by isometries on $S(r)$.
Observe that $S(r)$ is bipartite.
The vertices are indeed of two types: the $r$-transverse components and the $r$-peripheral components.
We call them respectively \emph{transverse} and \emph{peripheral vertices} of $S(r)$.
The vertex trees and the attaching points of $\Lambda(r)$ are the natural ones.
Since the action of $L$ on $T$ is minimal, so is the one of $L$ on $S(r)$.
Let us now describe a bit more its properties.

\begin{lemm}
\label{res: dual tree - ab stab}
	Let $c \in \mathcal C(r)$.
	The stabilizer of $T \cap \bar P(c,r)$ seen as a vertex of $S(r)$ is $L_c$.
\end{lemm}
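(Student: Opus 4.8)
The plan is to show the two inclusions $L_c \subseteq \stab{Y}$ and $\stab Y \subseteq L_c$, where $Y = T \cap \bar P(c,r)$ is the $r$-peripheral component associated to $c \in \mathcal C(r)$, the stabilizer being taken for the action of $L$ on the skeleton $S(r)$.

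For the first inclusion, let $g \in L_c$. Since the family $(b_{c'})_{c' \in \mathcal C_\omega}$ is $L$-invariant in the sense that $b_{gc'}(gx) = b_{c'}(x)$, and since $gc = c$, we get $b_c(gx) = b_c(x)$ for all $x$, hence $g$ preserves the peripheral subtree $\bar P(c,r)$. As $g$ also preserves the minimal subtree $T$, it preserves $Y = T \cap \bar P(c,r)$, so $g$ stabilizes the corresponding vertex of $S(r)$; thus $L_c \subseteq \stab Y$.

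For the converse, suppose $g \in L$ stabilizes $Y$ as a vertex of $S(r)$, i.e. $gY = Y$. Because $r > 0$, \autoref{res: intersection pieces} (combined with the definition of $r$-peripheral components) shows that two distinct $r$-peripheral components are disjoint, so $g(T \cap \bar P(c,r)) = T \cap \bar P(gc, r)$ forces $gc = c$, i.e. $g \in L_c$. More carefully: $gY = T \cap g\bar P(c,r) = T \cap \bar P(gc,r)$, and if this equals $Y = T \cap \bar P(c,r)$ then $\bar P(gc,r) \cap \bar P(c,r)$ is non-degenerated (it contains $Y$, which is non-degenerated by definition of $\mathcal C(r)$); since $r > 0$ this is only possible when $gc = c$ by \autoref{res: intersection pieces} and the strict convexity of peripheral subtrees. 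Hence $g \in L_c$, giving $\stab Y \subseteq L_c$ and completing the proof.

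The only mildly delicate point is the converse inclusion: one must be careful that $gY = Y$ genuinely forces $gc = c$ rather than merely $g$ permuting cone points while fixing $Y$ setwise. This is handled by the non-degeneracy of $Y$ (guaranteed since $c \in \mathcal C(r)$) together with the fact, already recorded, that for $r > 0$ distinct peripheral subtrees $\bar P(c,r)$ and $\bar P(c',r)$ are disjoint when $c \neq c'$ — a consequence of \autoref{res: intersection pieces} applied to the larger subtrees $\bar P(c)$ and $\bar P(c')$. I do not expect any real obstacle here; the statement is essentially immediate from the $L$-invariance of the Busemann functions and the separation of cone points.
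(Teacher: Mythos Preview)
Your proof is correct and follows the same approach as the paper, which simply records the lemma as a direct consequence of \autoref{res: intersection pieces}. You have spelled out both inclusions explicitly, but the key point in each case --- $L$-invariance of the Busemann functions for $L_c \subseteq \stab Y$, and the non-degeneracy of $Y \subseteq \bar P(c) \cap \bar P(gc)$ forcing $gc = c$ for the converse --- is exactly what the paper's one-line reference encodes.
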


\begin{proof}
	This is a direct consequence of \autoref{res: intersection pieces}.
\end{proof}

\begin{lemm}
\label{res: dual tree - transverse stab}
	Let $v$ be a transverse vertex of $S(r)$.
	Then $L_v$ is non-abelian.
\end{lemm}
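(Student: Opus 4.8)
The plan is to argue by contradiction. Suppose $L_v$ is abelian and write $Y$ for the $r$-transverse component of $T$ corresponding to $v$. If $\mathcal C(r)$ is empty, then $Y = T$ and $L_v = L$, which is not abelian by our standing assumption; so we may assume $\mathcal C(r)\neq\emptyset$, and then, since $S(r)$ is bipartite, $v$ has at least one neighbour and each of its neighbours is a peripheral vertex. The first thing I would establish is that every edge group of $S(r)$ is non-trivial. Collapsing all edges outside the $L$-orbit of a given edge $e$ produces a non-trivial minimal $L$-tree whose edge stabiliser is $L_e$; moreover every elusive subgroup of $L$ fixes a point of $X_\omega$ by \autoref{res: limit tree - elusive sbgps are elliptic}, hence fixes its projection onto the $L$-invariant subtree $T$ and so is elliptic in $S(r)$ and in any collapse of it. If $L_e$ were trivial, this would exhibit a free splitting of $L$ relative to $\mathcal H$, contradicting \autoref{ass: splitting - freely indecomposable}. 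Taking $e$ incident to $v$, we get in particular $L_v \supseteq L_e \neq \{1\}$.

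The heart of the argument is a short CSA computation. Let $c$ be a neighbour of $v$ and $e=(v,c)$ the connecting edge, so that $L_e = L_v \cap L_c$ by \autoref{res: dual tree - ab stab}, and fix $g \in L_e \setminus \{1\}$. Since $L$ is CSA, the centraliser $C_L(g)$ is the unique maximal abelian subgroup of $L$ containing $g$. As $L_v$ is abelian and contains $g$, we get $L_v \subseteq C_L(g)$; and as $L_c$ is abelian by \autoref{res: tree-graded - abelian piece stabilizer}, contains $g$, and is maximal abelian, we get $L_c = C_L(g)$. Hence $L_v \subseteq L_c$ for every neighbour $c$ of $v$.

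To conclude I would split according to the valence of $v$ in $S(r)$. If $v$ has two distinct neighbours $c_1$ and $c_2$, then they correspond to distinct cone points of $\mathcal C_\omega$ by \autoref{res: intersection pieces}, so $L_v \subseteq L_{c_1} \cap L_{c_2} = \{1\}$ by \autoref{res: intersection stab of cone pts}, contradicting $L_v \neq \{1\}$. If instead $v$ has a single neighbour $c$, then $L_e = L_v \cap L_c = L_v$, so $v$ is a valence-one vertex of $S(r)$ whose stabiliser coincides with that of its incident edge, which is impossible since the action of $L$ on $S(r)$ is minimal. I expect the only delicate point to be the bookkeeping in the first paragraph — checking that elusive subgroups really are elliptic in the collapsed tree so that a trivial edge group genuinely produces a free splitting \emph{relative to $\mathcal H$} — together with the correct exclusion of the valence-one case by minimality; by contrast, the core of the proof is the short CSA identity feeding into \autoref{res: intersection stab of cone pts}.
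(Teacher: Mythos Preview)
Your proof is correct and follows essentially the same route as the paper's: assume $L_v$ abelian, use freely-indecomposability relative to $\mathcal H$ to get non-trivial edge groups, then use CSA together with \autoref{res: intersection stab of cone pts} to derive a contradiction. The paper is terser — it invokes minimality directly to get two distinct edges at $v$ (hence two distinct cone points $c,c'$) rather than splitting into the one-neighbour/two-neighbour cases, and it phrases the CSA step as ``$L_c$ and $L_{c'}$ commute, hence $L_c=L_{c'}$'' rather than your ``$L_v\subseteq L_c$ for every neighbour $c$'' — but these are cosmetic differences in the same argument.
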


\begin{proof}
	Suppose that contrary to our claim $L_v$ is abelian.
	Since $S(r)$ is minimal, there are two distinct edges $e$ and $e'$ of $S(r)$ starting at $v$.
	The peripheral endpoints of $e$ and $e'$ correspond to $r$-peripheral subtrees $T \cap \bar P(c,r)$ and $T \cap \bar P(c',r)$ for distinct $c,c' \in \mathcal C(r)$.
	In particular, $L_e = L_v \cap L_c$ and $L_{e'} = L_v \cap L_{c'}$.
	According \autoref{ass: splitting - freely indecomposable}, $L_e$ and $L_{e'}$ are non trivial.
	Since $L$ is CSA, the groups $L_c$ and $L_{c'}$ commute.
	However $L_c$ and $L_{c'}$ are maximal abelian subgroup (\autoref{res: tree-graded - abelian piece stabilizer}).
	Hence $L_c = L_{c'}$ which contradicts \autoref{res: intersection stab of cone pts}.
\end{proof}

\begin{lemm}
\label{res: dual tree - edge stab}
	Let $e$ be an edge of $S(r)$.
	Let $c \in \mathcal C(r)$ such that one of the vertices of $e$ corresponds to the $r$-peripheral component $T\cap\bar P(c,r)$.
	The stabilizer in $S(r)$ of $e$ coincide with the pointwise stabilizer of $\bar P(c,r)$.
	In particular, it is torsion-free, abelian.
\end{lemm}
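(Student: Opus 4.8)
The statement has two parts: first, the stabilizer of the edge $e$ in $S(r)$ coincides with the pointwise stabilizer of $\bar P(c,r)$; second, this group is torsion-free abelian. I would begin with the identification of the edge stabilizer. By construction of the skeleton $S(r)$, the edge $e$ joins the peripheral vertex corresponding to $T\cap\bar P(c,r)$ to a transverse vertex corresponding to some $r$-transverse component $Y$, and the edge stabilizer is $L_e = L_v \cap L_w$ where $v,w$ are the two endpoints. Since the peripheral vertex $v$ has stabilizer $L_c$ (\autoref{res: dual tree - ab stab}), we get $L_e \subset L_c$. On the other hand, the intersection point $T\cap\bar P(c,r)\cap Y$ is a single point $z$ (by \autoref{res: valence two vertices in transverse covering} and strict convexity of peripheral subtrees), and $L_e$ fixes $z$; but it also fixes all of $\bar P(c,r)$ because any element of $L_c$ fixing a point of $\bar P(c,r)$ fixes the whole set $\bar P(c,r)$ — this is exactly the content of \autoref{res: tree-graded - fixed point set}, which says that a nontrivial $g \in L_c$ has $\fix g = \bar P(c,s)$ for some finite $s \in [0,\rho)$, and in particular, if $g$ fixes a point of $\bar P(c,r)$ with $r \le s$ forced, then $\bar P(c,r) \subset \fix g$. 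Conversely, the pointwise stabilizer $A_r$ of $\bar P(c,r)$ fixes $z$ and also lies in $L_v = L_c$ and in $L_w$ (since it fixes the edge orientation / the point $z$ which is the attaching point on the transverse side too), hence $A_r \subset L_e$. Putting these together gives $L_e = A_r$, the pointwise stabilizer of $\bar P(c,r)$.

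For the second part, torsion-freeness follows immediately: if $g \in A_r \setminus \{1\}$ then $\fix g$ contains $\bar P(c,r)$, which is non-degenerate since $c \in \mathcal C(r)$ (that is precisely the defining condition of $\mathcal C(r)$), so $g$ has infinite order by \autoref{res: root stability}. Abelian-ness follows from \autoref{res: tree-graded - abelian piece stabilizer}: $A_r$ is a subgroup of the global stabilizer of $\bar P(c)$, which equals $L_c$ and is a maximal abelian subgroup (provided it is non-trivial; and $A_r$ non-trivial forces $L_c$ non-trivial). Alternatively one may invoke \autoref{res: abelian arc stabilizers} directly: $A_r$ pointwise stabilizes any sub-arc of the non-degenerate tree $\bar P(c,r)$, hence is abelian (and torsion-free, and visible).

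The only point requiring a little care is the first inclusion $L_e \subset A_r$, i.e. that fixing the single intersection point $z$ together with lying in $L_c$ forces fixing all of $\bar P(c,r)$. Here one uses that $z \in \bar P(c,r)$ and that, by \autoref{res: tree-graded - fixed point set}, the fixed-point set of any nontrivial element of $L_c$ is of the form $\bar P(c,s)$; since $z$ lies in this set and $z$ satisfies $b_c(z) = -r$ (it is on the boundary sphere of the $r$-peripheral component adjacent to a transverse component, where $b_c = -r$ exactly), we must have $s \ge r$, hence $\bar P(c,r) \subset \bar P(c,s) = \fix g$ for every $g \in L_e \setminus\{1\}$, which gives $L_e \subset A_r$. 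I expect this bookkeeping about where $b_c$ takes the value $-r$ on the attaching point $z$ to be the main (and only mildly delicate) obstacle; everything else is a direct citation of \autoref{res: tree-graded - fixed point set}, \autoref{res: root stability}, \autoref{res: tree-graded - abelian piece stabilizer}, and \autoref{res: dual tree - ab stab}.
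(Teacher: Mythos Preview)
Your proposal is correct and follows essentially the same approach as the paper. The paper is slightly more streamlined: it observes directly that the edge stabilizer $L_e$ equals the point stabilizer $L_p$ of the unique intersection point $p \in Y \cap \bar P(c,r)$ (this is what \autoref{res: valence two vertices in transverse covering} buys you, and it subsumes both of your inclusions at once), notes that $p \in \bar P(c,r)\setminus P(c,r)$ so $b_c(p)=-r$, and then invokes \autoref{res: tree-graded - fixed point set} exactly as you do; your two-inclusion argument and the separate appeals to \autoref{res: root stability} and \autoref{res: tree-graded - abelian piece stabilizer} for the ``in particular'' clause are fine but slightly redundant.
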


\begin{proof}
	Let $Y$ be the $r$-transverse component corresponding to the other vertex of $e$ and write $p$ for the unique point in $Y \cap \bar P(c,r)$.
	It follows from the construction that the edge stabilizer $L_e$ is exactly the point stabilizer $L_p$ of $p \in T$.
	Since $p$ belongs to $Y$, it does not lie in $P(c,r)$.
	Hence $p \in \bar P(c,r) \setminus P(c,r)$, i.e. $b_c(p) = -r$.
	It follows from \autoref{res: tree-graded - fixed point set} that $L_p$ is the pointwise stabilizer of $\bar P(c,r)$.
\end{proof}

\begin{lemm}
\label{res: ab-tree rel to nc ab}
	Every non-cyclic abelian subgroup of $L$ fixes a vertex in $S(r)$.
	Hence $S(r)$ is an $(\mathcal A, \mathcal H \cup \mathcal A_{\rm nc})$-tree.
\end{lemm}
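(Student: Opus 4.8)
The plan is to take a non-cyclic abelian subgroup $H$ of $L$ and show it acts elliptically on the skeleton $S(r)$; the second assertion then follows because $S(r)$ is, by construction (\autoref{res: dual tree - edge stab}), a $G$-tree whose edge stabilizers are abelian, and the elusive subgroups in $\mathcal H$ are elliptic by \autoref{res: limit tree - elusive sbgps are elliptic} together with the fact that $S(r)$ is a quotient/collapse-type object built from the $L$-action on $T \subset X_\omega$ — more precisely, each elusive subgroup fixes a point of $X_\omega$, hence lies in a single vertex tree of $\Lambda(r)$ and thus fixes a vertex of $S(r)$.

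First I would dispose of the case where $H$ is elliptic on $X_\omega$: then $H$ fixes a point $x \in X_\omega$. If $x \in T$, this point lies in some $r$-transverse or $r$-peripheral component, so $H$ fixes the corresponding vertex of $S(r)$. If $x \notin T$, project $x$ to the (unique) closest point $p$ of the minimal subtree $T$; since $H$ preserves $T$ and fixes $x$, it fixes $p$, and we argue as before. So assume $H$ is non-elliptic on $X_\omega$. Then $H$ is small (being abelian it cannot contain two loxodromics with bounded-intersection axes), so \autoref{res: tree-graded - small subgroups} applies: $H$ is visible and abelian, and since it is non-cyclic, either there is $c \in \mathcal C_\omega$ with $H \subset L_c$, or $H$ preserves a transverse subtree $Y \subset X_\omega$.

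In the first case, $H \subset L_c$ and by \autoref{res: tree-graded - fixed point set} the whole of $\bar P(c,0) \supset \bar P(c,r)$ is contained in the fixed-point set of every nontrivial element of $L_c$; in particular $H$ fixes $T \cap \bar P(c,r)$ pointwise, and this set is non-degenerate (it contains, e.g., the axis of $L_c$'s action restricted to $T$ is not the right phrasing — rather: since $H$ is non-elliptic and preserves $T$, $T \cap \bar P(c,r)$ is non-degenerate because $H$ moves points of $T$ and all its motion is inside $\bar P(c)$, so $c \in \mathcal C(r)$ after noting $r < \rho$ and the fixed set equals $\bar P(c,r')$ for the appropriate $r' \le r$ — here one uses that the translation lengths vanish so $H$ lies in the pointwise stabilizer of an arc of $\bar P(c,r)$). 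Hence $c \in \mathcal C(r)$ and $H$ fixes the peripheral vertex $T \cap \bar P(c,r)$ of $S(r)$. In the second case, $H$ preserves a transverse subtree $Y$; intersecting with $T$, $H$ preserves the non-degenerate subtree $T \cap Y$, which is transverse, hence (being connected and avoiding every $P(c,r)$) is contained in a single $r$-transverse component $Z \in \mathcal Y(r)$; since $H$ preserves $T \cap Y$ and the $r$-transverse components are permuted by $L$ with at most one containing any given arc, $H$ stabilizes $Z$ and thus fixes the corresponding vertex of $S(r)$.

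The main obstacle I anticipate is the bookkeeping in the case $H \subset L_c$: one must check that $T \cap \bar P(c,r)$ is actually non-degenerate (so that it is genuinely a vertex of $S(r)$, i.e. $c \in \mathcal C(r)$) rather than collapsing to a point, and this requires combining \autoref{res: tree-graded - fixed point set} (the fixed set of any nontrivial $g \in L_c$ is $\bar P(c,r_g)$ for some finite $r_g \in [0,\rho)$) with the hypothesis that $H$ is \emph{non-elliptic} — so some element of $H$ has a non-degenerate fixed set reaching depth at least $r$ — together with the observation that all elements of the abelian group $H$ fixing an arc must fix the same subtree (via \autoref{res: root stability} and the cocycle description). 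A secondary subtlety is verifying that a transverse subtree preserved by $H$ meets the minimal subtree $T$ in a non-degenerate set; this follows since $H$ acts non-trivially on $X_\omega$ and $T$ is the unique minimal $H$-invariant (indeed $L$-invariant) subtree, so $H$'s characteristic set lies in $T$.
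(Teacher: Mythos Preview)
Your overall architecture matches the paper's: dispose of the elliptic case, then invoke \autoref{res: tree-graded - small subgroups} to split into the transverse-subtree case and the $H\subset L_c$ case. The transverse-subtree case is fine (and you are right that one should pass to the minimal $H$-invariant subtree inside $T$, which is then transverse and hence contained in a single $r$-transverse component).

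The gap is exactly where you locate it: the case $H\subset L_c$. Your first claim, that ``the whole of $\bar P(c,0)\supset\bar P(c,r)$ is contained in the fixed-point set of every nontrivial element of $L_c$'', is false: \autoref{res: tree-graded - fixed point set} says $\fix g=\bar P(c,r_g)$ for some $r_g\in[0,\rho]$ depending on $g$, not that $r_g=0$. Your attempted correction (``some element of $H$ has a non-degenerate fixed set reaching depth at least $r$'') does not follow from non-ellipticity either; non-ellipticity only says the fixed sets in $T$ have empty common intersection, which gives no lower bound on the depth of $T\cap\bar P(c)$.

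The paper sidesteps this entirely: it does \emph{not} try to prove $c\in\mathcal C(r)$. Instead it treats both subcases. If $c\in\mathcal C(r)$, then $H\subset L_c$ fixes the peripheral vertex $T\cap\bar P(c,r)$ by \autoref{res: dual tree - ab stab}. If $c\notin\mathcal C(r)$, then $T\cap\bar P(c,r)$ is degenerate, so the nearest-point projection of $\bar P(c,r)$ onto $T$ is a single point $y$; since $H$ preserves both $\bar P(c,r)$ and $T$, it fixes $y$. One then checks $y$ cannot lie in $\bar P(c',r)$ for any $c'\in\mathcal C(r)$ (else $H\subset L_c\cap L_{c'}=\{1\}$ by \autoref{res: intersection stab of cone pts}), so $y$ belongs to an $r$-transverse component, and $H$ fixes the corresponding vertex. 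Note this second subcase incidentally shows $H$ is elliptic on $T$, so under your standing hypothesis it is vacuous; but the projection argument is what makes that vacuity visible, and it is precisely the missing ingredient in your sketch.
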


\begin{proof}
	Let $A$ be a non-cyclic abelian subgroup of $L$.
	If $A$ is elliptic (for its action on $T$) then it automatically fixes a point in $S(r)$.
	Otherwise, we distinguish two cases, following \autoref{res: tree-graded - small subgroups}.
	If $A$ preserves a transverse subtree, say $Y$, then it fixes the vertex of $S(r)$ associated to the unique $r$-transverse component containing $Y$.
	Assume now that there exists $c \in \mathcal C_\omega$ such that $A$ is contained in $L_c$.
	If $c \in \mathcal C(r)$, then $A$ fixes the vertex of $S(r)$ associated to the $r$-peripheral tree $T \cap \bar P(c,r)$.
	If $c$ does not belong to $\mathcal C(r)$, then by definition $T \cap \bar P(c,r)$ is degenerated.
	It follows that the closest point projection of $\bar P(c,r)$ onto $T$ is reduced to a single point say $y$, which is fixed by $A$.
	Note that $y$ cannot lie in an $r$-peripheral subtree $T \cap \bar P(c', r)$ for some $c' \in \mathcal C(r)$.
	Indeed otherwise $A$ would fix both $c$ and $c'$ and thus be trivial (\autoref{res: intersection stab of cone pts}).
	Hence $y$ belongs to an $r$-transverse subtree $Y$, which is $A$-invariant.
	In particular, $A$ fixes the vertex of $S(r)$ corresponding to $Y$.
\end{proof}

Recall that the action of a group on a simplicial tree is \emph{$k$-acylindrical}, if the pointwise stabilizer of any segment of length greater than $k$ is trivial.

\begin{lemm}
\label{res: acyl}
	The action of $L$ on $S(r)$ is $2$-acylindrical.
\end{lemm}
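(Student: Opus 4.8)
The plan is to show that the fixed-point set $\mathrm{Fix}(g)$ of every non-trivial $g \in L$ has diameter at most $2$ in $S(r)$, which is exactly $2$-acylindricity. All the ingredients are already in place: $S(r)$ is bipartite, with its vertex set split into transverse and peripheral vertices and this bipartition preserved by $L$; the stabilizer of a peripheral vertex $T \cap \bar P(c,r)$ is $L_c$ (\autoref{res: dual tree - ab stab}); distinct peripheral vertices of $S(r)$ correspond to distinct cone points of $\mathcal C(r)$ — here the standing hypothesis $r>0$ enters, since two distinct $r$-peripheral components are disjoint (a common point would lie in $P(c) \cap P(c')$ and hence force $c = c'$ by \autoref{res: intersection pieces}); and $L_c \cap L_{c'} = \{1\}$ whenever $c \neq c'$ (\autoref{res: intersection stab of cone pts}).

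First I would record the combinatorial observation that, because $S(r)$ is bipartite with $L$ preserving the bipartition, the action is without inversion and any reduced segment of $S(r)$ with at least three edges passes through two distinct peripheral vertices: such a segment has at least four vertices, which alternate between the two types, so at least two of them are peripheral, and along a reduced path they are pairwise distinct.

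Then, given $g \in L \setminus \{1\}$, I would argue as follows. A simplicial isometry without inversion that fixes a point in the interior of an edge fixes that whole edge pointwise; hence $\mathrm{Fix}(g)$ is a subcomplex of $S(r)$, so its diameter, if positive, is an integer. Suppose for contradiction that it exceeds $2$, hence is at least $3$. Then $\mathrm{Fix}(g)$ contains a reduced segment with three edges, which by the previous paragraph passes through two distinct peripheral vertices $T \cap \bar P(c,r)$ and $T \cap \bar P(c',r)$ with $c \neq c'$ in $\mathcal C(r)$. By \autoref{res: dual tree - ab stab}, $g$ then lies in $L_c \cap L_{c'}$, which is trivial by \autoref{res: intersection stab of cone pts} — a contradiction. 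Therefore $\mathrm{Fix}(g)$ has diameter at most $2$, so the action of $L$ on $S(r)$ is $2$-acylindrical.

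I do not expect a genuine obstacle: the proof is a few lines once the structural lemmas on $S(r)$ are available. The only points deserving a little care are the reduction to a subcomplex (so that ``diameter greater than $2$'' really produces a fixed reduced $3$-edge segment) and the explicit use of $r>0$ to guarantee that distinct peripheral components carry distinct cone points — this being precisely the feature that fails for $r=0$ and forces the family $\mathcal Y(0)$ to be treated separately afterwards.
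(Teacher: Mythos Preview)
Your proposal is correct and follows essentially the same argument as the paper: any segment of length at least $3$ in the bipartite tree $S(r)$ must contain two distinct peripheral vertices, whose stabilizers $L_c$ and $L_{c'}$ intersect trivially by \autoref{res: intersection stab of cone pts}. The paper's proof is simply a terser version of yours, phrased in terms of the pointwise stabilizer of a segment rather than the diameter of a fixed-point set.
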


\begin{proof}
	Any segment $I$ of length at least $3$ in $S(r)$ contains two vertices of the form $T \cap \bar P(c,r)$ and $T \cap \bar P(c',r)$ for some distinct $c,c' \in \mathcal C(r)$.
	It follows from \autoref{res: dual tree - ab stab} that the pointwise stabilizer of $I$ is contained in $L_c \cap L_{c'}$  which is trivial by \autoref{res: intersection stab of cone pts}.
\end{proof}

An $L$-tree $S$ is \emph{reduced} if it is minimal, and for every vertex $v \in S$ whose image in $S/L$ has valence two, for every $e$ starting at $v$, the edge group $L_e$ is properly contained in $L_v$, see \cite{Bestvina:1991jh}.
Although $S(r)$ may not be reduced, it is not ``far'' from being so.

\begin{lemm}
\label{res: dual tree - reduced collapse}
	There exists an $L$-tree $\bar S(r)$ and an $L$-equivariant collapse map $S(r) \onto \bar S(r)$ with the following properties.
	\begin{enumerate}
		\item \label{res: dual tree - reduced collapse - acylindrical}
		The action of $L$ on $\bar S(r)$ is minimal and $2$-acylindrical
		\item \label{res: dual tree - reduced collapse - reduced}
		For every vertex $v$ of $\bar S(r)$, for every edge $e$ starting at $v$, the group $L_{e}$ is properly contained in $L_{v}$.
		In particular, $\bar S(r)$ is reduced.
		\item \label{res: dual tree - reduced collapse - edges}
		If $E$ and $\bar E$ stands for the edge sets of $S(r) /L$ and $\bar S(r) / L$ respectively, then $\card{E} \leq 2 \card{\bar E}$.
	\end{enumerate}
\end{lemm}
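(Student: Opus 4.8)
The plan is to obtain $\bar S(r)$ from $S(r)$ by repeatedly performing \emph{elementary collapses}: whenever the tree at hand is not reduced, there is an edge $e$ incident to a vertex $v$ whose image in the quotient graph has valence two and with $L_e=L_v$, and one collapses the orbit of that edge. The first point to record is that such a redundant edge is always incident to a \emph{transverse} vertex: if $v$ were peripheral, then $L_v$ would be non-abelian by \autoref{res: dual tree - transverse stab}, while every edge stabilizer is abelian by \autoref{res: dual tree - edge stab}, so $L_e\subsetneq L_v$. Since moreover the action of $L$ preserves the transverse/peripheral dichotomy, no edge of $S(r)/L$ is a loop and every edge joins a transverse vertex to a peripheral one. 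Collapsing a redundant edge $e=(v,w)$ (with $v$ transverse, $w$ peripheral) merges $v$ into $w$ and produces a vertex whose stabilizer is the non-abelian group $L_w$; hence along the whole procedure one can colour the vertices so that transverse vertices are never created, merged, or otherwise modified (they simply get deleted when collapsed), every remaining ``peripheral'' vertex has non-abelian stabilizer, every edge stabilizer stays abelian, and every edge joins a transverse vertex to a peripheral one or two peripheral ones.

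Since $L$ is finitely generated (being a quotient of $G$), the quotient graph $S(r)/L$ is finite and each elementary collapse removes one edge orbit, so the procedure terminates, say after $m$ steps, at an $L$-tree $\bar S(r)$ which is reduced by construction. Collapses preserve minimality, and the composition of the elementary collapse maps is an $L$-equivariant collapse map $S(r)\onto\bar S(r)$. This gives \ref{res: dual tree - reduced collapse - reduced}. For \ref{res: dual tree - reduced collapse - acylindrical} (minimality being clear), suppose $g\in L\setminus\{1\}$ fixes pointwise a segment $\bar I=(\bar e_1,\dots,\bar e_k)$ of $\bar S(r)$ with $k\ge 3$. Each $\bar e_i$ has a unique lift $e_i$ in $S(r)$, fixed by $g$; between the relevant endpoints of $e_i$ and $e_{i+1}$ lies a possibly degenerate arc $J_i$ contained in a collapsed subtree, and $g$ fixes its two endpoints, hence $J_i$ pointwise. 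Because a non-collapsed edge meets a given collapsed subtree only in its endpoints, the concatenation $e_1J_1e_2\cdots J_{k-1}e_k$ is a non-backtracking path of length at least $k\ge 3$ fixed pointwise by $g$, contradicting the $2$-acylindricity of $S(r)$ from \autoref{res: acyl}.

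For the edge count \ref{res: dual tree - reduced collapse - edges} one has $\card E=\card{\bar E}+m$, so it suffices to check $m\le\card{\bar E}$. The $m$ vertices deleted by the successive collapses are pairwise distinct transverse vertices of $S(r)/L$, since transverse vertices are never modified. Such a deleted vertex $v$ has valence two at the moment it is collapsed, hence already in $S(r)/L$: one incident edge is the one collapsed together with $v$, the other is a ``partner'' $\mathrm{ptr}(v)\in E$. The edge $\mathrm{ptr}(v)$ is never collapsed: before $v$ disappears, collapsing $\mathrm{ptr}(v)$ would require one of its endpoints to be the collapsing transverse vertex, but one endpoint is $v$ (deleted later through its other edge) and the other is peripheral; afterwards both endpoints of $\mathrm{ptr}(v)$ are peripheral-type with non-abelian stabilizers, whereas $L_{\mathrm{ptr}(v)}$ is abelian, so it can never become redundant. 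Thus $\mathrm{ptr}(v)$ descends to an edge of $\bar S(r)$, and $v\mapsto\mathrm{ptr}(v)$ is injective because two transverse vertices are never joined by an edge; hence $m\le\card{\bar E}$, i.e. $\card E\le 2\card{\bar E}$.

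The step needing the most care is this last bookkeeping, together with the structural claims of the first paragraph: one must check that the bipartite, loop-free shape of $S(r)/L$ and the ``only transverse vertices get collapsed'' phenomenon genuinely persist through the entire iteration. This is exactly where the contrast between abelian edge groups (\autoref{res: dual tree - edge stab}) and non-abelian peripheral vertex groups (\autoref{res: dual tree - transverse stab}) is used essentially, and it is also what prevents the collapsing process from cascading uncontrollably.
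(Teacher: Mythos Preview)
Your argument has a genuine gap in establishing condition~\ref{res: dual tree - reduced collapse - reduced}. You only collapse edges at vertices of valence two in the quotient, which makes the resulting tree \emph{reduced} in the technical sense, but the lemma asks for the stronger assertion that $L_{\bar e}\subsetneq L_{\bar v}$ for \emph{every} vertex $\bar v$ and incident edge $\bar e$, not just the valence-two ones. Nothing in your procedure rules out a vertex $v$ of valence $\geq 3$ in the quotient with $L_e=L_v$: concretely, a cone-point vertex $c$ for which $L_c$ equals the pointwise stabiliser of $\bar P(c,r)$ and which has three or more orbits of adjacent transverse components survives your process untouched, and~\ref{res: dual tree - reduced collapse - reduced} fails there. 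The paper instead collapses \emph{any} edge with $L_e=L_v$, noting that minimality forces valence $\geq 2$ at the collapsing endpoint; this gives~\ref{res: dual tree - reduced collapse - reduced} directly. Once the collapsing rule is broadened in this way, your partner-edge bookkeeping for~\ref{res: dual tree - reduced collapse - edges} breaks, since it relies essentially on valence exactly two. The paper's count is simpler and robust: each collapse takes place at a distinct abelian vertex of degree $\geq 2$ in the bipartite quotient $S(r)/L$, and the number of such vertices on one side of a bipartite graph is at most $\card E/2$.

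A separate point: you have the roles of peripheral and transverse vertices inverted, following a misprint in the statement of \autoref{res: dual tree - transverse stab}. It is the \emph{transverse} vertex groups that are non-abelian (as the label and proof of that lemma, together with \autoref{res: dual tree - ab stab}, make clear), while peripheral vertex groups are the abelian $L_c$. The paper's own proof of the present lemma uses the non-abelian (transverse) vertices as the ones that can never be the collapsing endpoint. Your structural reasoning survives after swapping the two roles, but the gap above remains.
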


\begin{proof}
	For simplicity we write $V_0$ and $V_1$ for the set of peripheral and transverse vertices of $S(r)$.
	Let $v_1, \dots, v_m$ be a set of representatives for the $L$-orbite of vertices in $V_0$.
	In addition, we write $I$ for the set of indices $i \in \intvald 1m$ for which there is an edge, say $e_i$, adjacent to $v_i$ such that $L_{e_i} = L_{v_i}$.
	For every $i \in \intvald 1m$, denote by $E_i$ the set of edges of $S(r)$ one of whose endpoint lies in the orbit of $v_i$.
	Since $S(r)$ is bi-partite, the collection $E_1, \dots, E_m$ forms an $L$-invariant partition of the edges of $S(r)$.
	Moreover, since $S(r)$ is minimal, $E_i$ contains at least two distinct $L$-orbits whenever $i \in I$.
	
	Denote by $\bar S(r)$ the tree obtained from $S(r)$ by collapsing the $L$-orbit of $e_i$ for every $i \in I$.
	By construction the collapse map $f \colon S(r) \onto \bar S(r)$ is $L$-equivariant.
	In each class $E_i$ we have collapsed at most one edge orbit.
	According to our previous observation, we have collapsed in total at most half of the orbits of $S(r)$, whence \ref{res: dual tree - reduced collapse - edges}.
	Since the action of $L$ on $S(r)$ is $2$-acylindrical, so is the one on $\bar S(r)$, hence \ref{res: dual tree - reduced collapse - acylindrical} holds.
	
	We are left to prove \ref{res: dual tree - reduced collapse - reduced}.
	Assume that contrary to our claim, there is an edge $e$ of $\bar S(r)$ starting at a vertex $v$ such that $L_e = L_v$.
	Denote by $\tilde e$ the pre-image of $e$ in $S(r)$ and $\tilde v$ the endpoint of $\tilde e$ lifting $v$.
	By construction $L_e = L_{\tilde e}$ is abelian, hence so is $L_v$.
	Note that $L_{\tilde v}$ is contained in $L_v$, hence $\tilde v$ cannot belongs to $V_1$.
	Indeed it would contradict the fact that transverse vertex stabilizers are non-abelian.
	Therefore there is $g \in L$ and $i \in \intvald 1m$ such that $\tilde v = gv_i$.
	Note that $i$ does not belong to $I$.
	Indeed otherwise the non-abelian subgroup fixing the other end of $ge_i$ would also be contained in $L_v$.
	Since $i \notin I$, no edges in the link of $v_i$ has been collapsed, hence $L_v = L_{\tilde v}$ properly contains $L_e = L_{\tilde e}$.
	Contradiction
\end{proof}

\paragraph{First consequences.}
We exploit now the collection of acylindrical splittings $S(r)$ to get informations on the group $L$ and its action on $T$.
For simplicity we write $\mathcal C$ for $\mathcal C(0)$.
Observe that 
\begin{equation*}
	\mathcal C = \bigcup_{r \in (0,\rho)} \mathcal C(r).
\end{equation*}

\begin{lemm}
\label{res: conical subgroup elliptic}
	For every $c \in \mathcal C$, the group $L_c$ is elliptic, for its action on $T$.
\end{lemm}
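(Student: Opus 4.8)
The goal is to produce a point of $T$ fixed by all of $L_c$; since a fixed point of $L_c$ in $X_\omega$ projects onto one in the invariant subtree $T$, it is enough to find a fixed point in $X_\omega$. Recall that $L_c$ is abelian (\autoref{res: tree-graded - abelian piece stabilizer}) and that, by \autoref{res: tree-graded - fixed point set}, every $g\in L_c\setminus\{1\}$ acts elliptically on $X_\omega$ with $\fix{g}=\bar P(c,r_g)$ for some finite $r_g\in[0,\rho)$. Hence the sets $\fix{g}$, $g\in L_c\setminus\{1\}$, are totally ordered by inclusion, since $\bar P(c,r)\subseteq\bar P(c,r')$ exactly when $r\ge r'$; writing $r^\ast=\sup_{g\neq 1}r_g$ we get $\bigcap_{g\in L_c}\fix{g}=\bar P(c,r^\ast)$. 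If $L_c$ is cyclic (or trivial) it is generated by an element that is elliptic on $X_\omega$, hence elliptic; so assume henceforth that $L_c$ is not cyclic and, for contradiction, not elliptic.

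Suppose first $\rho<\infty$. Then $\bar P(c)=\bar P(c,0)$ is the closed ball of radius $\rho$ about $c=\limo c_k$, so $T\cap\bar P(c)$ is a \emph{bounded} complete $\R$-tree; it is non-empty because $c\in\mathcal C(0)$, and it is $L_c$-invariant since $L_c$ preserves both $T$ and $\bar P(c)$ (as $g\bar P(c)=\bar P(gc)=\bar P(c)$). The centre of this bounded tree is fixed by $L_c$ — a contradiction. So $\rho=\infty$.

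Then $b_c$ is the Busemann function of a point $\xi_c\in\partial X_\omega$, and it is $L_c$-invariant ($b_c(gx)=b_{gc}(gx)=b_c(x)$), so $L_c$ fixes $\xi_c$ and preserves every horoball $\bar P(c,r)$. Since $L_c$ contains no loxodromic element, the classification of isometric actions on $\R$-trees together with our assumption forces $L_c$ to fix a \emph{unique} end $\xi$ of $T$, with each non-trivial $g\in L_c$ fixing pointwise a ray of $T$ towards $\xi$. If $\xi\neq\xi_c$, then $L_c$ preserves the bi-infinite geodesic $\Delta$ of $X_\omega$ joining $\xi$ to $\xi_c$; the convex $L_c$-invariant set $\Delta\cap T$ contains a ray to $\xi$ (as $\xi\in\partial T$), so it is a half-line or all of $\Delta$. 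In the first case $L_c$ fixes the finite endpoint of that half-line; in the second, $L_c$ acts on the line $\Delta$ with every element elliptic (a reflection or the identity), and since two reflections with distinct centres compose to a loxodromic, $L_c$ fixes a point of $\Delta$. Either way $L_c$ is elliptic on $T$ — a contradiction. Hence $\xi=\xi_c$; in particular $T$ contains a ray to $\xi_c$, namely the geodesic ray $\sigma\colon[0,\infty)\to T$ from the basepoint $o^+$ of \autoref{res: basepoint in minimal tree} towards $\xi_c$. Along $\sigma$ the function $b_c$ decreases at unit speed, and $b_c(o^+)\ge 0$ because $o^+=\limo o^+_k$ with $o^+_k\in X^+_k$, so $\dist{o^+_k}{c_k}\ge\rho_k$. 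If $r^\ast<\infty$, the point $q=\sigma(b_c(o^+)+r^\ast)$ satisfies $b_c(q)=-r^\ast\le -r_g$ for every $g\in L_c$, hence $q\in\bar P(c,r_g)\cap T=\fix{g}\cap T$ for all $g$; thus $q\in T$ is fixed by $L_c$ — again a contradiction.

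The one remaining case is $\rho=\infty$, $\xi=\xi_c$ and $r^\ast=\infty$, and excluding it is the crux of the proof: here the fixed-point sets $\fix{g}\cap T$ slide down $\sigma$ toward the end $\xi_c$ with no common finite point. I expect to rule it out by analysing the angles at the apices. By \autoref{res; family - fully conical subgroup} and Axiom~\ref{enu: family axioms - elem subgroups} the group $\stab{c_k}$ is cyclic, and the cocycle of Axiom~\ref{enu: family axioms - conical - 2} embeds it into $\R/\Theta_k\Z$; \autoref{res: limit tree -  asymp angle} then expresses $r_g$ in terms of the $\omega$-asymptotics of $\tilde\theta_k\bigl(\phi_k(\tilde g)x_k,x_k\bigr)$, and since for $g\in L_c$ these representatives lie in $(-\pi,\pi)$ (finiteness of $r_g$) and add up without wrap-around, one should obtain a finite bound on $\sup_{g\in L_c}r_g$, after which the argument of the previous paragraph applies. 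Controlling this supremum — equivalently, showing that the chain $\{\fix{g}\cap T\}_{g\in L_c}$ has a smallest member or a non-empty intersection — is the step I expect to require the most care.
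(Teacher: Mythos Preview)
Your reduction is sound: once every nontrivial $g\in L_c$ has $\fix g=\bar P(c,r_g)$ with $r_g$ finite, the fixed-point sets form a chain and $L_c$ is elliptic as soon as $r^\ast=\sup_g r_g<\infty$. The case $\rho<\infty$ is fine (indeed trivial: $L_c$ fixes the actual point $c=\limo c_k\in X_\omega$, hence its projection on $T$). The remaining case $\rho=\infty$, $r^\ast=\infty$ is a genuine gap, and the angle-cocycle sketch does not close it.

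Having $\tilde\theta_k(\phi_k(\tilde g)x_k,x_k)\in(-\pi,\pi)$ together with additivity gives no control on $r_g$. By the computation in the proof of \autoref{res: tree-graded - fixed point set}, the number $r_g$ is governed by $\limo(\epsilon_k\ln|\beta_k(g)|+\rho_k)$, i.e.\ by the \emph{rate} at which $\beta_k(g)\to0$, not merely by its eventual smallness; different elements $g$ can have this rate arbitrarily slow, hence $r_g$ arbitrarily large. Nothing in your argument uses that $L$ is finitely generated, so there is no mechanism preventing $L_c$ from containing a sequence $g_i$ with $r_{g_i}\to\infty$.

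Finite generation of $L$ is exactly what the paper's proof supplies, through the auxiliary simplicial tree $S(r)$ built just before the lemma. Choose $r\in(0,\rho)$ with $c\in\mathcal C(r)$; then $L_c$ is the stabiliser of a vertex of the $L$-tree $S(r)$ (\autoref{res: dual tree - ab stab}), so finite generation of $L$ makes $L_c$ finitely generated relative to the incident edge groups. By \autoref{res: dual tree - edge stab} each of those edge groups lies in the pointwise stabiliser $N_c(r)$ of $\bar P(c,r)$, hence $L_c/N_c(r)$ is a finitely generated (abelian) group. It acts on the tree $\bar P(c,r)$ with every element elliptic (\autoref{res: tree-graded - fixed point set}), and Serre's lemma then gives a global fixed point for $L_c$.
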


\begin{proof}
	Let $c \in \mathcal C$.
	As we observed, there exists $r \in (0,\rho)$ such that $c$ belongs to $\mathcal C(r)$.
	Let $v$ be the vertex of $S(r)$ associated to $T \cap \bar P(c,r)$.
	Let $e_1, \dots, e_m$ be a set of $L_c$-representatives of the edges of $S(r)$ starting at $v$.
	Since $L$ is finitely generated, $L_c$ is finitely generated relative to $L_{e_1}, \dots, L_{e_m}$.
	However by \autoref{res: dual tree - edge stab}, those subgroups are contained in the pointwise stabilizer of $\bar P(c,r)$, that we denote by $N_c(r)$.
	Hence the quotient $Q_c = L_c / N_c(r)$ is finitely generated. 
	The action of $L_c$ on $T$ induces an action of $Q_c$ on $\bar P(c,r)$.
	Every element of $Q_c$ is elliptic by \autoref{res: tree-graded - fixed point set}.
	It follows from Serre's Lemma that $Q_c$ and thus $L_c$ is elliptic.
\end{proof}

\begin{prop}
\label{res: b_v achieves its min on T0}
	Let $c \in \mathcal C$.
	The map $b_c \colon T \to \R$ achieves its minimum at a unique point $z_c$.
	Moreover $z_c$ belongs to $T \cap \bar P(c)$ and is fixed by $L_c$.
\end{prop}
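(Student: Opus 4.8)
The plan is to treat separately the cases $\rho < \infty$ and $\rho = \infty$, the common backbone being two elementary features of $b_c$. First, $b_c$ is convex and $1$‑Lipschitz on the $\R$‑tree $X_\omega$, and its restriction to any geodesic segment is piecewise affine with one‑sided slopes in $\{-1,+1\}$: this is clear when $\rho<\infty$, where $b_c=\dist c\cdot-\rho$, and it is the standard behaviour of a Busemann function on a tree when $\rho=\infty$, where $b_c$ is the Busemann function at $\xi=\sigma(\infty)$. In particular, along a segment $\geo x{x'}$ the maximum of $b_c$ is attained at an endpoint, and $\dist x{x'}$ is the sum of the drops from $x$ and from $x'$ to the minimum of $b_c$ over $\geo x{x'}$. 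Second, since $c\in\mathcal C=\mathcal C(0)=\bigcup_{r\in(0,\rho)}\mathcal C(r)$, I fix $r_0\in(0,\rho)$ with $c\in\mathcal C(r_0)$, so that $T\cap\bar P(c,r_0)$ is non‑degenerate and there is $x_0\in T$ with $b_c(x_0)\le-r_0<0$.

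The first step is to show that $b_c$ is bounded below on $T$. When $\rho<\infty$ this is immediate since $b_c\ge-\rho$. When $\rho=\infty$ it amounts to proving that $\xi$ is not an end of $T$: indeed, were $b_c$ unbounded below on $T$, the nested non‑degenerate subtrees $\{b_c\le s\}\cap T$ would escape to infinity (using completeness of $X_\omega$) and yield a geodesic ray of $T$ along which $b_c$ decreases at unit rate, and such a ray necessarily converges to $\xi$. To rule this out I would argue by contradiction: if $\xi\in\partial T$ then the ray $\sigma$ lies in $T$ (as $o\in T$ and $T$ is a closed convex subtree), and the stabiliser of $\xi$ in $L$ equals $L_c$ — for if $g\xi=\xi$ then $\bar P(c)$ and $g\bar P(c)=\bar P(gc)$ are nested horoballs centred at the same end, so they meet non‑degenerately and \autoref{res: intersection pieces} forces $gc=c$. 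By \autoref{res: tree-graded - abelian piece stabilizer} and \autoref{res: conical subgroup elliptic}, $L_c$ is abelian and elliptic on $T$; fixing a point of $T$ and the end $\xi$, it fixes pointwise a sub‑ray of $\sigma$. The contradiction should then come from the minimality of $T$ together with the structure of the transverse‑covering splittings: a sub‑ray fixed pointwise by $L_c$ is eventually contained in a peripheral vertex tree $T\cap\bar P(c,r)$ of $S(r)$, and combining the reducedness and $2$‑acylindricity of $S(r)$ (see \autoref{res: dual tree - reduced collapse} and \autoref{res: acyl}) with minimality of $T$ — and crucially with the fact, recorded in \autoref{res: limit action - dist to cone point}, that $o$ and its translates sit at distance essentially $\rho$ from every cone point — one should see that no such fixed ray can exist. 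I expect this boundedness statement to be the main obstacle.

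Granting that $\mu:=\inf_T b_c$ is finite, attainment and uniqueness follow from the unit‑slope structure. If $x,x'\in T$ satisfy $b_c(x),b_c(x')\le\mu+\epsilon$, then $\geo x{x'}\subseteq T$, $b_c\le\mu+\epsilon$ on it by convexity, its minimum over $\geo x{x'}$ is $\ge\mu$ since that minimum is attained in $T$, and therefore $\dist x{x'}\le 2\epsilon$. Hence any sequence $(x_n)$ in $T$ with $b_c(x_n)\to\mu$ is Cauchy; since $X_\omega$ is complete and $T$ is closed, it converges to a point $z_c\in T$ with $b_c(z_c)=\mu$, and the same inequality with $\epsilon=0$ shows $z_c$ is the only minimiser.

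It remains to locate $z_c$ and check its invariance, both now formal. From $b_c(z_c)=\mu\le b_c(x_0)\le-r_0<0$ we get $z_c\in P(c)$, hence $z_c\in T\cap\bar P(c)$. And for $g\in L_c$, the $L$‑invariance $b_{gc}(gx)=b_c(x)$ together with $gc=c$ gives $b_c(gz_c)=b_c(z_c)=\mu$, while $gz_c\in gT=T$; by uniqueness $gz_c=z_c$, so $L_c$ fixes $z_c$.
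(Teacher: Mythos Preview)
Your arguments for uniqueness (via strict convexity), for $z_c\in T\cap\bar P(c)$, and for $L_c z_c=z_c$ are all correct once the minimum is known to exist. The gap is exactly where you flag it: for $\rho=\infty$ you do not establish that $b_c$ is bounded below on $T$. You reduce to ruling out a ray in $T$ converging to $\xi$ and pointwise fixed by the elliptic group $L_c$, but the closing sentence --- invoking minimality of $T$, reducedness and $2$-acylindricity of $S(r)$, and the position of $o$ relative to cone points --- is a list of ingredients rather than an argument, and there is no evident way to assemble them into a contradiction: a sub-ray of $T$ fixed pointwise by an abelian elliptic subgroup does not by itself conflict with any of these facts.

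The paper's proof bypasses the boundedness question altogether. Since $L_c$ is elliptic on $T$ (\autoref{res: conical subgroup elliptic}) it fixes a point of $T$, and together with \autoref{res: tree-graded - fixed point set} this yields a finite $r>0$ for which $T\cap\bar P(c,r)$ is non-empty and \emph{pointwise} fixed by $L_c$. If this set is a single point one is done. Otherwise $c\in\mathcal C(r)$, and by minimality of $T$ the peripheral vertex tree $T\cap\bar P(c,r)$ of the graph of actions $\Lambda(r)$ is the convex hull of the $L_c$-orbit of the finitely many attaching points $x_1,\dots,x_m$. Each $x_i$ lies in $T\cap\bar P(c,r)$ and is therefore itself fixed by $L_c$, so this is simply the convex hull of $\{x_1,\dots,x_m\}$ --- a \emph{compact} tree. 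Continuity of $b_c$ gives a minimum there, and it is global on $T$ since any $x\in T$ with $b_c(x)\le -r$ already belongs to this set. The idea you are missing is precisely this compactness: the ellipticity of $L_c$, combined with the graph-of-actions description of the peripheral piece, collapses $T\cap\bar P(c,r)$ to a finite tree and makes existence of the minimum immediate, with no separate boundedness step needed.
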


\begin{proof}
	Recall that balls/horoballs are strictly convex, hence the point $z_c$, if it exists, is necessarily unique, and therefore is fixed by $L_c$.

	According to \autoref{res: conical subgroup elliptic}, $L_c$ is elliptic.
	Hence by \autoref{res: tree-graded - fixed point set} there exists a finite number $r \in [0,\rho]$ such that $T\cap \bar P(c,r)$ is non-empty and pointwise fixed by $L_c$ (recall that $\rho$ can be finite or infinite).
	Since $\mathcal C$ is the increasing union of all $\mathcal C(s)$ for $s > 0$, we can assume without loss of generality that $r > 0$.
	We distinguish two cases.
	Suppose first that $T\cap \bar P(c,r)$ is reduced to a point, say $z_c$.
	One checks that $b_c(z_c) = -r$, while $b_c(x) \geq -r$, for every $x \in T$.
	Hence $z_c$ satisfies all the required properties.
	
	Suppose now that $T\cap \bar P(c,r)$ is non-degenerate, so that $c$ belongs to $\mathcal C(r)$.
	The peripheral subtree $T \cap \bar P(c,r)$ corresponds to a vertex $v$ of the tree $S(r)$ whose stabilizer is $L_c$.
	We denote by $e_1, \dots, e_m$ a set of $L_c$-representatives of the edges of $S(r)$ starting at $v$ and by $x_1, \dots, x_m$ the corresponding attaching points of the graph of actions $\Lambda(r)$.
	Since the action of $L$ on $T$ is minimal, $T \cap \bar P(c, r)$ is the convex hull of $L_c \cdot \{x_1, \dots, x_m\}$.
	Each $x_i$ is fixed by $L_c$, hence  $T \cap \bar P(c, r)$ is actually the convex hull of $\{x_1, \dots, x_m\}$.
	In particular, it is compact.
	Since $b_c$ is continuous, it achieves its minimum at some point $z_c$ in $T \cap \bar P(c, r)$.
\end{proof}

\paragraph{Comparing the splittings.}
Let $r,r' \in (0, \rho)$ with $r' < r$.
In order to compare the splittings $S(r)$ and $S(r')$, we build a map $f \colon S(r') \to S(r)$.
First we define $f$ on the set of vertices.
Let $v'$ be a vertex of $S(r')$.
\begin{itemize}
	\item Suppose that $v'$ corresponds to an $r'$-transverse component, say $Y'$, then $f(v')$ is the unique $r$-transverse component of $T$ containing $Y'$.
	\item Suppose that $v'$ corresponds to a peripheral subtree $T \cap \bar P(c, r')$ for some cone point $c \in \mathcal C(r')$, then we distinguish two cases.
	\begin{enumerate}
		\item If $c \in \mathcal C(r)$, that is $T \cap \bar P(c,r)$ is non-degenerate, then we define $f(v')$ to be the $T \cap \bar P(c,r)$.
		\item Otherwise, $f(v')$ is the unique $r$-transverse component of $T$ containing $T\cap \bar P(c,r')$.
	\end{enumerate}
\end{itemize}
Second, we extend the definition of $f$ to the edge set of $S(r')$.
By definition an edge $e'$ of $S(r')$ consists of a pair of two subtrees in $\mathcal Y(r')$: an $r$-transverse component, say $Y'$ and an $r$-peripheral component, say $T \cap \bar P(c, r')$ where $c \in \mathcal C(r')$.
We denote by $v'_1$ and $v'_2$ the corresponding vertices of $S(r')$.
We also write for $Y$ the (unique) $r$-transverse component containing $Y'$.
\begin{enumerate}
	\item Assume that $c \in \mathcal C(r)$.
	We claim that $Y \cap \bar P(c,r)$ is non-empty.
	Indeed choose a point in $T \cap \bar P(c,r)$, which is non-empty by definition of $\mathcal C(r)$.
	Join this point to $Y'$ by an a minimal arc $I$.
	By construction $(Y' \cup I)\setminus P(c,r)$ is contained in $Y$ and intersects $\bar P(c,r)$.
	Consequently $f(v'_1)$ and $f(v'_2)$ are joining by an edge $e$ in $S(r)$.
	We let $f(e') = e$.
	\item Assume now that $c \notin \mathcal C(r)$. 
	Then $Y$ also contains $T \cap \bar P(c,r')$, that is $f(v'_1) = f(v'_2)$.
	In this case we collapse the edge $e'$ to the vertex $f(v'_i)$.
\end{enumerate}

Observe that $f$ is the composition of a collapse map and a folding map, in the sense of \cite{Stallings:1983uz,Bestvina:1991jh}.
By construction, $f$ is $L$-equivariant.
Since $S(r)$ is minimal, $f$ is also onto.

\begin{prop}
\label{res: splitting acyl access}
	There exists $r \in (0,\rho)$ such that for every $r' \in (0, r)$ the map $f \colon S(r') \to S(r)$ induces an isomorphism from $S(r')/L$ onto $S(r)/L$.
\end{prop}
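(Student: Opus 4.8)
The plan is to bound the number of edge orbits of the splittings $S(r)$ uniformly in $r$, deduce that the combinatorial type of the quotient graphs $S(r)/L$ stabilizes as $r \to 0^+$, and then observe that a surjective collapse--fold map between two finite graphs of the same size must be an isomorphism. Throughout I assume $\rho > 0$, otherwise there is nothing to prove.

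First I would pass to reduced splittings. By \autoref{res: dual tree - reduced collapse}, for each $r \in (0,\rho)$ there is an $L$-equivariant collapse map $S(r) \onto \bar S(r)$ onto a reduced, minimal, $2$-acylindrical $L$-tree with $\card{E(S(r)/L)} \leq 2\,\card{E(\bar S(r)/L)}$. Now $L$ is finitely generated, being a quotient of $G$, and all the $\bar S(r)$ are reduced $2$-acylindrical $L$-trees; acylindrical accessibility therefore provides a constant $N = N(L)$, depending only on $L$, such that $\card{E(\bar S(r)/L)} \leq N$ for every $r \in (0,\rho)$. Consequently $\card{E(S(r)/L)} \leq 2N$, and since $S(r)/L$ is a connected finite graph, $\card{V(S(r)/L)}$ is uniformly bounded as well; both bounds are independent of $r$ (and of the sequence $(\Gamma_k, \phi_k)$ used to build $T$).

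Next I would use the maps $f$ to get monotonicity. For $0 < r' < r < \rho$ the map $f \colon S(r') \to S(r)$ is $L$-equivariant and onto, hence induces an onto morphism of finite graphs $\bar f \colon S(r')/L \onto S(r)/L$. In particular $\card{E(S(r)/L)} \leq \card{E(S(r')/L)}$ and $\card{V(S(r)/L)} \leq \card{V(S(r')/L)}$. Thus the map $r \mapsto \big(\card{V(S(r)/L)},\,\card{E(S(r)/L)}\big)$ is coordinatewise non-increasing in $r$ and bounded; being $\N^2$-valued it is constant on some interval $(0, r_0)$, so for all $r, r' \in (0, r_0)$ with $r' < r$ we have $\card{E(S(r')/L)} = \card{E(S(r)/L)}$ and $\card{V(S(r')/L)} = \card{V(S(r)/L)}$. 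Fix such an $r \in (0, r_0)$; I claim it works for the statement. Let $r' \in (0, r)$. Recall that $\bar f$ is the composition of a collapse map and a folding map. Collapsing any edge of $S(r')/L$ strictly decreases the edge count, so by the equality above $\bar f$ collapses no edge; then $\bar f$ is an onto map between edge sets of equal finite cardinality, hence a bijection on edges, and similarly a bijection on vertices; as $\bar f$ preserves incidence, it is an isomorphism $S(r')/L \to S(r)/L$. This is exactly the assertion.

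The main obstacle is the uniform bound in the second step: one has to know that the $2$-acylindricity of the $S(r)$ — which is precisely what \autoref{res: acyl}, together with the passage to the reduced tree $\bar S(r)$ via \autoref{res: dual tree - reduced collapse}, secures — combined with the finite generation of $L$, forces a bound on the number of edge orbits that is blind to $r$. Everything afterwards is the elementary combinatorics of monotone bounded integer-valued data and of collapse--fold maps between finite graphs.
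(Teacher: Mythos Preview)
Your argument is correct and follows essentially the same route as the paper: pass to the reduced $2$-acylindrical trees $\bar S(r)$ via \autoref{res: dual tree - reduced collapse}, invoke acylindrical accessibility to bound $\card{E(\bar S(r)/L)}$ uniformly, transfer this to $S(r)/L$ using the inequality $\card E \leq 2\card{\bar E}$, and conclude by monotonicity under the surjections $S(r')/L \onto S(r)/L$. Your version is slightly more explicit (tracking vertex counts and unpacking why a surjective collapse--fold map between equal-size finite graphs is an isomorphism), but the strategy is identical.
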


\begin{proof}
	For every $r \in (0, \rho)$ we write $\bar S(r)$ for the reduced $L$-tree obtained from $S(r)$ by \autoref{res: dual tree - reduced collapse}.
	The action of $L$ on $\bar S(r)$ is $2$-acylindrical.
	Since $L$ is finitely generated, it follows from acylindrical accessibility -- see for instance \cite{Sela:1997gh,Weidmann:2012aa} -- that the number of edges in $\bar S(r)/L$ is uniformly bounded from above.
	Hence the same holds of the number of edges in $S(r) / L$.
	However if given $r,r' \in (0,\rho)$ with $r' < r$, the $L$-equivariant map $f \colon S(r') \to S(r)$ induces an epimorphism from $S(r')/L$ onto $S(r)/L$.
	Therefore if $r$ is sufficiently small, this map need be an isomorphism.
\end{proof}

\paragraph{Passing to the limit.}
Recall that $L$ is CSA.
In addition we assumed that $L$ is freely indecomposable relative to $\mathcal H$.
It follows from \autoref{res: jsj - existence} that the JSJ deformation space $\mathcal D$ of $L$ over $\mathcal A$ relative to $\mathcal H \cup \mathcal A_{\rm nc}$ exists.
We denote by $S_{\rm JSJ}$ the tree of cylinders of an element in $\mathcal D$.
According to \autoref{res: tree of cylinders properties}, $S_{\rm JSJ}$ also belongs to $\mathcal D$.

\begin{prop}
\label{res: tree-graded - non-ab big elliptic}
	Let $c \in \mathcal C$.
	Let $i \in \intvald 1m$ and $H$ a non-abelian vertex group of $S_{\rm JSJ}$.
	If $L_c \cap H$ is not virtually cyclic, then it pointwise fixes $\bar P(c)$.
\end{prop}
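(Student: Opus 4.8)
The plan is to argue by contradiction. Write $A = L_c \cap H$; we may assume $A \neq \{1\}$ (otherwise $A$ is virtually cyclic and there is nothing to prove), and we suppose that $A$ is not virtually cyclic and does \emph{not} pointwise fix $\bar P(c)$, aiming for a contradiction. First I would reduce to the case that $H$ is a rigid vertex group. Since a cylinder vertex of $S_{\rm JSJ}$ has abelian stabilizer, a non-abelian $H$ must be the stabilizer of a vertex of the underlying JSJ tree, and by \autoref{res: jsj - existence} such a vertex group is rigid, abelian, or QH with trivial fiber. It is not abelian by hypothesis, and if it were QH with trivial fiber it would be isomorphic to $\pi_1(\Sigma)$ for a compact hyperbolic $2$-orbifold $\Sigma$, whose abelian subgroups — in particular $A$ — are all virtually cyclic; this contradicts our assumption. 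Hence $H$ is rigid, i.e. universally elliptic over $\mathcal A$ relative to $\mathcal H \cup \mathcal A_{\rm nc}$.

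Next I would locate $A$ inside the simplicial trees $S(r)$. By \autoref{res: tree-graded - fixed point set}, for every $g \in A\setminus\{1\}$ we have $\fix g = \bar P(c, r_g)$ for a finite $r_g \in [0,\rho]$; since $z_c \in \fix A$ by \autoref{res: b_v achieves its min on T0} the set $\{r_g\}$ is bounded (otherwise $\fix A$ would be empty, $b_c$ being real valued), so $\fix A = \bar P(c, r_A)$ with $r_A = \sup_g r_g$. By assumption $r_A > 0$. I would then fix a radius $r \in (0,\rho)$ with $r < r_A$ and small enough that $c \in \mathcal C(r)$; this is possible because $\mathcal C = \bigcup_{s>0}\mathcal C(s)$ and the sets $\mathcal C(s)$ are nested. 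Consider the peripheral vertex $Z = T \cap \bar P(c,r)$ of $S(r)$, whose stabilizer is $L_c \supseteq A$ by \autoref{res: dual tree - ab stab}. Since $S(r)$ is an $(\mathcal A, \mathcal H \cup \mathcal A_{\rm nc})$-tree with abelian edge groups (Lemmas~\ref{res: ab-tree rel to nc ab} and~\ref{res: dual tree - edge stab}) and $H$ is universally elliptic, $H$ fixes some vertex $v$ of $S(r)$. The vertex $v$ cannot be peripheral, for then its stabilizer would be some $L_{c'}$, forcing $H$ to be abelian; so $v$ is transverse, and $A \subseteq H$ preserves the corresponding $r$-transverse component $Y$.

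Finally I would extract the contradiction. The group $A$ fixes the two vertices $Z$ and $Y$ of $S(r)$, which are distinct (peripheral versus transverse in the bipartite tree $S(r)$), hence $A$ fixes the geodesic between them; bipartiteness makes this geodesic of odd length, and $2$-acylindricity of $S(r)$ (\autoref{res: acyl}) together with $A \neq \{1\}$ forces length $1$, so $Z$ and $Y$ are adjacent. The corresponding edge is the single point $p = Z \cap Y \in T$, which is fixed by $A$ (\autoref{res: dual tree - edge stab}). Now $p \in \bar P(c,r)$ gives $b_c(p) \le -r$, while $p \in Y$ and $Y \cap P(c,r) = \emptyset$ give $b_c(p) \ge -r$; hence $b_c(p) = -r$. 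Then $p \in \fix A = \bar P(c, r_A)$ yields $-r \le -r_A$, i.e. $r \ge r_A$, contradicting $r < r_A$. Therefore $r_A = 0$, which says exactly that every element of $A = L_c \cap H$ fixes $\bar P(c) = \bar P(c,0)$ pointwise.

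The routine parts are the bookkeeping with the radii and the standard fact that abelian subgroups of compact $2$-orbifold groups are virtually cyclic. The one genuinely load-bearing step is the second one: translating the \emph{algebraic} property ``rigid/universally elliptic'' into the \emph{geometric} statement that $H$ preserves a transverse component of every $S(r)$, which is what places the $L_c$-fixed point $p$ at Busemann level exactly $-r$; once that is in hand, everything else is forced.
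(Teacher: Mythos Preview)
Your proof is correct and follows essentially the same route as the paper: reduce to $H$ rigid (ruling out the QH case since abelian subgroups of a compact hyperbolic $2$-orbifold group are virtually cyclic), then use universal ellipticity of $H$ in the $(\mathcal A,\mathcal H\cup\mathcal A_{\rm nc})$-tree $S(r)$ to trap $A=L_c\cap H$ in an edge group adjacent to the peripheral vertex at $c$, which by \autoref{res: dual tree - edge stab} is the pointwise stabilizer of $\bar P(c,r)$.

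Two minor remarks on packaging. First, the paper dispenses with the contradiction framing and the auxiliary radius $r_A$: it simply observes that $A$ pointwise fixes $\bar P(c,r)$ for every sufficiently small $r\in(0,\rho)$, and then lets $r\to 0$. Second, your appeal to $2$-acylindricity to force $Z$ and $Y$ adjacent is unnecessary. Since $A$ fixes both endpoints of the geodesic from $Z$ to $Y$ in $S(r)$, it fixes in particular the \emph{first} edge of that geodesic, which starts at the peripheral vertex $Z$ regardless of the geodesic's length; \autoref{res: dual tree - edge stab} then already gives $A\subset$ pointwise stabilizer of $\bar P(c,r)$, which is in fact stronger than the single-point conclusion $p\in\fix A$ you extract.
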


\begin{proof}
	We claim that $H$ is rigid in $(\mathcal A, \mathcal H \cup \mathcal A_{\rm nc})$-trees.
	Assume indeed that it is not the case.
	Since $H$ is not abelian, it is quadratically hanging (see \autoref{res: jsj - existence}).
	Recall that if $\Sigma$ is a hyperbolic orbifold, the abelian subgroups of its fundamental group are all virtually cyclic.
	In particular, $L_c \cap H$ is virtually cyclic, which contradicts our assumption.

	Let $r \in (0,\rho)$.
	We assume that $r$ is sufficiently small so that $c$ belongs to $\mathcal C(r)$.
	According to \autoref{res: ab-tree rel to nc ab}, $S(r)$ is an $(\mathcal A, \mathcal H \cup \mathcal A_{\rm nc})$-tree.
	Since $H$ is rigid, it fixes a vertex in $S(r)$.
	Such a vertex necessarily corresponds to an $r$-transverse component $Y$, otherwise $H$ would be abelian.
	On the other hand $L_c$ fixes the vertex of $S(r)$ associated to $T \cap \bar P(c,r)$.
	Hence $L_c \cap H$ fixes the first edge on the geodesic in $S(r)$ from $T \cap \bar P(c,r)$ to $Y$.
	It follows from \autoref{res: dual tree - edge stab} that $L_c \cap H$ pointwise fixes $\bar P(c,r)$ for every sufficiently small $r \in (0, \rho)$.
	Hence $L_c \cap H$ fixes $\bar P(c) = \bar P(c,0)$.
\end{proof}

\begin{coro}
\label{res: tree-graded - stab v fg rel fix}
	Let $c \in \mathcal C$.
	The group $L_c$ is finitely generated relative to the pointwise stabilizer of $\bar P(c)$.
\end{coro}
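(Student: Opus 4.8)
The plan is to reduce, for a fixed $c \in \mathcal C$, the statement to the splitting $S(r)$ for a small parameter $r$ and then to pass to the limit $r \to 0$. Write $N_c$ for the pointwise stabilizer of $\bar P(c)$ and, more generally, $N_c(r)$ for the pointwise stabilizer of $\bar P(c,r)$, so that $N_c = N_c(0)$ and the family $\left(N_c(r)\right)_{r > 0}$ is non-increasing as $r \to 0$. The first step is to record that $N_c = \bigcap_{r > 0} N_c(r)$: the inclusion $\subset$ is clear, and conversely a subgroup of $L$ fixing $\bar P(c,r) \cap T$ pointwise for every $r > 0$ fixes $P(c) \cap T = \bigcup_{r>0} \bigl(\bar P(c,r) \cap T\bigr)$ pointwise, hence — since peripheral subtrees are strictly convex and $b_c$ takes negative values on $T$ by \autoref{res: b_v achieves its min on T0} — fixes the closure $\bar P(c) \cap T$ pointwise as well.

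Next, I would choose $r \in (0,\rho)$ with $c \in \mathcal C(r)$, which is possible by \autoref{res: b_v achieves its min on T0}, and argue exactly as in the proof of \autoref{res: conical subgroup elliptic}: the $r$-peripheral component $T \cap \bar P(c,r)$ is a vertex $v_r$ of $S(r)$ with $L_{v_r} = L_c$ (\autoref{res: dual tree - ab stab}), and since $L$ is finitely generated and acts on the tree $S(r)$, the vertex group $L_c$ is finitely generated relative to the stabilizers of the edges of $S(r)$ incident to $v_r$; by \autoref{res: dual tree - edge stab} each of those edge stabilizers equals $N_c(r)$. Thus $L_c$ is generated by some finite set $F_r$ together with $N_c(r)$.

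The last step — which I expect to be the crux — is to upgrade ``finitely generated relative to $N_c(r)$ for each small $r$'' into ``finitely generated relative to $N_c = \bigcap_{r>0} N_c(r)$''. This is not formal: in general $\langle F, A \rangle = \langle F, B \rangle = G$ does not force $\langle F, A\cap B\rangle = G$, so one cannot merely intersect. Instead I would use acylindrical accessibility through \autoref{res: splitting acyl access}: fixing $r^* \in (0,\rho)$ with $c \in \mathcal C(r^*)$ in the stable range of that proposition, the collapse-and-fold maps $f \colon S(r') \to S(r^*)$ for $0 < r' < r^*$ induce isomorphisms of quotient graphs, hence collapse no edge and restrict to the identity on $L_c = L_{v_{r'}} = L_{v_{r^*}}$. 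The plan is to show that for $r'$ small enough $f$ performs no further folding at $v_{r'}$, so that the incident edge group $N_c(r')$ no longer changes as $r' \to 0$ and therefore coincides with $\bigcap_{s>0} N_c(s) = N_c$ — equivalently, that the splittings $S(r)$ stabilise as $r \to 0$. Granting this, the second step applied to such an $r'$ gives that $L_c$ is generated by $F_{r'}$ together with $N_c$, which is the assertion. The main obstacle is precisely this stabilisation: bounding how much genuine folding can accumulate as $r \to 0$, which is where the $2$-acylindricity of the reduced splittings $\bar S(r)$ (\autoref{res: acyl}, \autoref{res: dual tree - reduced collapse}) and acylindrical accessibility must be exploited somewhat beyond the bare statement of \autoref{res: splitting acyl access}.
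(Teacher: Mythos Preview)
Your first two steps are fine and match how the paper thinks about the situation. The problem is Step~3: the stabilisation of $N_c(r')$ as $r'\to 0$ cannot be obtained from acylindrical accessibility alone. The isomorphism $S(r')/L \to S(r)/L$ of \autoref{res: splitting acyl access} only controls the quotient graph; it does not rule out folds at the peripheral vertex $v_c$. Such a fold occurs precisely when some $g\in N_c(r)\setminus N_c(r')$ identifies two edges of $S(r')$ at $v_c$, and since these edges lie in the same $L_c$-orbit the quotient graph is unchanged. Acylindricity gives no bound on how often this can happen: the chain $\{N_c(r)\}_{r>0}$ sits entirely inside the abelian group $L_c$, and nothing so far prevents it from being an infinite strictly decreasing chain with trivial intersection.

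In fact your plan is circular in the paper's logical order. The paper does eventually prove that $S(r')\to S(r)$ is an isometry for small $r,r'$ (\autoref{res: splittings isom}), but that relies on \autoref{res: finite filtration stab v}, whose proof opens with ``According to \autoref{res: tree-graded - stab v fg rel fix}, the quotient $Q_c=L_c/N_c$ is a finitely generated abelian group'' --- exactly the statement you are trying to establish. The paper breaks the circle by appealing to an \emph{external} splitting, the JSJ tree of cylinders $S_{\rm JSJ}$: since $L_c$ is non-cyclic abelian it fixes a vertex $v$ there, and $L_c$ is finitely generated relative to the incident edge groups $L_{e_j}$. Because $S_{\rm JSJ}$ is bipartite, each $e_j$ is adjacent to a non-abelian vertex group $H$; then $L_{e_j}\subset L_c\cap H$ is either virtually cyclic (if $H$ is QH) or, by \autoref{res: tree-graded - non-ab big elliptic}, pointwise fixes $\bar P(c)$. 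Either way $L_{e_j}$ is finitely generated modulo $N_c$, which gives the result. The JSJ input is what replaces the stabilisation argument you were hoping for.
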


\begin{proof}
	Without loss of generality we can assume that $L_c$ is non cyclic.
	It follows that $L_c$ fixes a vertex in $S_{\rm JSJ}$.
	We distinguish two cases.
	Assume first that $L_c$ is contained in a non-abelian vertex group $H$ of  $S_{\rm JSJ}$.
	Applying \autoref{res: tree-graded - non-ab big elliptic}, we see that $L_c$ pointwise fixes $\bar P(c)$ hence the result holds.
	Assume now that $L_c$ fixes an abelian vertex $v$ of  $S_{\rm JSJ}$.
	Since $L_c$ is a maximal abelian subgroup of $L$ (\autoref{res: tree-graded - abelian piece stabilizer}) the stabilizer of $v$ is exactly $L_c$.
 	We write $e_1, \dots, e_p$ for a set of representatives of $L_c$-orbits of edges from $S_{\rm JSJ}$ starting at $v$.
	We know that $L_c$ is finitely generated with respect to the collection $\{L_{e_j}\}$.
	Hence it suffices to prove that  for every $j \in \intvald 1p$, either $L_{e_j}$ is finitely generated or fixes $\bar P(c)$.
	Recall that $S_{\rm JSJ}$ is a tree of cylinders. 
	In particular, it is a bipartite tree where half of the vertices are non-abelian.
	Thus $e_j$ is adjacent to a non-abelian vertex group.
	The statement now follows from \autoref{res: tree-graded - non-ab big elliptic}.
\end{proof}

\begin{prop}
\label{res: finite filtration stab v}
	Let $c \in \mathcal C$ and set 
	\begin{equation*}
		r = - \min_{x \in T} b_c(x).
	\end{equation*}
	There exists a finite partition  $r_0 < r_1 < \dots < r_m$ of $\intval 0r$ with the following property.
	For every $i \in \intvald 0{m-1}$, for every $s \in [r_i, r_{i+1})$ the pointwise stabilizers of $\bar P(c,s)$ and $\bar P(c,r_i)$ coincide.
\end{prop}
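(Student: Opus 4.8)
The plan is to study, for $s\in[0,r)$, the pointwise stabilizer $A_s\subset L$ of the peripheral subtree $\bar P(c,s)$, to show that $s\mapsto A_s$ takes only finitely many values and is constant on half‑open intervals, and to read off the partition from the endpoints of these intervals. The first ingredient is a dictionary between $A_s$ and the invariant $r_g$ of \autoref{res: tree-graded - fixed point set}. For $g\in L_c\setminus\{1\}$ that lemma produces $r_g\in[0,\rho)$ with $\fix g=\bar P(c,r_g)$, and since the point $z_c$ of $T$ at which $b_c|_T$ attains its minimum (\autoref{res: b_v achieves its min on T0}) is fixed by $L_c$, we get $b_c(z_c)=-r\le -r_g$, i.e. $r_g\le r$. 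Fix $s\in[0,r)$. As $b_c(z_c)=-r<-s$ we have $z_c\in P(c,s)\subseteq P(c)$, so any $g$ fixing $\bar P(c,s)$ pointwise fixes $z_c\in P(c)\cap gP(c)=P(c)\cap P(gc)$, whence $gc=c$ by \autoref{res: intersection pieces}; thus $A_s\subseteq L_c$. Comparing the nested subtrees $\bar P(c,s)$ and $\fix g=\bar P(c,r_g)$ of $X_\omega$ then gives
\begin{equation*}
	A_s=\{1\}\cup\set{g\in L_c\setminus\{1\}}{r_g\le s}.
\end{equation*}
In particular $A_s$ is a subgroup of $L_c$ (a pointwise stabilizer), it is non‑decreasing in $s$, it contains $A_0$, and $A_0=\{1\}\cup\set{g}{r_g=0}$.

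The reduction to a finiteness statement is then formal. Assume the family $\set{A_s}{s\in[0,r)}$ is finite. Its members form a finite chain $A_0=B_0\subsetneq\dots\subsetneq B_{m-1}$ (totally ordered by monotonicity), and the preimages $I_i=\set{s\in[0,r)}{A_s=B_i}$ are intervals partitioning $[0,r)$ in increasing order. Put $r_i=\inf I_i$ (so $r_0=0$) and $r_m=r$. The displayed formula shows $A_{r_i}=B_i$: an element $g$ lies in $B_i$ iff $r_g\le s$ for all $s$ slightly above $r_i$, hence iff $r_g\le r_i$. Consequently $I_i=[r_i,r_{i+1})$ and $A_s=A_{r_i}$ for all $s\in[r_i,r_{i+1})$, which is the assertion.

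It remains to prove the finiteness, which is the heart of the matter, and I would work in $\bar L=L_c/A_0$. By \autoref{res: tree-graded - abelian piece stabilizer}, $L_c$ is abelian, so $A_0$ is normal; by \autoref{res: tree-graded - stab v fg rel fix}, $L_c$ is finitely generated relative to $A_0$, hence $\bar L$ is a finitely generated abelian group. Moreover $\bar L$ is torsion‑free: for $g\in L_c\setminus\{1\}$ the set $\fix g=\bar P(c,r_g)$ is non‑degenerate (it is not a point, as $r_g<\rho$), so by \autoref{res: root stability} $g$ has infinite order and $\fix{g^m}=\fix g$, i.e. $r_{g^m}=r_g$, for all $m\neq0$; hence $g^m\in A_0$ forces $r_g=r_{g^m}=0$, i.e. $g\in A_0$. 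The same relation $r_{g^m}=r_g$ shows that each $\bar A_s:=A_s/A_0$ is \emph{pure} in $\bar L$: if $m\bar g\in\bar A_s$ with $m\neq0$ then $r_g=r_{g^m}\le s$, so $\bar g\in\bar A_s$. Thus $\bar L/\bar A_s$ is torsion‑free for every $s$; consequently at each strict inclusion between two of the $\bar A_s$ the torsion‑free rank increases strictly while staying bounded by that of $\bar L$. Since $\bar A_s$ is non‑decreasing in $s$, it takes only finitely many values on $[0,r)$, and hence so does $A_s$. (This is consistent with \autoref{res: coro root stability}, the quotients $\bar A_s/\bar A_{r'}$ being subgroups of the torsion‑free group $\bar L/\bar A_{r'}$.)

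The main obstacle is exactly this last step: $L_c$ is not assumed finitely generated, let alone Noetherian, so the nested family $(A_s)$ has no reason a priori to stabilise. What rescues the argument is that $g\mapsto r_g$ behaves like a non‑archimedean valuation on $L_c$ — the ultrametric feature being built into the angle cocycle of Axiom~\ref{enu: family axioms - conical - 2} and the scale‑invariance $r_{g^m}=r_g$ being precisely root stability (\autoref{res: root stability}) — which forces the stabilizers $\bar A_s$ to be pure subgroups of the finitely generated abelian group $\bar L$, and such subgroups, being determined by the rational subspace they span, form a chain of bounded length.
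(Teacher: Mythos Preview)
Your approach is the same as the paper's: bound the length of the chain $(A_s)_{s\in[0,r)}$ by a rank argument in the finitely generated abelian group $L_c/A_0$, using root stability to get torsion-freeness of successive quotients. The paper argues directly that if the chain had more than $p+2$ terms (where $p$ is the free rank of $L_c/A_0$), then \autoref{res: coro root stability} would produce a free abelian subgroup of rank $p+1$ inside $L_c/A_0$.

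There is, however, a small gap in your implementation. You assert that $r_g<\rho$ for every $g\in L_c\setminus\{1\}$, hence that $\bar L=L_c/A_0$ is torsion-free and each $\bar A_s$ is pure in $\bar L$. But \autoref{res: tree-graded - fixed point set} only gives $r_g\in[0,\rho]$; all you know is $r_g\le r$ (since $g$ fixes $z_c$), and the case $r=\rho$ is possible when $\rho$ is finite and the cone point $c$ itself lies in $T$. In that situation an element $g\in L_c$ with $\fix g=\{c\}$ has degenerate fixed set, \autoref{res: root stability} does not apply to it, and the conclusion $r_{g^m}=r_g$ can fail. The paper explicitly flags this: ``$A(r_{p+2})/A(r_{p+1})$ may have torsion if $r_{p+2}=r=\rho$.'' Your purity-in-$\bar L$ step therefore breaks down precisely at the top of the chain.

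The repair is immediate and brings you back to the paper's argument: instead of claiming purity in $\bar L$, observe that for $s'<s$ both in $[0,r)$ every $g\in A_s$ satisfies $r_g\le s<\rho$, so $\fix g$ is non-degenerate and root stability gives $r_{g^m}=r_g$; hence $A_s/A_{s'}$ is torsion-free. Each strict inclusion in the chain then increases the rank by at least one, and the rank is bounded by that of $L_c/A_0$.
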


\begin{proof}
	For every $s \in [0, r]$, we denote by $A(s)$ the pointwise stabilizer of $\bar P(c,s)$.
	According to \autoref{res: tree-graded - stab v fg rel fix}, the quotient $Q_c = L_c/A(0)$ is a finitely generated abelian group.
	Let $p$ be the rank of its maximal free abelian group.
	We claim that the collection 
	\begin{equation*}
		\set{A(s)}{ 0 \leq s \leq r}
	\end{equation*}
	contains at most $p+2$ distinct pairwise distinct elements.
	Assume on the contrary that is not the case.
	There exist $r_0 < r_1 < \dots r_{p+2}$ in $[0,r]$, such that $A(r_i)$ is strictly increasing sequence.
	Let $i \in \intvald 0p$.
	By \autoref{res: coro root stability}, $A(r_{i+1})/A(r_i)$ is a free abelian group of rank at least $1$ -- note that $A(r_{p+2})/A(r_{p+1})$ may have torsion if $r_{p+2} = r = \rho$.
	It follows that $L_c/A(0)$ contains a free abelian group of rank at least $p+1$, which contradicts the definition of $p$.
\end{proof}

\begin{coro}
\label{res: splittings isom}
	There exists $r \in (0, \rho)$ such that for every $r' \in (0, r)$ the simplicial map $f \colon S(r') \to S(r)$ is an isometry.
\end{coro}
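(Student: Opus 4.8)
The plan is to choose $r$ so small that two mechanisms become available at once. On the one hand, by \autoref{res: splitting acyl access} there is $r_1 \in (0,\rho)$ such that $f \colon S(r') \to S(r_1)$ induces an isomorphism of quotient graphs for every $r' \in (0, r_1)$; hence all the finite graphs $S(s)/L$ with $s \in (0, r_1)$ are isomorphic, and for $r' < r < r_1$ the morphism $\bar f \colon S(r')/L \to S(r)/L$, which is onto by minimality of $S(r)$, is itself an isomorphism. On the other hand, the peripheral vertices of the trees $S(s)$, $s \in (0, r_1)$, come from a fixed finite collection of $L$-orbits of cone points $c \in \mathcal C$, and for each orbit representative \autoref{res: finite filtration stab v} supplies a positive threshold below which the pointwise stabilizer of $\bar P(c,s)$ does not depend on $s$. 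Fix $r$ positive, smaller than $r_1$, and smaller than all these thresholds. It then suffices to prove that for every $r' \in (0,r)$ the $L$-equivariant simplicial map $f \colon S(r') \to S(r)$ is injective, since it is already onto.

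Since $\bar f$ is a graph isomorphism, $S(r')/L$ and $S(r)/L$ have the same number of edges, so no edge orbit, and hence (by equivariance) no edge, is collapsed by $f$; in the language of the construction of $f$ this says $\mathcal C(r') = \mathcal C(r)$, so $f$ carries peripheral vertices to peripheral vertices, transverse vertices to transverse vertices, and is a morphism of $L$-trees. If $f$ were not injective it would fold two edges: there would be a vertex $v'$ and distinct edges $e'_1,e'_2$ of $S(r')$ issuing from $v'$ with $f(e'_1)=f(e'_2)$. A transverse $v'$ is impossible: then $e'_i$ joins $v'$ to $T\cap \bar P(c_i,r')$, so $f(e'_i)$ joins $f(v')$ to $T\cap \bar P(c_i,r)$, equality forces $c_1=c_2$ by \autoref{res: intersection pieces}, and then $e'_1=e'_2$ in the tree $S(r')$. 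So $v'=T\cap \bar P(c,r')$ is peripheral, $e'_i$ joins it to a transverse vertex $Y'_i$, and $f(e'_1)=f(e'_2)$ means precisely that $Y'_1$ and $Y'_2$ lie in a common $r$-transverse component $Y$.

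This is the crux. Injectivity of $\bar f$ on edges forces $e'_1$ and $e'_2$ into the same $L$-orbit, say $e'_2=ge'_1$; as $v'$ is their common (peripheral) endpoint and $f$ respects types, $g$ fixes $v'$, so $g\in L_{v'}=L_c$ by \autoref{res: dual tree - ab stab}, and $gY'_1=Y'_2$. Since $Y'_2\subseteq Y$ and $gY'_1\subseteq gY$ while distinct transverse components are disjoint, $gY=Y$, so $g\in L_c\cap L_Y$; this intersection is the stabilizer of the edge of $S(r)$ joining $T\cap\bar P(c,r)$ to $Y$, hence equals the pointwise stabilizer of $\bar P(c,r)$ by \autoref{res: dual tree - edge stab}, which in turn equals the pointwise stabilizer of $\bar P(c,r')$ by the choice of $r$. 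Thus $g$ fixes $\bar P(c,r')$ pointwise; in particular it fixes the attaching point $q'_1\in Y'_1\cap(T\cap\bar P(c,r'))$ of $e'_1$, whence $q'_1=gq'_1\in gY'_1=Y'_2$, so $q'_1\in Y'_1\cap Y'_2$, contradicting \autoref{res: valence two vertices in transverse covering}. Therefore $f$ is injective, hence an isometry.

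The delicate point is exactly this last step: a fold of two edges lying in a single $L$-orbit leaves no trace on the quotient graph, so the quotient-graph isomorphism of \autoref{res: splitting acyl access} does not rule it out by itself; one really needs, in addition, the eventual stabilization of edge stabilizers furnished by \autoref{res: finite filtration stab v} together with the disjointness of distinct pieces of the transverse covering $\mathcal Y(r')$. The remaining parts (no collapse, and the transverse-vertex case) are essentially formal consequences of the quotient-graph isomorphism.
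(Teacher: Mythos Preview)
Your proof is correct and follows essentially the same approach as the paper: choose $r$ using \autoref{res: splitting acyl access} so that the quotient graphs stabilize, then use \autoref{res: finite filtration stab v} (applied to the finitely many $L$-orbit representatives of cone points) to ensure edge stabilizers stabilize as well, and finally show that a fold at a peripheral vertex leads to a contradiction. The only cosmetic difference is in the final step: the paper observes directly that the folding element $g$ lies in the pointwise stabilizer of $\bar P(c,r)$ but not of $\bar P(c,r')$, contradicting the stabilizer equality, whereas you push one step further to a geometric contradiction (two distinct $r'$-transverse components sharing a point); these are equivalent, since $g$ fixing $\bar P(c,r')$ pointwise is exactly the statement that $g$ fixes the edge $e'_1$, i.e.\ $e'_1=e'_2$.
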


\begin{proof}
	In view of Propositions~\ref{res: splitting acyl access} and \ref{res: finite filtration stab v}, there exists $r \in (0,\rho)$ such that 
	\begin{enumerate}
		\item \label{enu: splittings isom - graph}
		for every $r' \in (0,r)$ the map $S(r') \to S(r)$ induces an isomorphism from $S(r')/L$ onto $S(r)/L$;
		\item \label{enu: splittings isom - stab}
		for every $c \in \mathcal C$ the pointwise stabilizers of $\bar P(c,r)$ and $\bar P(c)$ coincide.
	\end{enumerate}
	Let $r' \in (0,r)$.
	Assume that the map $f \colon S(r') \to S(r)$ (which is onto) is not an isometry.
	Observe that $f$ is one-to-one when restricted to the set of peripheral vertices.
	It follows that there exist two distinct edges $e'_1$ and $e'_2$ in $S(r')$ starting from the same peripheral vertex $v'$ such that $f(e'_1) = f(e'_2)$.
	Since $S(r')/L \to S(r)/L$ is a bijection, $e'_1$ and $e'_2$ are in the same orbit.
	Hence there exists $g \in L$ such that $ge'_1 = e'_2$.
	Recall that $S(r)$ is bipartite. 
	Thus $g$ necessarily fixes the peripheral vertex $v'$.
	According to \autoref{res: dual tree - edge stab}, there is $c \in \mathcal C$ such that the stabilizers of $e'_1$ and $f(e'_1)$ are the pointwise stabilizers of $\bar P(c,r')$ and $\bar P(c,r)$ respectively.
	By construction $g$ fixes $f(e'_1)$ but not $e'_1$, hence pointwise fixes $\bar P(c,r)$ but not $\bar P(c,r')$.
	This contradicts \ref{enu: splittings isom - stab}.
	Hence $f \colon S(r') \to S(r)$ is an isometry.
\end{proof}

We now investigate the consequences of the above discussion for the family $\mathcal Y(0)$.

\begin{lemm}
\label{res: pre 0-transverse covering}
	There exists $r \in (0, \rho)$ such that for every $c \in \mathcal C$, for every $x_1,x_2 \in T \setminus P(c)$, the intersection $\geo {x_1}{x_2} \cap \bar P(c)$ is either degenerated or contains an arc of length $2r$.
\end{lemm}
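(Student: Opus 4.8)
The statement is the ``uniformity'' refinement of \autoref{res: large intersection w/ pieces}: there we learned that for each individual cone point $c$, if $\geo{x_1}{x_2}$ meets $\bar P(c,r)$ then $\geo{x_1}{x_2}\cap\bar P(c)$ has length at least $2r$; the issue now is to choose a \emph{single} $r>0$ that detects, uniformly over all $c\in\mathcal C$ and all pairs $x_1,x_2\in T\setminus P(c)$, whether $\geo{x_1}{x_2}\cap\bar P(c)$ is non-degenerated. The plan is to derive this from the stabilization results for the transverse coverings $\mathcal Y(r)$ already established, namely \autoref{res: splitting acyl access} and its strengthening \autoref{res: splittings isom}.

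First I would fix $r\in(0,\rho)$ as in \autoref{res: splittings isom}, so that for every $r'\in(0,r)$ the simplicial map $f\colon S(r')\to S(r)$ is an isometry; I may also shrink $r$ so that \ref{enu: splittings isom - stab} of the proof of \autoref{res: splittings isom} holds, i.e.\ for every $c\in\mathcal C$ the pointwise stabilizers of $\bar P(c,r)$ and $\bar P(c)$ coincide. Now suppose, for contradiction, that there are $c\in\mathcal C$ and $x_1,x_2\in T\setminus P(c)$ such that $I=\geo{x_1}{x_2}\cap\bar P(c)$ is non-degenerated but has length $<2r$. Since $I$ is non-degenerated, $c$ belongs to $\mathcal C(s)$ for some $s>0$; but by \autoref{res: large intersection w/ pieces}, if $\geo{x_1}{x_2}$ met $\bar P(c,r)$ then $I$ would have length $\ge 2r$, so in fact $\geo{x_1}{x_2}$ does not meet $\bar P(c,r)$. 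Consequently the midpoint-type analysis gives $c\in\mathcal C(r')$ for all sufficiently small $r'>0$ (since $T\cap\bar P(c,r')\supset I$ is non-degenerated for $r'$ small), while $c\notin\mathcal C(r)$ would make $T\cap\bar P(c,r)$ degenerated — wait, $c$ may still lie in $\mathcal C(r)$; the real point is the \emph{local picture} of $S(r)$ versus $S(r')$ near the peripheral vertex associated to $c$.

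The key step is then to translate the bound ``$\operatorname{length}(I)<2r$'' into a statement about edges of $S(r')$ that are identified by $f$. Pick $r'\in(0,r)$ with $2r'<\operatorname{length}(I)$ (possible since $\operatorname{length}(I)>0$). Since $x_1,x_2\notin P(c)$ and $I$ has length $>2r'$, \autoref{res: large intersection w/ pieces} applied at level $r'$ shows that $\geo{x_1}{x_2}$ enters $P(c,r')$: the two ``entry points'' $y_1,y_2$ of $\geo{x_1}{x_2}$ into $\bar P(c)$ satisfy $b_c(y_i)=0$ and lie in distinct $r'$-transverse components $Y_1,Y_2$ of $T$ (they are separated by $P(c,r')$), giving rise to two distinct edges $e'_1,e'_2$ of $S(r')$ at the peripheral vertex $v'$ corresponding to $T\cap\bar P(c,r')$. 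However, because $\operatorname{length}(I)<2r$, the same two points $y_1,y_2$ are \emph{not} separated by $P(c,r)$ — indeed any point $z\in\geo{y_1}{y_2}=I$ has $b_c(z)\ge -\tfrac12\operatorname{length}(I)>-r$, so $I\cap P(c,r)=\emptyset$. Hence in $S(r)$ the images $f(Y_1)$ and $f(Y_2)$ lie in a common $r$-transverse component, so $f(e'_1)=f(e'_2)$, contradicting that $f$ is an isometry (a fortiori injective on edges). This contradiction proves the lemma. The main obstacle I anticipate is bookkeeping the bipartite structure carefully — checking that $Y_1\ne Y_2$ as $r'$-transverse components (which is exactly the content of ``$\geo{x_1}{x_2}$ crosses $P(c,r')$'', using that $P(c,r')$ is open and strictly convex so it genuinely disconnects $Y_1$ from $Y_2$) and that $f$ being an isometry forbids the collapse $f(e'_1)=f(e'_2)$; both are essentially re-runs of the argument in the proof of \autoref{res: splittings isom}, so no genuinely new difficulty should arise.
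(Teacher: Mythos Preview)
Your proposal is correct and follows essentially the same route as the paper: fix $r$ as in \autoref{res: splittings isom}, reduce to the boundary points $y_1,y_2$ with $b_c(y_i)=0$, and derive a contradiction from the fact that $y_1,y_2$ lie in distinct $r'$-transverse components for some $r'<r$ but in a single $r$-transverse component, violating injectivity of $f\colon S(r')\to S(r)$. The paper phrases the contradiction at the level of vertices (the two transverse vertices $Y'_1\neq Y'_2$ map to the same vertex $Y$) rather than edges, which is slightly cleaner since it avoids the case split on whether $c\in\mathcal C(r)$; also note that your appeal to \autoref{res: large intersection w/ pieces} ``at level $r'$'' is in the wrong direction---the fact that $\geo{y_1}{y_2}$ enters $P(c,r')$ comes directly from the midpoint computation you sketch, not from that lemma.
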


\begin{proof}
	Let $r \in (0, \rho)$ be the parameter given by \autoref{res: splittings isom}.
	Note that it suffices to prove the result when $x_1$ and $x_2$ lie in $\bar P(c) \setminus P(c)$, i.e. when $b_c(x_1) = b_c(x_2) = 0$.
	Assume that $\geo {x_1}{x_2} \cap \bar P(c)$ is an arc whose length is less than $2r$.
	It follows from \autoref{res: large intersection w/ pieces}, that $\geo {x_1}{x_2}$ does not intersect $P(c,r)$.
	Hence $x_1$ and $x_2$ lie in the same $r$-transverse component, say $Y$.
	On the other hand, since $\geo {x_1}{x_2} \cap P(c)$ is non-empty, there exists $r' \in (0,r)$ such that $x_1$ and $x_2$ lie in distinct $r'$-transverse component, say $Y'_1$ and $Y'_2$.
	Observe that $Y'_1$ and $Y'_2$ correspond to two distinct transverse vertices of $S(r')$ which are mapped by $f \colon S(r') \to S(r)$ to the same vertex, namely $Y$.
	This contradicts the fact that $f$ is an isometry.
	Hence $\geo {x_1}{x_2} \cap \bar P(c)$ is either degenerated or has length at least $2r$ as announced.
\end{proof}

\begin{prop}
\label{res: 0-transverse covering}
	The family $\mathcal Y(0)$ is a transverse covering.
\end{prop}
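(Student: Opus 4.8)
The plan is to verify the two defining properties of a transverse covering from \autoref{def: transverse covering} for the family $\mathcal Y(0)$, using the family $\mathcal Y(r)$ for small $r > 0$ (which is already known to be a transverse covering by \autoref{res: r-transverse covering}) as a scaffold.

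First I would establish Property~\ref{enu: transverse covering - intersection}, that two distinct elements of $\mathcal Y(0)$ have degenerated intersection. The elements are of two types: $0$-peripheral components $T \cap \bar P(c)$ for $c \in \mathcal C$, and $0$-transverse components. Two distinct $0$-peripheral components have degenerated (in fact empty interior) intersection by \autoref{res: intersection pieces}. Two distinct $0$-transverse components are disjoint by definition, being distinct connected components of $T \setminus \bigcup_{c} P(c)$. The remaining case is a $0$-transverse component $Y$ meeting a peripheral component $T \cap \bar P(c)$: since $Y$ avoids $P(c)$ by construction and $\bar P(c)$ is strictly convex (if $Y \cap \bar P(c)$ contained two points, the interior of the geodesic between them would lie in $P(c)$, contradicting $Y \cap P(c) = \emptyset$), this intersection is degenerated.

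The main work is Property~\ref{enu: transverse covering - cover}: every bounded arc $\geo {x_1}{x_2}$ of $T$ is covered by finitely many elements of $\mathcal Y(0)$. Here I would invoke \autoref{res: pre 0-transverse covering}: fix the parameter $r \in (0,\rho)$ it provides, so that for every $c \in \mathcal C$ and every $x_1, x_2 \in T \setminus P(c)$, the intersection $\geo{x_1}{x_2} \cap \bar P(c)$ is either degenerated or contains an arc of length $2r$. Now given a bounded arc $\geo{x_1}{x_2}$, I would argue there are only finitely many $c \in \mathcal C$ for which $\geo{x_1}{x_2} \cap P(c)$ is non-degenerated: each such intersection forces $\geo{x_1}{x_2}$ to meet $\bar P(c)$ in an arc of length at least $2r$ (applying the previous sentence after moving $x_1, x_2$ to the boundary of $P(c)$, as in the proof of \autoref{res: r-transverse covering}), while distinct $0$-peripheral components have disjoint interiors, so only boundedly many (at most $\mathrm{length}(\geo{x_1}{x_2})/2r + 1$) can occur, plus possibly the at most two open peripheral subtrees containing the endpoints $x_1$ and $x_2$. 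Removing these finitely many $0$-peripheral pieces from $\geo{x_1}{x_2}$ leaves finitely many subarcs, each disjoint from all $P(c)$, hence each contained in a single $0$-transverse component. Thus $\geo{x_1}{x_2}$ is covered by finitely many elements of $\mathcal Y(0)$.

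The step I expect to be the main obstacle is the finiteness of cone points $c$ with $\geo{x_1}{x_2} \cap P(c) \neq \emptyset$ — this is exactly where the uniform lower bound $2r$ from \autoref{res: pre 0-transverse covering} is essential, since without a uniform gap one could a priori have infinitely many peripheral subtrees shaving off arbitrarily short sub-arcs of the geodesic and the covering could fail to be finite. Everything else is a matter of unwinding the definitions of $0$-peripheral and $0$-transverse components and using strict convexity of peripheral subtrees (\autoref{res: large intersection w/ pieces}) together with \autoref{res: intersection pieces} for disjointness.
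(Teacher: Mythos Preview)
Your proposal is correct and follows essentially the same approach as the paper: the paper's proof simply states that it ``goes verbatim as the one of \autoref{res: r-transverse covering}, using \autoref{res: pre 0-transverse covering} instead of \autoref{res: large intersection w/ pieces},'' and what you have written is precisely that argument spelled out in full. Your identification of the uniform lower bound $2r$ from \autoref{res: pre 0-transverse covering} as the crucial ingredient for finiteness is exactly the point of the substitution the paper indicates.
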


\begin{proof}
	The proof goes verbatim as the one of \autoref{res: r-transverse covering}, using \autoref{res: pre 0-transverse covering} instead of \autoref{res: large intersection w/ pieces}.
\end{proof}

Note that this time, two elements of $\mathcal Y(0)$ of the form $\bar P(c)$, $\bar P(c')$ for distinct $c, c'\in \mathcal C$ may have a non-empty intersection.
A transverse component may be reduced to a single point.

As in the previous section we define a bipartite tree $S(0)$.
Its vertex set is $\mathcal Y(0)$.
There are two type of vertices. 
The \emph{peripheral vertices} correspond to the subtrees of the form $T \cap \bar P(c)$ for some $c \in \mathcal C$.
The \emph{transverse vertices} correspond to the transverse components of $T$.
A peripheral vertex and a transverse vertex are joined by an edge if the subtrees of $T$ that they represent have a non-empty intersection.
One checks using \autoref{res: 0-transverse covering}, that this construction defines indeed a simplicial bipartite tree endowed with a minimal action by isometries of $L$.

Proceeding as above, one defines for every $r \in (0, \rho)$ an $L$-equivariant simplicial map $f \colon S(0) \to S(r)$.
It follows from \autoref{res: splittings isom} that there exists $r \in (0, \rho)$ such that the map $f \colon S(0) \to S(r)$ is an isometry.
In particular, $S(0)$ is an $(\mathcal A, \mathcal H \cup \mathcal A_{\rm nc})$-tree.
The action of $L$ on $S(0)$ is $2$-acylindrical.

The tree $S(0)$ is the skeleton of a graph of actions $\Lambda(0)$ (see \autoref{def: graph of actions}).
If $v$ is a vertex of $S(0)$, the associated vertex tree is the subtree of $T$ represented by $v$.
If $e$ is an edge of $S(0)$ joining a peripheral vertex $v_1$ to a transverse vertex $v_2$, the associated attaching point is the unique intersection point of the subtrees represented by $v_1$ and $v_2$.

\paragraph{Further splitting of $T$.}
Our next task is to further refine the graph of actions $\Lambda(0)$ defined above, into well-understood ``elementary bricks''.
Before doing so, let us recall the kind of actions we are interested in.

\begin{defi}
\label{def: action type on R-tree}
	Let $G$ be a group acting by isometries on an $\R$-tree $Y$.
	We say that the action is 
	\begin{enumerate}
		\item \emph{simplicial} if $Y$ is simplicial and the action of $G$ on $Y$ is simplicial;
		\item \emph{axial} if $Y$ is a line and the image of $G$ in $\isom Y$ is a finitely generated group acting with dense orbits on $Y$;
		\item of \emph{Seifert type} if the action has a kernel $N$ and the faithful action of $G/N$ on $Y$ is dual to an arational measured foliation on a closed $2$-orbifold with boundary.
		\end{enumerate}
\end{defi}

The next theorem is a relative version of Sela \cite[Theorem~3.1]{Sela:1997gh}.

\begin{theo}[{\cite[Theorem~5.1]{Guirardel:2008ik}}]
\label{res: splitting relative version}
	Let $G$ be a group and $\mathcal H$ a finite collection of subgroups of $G$ such that $G$, is finitely generated relative to $\mathcal H$.
	Let $Y$ be an $\R$-tree endowed with a minimal, super-stable action of $G$ so that each subgroup in $\mathcal H$ is elliptic.
	Then one of the following holds
	\begin{itemize}
		\item The group $G$ splits as a free product relative to $\mathcal H$.
		\item The action of $G$ on $Y$ splits as a graph of actions where each vertex action is either simplicial, axial or of Seifert type.
	\end{itemize}	
\end{theo}

Let us now move back to the study of the action of $L$ on $T$.
Recall that $\mathcal H$ stands for the collection of elusive subgroups of $L$.

\begin{theo}
\label{res: final decomposition tree}
	Assume that $L$ does not split as a free product relative to $\mathcal H$.
	Then the action of $L$ on $T$ decomposes as a graph of actions over abelian groups $\Lambda$ whose skeleton $S_\Lambda$ refines $S(0)$.
	Each vertex action of $\Lambda$ is either peripheral, simplicial, axial or of Seifert type.
	Moreover,
	\begin{enumerate}
		\item if $L_v$ is a Seifert type vertex group of $\Lambda$, then $L_v$ is quadratically hanging with trivial fiber and the corresponding vertex tree is transverse in $T$;
		\item if $L_v$ is an axial vertex group of $\Lambda$, then the corresponding vertex tree is transverse in $T$.
	\end{enumerate}
\end{theo}

\begin{rema*}
	One can prove that the peripheral components are also simplicial.
	Nevertheless we distinguish them as they cannot always be shorten.
\end{rema*}

\begin{proof}
	We start with the graph of actions $\Lambda(0)$ defined above.
	The strategy is to blow up each transverse vertex of $S(0)$ to further refine this decomposition.
	The first step is to replace $\mathcal H$ be a \emph{finite} collection of subgroups.
	According to \cite[Lemma~8.2]{Guirardel:2017te}, there exists a finite collection $\mathcal H' = \{H_1, \dots, H_m\}$ where each $H_i$ is a finitely generated subgroup of an element in $\mathcal H$ such that $L$ does not split as a free product relative to $\mathcal H'$.
	Let $v$ be a transverse vertex of $S(0)$ and $Y_v$ the corresponding vertex tree.
	Let $e_1, \dots, e_m$ a set of $L_v$-representatives of the edges of $S(0)$ starting at $v$.
	For simplicity, we let
	\begin{equation*}
		\mathcal E_v = \{ L_{e_1}, \dots, L_{e_m}\}.
	\end{equation*}
	Consider now all the conjugates (in $L$) of groups in $\mathcal H'$ which fix $v$ an no other vertex in $S(0)$.
	We define $\mathcal H'_v$ by choosing a representative for each $L_v$-conjugacy class in this family.
	Since $L$ is finitely generated, $\mathcal E_v \cup \mathcal H'_v$ is finite and $L_v$ is finitely generated relative to $\mathcal E_v \cup \mathcal H'_v$, see for instance \cite[Lemma~1.12]{Guirardel:2008ik}.
	It follows from our assumption that $L_v$ does not split as a free product relative to $\mathcal E_v \cup \mathcal H'_v$.
	Moreover each group in $\mathcal E_v \cup \mathcal H'_v$ is elliptic for its action on $Y_v$.
	
	Let $Y'_v$ be the minimal $L_v$-invariant subtree of  $T$.		
	Since $L_v$ preserves $Y_v$, the latter contains $Y'_v$.
	We claim that $Y'_v = Y_v$.
	If $e$ is an edge of $S(0)$ starting at $v$, we denote by $x_e \in T$ the associated attaching point in the graph of actions $\Lambda(0)$ and by $y_e$ its projection on $Y'_v$.
	Since the action of $L$ on $T$ is minimal, $Y_v$ is exactly
	\begin{equation*}
		Y_v = Y'_v \cup \left(\bigcup_{e \in \link v} \geo{y_e}{x_e}\right)
	\end{equation*}
	Let $e \in \link v$.
	The group $L_e$ is the pointwise stabilizer of $\bar P(c)$ for some $c \in \mathcal C$.
	Since $L$ is freely indecomposable relative to $\mathcal H$, the group $L_e$ is not trivial.
	According to \autoref{res: tree-graded - fixed point set}, any point of $T$ fixed by $L_e$ belongs to $\bar P(c)$.
	By construction the geodesic $\geo{y_e}{x_e}$ is pointwise fixed by $L_e$, but belongs to the transverse subtree $Y_v$.
	Hence it is necessary degenerated.
	Consequently $Y'_v = Y_v$, which completes our claim.
	According to \autoref{res: tree-graded - transverse stability} the action of $L_v$ on $Y_v$ is super-stable.
	Consequently we can apply \autoref{res: splitting relative version} to the action of $L_v$ on $Y_v$ (relative to $\mathcal E_v \cup \mathcal H'_v$).
	This provides a decomposition of $Y_v$ as a graph of actions whose vertex actions fall in one of the three categories of the theorem.
		
	We obtain the graph of actions $\Lambda$ by assembling together all the decomposition of the vertex actions of $S(0)$.
	Let $S_\Lambda$ be its underlying skeleton.
	By construction $S_\Lambda$ refines $S(0)$ and every vertex action of $\Lambda$ is either peripheral, simplicial, axial or of Seifert type.
	Moreover axial and Seifert type vertex trees are transverse in $T$.
	Note that we can assume that transverse simplicial vertex trees are maximal, that is if $v$ and $v'$ are two vertices in $S_\Lambda$ such that the corresponding vertex trees $Y_v$ and $Y_{v'}$ are simplicial and the images of $v$ and $v'$ in $S(0)$ are transverse vertices, then either $Y_v \cap Y_{v'} = \emptyset$ or $v = v'$.
	
	Let us now prove the additional properties of $\Lambda$.
	Let $v$ be a Seifert type vertex of $S_\Lambda$ and $Y_v \subset T$ the associated vertex tree.
	We already mentioned that $Y_v$ is transverse.
	The kernel $N_v$ of the action of $L_v$ on $Y_v$ fixes a non-degenerate tripod, hence is trivial by \autoref{res: tree-graded - transverse tripod}.
	In particular, $L_v$ is isomorphic to the fundamental group of a closed $2$-orbifold $\Sigma$ with boundary.
	Consider a subgroup $H$ which is conjugated to an element of $\mathcal H$ so that $H \cap L_v$ fixes a point in $Y_v$.
	Since $Y_v$ is dual to an arational foliation, either $H\cap L_v$ is contained in a cone point of $\Sigma$ or in a boundary subgroup of $\pi_1(\Sigma)$.
	Consequently $H \cap L_v$ is an extended boundary subgroup.
	We prove in the same way that every edge group of $S_\Lambda$ incident to $v$ is an extended boundary subgroup.
	It follows from this discussion that $L_v$ is $(\mathcal A, \mathcal H)$-quadratically-hanging.
	
	We are left to prove that edge groups of $S_\Lambda$ are abelian.
	Let $e$ be an edge of $S_\Lambda$.
	If $e$ comes from an edge of $S(0)$, then $L_e$ is abelian.
	Otherwise, since transverse simplicial trees are maximal, one of the end point $v$ of $e$ is either an axial vertex or a Seifert type vertex.
	If $v$ is an axial vertex, then $L_v$ is abelian, thus so is $L_e$.
	If $v$ is a Seifert type vertex, then $L_e$ is an extended boundary subgroup of the underlying manifold.
	Since $L$ is CSA, $L_e$ is abelian.
\end{proof}

\begin{defi}
\label{def: complete graph of action}
	We call the graph of actions $\Lambda$ given by \autoref{res: final decomposition tree} the \emph{complete decomposition of $T$} (as a graph of actions).
	Let $S_\Lambda$ be its underlying skeleton.
	We define a new splitting of $L$ refining $S_\Lambda$ as follows.
	For each vertex $v$ of $S_\Lambda$ whose corresponding vertex tree $Y_v \subset T$ is simplicial and transverse, we blow up the vertex $v$ and replace it by a copy of $Y_v$.
	We call the resulting tree $S$ the \emph{complete splitting of $L$ with respect to $T$}.
\end{defi}

\begin{rema*}
	Note that both $\Lambda$ and $S$ depend on the sequence $(\Gamma_k, \phi_k)$ defining $L$ and $T$.
	However the sequence is normally clear from the context. 
	Hence we can safely ignore it in the terminology.
	
	The tree $S$ is not necessarily the underlying skeleton of a decomposition of $T$ as a graph of actions.
	Note that $\mcg{L, S_\Lambda} \subset \mcg{L, S}$.
	This larger modular group $\mcg{L, S}$ has the advantage of containing all the automorphisms needed for the shortening argument in the next section.
\end{rema*}

% 2024-08-16 READ UNTIL HERE

%%%%%%%%%%%%%%%%%%%%%%%%%%%%%%%%%%%%%%%%%%%%%%%%%%%%%%%%%%%%%%%%%%%%%%%%%%%%%%%%%%%%%
%
\subsection{Shortening argument}
%
%%%%%%%%%%%%%%%%%%%%%%%%%%%%%%%%%%%%%%%%%%%%%%%%%%%%%%%%%%%%%%%%%%%%%%%%%%%%%%%%%%%%%
\label{sec: shortening}

Let $(L, \eta)$ be a divergent limit group as before.
We now explain how the action of $L$ on its limit tree $T$ can be used to (eventually) shorten the morphism $\phi_k \colon G \to \Gamma_k$, that where used to define $L$.
Unlike in the case of limit groups over a free group, a shortening does not always exist.
This comes from the fact that $L$ may have no modular automorphism beside the inner ones.
Such a situation occurs for instance if $L$ is obtained by adjoining a root to a rigid group.

Recall that strong covers and limit of strong covers have been defined in \autoref{sec: strong covering}.
This section is dedicated to the proof of the following statement.

\begin{theo}[Shortening argument]
\label{res: shortening argument}
	Let $\delta \in \R_+^*$.
	Let $G$ be a group and $U$ a finite generating set of $G$.
	Let $\phi_k \colon G \to \Gamma_k$ be a sequence of marked groups where $(\Gamma_k, X_k, \mathcal C_k) \in \mathfrak H_\delta(\tilde \rho_k)$ and such that both $(\tilde \rho_k)$ and $\lambda_\infty(\phi_k, U)$ diverge to infinity.
	We make the following assumptions.
	\begin{itemize}
		\item The sequence $(\Gamma_k, \phi_k)$ converges to a limit group $(L, \eta)$, which is non-abelian and freely indecomposable relative to its collection $\mathcal H$ of elusive subgroups.
		\item The complete splitting $S$ of $L$ with respect to its limit tree $T$ is not a generalized root splitting.
	\end{itemize}
	Let $(\hat L_k, \hat \eta_k, \hat S_k)$ be a sequence of strong covers of $(L, \eta, S)$ converging to $(L, \eta, S)$.
	Let $\zeta_k \colon \hat L_k \onto L$ be the covering map associated to $\hat L_k$.
	Assume that for every $k \in \N$, the morphism $\phi_k \colon G \to \Gamma_k$ factors through $\hat \eta_k \colon G \to \hat L_k$ and write $\hat \phi_k \colon \hat L_k \to \Gamma_k$ for the resulting morphism, so that $\phi_k = \hat \phi_k \circ \hat \eta_k$.
	
	There exists $\kappa > 0$ with the following property: for infinitely many $k \in \N$, there exist automorphisms $\alpha_k \in \mcg{L, S}$ and $\hat \alpha_k \in \aut{\hat L_k}$ preserving $\hat S_k$ such that $\zeta_k \circ \hat \alpha_k = \alpha_k \circ \zeta_k$ and 
	\begin{equation*}
		\lambda^+_1\left(\hat \phi_k \circ \hat \alpha_k \circ \hat \eta_k, U\right) \leq (1- \kappa) \lambda^+_1\left(\phi_k,U\right)  \ \oas.
	\end{equation*}
\end{theo}

\begin{rema*}
	In the statement, the energies are computed in the \emph{non-rescaled} space $X_k$.
\end{rema*}

The remainder of this section is dedicated to the proof of the theorem.
The material below comes from \cite{Rips:1994jg}.
We follow the exposition given in \cite{Perin:2008aa,Weidmann:2019ue}.
Let us recall the notation from Sections~\ref{sec: building limit tree} - \ref{sec: prelim decomposition}.

We denote by $\mathcal A$ the collection of all abelian subgroups of $L$ and by $\mathcal H$ the collection of elusive subgroups of $L$.
For every $k \in \N$, we let
\begin{equation*}
	\epsilon_k = \frac 1{\lambda_\infty\left(\phi_k, U\right)}.
\end{equation*}
The group $L$ acts on an $\R$-tree obtained $X_\omega$ as the $\omega$-limit the rescaled spaces $\epsilon_kX_k$, where $\omega$ is a non-principal ultra-filter fixed 
as in the beginning of \autoref{sec: building limit tree}.
As before we let $\rho_k = \epsilon_k \tilde \rho_k$ for every $k \in \N$.

Let $T$ be the minimal $L$-invariant subtree of $X_\omega$.
By \autoref{res: basepoint in minimal tree}, there exists a sequence of point $o^+_k \in X^+_k$ such that $o^+ = \limo o^+_k$ belongs to $T$ and
\begin{equation}
\label{eqn: basepoint oplus}
	\displaystyle \lambda_1(\phi_k, U, o^+_k) = \lambda^+_1(\phi_k,U) + o\left( \frac 1{\epsilon_k}\right).
\end{equation}
The action of $L$ on $T$ splits as a graph of actions $\Lambda$ whose vertex actions are either peripheral, simplicial, axial, or Seifert type (\autoref{res: final decomposition tree}).
We call a vertex $v$ of its skeleton $S_\Lambda$ \emph{peripheral}, \emph{simplicial}, \emph{axial}, or \emph{Seifert type} depending whether the corresponding component $Y_c \subset T$ is peripheral, simplicial, axial, or Seifert type.
We denote by $S$ the complete splitting of $L$ with respect to $T$, which refines the skeleton $S_\Lambda$ of $\Lambda$.
We denote by $\mcg[0]{L, S}$ and $\mcg[0]{L, \Lambda}$ the respective subgroups of $\mcg{L, S}$ and $\mcg{S, \Lambda}$ generated by the following elements
\begin{itemize}
	\item inner automorphisms of $L$,
	\item Dehn twists over an edge $e$ of $S$ (\resp $S_\Lambda$) by an element $u \in L_e$,
	\item surface type automorphisms,
	\item generalized Dehn twists,
\end{itemize}
(see \autoref{sec: modular group} for the definitions).
The only difference between with regular modular groups concerns the (regular) Dehn twists: we require that the twister $u$ belongs to the edge group $L_e$ (and not its centralizer).
Note that $\mcg[0]{L, \Lambda}$ is contained in $\mcg[0]{L, S}$ as the edge set of $S$ is larger than the one of $S_\Lambda$.

\begin{lemm}
\label{res: lifting mcg to cover}
	Let $\alpha \in \mcg[0]{L, S}$.
	There exist $C \in \R_+$ and automorphisms $\hat \alpha_k \in \aut{\hat L_k}$ preserving $\hat S_k$, such that for all but finitely many $k \in \N$, we have $\zeta_k \circ \hat \alpha_k = \alpha \circ \zeta_k$ and $\lambda_\infty(\hat \alpha_k \circ \hat \eta_k, U) \leq C$.
\end{lemm}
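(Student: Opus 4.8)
The plan is to lift $\alpha$ one generator at a time. Recall that $\mcg[0]{L,S}$ is generated by four families of automorphisms of $L$: inner automorphisms, Dehn twists over an edge $e$ of $S$ by an element of the edge group $L_e$, surface type automorphisms of quadratically hanging vertices, and generalized Dehn twists of abelian vertices fixing the peripheral subgroup. For each such generator $\gamma$ and for all sufficiently large $k$ I would produce $\hat\gamma_k \in \aut{\hat L_k}$ preserving $\hat S_k$, satisfying $\zeta_k \circ \hat\gamma_k = \gamma \circ \zeta_k$, and moving the action of $G$ on the simplicial tree $\hat S_k$ by a uniformly bounded amount. Since the automorphisms of $\hat L_k$ that preserve $\hat S_k$ and project through $\zeta_k$ form a group under composition, and since $\alpha$ is a fixed element, I would then write $\alpha$ as a fixed word in the generators and set $\hat\alpha_k$ to be the corresponding product of the lifts; as the word does not depend on $k$, the resulting bound on $\lambda_\infty(\hat\alpha_k \circ \hat\eta_k, U)$ will be independent of $k$, which is the assertion.

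For an inner automorphism $\gamma = \mathrm{conj}_g$, pick $g_0 \in G$ with $\eta(g_0) = g$ and put $\hat\gamma_k = \mathrm{conj}_{\hat\eta_k(g_0)}$; the relation $\zeta_k \circ \hat\eta_k = \eta$ gives $\zeta_k \circ \hat\gamma_k = \gamma \circ \zeta_k$, and inner automorphisms preserve every splitting. For a Dehn twist over $e$ by $u \in L_e$, the convergence of the strong covers (\autoref{def: conv strong cover}) provides, for $k$ large, an edge $\hat e_k$ of $\hat S_k$ above $e$ and $\hat u_k \in \hat L_{k,\hat e_k}$ with $\zeta_k(\hat u_k) = u$; the edge groups being abelian, $\hat u_k$ centralizes $\hat L_{k,\hat e_k}$, so the Dehn twist over $\hat e_k$ by $\hat u_k$ is an automorphism of $\hat L_k$ preserving $\hat S_k$, and it is compatible with $\zeta_k$ since $\zeta_k$ together with the equivariant map $\hat S_k \to S$ carries the one-edge collapse of $\hat S_k$ at $\hat e_k$ onto the one at $e$. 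This is precisely where the restriction defining $\mcg[0]{L,S}$ matters: the twister must lie in the edge group of the cover, not merely in a centralizer. For a surface type automorphism induced by $\gamma_v \in \aut{L_v}$, the vertex group $L_v$ is the fundamental group of a compact $2$-orbifold, hence finitely presented, so the last clause of \autoref{def: strong cover} makes $\zeta_k$ restrict to an isomorphism $\hat L_{k,\hat v_k} \to L_v$; transporting $\gamma_v$ through it --- the incident edge groups being matched as well --- and taking the natural extension gives $\hat\gamma_k$. The generalized Dehn twist case is analogous, now using that $\zeta_k$ embeds $\hat L_{k,\hat v_k}$ into $L_v$ and that, for $k$ large, the peripheral subgroup of $\hat v_k$ is carried by $\zeta_k$ to that of $v$, so that $\gamma_v$ restricts to an automorphism of $\zeta_k(\hat L_{k,\hat v_k})$ fixing its peripheral subgroup.

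For the length estimate, I would use that each generator-lift respects $\hat S_k$ in a strong sense: in the normal form of \autoref{sec: graph of groups} relative to $\hat S_k$ it acts by conjugating the vertex and edge syllables, so inner automorphisms move the basepoint by at most the length of $g_0$, while Dehn twists, surface type automorphisms and generalized Dehn twists leave the sequence of crossed edges unchanged. Hence, for the fixed word $w$ expressing $\alpha$, the length in $\hat S_k$ of $\hat\alpha_k(\hat\eta_k(u))$ exceeds that of $\hat\eta_k(u)$ by a constant depending only on $w$ and the chosen preimages. Finally, the convergence of the strong covers forces the normal form of $\hat\eta_k(u)$ relative to $\hat S_k$ to cross, for $k$ large, the same edges as the normal form of $\eta(u)$ relative to $S$ --- the quotient graphs being identified and every vertex syllable of $\eta(u)$ lifting --- so this length is eventually bounded by a quantity depending only on $U$, $\eta$ and $S$. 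Summing the two bounds, and enlarging $C$ to absorb the finitely many exceptional $k$, yields $\lambda_\infty(\hat\alpha_k \circ \hat\eta_k, U) \leq C$.

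The part I expect to be delicate is the generalized Dehn twist case, where $\zeta_k$ does not identify $\hat L_{k,\hat v_k}$ with the whole abelian vertex group: one must extract from \autoref{def: conv strong cover} that the peripheral data of $\hat v_k$ stabilizes, so that the fixed automorphism $\gamma_v$ really does preserve $\zeta_k(\hat L_{k,\hat v_k})$ and its peripheral subgroup. Keeping the three conditions --- invariance of $\hat S_k$, the identity $\zeta_k \circ \hat\gamma_k = \gamma \circ \zeta_k$, and the bounded distortion --- true simultaneously through the natural extensions and through composition is the technical heart; it follows the pattern of \cite{Rips:1994jg,Perin:2008aa,Weidmann:2019ue}, adapted to the present relative setting and to the presence of peripheral vertices.
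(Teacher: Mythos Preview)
Your proposal is correct and follows the same approach as the paper: both reduce to lifting each of the four types of generators of $\mcg[0]{L,S}$ through the strong covers, treating inner automorphisms trivially, Dehn twists by lifting the twister via convergence of edge groups, QH automorphisms via the isomorphism on finitely presented vertex groups, and generalized Dehn twists as the delicate case. Your phrasing in that last case (``$\gamma_v$ restricts to an automorphism of $\zeta_k(\hat L_{k,\hat v_k})$'') is the restriction viewpoint, whereas the paper takes the dual splitting viewpoint, writing $\hat L_{k,\hat v_k} = \Z^p \oplus \hat K_{k,\hat v_k}$ with $\Z^p \cong L_v/K_v$ and defining $\hat\beta_k$ factor by factor; both come down to the same observation that $L_v$ is finitely generated relative to $K_v$ and that $\zeta_k$ induces an isomorphism modulo the peripheral subgroups for $k$ large. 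You also sketch a justification of the bound $\lambda_\infty(\hat\alpha_k\circ\hat\eta_k,U)\leq C$ (measured in $\hat S_k$), which the paper's proof does not spell out; your argument via stability of normal forms under the lifts is the right one.
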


\begin{rema*}
	In the above statement, the energy $\lambda_\infty(\hat \alpha_k \circ \hat \eta_k, U)$ is measured in the Cayley graph of $\hat L_k$ with respect to the generating set $\hat \eta_k (U)$.
\end{rema*}

\begin{proof}
	It suffices to prove the statement for every element in the generating set of $\mcg[0]{L, S}$.
	It is obvious for inner automorphisms.
	
	Let $e$ be an edge of $S$.
	Let $u$ be an element of $L_e$ and $\alpha$ the Dehn twist of $L$ over $e$ by $u$.
	Let $\tilde u \in G$ be a pre-image of $u$.
	If $k$ is sufficiently large, then $\hat u_k = \hat \eta_k(\tilde u)$ fixes a pre-image $\hat e_k$ in $\hat S_k$ of $e$.
	We choose for $\hat \alpha_k$ the Dehn twist of $\hat L_k$ over $\hat e_k$ by $\hat u_k$. 
	
	Let $v$ be a quadratically hanging vertex of $S$ and $\Sigma$ the underlying $2$-orbifold.
	Let $\alpha$ be the natural extension of a local automorphism $\beta$ induced by a homeomorphism of $\Sigma$ fixing the boundary.
	If $k$ is sufficiently large, there exists a pre-image $\hat v_k$ of $v$ in $\hat S_k$ such that $\zeta_k \colon \hat L_k \onto L$ induces an isomorphism from $\hat L_{k,\hat v_k}$ onto $L_v$.
	Hence the local automorphism $\beta$ of $L_v$ we started with lifts to a local automorphism $\hat \beta_k$ of $\hat L_{k,\hat v_k}$.
	We choose for $\hat \alpha_k$ the natural extension of $\hat \beta_k$.
	
	Let $v$ be a vertex of $S$ whose vertex group $L_v$ is abelian.
	We denote by $K_v$ the peripheral subgroup of $v$ (see \autoref{sec: modular group} for the definition).
	Let $\alpha$ be the natural extension of a local automorphism $\beta$ of $L_v$ fixing $K_v$.
	Unlike in the previous case, if $L_v$ is a priori not finitely generated, thus one cannot find a vertex $\hat v_k$ in $\hat S_k$ such that $\hat L_{k,\hat v_k}$ and $L_v$ are isomorphic.
	Nevertheless, $L_v$ is finitely generated relative to $K_v$.
	Consequently, if $k$ is sufficiently large, there exists a pre-image $\hat v_k$ of $v$ in $\hat S_k$ with the following properties.
	Let $\hat K_{k,\hat v_k}$ be the peripheral subgroup of $\hat v_k$.
	The map $\zeta_k \colon \hat L_k \onto L$ sends $\hat K_{k, \hat v_k}$ into $K_v$ and induces an isomorphism from $\hat L_{k,\hat v_k}/\hat K_{k,\hat v_k}$ onto $L_v/K_v$.
	By definition, $\beta$ fixes pointwise $K_v$.
	By construction, $\hat L_{k, \hat v_k}$ splits as $\Z^p \oplus \hat K_{k,\hat v_k}$, where the abelian free factor $\Z^p$ is isomorphic to $\hat L_{k,\hat v_k}/\hat K_{k,\hat v_k}$ and thus to $L_v / K_v$.
	Hence one can find an automorphism $\hat \beta_k$ of $\hat L_{k,\hat v_k}$ that lifts $\beta$ and fixes $\hat K_{k,\hat v_k}$.
	We choose for $\hat \alpha_k$ the natural extension of $\hat \beta_k$.
\end{proof}

%------------------------------------------------------------------------------------
%
\subsubsection{Axial and Seifert type component}
%
%------------------------------------------------------------------------------------
\label{sec: shortening - axial/surface}

We handle first the case where $\Lambda$ has a vertex action which is axial or Seifert type.
To that end, we some recall results of Rips-Sela \cite{Rips:1994jg} that can be used to shorten an action in this situation.

\begin{theo}[{}]
\label{res: shortening action - surface}
	Let $L$ be a finitely generated group acting on an $\R$-tree $T$ which admits a decomposition as a graph of actions $\Lambda$.
	Let $x \in T$.
	Let $U$ be a finite subset of $L$.
	There exists an element $\alpha \in \mcg[0]{L, \Lambda}$ such that for every element $u \in U$,
	\begin{itemize}
		\item if the geodesics $\geo x{ux}$ has a non-degenerate intersection with a Seifert type component of $\Lambda$, then $\dist{\alpha(u)x}x < \dist {ux}x$,
		\item otherwise $\alpha(u) = u$.
	\end{itemize}
\end{theo}

\begin{proof}
	The proof is detailed in Perin's thesis \cite[Theorem~5.12]{Perin:2008aa}.
	Note that Perin only asserts that  $\alpha$ belongs to $\mcg{L,\Lambda}$ which is a priori bigger that $\mcg[0]{L, \Lambda}$.
	Nevertheless one checks from the proof that $\alpha$ is a product of surface type automorphisms, whence the result.
\end{proof}

\begin{theo}
\label{res: shortening action - axial}
	Let $L$ be a finitely generated group. 
	Suppose that $L$ acts on a real tree $T$ with abelian arc stabilizers and that $T$ admits a decomposition as a graph of actions $\Lambda$.
	In addition, we assume that every solvable subgroup of $L$ is either elliptic (for its action on $T$) or abelian.
	Let $x \in T$.
	Let $U$ be a finite subset of $L$.
	There exists an element $\alpha \in \mcg[0]{L, \Lambda}$ such that for every element $u \in U$,
	\begin{itemize}
		\item if the geodesics $\geo x{ux}$ has a non-degenerate intersection with an axial type component of $\Lambda$, then $\dist{\alpha(u)x}x < \dist {ux}x$,
		\item otherwise $\alpha(u) = u$.
	\end{itemize}
\end{theo}

\begin{proof}
	Let $v$ be vertex of $\Lambda$ whose corresponding component $Y_v$ is axial.
	We claim that $L_v$ splits as a direct product $L_v = N_v \oplus A_v$, where $N_v$ is the kernel of the map $L_v \to \isom{Y_v}$.
	Since pointwise arc stabilizers are abelian, the group $L_v$ is the extension of the abelian group $N_v$ by a subgroup of $\isom{Y_v}$, say $Q_v$, which is solvable.
	In particular, $L_v$ is solvable, but not elliptic, hence abelian.
	Moreover $Y_v$ is a line on which $Q_v$ acts with dense orbits.
	Hence $Q_v$ is necessarily free abelian.
	Since $L_v$ is abelian, the projection $L_v \onto Q_v$ admits a section $\sigma \colon Q_v \to L_v$.
	In particular, $L_v$ splits as a semi-direct product $L_v = N_v \rtimes A_v$ where $A_v$ is the image of $\sigma$ (which is isomorphic to $Q_v$).
	As $L_v$ is abelian, the action by conjugation on $N_v$ of any element of $L_v$ is trivial. 
	Hence the above semi-direct product is actually a direct product, which completes the proof of our claim.
	The same argument also shows that $N_v$ is the peripheral subgroup of $v$, as well as the stabilizer of any edge of $S$ adjacent to $v$.
	
	One can now follow the proof given by Perin in \cite[Theorem~5.17]{Perin:2008aa}.
	Indeed the statement given there is almost the same as our theorem.
	The main difference is that Perin assumes that every solvable subgroup of $L$ is free abelian.
	Nevertheless this assumption is only used to prove that the vertex groups of $L$ associated to an axial component splits as a direct sum as we described above.
	Therefore the rest of the proof can be carried verbatim in our settings.
	
	Perrin's statement asserts that $\alpha$ belongs to $\mcg{L,\Lambda}$.
	As for \autoref{res: shortening action - surface}, one can check from the proof that $\alpha$ is actually a product of generalized Dehn twists, hence belongs to $\mcg[0]{L, \Lambda}$.
\end{proof}

Recall that for every $k \in \N$, the morphism $\hat \phi_k \colon \hat L_k \to \Gamma_k$ is obtained by factoring $\phi_k \colon G \to \Gamma_k$ through the cover $\hat \eta_k \colon G \to \hat L_k$.

\begin{prop}
\label{res: shortening morphism - axial / surface}
	Assume that the graph of actions $\Lambda$ contains an axial or a Seifert type vertex action.
	There exist $\ell \in \R_+^*$ and $\alpha \in \mcg[0]{L, \Lambda}$ with the following property.
	For every sufficiently large $k \in \N$, there exists  $\hat \alpha_k \in \aut{\hat L_k}$ preserving $\hat S_k$ such that $\zeta_k \circ \hat \alpha_k = \alpha \circ \zeta_k$ and 
	\begin{equation*}
		\lambda^+_1\left(\hat \phi_k \circ \hat \alpha_k \circ \hat \eta_k, U\right) \leq \lambda^+_1\left(\phi_k,U\right) - \frac \ell{\epsilon_k} \ \oas,
	\end{equation*}
\end{prop}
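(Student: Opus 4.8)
The strategy is to transport the shortening statements for the action on the limit tree (Theorems~\ref{res: shortening action - surface} and \ref{res: shortening action - axial}) back into an estimate on the energy of $\phi_k$. First I would fix the basepoint $o^+ = \limo o^+_k \in T$ provided by \autoref{res: basepoint in minimal tree}, so that $\lambda_1(\phi_k, U, o^+_k)$ differs from the restricted energy $\lambda^+_1(\phi_k, U)$ only by a term $o(1/\epsilon_k)$, and the $\omega$-limit $o^+$ lies in the minimal subtree $T$. Applying \autoref{res: shortening action - surface} (if $\Lambda$ has a Seifert type component) or \autoref{res: shortening action - axial} (if $\Lambda$ has an axial component, using that arc stabilizers of $T$ are abelian by \autoref{res: abelian arc stabilizers} and that solvable subgroups of $L$ are elliptic or abelian by \autoref{res: tree-graded - small subgroups}) to the point $o^+$ and the finite set $U' = \eta(U)$, I obtain an automorphism $\alpha \in \mcg[0]{L, \Lambda}$ which strictly decreases $\dist{\alpha(u) o^+}{o^+}$ for every generator $u$ whose geodesic $\geo{o^+}{\eta(u) o^+}$ meets the relevant (axial or Seifert) component in a non-degenerate segment, and leaves the other generators untouched. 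The key point is that \emph{some} generator must have such a non-degenerate intersection: since the vertex tree of the axial/Seifert vertex is transverse and $L$ acts minimally on $T$, that vertex tree is the convex hull of an $L_v$-orbit, so it is covered by translates of geodesics $\geo{o^+}{\eta(u) o^+}$, forcing at least one such geodesic to cross it essentially.

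\textbf{From the tree to the groups.} Next I would make this decrease quantitative. For each $u \in U$ write $\ell_u = \dist{\eta(u)o^+}{o^+} - \dist{\alpha(u)o^+}{o^+} \geq 0$ in $T$, and set $\ell = \sum_{u \in U} \ell_u > 0$ (positive by the previous paragraph). By the definition of the $\omega$-limit metric, for every $u$ we have $\epsilon_k \dist[X_k]{\phi_k(\tilde u)o^+_k}{o^+_k} \to \dist[T]{\eta(u)o^+}{o^+}$ along $\omega$, where $\tilde u$ is a pre-image of $u$ in $G$. To realize $\alpha(u)$ inside $X_k$ I invoke \autoref{res: lifting mcg to cover}: for all large $k$ there are automorphisms $\hat\alpha_k \in \aut{\hat L_k}$ preserving $\hat S_k$, with $\zeta_k \circ \hat\alpha_k = \alpha \circ \zeta_k$, and the maps $\hat\phi_k \circ \hat\alpha_k \circ \hat\eta_k$ have uniformly bounded $L^\infty$-energy; moreover, $\hat\phi_k \circ \hat\alpha_k \circ \hat\eta_k(\tilde u)$ projects under $\zeta_k$ (well, is built so as) to represent $\alpha(u)$, and its displacement of $o^+_k$ in $\epsilon_k X_k$ converges to $\dist[T]{\alpha(u)o^+}{o^+}$ along $\omega$. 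Summing over $u$ and using (\ref{eqn: basepoint oplus}) gives
\begin{equation*}
	\epsilon_k \lambda_1\bigl(\hat\phi_k \circ \hat\alpha_k \circ \hat\eta_k, U, o^+_k\bigr) \leq \epsilon_k \lambda^+_1(\phi_k, U) - \ell + o(1) \ \oas,
\end{equation*}
because the shortened generators contribute their limiting (smaller) displacement while the unchanged generators contribute exactly the same limiting displacement. Since $o^+_k$ lies in the thick part $X^+_k$, the left-hand side bounds $\epsilon_k\lambda^+_1(\hat\phi_k \circ \hat\alpha_k \circ \hat\eta_k, U)$ from above, and multiplying through by $1/\epsilon_k$ yields the announced inequality with that same $\ell$.

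\textbf{Expected main obstacle.} The delicate step is the passage across the ultra-limit: one needs to know not just that displacements of the \emph{basepoint} converge, but that after applying $\alpha$ (and its lift $\hat\alpha_k$) the displacement of $o^+_k$ in $\epsilon_k X_k$ converges to the displacement of $o^+$ by $\alpha(u)$ in $T$ — this requires that the lift $\hat\alpha_k$ produced by \autoref{res: lifting mcg to cover} is geometrically compatible with the graph-of-actions structure, i.e. that a Dehn twist / surface automorphism on $L$ lifts to one whose realization in $X_k$ tracks the corresponding modification of geodesics in $T$. Since $\Lambda$ decomposes $T$ and the lifted automorphism is built edge-group-wise over $\hat S_k$ (which converges to $S$), the geodesic segments $\geo{o^+_k}{\phi_k(\tilde u)o^+_k}$ decompose, by (\ref{eqn: graph of actions - dist}), into pieces whose $\omega$-limits match the pieces of $\geo{o^+}{\eta(u)o^+}$ in $T_\Lambda$; the twist alters only the attaching data, by an amount controlled by a fixed element of an edge group, so the net effect on the length converges as required. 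Controlling that the contributions of the \emph{unchanged} generators do not secretly grow — which follows because $\alpha$ literally fixes those generators and the lift is the identity on the corresponding subgroups for large $k$ — is the other point that needs care but is routine. The remaining case, where $\Lambda$ has neither axial nor Seifert components (so that the shortening must instead come from simplicial edges, where the hypothesis that $S$ is not a generalized root splitting enters), will be treated separately.
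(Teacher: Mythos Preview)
Your proposal is essentially correct and follows the same route as the paper. Two minor corrections:

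\textbf{Non-emptiness.} Your justification that some generator crosses an axial/Seifert component non-degenerately is phrased oddly: the clause ``that vertex tree is the convex hull of an $L_v$-orbit'' is neither needed nor obviously true. The paper's argument is cleaner: the set $L \cdot \bigl(\bigcup_{u \in U} \geo{o^+}{\eta(u)o^+}\bigr)$ is an $L$-invariant subtree of $T$, so by minimality it equals $T$; if no $\geo{o^+}{\eta(u)o^+}$ met an axial/Seifert component non-degenerately, neither would any $L$-translate (since $L$ permutes these components), and $T$ would be covered by peripheral and simplicial pieces, contrary to hypothesis.

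\textbf{The ``delicate step'' is simpler than you fear.} You invoke (\ref{eqn: graph of actions - dist}) and a decomposition of geodesics into pieces, but this is the machinery for the \emph{simplicial} and \emph{peripheral} cases, where the twisting element depends on $k$. Here $\alpha$ is a \emph{fixed} automorphism of $L$. The construction in \autoref{res: lifting mcg to cover} shows that for large $k$ one has $\hat\alpha_k \circ \hat\eta_k(u) = \hat\eta_k(g_u)$ for some fixed $g_u \in G$ with $\eta(g_u) = \alpha\circ\eta(u)$ (the lifts are built from $\hat\eta_k$-images of fixed pre-images in $G$). Hence $\hat\phi_k \circ \hat\alpha_k \circ \hat\eta_k(u) = \phi_k(g_u)$, and the convergence $\epsilon_k\,\dist{\phi_k(g_u)o^+_k}{o^+_k} \to \dist{\alpha\circ\eta(u)\,o^+}{o^+}$ is immediate from the definition of the ultralimit. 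No graph-of-actions bookkeeping is required at this stage. With this in hand, summing over $U$ and using (\ref{eqn: basepoint oplus}) exactly as you wrote gives the conclusion; this is precisely what the paper does.
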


\begin{proof}
	Recall that $o^+ = \limo o^+_k$ is a point of $T$, where $o_k^+ \in X^+_k$ (almost) minimizes the restricted energy of $\phi_k$.
	We write $U_0$ for the set of all elements $u \in U$ such that $\geo {o^+}{\eta(u)o^+}$ has a non-degenerate intersection with an axial or Seifert type component of $\Lambda$.
	Observe that $U_0$ is non-empty.
	Indeed otherwise 
	\begin{equation*}
		L \cdot \left( \bigcup_{u \in U}\geo {o^+}{\eta(u)o^+}\right)
	\end{equation*}
	would be an $L$-invariant subtree of $T$ covered only by peripheral and simplicial subtrees.
	By minimality it should equal $T$, which contradicts our assumption.
	Since $L$ is CSA, every solvable subgroup of $L$ is abelian, see \cite[Lemma~1.4]{Sela:2001gb}.
	According to Theorems~\ref{res: shortening action - surface} and \ref{res: shortening action - axial}, there exist $\ell \in \R_+^*$ and $\alpha$ in  $\mcg[0]{L,\Lambda}\subset \mcg[0]{L,S}$ such that 
	\begin{enumerate}
		\item \label{eqn: shortening morphism - axial / surface}
		$\dist{\alpha \circ \eta(u)o^+}{o^+} \leq \dist {\eta(u)o^+}{o^+} - \ell$, for all $u \in U_0$
		\item $\alpha \circ \eta(u) = \eta(u)$, for all $u \in U \setminus U_0$.
	\end{enumerate}
	Let us translate this fact to the action of $\Gamma_k$ on $X_k$.
	According to \autoref{res: lifting mcg to cover}, there exists $k_0 \in \N$ such that for every integer $k \geq k_0$, there is an automorphism $\hat \alpha_k \in \aut{\hat L_k}$ preserving $\hat S_k$ that lifts $\alpha$, while $\lambda_\infty(\hat \alpha_k \circ \hat \eta_k, U)$ is uniformly bounded.
	Up to increasing the value of $k_0$, we can therefore assume that for every $k\geq k_0$ and $u \in U \setminus U_0$, we have $\hat \alpha_k \circ \hat\eta_k(u)  =\hat\eta_k(u)$, hence
	\begin{equation*}
		\hat \phi_k \circ \hat \alpha_k \circ \hat\eta_k(u) = \hat \phi_k \circ \hat\eta_k(u) = \phi_k(u).
	\end{equation*}
	On the other hand, for every $u \in U_0$, Inequality \ref{eqn: shortening morphism - axial / surface} yields
	\begin{equation*}
		\dist{\hat \phi_k\circ \hat \alpha_k \circ \hat\eta_k(u)o^+_k}{o^+_k} \leq \dist {\phi_k (u)o^+_k}{o^+_k} - \frac \ell{\epsilon_k} + o\left(\frac 1{\epsilon_k}\right),\ \oas.
	\end{equation*}
	Since $U$ is finite, we have
	\begin{align*}
		\lambda^+_1\left(\hat \phi_k\circ \hat \alpha_k \circ \hat \eta_k, U \right)
		& \leq \lambda^+_1\left(\hat \phi_k\circ \hat \alpha_k \circ \hat \eta_k, U, o^+_k\right) \\
		& \leq \lambda^+_1\left(\phi_k, U, o^+_k\right)- \frac{\ell |U_0|}{\epsilon_k} + o\left(\frac 1{\epsilon_k}\right).
	\end{align*}
	Recall that $o^+_k$ (almost) minimizes the restricted energy -- see Inequality (\ref{eqn: basepoint oplus}). Hence we get
	\begin{equation*}
		\lambda^+_1\left(\hat \phi_k\circ\hat  \alpha_k \circ \hat \eta_k, U\right)
		\leq \lambda^+_1\left(\phi_k, U\right) - \frac{\ell |U_0|}{\epsilon_k} + o\left(\frac 1{\epsilon_k}\right).
	\end{equation*}
	Recall that $\card{U_0} \geq 1$.
	Hence the result follows from the previous inequality.
\end{proof}

%------------------------------------------------------------------------------------
%
\subsubsection{Peripheral component}
%
%------------------------------------------------------------------------------------
\label{sec: shortening peripheral}

We now focus on the peripheral components.
This case is new compared to the situations handled in the literature.
Our goal in this section is to prove the following statement.

\begin{theo}
\label{res: shortening peripheral - main theo}
	Suppose that $\rho > 0$.
	Let $c \in \mathcal C$.
	Let $v_c$ be the vertex of $S_\Lambda$ corresponding to $T \cap \bar P(c)$
	Assume that one of the following holds:
	\begin{nenumerate}[label=(\Alph*)]
		\item \label{enu: shortening peripheral - orbit hyp}
		either the link of $v_c$ contains at least two $L_c$-orbits, or
		\item \label{enu: shortening peripheral - index hyp}
		the pointwise stabilizer of $\bar P(c)$ has infinite index in $L_c$,
	\end{nenumerate}
	Then there is $\ell \in \R_+^*$ such that for every sufficiently large $k \in \N$, there exist $\alpha_k \in \mcg[0]{L,\Lambda}$ and $\hat \alpha_k \in \aut{\hat L_k}$ preserving $\hat S_k$ such that $\zeta_k \circ \hat \alpha_k = \alpha_k \circ \zeta_k$ and 
		\begin{equation*}
			\lambda^+_1\left(\hat \phi_k \circ \hat \alpha_k \circ \hat \eta_k, U\right) \leq \lambda^+_1\left(\phi_k,U\right) - \frac \ell{\epsilon_k} \ \oas
		\end{equation*}
	Moreover, in Case (A), $\alpha_k$ is a Dehn twist over an edge of $S$ adjacent to $v$, while in Case (B), $\alpha_k$ is a generalized Dehn twist.
\end{theo}

The proof of the statement is quite long and somewhat technical.
Therefore we break it in several parts. 
A summary of the proof is given at the end of this section.
For this proof we actually do not need to work with the complete splitting $S$ of $L$ with respect to $T$.
We will only consider the skeleton $S_\Lambda$ of the graph of action decomposition $\Lambda$ of $T$ given by \autoref{res: final decomposition tree}.
Let us start with some notations.

Recall that $o^+ = \limo o^+_k$ is a base point in $T$ such that $o^+_k$ lies in the thick part of $X^+_k$ of $X_k$ as (almost) minimizes the restricted energy $\lambda_+(\phi_k,U)$ of $\phi_k$.
Consequently $o^+$ belongs to a transverse component of $T$.
In particular there is a vertex $v^+$ in $S_\Lambda$ which does not lie in the $L$-orbit of a peripheral vertex of $S_\Lambda$ and whose corresponding component $Y_{v^+} \subset T$ contains $o^+$.

\begin{rema*}
	This property is precisely the reason that motivated the definition of the restricted energy.
	It will greatly simplify the arguments to have the point $o^+$ outside of the peripheral subtree $P(c)$.
\end{rema*}

We start with a lemma to provide more informations about the action of peripheral subgroups.

\begin{lemm}
\label{res: shortening peripheral - shortening pairs}
	Suppose that $\rho > 0$.
	Let $c \in \mathcal C_\omega$.
	Let $x = \limo x_k$ and $x' = \limo x'_k$ be two points in $X_\omega$ such that $b_c(x) = b_c(x') = 0$.	
	Let $g \in G$ be such that $\eta(g)$ is a non-trivial element fixing $\bar P(c)$ pointwise.
	Let $n \in \N\setminus\{0\}$.
	There exists a sequence $(g_k)$ of elements of $\group g$ with the following properties.
	\begin{enumerate}
		\item \label{enu: shortening peripheral - shortening pairs - fix point}
		$\eta(g_k)$ pointwise fixes $\bar P(c)$ for every $k \in \N$.
		\item \label{enu: shortening peripheral - shortening pairs - shortening}
		$\limo\dist{\phi_k(g^n_k)x_k}{x'_k} = 0$.
		\item \label{enu: shortening peripheral - fixing deep points}
		If $y = \limo y_k$ is a point of $X_\omega$ such that $2b_c(y) \leq  - \dist x{x'}$, then for every $q \in \Z$,
		\begin{equation*}
			\limo \dist{\phi_k(g^q_k)y_k}{y_k} =0.
		\end{equation*}
		\item \label{enu: shortening peripheral - shortening pairs - non-decreasing}
		If $z = \limo z_k$ and $z' = \limo z'_k$ are two points of $X_\omega$ with $\dist z{z'} \geq \dist x{x'}$ and $b_c(z) = b_c(z') = 0$, then for every $q \in \Z$,
		\begin{equation*}
			\limo \dist{\phi_k(g^q_k)z_k}{z'_k} \leq \dist z{z'}.
		\end{equation*}
	\end{enumerate}
\end{lemm}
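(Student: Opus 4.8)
The plan is to realise $g_k$ as a suitable power $g^{m_k}$ of $g$, chosen so that $\phi_k(g_k^n)$ performs, up to a negligible error, the rotation about $c_k$ that carries $x_k$ to $x'_k$. Write $c = [c_k]$. Since $\eta(g)$ is a non‑trivial element of $L_c$, one has $\phi_k(g) \in \stab{c_k}\setminus\{1\}$ \oas, and \autoref{res: tree-graded - fixed point set} together with the hypothesis that $\eta(g)$ fixes $\bar P(c)$ pointwise forces $\fix{\eta(g)} = \bar P(c)$. For each $k$, let $\beta_k$ be the unique representative in $(-\Theta_k/2,\Theta_k/2]$ of the angle $\theta_k(\phi_k(g)x,x)$ of $\phi_k(g)$ provided by Axiom~\ref{enu: family axioms - conical - 2}; by the last clause of that axiom $\beta_k \neq 0$ \oas, and the cocycle identity gives $\theta_k(\phi_k(g^m)x,x) = m\beta_k$ for all $m \in \Z$. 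Arguing as in the proof of \autoref{res: tree-graded - fixed point set} (i.e. feeding the equality $\fix{\eta(g)} = \bar P(c,0)$ into \autoref{res: limit tree -  asymp angle} at level $r = 0$) I would first record $\ln\abs{\beta_k} \leq -\rho_k/\epsilon_k + o(1/\epsilon_k)$ \oas. Setting $D = \dist x{x'}$, \autoref{res: limit tree -  asymp angle}\ref{enu: limit tree -  asymp angle - direct} likewise gives, when $D < 2\rho$, the bound $\ln\abs{\tilde\theta_k(x_k,x'_k)} \leq -\rho_k/\epsilon_k + D/(2\epsilon_k) + o(1/\epsilon_k)$, so that the angular gap from $x_k$ to $x'_k$ is exponentially large compared with $\abs{\beta_k}$ as soon as $D > 0$; moreover $\tilde\theta_k(x_k,x'_k) \to 0$ in that case.

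Next I would dispose of the degenerate configurations. If $x = x'$ I take $g_k = 1$, and if $D = 2\rho$ (which forces $\rho < \infty$) I again take $g_k = 1$: all four assertions then follow from $\limo\dist{w_k}{w'_k} = \dist w{w'}$ together with the crude triangle inequality $\dist{\phi_k(g_k^q)w_k}{w'_k} \leq \dist{c_k}{w_k} + \dist{c_k}{w'_k}$ (valid since $\phi_k(g_k^q)$ fixes $c_k$), which pins every relevant limit to be $\leq 2\rho$. In the remaining case $0 < D < 2\rho$ I set $g_k = g^{m_k}$ with $m_k$ a nearest integer to $-\tilde\theta_k(x_k,x'_k)/(n\beta_k)$. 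Then $\eta(g_k) = \eta(g)^{m_k}$ still fixes $\bar P(c)$ pointwise, which is \ref{enu: shortening peripheral - shortening pairs - fix point}; and for every $q \in \Z$ the element $\phi_k(g_k^q)$ fixes $c_k$, hence preserves each sphere about $c_k$ and (for $k$ large) rotates by the angle $qm_k\beta_k$. By construction $\abs{\tilde\theta_k(x_k,x'_k) + nm_k\beta_k} \leq \frac n2\abs{\beta_k}$, whence $\abs{m_k\beta_k} \leq \frac1n\abs{\tilde\theta_k(x_k,x'_k)} + \frac12\abs{\beta_k}$, and combining this with the two bounds from the first paragraph:
\begin{equation*}
	\ln\abs{qm_k\beta_k} \leq -\frac{\rho_k}{\epsilon_k} + \frac{D}{2\epsilon_k} + o\left(\frac1{\epsilon_k}\right)
	\quad\text{and}\quad
	\ln\abs{\tilde\theta_k(x_k,x'_k) + nm_k\beta_k} \leq -\frac{\rho_k}{\epsilon_k} + o\left(\frac1{\epsilon_k}\right).
\end{equation*}

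Then I would verify \ref{enu: shortening peripheral - shortening pairs - shortening}, \ref{enu: shortening peripheral - fixing deep points} and \ref{enu: shortening peripheral - shortening pairs - non-decreasing}. In each case, for a limit point $w = \limo w_k$ with $b_c(w) \leq 0$ I pick a representative $w_k \in \mathring B(c_k,\rho_k)$ (possible by \autoref{res: balls as limit of balls}) and compute angles with the cocycle, using $\theta_k(\phi_k(g_k^q)w_k, w'_k) = qm_k\beta_k + \theta_k(w_k, w'_k)$. For \ref{enu: shortening peripheral - shortening pairs - shortening}: the points $\phi_k(g_k^n)x_k$ and $x'_k$ both lie at level $b_c = 0$, the angle between them is $\tilde\theta_k(x_k,x'_k) + nm_k\beta_k$ whose logarithm is $\leq -\rho_k/\epsilon_k + o(1/\epsilon_k)$, so \autoref{res: limit tree -  asymp angle}\ref{enu: limit tree -  asymp angle - reverse} with $r = 0$ and $\ell \downarrow 0$ forces $\limo\dist{\phi_k(g_k^n)x_k}{x'_k} = 0$. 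For \ref{enu: shortening peripheral - fixing deep points}: given $y = \limo y_k$ with $r := -b_c(y) \geq D/2$, the case $y = c$ is trivial, and otherwise $r < \rho$, $y_k$ sits at level $-r$, and $\ln\abs{qm_k\beta_k} \leq -(\rho_k - r)/\epsilon_k + (D-2r)/(2\epsilon_k) + o(1/\epsilon_k)$ with $D - 2r \leq 0$, so \autoref{res: limit tree -  asymp angle}\ref{enu: limit tree -  asymp angle - reverse} (with this $r$, $\ell \downarrow 0$) gives $\limo\dist{\phi_k(g_k^q)y_k}{y_k} = 0$. For \ref{enu: shortening peripheral - shortening pairs - non-decreasing}: given $z,z'$ with $E := \dist z{z'} \geq D$ and $b_c(z)=b_c(z')=0$, the case $E = 2\rho$ follows from the triangle inequality, and for $E < 2\rho$ the angle between $\phi_k(g_k^q)z_k$ and $z'_k$ is $qm_k\beta_k + \tilde\theta_k(z_k,z'_k)$, whose modulus has logarithm $\leq -\rho_k/\epsilon_k + E/(2\epsilon_k) + o(1/\epsilon_k)$ (using $D \leq E$ and the upper bound of \autoref{res: limit tree -  asymp angle}\ref{enu: limit tree -  asymp angle - direct} for $\tilde\theta_k(z_k,z'_k)$), so \autoref{res: limit tree -  asymp angle}\ref{enu: limit tree -  asymp angle - reverse} with $r = 0$, $\ell = E$ yields $\limo\dist{\phi_k(g_k^q)z_k}{z'_k} \leq E$.

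The main obstacle is the content of the first paragraph: pinning down the precise exponential rate $\abs{\beta_k} = \exp(-\rho_k/\epsilon_k + o(1/\epsilon_k))$ of the rotation angle of $\phi_k(g)$ from the identity $\fix{\eta(g)} = \bar P(c)$. This is exactly what makes one integer multiple of $\beta_k$ approximate the much larger angle $\tilde\theta_k(x_k,x'_k)$ to within an exponentially negligible error, and what makes the hypothesis $2b_c(y) \leq -\dist x{x'}$ of \ref{enu: shortening peripheral - fixing deep points} the sharp threshold below which a point is not displaced; once this rate is secured, the rest is bookkeeping with Landau symbols and the asymptotic law of cosines of \autoref{res: limit tree -  asymp angle}, plus the degenerate cases handled by the crude triangle inequality.
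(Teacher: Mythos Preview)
Your approach is essentially the paper's: choose $g_k = g^{m_k}$ so that the $n$-th power of the rotation $\phi_k(g)$ nearly carries $x_k$ to $x'_k$, then read everything off the asymptotic law of cosines (\autoref{res: limit tree -  asymp angle}). One organizational difference: for Point~\ref{enu: shortening peripheral - shortening pairs - non-decreasing} you compute the angle between $\phi_k(g_k^q)z_k$ and $z'_k$ directly, whereas the paper routes through the midpoint of $\geo z{z'}$ and the already established Point~\ref{enu: shortening peripheral - fixing deep points} via the triangle inequality; both are valid.

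There is, however, a genuine gap in your handling of the boundary case $D = \dist x{x'} = 2\rho$ (with $\rho < \infty$). Taking $g_k = 1$ there makes $\phi_k(g_k^n) = 1$, so
\begin{equation*}
	\limo\dist{\phi_k(g_k^n)x_k}{x'_k} = \dist x{x'} = 2\rho \neq 0,
\end{equation*}
and Point~\ref{enu: shortening peripheral - shortening pairs - shortening} fails outright; the sentence ``pins every relevant limit to be $\leq 2\rho$'' is not what that point asks for. The fix is simply to drop this special case. Your main-case choice of $m_k$ (nearest integer to $-\tilde\theta_k(x_k,x'_k)/(n\beta_k)$) and the resulting bound $\abs{\tilde\theta_k(x_k,x'_k) + nm_k\beta_k} \leq \tfrac n2\abs{\beta_k}$ use only $\beta_k \neq 0$, not $D < 2\rho$; hence Point~\ref{enu: shortening peripheral - shortening pairs - shortening} goes through via \autoref{res: limit tree -  asymp angle}\ref{enu: limit tree -  asymp angle - reverse} (with $r = 0$ and $\ell \downarrow 0$) for every $D$. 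The only place $D < 2\rho$ genuinely enters your argument is the bound on $\abs{qm_k\beta_k}$ feeding into Points~\ref{enu: shortening peripheral - fixing deep points} and~\ref{enu: shortening peripheral - shortening pairs - non-decreasing}; but when $D = 2\rho$ those two points are automatic anyway (the hypothesis of \ref{enu: shortening peripheral - fixing deep points} forces $y = c$, and in \ref{enu: shortening peripheral - shortening pairs - non-decreasing} one necessarily has $E = 2\rho$ and the triangle inequality through $c_k$ suffices, exactly as you observe).
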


\begin{proof}
	By definition $c = [c_k]$ for some sequence $(c_k) \in \Pi_\omega \mathcal C_k$ where $\phi_k(g)$ fixes $c_k$, \oas.
	Without loss of generality, we can assume that $x_k, x'_k \in \mathring B(c_k, \rho_k)$ \oas.
	For every $k \in \N$, we denote by 
	\begin{equation*}
		\theta_k \colon \mathring B(c_k, \rho_k) \times \mathring B(c_k, \rho_k) \to \R / \Theta_k\Z
	\end{equation*}
	the angle cocycle provided by Axiom~\ref{enu: family axioms - conical - 2}.
	Since $\theta_k$ is a $\stab{c_k}$-invariant cocycle, the quantity $\theta_k(\phi_k(g)x_k, x_k)$ does not depend on the point $x_k$.
	For simplicity we write 
	\begin{equation*}
		\alpha_k = \tilde\theta_k\left(\phi_k(g)x_k, x_k\right).
	\end{equation*}
	for the unique representative of $\theta_k\left(\phi_k(g)x_k, x_k\right)$ in $(-\Theta_k/2, \Theta_k/2]$.
	By assumption $\eta(g) \neq 1$, hence $\phi_k(g)$ is non-trivial \oas.
	Consequently $\alpha_k \neq 0$ \oas.
	On the other hand, $\eta(g)$ fixes pointwise $\bar P(c)$.
	It follows from \autoref{res: limit tree -  asymp angle} that $(\alpha_k)$ converges to zero.
	More precisely we have
	\begin{equation*}
		\ln \abs{\alpha_k} \leq - \frac {\rho_k}{\epsilon_k} + o \left( \frac 1{\epsilon_k} \right).
	\end{equation*}
	Hence there exists a sequence of integers $(m_k)$ such that 
	\begin{equation}
	\label{eqn: shortening peripheral - shortening pairs}
		\abs{\tilde \theta_k\left(\phi_k\left(g^{nm_k}\right)x_k, x'_k\right)} \leq \frac n2 \abs{\alpha_k}, \ \text{\oas}.
	\end{equation}
	We let $g_k = g^{m_k}$, for every $k \in \N$.
	Let us now check the properties of $(g_k)$.
	Since $\eta(g)$ pointwise fixes $\bar P(c)$ so does $\eta(g_k)$, hence Point~\ref{enu: shortening peripheral - shortening pairs - fix point} holds.
	It follows from our choice of $(m_k)$ that 
	\begin{equation*}
		\ln\abs{\tilde \theta_k\left(\phi_k\left(g_k^n\right)x_k, x'_k\right)}
		\leq \ln\abs{\alpha_k} + \ln \left( \frac n2\right)
		\leq - \frac {\rho_k}{\epsilon_k} + o \left( \frac 1{\epsilon_k} \right).
	\end{equation*}
	Point~\ref{enu: shortening peripheral - shortening pairs - shortening} now follows from \autoref{res: limit tree -  asymp angle}.
	For the rest of the proof we will assume that $m_k \neq 0$ \oas.
	Indeed otherwise the last two points are straightforward.
			
	Consider now a point $y = \limo y_k$ and let $b_c(y) = -s$.
	If $s = \rho$, then $\rho$ is finite and $y$ coincide with $c = \limo c_k$.
	In this case Point~\ref{enu: shortening peripheral - fixing deep points} follows from the fact that $g_k$ fixes $c_k$.
	Suppose now that $s < \rho$.
	Without loss of generality we can assume that $y_k \in \mathring B(c_k,\rho_k)$ \oas.
	It follows from our assumption that $ \dist x{x'} \leq 2s < 2\rho$.
	Hence \autoref{res: limit tree -  asymp angle} tells us that $\tilde \theta_k\left(x_k, x'_k\right)$ converges to zero and 
	\begin{equation*}
		\ln\abs{\tilde \theta_k\left(x_k, x'_k\right)}
		\leq - \frac {\rho_k}{\epsilon_k} + \frac {\dist x{x'}}{2\epsilon_k} +  o \left( \frac 1{\epsilon_k} \right).
	\end{equation*}
	Note that 
	\begin{equation*}
		n\, \theta_k\left(\phi_k(g_k)y_k, y_k\right)
		= \theta_k\left(\phi_k(g^{nm_k})x_k, x'_k\right) +  \theta_k\left(x'_k, x_k\right)
	\end{equation*}
	It follows from our choice of $(m_k)$ that representatives in $(-\Theta_k/2, \Theta_k/2]$ of both terms on the right hand side of the previous equation converge to zero.
	Consequently the representative of $ \theta_k\left(\phi_k(g_k)y_k, y_k\right)$ in the same interval also converges to zero and satisfies
	\begin{align*}
		\abs{\tilde \theta_k\left(\phi_k(g_k)y_k, y_k\right)} 
		& \leq \frac 1n  \abs{\tilde \theta_k\left(\phi_k(g^{nm_k})x_k, x'_k\right) } + \frac 1n\abs {\tilde\theta_k\left(x'_k, x_k\right)} \\
		& \leq  \frac 12 \abs{\alpha_k}   + \frac 1n \abs {\tilde\theta_k\left(x'_k, x_k\right)}.
	\end{align*}
	Recall that if $a, b \in \R_+^*$, then $\ln(a+b) \leq \max \{\ln a, \ln b\} + \ln 2$.
	It follows from our previous observations that for every $q \in \Z$,
	\begin{equation*}
		\ln\abs{\tilde \theta_k\left(\phi_k(g^q_k)y_k, y_k\right)}
		\leq - \frac {\rho_k}{\epsilon_k} + \frac {\dist x{x'}}{2\epsilon_k} +  o \left( \frac 1{\epsilon_k} \right)
		\leq - \frac {\rho_k-s}{\epsilon_k} + o \left( \frac 1{\epsilon_k} \right)
	\end{equation*}
	Using again \autoref{res: limit tree -  asymp angle} we get that $\limo \dist{\phi_k(g^q_k)y_k}{y_k} =0$.
	
	Let $z = \limo z_k$ and $z' = \limo z_k$ in $X_\omega$ such that $\dist z{z'} \geq \dist x{x'}$ and $b_c(z) = b_c(z') = 0$.
	Let $q \in \Z$.
	We denote by $y = \limo y_k$ the midpoint of $\geo z{z'}$.
	Note that 
	\begin{equation*}
		2b_c(y) \leq - \dist z{z'} \leq  - \dist x{x'}.
	\end{equation*}
	It follows from the previous point that $\limo \dist{\phi_k(g^q_k)y_k}{y_k} =0$.
	Applying the triangle inequality we get
	\begin{align*}
		\dist{\phi_k(g^q_k)z_k}{z'_k} & \leq \dist{\phi_k(g^q_k)z_k}{\phi_k(g^q_k)y_k} +  \dist{\phi_k(g^q_k)y_k}{y_k}  + \dist{y_k}{z'_k} \\
		& \leq \dist{z_k}{y_k} +  \dist{\phi_k(g^q_k)y_k}{y_k}  + \dist{y_k}{z'_k}.
	\end{align*}
	Passing to the limit we get $\limo \dist{\phi_k(g^q_k)z_k}{z'_k} \leq \dist zy + \dist y{z'} \leq \dist z{z'}$, which completes the proof of Point~\ref{enu: shortening peripheral - shortening pairs - non-decreasing}.
\end{proof}

For the remainder of this section, we assume that $\rho > 0$.
We also fix a cone point $c \in \mathcal C$ and denote by $v_c$ the vertex of $S_\Lambda$ corresponding to the peripheral component $\bar P(c)$.
\begin{nota}
\label{not: peripheral subgroup of cone point}
	We write $N_c$ for the pointwise stabilizer of $\bar P(c)$.
	It is also the stabilizer of every edge of $S$ adjacent to $v_c$.
	In addition, we denote by $K_c$ the peripheral subgroup of $v_c$ (see \autoref{sec: modular group}).
	Concretely it is the pre-image of the torsion part of the finitely generated abelian group $Q_c = L_c / N_c$.
\end{nota}
	
\paragraph{Normal forms.}
We now define a suitable normal form for the elements of $L$ (\resp $\hat L_k$) which reflects their excursions in the peripheral subtree $\bar P(c)$.
Let $\bar S$ be the tree obtained from $S_\Lambda$ by collapsing every edge which is not adjacent to a vertex in the $L$-orbit of $v_c$.
In particular, $\bar S$ is a collapse of $S(0)$.
We write $\bar v^+$ and $\bar v_c$ for the respective images of $v^+$ and $v_c$ in $\bar S$.
Similarly we define $\bar S_k$ as the tree obtained from $\hat S_k$ by collapsing every edge whose image in $S$ is collapsed by the map $S \to S_\Lambda \to \bar S$.
Note that $\hat S_k \to S$ yields an equivariant map $\bar S_k \to \bar S$ such that the induced map $\bar S_k / \hat L_k \to \bar S/L$ is an isomorphism.	

We now proceed as in \autoref{sec: graph of groups}.
We choose a lift $D_0$ of a maximal subtree in $\bar S/L$ which contains $\bar v^+$.
Up to replacing $c$ by a translate, we may assume that $\bar v_c$ belongs to $D_0$.
We now fix a subtree $D_1$ of $\bar S$ such that $D_0$ is contained in $D_1$ and the $L$-orbit of every edge in $\bar S$ contains exactly one element is $D_1$.
We write $V$ (\resp $E$) for the vertex set of $D_0$ (\resp the edge set of $D_1$).
We write $\mathbf F(E)$ for the free group
\begin{equation*}
	\mathbf F(E) = \group{E \mid e\bar e = 1, \ \forall e \in E}
\end{equation*}
and let
\begin{equation*}
	\mathbf L = \mathbf F(E) \freep \left( \freep_{v \in V} L_v\right).
\end{equation*}
Recall that for every $k \in \N$, the quotient $\bar S/L$ and $\bar S_k / \hat L_k$ are isomorphic.
Hence we can choose for each $k \in \N$ two subtrees $D_{k,0}$ and $D_{k,1}$ such that for every $k' > k$, for every $i \in \{0,1\}$, the map $\bar S_k \to \bar S_{k'}$ (\resp $\bar S_k \to \bar S$) induces an isomorphism from $D_{k,i}$ to $D_{k',i}$ (\resp $D_{k,i}$ to $D_i$).
We write $V_k$ (\resp $E_k$) for the vertex set of $D_{k,0}$ (\resp the edge set of $D_{k,1}$) and let
\begin{equation*}
	\mathbf L_k = \mathbf F(E_k) \freep \left( \freep_{v \in V_k} L_{k,v}\right),
\end{equation*}
where $\mathbf F(E_k)$ is defined in the same way as $\mathbf F(E)$.
Observe that the maps $(\hat L_k, \bar S_k) \to (\hat L_{k'}, \bar S_{k'})$ and $(\hat L_k, \bar S_k) \to (L,\bar S)$ induce morphisms $\mathbf L_k \to \mathbf L_{k'}$ and $\mathbf L_k \to \mathbf L$.
For every $\mathbf e \in E_0$ we fix an element $s(\mathbf e) \in L_0$ such that $s(\mathbf e)t(\mathbf e)$ belongs to $V_0$.
Such an element always exists as $D_{0,0} \subset \bar S_0$ is a lift of a maximal subtree of $\bar S_0/L_0$.x
If $\mathbf e$ is an edge in $E_k$ (\resp $E$), we now denote by $s(\mathbf e)$ the image in $\hat L_k$ (\resp $L$) of $s(\mathbf e_0)$ where $\mathbf e_0$ is the (unique) pre-image of $\mathbf e$ in $E_0$.
With these data we can define a map $\sigma_k \colon \mathbf L_k \to \hat L_k$ which extends the embedding of the vertex groups $\hat L_{k,v} \into \hat L_k$, for every $v \in V_k$, and such that $\sigma_k(\mathbf e) = s(\bar{\mathbf e})s(\mathbf e)^{-1}$, for every $\mathbf e \in E_k$.
We define $\sigma \colon \mathbf L \to L$ in the exact same way so that for all integers $k' > k$, the following diagram commutes.
\begin{center}
\begin{tikzpicture}
	\matrix (m) [matrix of math nodes, row sep=2em, column sep=2.5em, text height=1.5ex, text depth=0.25ex] 
	{ 
		\mathbf L_k	& \mathbf L_{k'} & \mathbf L \\
		\hat L_k	& \hat L_{k'} & L \\
	}; 
	\draw[>=stealth, ->] (m-1-1) -- (m-1-2);
	\draw[>=stealth, ->] (m-1-2) -- (m-1-3);
	
	\draw[>=stealth, ->] (m-2-1) -- (m-2-2);
	\draw[>=stealth, ->] (m-2-2) -- (m-2-3);
	
	\draw[>=stealth, ->] (m-1-1) -- (m-2-1) node[pos=0.5, left, anchor=east]{$\sigma_k$};
	\draw[>=stealth, ->] (m-1-2) -- (m-2-2) node[pos=0.5, left, anchor=east]{$\sigma_{k'}$};
	\draw[>=stealth, ->] (m-1-3) -- (m-2-3) node[pos=0.5, right, anchor=west]{$\sigma$};	
\end{tikzpicture}
\end{center}
As in \autoref{sec: graph of groups}, these maps $\sigma_k$ and $\sigma$ provide a normal form at $\bar v_+$ to write the elements of $L_k$ and $L$ respectively.

\paragraph{Turn at $c$.}
Given $g \in L$, the next step is to analyze the non-degenerate excursions of $\geo o{go}$ in the peripheral subtree $\bar P(c)$ and its translates.
To read this information on the normal form of $g$, we define a notion of turn.

\begin{defi}
\label{def: turn}
	A \emph{turn at $c$} is a triple $(\mathbf e, \mathbf g, \mathbf e')$ where $\mathbf g \in L_{c}$ while $\mathbf e$ and $\mathbf e'$ are edges in  $E$ such that $t(\mathbf e)$ and $\iota(\mathbf e')$ are in the $L$-orbit of $\bar v_c$.
	Such a turn is \emph{reduced} if the decorated path $\mathbf e\mathbf g\mathbf e'$ is reduced.
\end{defi}

In practice, we sometimes blur the distinction between a turn $(\mathbf e, \mathbf g, \mathbf e')$ and the corresponding word $\mathbf e\mathbf g\mathbf e'$ of $\mathbf L$.
Each edge $e$ in $\bar S$ has unique pre-image in $S_\Lambda$.
For simplicity we still denote by $p_e$ the corresponding attaching point of the graph of actions $\Lambda$.

\begin{defi}
\label{def: realization turn}
	The \emph{realization} of a turn $(\mathbf e, \mathbf g, \mathbf e')$ is the pair of points 
	\begin{equation*}
		\left(s(\mathbf e)p_{\mathbf e}, \mathbf gs(\bar {\mathbf e}')p_{\bar{\mathbf e}'}\right).
	\end{equation*}
\end{defi}

\begin{rema*}
	Recall that $s(\mathbf e)$ maps the endpoint $t(\mathbf e)$ to a vertex in $V$, here $\bar v_c$.
	Same with $s(\bar {\mathbf e}')$.
	It follows that the realization $(z,z')$ of $(\mathbf e, \mathbf g, \mathbf e')$ is an element of $b_c^{-1}(0) \times b_c^{-1}(0)$.
	In particular, $\dist z{z'} \leq 2\rho$.
\end{rema*}

\begin{lemm}
\label{res: characterization reduced turn}
	Let $(\mathbf e, \mathbf g, \mathbf e')$ be a turn at $c$ and $(z,z')$ its realization.
	The following are equivalent
	\begin{enumerate}
		\item \label{enu: characterization reduced turn - def}
		$(\mathbf e, \mathbf g, \mathbf e')$ is not reduced;
		\item \label{enu: characterization reduced turn - nc}
		$\mathbf e' = \bar {\mathbf e}$ and $\mathbf g$ belongs to $N_c$;
		\item \label{enu: characterization reduced turn - realization}
		$z = z'$.
	\end{enumerate}
\end{lemm}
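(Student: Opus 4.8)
The plan is to prove the cycle of implications \ref{enu: characterization reduced turn - def} $\Rightarrow$ \ref{enu: characterization reduced turn - nc} $\Rightarrow$ \ref{enu: characterization reduced turn - realization} $\Rightarrow$ \ref{enu: characterization reduced turn - def}. First I would unwind the definition of a reduced decorated path applied to the short path $\mathbf e \mathbf g \mathbf e'$: by the normal form conventions of \autoref{sec: graph of groups}, $(\mathbf e, \mathbf g, \mathbf e')$ fails to be reduced exactly when $\mathbf e' = \bar{\mathbf e}$ and the vertex decoration $\mathbf g \in L_c$ fixes the edge $s(\mathbf e)\mathbf e$ of $S$. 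So the content of \ref{enu: characterization reduced turn - def} $\Leftrightarrow$ \ref{enu: characterization reduced turn - nc} is that, for $\mathbf g \in L_c$, fixing the edge $s(\mathbf e)\mathbf e$ is the same as lying in $N_c$, the pointwise stabilizer of $\bar P(c)$. One inclusion is immediate: $N_c$ is the stabilizer of every edge of $S$ adjacent to $v_c$ (\autoref{not: peripheral subgroup of cone point}), so elements of $N_c$ fix $s(\mathbf e)\mathbf e$. Conversely, if $\mathbf g \in L_c$ fixes the edge $s(\mathbf e)\mathbf e$, then since this edge is adjacent to the peripheral vertex $s(\mathbf e)v_c = v_c$ (after the normalization that $s(\mathbf e)$ sends $t(\mathbf e)$ into $V$), $\mathbf g$ fixes an attaching point lying in $\bar P(c)\setminus P(c)$; by \autoref{res: tree-graded - fixed point set} the fixed-point set of a nontrivial element of $L_c$ is a subtree of the form $\bar P(c,r)$, and fixing a point at Busemann level $0$ forces $r = 0$, i.e. $\mathbf g$ fixes all of $\bar P(c)$, so $\mathbf g \in N_c$. (The case $\mathbf g = 1$ is trivially in $N_c$.) This settles \ref{enu: characterization reduced turn - def} $\Leftrightarrow$ \ref{enu: characterization reduced turn - nc}.

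Next I would prove \ref{enu: characterization reduced turn - nc} $\Rightarrow$ \ref{enu: characterization reduced turn - realization}. Assume $\mathbf e' = \bar{\mathbf e}$ and $\mathbf g \in N_c$. Then the realization is
\begin{equation*}
	\left(s(\mathbf e)p_{\mathbf e},\ \mathbf g s(\mathbf e) p_{\mathbf e}\right),
\end{equation*}
since $\bar{\mathbf e}' = \mathbf e$. The point $s(\mathbf e)p_{\mathbf e}$ is the attaching point of an edge adjacent to $v_c$, hence lies in $\bar P(c)$ at Busemann level $0$; as $\mathbf g \in N_c$ fixes $\bar P(c)$ pointwise, $\mathbf g s(\mathbf e)p_{\mathbf e} = s(\mathbf e)p_{\mathbf e}$, so $z = z'$.

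Finally, for \ref{enu: characterization reduced turn - realization} $\Rightarrow$ \ref{enu: characterization reduced turn - nc}, suppose $z = z'$, i.e. $s(\mathbf e)p_{\mathbf e} = \mathbf g s(\bar{\mathbf e}')p_{\bar{\mathbf e}'}$. Both $s(\mathbf e)p_{\mathbf e}$ and $s(\bar{\mathbf e}')p_{\bar{\mathbf e}'}$ are attaching points of edges of $S$ adjacent to $v_c$ (after the $s(\cdot)$ normalization sending $t(\mathbf e)$, $t(\mathbf e')$ into $V$), hence are vertices of the blown-up tree $S$ lying in the orbit-representative copy attached at $v_c$; more precisely they are the terminal vertices in $S$ of the (lifted) edges $s(\mathbf e)\mathbf e$ and $\mathbf g^{-1}s(\mathbf e)\mathbf e$ is forced by $z=z'$, so $\mathbf g$ sends the edge $s(\bar{\mathbf e}')\bar{\mathbf e}'$ to $s(\mathbf e)\mathbf e$. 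Since $\mathbf g$ fixes $v_c$, comparing the two edges in the quotient graph $\bar S/L$ — each $L$-orbit of edges of $\bar S$ has a unique representative in $D_1$, namely the $\mathbf e$'s in $E$ — forces $\mathbf e = \bar{\mathbf e}'$, i.e. $\mathbf e' = \bar{\mathbf e}$. Then $\mathbf g$ fixes the edge $s(\mathbf e)\mathbf e$, and by the equivalence established above, $\mathbf g \in N_c$. This closes the cycle and proves the lemma. The step I expect to require the most care is the passage from "$\mathbf g$ fixes an attaching point at Busemann level $0$" to "$\mathbf g \in N_c$", which rests essentially on the precise description of cone-point stabilizer fixed-point sets in \autoref{res: tree-graded - fixed point set}; and, secondarily, keeping the bookkeeping of the $s(\cdot)$ coset representatives and the orbit-uniqueness of edges in $D_1$ straight when deducing $\mathbf e' = \bar{\mathbf e}$.
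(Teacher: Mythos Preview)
Your cycle of implications matches the paper's, and the steps \ref{enu: characterization reduced turn - def}$\Leftrightarrow$\ref{enu: characterization reduced turn - nc} and \ref{enu: characterization reduced turn - nc}$\Rightarrow$\ref{enu: characterization reduced turn - realization} are fine (your appeal to \autoref{res: tree-graded - fixed point set} to show that an element of $L_c$ fixing a point at Busemann level $0$ lies in $N_c$ is exactly the content of \autoref{res: dual tree - edge stab}, which the paper uses implicitly).

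The gap is in \ref{enu: characterization reduced turn - realization}$\Rightarrow$\ref{enu: characterization reduced turn - nc}. The attaching points $z = s(\mathbf e)p_{\mathbf e}$ and $z' = \mathbf g\, s(\bar{\mathbf e}')p_{\bar{\mathbf e}'}$ are points of the $\R$-tree $T$, not vertices of the simplicial tree $S$; the sentence ``hence are vertices of the blown-up tree $S$'' is a category error, and you cannot directly read off an equality of edges from an equality of points in $T$. What you actually need is that the assignment sending an edge of $\bar S$ adjacent to $\bar v_c$ to its attaching point in $T\cap\bar P(c)$ is \emph{injective}, and this requires an argument. The paper supplies it by passing to $S(0)$ (observing that $\bar S$ can equally be obtained from $S(0)$ by collapsing all edges not adjacent to the orbit of the peripheral vertex) and using the graph of actions $\Lambda(0)$: for an edge $f$ of $S(0)$ with endpoints $v_c$ and a transverse vertex $w$, the attaching point is the unique point of $Y_w \cap \bar P(c)$; since distinct $0$-transverse components are disjoint (they are connected components of $T\setminus\bigcup_{c'\in\mathcal C}P(c')$), the equality $z=z'$ forces $s(\mathbf e)Y_v = \mathbf g\, s(\bar{\mathbf e}')Y_{v'}$ and hence $s(\mathbf e)\mathbf e = \mathbf g\, s(\bar{\mathbf e}')\bar{\mathbf e}'$ as edges of $S(0)$. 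From that point your orbit-uniqueness argument for $\mathbf e' = \bar{\mathbf e}$ and the conclusion $\mathbf g\in N_c$ go through.
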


\begin{proof}
	By construction $s(\mathbf e)\mathbf e$ is an edge of $\bar S$ adjacent to $\bar v_c$.
	Hence its stabilizer is $N_c$.
	In particular, $N_c$ fixes $s(\mathbf e)p_{\mathbf e}$.
	The implications \ref{enu: characterization reduced turn - def} $\Rightarrow$ \ref{enu: characterization reduced turn - nc} and \ref{enu: characterization reduced turn - nc} $\Rightarrow$ \ref{enu: characterization reduced turn - realization} follow from these observation.
	Assume now that $z = z'$.
	We built $\bar S$ by collapsing certain edges of $S_\Lambda$.
	Alternatively it can be obtained from $S(0)$ by collapsing every edge which is not adjacent to a vertex in the orbit of the peripheral component $T \cap \bar P(c)$.
	We make an abuse of notation and still denote by $\mathbf e$ and $\mathbf e'$ their (unique) pre-images in $S(0)$.
	Let $v$ (\resp $v'$) be the start (\resp end) of $\mathbf e$ (\resp $\mathbf e'$) in $S(0)$.
	Note that $v$ and $v'$ are transverse vertices.
	The attaching point $z = s(\mathbf e)p_{\mathbf e}$ and $z' = \mathbf g s(\bar {\mathbf e}')p_{\bar{\mathbf e}'}$ belongs to the transverse subtrees $s(\mathbf e)Y_v$ and $\mathbf g s(\bar {\mathbf e}')Y_{v'}$ of the graph of actions $\Lambda(0)$.
	Since $z = z'$, the subtrees $s(\mathbf e)Y_v$ and $\mathbf g s(\bar {\mathbf e}')Y_{v'}$ are equal, i.e. $s(\mathbf e)v = \mathbf g s(\bar {\mathbf e}')v'$.
	By definition of a turn, $\mathbf g$ belongs to $L_c$.
	Consequently $s(\mathbf e)\mathbf e = \mathbf g s(\bar {\mathbf e}')\bar {\mathbf e}'$.
	It follows that $\mathbf e' = \bar {\mathbf e}$ and $\mathbf g$ fixes $s(\mathbf e)\mathbf e$.
	In other words $(\mathbf e, \mathbf g, \mathbf e')$ is not reduced, which proves \ref{enu: characterization reduced turn - realization} $\Rightarrow$ \ref{enu: characterization reduced turn - def}.
\end{proof}

We now define a finite set of turns $\mathscr T$ as follows.
First, we choose for each generator $u \in U$ a preferred normal form $\mathbf u$ based at $\bar v^+$ of $\eta(u)$:
\begin{equation*}
	\mathbf u = \mathbf g_0 \mathbf e_0 \mathbf g_1 \mathbf e_1 \dots \mathbf g_{m-1}\mathbf e_m \mathbf g_m
\end{equation*}
We write $\mathbf U$ for the set of normal forms obtained in this way.
Let $\mathscr T$ be the set of all turns $(\mathbf e, \mathbf g, \mathbf e')$ at $c$ such that $\mathbf e\mathbf g\mathbf e'$ is a subword of some $\mathbf u \in \mathbf U$.
We denote by $F$ the set of all realizations of the turns in $\mathscr T$ and let 
\begin{equation*}
	F_0 = \set{(y,y') \in F}{\dist y{y'} \leq \dist z{z'}, \ \forall (z,z') \in F}
\end{equation*}
which is the set of all ``shortest'' realizations.

\begin{defi}
\label{def: minimal turn}
	A turn  $(\mathbf e, \mathbf g, \mathbf e') \in \mathscr T$ is \emph{minimal} if its realization belongs to $F_0$.
	We denote by $\mathscr T_0$ the set of all minimal turns.
\end{defi}

Recall that $N_c$ stands for the pointwise stabilizer of $\bar P(c)$ while $K_c$ is the peripheral subgroup of $v_c$ (see \autoref{not: peripheral subgroup of cone point})

\begin{prop}
\label{res: shortening - dichotomy turn}
	Suppose that for every turn $(\mathbf e,\mathbf g,\mathbf e') \in \mathscr T_0$, the element $\mathbf g$ belongs $K_c$ and $\mathbf e' = \bar {\mathbf e}$.
	Then the link of $v_c$ in $S_\Lambda$ consists of a single $L_c$-orbit. 
	Moreover the pointwise stabilizer $N_c$ of $\bar P(c)$ has finite index in $L_c$.
\end{prop}

\begin{proof}
	The first step of the proof is to prove that $\mathscr T = \mathscr T_0$. 
	Let $(\mathbf e,\mathbf g,\mathbf e') \in \mathscr T_0$ be a turn at $c$.
	By assumption, we have $\mathbf e' = \bar {\mathbf e}$.
	Nevertheless, $\mathbf e \mathbf g \mathbf e'$ is a subword of a normal form $\mathbf u \in \mathbf U$.
	In particular, $(\mathbf e,\mathbf g,\mathbf e')$ is not reduced, hence $\mathbf g$ does not belong to $N_c$.
	By \autoref{res: tree-graded - fixed point set}, there exists a finite number $r \in (0, \rho]$ such that $\fix{\mathbf g} = \bar P(c,r)$.
	We claim that $r = \rho$ (in particular $\rho$ is finite).
	Assume on the contrary that $r < \rho$.
	In view of \autoref{res: coro root stability}, the image of $\mathbf g$ in $Q_c = L_c / N_c$ has infinite order.
	Recall that $K_c$ is the pre-image of the torsion part of $Q_c$.
	Hence $\mathbf g$ does not belongs to $K_c$.
	This contradicts our assumption and completes the proof of our first claim.
	
	We now claim that $\dist y{y'} = 2\rho$, where $(y,y')$ stands for the realization of $(\mathbf e,\mathbf g,\mathbf e')$.
	Since $\mathbf e' = \bar {\mathbf e}$, we have $p_{\mathbf e} = p_{\mathbf e'}$.
	According to our previous claim $\fix{\mathbf g}$ is exactly $\bar P(c,\rho)$, which is reduced to a single point $z_c$ satisfying $\dist {z_c}{s(\mathbf e)p_{\mathbf e}} = \rho$.
	Hence 
	\begin{equation*}
		\dist {s(\mathbf e)p_{\mathbf e}}{\mathbf g s(\mathbf e)p_{\mathbf e}} = 2\rho,
	\end{equation*}
	which completes the proof of our second claim.
	As we noticed earlier, $\dist z{z'} \leq 2 \rho$, for every realization $(z,z') \in F$.
	We get from our second claim that $F_0 = F$ and thus $\mathscr T_0 = \mathscr T$, as we announced.
	
	According to our assumption, for every turn $(\mathbf e,\mathbf g,\mathbf e') \in\mathscr T$ we have $\mathbf e' = \bar {\mathbf e}$ and $\mathbf g \in K_c$.
	Recall that $\bar v^+$ is not a translate of $\bar v_c$.
	Hence every element $h\in L$ can be represented by a (non necessarily reduced) decorated circuit at $\bar v^+$
	\begin{equation}
	\label{eqn: shortening - dichotomy turn}
		\mathbf h = \mathbf g_0 \mathbf e_1 \mathbf g_1 \mathbf e_2 \dots \mathbf g_{m-1}\mathbf e_m \mathbf g_m
	\end{equation}
	such that every subword $\mathbf e_i \mathbf g_i \mathbf e_{i+1}$ corresponding to a turn at $c$ belongs to $\mathscr T$. 
	Those turns always satisfy $\mathbf e_{i+1} = \bar {\mathbf e}_i$, which forces the link of $\bar v_c$ in $\bar S$ to be reduced to a single $L_c$-orbit.
	Hence so is the link of $v_c$ in $S_\Lambda$.
	Recall that the stabilizer of every edge of $\bar S$ adjacent to $\bar v_c$ is $N_c$.
	Consequently there is an  epimorphism from $L$ onto $Q_c = L_c/N_c$ obtained by ``killing'' every vertex group distinct from $\bar v_c$, and every stable letter in the graph of groups decomposition induced by $\bar S$.
	However for every turn $(\mathbf e,\mathbf g,\mathbf e') \in\mathscr T$, the element $\mathbf g$ belongs to $K_c$.
	It follows from the decomposition (\ref{eqn: shortening - dichotomy turn}) that the morphism $L \onto Q_c$ maps $L$ onto the torsion part of $Q_c$.
	Since $Q_c = L_c/N_c$ is finitely generated, $Q_c$ is finite.
	Consequently $N_c$ has finite index in $L_c$.
\end{proof}

Recall that \autoref{res: shortening peripheral - main theo} assumes that either the link of $v_c$ contains two $L_c$-orbits or $N_c$ has infinite index in $L_c$.
In view of \autoref{res: shortening - dichotomy turn}, there is a turn $(\mathbf e,\mathbf g,\mathbf e') \in \mathscr T_0$ such that either  $\mathbf e' \neq \bar {\mathbf e}$ or  $\mathbf g \notin K_c$.
\autoref{res: shortening peripheral - dehn twist} and \autoref{res: shortening peripheral - peripheral auto} handle these two cases respectively.

% 2024-08-17 READ UNTIL HERE

\begin{prop}[Shortening by Dehn twist]
\label{res: shortening peripheral - dehn twist}
	Assume that  there exists a turn $(\mathbf e,\mathbf g,\mathbf e') \in \mathscr T_0$  such that $\mathbf e' \neq \bar {\mathbf e}$.
	Then there is $\ell \in \R_+^*$ with the following property.
	For every sufficiently large $k \in \N$, there is a Dehn twist $\alpha_k \in \mcg[0]{L,\Lambda}$ over an edge of $S_\Lambda$ adjacent to $v$, and $\hat \alpha_k \in \aut{\hat L_k}$ preserving $\hat S_k$ such that $\zeta_k \circ \hat \alpha_k = \alpha_k \circ \zeta_k$ and 
		\begin{equation*}
			\lambda^+_1\left(\hat \phi_k \circ \hat \alpha_k \circ \hat \eta_k, U\right) \leq \lambda^+_1\left(\phi_k,U\right) - \frac \ell{\epsilon_k} \ \oas
		\end{equation*}
\end{prop}

\begin{proof}
	We denote by $(y,y')$ the realization of the turn  $(\mathbf e,\mathbf g,\mathbf e') $ and write $y = \limo y_k$ and $y' = \limo y'_k$.
	Fix $b \in G$ such that $\eta(b)$ is a non-trivial element of $N_c$.
	According to \autoref{res: shortening peripheral - shortening pairs}, there exists a sequence $(b_k)$ of elements of $\group b$ with the following properties:
	\begin{enumerate}
		\item $\eta(b_k)$ belongs to $N_c$, for every $k \in \N$.
		\item $\limo\dist{y_k} {\phi_k(b_k)y'_k}= 0$.
		\item if $z = \limo z_k$ and $z' = \limo z'_k$ are two points of $T$ such that $(z,z') \in F$, then for every $q \in \Z$,
		\begin{equation*}
			\limo \dist{z_k} {\phi_k(b_k)^qz'_k}\leq \dist z{z'}.
		\end{equation*}
	\end{enumerate}
	Recall that $V_k$ and $E_k$ are respectively the vertex set of $D_{k,0}$ and the edge set of $D_{k,1}$.
	We make an abuse of notation and still denote by $\mathbf e$ and $\mathbf e'$ the pre-images of $\mathbf e$ and $\mathbf e'$ in $E_k$.
	Similarly we still write $\bar v_c$ for the pre-image of $\bar v_c$ in $V_k$ and  $\hat L_{k,c}$ for the vertex group $\hat L_{k, \bar v_c}$.
	It follows from the definition of strong covers that for every sufficiently large $k \in \N$, the element $\hat \eta_k(b)$ belongs to 
	\begin{equation*}
		s\left(\mathbf e\right) \hat L_{k, \mathbf e} s\left(\mathbf e\right)^{-1} \cap s\left(\bar {\mathbf e}'\right) \hat L_{k, \bar {\mathbf e}'} s\left(\bar {\mathbf e}'\right)^{-1}
		\subset \hat L_{k,c}.
	\end{equation*}
	Hence so does $\hat \eta_k(b_k)$.
	For such an integer $k$,  we define an isomorphism $\mathbf L_k \to \mathbf L_k$, which fixes every edge in $E_k \setminus\{ \mathbf e, \bar{\mathbf e}\}$ as well as the factors $\hat L_{k,\bar v}$ for every $\bar v \in V_k$ and maps $\mathbf e$ to $\mathbf e\hat \eta_k(b_k)$.
	This morphisms induces a Dehn twist $\hat \alpha_k \in \aut{\hat L_k}$.
	The Dehn twists $\alpha_k \in \mcg[0]{L,\Lambda}$ are defined in a similar way so that the following diagram commutes.
	\begin{center}
	\begin{tikzpicture}
		\matrix (m) [matrix of math nodes, row sep=2em, column sep=2.5em, text height=1.5ex, text depth=0.25ex] 
		{ 
			\hat L_k	 & L \\
			\hat L_k	 & L \\
		}; 
		\draw[>=stealth, ->] (m-1-1) -- (m-1-2);
		\draw[>=stealth, ->] (m-2-1) -- (m-2-2);
		
		\draw[>=stealth, ->] (m-1-1) -- (m-2-1) node[pos=0.5, left, anchor=east]{$\hat \alpha_k$};
		\draw[>=stealth, ->] (m-1-2) -- (m-2-2) node[pos=0.5, right, anchor=west]{$\alpha_k$};
	\end{tikzpicture}
	\end{center}

	We now study the effect of this Dehn twist on the generating set $\eta(U)$ of $L$.
	Let $u \in U$.
	Recall that $\bar v^+$ is not in the orbit of $\bar v_c$.
	Hence, the normal form $\mathbf u \in \mathbf U$ of $\eta(u)$ can be written
	\begin{equation*}
		\mathbf u = \mathbf h_0 (\mathbf e_1 \mathbf g_1 \mathbf e'_1) \mathbf h_1 (\mathbf e_2 \mathbf g_2 \mathbf e'_2) \mathbf h_2 \dots(\mathbf e_m \mathbf g_m \mathbf e'_m) \mathbf h_m
	\end{equation*}
	where $(\mathbf e_i, \mathbf g_i, \mathbf e'_i)$ belongs to $\mathscr T$, each $\mathbf h_i$ fixes a vertex in $V$ different from $\bar v_c$.
	For each index $i$, we write $p_i$ (\resp $p'_i$) for the attaching point in $T$ of the edge $\mathbf e_i$ (\resp $\mathbf e'_i$) in the graph of actions $\Lambda$.
	In addition we let 
	\begin{equation*}
		z_i = s(\mathbf e_i)p_i,\quad
		z'_i = s(\bar {\mathbf e}'_i)p'_i,\quad
		x_i = s(\bar{\mathbf e}_i)p_i,\quad \text{and}\quad
		x'_i = s(\mathbf e'_i)p'_i.
	\end{equation*}
	In particular, $(z_i, \mathbf g_i z'_i)$ is the realization of $(\mathbf e_i, \mathbf g_i, \mathbf e'_i)$.
	The normal form of $\eta(u)$ can be used to decompose the geodesic $\geo o{\eta(u)o} \subset T$.
	More precisely applying (\ref{eqn: graph of actions - dist}) we get
	\begin{equation}
	\label{eqn: dehn twist - limit split - dehn twist}
		\dist{\eta(u)o^+}{o^+} = 
		\dist {o^+}{\mathbf h_0 x_0}
		+ \sum_{i=1}^m \dist{z_i}{\mathbf g_i z'_i}  
		+ \sum_{i=1}^{m-1} \dist{x'_i}{\mathbf h_i x_{i+1}}
		+ \dist{x'_m}{\mathbf h_mo^+}.
	\end{equation}
	Let us now apply the modular automorphism $\alpha_k$.	
	For every $i \in \intvald 0m$ we choose a pre-image $h_i$ of $\mathbf h_i$ in $G$.
	By construction $\mathbf h_i$ belongs to the vertex group $L_{\bar w_i}$ for some $\bar w_i \in V\setminus\{\bar v_c\}$.
	If $k$ is a sufficiently large integer, then 
	\begin{equation*}
		\hat \eta_k(h_i) \in \hat L_{k, \bar w_i}
	\end{equation*}
	where we have identified $\bar w_i \in V$ with its unique pre-image in $V_k$.
	Similarly we choose for each $i \in \intvald 1m$ a pre-image $g_i$ of $\mathbf g_i$ in $G$ such that $\hat \eta_k(g_i)$ belongs to $\hat L_{k,c}$, for all but finitely many $k\in \N$.
	Consequently, 
	\begin{equation*}
		\mathbf u_k = \hat  \eta_k (h_0) 
		\left[\mathbf e_1 \hat \eta_k(g_1) \mathbf e'_1\right] \hat \eta_k(h_1) \cdots 
		\left[\mathbf e_m \hat \eta_k(g_m) \mathbf e'_m\right] \hat \eta_k(h_m) 
	\end{equation*}
	is a normal form in $\mathbf L_k$ of $\hat \eta_k(u)$, provided $k$ is large enough (as above, we identified the edges of $E$ with their pre-images in $E_k$).
	Consequently a pre-image in $\mathbf L_k$ of $\hat \alpha_k \circ \hat \eta_k(u)$ is 
	\begin{equation*}
		\mathbf u'_k =\hat  \eta_k (h_0) 
		\left[\mathbf e_1  \hat \eta_k(b_k)^{q_1} \eta_k(g_1)  \mathbf e'_1\right] \hat \eta_k(h_1)
		 \cdots \\  
		\left[\mathbf e_m \hat \eta_k(b_k)^{q_m} \hat \eta_k(g_m) \mathbf e'_m\right] \hat \eta_k(h_m),
	\end{equation*}
	where the value of $q_i \in \{-1, 0,1\}$ depends on the edges of the turn  $(\mathbf e_i, \mathbf g_i, \mathbf e'_i)$.
	Since $p_i$ and $p'_i$ are points of $T$, we can write them as
	\begin{equation*}
		p_i = \limo p_{k,i} 
		\quad \text{and} \quad
		p'_i = \limo p'_{k,i} 
	\end{equation*}
	where $p_{k,i}, p'_{k,i} \in X_k$.
	In addition we let
	\begin{align*}
		z_{k,i}  &= \hat \phi_k \circ s(\mathbf e_i)p_{k,i}, & 
		z'_{k,i} &= \hat \phi_k \circ s(\bar {\mathbf e}'_i)p'_{k,i}, \\
		x_{k,i} &= \hat \phi_k \circ s(\bar{\mathbf e}_i)p_{k,i},&
		x'_{k,i} &= \hat \phi_k \circ s(\mathbf e'_i)p'_{k,i}.
	\end{align*}
	so that 
	\begin{equation*}
		z_i = \limo z_{k,i}, \quad
		z'_i = \limo z'_{k,i}, \quad
		x_i = \limo x_{k,i}, \quad \text{and} \quad
		x'_i = \limo x'_{k,i}.
	\end{equation*}
	Observe that our choices are consistent with the definition of $(y_k)$ and $(y'_k)$. 
	That is for instance, if $z_i = y$ (\resp $z_i = y'$), then $z_{k,i} = y_k$ (\resp $z_{k,i} = y'_k$) for all $k \in \N$. 
	Same with $z'_i$, $x_i$, $x'_i$.
	
	Applying the triangle inequality in $X_k$ as we did to establish (\ref{eqn: graph of actions - dist}) we observe that 
	\begin{multline}
	\label{eqn: shortening peripheral - apply trig inequality Xk - dehn twist}
			\dist{\hat \phi_k \circ \hat \alpha_k\circ \hat \eta_k(u)o^+_k}{o^+_k} \leq  
		 \dist {o^+_k}{\phi_k (h_0) x_{k,0}} \\
		 + \sum_{i=1}^m \dist{z_{k,i}}{\phi_k(b_k)^{q_i}\phi_k (g_i)z'_{k,i}}
		+ \sum_{i=1}^{m-1} \dist{x'_{k,i}}{\phi_k (h_i)x_{k,i+1}} \\
		 + \dist{x'_{k,m}}{\phi_k (h_m)o_k}
	\end{multline}
	We now focus on each term in the sum and estimate its asymptotic behavior as $k$ diverges to infinity.
	\begin{labelledenu}[T]
		\item 
		Observe first that for every $i \in \intvald 1{m-1}$,
		\begin{equation*}
			\limo \dist{x'_{k,i}}{\phi_k (h_i)x_{k,i+1}}
			= \dist{x'_i}{\mathbf h_i x_{i+1}}.
		\end{equation*}
		Similarly,
		\begin{equation*}
			\limo   \dist {o^+_k}{\phi_k (h_0) x_{k,0}} = \dist {o^+}{\mathbf h_0 x_0}
		\end{equation*}
		and
		\begin{equation*}
			\limo  \dist{x'_{k,m}}{\phi_k (h_m)o_k} = \dist{x'_m}{\mathbf h_mo^+}.
		\end{equation*}

		\item \label{enu: t2 dehn twist}
		Let $i \in \intvald 1m$ and consider the turn $(\mathbf e_i, \mathbf g_i, \mathbf e'_i)$.
		Recall that the pair $(z_i, \mathbf g_i z'_i)$ is the realization of $(\mathbf e_i, \mathbf g_i, \mathbf e'_i)$ which belongs to $\mathscr T$.
		Therefore it belongs to $F$.
		It follows then from our choice of $b_k$ that
		\begin{equation*}
			\limo  \dist{z_{k,i}}{\phi_k(b_k)^{q_i}\phi_k (g_i)z'_{k,i}} 
			 \leq \dist{z_i}{\mathbf g_i z'_i}.
		\end{equation*}
	\end{labelledenu}
	Combining these observations with (\ref{eqn: dehn twist - limit split - dehn twist}) and (\ref{eqn: shortening peripheral - apply trig inequality Xk - dehn twist}) we get that for every $u \in U$, 
	\begin{equation}
	\label{eqn: dehn twist - pre sum generic - dehn twist}
		\limo  \dist{\hat \phi_k \circ \hat \alpha_k\circ \hat \eta_k(u)o^+_k}{o^+_k} \leq \dist{\eta(u)o^+}{o^+}.
	\end{equation}
	Let us now come back to the terms studied in \ref{enu: t2 dehn twist}.
	By assumption there is at least one element $u \in U$ and one turn $(\mathbf e_i, \mathbf g_i, \mathbf e'_i)$ in its normal form that coincides with the turn $(\mathbf e, \mathbf g, \mathbf e')$ fixed at the beginning.
	For this turn, our choice of $b_k$ gives
	\begin{equation*}
		\limo  \dist{z_{k,i}}{\phi_k(b_k)\phi_k (g_i)z'_{k,i}} 
		= \limo \dist{y_k}{\phi_k (b_k)y'_k}
		= 0
	\end{equation*}
	Hence this specific element $u$ satisfies
	\begin{equation}
	\label{eqn: dehn twist - pre sum specific - dehn twist}
		\limo  \dist{\hat \phi_k \circ \hat \alpha_k\circ \hat \eta_k(u)o^+_k}{o^+_k} \leq \dist{\eta(u)o^+}{o^+} - \dist y{y'}.
	\end{equation}
	Consequently, when we sum (\ref{eqn: dehn twist - pre sum generic - dehn twist}) and (\ref{eqn: dehn twist - pre sum specific - dehn twist}) to estimate the energy of $\hat \phi_k \circ \hat \alpha_k \circ \hat \eta_k$ (in the non-rescaled space $X_k$) we get
	\begin{align*}
		\limo  \epsilon_k\lambda^+_1(\hat \phi_k \circ \hat \alpha_k \circ \hat \eta_k, U)
		& \leq \limo \epsilon_k\lambda_1(\hat \phi_k \circ \hat \alpha_k\circ \hat \eta_k, U, o^+_k) \\
		& \leq \limo \epsilon_k\lambda_1(\phi_k, U, o^+_k) - \dist y{y'}
	\end{align*}
	Recall that $o^+_k$ has been chosen to (almost) minimize the restricted energy of $\phi_k$.	Consequently we get
	\begin{equation*}
		\lambda^+_1(\hat \phi_k \circ \hat \alpha_k\circ \hat \eta_k, U)
		\leq \limo \lambda_1(\phi_k, U) - \frac{\dist y{y'}}{\epsilon_k} + o \left( \frac 1{\epsilon_k}\right). \qedhere
	\end{equation*}
\end{proof}

\begin{prop}[Shortening by peripheral automorphism]
\label{res: shortening peripheral - peripheral auto}
	Assume that  there exists a turn $(\mathbf e, \mathbf g, \mathbf e') \in \mathscr T_0$ and such that $\mathbf g \notin K_c$
	Then there is $\ell \in \R_+^*$ with the following property.
	For every sufficiently large $k \in \N$, there is a generalized Dehn twist $\alpha_k \in \mcg[0]{L,\Lambda}$, and $\hat \alpha_k \in \aut{\hat L_k}$ preserving $\hat S_k$ such that $\zeta_k \circ \hat \alpha_k = \alpha_k \circ \zeta_k$ and 
		\begin{equation*}
			\lambda^+_1\left(\hat \phi_k \circ \hat \alpha_k \circ \hat \eta_k, U\right) \leq \lambda^+_1\left(\phi_k,U\right) - \frac \ell{\epsilon_k} \ \oas
		\end{equation*}
\end{prop}

\begin{proof}
	Recall that $K_c$ is the pre-image of the torsion part of $Q_c = L_c / N_c$.
	Hence $L_c$ decomposes as $L_c = A_c \oplus K_c$ where $A_c$ is a non-trivial, free abelian group.
	We make an abuse of notation and still write $\bar v_c$ for the pre-image of $\bar v_c$ in $V_k$.
	Similarly, we write $\hat L_{k,c}$ for the vertex group $\hat L_{k, \bar v_c}$ associated to the vertex $\bar v_c \in \bar S_k$.
	Recall that $L_c$ is finitely generated relative to $N_c$ (\autoref{res: tree-graded - stab v fg rel fix}).
	In particular, $A_c$ is finitely generated.
	We fix a basis $a_0, \dots, a_r$ of $A_c$, and write $\tilde a_0, \dots, \tilde a_r$ for pre-images of these elements in $G$.
	Without loss of generality we can choose our basis so that $\mathbf g$ decomposes as $\mathbf g = ma_0^n$ where $m \in K_c$ and  $n \in \N \setminus\{0\}$.	
	Fix $b \in G$ such that $\eta(b)$ is a non-trivial element of $N_c$.
	According to \autoref{res: shortening peripheral - shortening pairs}, there exists a sequence $(b_k)$ of elements of $\group b$ with the following properties:
	\begin{enumerate}
		\item $\eta(b_k)$ belongs to $N_c$, for every $k \in \N$.
		\item $\limo\dist{y_k} {\phi_k(b^n_k)y'_k}= 0$.
		\item if $z = \limo z_k$ and $z' = \limo z'_k$ are two points of $T$ such that $(z,z') \in F$, then for every $q \in \Z$, 
		\begin{equation*}
			\limo \dist{z_k} {\phi_k(b^q_k)z'_k}\leq \dist z{z'}.
		\end{equation*}
	\end{enumerate}

	It follows from the definition of strong covers that for every sufficiently large $k \in \N$, the (abelian) group $\hat L_{k, c}$ admits a non trivial splitting $\hat L_{k,c} =A_{k,c} \oplus  K_{k,c}$ where
	\begin{itemize}
		\item the map $\zeta_k \colon \hat L_k \to L$ induces a isomorphism from $A_{k,c}$ onto $A_c$ and maps $K_{k,c}$ into $K_c$;
		\item the pre-image $\hat \eta_k(b_k) \in L_k$ of $\eta(b_k) \in L$ belongs to $K_{k,c}$.
	\end{itemize}
	We denote by $a_{j,k} = \hat \eta_k(\tilde a_j)$ for every $j \in \intvald 0r$.
	Note that $a_{0,k}, \dots, a_{r,k}$ is a free basis of $A_{k,c}$ \oas.
	We define an automorphism $\hat \alpha_k$ of $\hat L_{k,c}$ as follows: it fixes $K_{k,c}$ as well as $a_{1,k}, \dots, a_{r,k}$ and maps $a_{0,k}$ to $a_{0,k}\hat \eta_k(b_k)$.
	Observe that $\hat \alpha_k$ is trivial when restricted to the stabilizer $\hat L_{k,e}$ of any edge of $\bar S_k$ adjacent to $\bar v_c$.
	Indeed by definition of strong cover the map $\zeta_k\colon \hat L_k \to L$ maps $\hat L_{k,e}$ into the stabilizer of some edge of $\bar S$ adjacent to $\bar v_c$, that is into $N_c$.
	However $N_c$ is contained in the factor $K_c$, hence $\hat L_{k,e}$ necessarily belongs to $K_{k,c}$.
	Consequently $\hat \alpha_k$ is trivial when restricted to $\hat L_{k,e}$.
	It follows that $\hat \alpha_k$ extends to an automorphism of $\hat L_k$, that we still denote by $\hat \alpha_k$, that preserves the graph group decomposition of $\bar S_k$.
	
	The automorphisms $\alpha_k \in \mcg[0]{L,\Lambda}$ are defined in a similar way so that the following diagram commutes.
	\begin{center}
	\begin{tikzpicture}
		\matrix (m) [matrix of math nodes, row sep=2em, column sep=2.5em, text height=1.5ex, text depth=0.25ex] 
		{ 
			\hat L_k	 & L \\
			\hat L_k	 & L \\
		}; 
		\draw[>=stealth, ->] (m-1-1) -- (m-1-2);
		\draw[>=stealth, ->] (m-2-1) -- (m-2-2);
		
		\draw[>=stealth, ->] (m-1-1) -- (m-2-1) node[pos=0.5, left, anchor=east]{$\hat \alpha_k$};
		\draw[>=stealth, ->] (m-1-2) -- (m-2-2) node[pos=0.5, right, anchor=west]{$\alpha_k$};
	\end{tikzpicture}
	\end{center}
	
	We now study the effect of this automorphism on the generating set of $L$.
	We follow the same strategy as in the proof of the Dehn twist case.
	Let $u \in U$ and $\mathbf u \in \mathbf U$ its normal form that we write
	\begin{equation*}
		\mathbf u = \mathbf h_0 (\mathbf e_1 \mathbf g_1 \mathbf e'_1) \mathbf h_1 (\mathbf e_2 \mathbf g_2 \mathbf e'_2) \mathbf h_2 \dots(\mathbf e_m \mathbf g_m \mathbf e'_m) \mathbf h_m
	\end{equation*}
	where $(\mathbf e_i, \mathbf g_i, \mathbf e'_i)$ belongs to $\mathscr T$, each $\mathbf h_i$ fixes a vertex in $V$ different from $\bar v_c$.
	For each index $i$, we write $p_i$ (\resp $p'_i$) for the attaching point in $T$ of the edge $\mathbf e_i$ (\resp $\mathbf e'_i$).
	In addition we let 
	\begin{equation*}
		z_i = s(\mathbf e_i)p_i,\quad
		z'_i = s(\bar {\mathbf e}'_i)p'_i,\quad
		x_i = s(\bar{\mathbf e}_i)p_i,\quad \text{and}\quad
		x'_i = s(\mathbf e'_i)p'_i.
	\end{equation*}
	Just as before we have
	\begin{equation}
	\label{eqn: dehn twist - limit split - auto}
		\dist{\eta(u)o^+}{o^+} = 
		\dist {o^+}{\mathbf h_0 x_0}
		+ \sum_{i=1}^m \dist{z_i}{\mathbf g_i z'_i}  
		+ \sum_{i=1}^{m-1} \dist{x'_i}{\mathbf h_i x_{i+1}}
		+ \dist{x'_m}{\mathbf h_mo^+}.
	\end{equation}
	Let us now apply the modular automorphism $\alpha_k$.	
	For every $i \in \intvald 0m$ we choose a pre-image $h_i$ of $\mathbf h_i$ in $G$.
	By construction $\mathbf h_i$ belongs to the vertex group $L_{\bar w_i}$ for some $\bar w_i \in V\setminus\{\bar v_c\}$.
	If $k$ is a sufficiently large integer, then 
	\begin{equation*}
		\hat \eta_k(h_i) \in \hat L_{k, \bar w_i}
	\end{equation*}
	where we identified $\bar w_i \in V$ with its unique pre-image in $V_k$.
	Similarly for every $i \in \intvald 1m$, we chose a pre-image $g_i \in G$ of $\mathbf g_i$ such that $\hat \eta_k(g_i)$ belongs to $\hat L_{k,c}$ \oas.
	\begin{equation*}
		\mathbf u_k = \hat  \eta_k (h_0) 
		\left[\mathbf e_1 \hat \eta_k(g_1) \mathbf e'_1\right] \hat \eta_k(h_1) \cdots 
		\left[\mathbf e_m \hat \eta_k(g_m) \mathbf e'_m\right] \hat \eta_k(h_m) 
	\end{equation*}
	is a normal form in $\mathbf L_k$ of $\hat \eta_k(u)$ \oas.
	The element $\mathbf g_i$ can be decomposed $\mathbf g_i = m_ia_0^{q_i}$ where $q_i \in \Z$ and $m_i$ belongs to $\mathbf Z a_1 \oplus \dots \oplus \mathbf Z a_r \oplus K_c$.
	In particular, $m_i$ is fixed by $\alpha_k$, while, $a_0^{q_i}$ is sent to $a_0^{q_i}\eta(b_k)^{q_i}$.
	It follows that $\alpha_k(\mathbf g_i) =\eta(b_k)^{q_i} \mathbf g_i $.
	Hence $\hat \alpha_k \circ \hat \eta_k(g_i) =  \hat \eta_k(b_k)^{q_i}\hat \eta_k(g_i)$ \oas.
	Consequently a pre-image in $\mathbf L_k$ of $\hat \alpha_k \circ \hat \eta_k(u)$ is 
	\begin{equation*}
		\mathbf u'_k = \hat  \eta_k (h_0) 
		\left[\mathbf e_1 \hat \eta_k(b_k)^{q_1}\hat \eta_k(g_1) \mathbf e'_1\right] \hat \eta_k(h_1)
		 \cdots \\  
		\left[\mathbf e_m \hat \eta_k(b_k)^{q_m} \hat \eta_k(g_m)\mathbf e'_m\right] \hat \eta_k(h_m),
	\end{equation*}
	Since $p_i$ and $p'_i$ are points of $T$, we can write them as
	\begin{equation*}
		p_i = \limo p_{k,i} 
		\quad \text{and} \quad
		p'_i = \limo p'_{k,i} 
	\end{equation*}
	where $p_{k,i}, p'_{k,i} \in X_k$.
	In addition we let
	\begin{align*}
		z_{k,i}  &= \hat \phi_k \circ s(\mathbf e_i)p_{k,i}, & 
		z'_{k,i} &= \hat \phi_k \circ s(\bar {\mathbf e}'_i)p'_{k,i}, \\
		x_{k,i} &= \hat \phi_k \circ s(\bar{\mathbf e}_i)p_{k,i},&
		x'_{k,i} &= \hat \phi_k \circ s(\mathbf e'_i)p'_{k,i}.
	\end{align*}
	so that 
	\begin{equation*}
		z_i = \limo z_{k,i}, \quad
		z'_i = \limo z'_{k,i}, \quad
		x_i = \limo x_{k,i}, \quad \text{and} \quad
		x'_i = \limo x'_{k,i}.
	\end{equation*}	
	As previously we have the following estimate,
	\begin{multline}
	\label{eqn: shortening peripheral - apply trig inequality Xk - auto}
			\dist{\hat \phi_k \circ \hat \alpha_k\circ \hat \eta_k(u)o^+_k}{o^+_k} \leq  
		 \dist {o^+_k}{\phi_k (h_0) x_{k,0}} \\
		 + \sum_{i=1}^m \dist{z_{k,i}}{\phi_k(b_k)^{q_i}\phi_k (g_i)z'_{k,i}}
		+ \sum_{i=1}^{m-1} \dist{x'_{k,i}}{\phi_k (h_i)x_{k,i+1}} \\
		 + \dist{x'_{k,m}}{\phi_k (h_m)o_k}
	\end{multline}
	Moreover we prove that for every $i \in \intvald 1{m-1}$,
	\begin{equation*}
		\limo \dist{x'_{k,i}}{\phi_k (h_i)x_{k,i+1}}
		= \dist{x'_i}{\mathbf h_i x_{i+1}}.
	\end{equation*}
	while,
	\begin{equation*}
		\limo   \dist {o^+_k}{\phi_k (h_0) x_{k,0}} = \dist {o^+}{\mathbf h_0 x_0}
	\end{equation*}
	and
	\begin{equation*}
		\limo  \dist{x'_{k,m}}{\phi_k (h_m)o_k} = \dist{x'_m}{\mathbf h_mo^+}.
	\end{equation*}

	Let $i \in \intvald 1m$ and consider the turn $(\mathbf e_i, \mathbf g_i, \mathbf e'_i)$.
	Recall that the pair $(z_i, \mathbf g_i z'_i)$ is the realization of $(\mathbf e_i, \mathbf g_i, \mathbf e'_i)$ and thus belongs to $F$.		
	It follows then from our choice of $b_k$ that
	\begin{equation*}
		\limo  \dist{z_{k,i}}{\phi_k(b_k)^{q_i}\phi_k (g_i)z'_{k,i}} 
		 \leq \dist{z_i}{\mathbf g_i z'_i}.
	\end{equation*}
	Combining these observations with (\ref{eqn: dehn twist - limit split - auto}) and (\ref{eqn: shortening peripheral - apply trig inequality Xk - auto}) we get that for every $u \in U$, 
	\begin{equation}
	\label{eqn: dehn twist - pre sum generic- auto}
		\limo  \dist{\hat \phi_k \circ \hat \alpha_k\circ \hat \eta_k(u)o^+_k}{o^+_k} \leq \dist{\eta(u)o^+}{o^+}.
	\end{equation}
	
	The previous computation holds for every $u \in U$.
	However, by assumption there is at least one element $u \in U$ and one turn $(\mathbf e_i, \mathbf g_i, \mathbf e'_i)$ in its normal form that coincides with the turn $(\mathbf e, \mathbf g, \mathbf e')$ fixed at the beginning.
	By construction it satisfies $q_i = n$ (this is the way we defined $n$ at the beginning of the proof).
	For this turn, our choice of $b_k$ gives
	\begin{equation*}
		\limo  \dist{z_{k,i}}{\phi_k(b_k)^n\phi_k (g_i)z'_{k,i}} 
		= \limo \dist{y_k}{\phi_k (b_k)^ny'_k}
		= 0.
	\end{equation*}
	Hence this specific element $u$ satisfies
	\begin{equation}
	\label{eqn: dehn twist - pre sum specific- auto}
		\limo  \dist{\hat \phi_k \circ \hat \alpha_k\circ \hat \eta_k(u)o^+_k}{o^+_k} \leq \dist{\eta(u)o^+}{o^+} - \dist y{y'}.
	\end{equation}
	The rest of the proof goes as in the proof of \autoref{res: shortening peripheral - dehn twist}.
	We conclude that
	\begin{equation*}
		\lambda^+_1(\hat \phi_k \circ \hat \alpha_k \circ \hat \eta_k, U)
		\leq \limo \lambda_1(\phi_k, U) - \frac{\dist y{y'}}{\epsilon_k} + o \left( \frac 1{\epsilon_k}\right). \qedhere
	\end{equation*}
\end{proof}

\begin{proof}[Proof of \autoref{res: shortening peripheral - main theo}]
	We now summarize the above study.
	Fix a peripheral vertex $v \in S_\Lambda$.
	If we collapse all edges of $S_\Lambda$ which are not adjacent to a vertex in the $L$-orbit of $v_c$, we get a new tree $\bar S$ which is the ``smallest'' $L$-tree compatible is compatible with $S_\Lambda$ and ``sees'' $L_c$ as a vertex group.
	This tree $\bar S$ can also bee seen as a graph of group decomposition of $L$, providing a normal form to express the elements of $L$ (as explained in the beginning of this section).
	A turn is now a subword of the form $\mathbf e\mathbf g\mathbf e'$, where $\mathbf g$ belongs to $L_c$, which appears in the normal form an element of $L$ (see \autoref{def: turn}).
	Among all possible turns showing up in the normal form of the generators $u \in U$ of $L$, we pay a particular assumption to the subset $\mathscr T_0$ which consists of all turns whose ``angle at $c$'' is roughly speaking minimal (see \autoref{def: minimal turn}).
	In view of Assumptions \ref{enu: shortening peripheral - orbit hyp} and \ref{enu: shortening peripheral - index hyp}, \autoref{res: shortening - dichotomy turn} tells us that there is a turn  $\mathbf e\mathbf g\mathbf e'$ in $\mathscr T_0$ such that one of the following holds:
	\begin{enumerate}
		\item either $\mathbf e' \neq \bar{\mathbf e}$, or
		\item $\mathbf g$ does not belong to $K_c$.
	\end{enumerate}
	In the former case, we can use \autoref{res: shortening peripheral - dehn twist} to shorten the sequence $(\phi_k)$ with a sequence $(\alpha_k)$ of Dehn twists around an edge adjacent to $v$.
	In the latter case, the shortening is obtained by \autoref{res: shortening peripheral - peripheral auto} using generalized Dehn twists.
\end{proof}

%------------------------------------------------------------------------------------
%
\subsubsection{Simplicial transverse component}
%
%------------------------------------------------------------------------------------
\label{sec: shortening - simplicial transverse}

\begin{lemm}
\label{res: shortening - transverse simplicial arc}
	Let $x = \limo x_k$ and $y = \limo y_k$ be two points of $X_\omega$.
	Assume that $\geo xy$ is a transverse arc.
	Let $g \in G$ such that $\eta(g)$ is a non-trivial element fixing pointwise $[x,y]$.
	There exists a sequence $(g_k)$ of elements in $\group g$, such that  
	\begin{enumerate}
		\item $\eta(g_k)$ pointwise fixes $\geo xy$ \oas;
		\item $\limo\dist{x_k}{\phi_k(g_k)y_k} = 0$.
	\end{enumerate}
\end{lemm}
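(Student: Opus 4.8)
The plan is to take for $g_k$ suitable powers $g^{m_k}$ of $g$; then the first conclusion is automatic, since $\eta(g)$ fixes $\geo xy$ pointwise and hence so does every power $\eta(g)^{m_k} = \eta(g_k)$. Everything therefore reduces to producing, for $\omega$-almost every $k$, an exponent $m_k \in \Z$ with $\limo \dist{x_k}{\phi_k(g)^{m_k}y_k} = 0$. Write $\gamma_k = \phi_k(g)$. Since $\eta(g) \neq 1$, the element $g$ does not lie in the stable kernel of $(\phi_k)$, so $\gamma_k \neq 1$ \oas. As $\eta(g)$ fixes $x = \limo x_k$ and $y = \limo y_k$, the rescaled displacements $d_k = \max\{\dist{\gamma_k x_k}{x_k}, \dist{\gamma_k y_k}{y_k}\}$ (measured in $\epsilon_k X_k$) converge to $0$ along $\omega$, i.e. $x_k,y_k \in \fix{\gamma_k, d_k}$ with $d_k = o(1)$. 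I would first show that $\gamma_k$ is loxodromic \oas. If $\gamma_k$ were elusive on an $\omega$-large set, then $\eta(g)$ would be an elusive element of $L$, whence $\fix{\eta(g)}$ would be a single point by \autoref{res: limit tree - elusive sbgps are elliptic}; but $\geo xy \subseteq \fix{\eta(g)}$ is a non-degenerate arc, a contradiction. Since $\Gamma_k$ has no parabolic elements (Axioms~\ref{enu: family axioms - elusive} and \ref{enu: family axioms - conical - 1}), if $\gamma_k$ is not loxodromic \oas it is conical \oas, fixing a unique cone point $c_k \in \mathcal C_k$. By Axiom~\ref{enu: family axioms - conical - 1}, $\gamma_k$ is $(\rho_k+\delta_k)$-thin at $c_k$, so $\dist{x_k}{c_k} \leq \rho_k + d_k/2 + \delta_k$ and hence $\dist{o_k}{c_k} - \rho_k \leq \dist{o_k}{x_k} + o(1)$ is bounded above along $\omega$; combined with $\dist{o_k}{c_k} \geq d(o_k,\mathcal C_k) \geq \rho_k - o(1)$ (\autoref{res: limit action - dist to cone point}), the sequence $\dist{o_k}{c_k} - \rho_k$ is $\omega$-essentially bounded, so $(c_k) \in \Pi_\omega \mathcal C_k$ defines a point $c \in \mathcal C_\omega$ with $\eta(g) \in L_c$. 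By \autoref{res: tree-graded - fixed point set}, $\fix{\eta(g)} = \bar P(c,r)$ for some finite $r$, so the non-degenerate arc $\geo xy \subseteq \bar P(c)$ contradicts transversality. Hence $\gamma_k$ is loxodromic \oas.

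Now fix such a $k$ and work in $\epsilon_k X_k$. Put $\mu_k = \snorm{\gamma_k}$; since $\eta(g)$ is elliptic in $X_\omega$ we have $\limo \mu_k = \snorm[X_\omega]{\eta(g)} = 0$. As $\gamma_k$ is $\delta_k$-thin (Axiom~\ref{enu: family axioms - loxodromic}, rescaled), every point of $\fix{\gamma_k, d_k}$ has Gromov product at most $\tfrac12 d_k + \delta_k$ with the pair $\gamma_k^\pm$, so $x_k$ and $y_k$ lie within $\alpha_k = O(d_k + \delta_k) = o(1)$ of the cylinder $Y_{\gamma_k}$. Fix a $(1,\delta_k)$-local quasi-geodesic axis $A_k \colon \R \to X_k$ for $\gamma_k$ inside $U_{\gamma_k}$, parametrized by arc length and oriented toward $\gamma_k^+$, and let $\bar x_k = A_k(s_k)$ and $\bar y_k = A_k(t_k)$ be $\delta_k$-projections of $x_k$ and $y_k$ onto $A_k$; then $|s_k - t_k| = \dist{x_k}{y_k} + O(\alpha_k + \delta_k) \to D := \dist xy$. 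Using the standard description of loxodromic isometries and their cylinders (\cite[Section~2.3]{Coulon:2018vp}, together with (\ref{eqn: regular vs stable length})), the orbit map $m \mapsto \gamma_k^m$ moves each point $A_k(p)$ to within a \emph{uniform} additive error $O(\delta_k)$ of $A_k(p + m\mu_k)$ — the error not growing with $m$, because $\gamma_k^m$ preserves $Y_{\gamma_k}$, which is quasi-isometric to $\R$ with constants depending only on $\delta$. Consequently $\gamma_k^m y_k$ lies within $O(\alpha_k + \delta_k)$ of $A_k(t_k + m\mu_k)$, giving
\begin{equation*}
	\dist{x_k}{\gamma_k^m y_k} \leq |s_k - t_k - m\mu_k| + O(\alpha_k + \delta_k).
\end{equation*}
Since $\mu_k \to 0$ along $\omega$, I would take $m_k$ to be an integer nearest to $(s_k - t_k)/\mu_k$ (with the sign adjusted if $A_k$ is oriented so that $\gamma_k$ translates toward $\gamma_k^-$; this is harmless since $\langle g\rangle$ contains negative powers), so that $|s_k - t_k - m_k\mu_k| \leq \mu_k/2 = o(1)$ and hence $\dist{x_k}{\gamma_k^{m_k}y_k} = o(1)$ along $\omega$. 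Setting $g_k = g^{m_k}$ then furnishes the required sequence.

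The hard part is the translation estimate in the second paragraph: in the generality of hyperbolic length spaces a loxodromic isometry has only a quasi-geodesic "axis", and one must be sure that iterating $\gamma_k$ does not accumulate additive error, so that a single power $m_k$ — which will necessarily satisfy $|m_k| \to \infty$, as $\mu_k \to 0$ — genuinely brings $y_k$ close to $x_k$ rather than merely onto the correct coarse $\gamma_k$-orbit. This is exactly where the cylinder formalism of \cite{Coulon:2018vp} and the uniform $\delta$-thinness of loxodromic elements (Axiom~\ref{enu: family axioms - loxodromic}) are used; the rest of the argument is routine hyperbolic geometry and bookkeeping with the ultrafilter.
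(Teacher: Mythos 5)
Your proof is correct and takes essentially the same route as the paper's: rule out the elusive and conical cases (via the single-fixed-point lemma and the identification of conical fixed sets with $\bar P(c,r)$, contradicting transversality) to get that $\phi_k(g)$ is loxodromic \oas, note that its translation length tends to zero, use the uniform $\delta$-thinness of loxodromic elements to place $x_k$ and $y_k$ at distance $o(1)$ from a quasi-axis, and choose the power of $g$ whose translation brings $y_k$ closest to $x_k$, with the iteration error controlled uniformly by the cylinder/quasi-axis formalism. The only loose point is the assertion $\limo \snorm{\phi_k(g)} = \snorm[X_\omega]{\eta(g)}$, which is unnecessary: the bound $\snorm{\phi_k(g)} \leq \dist{\phi_k(g)x_k}{x_k} = o(1)$ gives what you need directly, exactly as in the paper.
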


\begin{proof}
	Since $\geo xy$ is non-degenerate, $\eta(g)$ is not elusive (\autoref{res: limit tree - elusive sbgps are elliptic}).
	On the other hand, as $\geo xy$ is transverse, $\eta(g)$ does not belong to $L_c$ for some $c \in \mathcal C_\omega$ (\autoref{res: tree-graded - fixed point set}).
	It follows that $\phi_k(g)$ is loxodromic \oas.
	There exists a sequence $(d_k)$ converging to zero such that $x_k, y_k \in \fix{ \phi_k(g),d_k}$ \oas.
	In particular, $\norm{\phi_k(g)} \leq d_k$.
	Since $\phi_k(g)$ is $\delta_k$-thin, $x_k$ and $y_k$ lie in the $(d_k/2 + 50\delta_k)$-neighborhood of any $100\delta_k$-local $(1, \delta_k)$-quasi-geodesic of $\epsilon_kX_k$ joining the attractive and repulsive points of $\phi_k(g)$.
	Recall that $\phi_k(g)$ approximately acts on this geodesic by translation of length $\norm{\phi_k(g)}$.
	Consequently there exists $g_k \in \group {g}$ such that
	\begin{equation*}
		\dist{x_k}{\phi_k\left(g_k\right)y_k} \leq 2 d_k +1000 \delta_k\quad  \oas.
	\end{equation*}
	In particular, $\dist{x_k}{\phi_k(g_k)y_k}$ converges to zero.
	Since $\eta(g_k)$ belongs to $\group{\eta(g)}$, it pointwise fixes $\geo xy$, whence the result.
\end{proof}

\begin{prop}
\label{res: shortening morphism - transverse arc}
	Assume that $T$ contains a transverse simplicial arc.
	Then there exists $\ell > 0$ such that for every sufficiently large $k \in \N$, there are $\alpha_k \in \mcg[0]{L, S}$ and $\hat \alpha_k \in \aut{\hat L_k}$ preserving $\hat S_k$ such that $\zeta_k \circ \hat \alpha_k = \alpha_k \circ \zeta_k$ and 
		\begin{equation*}
			\lambda_1\left(\hat \phi_k \circ \hat \alpha_k \circ \hat \eta_k, U\right) < \lambda_1\left(\phi_k,U\right) - \frac \ell{\epsilon_k}\ \oas.
		\end{equation*}
\end{prop}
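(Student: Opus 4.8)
The plan is to adapt the shortening argument of Rips--Sela (following the exposition of Perin and Weidmann--Reinfeldt) to the transverse simplicial part of the complete splitting $S$, using Lemma~\ref{res: shortening - transverse simplicial arc} in place of the usual ``bending move'' along a simplicial edge. By Definition~\ref{def: complete graph of action}, every transverse simplicial vertex tree $Y_v$ of the complete decomposition $\Lambda$ has been blown up in $S$, so the transverse simplicial arcs of $T$ are carried by genuine edges of $S$; their stabilizers are edge groups of $S$, hence abelian by \autoref{res: final decomposition tree}. Fix such an edge $e$ of $S$ and let $H = L_e$ be its (abelian, torsion-free by \autoref{res: abelian arc stabilizers}) stabilizer, fixing a transverse arc $\geo xy$ with $x = \limo x_k$, $y = \limo y_k$. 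Using the basepoint $o^+ = \limo o^+_k$ of \autoref{res: basepoint in minimal tree} and the preferred normal forms at a vertex $\bar v^+$ not in the orbit of the endpoints of $e$ (exactly as set up in \autoref{sec: graph of groups} and used in the proof of \autoref{res: shortening peripheral - main theo}), I would decompose each $\geo{o^+}{\eta(u)o^+}$, for $u\in U$, into pieces read off the normal form via \eqref{eqn: graph of actions - dist}, singling out the subpaths that cross a translate of $e$.

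First I would define the analogue of the set of ``turns at $e$'': for each $u\in U$ take a preferred normal form $\mathbf u$ of $\eta(u)$ based at $\bar v^+$, collect all subwords $\mathbf e_i\mathbf g_i\mathbf e_i'$ where $\mathbf e_i,\mathbf e_i'$ are edges incident to a translate of the simplicial edge $e$ and $\mathbf g_i$ lies in the adjacent vertex group, and among the resulting realizations $(z_i,\mathbf g_iz_i')\in b_e^{-1}\times\cdots$ select the longest one, say realized by the pair $(x,y)$ as above with $\eta(g)$ generating (a finite-index subgroup of) $H=L_e$. Apply Lemma~\ref{res: shortening - transverse simplicial arc} to obtain a sequence $(g_k)$ in $\langle g\rangle$ with $\eta(g_k)$ fixing $\geo xy$ pointwise and $\limo\dist{x_k}{\phi_k(g_k)y_k}=0$. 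Then form the Dehn twist $\alpha_k\in\mcg{L,S}$ over $e$ by (a bounded power of) $g$, together with its lift $\hat\alpha_k\in\aut{\hat L_k}$ preserving $\hat S_k$: this uses the strong cover hypothesis and is exactly \autoref{res: lifting mcg to cover} (the twister $g$ lies in $L_e$, so $\alpha_k\in\mcg[0]{L,S}$). Apply the triangle inequality in $X_k$ piece by piece along the normal form of each $u$, just as in \eqref{eqn: shortening peripheral - apply trig inequality Xk - dehn twist}: the ``vertex pieces'' $\dist{x'_{k,i}}{\phi_k(\tilde h_i)x_{k,i+1}}$ have limits equal to the corresponding lengths in $T$ by construction of $\limo$, the ``turn pieces'' not involving $(x,y)$ do not increase in the limit by the maximality of the chosen realization and the choice of $(g_k)$, and the one turn coinciding with $(x,y)$ contributes, in the limit, $0$ instead of $\dist xy>0$.

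Summing over $U$ and using that $o^+_k$ (almost) minimizes the restricted energy $\lambda^+_1(\phi_k,U)$ via \eqref{eqn: basepoint oplus}, and then invoking \autoref{res: comparing energies} to pass between $\lambda_1$ and $\lambda^+_1$ up to the factor $2\card U$ (the image of $\phi_k$ is non-abelian since $L$ is non-abelian), I would conclude
\begin{equation*}
	\lambda_1\left(\hat\phi_k\circ\hat\alpha_k\circ\hat\eta_k,U\right)\leq \lambda_1(\phi_k,U)-\frac{\ell}{\epsilon_k}+o\left(\frac1{\epsilon_k}\right)\ \oas,
\end{equation*}
with $\ell = \dist xy>0$, which after absorbing the $o(1/\epsilon_k)$ term into $\ell$ gives the strict inequality claimed. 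I expect the main obstacle to be verifying that the chosen ``longest turn'' is genuinely realized by some $u\in U$ and that no other turn piece grows under the twist --- i.e. the bookkeeping showing the twist shortens at least one generator while not lengthening the others in the limit. This is precisely the point that required the detailed turn/realization machinery and the dichotomy lemma in the peripheral case; here it is simpler because a transverse simplicial edge always admits a non-inner Dehn twist (its edge group is non-trivial by \autoref{ass: splitting - freely indecomposable} and the twister is not forced to be peripheral), so no analogue of the root-splitting obstruction arises, but the combinatorial argument that the maximal crossing must occur along some normal form in $\mathbf U$ --- and hence is strictly shortened --- still needs care, exactly as in Case~A of the proof of \autoref{res: shortening peripheral - main theo}.
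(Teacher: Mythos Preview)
Your approach is the same as the paper's --- collapse to the one-edge splitting carried by the transverse simplicial arc, produce the twister via \autoref{res: shortening - transverse simplicial arc}, Dehn-twist, and estimate piece by piece along normal forms --- but two points need correction.

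First, the twister is \emph{not} a bounded power of $g$: the sequence $(g_k)$ from \autoref{res: shortening - transverse simplicial arc} consists of unbounded powers (the exponent must grow so that $\phi_k(g_k)$ slides $y_k$ back to $x_k$ in the rescaled space). Consequently $\alpha_k$ genuinely depends on $k$, and \autoref{res: lifting mcg to cover} does not apply as stated, since that lemma lifts a \emph{fixed} $\alpha \in \mcg[0]{L,S}$. You must instead lift each $\alpha_k$ by hand exactly as in Case~A of the peripheral argument: choose a pre-image $\tilde g \in G$ of a generator of $L_e$, observe that for large $k$ the element $\hat\eta_k(\tilde g)$ (and hence all its powers) lies in the edge group $\hat L_{k,\hat e_k}$ of the strong cover, and define $\hat\alpha_k$ as the Dehn twist of $\hat L_k$ over $\hat e_k$ by $\hat\eta_k(g_k)$.

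Second, the turn/realization machinery with a ``longest'' selection is unnecessary here and slightly misleading. In the peripheral case one chooses the \emph{shortest} realization so that \autoref{res: shortening peripheral - shortening pairs}\ref{enu: shortening peripheral - shortening pairs - non-decreasing} guarantees no other turn lengthens. In the transverse simplicial case every crossing of the edge $e$ in a reduced normal form is a translate of the \emph{same} arc $\geo xy$, so all realizations have identical length $\dist xy$ and the twist collapses each of them in the limit; there is no growth to control. The bookkeeping you flag as the main obstacle is therefore automatic, and the paper simply takes a maximal simplicial arc without invoking turns at all.
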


\begin{proof}
	The strategy in this situation has been detailed many times in the literature.
	We refer the reader to Rips and Sela~\cite[Section~6]{Rips:1994jg}, Perin~\cite[Section~5.6]{Perin:2008aa}, or Weidmann-Reinfeldt~\cite[Section~5.2.3]{Weidmann:2019ue}  for a carefully written proof.
	Let us just highlight its main steps.
	Let $x = \limo x_k$ and $y = \limo y_k$ such that  $\geo xy$ is a maximal simplicial arc of $T$ contained in a transverse subtree.
	Recall that $S$ is the simplicial tree obtained by refining the skeleton of the graph of actions $\Lambda$.
	We denote by $\bar S$ the one-edge splitting obtained from $S$ be collapsing every edge orbit but the edge $e$ corresponding to $\geo xy$.
	Note that the subgroup $L_e$ is non-trivial otherwise $L$ would be freely decomposable relative to $\mathcal H$.
	Using \autoref{res: shortening - transverse simplicial arc} we build a sequence $(g_k)$ of elements of $G$ such that 
	\begin{enumerate}
		\item $\eta(g_k)$ pointwise fixes $\geo xy$ \oas, in particular $\eta(g_k)$ belongs to $L_e$;
		\item $\limo\dist{x_k}{\phi_k(g_k)y_k} = 0$.
	\end{enumerate}
	For every $k \in \N$, we define a Dehn twist $\alpha_k \in \mcg[0]{L, S}$ over $e$ by mapping the edge $e$ to $e\eta(g_k)$.
	We write $\hat \alpha_k$ for the automorphism of $\hat L_k$ lifting $\alpha_k$ (see \autoref{res: lifting mcg to cover}).
	Proceeding as in the case of peripheral component (see \autoref{sec: shortening peripheral}) we prove that for every $u \in U$,
	\begin{equation*}
		\limo \dist{\hat\phi_k \circ \hat \alpha_k \circ \hat  \eta_k(u)o^+_k}{o^+_k}
		\leq \dist{\eta(u)o^+}{o^+}
	\end{equation*}
	However, since $U$ generates $L$, there is at least one element $u \in U$ such that the geodesic $\geo{o^+}{uo^+}$ contains a translated copy of $\geo xy$.
	For such an element we get
	\begin{equation*}
		\limo \dist{\hat\phi_k \circ \hat \alpha_k \circ \hat  \eta_k(u)o^+_k}{o^+_k}
		\leq \dist{\eta(u)o^+}{o^+} - \dist xy.
	\end{equation*}
	It follows from there that
	\begin{equation*}
		\lambda^+_1\left(\hat \phi_k \circ \hat \alpha_k \circ \eta_k, U\right) \leq \lambda^+_1\left(\phi_k,U\right) - \frac {\dist xy}{\epsilon_k}+ o \left( \frac 1{\epsilon_k}\right). \qedhere
	\end{equation*}
\end{proof}

\begin{rema*}
	We keep the notation of the proof.
	If the point $o^+$ lies on the arc $\geo xy$, we need to subdivide this arc first, but it does not change the strategy.
	
	In \cite{Rips:1994jg} or \cite{Weidmann:2019ue} the authors only consider limit groups over a fixed hyperbolic group.
	The proof of Perin \cite{Perin:2008aa} does not make this assumption. 
	However it requires uniform control on $L_e$.
	More precisely she asks that there exists $g \in G$ in the pre-image of $L_e$ such that $\norm {\phi_k(g)} > 12\delta_k$ \oas.
	This assumption is used to prove the analogue of our \autoref{res: shortening - transverse simplicial arc} see \cite[Lemma~5.25]{Perin:2008aa}.
	In our context, given a triple $(X,\Gamma, \mathcal C) \in \mathfrak H_\delta(\tilde \rho_k)$, we have no control on the injectivity radius of the action of $\Gamma$ on $X$.
	Consequently such a uniform control does not hold in general.
	However, as we have seen in the proof of \autoref{res: shortening - transverse simplicial arc}, it can be advantageously replaced by the fact that hyperbolic isometries of $\Gamma$ are uniformly thin.
\end{rema*}

%------------------------------------------------------------------------------------
%
\subsubsection{Proof of the shortening argument}
%
%------------------------------------------------------------------------------------

We are now in position to complete the proof of the shortening argument.

\begin{proof}[End of the proof of \autoref{res: shortening argument}]
	By assumption the image of $\phi_k \colon G \to \Gamma_k$ is not abelian \oas\ (otherwise so would be $L$).
	Applying \autoref{res: comparing energies}, we get
	\begin{equation*}
		\frac 1{2\card U} \lambda_1^+(\phi_k, U) \leq  \lambda_\infty(\phi_k, U) \leq \frac 1{\epsilon_k}.
	\end{equation*}
	Hence it suffices to prove that there exist $\ell > 0$ and two collections of automorphisms $\alpha_k \in \mcg[0]{L, S}$ and $\hat \alpha_k \in \aut{\hat L_k}$ preserving $\hat S_k$ such that $\zeta_k \circ \hat \alpha_k = \alpha_k \circ \zeta_k$ and satisfying
	\begin{equation*}
		\lambda^+_1\left(\hat \phi_k \circ \hat \alpha_k \circ \hat \eta_k, U\right) \leq \lambda^+_1\left(\phi_k,U\right) - \frac \ell{\epsilon_k} \ \oas.
	\end{equation*}
	Assume that such a statement does not hold.
	In view of Propositions~\ref{res: shortening morphism - axial / surface} and \ref{res: shortening morphism - transverse arc}, every transverse component of $T$ is reduced to a point.
	Said differently, $T$ is covered by its peripheral components.
	It follows from \autoref{res: shortening peripheral - main theo} that every vertex of $S/L$ corresponding to a peripheral subtree has valence one.
	In other words the graph of groups decomposition associated to $S$ has the following form
	\begin{center}
		\begin{tikzpicture}
			\def \a{4}
			\def \b{1}
			\def \r{0.05}
						
			\draw[fill=black] (0,0) circle (\r) node[left, anchor=east]{$B$};
			\draw[fill=black] (\a,2*\b) circle (\r) node[right, anchor=west]{$L_{c_1}$};
			\draw[fill=black] (\a,\b) circle (\r) node[right, anchor=west]{$L_{c_2}$};
			\draw[fill=black] (\a,0) circle (\r) node[right, anchor=west]{$L_{c_3}$};
			\draw[fill=black] (\a,-0.5\b) node[anchor=center]{$\vdots$};
			\draw[fill=black] (\a,-1.2\b) circle (\r) node[right, anchor=west]{$L_{c_m}$};
			
			\draw[thick] (0,0) -- (\a, 2*\b) node[pos=0.7, above, anchor=south]{$N_1$};
			\draw[thick] (0,0) -- (\a, \b) node[pos=0.7, above, anchor=south]{$N_2$};
			\draw[thick] (0,0) -- (\a, 0) node[pos=0.7, above, anchor=south]{$N_3$};
			\draw[thick] (0,0) -- (\a, -1.2*\b) node[pos=0.7, above, anchor=south]{$N_m$};
		\end{tikzpicture}
	\end{center}
	where $B$ is the stabilizer of a transverse component, $c_1, \dots, c_m \in \mathcal C$ is a collection of $L$-representatives of $\mathcal C$, and $N_i$ is the pointwise stabilizer of $\bar P(c_i)$ for every $i \in \intvald 1m$.
	According to \autoref{res: shortening peripheral - main theo}, $N_i$ has finite index in $L_{c_i}$.
	This means that $S$ is a generalized root splitting, which contradicts our assumption.
\end{proof}

% 2024-08-17 READ UNTIL HERE

%%%%%%%%%%%%%%%%%%%%%%%%%%%%%%%%%%%%%%%%%%%%%%%%%%%%%%%%%%%%%%%%%%%%%%%%
%%%%%%%%%%%%%%%%%%%%%%%%%%%%%%%%%%%%%%%%%%%%%%%%%%%%%%%%%%%%%%%%%%%%%%%%
%
\section{Lifting theorems}
%
%%%%%%%%%%%%%%%%%%%%%%%%%%%%%%%%%%%%%%%%%%%%%%%%%%%%%%%%%%%%%%%%%%%%%%%%
%%%%%%%%%%%%%%%%%%%%%%%%%%%%%%%%%%%%%%%%%%%%%%%%%%%%%%%%%%%%%%%%%%%%%%%%
\label{sec: lifting theorems}

In this section we study when a morphism $\phi \colon G \to \Gamma / \Gamma^n$ can be lifted to a morphism $\tilde \phi \colon G \to  \Gamma$.
As we noticed in the introduction, this property heavily depends on the group $G$.
We prove three lifting statements that can be used in different contexts.
The first (and easiest) one applies when no quotient of $G$ has an abelian splitting (\autoref{res: lifting morphism - rigid}). 
It serves mostly as a warm up for the next results.
The second proposition deals with groups $G$ without essential root-splitting (\autoref{res: lifting morphism - killing element for hyperbolic groups}) and the last one pushes, when it is possible, all the standard arguments on limit groups (\autoref{res: lifting quotient wo roots -  inf presented}).

%%%%%%%%%%%%%%%%%%%%%%%%%%%%%%%%%%%%%%%%%%%%%%%%%%%%%%%%%%%%%%%%%%%%%%%%
%
\paragraph{Optimal approximation.}
%
%%%%%%%%%%%%%%%%%%%%%%%%%%%%%%%%%%%%%%%%%%%%%%%%%%%%%%%%%%%%%%%%%%%%%%%%
Our first task is to define a suitable notion of short morphisms for the study of limit groups over the periodic quotients of some hyperbolic group.

From now on the parameter $\delta \in \R_+^*$ and the map $\rho \colon \N \to \R_+^*$ are fixed as in \autoref{res: approximating sequence}.
Let $\tau \in (0,1)$.
We write $N_\tau$ for the critical exponent provided by \autoref{res: approximating sequence}.
Let $n \geq N_\tau$ be an odd integer.
Recall that a $\tau$-bootstrap for the exponent $n$ -- introduced in \autoref{def: bootstrap} -- is a pair $(\Gamma, X)$ designed to serve as the initial step to approximate the $n$-periodic quotients of the group $\Gamma$.
We fix such a bootstrap, say $(\Gamma, X)$.
We denote by $(\Gamma_j, X_j, \mathcal C_j)$ the approximating sequence given by \autoref{res: approximating sequence} to study $\Gamma / \Gamma^n$.
By construction all these triples belong to the class $\mathfrak H_\delta(\rho(n))$ provided by \autoref{def: preferred class}.

Let $G$ be a group and $U$ a finite subset of $G$.
Let $\phi \colon G \to \Gamma / \Gamma^n$.
Recall that an approximation of $\phi$ relative to $U$ is a morphism $\tilde\phi \colon G \to \Gamma_j$ lifting $\phi$ such that $U \cap \ker \tilde\phi  = U \cap \ker \phi$ (see \autoref{def: approx of morphism}).
Such an approximation is minimal if $j$ is minimal for the above property.

\begin{defi}
\label{def: short morphism}
	Let $\epsilon \in \R_+^*$.
	Let $G$ be a group generated by a finite set $U$ and $V$ a subset of $G$.
	Let $\mathcal P \subset {\rm Hom}(G,\Gamma/\Gamma^n)$ be a set of morphisms from $G$ to $\Gamma/\Gamma^n$.
	We say that a morphism $\phi \in \mathcal P$ is \emph{$\epsilon$-short among $\mathcal P$ relative to $V$} if it admits a minimal approximation $\phi_i \colon G \to \Gamma_i$ relative to $V$ such that for every $\psi \in \mathcal P$, for every minimal approximation $\psi_j \colon G \to \Gamma_j$ of $\psi$ relative to $V$, 
	\begin{itemize}
		\item either $i<j$,
		\item or $i=j$ and $\lambda_1^+(\phi_i, U) \leq \lambda_1^+(\psi_j,U) + \epsilon$.
	\end{itemize}
	In this context $\phi_i$ is called an \emph{optimal approximation} of $\phi$.
\end{defi}

\begin{rema}
	For our purpose we are interested in limit groups marked by a finitely generated group $G$.
	If $F$ is a finitely generated free group, recall that any projection $\pi \colon F \onto G$ induces an embedding $\mathfrak G(G) \to \mathfrak G(F)$.
	We often prefer to work in $\mathfrak G(F)$ rather than in $\mathfrak G(G)$.
	The reason is that $G$ may not be finitely presented.
	In such a situation finding approximations of a morphism $G \to \Gamma / \Gamma^n$ is not always automatic, unlike as in \autoref{res: approx - energy minimal approx}.
\end{rema}

%%%%%%%%%%%%%%%%%%%%%%%%%%%%%%%%%%%%%%%%%%%%%%%%%%%%%%%%%%%%%%%%%%%%%%%%
%
\subsection{Rigid groups}
%
%%%%%%%%%%%%%%%%%%%%%%%%%%%%%%%%%%%%%%%%%%%%%%%%%%%%%%%%%%%%%%%%%%%%%%%%

\begin{prop}
\label{res: lifting morphism - rigid gal}
	Let $F$ be a finitely generated free group and $U$ be a free basis of $F$.
	Let $\pi \colon F \onto G$ be an epimorphism where $G$ is a group none of whose quotients admit an abelian splitting.
	Let $\tau \in (0,1)$.
	There exist $\lambda \in \R_+$ and a finite subset $W \subset \ker \pi$, such that for every finite subset $V \subset F$, there is a critical exponent $N \in \N$, with the following property.
	
	Let $n \geq N$ be an odd integer.
	Let $(\Gamma, X)$ be a $\tau$-bootstrap for the exponent $n$.
	For every morphism $\phi \colon F \to \Gamma/ \Gamma^n$ whose image is not abelian, if $W \subset \ker \phi$, then there is a morphism $\tilde \phi \colon F \to \Gamma$ lifting $\phi$ relative to $V$ and such that $\lambda_\infty(\tilde \phi, U) \leq \lambda$.
\end{prop}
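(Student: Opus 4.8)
The plan is to argue by contradiction, in the spirit of the proof of \autoref{res: trichotomy limit groups}: one assembles a sequence of hypothetical counterexamples into a non-abelian divergent limit group over $\mathfrak H_\delta$ and then contradicts the hypothesis on $G$. Fix once and for all an increasing exhaustion $(W_i)_{i\in\N}$ of $\ker\pi$ and keep the constant $\delta$, the function $\rho$ and the critical exponent $N_\tau$ of \autoref{res: approximating sequence}. Supposing the statement false, apply its negation to the pair $(\lambda,W)=(i,W_i)$: this yields a finite subset $V^{(i)}\subset F$ such that for every $N$ there are an odd integer $n\geq N$, a $\tau$-bootstrap $(\Gamma,X)$ for the exponent $n$, and a morphism $\phi\colon F\to\Gamma/\Gamma^n$ with non-abelian image and $W_i\subset\ker\phi$ that admits no lift $F\to\Gamma$ relative to $V^{(i)}$ of energy at most $i$. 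Put $\ell_i=\max\bigl(i,\ \max_{v\in V^{(i)}}|v|\bigr)$; since $\rho(n)\to\infty$, I may then choose the odd exponent $n_i\geq N_\tau$ above so large that $\rho(n_i)\geq 100\,i\,\ell_i$, obtaining a $\tau$-bootstrap $(\Gamma_i,X_i)$ for $n_i$ and a morphism $\phi_i\colon F\to\Gamma_i/\Gamma_i^{n_i}$ with the stated failure property. By compactness of $\mathfrak G(F)$, pass to a subsequence so that $(\Gamma_i/\Gamma_i^{n_i},\phi_i)$ converges to some $(L,\eta)\in\mathfrak G(F)$.

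First I would record the properties of $(L,\eta)$. As $W_i\subset\ker\phi_i$ and $(W_i)$ exhausts $\ker\pi$, the stable kernel of $(\phi_i)$ contains $\ker\pi$, so $\eta$ factors through $\pi$ and $L$ is a quotient of $G$; hence $L$ has no abelian splitting, and since the trivial group is abelian $L$ is absolutely freely indecomposable --- so \autoref{ass: splitting - freely indecomposable} holds and $L$ does not split as a free product relative to its family $\mathcal H$ of elusive subgroups. Also $L$ is non-abelian: otherwise every commutator of generators of $F$ lies in $\ker\eta$, hence in $\ker\phi_i$ for large $i$, contradicting that the image of $\phi_i$ is non-abelian. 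Next, for each $i$ fix an approximation sequence $(\Gamma_{j,i},X_{j,i},\mathcal C_{j,i})_{j\in\N}$ of $\Gamma_i/\Gamma_i^{n_i}$ and a minimal approximation $\psi_i\colon F\to\Gamma_{j_i,i}$ of $\phi_i$ relative to the ball $U_i\subset F$ of radius $\ell_i$; this minimal approximation exists since $F$ is finitely presented and $U_i$ finite, and $V^{(i)}\subset U_i$. Because $U_i\cap\ker\psi_i=U_i\cap\ker\phi_i$ and the $U_i$ exhaust $F$, the sequence $(\Gamma_{j_i,i},\psi_i)$ --- all of whose triples lie in $\mathfrak H_\delta$ --- again converges to $(L,\eta)$.

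The core of the proof is to bound the energies $\lambda_\infty(\psi_i,U)$, measured in the $\delta$-hyperbolic spaces $X_{j_i,i}$. If they were unbounded, a subsequence would diverge to infinity and $(L,\eta)$ would be a non-abelian divergent limit group over $\mathfrak H_\delta$ (\autoref{def: div limit group}); then \autoref{res: final decomposition tree} decomposes the action of $L$ on its limit tree $T$ as a graph of actions over abelian subgroups and yields a splitting $S$ of $L$ over abelian subgroups. If $S$ is non-trivial this already contradicts that $L$ is a quotient of $G$; if $S$ is trivial, all of $T$ is a single vertex action of the graph of actions, and every possibility is excluded --- it cannot be peripheral (cone-point stabilizers are abelian, \autoref{res: tree-graded - abelian piece stabilizer}), nor axial ($L$ would be small and non-elliptic, hence abelian by \autoref{res: tree-graded - small subgroups}); a Seifert-type action would make $L$ virtually free, hence abelian if virtually cyclic (again \autoref{res: tree-graded - small subgroups}) and otherwise admitting a non-trivial splitting over a finite, hence abelian, subgroup; and a simplicial action would make $T$ a minimal non-trivial simplicial $L$-tree with abelian edge stabilizers (\autoref{res: abelian arc stabilizers}), an abelian splitting. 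So $\lambda_\infty(\psi_i,U)\leq\Lambda_0$ for some $\Lambda_0$ and all $i$. Finally, since $\phi_i$, hence $\psi_i$, has non-abelian image, \autoref{res: approx - energy minimal approx} with $\ell=\ell_i$ gives $j_i=0$ or $\lambda_\infty(\psi_i,U)\geq\rho(n_i)/(100\ell_i)\geq i$; once $i>\Lambda_0$ the latter is impossible, so $j_i=0$ and $\psi_i\colon F\to\Gamma_{0,i}=\Gamma_i$ lifts $\phi_i$ relative to $V^{(i)}$ (as $V^{(i)}\subset U_i$) with energy $\lambda_\infty(\psi_i,U)\leq\Lambda_0\leq i$, contradicting the choice of $\phi_i$. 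This refutes the negation, and the proposition follows with $\lambda=\Lambda_0$ and $W=W_{i_0}$ for $i_0$ large.

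The hard part is the step that excludes divergence: turning ``no quotient of $G$ has an abelian splitting'' into ``$(L,\eta)$ is not a non-abelian divergent limit group''. Quoting \autoref{res: final decomposition tree} does not suffice on its own, because the graph of actions may collapse to a single vertex; the genuine work is checking, for each of the four types of vertex action, that the surviving action remains incompatible with $L$ being simultaneously non-abelian and freely indecomposable. The remaining ingredients --- the compactness extraction, the calibration of the exponents $n_i$ against $\rho$, and the reduction to minimal approximations --- are routine and closely parallel \autoref{res: trichotomy limit groups} and \autoref{res: approx - energy minimal approx}.
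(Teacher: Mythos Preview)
Your proof is correct and follows essentially the same route as the paper: argue by contradiction, take minimal approximations of the offending morphisms, pass to a non-abelian divergent limit group over $\mathfrak H_\delta$ that is a quotient of $G$, and use the limit-tree decomposition (\autoref{res: final decomposition tree}) to produce a forbidden abelian splitting. The paper packages the argument a bit more tightly by showing directly that $\lambda_\infty(\psi_k,U)\geq k$ in \emph{both} cases (when $j_k=0$ the approximation is already a lift relative to $V_k$, so the negation forces energy $\geq k$; when $j_k\geq 1$ one invokes \autoref{res: approx - energy minimal approx}), thereby avoiding your bounded/unbounded dichotomy, and it leaves implicit the case analysis you carry out for a single-vertex skeleton---but these are stylistic rather than substantive differences.
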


\begin{proof}
	We fix a non-decreasing exhaustion $(W_k)$ of $\ker \pi$ by finite subsets.
	Assume on the contrary that the statement is false.
	For every $k \in \N$, there exists a finite subset $V_k \subset F$ such that for every $N \in \N$, there are an odd integer $n \geq N$, a $\tau$-bootstrap $(\Gamma, X)$ for the exponent $n$, and a morphism $\phi \colon F \to \Gamma / \Gamma^n$ whose image is not abelian, such that $W_k \subset \ker \phi$, but every lift $\tilde \phi \colon F \to \Gamma$ of $\phi$ relative to $V_k$ satisfies $\lambda_\infty(\tilde \phi, U) \geq k$. 
	
	We write $\ell_k$ for the length of the longest element in $U_k = V_k \cup W_k$ (seen as a word over $U$).
	The map $\rho \colon \N \to \R_+$ diverges to infinity, hence there exists $M_k \in \N$ such that for every integer $n \geq M_k$ we have
	\begin{equation*}
		\frac {\rho(n)}{100\ell_k} \geq k.
	\end{equation*}
	In addition, we write $N_\tau$ for the critical exponent given by \autoref{res: approximating sequence}.

	By definition of $(V_k)$,  for every $k \in \N$, there exist $n_k \geq \max \{N_\tau, M_k\}$, a $\tau$-bootstrap $(\Gamma_k, X_k)$ for the exponent $n_k$ and a morphism $\phi_k \colon F \to \Gamma_k / \Gamma_k^{n_k}$ whose image is not abelian, such that $W_k \subset \ker \phi_k$, but every lift  $\tilde \phi_k \colon F \to \Gamma_k$ of $\phi_k$ relative to $V_k$ satisfies $\lambda_\infty(\tilde \phi_k, U) \geq k$. 
	For each $k \in \N$, \autoref{res: approximating sequence}, provides an approximating sequence 
	\begin{equation*}
		(\Gamma_{j,k}, X_{j,k}, \mathcal C_{j,k})_{j \in \N}
	\end{equation*}
	of $\Gamma_k/\Gamma_k^{n_k}$.
	We denote by $\psi_k \colon F \to \Gamma_{j_k, k}$ a minimal approximation of $\phi_k$ relative to $U_k$  (see \autoref{def: approx of morphism}).
	Recall that $V_k$ is contained in $U_k$.
	If $j_k = 0$, then $\psi_k \colon F \to \Gamma$ is a fortiori a lift of $\phi_k$ relative to $V_k$.
	Hence  $\lambda_\infty(\psi_k,U) \geq k$.
	If $j_k \geq 1$, combining \autoref{res: approx - energy minimal approx} with our choice of $M_k$, we get
	\begin{equation*}
		\lambda_\infty(\psi_k, U) \geq \frac {\rho(n_k)}{100\ell_k} \geq k
	\end{equation*}
	Consequently, up to passing to a subsequence, $(\Gamma_{j_k, k}, \psi_k)$ converges to a non-abelian, divergent limit group over $\mathfrak H_\delta$, say $(L, \eta)$ .
	By construction for every $k \in \N$, 
	\begin{equation*}
		W_k \cap \ker \psi_k = W_k \cap \ker \phi_k = W_k.
	\end{equation*}
	Consequently $L$ is necessarily a quotient of $G$.
	Proceeding as in \autoref{sec: action on limit tree} we build an action of $L$ on a limit $\R$-tree $T$.
	This action decomposes as a graph of actions (\autoref{res: final decomposition tree}).
	Since $L$ is non-abelian, it necessarily has a non-trivial abelian splitting, which contradicts our assumption.
\end{proof}

\begin{nota*}
	In the next statement, $\iota_\gamma$ stands for the conjugation by $\gamma \in \Gamma$.
\end{nota*}

\begin{coro}
\label{res: lifting morphism - rigid}
	Let $G$ be a finitely generated group none of whose quotient admits an abelian splitting.
	Let $\Gamma$ be a non-elementary, torsion-free, hyperbolic group.	
	There exist $N \in \N$ and finite collection $\{\psi_1, \dots, \psi_p\}$ of morphisms from $G$ to $\Gamma$, such that for every odd integer $n \geq N$, the following holds.
	
	For every morphism $\phi \colon G \to \Gamma/ \Gamma^n$ whose image is not abelian, there exist $i \in \intvald 1p$ and $\gamma \in \Gamma$, such that $\iota_\gamma \circ \psi_i$ lifts $\phi$.
\end{coro}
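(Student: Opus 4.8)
The statement should follow by combining \autoref{res: lifting morphism - rigid gal} with a standard compactness argument in the space of marked groups, together with equational noetherianity of $\Gamma$. The plan is as follows. Fix a finite generating set $U$ of $G$ and let $F$ be the free group on $U$, with $\pi \colon F \onto G$ the canonical epimorphism. Since $G$ is only finitely generated (not necessarily finitely presented), I cannot directly apply \autoref{res: lifting morphism - rigid gal} with a fixed finite $V$; instead I will first produce, via \autoref{res: lifting morphism - rigid gal}, a finite subset $W \subset \ker \pi$ and a bound $\lambda$. Then I will show that after replacing $G$ by the (still rigid) quotient $G' = F/\normal W$, every morphism $G \to \Gamma/\Gamma^n$ whose image is not abelian lifts to $\Gamma$ with $L^\infty$-energy at most $\lambda$; the point is that a morphism $G \to \Gamma/\Gamma^n$ factors through $G'$, so applying \autoref{res: lifting morphism - rigid gal} with $V = \emptyset$ (or any convenient finite set) produces the lift.

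\textbf{Key steps.} First, I would invoke \autoref{res: lifting morphism - rigid gal} to obtain $\lambda \in \R_+$ and a finite $W \subset \ker\pi$, choosing $\tau$ so that $(\Gamma, X)$ is a $\tau$-bootstrap for the Cayley graph $X$ of $\Gamma$; this also yields a critical exponent $N_0$. Second, having the uniform energy bound $\lambda$, I note that every lift $\tilde\phi \colon F \to \Gamma$ with $\lambda_\infty(\tilde\phi, U) \leq \lambda$ is, up to conjugation by an element $\gamma \in \Gamma$, a morphism taking each generator in $U$ into the ball of radius $\lambda + 1$ (say) around the identity in $\Gamma$; since $\Gamma$ is hyperbolic (hence has solvable word problem and finitely many elements in each ball), there are only \emph{finitely many} conjugacy classes of such morphisms $F \to \Gamma$ whose restriction to $W$ is trivial. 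Among these, select representatives $\psi_1, \dots, \psi_p \colon F \to \Gamma$; since each kills $W$, each descends to a morphism $G' \to \Gamma$, and since $W \subset \ker \pi$, each further descends to $G \to \Gamma$ (abusing notation, still denoted $\psi_i$). Third, given any odd $n \geq N := N_0$ and any $\phi \colon G \to \Gamma/\Gamma^n$ with non-abelian image, pre-compose with $\pi$ to get $\phi \circ \pi \colon F \to \Gamma/\Gamma^n$; since $W \subset \ker\pi \subset \ker(\phi\circ\pi)$, \autoref{res: lifting morphism - rigid gal} produces a lift $\tilde\phi \colon F \to \Gamma$ with $\lambda_\infty(\tilde\phi, U) \leq \lambda$ (and $\ker\tilde\phi$ containing $W$, so $\tilde\phi$ descends to $G$). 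By the finiteness from the second step, $\tilde\phi$ is conjugate to some $\psi_i$, i.e. $\tilde\phi = \iota_\gamma \circ \psi_i$ for some $\gamma\in\Gamma$ and $i \in \intvald 1p$. Composing with the projection $\Gamma \onto \Gamma/\Gamma^n$ shows $\iota_\gamma \circ \psi_i$ lifts $\phi$, which is exactly the conclusion.

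\textbf{Main obstacle.} The delicate point is the passage from the energy bound to a \emph{finite} list of morphisms $\psi_i$ that works \emph{uniformly in $n$}. The energy bound $\lambda_\infty(\tilde\phi, U) \leq \lambda$ guarantees that, after conjugating so that an energy-minimizing point is within bounded distance of the basepoint $1 \in X$, each $\tilde\phi(u)$ for $u \in U$ lies in a fixed finite set of elements of $\Gamma$; hence there are only finitely many such $\tilde\phi$ up to conjugacy, and crucially this finite set depends only on $\Gamma$, $U$, and $\lambda$ — not on $n$. One must be slightly careful that the lift $\tilde\phi$ of $\phi \circ \pi$ indeed kills $W$: this follows because \autoref{res: lifting morphism - rigid gal} provides a lift \emph{relative to} any finite $V$, and enlarging $V$ to contain $W$ (which is legitimate since $W$ is finite and $W \subset \ker(\phi\circ\pi)$) forces $W \cap \ker\tilde\phi = W \cap \ker(\phi\circ\pi) = W$. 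A second, more minor issue is ensuring the $\psi_i$ genuinely factor through $G$ and not merely through $G' = F/\normal W$; but since $W \subset \ker \pi$ by construction, the quotient map $F \onto G$ factors through $F \onto G'$, so any morphism killing $W$ that we care about can be viewed on $G$ directly — and in fact it suffices to let the $\psi_i$ be morphisms from $G$ since every relevant source morphism factors through $\pi$. The rest is bookkeeping.
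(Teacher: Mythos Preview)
Your overall strategy matches the paper's, but there is a genuine gap in the step you label a ``minor issue.'' You obtain a lift $\tilde\phi \colon F \to \Gamma$ with $W \subset \ker\tilde\phi$, and correctly note that this makes $\tilde\phi$ factor through $G' = F/\normal W$. However, the claim that $\tilde\phi$ then ``descends to $G$'' is wrong: since $W \subset \ker\pi$, the group $G$ is a \emph{quotient} of $G'$, so morphisms out of $G'$ do not automatically descend to $G$. Your sentence ``any morphism killing $W$ that we care about can be viewed on $G$ directly'' has the direction of the quotient map backwards, and the follow-up ``every relevant source morphism factors through $\pi$'' is precisely what needs to be proved. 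As written, your $\psi_i$ are only morphisms $G' \to \Gamma$, and the conclusion of the corollary requires morphisms $G \to \Gamma$.

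The fix is exactly the use of equational noetherianity you mention in your plan but never deploy. The paper chooses the finite set $V \subset \ker\pi$ \emph{first}, via equational noetherianity of $\Gamma$, so that any morphism $F \to \Gamma$ whose kernel contains $V$ automatically factors through $\pi$. Then \autoref{res: lifting morphism - rigid gal} is applied with this particular $V$: the lift $\tilde\mu$ it produces satisfies $V \cap \ker\tilde\mu = V \cap \ker(\phi\circ\pi) = V$, hence $V \subset \ker\tilde\mu$, hence $\tilde\mu$ factors through $G$. The finite list $\psi_1,\dots,\psi_p$ is then taken among morphisms $G \to \Gamma$ of energy at most $\lambda$, which is finite because the action of $\Gamma$ on $X$ is proper and cocompact. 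Once you insert this use of equational noetherianity in place of your $G'$-argument, the rest of your outline is correct and coincides with the paper's proof.
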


\begin{proof}
	Let $X$ be Cayley graph of $\Gamma$.
	It is a $\tau$-bootstrap from some $\tau \in (0,1)$ (and for every exponent $n \in \N$).
	Let $F$ be the free group generated by a finite generating set $U$ of $G$ and $\pi \colon F \onto G$ the corresponding epimorphism.
	Being torsion-free hyperbolic, $\Gamma$ is equationally Noetherian \cite[Theorem~1.22]{Sela:2009bh}.
	Consequently there exists a finite subset $V \subset \ker \pi$ with the following property:
	for every morphism $\phi \colon F \to \Gamma$, if $V \subset\ker \phi$, then $\phi$ factors through $\pi$.
	Let $\lambda \in \R_+$, $W \subset \ker \pi$ and $N \in \N$ be the parameters given by \autoref{res: lifting morphism - rigid gal}.
	Since the action of $\Gamma$ on $X$ is proper and co-compact, there are finitely many morphisms $\psi_1, \dots, \psi_p$ from $G$ to $\Gamma$ such that every morphism $\psi \colon G \to \Gamma$ satisfying $\lambda_\infty(\psi, U) \leq \lambda$ is conjugated to one of the $\psi_j$.
	
	Consider now an odd integer $n \geq N$ and a morphism $\phi \colon G \to \Gamma / \Gamma^n$ whose image is not abelian.
	By construction $W$ is contained in $\ker \phi \circ \pi$.
	According to \autoref{res: lifting morphism - rigid gal}, there exists a morphism $\tilde \mu \colon F \to \Gamma$ lifting $\mu = \phi \circ \pi$ relative to $V$ such that $\lambda_\infty(\tilde \mu, U) \leq \lambda$.
	Recall that $V$ is contained in $\ker \pi$, hence $V$ is also contained in $\ker \tilde \mu$.
	Consequently $\tilde \mu$ factors through $\pi$.
	The resulting morphism $\tilde \phi \colon G \to \Gamma$ is a lift of $\phi$ such that  $\lambda_\infty(\tilde \phi, U) \leq \lambda$.
	In particular, $\tilde \phi$ is conjugated to one of the $\psi_i$.
\end{proof}

% 2024-08-17 READ UNTIL HERE

%%%%%%%%%%%%%%%%%%%%%%%%%%%%%%%%%%%%%%%%%%%%%%%%%%%%%%%%%%%%%%%%%%%%%%%%
%
\subsection{(Almost) injective morphisms}
%
%%%%%%%%%%%%%%%%%%%%%%%%%%%%%%%%%%%%%%%%%%%%%%%%%%%%%%%%%%%%%%%%%%%%%%%%

\begin{prop}
\label{res: lifting morphism - killing elements}
	Let $F$ be a finitely generated free group and $U$ be a free basis of $F$.
	Let $\pi \colon F \onto G$ be an epimorphism where $G$ is a  freely indecomposable group with no essential root splitting.
	Let $\tau \in (0,1)$.
	There exist a finite subset $W \subset G \setminus\{1\}$ and  $\lambda \in \R_+$, such that for every finite subset $V \subset F$, there is a critical exponent $N \in \N$ with the following properties.
	
	Let $n \geq N$ be an odd integer.
	Let $(\Gamma, X)$ be a $\tau$-bootstrap for the exponent $n$.
	For every morphism $\phi \colon G \to \Gamma / \Gamma^n$ whose image is not abelian, there exists $\alpha \in \aut G$ such that one of the following holds:
	\begin{itemize}
		\item $W \cap \ker(\phi \circ \alpha) \neq \emptyset$;
		\item there is a morphism $\psi \colon F \to \Gamma$ lifting $\phi \circ \alpha \circ \pi$ relative to $V$ such that $\lambda_\infty(\psi, U) \leq \lambda$.
	\end{itemize}
\end{prop}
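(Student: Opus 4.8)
The statement is the ``almost injective'' lifting theorem, and the natural strategy is a proof by contradiction running in parallel with \autoref{res: lifting morphism - rigid gal}, but now using the full machinery of \autoref{sec: action on limit tree}, the JSJ decomposition, and the shortening argument. First I would fix a non-decreasing exhaustion $(W_k)$ of $G \setminus\{1\}$ by finite subsets and, separately, a non-decreasing exhaustion $(V_k)$ of $F$; I would also fix an increasing sequence $(\lambda_k)$ of candidate energy bounds diverging to infinity. Assuming the statement fails for every choice, for each $k$ I obtain an odd integer $n_k \geq \max\{N_\tau, M_k\}$ (where $M_k$ is chosen so that $\rho(n_k)/(100\ell_k) \geq k$, with $\ell_k$ bounding the lengths of elements of $U_k = V_k$ as words over $U$), a $\tau$-bootstrap $(\Gamma_k, X_k)$ for the exponent $n_k$, and a morphism $\phi_k \colon G \to \Gamma_k/\Gamma_k^{n_k}$ with non-abelian image such that for \emph{every} $\alpha \in \aut G$ we have both $W_k \cap \ker(\phi_k \circ \alpha) = \emptyset$ and no lift of $\phi_k \circ \alpha \circ \pi$ relative to $V_k$ with energy $\leq k$. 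The point of ranging over all $\alpha$ is that I may replace each $\phi_k$ by a \emph{short} representative in its $\aut G$-orbit (with respect to the optimal-approximation ordering of \autoref{def: short morphism}); the first condition says the short representatives still have all of $W_k$ outside their kernels.

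Next I pass to the limit. Taking minimal approximations $\psi_k \colon F \to \Gamma_{j_k,k}$ of $\phi_k \circ \pi$ relative to $U_k$ and applying \autoref{res: approx - energy minimal approx} together with the choice of $M_k$, the energy $\lambda_\infty(\psi_k, U)$ diverges to infinity (the case $j_k = 0$ is excluded exactly as in \autoref{res: lifting morphism - rigid gal}, since then $\psi_k$ would itself be a low-energy lift relative to $V_k$). So, up to a subsequence, $(\Gamma_{j_k,k}, \psi_k)$ converges to a non-abelian divergent limit group $(L, \eta)$ over $\mathfrak H_\delta$, and since $W_k \subset \ker\psi_k = \ker(\phi_k \circ \pi)$ eventually fails in an increasingly weak way, $L$ is fully residually the periodic quotients and is in fact a quotient of $G$. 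The key structural input is that $G$ is freely indecomposable with no essential root splitting; I would need to check (this is a standard but nontrivial point) that $L$ inherits freely indecomposability relative to its family $\mathcal H$ of elusive subgroups — if $L$ split as a free product relative to $\mathcal H$, one pulls this back to a splitting of a quotient of $G$, contradicting free indecomposability of $G$ together with the structure of elusive subgroups. Then \autoref{res: final decomposition tree} gives the complete splitting $S$ of $L$ with respect to its limit tree $T$, and I must argue that $S$ is \emph{not} a generalized root splitting: otherwise, by \autoref{res: root splitting extending morphism} / \autoref{res: root splitting isom burnside} applied to the adjoint root factors, $L$ would be a quotient of $G$ carrying an essential root splitting (or $G$ itself would, after pulling back the generalized root splitting through $\pi$ and using that $n_k$ is odd hence coprime to the orders), contradicting the hypothesis on $G$.

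With $S$ not a generalized root splitting, the shortening argument \autoref{res: shortening argument} applies: choosing a convergent sequence of finitely presented strong covers $(\hat L_k, \hat\eta_k, \hat S_k)$ of $(L, \eta, S)$ via \autoref{res: graph of groups cover} (here I need $G$, hence $F/\ker\pi$, to admit such covers — this uses that $S$ is a splitting over abelian groups), and factoring $\psi_k$ through $\hat\eta_k$, I obtain $\hat\alpha_k \in \aut{\hat L_k}$ and $\alpha_k \in \mcg{L,S}$ with $\zeta_k \circ \hat\alpha_k = \alpha_k \circ \zeta_k$ and a definite proportional drop $\lambda_1^+(\hat\phi_k \circ \hat\alpha_k \circ \hat\eta_k, U) \leq (1-\kappa)\lambda_1^+(\psi_k, U)$ for infinitely many $k$. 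The subtle final step — and the one I expect to be the main obstacle — is the bookkeeping that converts this into a contradiction with the minimality of the chosen short representative $\phi_k$ in its $\aut G$-orbit. One must descend $\hat\alpha_k$ through $\zeta_k$ to get $\alpha_k \in \mcg{L,S}$, lift $\alpha_k$ to an element $\beta_k \in \aut G$ (modular automorphisms of $L$ are realized, at the level of the marked group, by automorphisms of $G$ up to the stable kernel — this requires identifying $\mcg{L,S}$ with automorphisms of a quotient of $G$ and lifting along $\pi$, a point that needs care), and then observe that $\phi_k \circ \beta_k$ has a strictly better optimal approximation than $\phi_k$ for all large $k$: either its minimal approximation sits at a strictly smaller level $j$, or at the same level with strictly smaller restricted $L^1$-energy (by the $(1-\kappa)$ factor, once $k$ is large enough that the energy was large). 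Since $\phi_k$ was chosen short among its $\aut G$-orbit relative to $V_k$, and $\beta_k(V_k)$-considerations can be absorbed by enlarging the exhaustion, this is the desired contradiction, and it also explains why $W$ appears: the first bullet of the conclusion is precisely the escape hatch used to guarantee that the short representatives remain genuinely ``large'' (non-degenerate on $W_k$) so that the limit group $L$ is non-abelian and the whole machine engages.
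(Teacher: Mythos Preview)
Your outline follows the paper's architecture closely, but you miss one key observation that dissolves all three of the difficulties you flag. The limit group is not merely a quotient of $G$: it is $(G,\pi)$ itself. Indeed, since $(W_k)$ exhausts $G\setminus\{1\}$ and the negation of the statement gives $W_k\cap\ker(\phi_k\circ\alpha)=\emptyset$ for \emph{every} $\alpha\in\aut G$ (in particular $\alpha=\id$), the stable kernel of $(\phi_k)$ as maps $G\to\Gamma_k/\Gamma_k^{n_k}$ is trivial; hence the stable kernel of $(\phi_k\circ\pi)$ is exactly $\ker\pi$. Together with $U_k\cap\ker\psi_k=U_k\cap\ker(\phi_k\circ\pi)\supset U_k\cap\ker\pi$ (from the definition of approximation relative to $U_k$), this forces $(\Gamma_{j_k,k},\psi_k)\to(G,\pi)$.

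Once $L=G$, everything you worried about is automatic. Free indecomposability of $L$ relative to $\mathcal H$ holds because $G$ is freely indecomposable outright. The complete splitting $S$ is a splitting of $G$, and since $G$ is freely indecomposable every root splitting of $G$ is essential, so by hypothesis $G$ has no root splitting at all; hence $S$ cannot be a generalized root splitting. Finally, the shortening argument produces $\alpha_k\in\mcg{G,S}\subset\aut G$ directly --- there is nothing to lift. The strong covers are covers $(\hat G_k,\hat\pi_k,\hat S_k)$ of $(G,\pi,S)$, and the shortened map $\hat\psi_k\circ\hat\alpha_k\circ\hat\pi_k$ is an approximation of $\phi_k\circ\alpha_k\circ\pi$ relative to $U_k$; its strictly smaller restricted energy contradicts the $\epsilon$-shortness of $\phi_k\circ\pi$ among $\mathcal P_k=\{\phi_k\circ\alpha\circ\pi:\alpha\in\aut G\}$.

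By contrast, the routes you propose for these steps would not work. A free splitting of a proper quotient of $G$ does not pull back to one of $G$ (a surface group has free quotients), a root splitting of a quotient does not pull back to one of $G$, and automorphisms of a proper quotient need not lift to $\aut G$. The whole point of the finite set $W$ is precisely to force $L=G$ so that none of these issues arise.
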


\begin{proof}
	Let $\tau \in (0,1)$.
	We fix a non-decreasing exhaustion $(W_k)$ of $G\setminus\{1\}$ by finite subsets.
	We assume that the statement is false.
	In particular, the following fact holds
	\begin{fact}
	\label{res: lifting morphism - killing elements - negation}
		For every $k \in \N$, there is a finite subset $V_k \subset F$ such that for every $N \in \N$, there exist an odd integer $n \geq N$, a $\tau$-bootstrap $(\Gamma, X)$ for the exponent $n$, and a morphism $\phi \colon G \to \Gamma / \Gamma^n$ whose image is not abelian such that for every $\alpha \in \aut G$, the following holds.
		\begin{enumerate}
			\item \label{res: lifting morphism - stably one-to-one - kernel}
			$W_k \cap \ker(\phi \circ \alpha) = \emptyset$
			\item \label{res: lifting morphism - stably one-to-one - lift}
			if $\phi \circ \alpha \circ \pi$ admits a lift $\psi \colon F \to \Gamma_k$ relative to $V_k$, then $\lambda_\infty(\psi, U) \geq k$.
		\end{enumerate}
	\end{fact}

	We fix a non-decreasing exhaustion $(U_k)$ of $F$ by finite subsets such that $V_k \subset U_k$ for every $k \in \N$.
	We write $\ell_k$ for the length of the longest element in $U_k$ (seen as a word over $U$).
	Since the map  $\rho \colon \N \to \R_+$ diverges to infinity, for every $k \in \N$, there exists $M_k$ such that for every $n \geq M_k$, 
	\begin{equation*}
		\frac {\rho(n)}{100\ell_k} \geq k.
	\end{equation*}

	Let $N_\tau$ be the critical exponent given by \autoref{res: approximating sequence}.
	For every $k \in \N$, there is an odd integer $n_k \geq \max \{N_\tau, M_k\}$, a $\tau$-bootstrap $(\Gamma_k, X_k)$ for the exponent $n_k$ and a morphism $\phi_k \colon G \to \Gamma_k / \Gamma_k^{n_k}$ as in \autoref{res: lifting morphism - killing elements - negation}.
	For each $k \in \N$, \autoref{res: approximating sequence}, provides an approximating sequence 
	\begin{equation*}
		(\Gamma_{j,k}, X_{j,k}, \mathcal C_{j,k})_{j \in \N}
	\end{equation*}
	of $\Gamma_k/\Gamma_k^{n_k}$.
	Let $\epsilon \in \R_+^*$.
	Up to pre-composing $\phi_k$ with an element of $\aut G$, we can assume that $\phi_k \circ \pi$ is $\epsilon$-short among $\mathcal P_k = \set{\phi_k \circ \alpha \circ \pi}{\alpha \in \aut G}$ relative to $U_k$. 
	We denote by $\psi_k \colon F \to \Gamma_{j_k, k}$ an optimal approximation of $\phi_k \circ \pi$ relative to $U_k$.	
	
	By construction $\ker \psi_k$ is contained in $\ker(\phi_k \circ \pi)$.
	On the one hand, by \ref{res: lifting morphism - stably one-to-one - kernel} the stable kernel of $(\phi_k \circ \pi)$ -- see (\ref{eqn: stable kernel}) for the definition -- is exactly $\ker \pi$.
	On the other hand, $U_k \cap \ker \pi$ is contained in $U_k \cap \ker (\phi_k \circ \pi)$ and therefore in $\ker \psi_k$ for every $k \in \N$.
	If follows that $(\Gamma_{j_k,k}, \psi_k)$ converges to $(G, \pi)$ in the space of marked groups $\mathfrak G(F)$.
	We next claim that for every $k \in \N$,
	\begin{equation*}
		\lambda_\infty(\psi_k, U) \geq k
	\end{equation*}
	Suppose first that $j_k = 0$.
	Since $V_k$ is contained in $U_k$, the morphism $\psi_k$ is a fortiori a lift of $\phi_k \circ \pi$ relative to $V_k$.
	Thus the conclusion follows from \ref{res: lifting morphism - stably one-to-one - lift}.
	Suppose now that $j_k \geq 1$.
	Observe that the image of $\psi_k$ is not abelian. 
	Indeed otherwise so would be the one of $\phi_k$.
	Recall that $\ell_k$ is the length of the longest element of $U_k$, while $\psi_k$ is a minimal lift of $\phi_k \circ \pi$ relative to $U_k$.
	By \autoref{res: approx - energy minimal approx}, we have $\lambda_\infty(\psi_k, U) \geq \rho(n_k) / 100\ell_k$ and the conclusion follows from our choice of $M_k$.
	
	It follows from our claim that $\lambda_\infty(\psi_k, U)$ diverges to infinity, that is $(G, \pi)$ is a divergent limit group.
	Proceeding as in \autoref{sec: action on limit tree}, we build an action of $G$ on a limit $\R$-tree $T$.
	This action decomposes as a graph of action, where each vertex action is either peripheral, simplicial, axial or has Seifert type (\autoref{res: final decomposition tree}).
	We denote by $S$ the complete splitting of $G$ with respect to $T$, see \autoref{def: complete graph of action}.
	We write $(\hat G_k, \hat \pi_k, \hat S_k)$ for a sequence of finitely presented strong covers of $(G, \pi, S)$ that converges to $(G, \pi, S)$, see \autoref{def: conv strong cover}.
	For every $k \in \N$, we denote by $\zeta_k \colon \hat G_k \onto G$ the associated projection.
	Up to replacing the original sequence $(\phi_k)$ by a subsequence, we can always assume that $\ker \hat \pi_k$ is generated (as a normal subgroup) by $U_k \cap \ker \hat \pi_k$.
	By construction, 
	\begin{equation*}
		U_k \cap \ker \hat \pi_k \subset U_k \cap \ker \pi \subset U_k \cap \ker (\phi_k \circ \pi) \subset U_k \cap \ker \psi_k,
	\end{equation*}
	thus $\ker \hat \pi_k$ lies in the kernel of $\psi_k$.
	Hence $\psi_k \colon F \to \Gamma_{j_k, k}$ factors through $\hat \pi_k \colon F \onto \hat G_k$.
	We write $\hat \psi_k \colon \hat G_k \to \Gamma_{j_k, k}$ for the resulting morphism.
	
	According to our assumption $G$ is freely indecomposable and does not admit any root splitting.
	Hence we can perform the shortening argument (\autoref{sec: shortening}) with $(\hat G_k, \hat \pi_k, \hat S_k)$ as the sequence of strong covers.
	By \autoref{res: shortening argument}, there exists $\kappa > 0$, as well as automorphisms $\alpha_k$ in $\mcg {G,S}$ and $\hat \alpha_k \in \aut{\hat G_k}$ such that for infinitely many $k \in \N$, we have $\alpha_k \circ \zeta_k = \zeta_k \circ \hat \alpha_k$ and 
	\begin{equation*}
		\lambda_1^+(\hat \psi_k \circ \hat \alpha_k \circ \hat \pi_k, U) < (1-\kappa)\lambda_1^+(\psi_k, U) 
	\end{equation*}
	Observe that $\hat \psi_k \circ \hat \alpha_k \circ \hat \pi_k$ is an approximation of $\phi_k \circ \alpha_k \circ \pi$.
	Since $\phi_k \circ \pi$ is $\epsilon$-short, this approximation is necessarily minimal.
	Moreover it satisfies
	\begin{equation*}
		\lambda_1^+(\psi_k, U) \leq \lambda_1^+(\hat \psi_k \circ \hat \alpha_k \circ \hat \pi_k, U) + \epsilon \leq (1-\kappa)\lambda_1^+(\psi_k,U)  + \epsilon.
	\end{equation*}
	Recall that $\lambda_\infty(\psi_k, U)$ diverges to infinity.
	This leads to a contradiction if $k$ is sufficiently large, whence the result.
\end{proof}

\begin{coro}
\label{res: lifting morphism - killing element for hyperbolic groups}
	Let $G$ be a finitely generated, freely indecomposable group with no essential root splitting.
	Let $\Gamma$ be a non-elementary, torsion-free, hyperbolic group.
	There exist a finite subset $W \subset G \setminus \{1\}$ and a critical exponent $N \in \N$, with the following properties.
	For every odd integer $n \geq N$, for every morphism $\phi \colon G \to \Gamma / \Gamma^n$ whose image is not abelian, one of the following holds:
	\begin{itemize}
		\item there is an automorphism $\alpha \in \aut G$ such that $W \cap \ker(\phi \circ \alpha) \neq \emptyset$,
		\item there is a monomorphism $\tilde \phi \colon G \to \Gamma$ lifting $\phi$.
	\end{itemize}
\end{coro}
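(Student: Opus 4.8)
The plan is to deduce \autoref{res: lifting morphism - killing element for hyperbolic groups} from \autoref{res: lifting morphism - killing elements} in essentially the same way that \autoref{res: lifting morphism - rigid} is deduced from \autoref{res: lifting morphism - rigid gal}. Fix a Cayley graph $X$ of $\Gamma$; since $\Gamma$ is torsion-free hyperbolic, $(\Gamma, X)$ is a $\tau$-bootstrap for some $\tau \in (0,1)$ and for every exponent $n$. Let $U$ be a finite generating set of $G$, let $F$ be the free group on $U$, and let $\pi \colon F \onto G$ be the canonical projection. Because $\Gamma$ is equationally noetherian (\cite[Theorem~1.22]{Sela:2009bh}), there is a finite subset $V \subset \ker \pi$ such that any morphism $F \to \Gamma$ whose kernel contains $V$ factors through $\pi$.

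Next I would feed $F$, $\pi$, $\tau$ into \autoref{res: lifting morphism - killing elements}: this produces a finite subset $W \subset G \setminus\{1\}$, a constant $\lambda \in \R_+$, and — applied to the specific finite set $V$ chosen above — a critical exponent $N \in \N$. I claim these $W$ and $N$ work (after possibly enlarging $N$ so that $N \geq N_\tau$, which is harmless). Indeed, let $n \geq N$ be odd and let $\phi \colon G \to \Gamma/\Gamma^n$ have non-abelian image. Apply \autoref{res: lifting morphism - killing elements} to $\phi$: either there is $\alpha \in \aut G$ with $W \cap \ker(\phi \circ \alpha) \neq \emptyset$, in which case we are in the first alternative of the corollary and are done; or there is a morphism $\psi \colon F \to \Gamma$ lifting $\phi \circ \alpha \circ \pi$ relative to $V$ with $\lambda_\infty(\psi, U) \leq \lambda$. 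In the latter case, since $\psi$ lifts $\phi \circ \alpha \circ \pi$ relative to $V$ and $V \subset \ker \pi$, we get $V \subset \ker(\phi \circ \alpha \circ \pi) = V \cap \ker \psi$, hence $V \subset \ker \psi$, so $\psi$ factors through $\pi$, giving a morphism $\tilde\phi_\alpha \colon G \to \Gamma$ with $\tilde\phi_\alpha \circ \pi = \psi$ and $\pi_\Gamma \circ \tilde\phi_\alpha = \phi \circ \alpha$, where $\pi_\Gamma \colon \Gamma \onto \Gamma/\Gamma^n$ is the canonical projection. Then $\tilde\phi = \tilde\phi_\alpha \circ \alpha^{-1}$ is a morphism $G \to \Gamma$ with $\pi_\Gamma \circ \tilde\phi = \phi$, i.e.\ $\tilde\phi$ lifts $\phi$.

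It remains to upgrade $\tilde\phi$ to a \emph{monomorphism}, and this is the step I expect to require the most care. The point is that $W$ is a finite set of \emph{non-trivial} elements of $G$, and we may arrange — either directly from the construction in \autoref{res: lifting morphism - killing elements} or by a short separate argument — that $W$ is chosen large enough that any morphism $G \to \Gamma$ is either non-injective on $W$ or injective outright; concretely, since $G$ is a limit group over the periodic quotients of $\Gamma$ (being a quotient of the $\mathfrak F$-limit appearing in the proof), hence fully residually $\Gamma$ in the relevant sense, one uses equational noetherianity of $\Gamma$ together with \autoref{res: factorization prop implies almost open}: there is a finite $W_0 \subset G\setminus\{1\}$ such that a morphism $G \to \Gamma$ not killing any element of $W_0$ is automatically injective. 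Enlarging the output $W$ of \autoref{res: lifting morphism - killing elements} to contain $W_0$, we may assume $W \supset W_0$. Now, in the second alternative above, we in fact ran \autoref{res: lifting morphism - killing elements} after checking that the first alternative $W \cap \ker(\phi \circ \alpha) \neq \emptyset$ does not hold; thus $\phi \circ \alpha$ — and hence $\tilde\phi_\alpha$, whose composition with $\pi_\Gamma$ is $\phi\circ\alpha$ and which therefore has the same kernel on elements not lying in $\Gamma^n$ (using \autoref{rem: canonical proj asymp injective}, after possibly enlarging $N$ once more so that $W \cap \Gamma^n = \emptyset$ in every $\Gamma/\Gamma^n$ with $n \geq N$) — does not kill any element of $W \supset W_0$, whence $\tilde\phi_\alpha$ is injective, and therefore so is $\tilde\phi = \tilde\phi_\alpha \circ \alpha^{-1}$. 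This completes the proof. The genuinely delicate bookkeeping is ensuring that "$\phi\circ\alpha$ kills no element of $W$'' transfers to "$\tilde\phi_\alpha$ kills no element of $W$'', which is exactly where one invokes that $W$ avoids $\Gamma^n$ for all large odd $n$, a consequence of the induction on the approximation steps recorded in \autoref{rem: canonical proj asymp injective}.
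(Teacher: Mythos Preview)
Your argument up to producing a lift $\tilde\phi_\alpha \colon G \to \Gamma$ of $\phi \circ \alpha$ is correct and matches the paper. The gap is in the injectivity step. Your claim that there exists a finite $W_0 \subset G \setminus\{1\}$ such that \emph{every} morphism $G \to \Gamma$ not killing any element of $W_0$ is injective is unjustified, and the reasoning you offer for it does not hold: $G$ is an arbitrary finitely generated, freely indecomposable group with no essential root splitting --- it is not assumed to be a limit group over anything, so the appeal to ``$G$ is a limit group over the periodic quotients of $\Gamma$'' and to \autoref{res: factorization prop implies almost open} is misplaced. Even for limit groups, equational noetherianity gives finite sets witnessing factorization through a \emph{fixed} quotient, not a finite set detecting injectivity among all morphisms to $\Gamma$.

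The paper avoids this entirely by using the one piece of information you drop: the bound $\lambda_\infty(\psi, U) \leq \lambda$, with $\lambda$ independent of $n$. Since $\Gamma$ acts properly and co-compactly on $X$, there are only finitely many morphisms $G \to \Gamma$ of energy at most $\lambda$ up to conjugacy. One then enlarges $W$ so that, for each of these finitely many morphisms that happens to be non-injective, $W$ meets its kernel. With this adjusted $W$, the lift $\tilde\phi_\alpha$ has energy at most $\lambda$ and satisfies $W \cap \ker \tilde\phi_\alpha \subset W \cap \ker(\phi \circ \alpha) = \emptyset$, hence is injective. Note also that this last containment is immediate from $\ker \tilde\phi_\alpha \subset \ker(\phi \circ \alpha)$ (any lift has smaller kernel); the detour through \autoref{rem: canonical proj asymp injective} that you flag as ``genuinely delicate'' is unnecessary here.
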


\begin{proof}
	Let us first introduce all the necessary objects.
	The Cayley graph $X$ of $\Gamma$ is a $\tau$-bootstrap for some $\tau \in (0,1)$ (and for every exponent $n \in \N$).
	Let $U$ be a finite generating set of $G$ and $F$ the free group generated by $U$.
	Let $\pi \colon F \onto G$ be the corresponding epimorphism.
	Recall that torsion-free hyperbolic groups are equationally noetherian.
	Hence there exists a finite subset $V \subset \ker \pi$ such that for every morphism $\phi \colon F \to \Gamma$, if $V$ is contained in $\ker \phi$, then $\phi$ factors through $\pi$.
	We write $W \subset G \setminus\{1\}$, $\lambda \in \R_+$, and $N \in \N$ for the objects provided by \autoref{res: lifting morphism - killing elements}.
	In this proof the group $\Gamma$ and the space $X$ are fixed.
	Hence, up to conjugacy,  there are only finitely morphisms $\psi \colon G \to \Gamma$ such that $\lambda_\infty(\psi, U) \leq \lambda$.
	Up to replacing $W$ by a larger subset of $G \setminus\{1\}$, we can assume that for every non-injective such morphism $\psi \colon G \to \Gamma$, we have $W \cap \ker \psi \neq \emptyset$.
	
	Let $n \geq N$ be an odd integer.
	Let $\phi \colon G \to \Gamma / \Gamma^n$ be a morphism.
	Assume that $W \cap \ker (\phi \circ \alpha)$ is empty for every $\alpha \in \aut G$.
	We are going to prove that $\phi$ admits an injective lift.
	According to \autoref{res: lifting morphism - killing elements}, there exist $\alpha \in \aut G$ and a morphism $\psi \colon F \to \Gamma$ lifting $\phi \circ \alpha \circ \pi$ relative to $V$ and such that $\lambda_\infty(\psi, U) \leq \lambda$.
	It follows from our choice of $V$ that $\psi$ factors through $\pi$.
	In particular, the resulting morphism $\psi_0 \colon G \to \Gamma$ lifts $\phi \circ \alpha$.
	Hence $\psi_0 \circ \alpha^{-1}$ lifts $\phi$.
	In order to complete our proof it suffices to show that $\psi_0$ is one-to-one.
	By construction $\lambda_\infty(\psi_0, U) \leq \lambda$.
	Note that $W \cap \ker \psi_0 \subset W \cap \ker(\phi \circ \alpha)$.
	According to our assumption, $W \cap \ker \psi_0$ is empty.
	Thanks to the adjustment we made on the set $W$ we can conclude that $\psi_0$ is one-to-one.
\end{proof}

\begin{prop}
\label{res: morphism to free product}
	Let $F$ be a finitely generated free group and $U$ be a free basis of $F$.
	Let $\pi \colon F \onto G$ be an epimorphism onto a freely indecomposable group distinct from $\Z$.
	For every $\lambda \in \R_+$, there is a finite subset $V \subset F$ with the following property.
	Let $A$ and $B$ be two non-trivial groups and $X$ the Bass-Serre tree associated to the free product $\Gamma = A \ast B$.
	Let $\phi \colon F \to \Gamma$ be a morphism. 
	If $\lambda_\infty(\phi, U) \leq \lambda$ (where the energy is measured in $X$) and $V \cap \ker \pi = V \cap \ker \phi$, then the image of $\phi$ is contained in a conjugate of $A$ or $B$.
\end{prop}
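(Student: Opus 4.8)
The plan is to argue by contradiction, using the machinery of divergent limit groups over $\mathfrak H_\delta$ and the shortening argument, in the same spirit as the proof of \autoref{res: lifting morphism - killing elements}. Suppose the statement fails. Then for some $\lambda \in \R_+$ there is a sequence of free products $\Gamma_k = A_k \ast B_k$ with Bass-Serre trees $X_k$, and morphisms $\phi_k \colon F \to \Gamma_k$ with $\lambda_\infty(\phi_k, U) \leq \lambda$ (measured in $X_k$), such that $V_k \cap \ker \pi = V_k \cap \ker \phi_k$ for an exhausting sequence $(V_k)$ of finite subsets of $F$, yet the image of $\phi_k$ is not contained in any conjugate of $A_k$ or $B_k$. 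The first step is to observe that $X_k$, equipped with the action of $\Gamma_k$ through $\phi_k$, fits into the framework of bootstraps: a non-trivial free product of (say) CSA groups acting on its Bass-Serre tree is one of the two model examples of a $\tau$-bootstrap. Hence, after passing to the approximation sequences and a subsequence, the sequence $(\Gamma_k, \phi_k)$ converges in $\mathfrak G(F)$ to a limit group, which is a quotient of $G = F/\ker\pi$; since the energies $\lambda_\infty(\phi_k, U)$ are \emph{bounded} by $\lambda$, this limit group is a subgroup of a $\tau$-bootstrap rather than a divergent one — but here the bounded-energy case needs care, and this is where I expect the real content to lie.

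More precisely: because the energies are bounded, the rescaling construction of \autoref{sec: building limit tree} does not directly produce a non-degenerate $\R$-tree. Instead I would work with the \emph{unrescaled} limit. The key point is that the stable kernel of $(\phi_k)$ equals $\ker \pi$ (by the hypothesis $V_k \cap \ker\pi = V_k \cap \ker\phi_k$ together with the exhaustion), so the limit group is exactly $(G, \pi)$. Now $G$ acts on the $\omega$-limit of the $X_k$ (pointed at energy-minimizing basepoints), which is a genuine simplicial $G$-tree $T$ with trivial edge stabilizers (edge stabilizers of a free-product Bass-Serre tree are trivial, and this passes to the limit). Since $G$ is finitely generated, $T$ contains a minimal $G$-invariant subtree; if this subtree is a point, then $\phi_k(F)$ fixes a point of $X_k$ \oas, i.e.\ lies in a conjugate of $A_k$ or $B_k$ \oas, contradicting our assumption. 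Otherwise $G$ acts non-trivially on a simplicial tree with trivial edge stabilizers, hence splits as a non-trivial free product (or is $\Z$), contradicting the hypothesis that $G$ is freely indecomposable and distinct from $\Z$.

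The main obstacle is making rigorous the passage to the limit tree in the \emph{bounded energy} regime and checking that edge stabilizers remain trivial; one cannot quote \autoref{res: final decomposition tree} verbatim since that applies to divergent limit groups and rescaled spaces. I would instead invoke directly that an ultralimit of simplicial trees with uniformly bounded energy (with respect to a finite generating set) is a simplicial $G$-tree, with edge stabilizers contained in ultralimits of edge stabilizers of the $X_k$ — hence trivial — and that the action is non-trivial precisely when the energies do not eventually drop to a value forcing a global fixed point. The technical lemma needed is: if $\phi_k(u)$ fixes a point of $X_k$ within bounded distance of the basepoint for every $u \in U$ \oas, then (since $X_k$ is a tree and $U$ is finite) $\phi_k(F)$ fixes a point, which by the structure of free products means $\phi_k(F)$ is contained in a conjugate of a factor. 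Assembling these observations with the Bass--Serre/Kurosh theory for the limit — and using that a finitely generated group acting minimally and non-trivially on a simplicial tree with trivial edge stabilizers is a non-trivial free product or infinite cyclic — completes the argument. Finally one extracts a single finite set $V = V_k$ working for the given $\lambda$ by the standard compactness packaging: if no finite $V$ worked, the diagonal sequence above would yield the contradiction.
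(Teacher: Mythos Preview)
Your second and third paragraphs are correct and follow the paper's own proof almost verbatim: take the (unrescaled) $\omega$-limit $X_\omega$ of the Bass--Serre trees, observe that $(\Gamma_k,\phi_k)\to(G,\pi)$ so that $G$ acts on $X_\omega$, check that edge stabilizers are trivial, and use that a freely indecomposable group distinct from $\Z$ must fix a vertex $v=\lim_\omega v_k$; then each $\phi_k(u)$ fixes $v_k$ $\omega$-almost surely, so the image of $\phi_k$ lies in a conjugate of a factor, contradicting the setup.

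Your first paragraph, however, is a detour. This proposition has nothing to do with periodic groups, bootstraps, or approximation sequences; no machinery from \autoref{res: approximating sequence} or \autoref{sec: action on limit tree} is needed. The Bass--Serre trees $X_k$ are already simplicial trees with trivial edge stabilizers, and the bounded-energy hypothesis lets you pass directly to an ultralimit which is again a simplicial tree. You seem to realize this yourself when you write that the rescaling construction ``does not directly produce a non-degenerate $\R$-tree'' and pivot to the unrescaled limit --- just drop the first paragraph entirely and start there. The paper's proof is exactly your paragraphs two and three, written without the initial false start.
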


\begin{proof}
	Let $\lambda \in \R_+^*$.
	We fix a non-decreasing exhaustion $(V_k)$ of $F$ by finite subsets.
	Assume that the statement is false.
	For every $k \in \N$, there exist two non trivial groups $A_k$ and $B_k$ and a morphism $\phi_k \colon F \to A_k \ast B_k$ such that if $X_k$ stands for the Bass-Serre tree associated to $\Gamma_k = A_k \ast B_k$, then the following holds
	\begin{enumerate}
		\item \label{enu: morphism to free product - energy}
		$\lambda(\phi_k, U) \leq \lambda$, (where the energy is measured in $X_k$);
		\item \label{enu: morphism to free product - cvg}
		$V_k \cap \ker \pi = V_k \cap \ker \phi_k$;
		\item \label{enu: morphism to free product - factor}
		the image of $\phi_k$ is not contained in a conjugate of $A_k$ of $B_k$.
	\end{enumerate}
	Let $(\epsilon_k)$ be a sequence converging to zero.
	For every $k \in \N$, we fix a basepoint $o_k \in X_k$ such that $\lambda_\infty(\phi_k, U, o_k) \leq \lambda_\infty(\phi_k, U) + \epsilon_k$.
	In addition we choose a non-principal ultra-filter $\omega$.
	We denote by $X_\omega$ the $\omega$-limit of the sequence of pointed space $(X_k, o_k)$.
	Note that we do not rescale the space $X_k$ here.
	By construction, $X_k$ is a simplicial tree for every $k \in \N$, hence so it $X_\omega$.
	For every $k \in \N$, the morphism $\phi_k$ defines an action of $F$ on $X_k$.
	Since its energy is uniformly bounded, it induces a simplicial action of $F$ on $X_\omega$.
	We get from \ref{enu: morphism to free product - cvg} that $(\Gamma_k, \phi_k)$ converges to $(G, \pi)$ for the topology of marked groups.
	Consequently $\ker \pi$ acts trivially on $X_\omega$.
	In particular, the previous action induces an action of $G$ on $X_\omega$.
	One checks that for every edge $e$ of $X_\omega$ its stabilizer (in $G$) is trivial.
	However $G$ is freely indecomposable and distinct from $\Z$.
	Hence it necessarily fixes a vertex $v$ of $X_\omega$.
	In particular, every generator of $U$ fixes $v$.
	We write $v$ as the limit $v = \limo v_k$ of a sequence of vertices $v_k \in X_k$.
	Since $U$ is finite, for every $u \in U$, the element $\phi_k(u)$ fixes $v_k$ \oas.
	Hence the image of $\phi_k$ fixes $v_k$ \oas, or in other words the image of $\phi_k$ is contained in a conjugate of $A_k$ or $B_k$.
	This contradicts our assumptions and completes the proof.
\end{proof}

\begin{coro}
\label{res: lifting morphism - killing element free product}
	Let $G$ be a finitely generated, freely indecomposable group with no essential root splitting.
	There exist a finite subset $W \subset G \setminus \{1\}$ and a critical exponent $N \in \N$, with the following properties.
	
	Let $n \geq N$ be an odd integer.
	Let $A,B \in \mathfrak B_n$ be two non-trivial CSA groups.
	For every morphism $\phi \colon G \to A \ast^n B$ whose image is not abelian, one of the following holds:
	\begin{itemize}
		\item there exists an automorphism $\alpha \in \aut G$ such that $W \cap \ker(\phi \circ \alpha) \neq \emptyset$;
		\item the image of $\phi$ is contained in a conjugate of $A$ or $B$.
	\end{itemize}\end{coro}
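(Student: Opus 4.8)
The plan is to deduce \autoref{res: lifting morphism - killing element free product} from \autoref{res: lifting morphism - killing elements} and \autoref{res: morphism to free product} in essentially the same way that \autoref{res: lifting morphism - killing element for hyperbolic groups} was deduced from \autoref{res: lifting morphism - killing elements}, replacing the hyperbolic target $\Gamma$ by the $n$-periodic free product $A \bfreep n B$, whose natural bootstrap is the Bass-Serre tree of the \emph{regular} free product $A \ast B$. First I would fix a finite generating set $U$ of $G$, let $F$ be the free group on $U$ and $\pi \colon F \onto G$ the canonical epimorphism. Since $G$ is freely indecomposable with no essential root splitting, \autoref{res: lifting morphism - killing elements} applies and provides a finite subset $W_0 \subset G\setminus\{1\}$ and a constant $\lambda \in \R_+$; feeding $\lambda$ into \autoref{res: morphism to free product} yields a finite subset $V \subset F$. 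I would also dispose of the trivial case $G \cong \Z$ separately (where the statement is vacuous or immediate), so that \autoref{res: morphism to free product} applies.

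The key point is to recognize that, for any two non-trivial CSA groups $A, B \in \mathfrak B_n$ with no even torsion, the free product $\Gamma = A \ast B$ acting on its Bass-Serre tree $X$ is a $\tau$-bootstrap for the exponent $n$ (this is the second example listed after \autoref{def: bootstrap}), and that the $n$-periodic quotient of $A \ast B$ is exactly $A \bfreep n B$ by the definition of the $n$-periodic free product. Thus, applying \autoref{res: approximating sequence} to $(\Gamma, X)$ for all odd $n \geq N_\tau$, I can run the argument of \autoref{res: lifting morphism - killing elements}: given $\phi \colon G \to A \bfreep n B$ with non-abelian image, either some $\phi \circ \alpha$ kills an element of $W_0$, or there is a lift $\psi \colon F \to A \ast B$ of $\phi \circ \alpha \circ \pi$ relative to $V$ with $\lambda_\infty(\psi, U) \leq \lambda$. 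In the latter case, since $V \cap \ker \pi = V \cap \ker \psi$ (because $V \subset \ker \pi$ and $\psi$ lifts $\phi \circ \alpha \circ \pi$ relative to $V$, which is taken inside $\ker\pi$ once we note $V \subset \ker\pi$), \autoref{res: morphism to free product} forces the image of $\psi$ — hence the image of $\phi \circ \alpha$, hence a conjugate of the image of $\phi$ — to lie in a conjugate of $A$ or $B$. Absorbing the conjugation into the statement (conjugates of $A$ and $B$ in $A \bfreep n B$ are exactly the images of conjugates of $A, B$ in $A\ast B$) gives the conclusion for $\phi \circ \alpha$, and applying $\alpha^{-1}$ returns a statement about $\phi$ itself; one takes $W = W_0$ and $N = \max\{N_\tau, N'\}$ where $N'$ is the critical exponent from \autoref{res: lifting morphism - killing elements} associated to the chosen $V$.

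The main obstacle I anticipate is bookkeeping rather than conceptual: one must be careful that \autoref{res: lifting morphism - killing elements} is quantified so that the finite set $V$ is chosen \emph{after} $W_0$ and $\lambda$, and that this same $V$ (coming from \autoref{res: morphism to free product}, which in turn depends on $\lambda$) is the one fed back into \autoref{res: lifting morphism - killing elements} to extract the critical exponent $N'$ — so there is a small dependency loop to untangle, but it closes because $\lambda$ and $W_0$ do not depend on $V$. A second minor point is checking that $A \bfreep n B$ is CSA (so that the hypotheses of the approximation machinery are met): this follows since CSA passes to free products of CSA groups and is stable under direct limits, as recorded in \autoref{rem: canonical proj asymp injective}, and one also needs $A, B$ to have no even torsion, which holds automatically as $A, B \in \mathfrak B_n$ with $n$ odd. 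Finally, one should double-check the identification of the image of $\phi$ being "contained in a conjugate of $A$ or $B$" at the level of $A \bfreep n B$ versus $A \ast B$: because the projection $A \ast B \onto A \bfreep n B$ restricts to the identity on $A$ and on $B$ (the maps $\iota_i$ of \autoref{res: def free product burnside} are injective), a subgroup of $A \bfreep n B$ lying in a conjugate of (the image of) $A$ is exactly the image of a subgroup of $A\ast B$ lying in a conjugate of $A$, so the transfer is clean.
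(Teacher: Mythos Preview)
Your overall strategy is exactly the paper's: get $W_0$ and $\lambda$ from \autoref{res: lifting morphism - killing elements}, feed $\lambda$ into \autoref{res: morphism to free product} to obtain $V$, then feed $V$ back into \autoref{res: lifting morphism - killing elements} to extract the critical exponent, using that $A\ast B$ with its Bass-Serre tree is a bootstrap for the exponent $n$ and that its $n$-periodic quotient is $A\bfreep n B$. The dependency loop you worried about closes exactly as you describe.

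There is, however, a genuine gap at the point where you invoke \autoref{res: morphism to free product}. That proposition requires $V\cap\ker\pi = V\cap\ker\psi$, and you justify this by asserting ``$V\subset\ker\pi$''. But nothing in \autoref{res: morphism to free product} places $V$ inside $\ker\pi$ --- in its proof, $V$ is a term of an exhaustion of the whole of $F$, not of $\ker\pi$. So the assertion is unjustified and in general false. What you do know is that $\psi$ lifts $\phi\circ\alpha\circ\pi$ relative to $V$, hence $V\cap\ker\psi = V\cap\ker(\phi\circ\alpha\circ\pi)$; the missing ingredient is to force $V\cap\ker(\phi\circ\alpha\circ\pi) = V\cap\ker\pi$.

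The paper's fix is to \emph{enlarge} the witness set: after obtaining $V$, replace $W_0$ by $W = W_0\cup\bigl(\pi(V)\setminus\{1\}\bigr)$. Now assume $W\cap\ker(\phi\circ\alpha)=\emptyset$ for every $\alpha$. Then for $v\in V$ with $\pi(v)\neq 1$ one has $\pi(v)\in W$, hence $\phi\circ\alpha\circ\pi(v)\neq 1$, so $v\notin\ker(\phi\circ\alpha\circ\pi)$. Combined with the trivial inclusion $\ker\pi\subset\ker(\phi\circ\alpha\circ\pi)$, this gives $V\cap\ker(\phi\circ\alpha\circ\pi)=V\cap\ker\pi$, and hence $V\cap\ker\psi=V\cap\ker\pi$ as required. (A minor aside: once the image of $\phi\circ\alpha$ lies in a conjugate of $A$ or $B$, so does the image of $\phi$, since $\alpha$ is an automorphism of $G$ and the two maps have the same image --- no need to ``apply $\alpha^{-1}$''.)
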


\begin{proof}
	Let $U$ be a finite generating set of $G$ and $F$ the free group generated by $U$.
	Let $\pi \colon F \onto G$ be the corresponding epimorphism.
	We denote by $W \subset G \setminus\{1\}$ and $\lambda \in \R_+^*$ the first two parameters given by \autoref{res: lifting morphism - killing elements} to lift morphisms from $G$ to periodic groups.
	Now that $\lambda$ is fixed, \autoref{res: morphism to free product} gives us a finite subset $V \subset F$ to analyze morphisms from $G$ to a free product.
	According to \autoref{res: lifting morphism - killing elements}, there exists $N \in \N$, such that for every odd integer $n \geq N$, the following holds.
	Let $(\Gamma, X)$ be a $1$-bootstrap for the exponent $n$.
	For every morphism $\phi \colon G \to \Gamma / \Gamma^n$ whose image is not abelian, there exists $\alpha \in \aut G$ such that one of the following holds:
	\begin{itemize}
		\item $W \cap \ker(\phi \circ \alpha) \neq \emptyset$;
		\item there is a morphism $\psi \colon F \to \Gamma$ lifting $\phi \circ \alpha \circ \pi$ relative to $V$ such that $\lambda_\infty(\psi, U) \leq \lambda$.
	\end{itemize}
	Note that we can always replace $W$ by a larger subset so that it also contains $\pi (V) \setminus \{1\}$.

	Let $n \geq N$ be an odd integer.
	Let $A,B \in \mathfrak B_n$ be two non-trivial CSA groups.
	Let $\phi \colon G \to A \ast ^n B$ be a morphism whose image is not abelian.
	Assume that $W \cap \ker (\phi \circ \alpha)$ is empty for every $\alpha \in \aut G$.
	We denote by $\Gamma$ the (regular) free product $\Gamma = A \ast B$ and let it act on its Bass-Serre tree $X$.
	Hence $(\Gamma, X)$ is a $1$-bootstrap for the exponent $n$.
	According to our choice of $N$, there exists $\alpha \in \aut G$ and a morphism $\psi \colon F \to \Gamma$ lifting $\phi \circ \alpha \circ \pi$ relative to $V$ such that $\lambda_\infty(\psi, U) \leq \lambda$.
	We claim that $V \cap \ker \psi = V \cap \ker \pi$.
	Since $\psi$ is a lift of $\phi \circ \alpha \circ \pi$ relative to $V$, it suffices to show that $V \cap \ker (\phi \circ \alpha \circ \pi)  = V \cap \ker \pi$.
	Note that $\ker \pi$ is contained in $\ker (\phi \circ \alpha \circ \pi)$.
	By construction $\pi(V) \setminus \{1\}$ is contained in $W$ while $W \cap \ker(\phi \circ \alpha)  = \emptyset$.
	Hence $V \cap \ker (\phi \circ \alpha \circ \pi)  \subset V \cap \ker \pi$, which completes the proof of our claim.
	
	Combining this claim with \autoref{res: morphism to free product}, we obtain that the image of $\psi$ is contained in a conjugate of $A$ or $B$ (seen as a subgroup of $A \ast B$).
	Recall that $\psi$ is a lift of $\phi \circ \alpha \circ \pi$.
	Consequently the image of $\phi$ is contained in a conjugate of $A$ or $B$ (seen this time as a subgroup of $A \ast^n B$).
\end{proof}

%%%%%%%%%%%%%%%%%%%%%%%%%%%%%%%%%%%%%%%%%%%%%%%%%%%%%%%%%%%%%%%%%%%%%%%%
%
\subsection{Shortening limit groups with no root splitting}
%
%%%%%%%%%%%%%%%%%%%%%%%%%%%%%%%%%%%%%%%%%%%%%%%%%%%%%%%%%%%%%%%%%%%%%%%%
\label{sec: limit group w/o root splitting aka shortening}
In this section, we fix once and for all a non-elementary hyperbolic group $\Gamma$ acting properly co-compactly on a hyperbolic length space $X$, so that $(\Gamma, X)$ is a $\tau$-bootstrap for some $\tau \in (0,1)$.
The goal is to prove the existence of shortening quotients that will allow us to apply the material of \autoref{sec: infinite desc  seq}.

Recall that the definition of a shortening limit group (\autoref{def: shortenable}) relies on a group property $\mathcal P$.
The one we use here is the following: a group $H$ has $\mathcal P$ if all its abelian subgroups are finitely generated.
This property being fixed for the remainder of this section, we make an abuse of vocabulary and say that a sequence / limit group is \emph{shortenable} if it is shortenable with respect to this property.

\begin{prop}
\label{res: shortening group w/o root - freely indecomposable}
	Let $G$ be a finitely presented group.
	Let $(L, \eta) \in \mathfrak G(G)$.
	If $L$ is freely indecomposable with no essential root splitting, then $(L, \eta)$ is shortenable.
\end{prop}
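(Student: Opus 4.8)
The statement is the natural culmination of the machinery built in Section~\ref{sec: action on limit tree}: we must verify that any $\mathfrak F$-sequence converging to $(L,\eta)$ is shortenable, where $\mathfrak F$ is the filtration of periodic quotients of $\Gamma$ (so that $\mathfrak F_k$ consists of subgroups of $\Gamma/\Gamma^n$ for odd $n\geq k$) and $\mathcal P$ is the property ``every abelian subgroup is finitely generated''. First I would dispose of the degenerate cases. If $L$ is abelian, then $L$ is finitely generated abelian (being freely indecomposable and distinct from a free product, hence not $\Z \ast (\text{something})$); it is then finitely presented and satisfies $\mathcal P$, so by \autoref{rem: fp group are shortenable} it is shortenable with $(H_k,\phi'_k)=(H_k,\phi_k)$. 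If $L$ is a subgroup of $\Gamma$, then $L$ is torsion-free hyperbolic, hence finitely presented and satisfies $\mathcal P$ (abelian subgroups of hyperbolic groups are virtually cyclic), so again it is shortenable by the same remark. By \autoref{res: trichotomy limit groups}, the only remaining case is that $(L,\eta)$ is a non-abelian divergent limit group over $\mathfrak H_\delta$, which is the substantive case.

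So fix a divergent $\mathfrak F$-sequence $(H_k,\phi_k)$ converging to $(L,\eta)$; by definition of divergence (and \autoref{res: comparing energies}, since $L$ is non-abelian) we may realize each $H_k$ as a quotient of some approximating group $\Gamma_{j_k,k}\in\mathfrak H_\delta$ (via minimal approximations relative to an exhaustion $(U_k)$ of $G$, exactly as in the proof of \autoref{res: trichotomy limit groups}), and arrange that the energy $\lambda_\infty$ of the resulting morphisms $\psi_k \colon G \to \Gamma_{j_k,k}$ diverges to infinity. Now $L$ is freely indecomposable; since $\mathcal H$ (the elusive subgroups of $L$) are elliptic in the limit tree (\autoref{res: limit tree - elusive sbgps are elliptic}) and $L$ is finitely generated, $L$ is a fortiori freely indecomposable relative to $\mathcal H$, so \autoref{ass: splitting - freely indecomposable} holds. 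If $L$ split as a free product relative to $\mathcal H$, this would contradict free indecomposability (since free factors in such a splitting would give a genuine free product decomposition of $L$, the elusive subgroups already being elliptic); hence we may apply \autoref{res: final decomposition tree} and \autoref{def: complete graph of action} to obtain the complete splitting $S$ of $L$ with respect to its limit tree $T$. The crucial point is that $S$ is \emph{not a generalized root splitting}: a generalized root splitting (\autoref{def: generalized root splitting}) has the shape of $A$ amalgamated with abelian groups $B_i$ over finite-index subgroups $C_i$, which in the one-edge case collapses to an essential root splitting of $L$ (essential because $L$ is freely indecomposable, so $C$ cannot be a free factor of $A$) — and $L$ has none by hypothesis. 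Thus the hypotheses of the shortening argument \autoref{res: shortening argument} are met.

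We then run \autoref{res: shortening argument}. Take a sequence $(\hat L_k,\hat\eta_k,\hat S_k)$ of finitely presented strong covers of $(L,\eta,S)$ converging to $(L,\eta,S)$, which exists by \autoref{res: graph of groups cover} (using that $G$, hence $L$, is finitely presented here). Up to passing to a subsequence, the $\psi_k$ factor through $\hat\eta_k$ (arrange $\ker\hat\pi_k$ normally generated by $U_k\cap\ker\hat\eta_k\subset\ker\psi_k$), giving $\hat\psi_k \colon \hat L_k \to \Gamma_{j_k,k}$. The theorem produces $\kappa>0$ and automorphisms $\alpha_k\in\mcg{L,S}$, $\hat\alpha_k\in\aut{\hat L_k}$ with $\zeta_k\circ\hat\alpha_k=\alpha_k\circ\zeta_k$ and $\lambda_1^+(\hat\psi_k\circ\hat\alpha_k\circ\hat\eta_k,U)\leq(1-\kappa)\lambda_1^+(\psi_k,U)$ for infinitely many $k$. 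Set $\phi'_k = \hat\psi_k\circ\hat\alpha_k\circ\hat\eta_k \colon G \to H_k$ (composing with the quotient $\Gamma_{j_k,k}\onto H_k$), which is an $\mathfrak F$-sequence with the same target groups $H_k$. Passing to a further subsequence, $(H_k,\phi'_k)$ converges to some limit group $(L',\eta')\prec(L,\eta)$, since $\alpha_k\in\mcg{L,S}$ implies $\phi'_k$ still factors through $\eta$ up to the automorphism, so $(L',\eta')$ is a quotient of $L$. This gives Point~\ref{enu: shortening - cvg} of \autoref{def: shortenable}. The main work — and the place I expect to spend the most care — is checking Points~\ref{enu: shortening - dichotomy} and \ref{enu: shortening - fact}: one must show that if $(L',\eta')$ is not a \emph{proper} quotient of $(L,\eta)$ then the energy collapse forces a contradiction with divergence, so that $(L',\eta')=(L,\eta)$ cannot happen and the dichotomy is automatically satisfied (making the $\mathcal P$-clause vacuous), and conversely track the factorization statements through $\zeta_k$. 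Concretely: if $\ker\eta'=\ker\eta$, then $(H_k,\phi'_k)$ also converges to $(L,\eta)$, is again divergent (the target groups and generating set are unchanged, and a non-divergent subsequence would, by the argument of \autoref{res: trichotomy limit groups}, force $L\leq\Gamma$ or $L$ abelian, excluded), so we may re-run the shortening on it; iterating, $\lambda_1^+(\psi_k,U)$ would be forced below $(1-\kappa)^m\lambda_1^+(\psi_k,U)$ for every $m$, contradicting $\lambda_1^+(\psi_k,U)\to\infty$ — hence $(L',\eta')$ \emph{is} a proper quotient, and $\mathcal P$ never needs to be invoked. For Point~\ref{enu: shortening - fact}, since $(L',\eta')\prec(L,\eta)$ is always a \emph{proper} quotient in the divergent case, the hypothesis ``$(L',\eta')$ has $\mathcal P$'' combined with the requirement that all but finitely many $\phi'_k$ factor through $\eta'$ is precisely what we must rule out; but again the divergence of the energy obstructs all but finitely many $\phi'_k$ from factoring through $\eta'$ (a factoring would bound the energy via equational noetherianity of $L'$ when $L'$ satisfies $\mathcal P$, or one argues directly). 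Assembling these, $(H_k,\phi'_k)$ is a shortening sequence, so $(H_k,\phi_k)$ is shortenable; as the sequence was arbitrary, $(L,\eta)$ is shortenable.
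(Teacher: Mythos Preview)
Your proposal assembles the right ingredients but misassembles them, and your handling of Points~\ref{enu: shortening - dichotomy} and~\ref{enu: shortening - fact} of \autoref{def: shortenable} has genuine gaps.

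\textbf{Construction of the shortening sequence and Point~\ref{enu: shortening - dichotomy}.} You define $\phi'_k$ by a \emph{single} application of \autoref{res: shortening argument}; this reduces the restricted energy by a factor $(1-\kappa)$ and nothing more. The paper instead fixes strong covers of $(L,\eta,S_{\rm JSJ})$ --- the JSJ tree, not the sequence-dependent complete splitting --- and takes $\phi'_k=\hat\phi_k\circ\hat\alpha_k\circ\hat\eta_k$ to be $\epsilon$-\emph{short} among all $\hat\phi_k\circ\hat\alpha\circ\hat\eta_k$ as $\hat\alpha$ ranges over lifts of $\mcg L$ (see \autoref{def: short morphism}). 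This is the decisive move: if $(L',\eta')=(L,\eta)$, the paper shows by contradiction that the optimal approximations $\psi_k$ of $\phi'_k$ must lift all the way to $\Gamma$ with bounded energy --- otherwise they would be divergent, \autoref{res: shortening argument} applied to \emph{them} (with strong covers of $S'$ obtained from those of $S_{\rm JSJ}$ via compatibility, \autoref{res: compatible JSJ}) would produce a further $\hat\beta_k$ with strictly smaller energy, and $\hat\alpha_k\circ\hat\beta_k\in\mathcal M_k$ would violate $\epsilon$-shortness. Bounded-energy lifts to $\Gamma$ then force $L$ to be a subgroup of $\Gamma$, whence $\mathcal P$ and the factorization. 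Your iteration argument, by contrast, does not work as written: each re-run of \autoref{res: shortening argument} is on a new sequence with its own limit tree, its own splitting, its own $\kappa$, and its own subsequence; the inequalities do not chain to give $(1-\kappa)^m\lambda_1^+(\psi_k,U)$ for any fixed $k$.

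\textbf{Point~\ref{enu: shortening - fact}.} You write that the hypothesis ``$(L',\eta')$ has $\mathcal P$ and almost all $\phi'_k$ factor through $\eta'$'' is ``precisely what we must rule out''. This is backwards: Point~\ref{enu: shortening - fact} is a conditional you must \emph{prove}. You must show that if every abelian subgroup of $L'$ is finitely generated then the same holds for $L$, and that factoring of $\phi'_k$ through $\eta'$ implies factoring of $\phi_k$ through $\eta$. The paper's argument here is substantive and depends on having used $S_{\rm JSJ}$: modular automorphisms act by conjugation on rigid vertex groups, so $L\to L'$ is injective on those; this yields first that edge groups of $S_{\rm JSJ}$ are finitely generated, then that all abelian subgroups of $L$ are, and finally that $\ker\zeta_k$ (normally generated inside rigid vertex groups for $k$ large) lies in $\ker\hat\phi_k$ once it lies in $\ker(\hat\phi_k\circ\hat\alpha_k)$. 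None of this is accessible from your choice of the complete splitting $S$ in place of $S_{\rm JSJ}$.

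A minor additional issue: in your case split you claim that if $L\leq\Gamma$ then $L$ is finitely presented. Finitely generated subgroups of hyperbolic groups need not be finitely presented, so this case cannot be dismissed via \autoref{rem: fp group are shortenable}; the paper avoids the problem by treating the non-abelian case uniformly.
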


\begin{proof}	
	We fix once and for all a finite generating set $U$ of $G$.
	Note that if $L$ is abelian, then the conclusion directly follows from \autoref{rem: fp group are shortenable}.
	Therefore, from now on we assume that $L$ is non-abelian.
	Consider a sequence $(n_k)$ of odd integers diverging to infinity and a sequence of morphisms $\phi_k \colon G \to \Gamma / \Gamma^{n_k}$ such that $(\Gamma / \Gamma^{n_k}, \phi_k)$ converges to $(L, \eta)$.
	
	We denote by $S_{\rm JSJ}$ the tree of cylinders of a JSJ tree of $L$ (over the abelian subgroups of $L$).
	In addition we fix a sequence $(\hat L_k, \hat \eta_k, \hat S_k)$ of finitely presented strong covers of $(L, \eta, S_{\rm JSJ})$ converging to $(L, \eta, S_{\rm JSJ})$.
	We denote by $\zeta_k \colon \hat L_k \onto L_k$ the projection satisfying $\zeta_k \circ \hat \eta_k = \eta$.
	By construction there is a finite subset $W_k \subset G$ which generates $\ker \eta_k$ as a normal subgroup of $G$.
	We denote by $\ell_k$ the length of the longest element in $W_k$, seen as a word over $U$.
	Up to replacing $(\phi_k)$ by a subsequence, we can assume that 
	\begin{equation*}
		\lim_{k \to \infty} \frac {\rho(n_k)}{\ell_k} = \infty,
	\end{equation*}
	while every $\phi_k$ factors through $\hat \eta_k$.	
	We write $\hat \phi_k \colon \hat L_k \to \Gamma / \Gamma^{n_k}$ for the resulting morphism.
	Let $\mathcal M_k$ be the set of all automorphisms $\hat \alpha \in \aut{\hat L_k}$ lifting a modular automorphism $\alpha \in \mcg L$, i.e. such that $\zeta_k \circ \hat \alpha = \alpha \circ \zeta_k$.
	In addition we let 
	\begin{equation*}
		\mathcal P_k = \set{ \hat \phi_k \circ \hat \alpha \circ \hat \eta_k}{ \hat \alpha \in \mathcal M_k}.
	\end{equation*}
	For each $k \in \N$, we fix an approximation sequence 
	\begin{equation*}
		(\Gamma_{j,k}, X_{j,k}, \mathcal C_{j,k})_{j \in \N}
	\end{equation*}
	of $\Gamma / \Gamma^{n_k}$, as given by \autoref{res: approximating sequence}.
	Let $\epsilon \in \R_+^*$.
	For every $k \in \N$, we chose $\hat \alpha_k \in \mathcal M_k$ such that the morphism $\phi'_k  =  \hat \phi_k \circ \hat \alpha_k \circ \hat \eta_k$ is $\epsilon$-short among $\mathcal P_k$ relative to $W_k$, in the sense of \autoref{def: short morphism}.
	Up to passing to a subsequence $(\Gamma/ \Gamma^{n_k}, \phi'_k)$ converges to a limit group that we denote by $(L', \eta')$.
	We are going to prove that $(\Gamma/ \Gamma^{n_k}, \phi'_k)$ is a shortening of $(\Gamma / \Gamma^{n_k}, \phi_k)$.
	
	Recall that $\hat \eta_k$ is onto while $\hat \alpha_k$ is an automorphism of $\hat L_k$.
	Consequently $\phi_k$ and $\phi'_k$ have the same image of every $k \in \N$, proving Point~\ref{enu: shortening - sbgp} of \autoref{def: shortenable}.
	By construction $(L', \eta')$ is a quotient of $(L, \eta)$, which corresponds to Point~\ref{enu: shortening - cvg} of \autoref{def: shortenable}.
	We now focus on Point~\ref{enu: shortening - dichotomy} of this definition.
	Assume that $(L', \eta') = (L, \eta)$.
	
	\begin{clai}
		There exists $\lambda \in \R_+$ such that for all but finitely many $k \in \N$, there is a morphism $\tilde \phi'_k \colon G \to  \Gamma$ lifting $\phi'_k$ relative to $W_k$, such that $\lambda_\infty(\phi'_k, U) \leq \lambda$.
	\end{clai}

	\begin{proof}[Proof of the claim]
		For every $k \in \N$, we denote by $\psi_k \colon G \to \Gamma_{j_k, k}$ an optimal approximation of $\phi'_k$ relative to $W_k$ (see \autoref{def: short morphism}).
		By construction 
		\begin{equation*}
			W_k \subset \ker \hat \eta_k \subset \ker \phi'_k
		\end{equation*}
		Hence $W_k$ is also contained in $\ker \psi_k$.
		We chose $W_k$ in such a way that it generates $\ker \hat \eta_k$ as a normal subgroup.
		Consequently $\psi_k$ factors through $\hat \eta_k$.
		The resulting morphism $\hat \psi_k \colon \hat L_k \to \Gamma_{j_k, k}$ is an approximation of $\hat \phi_k \circ \hat \alpha_k$.
		It follows in particular that $(\Gamma_{j_k,k}, \psi_k)$ converges to $(L', \eta') = (L, \eta)$.
		
		Assume now that our claim fails, that is up to passing to a subsequence, any lift $\tilde \phi'_k \colon G \to \Gamma$ of $\phi'_k$, if it exists, satisfies $\lambda_\infty(\tilde \phi'_k, U) \geq k$.
		We are going to prove that for every $k \in \N$,
		\begin{equation*}
			\lambda_\infty(\psi_k,U) \geq \min\left\{ \frac {\rho(n_k)}{100\ell_k}, k\right\}
		\end{equation*}
		Let $k \in \N$.
		If $j_k \geq 1$, then the result follows from \autoref{res: approx - energy minimal approx}.
		If on the contrary $j_k = 0$, that is $\Gamma_{j_k, k} = \Gamma$, then $\psi_k \colon G \to \Gamma$ is a lift of $\phi'_k$ relative to $W_k$, and the statement follows from our assumption.
		It follows that $\lambda_\infty(\psi_k,U)$ diverges to infinity, that is $(L', \eta')$ is a divergent limit group.
		
		Proceeding as in \autoref{sec: action on limit tree}, we build an action of $L'$ on a limit $\R$-tree $T$.
		This action decomposes as a graph of actions, where each vertex action is either peripheral, simplicial, axial or has Seifert type (\autoref{res: final decomposition tree}).
		We denote by $S'$ the complete splitting of $L$ with respect to $T$ (see \autoref{def: complete graph of action}).
		
		Recall that $L$ is freely indecomposable, without even torsion.
		According to \autoref{res: compatible JSJ}, the trees $S_{\rm JSJ}$ and $S'$ are compatible.
		We write $R$ for a common refinement.
		By definition of the JSJ decomposition, $R$ is obtained from $S_{\rm JSJ}$ by splitting some of its flexible vertices, which are either surface groups or abelian groups.	
		Let $k \in \N$.
		It follows from the definition of strong cover that the $\zeta_k$-equivariant map $\hat S_k \onto S_{\rm JSJ}$ induces a bijection from $\hat S_k / \hat L_k$ to $S_{\rm JSJ}/ L$.
		Let $\hat v$ be a vertex of $\hat S_k$ and $v$ its image in $S_{\rm JSJ}$.
		If $L_v$ is a surface group (\resp abelian group) then the projection $\zeta_k$ induces an isomorphism from $\hat L_{k, \hat v}$ onto $L_v$ (\resp an embedding of  $\hat L_{k, \hat v}$ onto $L_v$).
		Splitting some vertices of $\hat S_k$, as we did with $S_{\rm JSJ}$ to obtain $R$, we build a refinement $\hat R_k$ of $\hat S_k$ such that $(\hat L_k, \hat \eta_k, \hat R_k)$ is a strong cover of $(L, \eta, R)$.
		We now collapse in $\hat R_k$ edges corresponding to edges of $R$ which are collapsed in $S'$.
		We obtained is this way a new $\hat L_k$-tree $\hat S'_k$ such that $(\hat L_k, \hat \eta_k, \hat S'_k)$ is a strong cover of $(L, \eta, S')$, provided $k$ is sufficiently large.
		It follows from the construction that the following equivariant diagram commute.
		\begin{center}
			\begin{tikzpicture}
				\matrix (m) [matrix of math nodes, row sep=2em, column sep=2.5em, text height=1.5ex, text depth=0.25ex] 
				{ 
					\hat S'_k & \hat R_k	 & \hat S_k \\
					S' & R	 & S_{\rm JSJ} \\
				}; 
				\draw[>=stealth, ->] (m-1-2) -- (m-1-1);
				\draw[>=stealth, ->] (m-2-2) -- (m-2-1);
				\draw[>=stealth, ->] (m-1-2) -- (m-1-3);
				\draw[>=stealth, ->] (m-2-2) -- (m-2-3);
				
				\draw[>=stealth, ->] (m-1-1) -- (m-2-1);
				\draw[>=stealth, ->] (m-1-2) -- (m-2-2);
				\draw[>=stealth, ->] (m-1-3) -- (m-2-3);
			\end{tikzpicture}
		\end{center}
		One checks that $(\hat L_k, \hat \eta_k, \hat S'_k)$  converges to $(L, \eta, S')$ in the sense of  \autoref{def: conv strong cover}.

		According to our assumption $L' = L$ is freely indecomposable and does not admit any essential root splitting.
		Hence we can perform the shortening argument (\autoref{sec: shortening}) with $(\hat L_k,\hat \eta_k,\hat S'_k)$ as the sequence of strong covers.		
		By \autoref{res: shortening argument}, there exists $\kappa > 0$, as well as automorphisms $\beta_k$ in $\mcg {L, S'}$ and $\hat \beta_k \in \aut{\hat L_k}$ such that for infinitely many $k \in \N$, we have $\beta_k \circ \zeta_k = \zeta_k \circ \hat \beta_k$ and 
		\begin{equation*}
			\lambda_1^+(\hat \psi_k \circ \hat \beta_k \circ \hat \eta_k, U) < (1-\kappa)\lambda_1^+(\psi_k, U) 
		\end{equation*}
		By compatibility $\beta_k$ (and thus $\alpha_k \circ \beta_k$) is also an element of $\mcg L$ (\autoref{res: compatible modular group.}).
		Hence $\hat \alpha_k \circ \hat  \beta_k$ belongs to $\mathcal M_k$.
		Recall that $\psi_k$ is an optimal approximation of $\phi'_k = \hat \phi_k \circ \hat \alpha_k \circ \hat \eta_k$.
		Observe that $\hat \psi_k \circ \hat \beta_k \circ \hat \eta_k$ is an approximation of $\hat \phi_k \circ (\hat \alpha_k \circ \hat \beta_k) \circ \hat \eta_k$ relative to $W_k$.
		Since $\phi'_k$ is $\epsilon$-short among $\mathcal P_k$ relative to $W_k$, this approximation is necessarily minimal.
		Moreover it satisfies
		\begin{equation*}
			\lambda_1^+(\psi_k, U) \leq \lambda_1^+(\hat \psi_k \circ \hat \beta_k \circ \hat \eta_k, U) + \epsilon \leq (1-\kappa)\lambda_1^+(\psi_k,U)  + \epsilon.
		\end{equation*}
		Recall that $\lambda_\infty(\psi_k, U)$ diverges to infinity.
		This leads to a contradiction if $k$ is sufficiently large, and completes the proof of our claim.
	\end{proof}

	We observe as above that $W_k \subset \ker \tilde \phi'_k$, for every $k \in \N$, so that $(\Gamma, \tilde \phi'_k)$ converges to $(L', \eta')$ as well.
	Up to conjugacy there are only finitely many morphisms $G \to \Gamma$ whose energy is at most $\lambda$.
	Hence, up to passing to a subsequence, we can assume that $\ker \tilde \phi'_k$ is constant, equal to $K$ say.
	In particular, $L = G/K$ is a subgroup of $\Gamma$, hence every abelian subgroup of $L$ is finitely generated.
	In addition all but finitely many $\tilde \phi'_k$ factor through $\eta'$.
	Thus all but finitely many $\phi'_k$ factor through $\eta'$.
	This completes the proof of Point~\ref{enu: shortening - dichotomy} of \autoref{def: shortenable}.
	
	\medskip
	The proof of Point~\ref{enu: shortening - fact} in \autoref{def: shortenable} follows the standard strategy, see for instance \cite{Sela:2001gb, Sela:2009bh,Weidmann:2019ue}.
	We quickly go through the main arguments.
	Assume that every abelian subgroup of $L'$ is finitely generated.
	Given $k \in \N$, we partition the vertices of $\hat S_k$ as follows: a vertex $\hat v$ belongs to $V_k^0$ (\resp $V_k^1$) if the stabilizer of its image $v$ in $S_{\rm JSJ}$ is a surface group or an abelian group (\resp a rigid group).
	If $\hat v$ belongs to $V_k^1$ then $\hat \alpha_k$ acts on the vertex group $\hat L_{k, \hat v}$ by conjugation.
	It follows that the projection $L \onto L'$ is one-to-one when restricted to every rigid vertex group of $L$.

	We now claim that the edge groups of $S_{\rm JSJ}$ are finitely generated.
	Consider an edge $e$ of $S_{\rm JSJ}$.
	Since $L$ is CSA, it cannot connect two vertices whose stabilizers are both abelian.
	Consequently $L_e$ is either contained in a surface group or a rigid group.
	If $L_e$ is contained in a surface subgroup, then $L_e$ is virtually cyclic.
	If $L_e$ is contained in a rigid vertex group, then according to our previous discussion, $L_e$ embeds in $L'$.
	It follows from our assumption that $L_e$ is finitely generated, which completes the proof of our claim.

	We now show that every abelian subgroup $A$ of $L$ is finitely generated.
	Without loss of generality we can assume that $A$ is not virtually cyclic.
	Since the action of $L$ on $S_{\rm JSJ}$ is acylindrical, $A$ fixes a vertex $v$.
	This vertex cannot be a surface group, otherwise $A$ would be virtually cyclic.
	Assume that the stabilizer of $v$ is abelian.
	Since $L$ is finitely generated, $L_v$ is finitely generated relative to the stabilizers of the edges in the link of $v$.
	We just proved that edge groups are finitely generated.
	Consequently $L_v$ and thus $A$ are finitely generated.
	Suppose now that the stabilizer of $v$ is rigid.
	As previously $A$ embeds in $L'$, and thus is finitely generated.
	
	We proved that every edge group and every abelian vertex group of $S_{\rm JSJ}$ is finitely generated.
	Consequently if $k$ is sufficiently large, the strong cover $(\hat L_k, \hat \eta_k, \hat S_k)$ satisfies the additional following properties.
	\begin{itemize}
		\item Let $\hat v$ be a vertex of $V_k^0$ and $v$ its image in $S_{\rm JSJ}$.
		The map $\zeta_k \colon \hat L_k \onto L$ induces an isomorphism from $\hat L_{k, \hat v}$ onto $L_v$.
		\item Let $\hat e$ be a an edge of $\hat S_k$ and $e$ its image in $S_{\rm JSJ}$.
		The map $\zeta_k \colon \hat L_k \onto L$ induces an isomorphism from $\hat L_{k, \hat e}$ onto $L_e$.
	\end{itemize}
	Hence the kernel of $\zeta_k$ is generated (as a normal subgroup) by a subset $U_k$ with the following property: for every $g \in U_k$, there exists a vertex $\hat v$ of $V_k^1$ such that $g$ belongs to $\hat L_{k, \hat v}$.	
	
	Suppose now that all but finitely many $\phi'_k$ factor through $\eta'$.
	Since $(L', \eta') \prec (L, \eta)$ those morphisms also factor through $\eta$.	
	Consequently if $k$ is sufficiently large $U_k$ is contained in $\ker \hat \phi_k \circ \hat \alpha_k$.
	As we observed before, for every $\hat v \in V_k^1$, the automorphism $\hat \alpha_k$ acts on $\hat L_{k, \hat v}$ by conjugation.
	Consequently, $U_k$ is also contained in $\ker \hat \phi_k$.
	However $U_k$ generates the kernel of $\zeta_k$ (as a normal subgroup).
	It follows that $\hat \phi_k$ factors through $\zeta_k$.
	Thus $\phi_k$ factors through $\eta$.
	This complete the proof of Point~\ref{enu: shortening - fact} of \autoref{def: shortenable}.
\end{proof}

The next statement extends the previous result, when $L$ is not necessarily freely indecomposable.

\begin{coro}
\label{res: shortening group w/o root - general}
	Let $G$ be a finitely presented group.
	Let $(L, \eta) \in \mathfrak G(G)$.
	If $L$ has no essential root splitting, then $(L, \eta)$ is shortenable.
\end{coro}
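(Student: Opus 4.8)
The plan is to reduce \autoref{res: shortening group w/o root - general} to the freely indecomposable case handled in \autoref{res: shortening group w/o root - freely indecomposable}, using the fact that $\mathcal P$ (here: every abelian subgroup is finitely generated) is stable under free products and under taking subgroups, together with the free-product stability of shortenability (\autoref{res: shortenable stable under free product}) and the Grushko-type decomposition of a finitely presented group.

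First I would observe that since $G$ is finitely presented, it admits a Grushko decomposition $G = G_1 \ast \dots \ast G_s \ast F_r$, where each $G_i$ is freely indecomposable and non-cyclic, $F_r$ is free of rank $r$, and each free factor is finitely presented. The marked group $(L, \eta)$ then induces a decomposition of $L$: writing $\eta_i$ for the restriction of $\eta$ to $G_i$ and $L_i = \eta(G_i)$, the Kurosh subgroup theorem applied inside $L$ (which, being a limit group over the periodic quotients of $\Gamma$, is CSA, hence in particular has no ``unexpected'' identifications forced) does not literally give $L = L_1 \ast \dots$; instead I would argue as follows. Pass to the limit tree and its complete splitting $S$ of $L$ as produced in \autoref{sec: action on limit tree}; collapsing the edges with non-trivial stabilizer yields a free splitting of $L$, and by minimality and finite presentation of $G$ this free splitting is finite. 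Thus $L$ decomposes as a free product $L = M_1 \ast \dots \ast M_t \ast F_q$ where each $M_j$ is freely indecomposable (in the relative sense, i.e. with respect to $\mathcal H$; but note that a free factor in $\mathfrak H_\delta$ inherits the property that its elusive subgroups are elliptic) and non-cyclic. After refining the Grushko decomposition of $G$ so that $\eta$ maps the free factors of $G$ onto the free factors of $L$ — which one can arrange by the standard argument that an epimorphism between groups with prescribed free decompositions can be made to respect them after conjugating and re-choosing the decomposition of the source — one is in the setting of \autoref{res: shortenable stable under free product}: $(L_i, \eta_i)$ is a marked group in $\mathfrak G(G_i)$ which is either cyclic/free, or freely indecomposable.

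Next I would treat the free factors individually. If $M_j$ is infinite cyclic or free, then $(M_j, \eta_j)$ is finitely presented and satisfies $\mathcal P$, hence is shortenable by \autoref{rem: fp group are shortenable}. If $M_j$ is non-cyclic and freely indecomposable, I must check it has no essential root splitting; this follows because an essential root splitting of $M_j$ would combine with the free decomposition of $L$ to produce an essential root splitting of $L$ (a root splitting of a free factor extends to a root splitting of the whole group, and it stays essential since $M_j$, being freely indecomposable, makes any root splitting of it essential, and the amalgamated edge group cannot become a free factor of the larger group without already being one inside $M_j$). Then \autoref{res: shortening group w/o root - freely indecomposable} applies to $(M_j, \eta_j)$, so it is shortenable. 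Finally, \autoref{res: shortenable changing marker} lets me view each $(M_j, \eta_j)$ as an element of $\mathfrak G(G_j)$, and a repeated application of \autoref{res: shortenable stable under free product} (which requires precisely that $\mathcal P$ is stable under free products and subgroups — true for ``every abelian subgroup is finitely generated'') assembles these into shortenability of $(L, \eta) \in \mathfrak G(G)$.

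The main obstacle I anticipate is the bookkeeping in matching the free decomposition of $G$ with that of $L$: one needs that the epimorphism $\eta$ can be assumed to send the chosen free factors $G_i$ of a Grushko decomposition of $G$ onto the free factors $M_j$ of the (finite) free decomposition of $L$, with the free parts matching up. This is a standard fact about epimorphisms and Grushko decompositions, but making it precise requires care — in particular one uses that $L$ is finitely generated (so that its free decomposition is finite) and that free decompositions are respected under passing to a cover $\hat L_k$, which is exactly the content encoded in the strong-cover machinery of \autoref{sec: strong covering}. A secondary point to verify carefully is that ``no essential root splitting'' is inherited by, and reflected from, free factors; this is where freely indecomposability of the factor is used, since it upgrades every root splitting of the factor to an essential one automatically.
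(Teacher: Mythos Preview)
Your overall strategy --- reduce to the freely indecomposable case via \autoref{res: shortenable stable under free product} --- is the same as the paper's, but the way you set up the free product is where things go wrong.

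The gap is in your claim that one can ``refine the Grushko decomposition of $G$ so that $\eta$ maps the free factors of $G$ onto the free factors of $L$''. This is not a standard fact, and in general it is simply false. Take for instance $G$ a closed surface group and $\eta$ the obvious epimorphism onto a free group $L$ of rank~$2$: then $G$ is freely indecomposable, so there is no non-trivial free splitting of $G$ at all, yet $L = \Z \ast \Z$. The hypothesis of \autoref{res: shortenable stable under free product} requires a free product decomposition of the \emph{marker}, and here none is available. Your detour through the limit tree does not help with this: the limit tree and its complete splitting are attached to a specific converging sequence (and require $(L,\eta)$ to be divergent), whereas here you need a purely algebraic statement about the marker $G$ itself. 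Likewise, your invocation of \autoref{res: shortenable changing marker} at the end is in the wrong direction: that lemma passes from $G$ to a finitely presented \emph{quotient} of $G$, not to a free factor of $G$.

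The paper sidesteps the problem entirely by never decomposing $G$. Instead it takes the Grushko decomposition $L = L_0 \ast \dots \ast L_m \ast F$ of $L$ and chooses a finitely presented cover $(\hat L, \hat\eta)$ of $(L,\eta)$ in $\mathfrak G(G)$ which itself splits as $\hat L = \hat L_0 \ast \dots \ast \hat L_m \ast \hat F$, with $\zeta \colon \hat L \onto L$ carrying $\hat L_i$ onto $L_i$ and $\hat F$ isomorphically onto $F$ (this is exactly a strong cover with respect to the Bass--Serre tree of the free decomposition, as in \autoref{res: graph of groups cover}). Each $\hat L_i$ is a retract of the finitely presented group $\hat L$, hence finitely presented, so \autoref{res: shortening group w/o root - freely indecomposable} applies to $(L_i,\zeta_i) \in \mathfrak G(\hat L_i)$; then \autoref{res: shortenable stable under free product} assembles these into shortenability of $(L,\zeta) \in \mathfrak G(\hat L)$; and finally \autoref{res: shortenable changing marker} (now used in the right direction, from $G$ down to its finitely presented quotient $\hat L$) transfers this to $(L,\eta) \in \mathfrak G(G)$.
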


\begin{proof}
	In view of \autoref{res: shortening group w/o root - freely indecomposable}, we can assume that $L$ is freely decomposable.
	Consider the Grushko decomposition $L$,
	\begin{equation*}
		L = L_0 \ast L_1 \ast \dots \ast L_m \ast F
	\end{equation*}
	where $L_i$ is a non-trivial, freely indecomposable subgroup and $F$ a finitely generated free group.
	We fix a finitely presented cover $(\hat L, \hat\eta)$ of $(L, \eta)$ with the following properties
	\begin{itemize}
		\item $\hat L$ splits as $\hat L = \hat L_0 \ast \dots \ast \hat L_m \ast \hat F$.
		\item The morphism $\zeta \colon \hat L \onto L$ maps each $\hat L_i$ onto $L_i$.
		Moreover it induces an isomorphism from $\hat F$ onto $F$.
	\end{itemize}
	For every $i \in \intvald 0m$, we write $\zeta_i \colon \hat L_i \onto L_i$ for the restriction of $\zeta$ to $\hat L_i$.
	Similarly we write $\zeta_F \colon \hat F \onto F$ for the isomorphism induced by $\zeta$.
	Note that no $L_i$ admits an essential root splitting, otherwise so would $L$.
	Hence by \autoref{res: shortening group w/o root - freely indecomposable}, the group $(L_i, \zeta_i) \in \mathfrak G(\hat L_i)$ is shortenable.
	Since $F$ is free, it has the factorization property, hence $(F, \zeta_F)$ is shortenable (see \autoref{rem: fp group are shortenable}).	
	According to \autoref{res: shortenable stable under free product}, $(L, \zeta) \in \mathfrak G(\hat L)$ is shortenable.
	It follows then from \autoref{res: shortenable changing marker} that $(L, \eta) \in \mathfrak G(G)$ is shortenable.
\end{proof}

In this context, \autoref{res: shortenable implies factorization} yields the following statement.

\begin{prop}
\label{res: quotient wo root give factorization}
	Let $G$ be a finitely presented group.
	Let $(L, \eta) \in \mathfrak G(G)$ be a limit group.
	If no quotient of $L$ has an essential root splitting, then $L$ satisfies the factorization property.
\end{prop}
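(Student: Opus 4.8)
The plan is to deduce the statement directly from \autoref{res: shortenable implies factorization}: that corollary says a limit group has the factorization property as soon as every limit group below it is shortenable. So the only thing to check is that hypothesis, and the conclusion then follows at once.

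To verify it, I would argue as follows. Fix a limit group $(Q, \nu) \in \mathfrak G(G)$ with $(Q, \nu) \prec (L, \eta)$. By definition of $\prec$ there is an epimorphism $L \onto Q$, so $Q$ is a quotient of $L$; composing epimorphisms, any quotient of $Q$ is again a quotient of $L$. Hence, by the assumption on $L$, no quotient of $Q$ admits an essential root splitting -- in particular $Q$ itself admits none. Since $G$ is finitely presented, \autoref{res: shortening group w/o root - general} applies to $(Q, \nu)$ and shows that it is shortenable. As $(Q, \nu)$ was an arbitrary limit group below $(L, \eta)$, every such limit group is shortenable, and \autoref{res: shortenable implies factorization} then yields that $(L, \eta)$ has the factorization property.

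There is essentially no obstacle left at this level: the real content has already been packaged into \autoref{res: shortening group w/o root - freely indecomposable} (through the shortening argument \autoref{res: shortening argument}), its free-product extension \autoref{res: shortening group w/o root - general}, and the descending-chain analysis behind \autoref{res: shortenable implies factorization}. The only points that need a moment's care are that shortenability is defined and established only for \emph{limit} groups, so one must restrict attention to limit quotients of $(L, \eta)$ -- which is exactly the input \autoref{res: shortenable implies factorization} asks for -- and that the hypothesis ``no quotient of $L$ has an essential root splitting'' is inherited by every quotient of $L$, which is immediate from transitivity of the quotient relation. If one wanted, the whole argument could be phrased as a one-line corollary of \autoref{res: shortening group w/o root - general} and \autoref{res: shortenable implies factorization}.
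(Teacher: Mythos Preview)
Your argument is correct and matches the paper's approach exactly: the paper presents this proposition as an immediate consequence of \autoref{res: shortening group w/o root - general} combined with \autoref{res: shortenable implies factorization}, and you have simply written out the one step needed, namely that any limit quotient $(Q,\nu)\prec(L,\eta)$ inherits the ``no essential root splitting'' hypothesis and is therefore shortenable.
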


We now explore the consequences of this fact for the lifting problem tackled in the introduction.
We start with the case of morphisms with abelian image.

\begin{lemm}
\label{res: lifting abelian groups}
	Let $G$ be a finitely generated abelian group.
	Let $p$ the least common multiple of the orders of its elements.
	There exists $N \in \N$ such that for every odd exponent $n \geq N$ co-prime with $p$, every morphism $\phi \colon G \to \Gamma / \Gamma^n$ lifts to a morphism $\tilde \phi \colon G \to \Gamma$.
\end{lemm}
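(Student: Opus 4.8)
Since $G$ is a finitely generated abelian group, it decomposes as $G = \Z^r \oplus T$ where $T$ is its torsion part and $p$ is the least common multiple of the orders of the elements of $T$. The strategy is to treat the free part and the torsion part separately. First I would record the structural observation that since $n$ is odd and co-prime with $p$, we can extract unique $n$-th roots inside $\Gamma/\Gamma^n$ along the directions that matter: this is exactly the content of \autoref{res: root splitting extending morphism}, applied to the (generalized) root splitting $G = \Z^r \ast_{\Z^r} G$ (viewing $G$ as obtained from $\Z^r$ by adjoining the root factor $G$ itself, with $C = \Z^r$ of index $\card T$ inside $B = G$). Concretely, a morphism $\phi \colon G \to \Gamma/\Gamma^n$ is determined by its restriction to the free factor $\Z^r$ together with a compatible choice on $T$, and because $\gcd(n,p)=1$ the torsion data is rigid: the inclusion $\Z^r \hookrightarrow G$ induces an isomorphism $\Z^r / (\Z^r)^n \to G/G^n$ by \autoref{res: root splitting isom burnside}. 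Thus it suffices to lift morphisms from a free abelian group $\Z^r$.

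Next I would reduce the lifting problem for $\phi|_{\Z^r}$ to the abelian subgroup structure of $\Gamma / \Gamma^n$. The image of $\phi$ is a finitely generated abelian subgroup $A$ of $\Gamma/\Gamma^n$. Since $\Gamma/\Gamma^n$ is CSA (see \autoref{rem: canonical proj asymp injective}), $A$ is contained in a unique maximal abelian subgroup $M$. The key point is that, for $n$ large and odd, the maximal abelian subgroups of $\Gamma/\Gamma^n$ are of controlled shape: they are either images of maximal abelian (hence cyclic, as $\Gamma$ is torsion-free hyperbolic) subgroups of $\Gamma$, which become cyclic modulo $\Gamma^n$ of order dividing $n$, or they are $\stab c$-type ``dihedral-free'' conical subgroups appearing in the approximation sequence. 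In either case $M$ is a finite abelian group whose exponent divides $n$; because $\gcd(n,p)=1$, the torsion part $T$ of $G$ maps trivially to such $M$ only if... — here I would instead argue directly: a morphism $\Z^r \to A \subset \Gamma/\Gamma^n$ factors through $\Z^r / (\Z^r)^{n'}$ for $n' = \exp(M)$ dividing $n$; lifting it to $\Gamma$ amounts to choosing, for each basis element $e_i$, an element $\gamma_i \in \Gamma$ projecting to $\phi(e_i)$ with the $\gamma_i$ pairwise commuting. Since $\phi(e_i)$ all lie in $M$ which is the homomorphic image of a single cyclic subgroup $\langle t \rangle$ of $\Gamma$ (when $\Gamma$ is freely indecomposable, or of a cyclic subgroup after a free-product analysis in general), we pick a generator $\gamma$ of $\langle t\rangle$ projecting onto a generator of $M$ and set $\gamma_i = \gamma^{m_i}$ with $m_i$ chosen so that $\gamma^{m_i}$ projects to $\phi(e_i)$; these commute automatically, and the assignment $e_i \mapsto \gamma_i$ extends to the required lift $\tilde\phi \colon \Z^r \to \Gamma$, which then extends uniquely over all of $G$ by \autoref{res: root splitting extending morphism}.

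The existence of the critical exponent $N$ enters precisely in guaranteeing that, for all odd $n \geq N$, every finitely generated abelian subgroup of $\Gamma/\Gamma^n$ is the image of a cyclic subgroup of $\Gamma$ (equivalently, that maximal abelian subgroups of $\Gamma/\Gamma^n$ are cyclic). This follows from \autoref{res: approximating sequence}\ref{enu: approximating sequence - cvg}: if every finite subgroup of $\Gamma$ is cyclic — which holds since $\Gamma$ is torsion-free — then the same holds for $\Gamma/\Gamma^n$, so all its finite (in particular all its finitely generated periodic abelian) subgroups are cyclic. One then uses \autoref{rem: canonical proj asymp injective}: for a finite generating set of a fixed cyclic lift, injectivity of the projection on large balls for $n \geq N$ lets us transport the relations. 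The main obstacle I anticipate is the bookkeeping in the non-freely-indecomposable case: when $\Gamma$ splits as a free product, a finitely generated abelian subgroup of $\Gamma/\Gamma^n$ need not be conjugate into a single free factor a priori, so I would invoke \autoref{res: lifting morphism - killing element free product} (or rather its underlying mechanism, that morphisms from freely indecomposable groups with no essential root splitting land in a conjugate of a factor) — but here $G$ is abelian, hence either cyclic or not freely indecomposable, so a direct argument using the Bass–Serre tree of $\Gamma = A \ast B$ and the fact that abelian subgroups of a free product are conjugate into a factor suffices, handled exactly as in the proof of \autoref{res: morphism to free product}. Assembling these pieces gives the uniform $N$ and the lift.
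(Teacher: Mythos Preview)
Your proposal reaches the right conclusion via the same core mechanism as the paper: abelian subgroups of $\Gamma/\Gamma^n$ are cyclic for large odd $n$ (by \autoref{res: approximating sequence}\ref{enu: approximating sequence - cvg}, since $\Gamma$ is torsion-free so every finite subgroup of $\Gamma$ is trivially cyclic), the torsion part of $G$ must map trivially, and the free part lifts through a single generator of its cyclic image. The paper's proof, however, is three lines: decompose $G = G_0 \oplus G_1$ with $G_0$ free abelian and $G_1$ finite; every element of $G_1$ has order coprime to $n$ while $\Gamma/\Gamma^n$ has exponent $n$, so $\phi(G_1)=1$; the image of $\phi|_{G_0}$ is abelian hence cyclic, so lift a generator to $\Gamma$ and send basis elements to suitable powers.

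Your route is correct but significantly over-engineered, with one genuine slip. The root-splitting machinery for the torsion reduction is unnecessary (and strictly speaking $G = \Z^r \ast_{\Z^r} G$ is a \emph{trivial} amalgam since $A=C$, hence not a root splitting in the paper's sense, though the proofs of \autoref{res: root splitting extending morphism} and \autoref{res: root splitting isom burnside} do not use non-triviality). More importantly, your final step --- extending $\tilde\phi$ from $\Z^r$ to $G$ via \autoref{res: root splitting extending morphism} --- is misapplied: that proposition requires the target to be $n$-periodic, and $\Gamma$ is not. This is easily repaired by extending by $1$ on $T$, which is legitimate precisely because you already established $\phi|_T = 1$; but you should say so rather than invoke the wrong lemma. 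Finally, the digression on free-product decompositions of $\Gamma$ and the appeal to \autoref{rem: canonical proj asymp injective} are red herrings: cyclicity of abelian subgroups of $\Gamma/\Gamma^n$ comes directly from the approximation sequence with no case analysis on $\Gamma$, and lifting a single element uses only surjectivity of $\Gamma \twoheadrightarrow \Gamma/\Gamma^n$.
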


\begin{proof}
	There exists $N \in \N$, such that for every odd integer $n \geq N$, every abelian subgroup of $\Gamma / \Gamma^n$ is cyclic, see \autoref{res: approximating sequence}~\ref{enu: approximating sequence - cvg}.
	Consider a morphism $\phi \colon G \to \Gamma / \Gamma^n$ where $n \geq N$ is an odd exponent co-prime with $p$.
	We decompose $G$ as $G = G_0 \oplus G_1$ where $G_0$ is free abelian and $G_1$ finite.
	The order of any element of $G_1$ is co-prime with $n$.
	Hence $\phi$ maps $G_1$ to the trivial group.
	Consequently, it suffices to lift $\phi$ restricted to $G_0$.
	This is always possible since every abelian subgroup of $\Gamma / \Gamma^n$ is cyclic.
\end{proof}

\begin{theo}
\label{res: lifting quotient wo roots}
	Let $F$ be a finitely generated free group and $\pi \colon F \onto G$ an epimorphism where no quotient of $G$ admits an essential root splitting.
	For every finite subset $V \subset F$ there are $N \in \N$ and a finite subset $W \subset \ker \pi$ with the following property.
	Let $n \geq N$ be an odd integer and $\phi \colon F \to \Gamma/ \Gamma^n$ be a morphism whose image is not abelian.
	If $W \subset  \ker \phi$,  then there is a morphism $\tilde \phi \colon F \to \Gamma$ such that $\tilde \phi$ lifts $\phi$ relative to $V$.
\end{theo}

\begin{proof}
	Let $V \subset F$ be a finite subset.
	We fix a non-decreasing exhaustion $(W_k)$ of $\ker \pi$ by finite subsets.
	Assume that the statement is false.
	There is a sequence of morphisms $\phi_k \colon F \to \Gamma / \Gamma^{n_k}$ with the following properties:
	\begin{itemize}
		\item $(n_k)$ is a sequence of odd integers diverging to infinity,
		\item $W_k \subset \ker \phi_k$, for all $k \in \N$,
		\item the image of every $\phi_k$ is non abelian,
		\item no $\phi_k$ can be lifted relative to $V$.
	\end{itemize}
	Up to passing to a subsequence, we can assume that $(\Gamma / \Gamma^{n_k}, \phi_k)$ converges to a limit group $(L, \eta)$.
	Since $W_k \subset \ker \phi_k$, for every $k \in \N$, we observe that $(L, \eta) \prec (G, \pi)$.
	In particular, no quotient of $L$ admits a root splitting.
	Hence $(L, \eta)$ has the factorization property (\autoref{res: quotient wo root give factorization}).
	Up to passing again to a subsequence, every $\phi_k$ factors through $\eta$.
	We write $\mu_k \colon L \to \Gamma / \Gamma^{n_k}$ for the resulting morphism.
	
	Note that the image of $\mu_k$ is non abelian (since the one of $\phi_k$ is non abelian either).
	We claim that if $k$ is sufficiently large, then $\mu_k$ cannot be lifted.
	Assume on the contrary that there is a morphism $\tilde \mu_k \colon L \to \Gamma$ lifting $\mu_k$.
	Then the map $\tilde \phi_k  = \tilde \mu_k \circ \eta$ lifts $\phi_k$ so that $V \cap \ker \tilde \phi_k \subset V \cap \ker \phi_k$.
	In addition if $k$ is sufficiently large, then $V \cap \ker \phi_k = V \cap \ker \eta$.
	Hence $V \cap \ker \phi_k \subset V \cap \ker \tilde \phi_k$.
	In other words, $\tilde \phi_k$ lifts $\phi_k$ relative to $V$.
	It contradicts the definition of $\phi_k$ and completes the proof of our claim.
	Up to passing to a subsequence, we now assume that none of the $\mu_k$ can be lifted.
	
	We are now going to produce an infinite descending sequence of limit groups starting at $(L, \eta)$
	\begin{equation*}
		(L, \eta) = (L_0, \eta_0) \succ (L_1, \eta_1) \succ (L_2, \eta_2) \succ \dots
	\end{equation*}
	and for every $i \in \N$, a sequence of morphisms $\mu_k^i \colon L_i \to \Gamma / \Gamma^{n_k}$ so that, up to passing to a subsequence, for every $k \in \N$, the image of $\mu_k^i$ is non abelian, and $\mu_k^i$ does not lift.
	We begin with $(L_0, \eta_0) = (L, \eta)$ and let $\mu_k^0 = \mu_k$, for every $k \in \N$.
	
	Let $i \in \N$ for which all the above objects have beed already defined.
	If needed, we pass to a subsequence so that for every $k \in \N$, the image of $\mu_k^i$ is non abelian and $\mu_k^i$ does not lift to a morphism $\tilde \mu_k^i \colon L_i \to \Gamma$.
	\paragraph{Case 1.}
	Assume first that $L_i$ is freely indecomposable.
	If follows from \autoref{res: lifting morphism - killing element for hyperbolic groups} that up to passing to a subsequence, there is an element $g_i \in L_i \setminus\{1\}$ and for every $k \in \N$, an automorphism $\alpha_k^i \in \aut {L_i}$ such that $\mu_k^i \circ \alpha_k^i (g_i) = 1$.
	For every $k \in \N$, we let 
	\begin{equation*}
		\phi_k^{i+1} = \mu_k^i \circ \alpha_k^i \circ \eta_i.
	\end{equation*}
	Up to passing to a subsequence we can assume that $(\Gamma / \Gamma^{n_k}, \phi_k^{i+1})$ converges to a limit group $(L_{i+1}, \eta_{i+1}) \prec (L_i, \eta_i)$.
	Note that $L_{i+1}$ is a proper quotient of $L_i$.
	Indeed by construction, the element $g_i$ lies in the kernel of the projection $\zeta_i \colon L_i \onto L_{i+1}$.
	Since $L_{i+1}$ is a quotient of $L$, no quotient of $L_{i+1}$ admits an essential root splitting.
	Consequently $(L_{i+1}, \eta_{i+1})$ has the factorization property (\autoref{res: quotient wo root give factorization}).
	Up to passing to a subsequence, every $\phi_k^{i+1}$ factors through $\eta_{i+1}$.
	By construction the resulting morphism $\mu_k^{i+1} \colon L_{i+1} \to \Gamma / \Gamma^{n_k}$ satisfies 
	\begin{equation*}
		\mu_k^i  = \mu_k^{i+1} \circ \zeta_i \circ \left( \alpha_k^i\right)^{-1}, \quad \forall k \in \N.
	\end{equation*}
	Consequently the image of $\mu_k^{i+1}$ is non abelian.
	Moreover $\mu_k^{i+1}$ cannot lift to a morphism $\tilde \mu_k^{i+1} \colon L_{i+1} \to \Gamma$.
	Otherwise it would contradict the properties of $\mu_k^i$.
		
	\paragraph{Case 2.}
	Assume now that $L_i$ is freely decomposable.
	Since $\mu_k^i$ does not lift, its restrictions to the free factors of a Grushko decomposition of $L_i$ cannot lift all simultaneously.
	We can decompose $L_i$ as 
	\begin{equation*}
		L_i = M \ast  L_{i+1}
	\end{equation*}
	where $L_{i+1}$ is a freely indecomposable free factor of $L_i$ and $M$ a non-trivial subgroup of $L_i$.
	Moreover, up to passing to a subsequence, the restriction of $\mu_k^i$ to $L_{i+1}$, that we denote by $\mu_k^{i+1}$, does not lift to a morphism $L_{i+1} \to \Gamma$.
	We denote by $\eta_{i+1}$ the morphism $\eta_i$ composed with projection $L_i \onto L_{i+1}$ (obtained by ``killing'' the factor $M$) so that $(L_{i+1}, \eta_{i+1})$ is a limit group and a proper quotient of $(L_i, \eta_i)$.
	We are left to prove that (up to passing to a subsequence) the image of $\mu_k^{i+1}$ is not abelian, for infinitely many $k \in \N$.
	Assume that it is not the case.
	Up to passing to a subsequence, we can assume that $(\Gamma/ \Gamma^{n_k} , \mu_k^{i+1} \circ \eta_{i+1})$ converges to an abelian limit group $(B, \beta) \prec (L_{i+1}, \eta_{i+1})$.
	We decompose $B$ as $B = B_0 \oplus B_1$ where $B_0$ is a free abelian and $B_1$ finite.
	Note that $B_1$ is actually trivial. 
	Indeed, otherwise the group $Q = M \ast B$, which is a quotient of $L$, admits the following non-essential root splitting
	\begin{equation*}
		Q = (M \ast B_0) \ast _{B_0}B,
	\end{equation*}
	which contradicts our assumption.
	In other words $B$ is free abelian.
	Up to passing to a subsequence, every $\mu_k^{i+1}$ factors through $B$ and thus can be lifted (\autoref{res: lifting abelian groups}), which contradicts our construction.
	
	In both cases, we have built all the announced objects for the index $i+1$.
	Hence the induction is complete.
	
	No quotient of $L$ admits a root splitting.
	It follows from \autoref{res: shortening group w/o root - general} that every limit group $(Q, \nu) \prec (L, \eta)$ is shortenable.
	Hence there is no infinite descending sequence of limit groups starting at $(L, \eta)$ (\autoref{res: dcc}).
	This contradicts our previous construction and completes the proof of the result.
\end{proof}

\begin{coro}
\label{res: lifting quotient wo roots -  inf presented}
	Let $G$ be a finitely generated group, none of whose quotients admits an essential root splitting.
	For every finite subset $V \subset G$, there is a critical exponent $N \in \N$, such that for every odd integer $n \geq N$, the following holds.
	For every homomorphism $\phi \colon G \to \Gamma / \Gamma^n$ whose image is not abelian, there exists a morphism $\tilde \phi \colon G \to \Gamma$ lifting $\phi$ relative to $V$.
\end{coro}

\begin{proof}
	We fix a projection $\pi \colon F \onto G$, where $F$ is a finitely generated free group.
	Being hyperbolic, $\Gamma$ is equationally noetherian, thus there is a finite subset $\tilde V_0 \subset \ker \pi$ such that every morphism $\phi \colon F \to \Gamma$ whose kernel contains $\tilde V_0$ factors through $\pi$.
	
	Let $V$ be a finite subset of $G$. 
	We fix a finite subset $\tilde V \subset F$ containing $\tilde V_0$ such that $\pi(\tilde V) = V$.
	We write $W \subset \ker \pi$ and $N \in \N$ for the data given by \autoref{res: lifting quotient wo roots} applied with $\tilde V$.
	Let $n \geq N$ be an odd integer and $\phi \colon G \to \Gamma / \Gamma^n$ a homomorphism.
	Note that $W$ is contained in the kernel of $\phi \circ \pi$.
	According to \autoref{res: lifting quotient wo roots}, there is a morphism $\psi \colon F \to \Gamma$ lifting $\phi \circ \pi$ relative to $\tilde V$.
	In particular, $\tilde V_0$ is contained in the kernel of $\psi$, and thus $\psi$ factors through $\pi$.
	We write $\tilde \phi \colon F \to \Gamma$ for the resulting morphism.
	It follows from the construction that $\tilde \phi$ lifts $\phi$ relative to $V$.
\end{proof}

The next two corollaries are obtained in the exact same way, using the fact that $\Gamma$ is equationally noetherian.
Their proofs are left to the reader.

\begin{coro}
\label{res: lifting quotient wo roots - noetherian}
	Let $F$ be a finitely generated group and $\pi \colon F \onto G$ a projection so that no quotient of $G$ admits an essential root splitting.
	There exist a finite subset $W \subset \ker \pi$ and a critical exponent $N \in \N$ with the following property.
	For every odd integer $n \geq N$, for every homomorphism $\phi \colon G \to \Gamma / \Gamma^n$, if $W \subset \ker \phi$, then $\phi$ factors through $\pi$.
\end{coro}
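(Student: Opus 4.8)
The plan is to deduce \autoref{res: lifting quotient wo roots - noetherian} from \autoref{res: lifting quotient wo roots} in the same spirit as the proof of \autoref{res: lifting quotient wo roots -  inf presented}, the only difference being that we now have to \emph{factor} the morphism through $\pi$ rather than lift it, and we must control this for \emph{all} morphisms $\phi$ simultaneously (not just those with non-abelian image). First I would invoke the fact that $\Gamma$, being a torsion-free hyperbolic group, is equationally noetherian \cite{Sela:2009bh}: this gives a finite subset $\tilde V_0 \subset \ker \pi$ such that every morphism $\psi \colon F \to \Gamma$ with $\tilde V_0 \subset \ker \psi$ factors through $\pi$. Then I would apply \autoref{res: lifting quotient wo roots} with the finite subset $V = \tilde V_0$; this produces a finite subset $W_0 \subset \ker \pi$ and a critical exponent $N_0 \in \N$ with the following property: for every odd $n \geq N_0$ and every morphism $\phi \colon F \to \Gamma/\Gamma^n$ with non-abelian image and $W_0 \subset \ker \phi$, there is a morphism $\tilde \phi \colon F \to \Gamma$ lifting $\phi$ relatively to $\tilde V_0$, i.e.\ with $\tilde V_0 \cap \ker \tilde \phi = \tilde V_0 \cap \ker \phi$. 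Since $W_0 \subset \ker \pi \subset \ker(\phi) $ forces $\tilde V_0 \subset \ker \phi$ whenever $\tilde V_0 \subset \ker \pi$ — wait, one must be slightly careful: the correct statement is that $W_0 \subset \ker\phi$ forces, through the relative lift, $\tilde V_0 \cap \ker \tilde \phi = \tilde V_0 \cap \ker \phi \supseteq \tilde V_0$ (because $\tilde V_0 \subset \ker \pi \subset \ker \phi$), hence $\tilde V_0 \subset \ker \tilde \phi$, so $\tilde \phi$ factors through $\pi$ and therefore so does $\phi = \pi_n \circ \tilde \phi$ where $\pi_n \colon \Gamma \onto \Gamma/\Gamma^n$. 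This disposes of the non-abelian case.

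Next I would handle the abelian case. Here the right tool is \autoref{res: quotient wo root give factorization}: since no quotient of $G$ admits an essential root splitting, the limit group $(G,\pi)$ has the factorization property, so there is a finite subset $W_1 \subset \ker \pi$ and $N_1 \in \N$ such that any $(H,\psi) \in \mathfrak F_k(F)$ with $k \geq N_1$ and $W_1 \subset \ker \psi$ factors through $\pi$ — but I need to be careful about what filtration is in play. The cleanest route is to observe that any $\phi \colon F \to \Gamma/\Gamma^n$ with abelian image and $W_1 \subset \ker \phi$ gives a marked group $(\phi(F), \phi) \prec (G,\pi)$; if this were not a quotient of $(G,\pi)$ we would need a contradiction via a limiting argument. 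Actually the more robust approach: suppose for contradiction the statement fails, so there is a sequence of odd integers $n_k \to \infty$ and morphisms $\phi_k \colon F \to \Gamma/\Gamma^{n_k}$ with $W_k \subset \ker \phi_k$ (for a fixed exhaustion $(W_k)$ of $\ker\pi$) but no $\phi_k$ factoring through $\pi$. Passing to a subsequence, $(\Gamma/\Gamma^{n_k},\phi_k)$ converges to a limit group $(L,\eta) \prec (G,\pi)$, so no quotient of $L$ has an essential root splitting, hence $(L,\eta)$ has the factorization property (\autoref{res: quotient wo root give factorization}). For all large $k$, $\phi_k$ then factors through $\eta$; write $\mu_k \colon L \to \Gamma/\Gamma^{n_k}$. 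If the image of $\mu_k$ is non-abelian for infinitely many $k$, the non-abelian lifting case above (applied to an appropriate presentation of $L$, or directly via \autoref{res: lifting quotient wo roots -  inf presented}) lets us lift, hence factor through $\pi$, a contradiction. If the image is abelian for all large $k$, then $(\Gamma/\Gamma^{n_k},\mu_k)$ subconverges to an abelian limit group $(B,\beta)$, and the no-essential-root-splitting hypothesis on quotients of $L$ forces $B$ to be \emph{free} abelian (exactly as in the proof of \autoref{res: lifting quotient wo roots}: a finite torsion summand would produce a non-essential root splitting of a quotient of $G$); then $\mu_k$ factors through $B$ and lifts by \autoref{res: lifting abelian groups}, again a contradiction.

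Finally, to assemble: take $W = W_0 \cup W_1 \cup (\text{the finite set produced by the contradiction argument})$ and $N = \max\{N_0, N_1, \dots\}$; more honestly, since the whole argument is by contradiction against a fixed exhaustion $(W_k)$, the conclusion is that some $W_k$ and some $N$ work, and since $W_k \subset \ker \pi$ is finite this is exactly the desired statement. I expect the main obstacle to be the careful bookkeeping in the limiting/contradiction argument — specifically, making sure the relative-lift conclusion of \autoref{res: lifting quotient wo roots} (which controls kernels only on $\tilde V$) genuinely upgrades to ``factors through $\pi$'' via the equational noetherianity set $\tilde V_0$, and ensuring the abelian case is closed off without circularity (the appeal to \autoref{res: quotient wo root give factorization} is what makes this work, since it is proved independently via the shortening machinery). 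The rest is a routine adaptation of the scheme already used for \autoref{res: lifting quotient wo roots -  inf presented}, and indeed the excerpt itself signals this by saying ``The next two corollaries are obtained in the exact same way, using the fact that $\Gamma$ is equationally noetherian.''
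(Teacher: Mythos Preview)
Your approach is essentially the one the paper intends (it leaves the proof to the reader, pointing to the scheme of \autoref{res: lifting quotient wo roots -  inf presented} together with equational noetherianity), but there are two wobbles worth flagging.

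First, in the non-abelian case you justify $\tilde V_0 \subset \ker\tilde\phi$ by writing ``$\tilde V_0 \subset \ker \pi \subset \ker \phi$''. The inclusion $\ker\pi \subset \ker\phi$ is precisely what you are trying to prove, so this is circular. The fix is trivial: simply take $W \supseteq \tilde V_0 \cup W_0$ (both are finite subsets of $\ker\pi$). Then $W \subset \ker\phi$ gives $\tilde V_0 \subset \ker\phi$ directly, and the relative lift $\tilde\phi$ from \autoref{res: lifting quotient wo roots} satisfies $\tilde V_0 \cap \ker\tilde\phi = \tilde V_0 \cap \ker\phi = \tilde V_0$, so $\tilde\phi$ factors through $\pi$ by equational noetherianity, hence so does $\phi$.

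Second, your handling of the abelian case is correct but far more elaborate than necessary. In your contradiction argument, once $(L,\eta)$ has the factorization property and $\phi_k$ factors through $\eta$, you are already done: $(L,\eta) \prec (G,\pi)$ means $\eta$ itself factors through $\pi$, so $\phi_k$ does too --- contradiction. There is no need to split further into abelian and non-abelian sub-cases, lift $\mu_k$, or argue that $B$ is free abelian. Alternatively, the abelian case admits a completely elementary direct argument that bypasses limit groups: if $\phi$ has abelian image then $[F,F] \subset \ker\phi$, so it suffices that the image of $\ker\pi$ in $F^{\mathrm{ab}}$ (namely $\ker(F^{\mathrm{ab}}\to G^{\mathrm{ab}})$, a finitely generated subgroup of the free abelian group $F^{\mathrm{ab}}$) lie in the kernel of the induced map $F^{\mathrm{ab}}\to \Gamma/\Gamma^n$; adjoining to $W$ a finite set $W_1 \subset \ker\pi$ whose image generates this subgroup disposes of the case.
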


\begin{coro}
	Let $G$ be a finitely generated group.
	Let $(L, \eta) \in \mathfrak G(G)$ be a limit group.
	If no quotient of $L$ admits an essential root splitting, then one of the following holds
	\begin{itemize}
		\item $L$ is an abelian group whose torsion part is not trivial
		\item $(L,\eta)$ is a limit group over $\Gamma$.
	\end{itemize}
\end{coro}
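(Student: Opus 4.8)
The plan is to reduce the statement to \autoref{res: lifting quotient wo roots - noetherian} together with the lifting machinery already established. Let $(L,\eta)\in\mathfrak G(G)$ be a limit group no quotient of which admits an essential root splitting. First I would dispose of the case where $L$ is abelian: if $L$ is abelian and its torsion part is non-trivial we are in the first alternative, so assume instead that $L$ is abelian and torsion-free. Then I would argue that $(L,\eta)$ is a limit group over $\Gamma$ directly, using \autoref{res: lifting abelian groups}: since $\Gamma$ is non-elementary torsion-free hyperbolic, for every finite subset of $L$ there is a large odd exponent $n$ (which we may take co-prime to anything we like) and a morphism $L\to\Gamma/\Gamma^n$ injective on that subset, and by \autoref{res: lifting abelian groups} such a morphism lifts to $\Gamma$; passing to a suitable exhaustion and taking a limit shows $(L,\eta)$ is a limit group over $\Gamma$.

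Next I would treat the main case: $L$ non-abelian, no quotient admitting an essential root splitting. The key point is \autoref{res: quotient wo root give factorization}, which tells us $(L,\eta)$ has the factorization property, so there is a finite $W\subset\ker\eta$ such that every marked group $(H,\phi)$ in the filtration (for large enough index) with $W\subset\ker\phi$ factors through $\eta$. I would combine this with \autoref{res: lifting quotient wo roots - noetherian} applied to a presentation $\pi\colon F\onto G$ with $F$ free: enlarging $W$ as needed, I get that for every odd $n$ larger than some $N$, every homomorphism $G\to\Gamma/\Gamma^n$ with $W$ in its kernel factors through... wait — more precisely, I would use that $(L,\eta)$ is a limit of morphisms $\phi_k\colon G\to\Gamma/\Gamma^{n_k}$ with $n_k\to\infty$, and show that each such $\phi_k$ (for $k$ large) lifts to a morphism $G\to\Gamma$. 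Indeed, since no quotient of $L$ admits an essential root splitting and $L$ is non-abelian, by the dichotomy in \autoref{res: lifting quotient wo roots -  inf presented} every homomorphism $G\to\Gamma/\Gamma^{n}$ with image refining $L$ and non-abelian image lifts relatively to larger and larger finite subsets of $G$; taking a subsequence and a limit of these lifts $\tilde\phi_k\colon G\to\Gamma$ produces a morphism witnessing that $(L,\eta)$ is a limit group over $\Gamma$.

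The cleanest route, which I would actually write, is: fix a non-decreasing exhaustion $(U_k)$ of $G$ by finite subsets; since $(L,\eta)$ is a limit group over the periodic quotients of $\Gamma$, choose $\phi_k\colon G\to\Gamma/\Gamma^{n_k}$ with $U_k\cap\ker\phi_k=U_k\cap\ker\eta$ and $n_k\to\infty$. If infinitely many $\phi_k$ have abelian image, then $L$ is abelian, contrary to assumption (and handled above); so discard those and assume every $\phi_k$ has non-abelian image. Apply \autoref{res: lifting quotient wo roots -  inf presented} with $V=U_k$ to obtain, for $k$ large, a lift $\tilde\phi_k\colon G\to\Gamma$ with $U_k\cap\ker\tilde\phi_k=U_k\cap\ker\phi_k=U_k\cap\ker\eta$. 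Then $(\Gamma,\tilde\phi_k)$ converges to $(L,\eta)$ in $\mathfrak G(G)$, so $(L,\eta)$ is a limit group over $\Gamma$, as desired.

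The main obstacle is the bookkeeping needed to apply \autoref{res: lifting quotient wo roots -  inf presented}: that statement lifts a \emph{single} morphism relative to a \emph{prescribed} finite set $V$ once $n\geq N(V)$, but here $N$ depends on $V=U_k$ while $n_k$ is already chosen. The fix is standard — first choose the finite sets $V_k=U_k$, let $N_k=N(V_k)$, and then when selecting $\phi_k$ from the definition of ``limit group over the periodic quotients of $\Gamma$'' additionally require $n_k\geq N_k$ (possible since that definition allows arbitrarily large exponents). With this ordering of the quantifiers the argument goes through, and the only genuinely non-formal inputs are \autoref{res: quotient wo root give factorization}, \autoref{res: lifting quotient wo roots -  inf presented}, and \autoref{res: lifting abelian groups}, all of which we may assume.
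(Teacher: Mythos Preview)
Your overall strategy is right, but the ``cleanest route'' has a real gap: \autoref{res: lifting quotient wo roots -  inf presented} assumes that no quotient of \emph{its input group} admits an essential root splitting, and you invoke it with the marker $G$. The hypothesis of the corollary, however, is only on $L$; quotients of $G$ that are not quotients of $L$ may well have essential root splittings. So the sentence ``Apply \autoref{res: lifting quotient wo roots -  inf presented} with $V=U_k$ to obtain \dots\ a lift $\tilde\phi_k\colon G\to\Gamma$'' is not justified as written.

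The fix is exactly what the paper has in mind when it says the corollary is ``obtained in the exact same way, using the fact that $\Gamma$ is equationally noetherian'': pass to a free presentation $p\colon F\onto G$, set $\pi'=\eta\circ p\colon F\onto L$, and apply \autoref{res: lifting quotient wo roots} (the theorem, not its corollary) with this $\pi'$, whose target $L$ does satisfy the no-root-splitting hypothesis. For each finite $\tilde V\subset F$ you then obtain lifts $\tilde\psi_k\colon F\to\Gamma$ of $\phi_k\circ p$ relative to $\tilde V$. These need not factor through $G$; this is where equational noetherianity of $\Gamma$ enters. Choose $\tilde V$ to contain a finite $\tilde V_0\subset\ker p$ such that any morphism $F\to\Gamma$ vanishing on $\tilde V_0$ factors through $p$. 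Since $\tilde V_0\subset\ker p\subset\ker(\phi_k\circ p)$ and $\tilde V\cap\ker\tilde\psi_k=\tilde V\cap\ker(\phi_k\circ p)$, the lift $\tilde\psi_k$ descends to $\psi_k\colon G\to\Gamma$, and letting $\tilde V$ exhaust $F$ (with the diagonalization of $N$ against $n_k$ that you already describe) gives $(\Gamma,\psi_k)\to(L,\eta)$. Your first paragraph in fact gestures at this free-group route via \autoref{res: lifting quotient wo roots - noetherian}; you should not abandon it. The torsion-free abelian case is fine, though it is quicker to note that any finitely generated free abelian group is a limit group over $\Gamma$ directly, without going through \autoref{res: lifting abelian groups}.
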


\begin{rema}
	If we restrict ourselves to limit groups over the periodic quotients of $\Gamma$ with \emph{prime} exponents, then the first case never happens and every limit group none of whose quotient admits an essential root splitting is a limit group over $\Gamma$.
\end{rema}

%%%%%%%%%%%%%%%%%%%%%%%%%%%%%%%%%%%%%%%%%%%%%%%%%%%%%%%%%%%%%%%%%%%%%%%%%%%%%%%%%%%%%
%%%%%%%%%%%%%%%%%%%%%%%%%%%%%%%%%%%%%%%%%%%%%%%%%%%%%%%%%%%%%%%%%%%%%%%%%%%%%%%%%%%%%
%
\section{Applications}
%
%%%%%%%%%%%%%%%%%%%%%%%%%%%%%%%%%%%%%%%%%%%%%%%%%%%%%%%%%%%%%%%%%%%%%%%%%%%%%%%%%%%%%
%%%%%%%%%%%%%%%%%%%%%%%%%%%%%%%%%%%%%%%%%%%%%%%%%%%%%%%%%%%%%%%%%%%%%%%%%%%%%%%%%%%%%
\label{sec: applications}

%%%%%%%%%%%%%%%%%%%%%%%%%%%%%%%%%%%%%%%%%%%%%%%%%%%%%%%%%%%%%%%%%%%%%%%%%%%%%%%%%%%%%
%
\subsection{Groups with no root splitting}
%
%%%%%%%%%%%%%%%%%%%%%%%%%%%%%%%%%%%%%%%%%%%%%%%%%%%%%%%%%%%%%%%%%%%%%%%%%%%%%%%%%%%%%
We focus here on freely indecomposable, hyperbolic groups $\Gamma$ without essential root splitting.

%Recall that a group $G$ is  \emph{Hopfian} if every epimorphism $G \onto G$ is an isomorphism.

\begin{theo}
\label{res: hopf - rigid}
	Let $\Gamma$ be a torsion-free, hyperbolic group, none of whose quotients admit an abelian splitting.
	There exists $N \in \N$, such that for every odd integer $n \geq N$, the periodic quotient $\Gamma / \Gamma^n$ is Hopfian.
\end{theo}

\begin{proof}
	If $\Gamma$ is cyclic, then $\Gamma / \Gamma^n$ is finite, hence the result is straightforward.
	We now assume that $\Gamma$ is non-elementary.
	We apply \autoref{res: lifting morphism - rigid} with the group $G = \Gamma$.
	Let $N$ be the corresponding critical exponent and $\psi_1, \dots, \psi_m$ the associated list of morphisms $G \to \Gamma$.
	Let $n \geq N$ be an odd integer.
	For simplicity we write $\pi \colon \Gamma \to \Gamma/ \Gamma^n$ for the canonical projection.
	Let $\phi$ be a surjective morphism from $\Gamma/\Gamma^n$ onto itself.
	Assume that contrary to our claim $\phi$ is not injective.
	Hence $K_p = \ker(\phi^p)$ is an increasing sequence of subgroups of $\Gamma/ \Gamma^n$.
	Let $p \in \N$.
	The composition $\phi^p \circ \pi$ is a homomorphism from $\Gamma$ onto $\Gamma/\Gamma^n$.
	In particular, its image is not abelian.
	It follows from \autoref{res: lifting morphism - rigid} there is $i \in \intvald 1m$ such that a conjugate of $\psi_i$ lifts $\phi^p \circ \pi$.
	Hence there exist two integers $p > q$, together with $\gamma \in \Gamma/\Gamma^n$, such that $\phi^p = \iota_\gamma \circ \phi^q$.
	Hence $K_p = K_q$ which contradicts the fact that the sequence $(K_p)$ is increasing.
\end{proof}

\begin{theo}
\label{res: no splitting}
	Let $\Gamma$ be a torsion-free, hyperbolic group.
	If $\Gamma$ has no essential root splitting, then the following are equivalent.
	\begin{enumerate}
		\item \label{enu: no splitting - one-ended}
		The group $\Gamma$ is freely indecomposable.
		\item \label{enu: no splitting - no-splitting exist}
		For every $N \in \N$, there exists an odd integer $n \geq N$ such that the quotient $\Gamma/ \Gamma^n$ is freely indecomposable in $\mathfrak B_n$.
		\item \label{enu: no splitting - no-splitting forall}
		There exists $N \in \N$, such that for every odd exponent $n \geq N$, the quotient $\Gamma/ \Gamma^n$ is freely indecomposable in $\mathfrak B_n$.
	\end{enumerate}
\end{theo}

\begin{proof}
	Observe that \ref{enu: no splitting - no-splitting forall} $\Rightarrow$ \ref {enu: no splitting - no-splitting exist} is obvious.
	Let us prove \ref{enu: no splitting - no-splitting exist} $\Rightarrow$ \ref{enu: no splitting - one-ended}.
	Assume that \ref{enu: no splitting - one-ended} does not holds, that is $\Gamma$ splits as a free product $\Gamma = \Gamma_1 \freep \Gamma_2$.
	Note that $\Gamma_1$ and $\Gamma_2$ are quasi-convex subgroups of $\Gamma$.
	Hence they are either infinite cyclic or non-elementary torsion-free hyperbolic groups.
	In particular, there exists $N \in \N$, such that for every odd exponent $n \geq N$, their $n$-periodic quotients are non-trivial.
	Let $n \geq N$ be an odd integer.
	Let $B_1$ and $B_2$ be the respective $n$-periodic quotients of $\Gamma_1$ and $\Gamma_2$.
	One checks that $\Gamma/ \Gamma^n$ is isomorphic to $B_1 \ast^n B_2$.
	Hence \ref{enu: no splitting - no-splitting exist} does not hold.

	Observe first that if $\Gamma$ is cyclic, then $\Gamma / \Gamma^n$ is freely indecomposable in $\mathfrak B_n$ provided $n$ is sufficiently large.
	Indeed assume on the contrary that $\Gamma / \Gamma^n$ is isomorphic to $A_1 \ast^n A_2$ where $A_1$ and $A_2$ are two non-trivial $n$-periodic groups.
	If $n$ is sufficiently large then the projection $A_1 \ast A_2 \to \Gamma / \Gamma^n$ is one-to-one on the set of elements whose word length is at most four, see \autoref{res: approximating sequence}\ref{enu: approximating sequence - metric}.
	In particular if $a_1$ and $a_2$ are non-trivial element of $A_1$ and $A_2$ respectively, then the commutator $[a_1,a_2]$ is non trivial in $\Gamma/\Gamma^n$.
	This contradicts the fact that $\Gamma / \Gamma^n$ is cyclic.
	
	From now on we assume that $\Gamma$ is non-elementary.
	Let us now focus on \ref{enu: no splitting - one-ended} $\Rightarrow$ \ref {enu: no splitting - no-splitting forall}.
	We now apply \autoref{res: lifting morphism - killing element free product} with $G = \Gamma$.
	We write $N \in \N$ and $W \subset \Gamma\setminus\{1\}$ for the data given by this statement.
	Up to increasing the value of $N$ we can assume that for every odd integer $n \geq N$, the intersection $W \cap \Gamma^n$ is empty (see \autoref{rem: canonical proj asymp injective}).
	
	Let $n \geq N$ be an odd integer.
	For simplicity we write $\pi \colon \Gamma \to \Gamma / \Gamma^n$ for the canonical projection.
	Assume that contrary to our claim, $\Gamma/ \Gamma^n$ splits as a free product in $\mathfrak B_n$, i.e. there exist two non-trivial groups $B_1, B_2 \in \mathfrak B_n$ and an isomorphism $\theta \colon \Gamma/ \Gamma^n \to B_1 \ast^n  B_2$.
	In particular, $B_1$ and $B_2$ can be seen as subgroups of $\Gamma / \Gamma^n$, hence they are CSA.
	
	The projection $\pi$ induces a map $\aut{\Gamma} \to \aut{\Gamma/\Gamma^n}$ that we write $\alpha \mapsto \alpha_n$.
	In particular, $\theta \circ \pi \circ \alpha = \theta\circ \alpha_n \circ \pi$, for every $\alpha \in \aut{\Gamma}$.
	As $\theta$ and $\alpha_n$ are one-to-one, the kernel of the map $\theta\circ \alpha_n \circ \pi$ is $\Gamma^n$.
	Hence $W \cap \ker (\theta \circ \pi \circ \alpha)$ is empty, for every $\alpha \in \aut{\Gamma}$.
	According to  \autoref{res: lifting morphism - killing element free product}, the image of $\theta$ is contained in a conjugate of $B_1$ or $B_2$, which contradicts the fact that $\theta$ is onto.
\end{proof}

\begin{theo}
\label{res: lifting monomorphism}
	Let $\Gamma_1$ and $\Gamma_2$ be two torsion-free, hyperbolic groups.
	Assume that $\Gamma_1$ is freely indecomposable with no essential root splitting and $\Gamma_2$ is non-elementary.
	There exists a critical exponent $N\in \N$, such that for every odd integer $n \geq N$, the following holds.
	
	Given any monomorphism $\phi \colon \Gamma_1/ \Gamma_1^n \into \Gamma_2/\Gamma_2^n$, there exists a monomorphism $\tilde \phi \colon \Gamma_1 \into \Gamma_2$ lifting $\phi$, i.e. such that the following diagram commutes
	\begin{center}
			\begin{tikzpicture}
				\matrix (m) [matrix of math nodes, row sep=2em, column sep=2.5em, text height=1.5ex, text depth=0.25ex] 
				{ 
					\Gamma_1 & \Gamma_2 \\
					\Gamma_1/ \Gamma_1^n & \Gamma_2 / \Gamma_2^n	  \\
				}; 
				\draw[>=stealth, ->] (m-1-1) -- (m-1-2) node[pos=0.5, above]{$\tilde \phi$};
				\draw[>=stealth, ->] (m-2-1) -- (m-2-2) node[pos=0.5, below]{$\phi$};
				
				\draw[>=stealth, ->] (m-1-1) -- (m-2-1);
				\draw[>=stealth, ->] (m-1-2) -- (m-2-2);
			\end{tikzpicture}
		\end{center}
\end{theo}

\begin{rema*}	
	Note that we allow $\Gamma_1$ to be cyclic.
\end{rema*}

\begin{proof}
	Let $X_2$ be a Cayley graph of $\Gamma_2$.
	It is a $\tau$-bootstrap for some $\tau \in (0,1)$.
	We now apply \autoref{res: lifting morphism - killing element for hyperbolic groups} with $G = \Gamma_1$.
	We write $N \in \N$ and $W \subset \Gamma_1\setminus\{1\}$ for the data given by this proposition.
	Up to increasing the value of $N$ we can assume that for every odd integer $n \geq N$, the intersection $W \cap \Gamma_1^n$ is empty (see \autoref{rem: canonical proj asymp injective}).
	
	Let $n \geq N$ be an odd integer.
	For simplicity we write $\pi_i \colon \Gamma_i \to \Gamma_i / \Gamma_i^n$ for the canonical projection.
	Let $\phi \colon \Gamma_1/ \Gamma_1^n \into \Gamma_2/\Gamma_2^n$ be a monomorphism.
	Recall that $\pi_1$ induces a map $\aut{\Gamma_1} \to \aut{\Gamma_1/\Gamma_1^n}$ that we write $\alpha \mapsto \alpha_n$.
	In particular, $\phi \circ \pi_1 \circ \alpha = \phi\circ \alpha_n \circ \pi_1$, for every $\alpha \in \aut{\Gamma_1}$.
	As $\phi$ and $\alpha_n$ are one-to-one, the kernel of $\phi\circ \alpha_n \circ \pi_1$ coincides with $\Gamma_1^n$.
	Hence $W \cap \ker (\phi \circ \pi_1 \circ \alpha)$ is empty, for every $\alpha \in \aut{\Gamma_1}$.
	Applying \autoref{res: lifting morphism - killing element for hyperbolic groups}, there exists a monomorphism $\tilde \phi \colon \Gamma_1 \to \Gamma_2$ such that $\pi_2 \circ \tilde \phi = \phi \circ \pi_1$, whence the result.
\end{proof}

\begin{coro}
\label{res: lifting monomorphism - w free splitting}
	Let $\Gamma_1$ and $\Gamma_2$ be two non-elementary, torsion-free, hyperbolic groups.
	Assume that $\Gamma_1$ has no essential root splitting.
	There exists a critical exponent $N\in \N$, such that for every odd integer $n \geq N$, the following holds.
	
	Given any monomorphism $\phi \colon \Gamma_1/ \Gamma_1^n \into \Gamma_2/\Gamma_2^n$, there exists a morphism $\tilde \phi \colon \Gamma_1 \to \Gamma_2$  lifting $\phi$ whose restriction to every freely indecomposable, free factors of $\Gamma_1$ is one-to-one.
\end{coro}

\begin{proof}
	Consider the Grushko decomposition of $\Gamma_1$
	\begin{equation*}
		\Gamma_1 = A_1 \ast \dots \ast A_m \ast F,
	\end{equation*}
	where each $A_j$ is non-cyclic, freely indecomposable and $F$ is a free group.
	Since each $A_j$ is quasi-convex in $\Gamma_1$ it is also torsion-free hyperbolic.
	Observe also that $A_j$ has no non-essential root splitting, for otherwise $\Gamma_1$ would admit a non-essential root splitting.
	Hence we can apply for each $j \in \intvald 1m$ \autoref{res: lifting monomorphism} replacing $\Gamma_1$ by $A_j$.
	We write $N_j$ for the critical exponent given by this theorem.
	Let $N = \max \{ N_1, \dots, N_m\}$.
	
	Let $n \geq N$ be an odd integer and $\phi \colon \Gamma_1 / \Gamma_1^n \to \Gamma_2 / \Gamma_2^n$ a monomorphism.
	As we observed in the proof \autoref{res: def free product burnside} the periodic quotient $A_j /A_j ^n$ embeds into $\Gamma_1 / \Gamma_1^n$.
	Consequently $\phi$ induces an embedding of $A_j /A_j ^n$ into $\Gamma_2 / \Gamma_2^n$.
	According to \autoref{res: lifting monomorphism}  the latter lifts to a monomorphism $\tilde \phi_j \colon A_j \into \Gamma_2$.
	Since $F$ is a free group the map
	\begin{equation*}
		\mu \colon F \to \Gamma_1 \to \Gamma_1/\Gamma_1^n \xrightarrow{\ \phi \ } \Gamma_2 / \Gamma_2^n
	\end{equation*}
	also lifts to a map $\tilde \mu \colon F \to \Gamma_2$ (which may not be one-to-one though, unless $F$ is cyclic).
	We write $\tilde \phi \colon \Gamma_1 \to \Gamma_2$ for the map whose restriction to $A_j$ (\resp $F$) is $\tilde \phi_j$ (\resp $\tilde \mu$).
	By construction it is a lift of $\phi$.
\end{proof}

\begin{coro}
\label{res: auto/co-hopf - particular}
	Let $\Gamma$ be a non-elementary, torsion-free, freely indecomposable, hyperbolic group with no essential root splitting.
	There exists a critical exponent $N \in \N$ such that for every odd integer $n \geq N$, the following holds.
	\begin{enumerate}
		\item The map $\aut \Gamma \to \aut{\Gamma/\Gamma^n}$ is onto.
		\item The quotient $\Gamma/\Gamma^n$ is co-Hopfian
	\end{enumerate}
\end{coro}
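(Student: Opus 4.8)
The proof will be a direct consequence of the lifting result for monomorphisms (\autoref{res: lifting monomorphism}) applied in the special case $\Gamma_1 = \Gamma_2 = \Gamma$, combined with the fact that $\Gamma$ itself is co-Hopfian and has a well-understood outer automorphism group. First I would invoke \autoref{res: lifting monomorphism} with $\Gamma_1 = \Gamma_2 = \Gamma$: this yields a critical exponent $N \in \N$ such that for every odd integer $n \geq N$, every monomorphism $\phi \colon \Gamma/\Gamma^n \into \Gamma/\Gamma^n$ lifts to a monomorphism $\tilde\phi \colon \Gamma \to \Gamma$ making the square with the canonical projections $\pi \colon \Gamma \onto \Gamma/\Gamma^n$ commute.

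For the surjectivity of $\aut\Gamma \to \aut{\Gamma/\Gamma^n}$: given $\phi \in \aut{\Gamma/\Gamma^n}$, in particular $\phi$ is injective, so it lifts to a monomorphism $\tilde\phi \colon \Gamma \into \Gamma$ with $\pi \circ \tilde\phi = \phi \circ \pi$. Since $\Gamma$ is torsion-free, freely indecomposable, hyperbolic, it does not split over a finite group (the only finite subgroup being trivial), so by Sela and Moioli it is co-Hopfian; hence $\tilde\phi$ is an automorphism of $\Gamma$. The relation $\pi \circ \tilde\phi = \phi \circ \pi$ together with the fact that $\pi$ is onto shows precisely that the image of $\tilde\phi$ under $\aut\Gamma \to \aut{\Gamma/\Gamma^n}$ is $\phi$. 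This proves the first point. The ``in particular'' clause of \autoref{res: auto one-ended -intro} (finite generation of $\aut{\Gamma/\Gamma^n}$ and $\out{\Gamma/\Gamma^n}$) then follows since $\aut\Gamma$ is finitely generated (hyperbolic groups are finitely presented and their automorphism groups are finitely generated, by Sela's work on the isomorphism problem), and finite generation passes to quotients; likewise $\out{\Gamma/\Gamma^n}$ is a quotient of $\out\Gamma$ postcomposed with the image of $\aut\Gamma$, hence finitely generated.

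For the co-Hopf property: let $\phi \colon \Gamma/\Gamma^n \into \Gamma/\Gamma^n$ be any injective endomorphism. Applying the lifting statement again, we get a monomorphism $\tilde\phi \colon \Gamma \into \Gamma$ with $\pi \circ \tilde\phi = \phi \circ \pi$. As above, co-Hopfianity of $\Gamma$ forces $\tilde\phi$ to be an automorphism, hence onto. Since $\pi$ is onto and $\pi \circ \tilde\phi = \phi \circ \pi$, it follows that $\phi$ is onto as well, so $\phi$ is an automorphism. Therefore $\Gamma/\Gamma^n$ is co-Hopfian. Taking the maximum of the two critical exponents (which here coincide, both coming from \autoref{res: lifting monomorphism}) gives a single $N$ that works for both conclusions.

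The only subtle point to check — and the one I would be most careful about — is that the lift $\tilde\phi$ produced by \autoref{res: lifting monomorphism} really is injective \emph{before} one appeals to co-Hopfianity: this is guaranteed by the statement itself, which outputs a monomorphism, so there is no circularity. The co-Hopfianity of the hyperbolic group $\Gamma$ is exactly the hypothesis-free input of Sela--Moioli, valid because freely indecomposable and torsion-free rule out any splitting over a finite subgroup. No further estimates are needed; the work has all been absorbed into \autoref{res: lifting monomorphism}.
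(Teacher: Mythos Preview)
Your proof is correct and follows essentially the same approach as the paper: apply \autoref{res: lifting monomorphism} with $\Gamma_1 = \Gamma_2 = \Gamma$, lift any monomorphism $\Gamma/\Gamma^n \into \Gamma/\Gamma^n$ to a monomorphism $\Gamma \into \Gamma$, and use co-Hopfianity of the freely indecomposable hyperbolic group $\Gamma$ to conclude the lift is an automorphism. The paper compresses your two cases into one observation --- that every monomorphism of $\Gamma/\Gamma^n$ lies in the image of $\aut\Gamma \to \aut{\Gamma/\Gamma^n}$ --- from which both surjectivity and co-Hopfianity are immediate; but the content is identical.
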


\begin{proof}
	It suffices to prove that if $n$ is a sufficiently large odd integer then every monomorphism of $\Gamma/\Gamma^n \into \Gamma/\Gamma^n$ belongs to the image of the map $\aut \Gamma \to \aut{\Gamma/\Gamma^n}$.
	To that end, we apply \autoref{res: lifting monomorphism} with $\Gamma_1 = \Gamma_2 = \Gamma$.
	Hence every monomorphism $\Gamma/\Gamma^n \into \Gamma/\Gamma^n$ is the image of a monomorphism $\Gamma \into \Gamma$ (provided $n$ is large enough).
	Since $\Gamma$ is freely indecomposable, it is co-Hopfian. 
	Thus every monomorphism $\phi \colon \Gamma \into \Gamma$ is actually an automorphism, whence the result.
\end{proof}

\begin{lemm}
\label{res: non intersecting free factors}
	Let $\Gamma$ be a group splitting as a free product $\Gamma = A_1 \ast A_2$.
	For every $n \in \N$, the groups $A_1 / A_1^n$ and $A_2 /A_2^n$ embed in $\Gamma / \Gamma^n$ with trivial intersection.
\end{lemm}

\begin{proof}
	Let $n \in \N$.
	The natural embedding $A_1 \into \Gamma$ as well as the canonical projections $\Gamma \onto A_1 \times A_2$ and $A_1 \times A_2 \onto A_i$ induce maps on the corresponding periodic quotients so that the following diagram commutes.
	\begin{center}
			\begin{tikzpicture}
				\matrix (m) [matrix of math nodes, row sep=2em, column sep=2.5em, text height=1.5ex, text depth=0.25ex] 
				{ 
					A_i & \Gamma & A_1 \times A_2 & A_i \\
					A_i/ A_i^n & \Gamma / \Gamma^n & A_1/A_1^n \times A_2/A_2^n & A_i /A_i^n	  \\
				}; 
				\draw[>=stealth, ->] (m-1-1) -- (m-1-2);
				\draw[>=stealth, ->] (m-1-2) -- (m-1-3);
				\draw[>=stealth, ->] (m-1-3) -- (m-1-4);
				\draw[>=stealth, ->] (m-2-1) -- (m-2-2);
				\draw[>=stealth, ->] (m-2-2) -- (m-2-3);
				\draw[>=stealth, ->] (m-2-3) -- (m-2-4);

				\draw[>=stealth, ->] (m-1-1) -- (m-2-1);
				\draw[>=stealth, ->] (m-1-2) -- (m-2-2);
				\draw[>=stealth, ->] (m-1-3) -- (m-2-3);
				\draw[>=stealth, ->] (m-1-4) -- (m-2-4);
			\end{tikzpicture}
		\end{center}
		Note also the top row is just the identity of $A_i$.
		The result now follows from chasing diagram.
\end{proof}

\begin{coro}
\label{res: isom - particular}
	Let $\Gamma_1$ and $\Gamma_2$ be two torsion-free, hyperbolic groups with no essential root splitting.
	There exists $N \in \N$ such that the following are equivalent.
	\begin{enumerate}
		\item \label{enu: isom - particular - source}
		$\Gamma_1$ and $\Gamma_2$ are isomorphic.
		\item \label{enu: isom - particular - some}
		There exists an odd integer $n \geq N$, such that the groups $\Gamma_1/ \Gamma_1^n$ and $\Gamma_2/\Gamma_2^n$ are isomorphic.
		\item \label{enu: isom - particular - any}
		For every integer $n \in \N$, the groups $\Gamma_1/ \Gamma_1^n$ and $\Gamma_2/\Gamma_2^n$ are isomorphic.
	\end{enumerate}
\end{coro}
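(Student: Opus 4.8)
The plan is to prove the cycle of implications $\ref{enu: isom - particular - source} \Rightarrow \ref{enu: isom - particular - any} \Rightarrow \ref{enu: isom - particular - some} \Rightarrow \ref{enu: isom - particular - source}$. The implication $\ref{enu: isom - particular - any} \Rightarrow \ref{enu: isom - particular - some}$ is trivial once we have produced a suitable $N$. The implication $\ref{enu: isom - particular - source} \Rightarrow \ref{enu: isom - particular - any}$ is also immediate: an isomorphism $\Gamma_1 \to \Gamma_2$ sends $\Gamma_1^n$ onto $\Gamma_2^n$ (since $\Gamma^n$ is characteristic), hence descends to an isomorphism of the $n$-periodic quotients for \emph{every} $n \in \N$. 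So the content is in $\ref{enu: isom - particular - some} \Rightarrow \ref{enu: isom - particular - source}$.

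For that direction, the idea is to lift an isomorphism of periodic quotients to an isomorphism of the hyperbolic groups. First I would fix the critical exponent: let $N_1$ be the exponent produced by \autoref{res: lifting monomorphism} applied with the pair $(\Gamma_1, \Gamma_2)$ and let $N_2$ be the one produced by the same theorem applied with $(\Gamma_2, \Gamma_1)$; set $N = \max\{N_1, N_2\}$. Now suppose $n \geq N$ is odd and $\phi \colon \Gamma_1/\Gamma_1^n \to \Gamma_2/\Gamma_2^n$ is an isomorphism. In particular $\phi$ is a monomorphism, so \autoref{res: lifting monomorphism} provides a monomorphism $\tilde\phi \colon \Gamma_1 \into \Gamma_2$ commuting with the canonical projections. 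Applying the same theorem to the inverse isomorphism $\phi^{-1} \colon \Gamma_2/\Gamma_2^n \to \Gamma_1/\Gamma_1^n$ yields a monomorphism $\tilde\psi \colon \Gamma_2 \into \Gamma_1$ with $\pi_1 \circ \tilde\psi = \phi^{-1} \circ \pi_2$.

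The key step is then to observe that $\tilde\psi \circ \tilde\phi \colon \Gamma_1 \into \Gamma_1$ is a monomorphism whose induced map on $\Gamma_1/\Gamma_1^n$ is $\phi^{-1}\circ\phi = \id$; in particular it is an injective endomorphism of $\Gamma_1$. Since $\Gamma_1$ is freely indecomposable, torsion-free, non-elementary and hyperbolic, it is co-Hopfian by Sela--Moioli, so $\tilde\psi \circ \tilde\phi$ is an automorphism of $\Gamma_1$; symmetrically $\tilde\phi \circ \tilde\psi$ is an automorphism of $\Gamma_2$. A standard categorical argument then forces both $\tilde\phi$ and $\tilde\psi$ to be isomorphisms (if $\tilde\psi\circ\tilde\phi$ and $\tilde\phi\circ\tilde\psi$ are both bijective then $\tilde\phi$ is surjective, being left-invertible up to the first and right-invertible up to the second, hence bijective). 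Therefore $\Gamma_1 \cong \Gamma_2$.

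The main obstacle is not really in assembling this argument — it is a clean diagram chase once \autoref{res: lifting monomorphism} is available — but rather in checking that \autoref{res: lifting monomorphism} genuinely applies in both directions simultaneously, i.e. that the hypotheses (non-elementary, torsion-free, hyperbolic, freely indecomposable, no essential root splitting) are symmetric in $\Gamma_1$ and $\Gamma_2$, which they are by the statement of the corollary. One subtlety worth a sentence in the write-up: for $n$ small the quotients $\Gamma_i/\Gamma_i^n$ could a priori be isomorphic for accidental reasons, which is exactly why the equivalence is only asserted for $n \geq N$ in \ref{enu: isom - particular - some} while \ref{enu: isom - particular - any} ranges over all $n$; the implication $\ref{enu: isom - particular - source}\Rightarrow\ref{enu: isom - particular - any}$ does not need $n$ large, so there is no inconsistency. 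Finally, I would remark (as the text already does after the statement of \autoref{res: isom - particular - intro}) that combining this with the solvability of the isomorphism problem for hyperbolic groups gives an algorithm deciding whether the periodic quotients are isomorphic.
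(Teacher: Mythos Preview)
Your proof is correct and follows essentially the same argument as the paper: apply \autoref{res: lifting monomorphism} in both directions to lift the isomorphism of periodic quotients and its inverse to monomorphisms $\tilde\phi$, $\tilde\psi$, then use the co-Hopf property of freely indecomposable hyperbolic groups on the composite $\tilde\psi\circ\tilde\phi$. The paper is slightly more economical, concluding directly that $\tilde\psi$ is surjective (hence an isomorphism) once $\tilde\psi\circ\tilde\phi$ is, rather than invoking the symmetric composite as well.
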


\begin{proof}
	Observe that \ref{enu: isom - particular - source} $\Rightarrow$ \ref{enu: isom - particular - any} and \ref{enu: isom - particular - any} $\Rightarrow$ \ref{enu: isom - particular - some} are obvious, whatever the value of $N$ is.
	Note that if $\Gamma_1$ or $\Gamma_2$ is cyclic, the result is straightforward.
	Indeed if $n$ is a sufficiently large odd integer, then the $n$-periodic quotient of a non-elementary torsion-free hyperbolic group is not cyclic.
	From now on, we assume that $\Gamma_1$ and $\Gamma_2$ are non-elementary.
	Let us now focus on \ref{enu: isom - particular - some} $\Rightarrow$ \ref{enu: isom - particular - source}.
	Denote by 
	\begin{equation*}
		\Gamma_i = A_{i,1} \ast \dots \ast A_{i,m_i} \ast F_i
	\end{equation*}
	the Grushko decomposition of $\Gamma_i$ where each $A_{i,j}$ is non-cyclic, freely indecomposable and $F_i$ is a free group. 
	We apply \autoref{res: lifting monomorphism} twice, first with $\Gamma_1$ and $\Gamma_2$, then with $\Gamma_2$ and $\Gamma_1$.
	Hence there exists $N \in \N$, such that for every odd integer $n \geq N$, every monomorphism $\Gamma_1/\Gamma_1^n \into \Gamma_2/\Gamma_2^n$ (\resp $\Gamma_2/\Gamma_2^n \into \Gamma_1/\Gamma_1^n$) comes from a morphism $\Gamma_1 \into \Gamma_2$ (\resp $\Gamma_2 \into \Gamma_1$) whose restriction to non-cyclic, freely indecomposable free factors is one-to-one.
	Up to increasing the value of $N$ we can assume that $A_{i,j}/ A_{i,j}^n$ is always non-trivial, for every $n \geq N$.
	
	Assume now that there exists an automorphism $\theta \colon \Gamma_1/\Gamma_1^n \to \Gamma_2/\Gamma_2^n$ for some $n \geq N$.
	We denote by $\pi_1 \colon \Gamma_1 \onto \Gamma_1 / \Gamma_1^n$ the canonical projection.
	According to our choice of $N$, there are two morphisms $\phi_1 \colon \Gamma_1 \into \Gamma_2$ and $\phi_2 \colon \Gamma_2 \into \Gamma_1$ as above respectively lifting $\theta$ and its inverse.
	Since $\phi_1$ and $\phi_2$ are one-to-one when restricted to freely indecomposable free factors there are two maps $\sigma_1 \colon \intvald 1{m_1} \to \intvald 1{m_2}$ and $\sigma_2  \colon \intvald 1{m_2} \to \intvald 1{m_1}$ such that 
	\begin{equation*}
		\phi_1(A_{1,j}) \subset A_{2, \sigma_1(j)} 
		\quad \text{and} \quad
		\phi_2(A_{2,j}) \subset A_{1, \sigma_2(j)},
		\quad \forall j.
	\end{equation*}
	We claim that $\sigma_1$ and $\sigma_2$ are bijections inverse of one another.
	For simplicity we let $\phi = \phi_2 \circ \phi_1$ and $\sigma = \sigma_2 \circ \sigma_1$ so that 
	\begin{equation*}
		\phi(A_{1,j}) \subset A_{1, \sigma(j)}, \quad \forall j \in \intvald 1{m_1}.
	\end{equation*}
	Let $j \in \intvald 1{m_1}$.
	By construction $\phi$ is a lift of the identity on $\Gamma_1 / \Gamma_1^n$.
	Hence 
	\begin{equation*}
		\pi_1\left(A_{1,j}\right) \subset \pi_1\left(A_{1, \sigma(j)}\right).
	\end{equation*}
	It follows from \autoref{res: non intersecting free factors} that either $\sigma(j) = j$ or $A_{1,j}/A_{1,j}^n$ is trivial. 
	We made sure when choosing $N$ that $A_{1,j}/A_{1,j}^n$ was not trivial.
	Consequently $\sigma = \sigma_2 \circ \sigma_1$ is the identity map.
	The same argument with $\phi_1 \circ \phi_2$ shows that $\sigma_1 \circ \sigma_2$ is also the identity, which completes the proof of our claim.
	In particular, $m_1 = m_2$ which we now denote by $m$.
	Moreover, up to reordering the free factors of $\Gamma_2$, we can assume without loss of generality that $\sigma_1$ (hence $\sigma_2$) is the identity.
	
	Let $j \in \intvald 1m$.
	According to the above discussion, $\phi_2 \circ \phi_1$ induces an embedding from $A_{1,j}$ into itself.
	However $A_{1,j}$ is a freely indecomposable hyperbolic group, hence co-Hopfian.
	It follows that $\phi_2 \circ \phi_1$ induces an automorphism of $A_{1,j}$.
	Similarly we prove that $\phi_1 \circ \phi_2$ induces an automorphism of $A_{2, j}$.
	Hence $\phi_1$ and $\phi_2$ respectively induce isomorphisms
	\begin{equation*}
		\phi_1 \colon A_{1,j} \to A_{2, j}, \quad \text{and} \quad
		\phi_2 \colon A_{2,j} \to A_{1, j},
	\end{equation*}
	for every $j \in \intvald 1m$.
	Let now us write
	\begin{equation*}
		A_i = A_{i,1} \ast \dots \ast A_{i,j}, \quad \forall i \in \{1, 2\}.
	\end{equation*}
	According to the above discussion $\phi_1$ (\resp $\phi_2$) induces an isomorphism from $A_1$ onto $A_2$ (\resp $A_2$ onto $A_1$).
	Consequently, if we consider the quotients of $\Gamma_1$ and $\Gamma_2$ by $A_1$ and $A_2$ respectively, we observe that the automorphism $\theta \colon \Gamma_1 /\Gamma_1^n \to \Gamma_2 / \Gamma_2^n$ induces an automorphism from $F_1/F_1^n$ onto $F_2/ F_2^n$.
	However $F_1$ and $F_2$ are free abelian groups.
	Looking at the abelianizations of $F_1/F_1^n$ and $F_2/ F_2^n$ we observe that $F_1$ and $F_2$ have the same rank, hence are isomorphic.
	It follows that $\Gamma_1$ and $\Gamma_2$ are isomorphic as well.
\end{proof}

\begin{rema*}
	Note that in the last step of the proof we do not claim that there is an isomorphism $\Gamma_1 \to \Gamma_2$ lifting $\theta$.
	Indeed \autoref{exa: lifting auto} can be adapted to show that such a statement is false in general.
\end{rema*}

%%%%%%%%%%%%%%%%%%%%%%%%%%%%%%%%%%%%%%%%%%%%%%%%%%%%%%%%%%%%%%%%%%%%%%%%%%%%%%%%%%%%%
%
\subsection{Root towers}
%
%%%%%%%%%%%%%%%%%%%%%%%%%%%%%%%%%%%%%%%%%%%%%%%%%%%%%%%%%%%%%%%%%%%%%%%%%%%%%%%%%%%%%
\label{sec: one-ended periodic groups}

As we observed in the introduction, all the above statements fail if one does not ask that $\Gamma$ has no essential root-splitting.
The notion of root tower defined below is designed to get rid of the root splittings.

\begin{defi}[Root tower]
	A \emph{root tower} is a finite sequence of groups $G_0, G_1, \dots, G_m$ where each $G_{i+1}$ is obtained from $G_i$ by adjoining roots.
	The \emph{orders} of this tower is the set $P = \{p_1, \dots, p_m\}$ where each $p_i$ is the order of the root splitting defining $G_i$.
	We respectively call $G_0$ and $G_m$ the \emph{floor} and the \emph{roof} of the tower.
\end{defi}

The next statement is a consequence of Louder and Touikan \cite{Louder:2017gy}.

\begin{theo}
	Let $\Gamma$ be a non-elementary, torsion-free, hyperbolic group.
	Then $\Gamma$ is the roof of a root tower whose floor does not admit any essential root splitting.
\end{theo}

Given a subset $P \subset \N$, we say that an integer $n \in \N$ is \emph{co-prime with $P$} if it is co-prime with every element in $P$.

\begin{lemm}
\label{res: floor tower props}
	Let $\Gamma$ be a non-elementary torsion-free hyperbolic group.
	Assume that $\Gamma$ is the roof of a root tower $\Gamma_0, \Gamma_1, \dots, \Gamma_m = \Gamma$.
	Let $P \subset \N$ be the orders of this tower.
	Then the following holds
	\begin{enumerate}
		\item \label{enu: floor tower props - qc}
		$\Gamma_0$ is a non-elementary, quasi-convex subgroup of $\Gamma$.
		\item \label{enu: floor tower props - isom}
		For every exponent $n \in \N$, which is co-prime with $P$ the inclusion $\Gamma_0 \into \Gamma$ induces an isomorphism $\Gamma_0/\Gamma_0^n \to \Gamma/\Gamma^n$.
	\end{enumerate}
\end{lemm}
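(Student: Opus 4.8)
The plan is to prove the two assertions by induction on the height $m$ of the root tower, so it suffices to treat a single floor, i.e. the case of a root splitting $G = A \freep_C B$ of order $p$ where $B$ is the adjoint root factor. For \ref{enu: floor tower props - qc}, I would argue that adjoining a root $G = A \freep_C B$ to a torsion-free hyperbolic group $A$ where $C$ is a finite-index subgroup of the abelian group $B$ produces a torsion-free hyperbolic group $G$ in which $A$ is quasi-convex: this is a standard fact about amalgams over virtually cyclic (here in fact cyclic, since $G$ is torsion-free hyperbolic so $B$ is infinite cyclic) malnormal subgroups; the hyperbolicity and quasi-convexity follow from the Bestvina--Feighn combination theorem, and $C$ being of finite index in $B$ makes $B$ quasi-convex and hence $A$ quasi-convex. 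Composing the quasi-convex inclusions $\Gamma_0 \hookrightarrow \Gamma_1 \hookrightarrow \cdots \hookrightarrow \Gamma_m = \Gamma$ (quasi-convexity is transitive for finitely generated subgroups of hyperbolic groups) yields that $\Gamma_0$ is quasi-convex in $\Gamma$.

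For \ref{enu: floor tower props - isom}, the single-step case is exactly \autoref{res: root splitting isom burnside}: if $G = A \freep_C B$ is a root splitting of order $p$ and $n$ is co-prime with $p$, then the inclusion $A \into G$ induces an isomorphism $A/A^n \to G/G^n$. Applying this at each floor of the tower, and using that $n$ co-prime with $P$ means $n$ is co-prime with the order $p_i$ of each step, I would get a chain of isomorphisms
\begin{equation*}
	\Gamma_0/\Gamma_0^n \xrightarrow{\ \sim\ } \Gamma_1/\Gamma_1^n \xrightarrow{\ \sim\ } \cdots \xrightarrow{\ \sim\ } \Gamma_m/\Gamma^n = \Gamma/\Gamma^n,
\end{equation*}
each induced by the corresponding inclusion $\Gamma_i \into \Gamma_{i+1}$. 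Since these maps are induced by inclusions and the periodic quotient functor is, well, functorial, the composite isomorphism is the one induced by $\Gamma_0 \into \Gamma$, which is the claim.

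I do not expect a genuine obstacle here: \ref{enu: floor tower props - isom} is essentially a bookkeeping argument reducing to \autoref{res: root splitting isom burnside}, and the only mild subtlety is making sure that ``co-prime with $P$'' is precisely what is needed at every level of the tower, which it is by definition. The part of \ref{enu: floor tower props - qc} that requires the most care is confirming that in a \emph{torsion-free hyperbolic} group the adjoint root factor $B$ of each root splitting is infinite cyclic (so that the combination theorem applies cleanly) — but $B$ is an abelian subgroup of the hyperbolic group $\Gamma_{i+1}$ so it is virtually cyclic, and being torsion-free it is infinite cyclic; moreover $C$ has finite index in $B$, hence $C$ is also infinite cyclic and quasi-convex, and malnormal since $\Gamma_{i+1}$ is CSA (being torsion-free hyperbolic it is CSA). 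So the real work is just citing Bestvina--Feighn (or the local characterization of quasi-convexity for amalgams over quasi-convex malnormal subgroups) and invoking transitivity of quasi-convexity; none of this should need more than a short paragraph once the hypotheses are unpacked.
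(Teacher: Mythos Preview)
Your approach is essentially the same as the paper's: induct on the height of the tower, using \autoref{res: root splitting isom burnside} for part~\ref{enu: floor tower props - isom} and a quasi-convexity result for amalgams over cyclic edge groups for part~\ref{enu: floor tower props - qc} (the paper cites Kapovich's \cite[Lemma~3.5]{Kapovich:1997eb} rather than Bestvina--Feighn, but the content is the same), then chain the steps via transitivity.

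There is one small logical wrinkle in your write-up of \ref{enu: floor tower props - qc}. You run the induction bottom-up, arguing that if $A=\Gamma_{i-1}$ is torsion-free hyperbolic then $G=\Gamma_i$ is hyperbolic with $A$ quasi-convex; but the hypothesis of the lemma only tells you that $\Gamma_m=\Gamma$ is hyperbolic, not that $\Gamma_0$ is. The fix is to run the induction top-down: since $\Gamma_m$ is torsion-free hyperbolic, the edge group $C_m$ is infinite cyclic hence quasi-convex in $\Gamma_m$, so $\Gamma_{m-1}$ is quasi-convex in $\Gamma_m$ and therefore itself hyperbolic; now repeat. This is what the paper's phrasing (``edge groups \dots\ quasi-convex in $\Gamma_i$'') implicitly does. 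With that adjustment your argument is complete.
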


\begin{proof}
	Observe that for each $i \in \intvald 1m$, the group $\Gamma_{i-1}$ is a quasi-convex subgroup of $\Gamma_i$ \cite[Lemma~3.5]{Kapovich:1997eb}.
	This is essentially due to the fact that the edge groups of the root splitting of $\Gamma_i$ are cyclic, hence quasi-convex in $\Gamma_i$.
	A proof by induction shows that $\Gamma_0$ is quasi-convex in $\Gamma_i$ for every $i \in \intvald 0m$, whence \ref{enu: floor tower props - qc}.
	Recall that $\Gamma$ is CSA.
	Hence $\Gamma_0$ cannot be cyclic, for otherwise $\Gamma$ would be cyclic as well.
	Point~\ref{enu: floor tower props - isom} is a simple induction using \autoref{res: root splitting isom burnside}.
\end{proof}

We can now study more generally all the $n$-periodic quotient of a torsion-free hyperbolic group which are freely indecomposable in the Burnside variety $\mathfrak B_n$.
We first give a characterization of those groups using root towers.

\begin{theo}
\label{res: no splitting - general}
	Let $\Gamma$ be a non-elementary, torsion-free, hyperbolic group.
	Assume that $\Gamma$ is the roof of a root tower whose floor $\Gamma_0$ has no essential root-splitting.
	Let $P$ be the orders of this tower.
	Then the following are equivalent.
	\begin{enumerate}
		\item \label{enu: no splitting - one-ended}
		The group $\Gamma_0$ is freely indecomposable.
		\item \label{enu: no splitting - no-splitting}
		There exists $N \in \N$, such that for every odd exponent $n \geq N$ which is co-prime with $P$, the quotient $\Gamma/ \Gamma^n$ does not split as a free product in $\mathfrak B_n$.
	\end{enumerate}
\end{theo}

\begin{proof}
	By \autoref{res: floor tower props}, the $n$-periodic quotients of $\Gamma$ and $\Gamma_0$ are isomorphic, for every $n \in \N$ that is co-prime with $P$.
	The result follows from \autoref{res: no splitting}.
\end{proof}

We now generalize the results of the previous section when $\Gamma$ is the roof of a root tower whose floor is freely indecomposable with no essential root splitting, i.e. when the $n$-periodic quotients $\Gamma / \Gamma^n$ are freely indecomposable in $\mathfrak B_n$.

\begin{theo}
\label{res: co-hopf/auto/etc - general}
	Let $\Gamma$ be a non-elementary, torsion-free, hyperbolic.
	Assume that $\Gamma$ is the roof of a root tower whose floor  $\Gamma_0$ is freely indecomposable, with no essential root splitting.
	Let $P$ be the orders of this tower.
	Then there exists $N \in \N$, such that for every $n \geq N$, which is co-prime with $P$, the following holds.
	\begin{enumerate}
		\item \label{enu: co-hopf/auto/etc - general - isom}
		The inclusion $\Gamma_0 \into \Gamma$ induces an isomorphism $\Gamma_0 /\Gamma_0^n \to \Gamma/\Gamma^n$.
		\item \label{enu: co-hopf/auto/etc - general - co-hopf}
		The group $\Gamma/ \Gamma^n$ is co-Hopfian
		\item \label{enu: co-hopf/auto/etc - general - aut}
		The canonical map from $\aut{\Gamma_0}$ to $\aut{\Gamma_0/\Gamma_0^n} = \aut{\Gamma/\Gamma^n}$ is onto.
	\end{enumerate}
\end{theo}

\begin{proof}
	It follows from \autoref{res: floor tower props}, that for every $n \in \N$ that is co-prime with $P$, the $n$-periodic quotients of  $\Gamma$ and $\Gamma_0$ are isomorphic, whence \ref{enu: co-hopf/auto/etc - general - isom}.
	Points~\ref{enu: co-hopf/auto/etc - general - co-hopf} and \ref{enu: co-hopf/auto/etc - general - aut} respectively follows from \autoref{res: auto/co-hopf - particular}.
\end{proof}

\begin{theo}
\label{res: isom - general}
	Let $\Gamma_1$ and $\Gamma_2$ be two non-elementary, torsion-free, hyperbolic groups.
	Assume that $\Gamma_i$ is the roof of a root tower whose floor $\Gamma_{i,0}$ has no essential root splitting.
	Let $P_i$ be the orders of this tower.
	Then there exists $N \in \N$ such that the following are equivalent
	\begin{enumerate}
		\item \label{enu: isom - particular - source}
		$\Gamma_{1,0}$ and $\Gamma_{2,0}$ are isomorphic.
		\item \label{enu: isom - particular - some}
		$\Gamma_1/ \Gamma_1^n$ and $\Gamma_2/\Gamma_2^n$ are isomorphic for some integer $n \geq N$ that is co-prime with $P_1 \cup P_2$.
		\item \label{enu: isom - particular - any}
		$\Gamma_1/ \Gamma_1^n$ and $\Gamma_2/\Gamma_2^n$ are isomorphic for every integer $n \in \N$, that is co-prime with $P_1 \cup P_2$.
	\end{enumerate}
\end{theo}

\begin{proof}
	It follows from \autoref{res: floor tower props}, that for every $n \geq N$ that is co-prime with $P_1 \cup P_2$, the $n$-periodic quotients of  $\Gamma_1$ and $\Gamma_{1,0}$ (\resp $\Gamma_2$ and $\Gamma_{2,0}$) are isomorphic.
	Hence the results follows from \autoref{res: isom - particular}.
\end{proof}

\appendix

% 2024-08-17 READ UNTIL HERE

%%%%%%%%%%%%%%%%%%%%%%%%%%%%%%%%%%%%%%%%%%%%%%%%%%%%%%%%%%%%%%%%%%%%%%%%%%%%%%%%%%%%%
%%%%%%%%%%%%%%%%%%%%%%%%%%%%%%%%%%%%%%%%%%%%%%%%%%%%%%%%%%%%%%%%%%%%%%%%%%%%%%%%%%%%%
%
\section{Approximation of periodic groups}
%
%%%%%%%%%%%%%%%%%%%%%%%%%%%%%%%%%%%%%%%%%%%%%%%%%%%%%%%%%%%%%%%%%%%%%%%%%%%%%%%%%%%%%
%%%%%%%%%%%%%%%%%%%%%%%%%%%%%%%%%%%%%%%%%%%%%%%%%%%%%%%%%%%%%%%%%%%%%%%%%%%%%%%%%%%%%
\label{sec: approx}

This appendix is dedicated to the proof of \autoref{res: approximating sequence}.
We first recall the construction of Delzant and Gromov \cite{Delzant:2008tu} to investigate small cancellation groups and then prove the existence of a class of approximations of a periodic group as in \autoref{sec: approx periodic groups}.
We follow Coulon \cite{Coulon:2014fr,Coulon:2016if,Coulon:2018vp}.

%%%%%%%%%%%%%%%%%%%%%%%%%%%%%%%%%%%%%%%%%%%%%%%%%%%%%%%%%%%%%%%%%%%%%%%%%%%%%%%%%%%%%
%
\subsection{Small cancellation theory}
%
%%%%%%%%%%%%%%%%%%%%%%%%%%%%%%%%%%%%%%%%%%%%%%%%%%%%%%%%%%%%%%%%%%%%%%%%%%%%%%%%%%%%%
\label{sec: appendix - sc}

%------------------------------------------------------------------------------------
%
\subsubsection{Cone and cone-off}
%
%------------------------------------------------------------------------------------

\paragraph{Cone over a metric space.}
Let $Y$ be a metric space.
The \emph{cone of radius $\rho$ over $Y$}, denoted by $Z_\rho(Y)$ or simply $Z(Y)$, is the quotient of $Y\times \left[0,\rho\right]$ by the equivalence relation that identifies all the points of the form $(y,0)$.	
The equivalence class of $(y,0)$, denoted by $c$, is called the \emph{apex} or \emph{cone point} of $Z(Y)$. 
By abuse of notation we still write $(y,r)$ for the equivalence class of $(y,r)$.
The cone over $Y$ is endowed with a metric characterized as follows \cite[Chapter I.5, Proposition 5.9]{Bridson:1999ky}.
If $x=(y,r)$ and $x'=(y',r')$ are two points of $Z(Y)$, then
\begin{equation}
\label{eqn: sc - metric cone}
	\cosh \dist x{x'} = \cosh r \cosh r' - \sinh r \sinh r' \cos \left(\min\left\{\pi, \frac {\dist y{y'}}{\sinh \rho}\right\} \right).
\end{equation}
This metric is modeled on the one of the hyperbolic place $\H^2$, see \cite{Coulon:2014fr} for the geometric interpretation.
If $Y$ is a length space, then so is $Z(Y)$.
The embedding $\iota \colon Y \to Z(Y)$ sending $y$ to $(y,\rho)$ satisfies
\begin{equation*}
	\dist {\iota(y)}{\iota(y')} = \mu \left(\dist y{y'}\right), \quad \forall y,y' \in Y,
\end{equation*}
where $\mu \colon \R_+ \to \R_+$ is the non-decreasing concave map characterized by 
\begin{equation*}
	\cosh \mu(t) = \cosh^2 \rho - \sinh^2 \rho \cos \left(\min\left\{\pi, \frac {t}{\sinh \rho}\right\} \right), \quad \forall t \in \R_+.
\end{equation*}
In addition, the cone comes with a \emph{radial projection} $p \colon Z(Y)\setminus\{c\} \to Y$ mapping $(y,r)$ to $y$.
The next proposition is an exercise using the geometric interpretation of the distance in a cone.
\begin{lemm}
\label{res: qi base implies qi cone}
	Assume that $f \colon Y_1 \to Y_2$ is a $(1, \ell)$-quasi-isometric embedding.
	Then $f$ extends to a $(1,\ell)$-quasi-isometric embedding $Z(Y_1)\to Z(Y_2)$ sending $(y,r)$ to $(f(y), r)$.
\end{lemm}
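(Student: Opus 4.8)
The claim is purely a statement about the conical metric formula \eqref{eqn: sc - metric cone}, so the plan is to unwind that formula on both sides and compare. First I would fix a $(1,\ell)$-quasi-isometric embedding $f\colon Y_1\to Y_2$ and define $\bar f\colon Z(Y_1)\to Z(Y_2)$ by $\bar f(y,r)=(f(y),r)$; this is well defined since $f$ sends any point of $Y_1$ to a point of $Y_2$ and the radial coordinate $r\in[0,\rho]$ is untouched, so in particular $\bar f$ sends the apex $c_1$ to the apex $c_2$.

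\textbf{Key computation.} The heart of the argument is the following: given $x=(y,r)$, $x'=(y',r')$ in $Z(Y_1)$, write $D=\dist[Y_1]y{y'}$ and $D'=\dist[Y_2]{f(y)}{f(y')}$, so that by hypothesis $D-\ell\le D'\le D+\ell$. Using \eqref{eqn: sc - metric cone}, both $\cosh\dist{x}{x'}$ and $\cosh\dist{\bar f(x)}{\bar f(x')}$ have the shape $\cosh r\cosh r' - \sinh r\sinh r'\cos(\min\{\pi, t/\sinh\rho\})$ with $t=D$ and $t=D'$ respectively. The function $t\mapsto \cos(\min\{\pi,t/\sinh\rho\})$ is non-increasing, so I would first observe that the map $t\mapsto \arccosh\!\big(\cosh r\cosh r'-\sinh r\sinh r'\cos(\min\{\pi,t/\sinh\rho\})\big)$ is the distance, in the hyperbolic plane $\H^2$, between two points at radii $r,r'$ from a common point subtending angle $\min\{\pi,t/\sinh\rho\}$; call this $\Phi_{r,r'}(t)$. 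Geometrically $\Phi_{r,r'}$ is $1$-Lipschitz in the angle (the derivative of the side length with respect to the opposite angle in a hyperbolic triangle is $\sinh r\sinh r'\sin(\text{angle})/\sinh(\text{side})\le 1$ by the hyperbolic law of sines), and the angle itself is $1/\sinh\rho\le 1$-Lipschitz in $t$ on $[0,\pi\sinh\rho]$ and constant afterwards; hence $\Phi_{r,r'}$ is $1$-Lipschitz in $t$. Therefore
\begin{equation*}
	\bigl|\dist{\bar f(x)}{\bar f(x')} - \dist{x}{x'}\bigr| = \bigl|\Phi_{r,r'}(D') - \Phi_{r,r'}(D)\bigr| \le |D'-D| \le \ell,
\end{equation*}
which is exactly the $(1,\ell)$-quasi-isometric embedding inequality $\dist{x}{x'}-\ell\le \dist{\bar f(x)}{\bar f(x')}\le \dist{x}{x'}+\ell$.

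\textbf{Expected obstacle.} The only genuinely non-formal point is justifying that $\Phi_{r,r'}$ is $1$-Lipschitz in $t$, i.e. that perturbing the opposite angle of a hyperbolic triangle by $\epsilon$ changes the opposite side by at most $\epsilon$ (uniformly in $r,r'$, including the degenerate cases $r=0$ or $r'=0$ where $\Phi_{r,r'}$ is constant, and the saturated regime $t\ge\pi\sinh\rho$ where it is again constant). This follows from the hyperbolic law of cosines/sines but should be stated carefully; since the paper explicitly says this lemma "is an exercise using the geometric interpretation of the distance in a cone," I would present it concisely, invoking \eqref{eqn: sc - metric cone} and the interpretation of $Z(Y)$ as modeled on $\H^2$ from \cite{Coulon:2014fr}, and note that the same monotonicity/Lipschitz estimate for the auxiliary map $\mu$ is what underlies the remark immediately preceding the lemma. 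No delicate estimate beyond this is needed, so the proof is short.
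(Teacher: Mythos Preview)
Your approach is the natural one and matches what the paper intends (the paper gives no proof, only the remark that it ``is an exercise using the geometric interpretation of the distance in a cone''). The reduction to showing that $\Phi_{r,r'}(t)$ is $1$-Lipschitz in $t$ is exactly right, and the chain-rule computation via the hyperbolic law of cosines is the intended mechanism.

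There is, however, a slip in your intermediate bounds. You claim that $\Phi_{r,r'}$ is $1$-Lipschitz \emph{in the angle}, i.e.\ that $\sinh r\,\sinh r'\,\sin\theta/\sinh c \le 1$. This is false: take $r=r'=\rho$ and $\theta$ small, so that $c\approx \theta\sinh\rho$ and the quotient tends to $\sinh\rho$, which is large in the regime of the paper. What the law of sines actually gives (writing $\alpha$ for the angle at the vertex at radius $r$) is
\[
\frac{d c}{d\theta}=\frac{\sinh r\,\sinh r'\,\sin\theta}{\sinh c}=\sinh r'\cdot\frac{\sinh r\,\sin\theta}{\sinh c}=\sinh r'\,\sin\alpha'\le\sinh r'\le\sinh\rho,
\]
and symmetrically $\le\sinh r$. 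Combined with $d\theta/dt=1/\sinh\rho$ (on $[0,\pi\sinh\rho]$; constant thereafter), you still get $|d\Phi_{r,r'}/dt|\le 1$, so your conclusion survives. Note also that your side remark ``$1/\sinh\rho\le 1$'' is unnecessary (and not guaranteed by the hypotheses of the lemma); it plays no role once the two Lipschitz constants are paired correctly as $\sinh\rho\cdot(1/\sinh\rho)=1$. With this correction the argument is complete.
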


In particular, if $H$ is a group acting by isometries on $Y$, then this action extends to an action by isometries on $Z(Y)$ fixing its apex.
Moreover, if the original action of $H$ on $Y$ is proper, then the quotient $Z(Y)/H$ is a metric space isometric to $Z(Y/H)$.
	
\paragraph{Cone-off space.}
Let $\rho \in \R_+^*$.
Let $X$ be a $\delta$-hyperbolic length space and $\mathcal Y$ a collection of strongly quasi-convex subsets of $X$.
Given $Y \in \mathcal Y$, we denote by $\distV[Y]$ the length metric on $Y$ induced by the restriction of $\distV$ on $Y$.
We write $Z_\rho(Y)$ for the cone of radius $\rho$ over $(Y, \distV[Y])$.

The \emph{cone-off of radius $\rho$ relative to $\mathcal Y$}, denoted by $\dot X_\rho(\mathcal Y)$ or simply $\dot X$ is the space obtained by attaching for every $Y \in \mathcal Y$, the cone $Z_\rho(Y)$ on $X$ along $Y$ according to $\iota \colon Y \to Z_\rho(Y)$.
We write $\distV[\dot X]$ for the largest pseudo-metric on $\dot X$ such that the canonical maps $X \to \dot X$ as well as $Z(Y) \to \dot X$ are $1$-Lipschitz.
It turns out that $\distV[\dot X]$ is a metric on $\dot X$ \cite[Proposition~5.10]{Coulon:2014fr}.
The metrics of $X$ and $\dot X$ are related as follows \cite[Lemma~5.8]{Coulon:2014fr}.
For every $x,x' \in X$ we have
\begin{equation}
\label{eqn: appendix - lower bound dist dot X}
	\mu \left(\dist[X] x{x'}\right) \leq \dist[\dot X] x{x'} \leq \dist[X] x{x'}
\end{equation}
By construction, the embedding $Z(Y) \to \dot X$ is $1$-Lipschitz for every $Y \in \mathcal Y$.
Since we passed to the length metric on $Y$ before building the cones, this map is not an isometry in general.
Nevertheless we have the following statement.

\begin{prop}
\label{res: qi cone in cone-off}
	For every $Y \in \mathcal Y$, the map $Z(Y) \to \dot X$ is $(1,8\delta)$-quasi-isometric embedding.
\end{prop}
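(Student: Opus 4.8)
The statement to prove is \autoref{res: qi cone in cone-off}: for every $Y \in \mathcal Y$, the canonical map $Z(Y) \to \dot X$ is a $(1, 8\delta)$-quasi-isometric embedding. Since this map is already $1$-Lipschitz by construction of the pseudo-metric on $\dot X$ (it sends $(y,r)$ to a point and the metric $\distV[\dot X]$ is the largest $1$-Lipschitz pseudo-metric compatible with all the gluing maps), the content is the \emph{lower} bound: for all $x, x' \in Z(Y)$ one must show
\begin{equation*}
	\dist[Z(Y)] x{x'} \leq \dist[\dot X] x{x'} + 8\delta.
\end{equation*}
First I would reduce to estimating the length of a path in $\dot X$ joining two points of $Z(Y)$. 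Any rectifiable path in $\dot X$ between points of $Z(Y)$ decomposes into subpaths lying entirely in $X$ and subpaths lying entirely in some cone $Z(Y')$ (with $Y'$ possibly equal to $Y$ or not), meeting the base $X$ at their endpoints. The key point is therefore: if $\gamma$ is a path in $X$ (or in another cone $Z(Y')$) between two points $a, b$ that happen to lie on $Y$, I need to compare $\operatorname{length}_{\dot X}(\gamma)$ with $\dist[Z(Y)]ab$, i.e. with $\mu(\dist[Y]ab)$.

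The heart of the argument is the following: because $Y$ is strongly quasi-convex in $X$, one has $\dist[X]ab \leq \dist[Y]ab \leq \dist[X]ab + 8\delta$ for $a,b \in Y$ (this is exactly (\ref{eqn: def strongly qc})), and hence, using monotonicity and the concavity/subadditivity of $\mu$ (with $\mu(t) \le t$ and $\mu$ non-decreasing), one gets
\begin{equation*}
	\dist[Z(Y)]ab = \mu\left(\dist[Y]ab\right) \leq \mu\left(\dist[X]ab + 8\delta\right) \leq \mu\left(\dist[X]ab\right) + 8\delta \leq \dist[\dot X]ab + 8\delta,
\end{equation*}
where the last inequality is the left inequality of (\ref{eqn: appendix - lower bound dist dot X}). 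For a subpath inside another cone $Z(Y')$, I would argue that its $\dot X$-length already dominates $\dist[X]ab$ (again by (\ref{eqn: appendix - lower bound dist dot X}) applied after projecting, or directly since the inclusion $X \to \dot X$ contracts), so it also dominates $\mu(\dist[X]ab)$, and one is back to the previous estimate after paying the additive $8\delta$. Summing over the finitely many pieces (or passing to an infimum over paths) requires care: the additive constant $8\delta$ must not accumulate. This is the standard trick in cone-off arguments — one only needs to "pay" the $8\delta$ once, because the pieces of the path that reenter $Y$ can be rerouted \emph{inside} $Z(Y)$ at no extra additive cost once we are already working with $\dist[Z(Y)]$, thanks to the triangle inequality in $Z(Y)$ and the fact that $\mu$ is subadditive; concretely, one replaces the whole excursion out of $Z(Y)$ between two visits to $Y$ by a geodesic of $Z(Y)$ and invokes the per-piece estimate only for the \emph{first and last} transition, or better, observes that $\mu$ subadditive lets the $8\delta$'s telescope into a single $8\delta$.

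\textbf{Main obstacle.} The delicate point I expect to wrestle with is precisely the bookkeeping of the additive constant when a path makes several excursions out of $Z(Y)$: a naive piecewise estimate gives $8\delta$ times the number of excursions. The resolution uses the concavity of $\mu$ — more precisely the inequality $\mu(s+t) \le \mu(s) + \mu(t)$ and $\mu(t) \le t$ — to absorb all intermediate $X$-distances into a single application of $\mu$, together with the observation that inside $Z(Y)$ the distance is \emph{already} $\mu$ of a $Y$-length, so no further loss occurs when concatenating in $Z(Y)$. One should also double-check the edge case where the path passes through the apex $c$ of $Z(Y)$ or through apices of other cones, but these are handled by the radial structure of the cone metric (\ref{eqn: sc - metric cone}) and pose no real difficulty. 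I would cite \cite[Lemma~5.8 and Proposition~5.10]{Coulon:2014fr} for the structural facts about $\distV[\dot X]$ and reference the analogous statement in that paper if it is available verbatim, otherwise carry out the path-surgery argument sketched above in full.
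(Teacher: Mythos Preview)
Your approach is workable but differs from the paper's, and your write-up leaves the hardest step vague.

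\textbf{What the paper does.} Rather than surgery on paths, the paper introduces an auxiliary cone-off $\dot X'$: for each $Y\in\mathcal Y$ one forms the cone $Z'(Y)$ over $(Y,\distV[X])$ (the ambient metric, not the induced length metric $\distV[Y]$), and glues these to $X$. Two facts then do all the work. First, because $\dist[Z'(Y)]{\iota(y)}{\iota(y')}=\mu(\dist[X]y{y'})\le\dist[\dot X']y{y'}$, the cone $Z'(Y)$ embeds \emph{isometrically} in $\dot X'$ (this is \cite[Lemma~3.1.16]{Coulon:2011il}); since $\dot X\to\dot X'$ is $1$-Lipschitz, one gets $\dist[Z'(Y)]z{z'}\le\dist[\dot X]z{z'}$ for all $z,z'\in Z(Y)$. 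Second, the identity $(Y,\distV[Y])\to(Y,\distV[X])$ is a $(1,8\delta)$-quasi-isometric embedding by strong quasi-convexity, so \autoref{res: qi base implies qi cone} gives that $Z(Y)\to Z'(Y)$ is a $(1,8\delta)$-quasi-isometric embedding. Chaining these two facts finishes the proof, uniformly for all points of the cone, with no bookkeeping of excursions.

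\textbf{Where your plan is loose.} Your displayed estimate is only for points $a,b\in Y$; the extension to arbitrary $x,x'\in Z(Y)$ is exactly where the additive constant could accumulate, and your ``telescoping via subadditivity'' is not a proof. The clean fix along your lines is a first-exit/last-re-entry argument: any chain realizing $\dist[\dot X]x{x'}$ that leaves $Z(Y)$ must do so through some $a\in Y$ and last re-enter through some $b\in Y$; the pieces before $a$ and after $b$ lie in $Z(Y)$ and contribute at least $\dist[Z(Y)]xa$ and $\dist[Z(Y)]b{x'}$, while the middle contributes at least $\dist[\dot X]ab\ge\mu(\dist[X]ab)\ge\mu(\dist[Y]ab)-8\delta=\dist[Z(Y)]ab-8\delta$ (using that $\mu$ is $1$-Lipschitz). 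Triangle inequality in $Z(Y)$ then gives a single $8\delta$. This is correct, but note that the paper's auxiliary-space trick packages this once and for all in \autoref{res: qi base implies qi cone} and the isometric embedding of $Z'(Y)$, which is why it is shorter and avoids the case analysis you anticipated.
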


\begin{proof}
	The proof involves a variation on the cone-off construction.
	For every $Y \in \mathcal Y$, we write $Z'(Y)$ for the cone of radius $\rho$ over the space $(Y, \distV[X])$.
	Note that this time, the metric on $Y$ is the one from $X$ and not the induced length metric.
	In particular, $Z'(Y)$ may fail to be a length space, but this is not important here.
	As a set of points $Z(Y)$ and $Z'(Y)$ are the same though.
	We denote by $\dot X'$ the space obtained by attaching for every $Y \in \mathcal Y$, the cone $Z'(Y)$ on $X$ along $Y$ according to $\iota \colon Y \to Z'(Y)$.
	We endow $\dot X'$ with the largest (pseudo-)metric such that the canonical maps $X \to \dot X'$ as well as $Z'(Y) \to \dot X'$ are $1$-Lipschitz, see \cite[Section~3]{Coulon:2011il}.
	For every $Y \in \mathcal Y$, the identity map $(Y, \distV[Y]) \to (Y, \distV[X])$ is $1$-Lipschitz.
	It follows that the natural map $\dot X \to \dot X'$ is $1$-Lipschitz.
	Let $Y \in \mathcal Y$.
	For every $y,y' \in Y$, we have 
	\begin{equation*}
		\dist[Z'(Y)]{\iota(y)}{\iota(y')} \leq \mu \left(\dist[X] y{y'}\right) \leq \dist[\dot X'] y{y'},
	\end{equation*}
	see for instance \cite[Lemma~3.1.16]{Coulon:2011il}.
	It follows that $Z'(Y)$ is \emph{isometrically} embedded in $\dot X'$.
	Combining these facts, we get that for every $z,z' \in Z(Y)$ 
	\begin{equation*}
		\dist[Z'(Y)] z{z'} \leq \dist[\dot X]z{z'}.
	\end{equation*}
	By (\ref{eqn: def strongly qc}) the map $(Y, \distV[Y]) \to (Y, \distV[X])$ is a $(1, 8\delta)$-quasi-isometric embedding.
	The conclusion now follows from the fact that the map $Z(Y) \to Z'(Y)$ is a $(1,8\delta)$-quasi-isometric embedding (\autoref{res: qi base implies qi cone}).
\end{proof}

%------------------------------------------------------------------------------------
%
\subsubsection{The small cancellation theorem}
%
%------------------------------------------------------------------------------------

\paragraph{General settings.}
In this section $X$ is a $\delta$-hyperbolic length space endowed with a non-elementary action by isometries of a group $\Gamma$.
In addition we fix a collection $\mathcal Q$ of pairs $(H,Y)$ where $H$ is a subgroup of $\Gamma$ and $Y$ an $H$-invariant strongly-quasi-convex subset of $X$.
We assume that $\mathcal Q$ is invariant under the action of $\Gamma$ defined by $\gamma \cdot (H,Y) = (\gamma H\gamma^{-1},\gamma Y)$, for every $(H,Y) \in \mathcal Q$ and every $\gamma \in \Gamma$.
We denote by $K$ the (normal) subgroup of $\Gamma$ generated by all $H$ where $(H,Y)$ runs over $\mathcal Q$.
The goal is to study the quotient $\bar \Gamma = \Gamma/K$ and the corresponding projection $\pi \colon \Gamma \onto \bar \Gamma$.
More precisely we are going to build a hyperbolic space $\bar X$ endowed with a non-elementary action of $\bar \Gamma$, and relate the geometry of $\bar X$ to the one of $X$.
To that end, we define the following small cancellation parameters
\begin{eqnarray*}
	\Delta(\mathcal Q,X) & = & \sup \set{\diam\left(Y_1^{+3\delta} \cap Y_2^{+3\delta}\right)}{(H_1,Y_1) \neq (H_2,Y_2) \in \mathcal Q} \\
	T(\mathcal Q, X) & = & \inf\set{\norm h}{h \in H,\ (H,Y) \in \mathcal Q}.
\end{eqnarray*}
They respectively play the role of the length of the largest piece and the smallest relation.

\paragraph{Metric spaces.}
We fix a paramater $\rho \in \R_+^*$.
Its value will be made precise later (see \autoref{res: small cancellation}).
We make an abuse of notation and write  $\dot X_\rho(\mathcal Q)$ or simply $\dot X$ for the cone-off of radius $\rho$ over $X$ relative to the collection
\begin{equation*}
	\mathcal Y = \set{Y}{(H,Y) \in \mathcal Q}.
\end{equation*}
Let $\mathcal C \subset \dot X$ be the set of apices of all the cones.
Recall that $\mathcal Q$ is $\Gamma$-invariant.
Hence the action of $\Gamma$ on $X$ naturally extends to an action by isometries on $\dot X$: for every $g \in \Gamma$, $(H,Y) \in \mathcal Q$, and $x = (y,r)$ in $Z(Y)$, we define $gx$ to be the point of $Z(gY)$ given by $gx = (gy,r)$. 
The \emph{radial projection} $p \colon \dot X \setminus \mathcal C \to X$ is the map whose restriction to $X$ is the identity and whose restriction to any punctured cone $Z(Y) \setminus \{c\}$ is the radial projection defined above.

The \emph{quotient space}, denoted by $\bar X_\rho(\mathcal Q)$ or simply $\bar X$ is the quotient of $\dot X_\rho(\mathcal Q)$ by the normal subgroup $K$.
The metric of $\dot X$ induces a pseudo-metric on $\bar X$.
The group $\bar \Gamma$ naturally acts by isometries on $\bar X$ so that the canonical projection $f\colon \dot X \to \bar X$ is a $1$-Lipschitz, $\Gamma$-equivariant map.
If $x$ is a point in $X$ we write $\bar x = f(x)$ for its image in $\bar X$.
Similarly, we denote by $\bar {\mathcal C}$ the image of $\mathcal C$ in $\bar X$.
It is a $\bar \Gamma$ invariant subset of $\bar X$.
Moreover for every distinct $\bar c, \bar c' \in \bar {\mathcal C}$, we have $\dist{\bar c}{\bar c'} \geq 2 \rho$.
We let
\begin{equation*}
	\bar X^+ = \bar X \setminus \bigcup_{\bar c \in \bar{\mathcal C}} B(\bar c, \rho),
\end{equation*}
which is also the image of $X$ under the map $f$.
As explained in \autoref{sec: approx periodic groups}, we think of $\bar X^+$ as the thick part of thin-thick decomposition, where the thin parts correspond to the balls $B(\bar c,\rho)$ centered at a cone point $\bar c \in \bar{\mathcal C}$.
The radial projection $p \colon \dot X \setminus \mathcal C \to X$ is $\Gamma$-equivariant.
Hence it induces a $\bar \Gamma$-equivariant map, $\bar p \colon \bar X \setminus \bar{\mathcal C} \onto \bar X^+$, that we still call the \emph{radial projection}.

The next statement is a combination of Proposition~6.4, Proposition~6.7, Corollary~ 3.12 and Proposition~3.15 in \cite{Coulon:2014fr}

\begin{theo}
\label{res: small cancellation}
	There exist $\delta_0, \delta_1, \Delta_0, \rho_0 \in \R_+^*$, which do not depend on $X$, $\Gamma$ or $\mathcal Q$, with the following property.
	Assume that $\rho \geq \rho_0$.
	If $\delta \leq \delta_0$, $\Delta(\mathcal Q,X) \leq \Delta_0$ and $T(\mathcal Q,X) \geq 10\pi \sinh \rho$, then the following holds
	\begin{enumerate}
		\item \label{enu: small cancellation - hyp cone-off}
		The cone-off space $\dot X$ is $\dot \delta$-hyperbolic with $\dot \delta \leq \delta_1$.
		\item \label{enu: small cancellation - hyp}
		The quotient space $\bar X$ is $\bar \delta$-hyperbolic with $\bar \delta \leq \delta_1$.
		\item \label{enu: small cancellation - local embedding}
		Let $(H,Y) \in \mathcal Q$.
		Let $\bar c$ be the image in $\bar X$ of the apex $c$ of $Z(Y)$.
		The projection $\pi \colon \Gamma \onto \bar \Gamma$ induces an isomorphism from $\stab Y/H$ onto $\stab{\bar c}$.
		\item \label{enu: small cancellation - local isom}
		For every $r \in (0, \rho/20]$, and $x \in \dot X$, if $d(x, \mathcal C) \geq 2r$, then the map $f \colon \dot X \to \bar X$ induces an isometry from $B(x,r)$ onto $B(\bar x, r)$.
		\item \label{enu: small cancellation - translation kernel}
		For every $x \in \dot X$, and $g \in K \setminus\{1\}$, we have $\dist[\dot X]{gx}x \geq \min \{2r, \rho/5\}$, where $r = d(x, \mathcal C)$.
		In particular, $K$ acts freely on $\dot X \setminus \mathcal C$ and the map $f \colon \dot X \to \bar X$ induces a covering map $\dot X \setminus \mathcal C \to \bar X \setminus \bar{\mathcal C}$.
	\end{enumerate}
\end{theo}

\begin{rema*}
	Note that the constants $\delta_0$ and $\Delta_0$ (\resp $\rho_0$) can be chosen arbitrarily small (\resp large).
	From now on, we will always assume that $\rho_0 > 10^{20} \delta_1$ whereas $\delta_0, \Delta_0 < 10^{-10}\delta_1$.
	Similarly we suppose that $\tanh(\rho_0) \geq 1/2$.
	These estimates are absolutely not optimal.
	We chose them very generously to ensure that all the inequalities which we need later are satisfied.
	What really matters is their orders of magnitude recalled below
	\begin{equation*}
		\max\left\{\delta_0, \Delta_0\right\} \ll \delta_1  \ll \rho_0 \ll \pi \sinh \rho_0.
	\end{equation*}
	
	From now on and until the end of \autoref{sec: appendix - sc} we assume that $X$, $\Gamma$ and $\mathcal Q$ are as in \autoref{res: small cancellation}.
	In particular, $\dot X$ and $\bar X$ are respectively $\dot\delta$- and $\bar \delta$-hyperbolic.
	Up to increasing one constant or the other, we can actually assume that $\dot \delta = \bar \delta$.
	Nevertheless we still keep two distinct notations, to remember which space we are working in.
\end{rema*}

%------------------------------------------------------------------------------------
%
\subsubsection{Lifting properties}
%
%------------------------------------------------------------------------------------

\autoref{res: small cancellation}~\ref{enu: small cancellation - local embedding} and \ref{enu: small cancellation - local isom} state that any ``small scale picture'' in $\bar X$ can be lifted in $X$.
The next statement is an illustration of this phenomenon.

\begin{lemm}
\label{res: sc - lifting isometries}
	Let $\bar x \in \bar X$ such that $d(\bar x, \bar{\mathcal C}) \geq \rho /2$.
	Let $\bar U$ be a subset of $\bar \Gamma$.
	If $\dist{\bar \gamma\bar x}{\bar x} \leq \rho/100$, for every $\bar \gamma \in \bar U$, then there exists a subset $U$ of $\Gamma$ with the following properties:
	\begin{enumerate}
		\item the epimorphism $\pi \colon \Gamma \to \bar \Gamma$ induces a bijection from $U$ onto $\bar U$;
		\item for every $\gamma,\gamma' \in U$, if $\pi(\gamma\gamma')$ belongs to $\bar U$, then $\gamma\gamma' \in U$.
	\end{enumerate}
\end{lemm}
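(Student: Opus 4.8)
The plan is to exploit the covering-space structure provided by \autoref{res: small cancellation}~\ref{enu: small cancellation - translation kernel}, namely that $f \colon \dot X \setminus \mathcal C \to \bar X \setminus \bar{\mathcal C}$ is a covering map, together with the fact that $K$ acts freely on $\dot X \setminus \mathcal C$ and $\bar \Gamma = \Gamma/K$ acts on $\bar X$. First I would fix a lift $x \in \dot X$ of $\bar x$; since $d(\bar x, \bar{\mathcal C}) \geq \rho/2$ we have $d(x, \mathcal C) \geq \rho/2$ as well (the radial projection and the thin-thick decomposition control the distance to cone points upstairs). For each $\bar \gamma \in \bar U$, the hypothesis $\dist{\bar\gamma \bar x}{\bar x} \leq \rho/100$ says that $\bar\gamma \bar x$ lies in a small ball around $\bar x$; I would use \autoref{res: small cancellation}~\ref{enu: small cancellation - local isom} (applicable since $\rho/100 < \rho/20$ and $d(x,\mathcal C) \geq 2 \cdot (\rho/100)$) to identify $B(\bar x, \rho/100)$ isometrically with $B(x, \rho/100)$ in $\dot X$, and thereby lift the point $\bar\gamma \bar x$ to a unique point $x_{\bar\gamma} \in B(x, \rho/100) \subset \dot X$ with $f(x_{\bar\gamma}) = \bar\gamma \bar x$.

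Next I would promote this to a choice of group element: pick any pre-image $\gamma_0 \in \Gamma$ of $\bar\gamma$, so $\gamma_0 x$ is \emph{a} lift of $\bar\gamma \bar x$. Since $K$ acts transitively on the fibers of $f$ over $\bar X \setminus \bar{\mathcal C}$ (this is exactly the covering-map statement in \ref{enu: small cancellation - translation kernel}), there is a unique $k \in K$ with $k\gamma_0 x = x_{\bar\gamma}$; I set $\gamma = k\gamma_0$, which is the desired canonical lift of $\bar\gamma$, characterized by $\gamma x = x_{\bar\gamma} \in B(x,\rho/100)$. Uniqueness of $\gamma$ among elements of $\Gamma$ moving $x$ into that small ball follows from \ref{enu: small cancellation - translation kernel}: if $\gamma' x$ and $\gamma x$ are both within $\rho/100$ of $x$ and $\pi(\gamma') = \pi(\gamma)$, then $\gamma'\gamma^{-1} \in K$ moves $\gamma x$ by at most $\rho/50 < \min\{2r, \rho/5\}$ where $r = d(\gamma x, \mathcal C) \geq \rho/2 - \rho/100 > \rho/10$, forcing $\gamma'\gamma^{-1} = 1$. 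This produces the set $U = \{\gamma_{\bar\gamma} : \bar\gamma \in \bar U\}$ together with a bijection $U \to \bar U$ induced by $\pi$, giving the first conclusion.

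For the second conclusion, suppose $\gamma, \gamma' \in U$ and $\pi(\gamma\gamma') = \bar\gamma\bar\gamma' \in \bar U$. By construction $\gamma' x$ lies within $\rho/100$ of $x$, and $\gamma(\gamma' x)$ lies within $\rho/100$ of $\gamma x$, which in turn is within $\rho/100$ of $x$; hence $\gamma\gamma' x$ is within $2\rho/100 = \rho/50$ of $x$. On the other hand, the canonical lift of $\bar\gamma\bar\gamma'$ in $U$, call it $\eta$, is by definition the unique element of $\Gamma$ with $\pi(\eta) = \bar\gamma\bar\gamma'$ and $\eta x \in B(x, \rho/100)$. Comparing $\gamma\gamma'$ and $\eta$: they have the same image in $\bar\Gamma$, and both move $x$ into $B(x,\rho/50)$; as above the element $(\gamma\gamma')\eta^{-1} \in K$ then moves $\eta x$ by at most $\rho/25$, which is still below the threshold $\min\{2r,\rho/5\}$ with $r = d(\eta x,\mathcal C) > \rho/10$, so $(\gamma\gamma')\eta^{-1} = 1$, i.e. $\gamma\gamma' = \eta \in U$.

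The main obstacle I anticipate is bookkeeping with the numerical thresholds: one must check that the distance $d(x,\mathcal C) \geq \rho/2$ together with the displacement bounds $\rho/100$, $\rho/50$, $\rho/25$ all stay comfortably inside the radii of injectivity demanded by \autoref{res: small cancellation}~\ref{enu: small cancellation - local isom} (namely $\leq \rho/20$) and by the free-action estimate in \ref{enu: small cancellation - translation kernel} (namely $< \min\{2r, \rho/5\}$). Since the relevant constants differ by large fixed factors (the hypotheses give $\rho \geq \rho_0$ with $\rho_0$ chosen very generously), all inequalities hold with room to spare, but the argument should state explicitly that $r = d(\gamma x, \mathcal C) \geq \rho/2 - \rho/50 > \rho/10$ at each use, so that $2r > \rho/5 > \rho/25$. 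A secondary point worth care is the passage from "lift of the point $\bar\gamma\bar x$" to "lift of the group element $\bar\gamma$" — this is where the transitivity of the $K$-action on fibers, rather than just the covering property, is essential.
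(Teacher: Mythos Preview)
Your proposal is correct and follows essentially the same route as the paper's proof: fix a lift $x \in \dot X$ of $\bar x$, use the local isometry of \autoref{res: small cancellation}~\ref{enu: small cancellation - local isom} to single out for each $\bar\gamma \in \bar U$ the unique lift $\gamma \in \Gamma$ with $\gamma x$ close to $x$, and then use the displacement lower bound from \autoref{res: small cancellation}~\ref{enu: small cancellation - translation kernel} to force $\gamma_0^{-1}\gamma\gamma' = 1$ for the second conclusion. The paper's write-up is terser (it invokes \ref{enu: small cancellation - local isom} directly to produce $\gamma$ rather than first lifting the point and then adjusting by an element of $K$, and it bounds the relevant displacement by $3\rho/100$ in one step), but the mechanism and the numerical checks are the same.
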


\begin{proof}
	Let $x \in \dot X$ be a pre-image of $\bar x$.
	According to \autoref{res: small cancellation}~\ref{enu: small cancellation - local isom}, for every $\bar \gamma \in \bar U$, there exists $\gamma \in \Gamma$, such that $\dist[\dot X]{\gamma x}x = \dist{\bar \gamma\bar x}{\bar x}$.
	We denote by $U$ the set of pre-images of all elements of $\bar U$ obtained in this way.	
	By \autoref{res: small cancellation}~\ref{enu: small cancellation - translation kernel} for every $\gamma \in K\setminus\{1\}$, we have $\dist[\dot X]{\gamma x}x \geq \rho/5$.
	It follows that the projection $\Gamma \twoheadrightarrow \bar \Gamma$ induces a bijection from $U$ onto $\bar U$.
	Let us focus now on the second part of the statement.
	Let $\gamma, \gamma' \in U$.
	Assume that $\pi(\gamma\gamma')$ belongs to $\bar U$.
	By construction, $\pi(\gamma\gamma')$ has a unique pre-image in $U$, say $\gamma_0$.
	Hence $\gamma_0^{-1}\gamma\gamma'$ is an element of $K$ which moves $x$ by at most $3\rho/100$.
	By \autoref{res: small cancellation}~\ref{enu: small cancellation - translation kernel}, $\gamma_0^{-1}\gamma\gamma'$ is trivial, hence $\gamma\gamma'$ belongs to $U$.
\end{proof} % DELTA CHECKED

\begin{prop}
\label{res: sc - lifting morphism}
	Let $F$ be the free group generated by a finite set $U$.
	Let $\ell \in \N \setminus\{0\}$ and $V\subset F$ be the closed ball of radius $\ell$ (for the word metric relative to $U$).
	Let $\phi \colon F \to \bar \Gamma$ be a morphism whose image does not fix a cone point in $\bar {\mathcal C}$.
	Assume that  
	\begin{equation*}
		\lambda_\infty\left(\phi, U\right) < \frac {\rho}{100\ell},
	\end{equation*}
	(the energy is measured with the metric of $\bar X$).
	Then there exists a map $\tilde \phi \colon F \to \Gamma$ such that $\phi = \pi \circ \tilde \phi$ and $V \cap \ker \phi = V \cap \ker \tilde \phi$.
\end{prop}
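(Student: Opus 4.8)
\textbf{Proof plan for \autoref{res: sc - lifting morphism}.}
The plan is to first choose a basepoint and lift it, then lift each generator in $U$ individually, and finally check that the resulting assignment respects all relations of $F$ up to length $\ell$ — which, since $F$ is free, amounts to checking nothing beyond consistency, so the real content is the control on kernels. Concretely, since $\lambda_\infty(\phi, U) < \rho/(100\ell)$ there is a point $\bar y \in \bar X$ with $\dist{\phi(u)\bar y}{\bar y} < \rho/(100\ell) \le \rho/100$ for every $u \in U$. I first want to move $\bar y$ into the thick part: using that the image of $\phi$ does not fix a cone point (exactly as in the proof of \autoref{res: comparing energies}), a triangle-inequality argument shows $2 d(\bar y, \bar{\mathcal C}) \ge 2\rho - \lambda_\infty(\phi, U, \bar y)$, so in particular $d(\bar y, \bar{\mathcal C}) > \rho/2$ after possibly replacing $\bar y$ by a nearby point via \autoref{res: small cancellation}~\ref{enu: small cancellation - local isom} and the radial projection. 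Fix a pre-image $y \in \dot X$ of $\bar y$, which can be taken in $X$ since $\bar y \in \bar X^+$.

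Next I would apply \autoref{res: sc - lifting isometries} with $\bar x = \bar y$ and $\bar U = \phi(U)$: this is legitimate because $d(\bar y, \bar{\mathcal C}) \ge \rho/2$ and every element of $\phi(U)$ moves $\bar y$ by at most $\rho/100$. This produces a set $\tilde U \subset \Gamma$ mapping bijectively onto $\phi(U)$ under $\pi$, and with the multiplicative closure property: if $\gamma, \gamma' \in \tilde U$ and $\pi(\gamma\gamma') \in \phi(U)$ then $\gamma\gamma' \in \tilde U$. Defining $\tilde\phi \colon F \to \Gamma$ on the free generators by sending each $u \in U$ to the pre-image in $\tilde U$ of $\phi(u)$ (and $u^{-1}$ to its inverse), we get a homomorphism with $\pi \circ \tilde\phi = \phi$ by construction. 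The key quantitative point is then to track, by induction on word length, that for any word $w \in F$ of length $m \le \ell$, the element $\tilde\phi(w)$ moves $y$ by at most $m \cdot \rho/(100\ell) \le \rho/100$ in $\dot X$; this uses that the lift of each generator was chosen to realize the $\bar X$-displacement exactly (via \autoref{res: small cancellation}~\ref{enu: small cancellation - local isom}, the ball of radius $\rho/100$ around $y$ embeds isometrically), together with the triangle inequality.

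Finally, to verify $V \cap \ker\phi = V \cap \ker\tilde\phi$: the inclusion $V \cap \ker\tilde\phi \subset V \cap \ker\phi$ is immediate from $\phi = \pi\circ\tilde\phi$. For the converse, let $w \in V$ with $\phi(w) = 1$. By the displacement bound just established, $\tilde\phi(w)$ moves $y$ by at most $\rho/100 < \rho/5$ in $\dot X$; but $\tilde\phi(w) \in \ker\pi = K$, so by \autoref{res: small cancellation}~\ref{enu: small cancellation - translation kernel} (with $r = d(y, \mathcal C) \ge \rho/2$, giving the bound $\min\{2r, \rho/5\} = \rho/5$) the only element of $K$ moving $y$ by less than $\rho/5$ is the identity, hence $\tilde\phi(w) = 1$. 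The main obstacle I anticipate is the bookkeeping in the inductive displacement estimate — one must be careful that the lifts of generators chosen by \autoref{res: sc - lifting isometries} genuinely concatenate without accumulating error beyond the linear bound, which is exactly why the hypothesis carries the factor $1/(100\ell)$ rather than just $1/100$, and why one needs $d(y,\mathcal C)$ to stay comfortably above $\rho/2$ throughout so that all the relevant balls embed isometrically under $f$.
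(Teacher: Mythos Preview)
Your argument is correct and follows essentially the same route as the paper: pick a base point with small displacement, observe it lies at distance $\geq \rho/2$ from cone points, lift via \autoref{res: sc - lifting isometries}, and use \autoref{res: small cancellation}~\ref{enu: small cancellation - translation kernel} to control kernels. The paper streamlines your inductive displacement tracking by applying \autoref{res: sc - lifting isometries} directly to $\bar W = \phi(V)$ rather than just $\phi(U)$ (the closure property then gives $\tilde\phi(V) \subset W$ by the same induction, and the bijection $W \to \bar W$ yields $V \cap \ker\phi = V \cap \ker\tilde\phi$ immediately); also, your detour through moving $\bar y$ into $\bar X^+$ and taking $y \in X$ is unnecessary --- the bound $d(\bar y,\bar{\mathcal C}) \geq (1 - \tfrac{1}{200\ell})\rho$ already suffices, and any pre-image $y \in \dot X$ then satisfies $d(y,\mathcal C) \geq \rho/2$ since $f$ is $1$-Lipschitz.
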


\begin{proof}
    By definition of the energy, there exist $\bar x \in \bar X$ such that for every $g \in U$, 
    \begin{equation}
    \label{eqn: sc - lifting morphism}
    	\dist{\phi(g)\bar x}{\bar x} < \frac {\rho}{100\ell}.
    \end{equation}
    Note that 
	\begin{equation*}
		d(\bar x, \bar {\mathcal C}) \geq \left(1-\frac 1{200\ell}\right)\rho \geq \frac 12 \rho.
	\end{equation*}
	Indeed if it was not the case, then by the triangle inequality, the image of $\phi$ would fix a cone point in $\bar {\mathcal C}$.
	Set $\bar W = \phi(V)$.
	It follows from (\ref{eqn: sc - lifting morphism}) that for every $\bar \gamma$ in $\bar W$, we have 
	\begin{equation*}
    	\dist{\bar \gamma\bar x}{\bar x} < \frac {\rho}{100}.
    \end{equation*}
    By \autoref{res: sc - lifting isometries}, there exists a finite subset $W \subset \Gamma$ such that  
	\begin{enumerate}
		\item \label{enu: sc - lifting morphism - isom}
		the epimorphism $\pi \colon \Gamma \to \bar \Gamma$ induces a bijection from $W$ onto $\bar W$;
		\item \label{enu: sc - lifting morphism - morphism}
		for every $\gamma, \gamma' \in W$, if $\pi(\gamma\gamma')$ belongs to $\bar W$, then $\gamma\gamma' \in W$.
	\end{enumerate}
	We define a morphism $\tilde \phi \colon F \to \Gamma$ by sending $g \in U$ to the unique pre-image of $\phi(g)$ in $W$.
	In particular, $\pi \circ \tilde \phi = \phi$.
	If follows from \ref{enu: sc - lifting morphism - morphism} that $\tilde \phi(g) \in W$, for every $g \in V$.
	Combined with \ref{enu: sc - lifting morphism - isom} wet get $V \cap \ker \phi = V \cap \ker \tilde \phi$.
\end{proof} % DELTA CHECKED

\autoref{res: small cancellation} can be used to lift small scale pictures.
More generally  any quasi-convex of $\bar X$ avoiding the cone points lifts in $\dot X$.
This is the purpose of the next statement which is a combination of Lemmas~4.17 and 4.18 of \cite{Coulon:2018vp}.
See also \cite[Proposition~3.21]{Coulon:2014fr}

\begin{lemm}
\label{res: sc - lifting quasi-convex}
	Let $\bar Z$ be a subset of $\bar X$ such that for every $\bar z, \bar z' \in \bar Z$, for every $\bar c \in \bar{\mathcal C}$, we have $\gro{\bar z}{\bar z'}{\bar c} > 13 \bar \delta$.
	Let $\bar z_0$ be a point of $\bar Z$ and $z_0$ a pre-image of $\bar z_0$ in $\dot X$.
	Then there is a unique subset $Z$ of $\dot X$ containing $z_0$ such that the map $\dot X \to \bar X$ induces an isometry from $Z$ onto $\bar Z$.
	Moreover the following holds.
	\begin{enumerate}
		\item \label{enu: sc - lifting quasi-convex - prestab}
		For every $z_1, z_2 \in Z$, for every $\bar \gamma \in \bar \Gamma$, if $\bar \gamma \bar z_1 = \bar z_2$, then there exists a unique pre-image $\gamma \in \Gamma$ of $\bar \gamma$ such that $\gamma z_1 = z_2$.
		Moreover for every $z,z' \in Z$, if $\bar \gamma \bar z = \bar z'$ then $\gamma z = z'$.
		\item \label{enu: sc - lifting quasi-convex - stab}
		The projection $\Gamma \onto \bar \Gamma$ induces an isomorphism from $\stab Z$ onto $\stab{\bar Z}$.
	\end{enumerate}
\end{lemm}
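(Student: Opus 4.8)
The statement is essentially a relative version of the local lifting result, \autoref{res: small cancellation}~\ref{enu: small cancellation - local isom}, upgraded from balls to quasi-convex subsets that stay uniformly far from the cone points. The plan is to build $Z$ by a connect-the-dots argument inside $\dot X$, using the covering map $\dot X \setminus \mathcal C \to \bar X \setminus \bar{\mathcal C}$ supplied by \autoref{res: small cancellation}~\ref{enu: small cancellation - translation kernel}, and then to check that the resulting lift is isometric, unique, and compatible with the group action.

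\textbf{Construction of $Z$.} First I would exploit the hypothesis $\gro{\bar z}{\bar z'}{\bar c} > 13\bar\delta$ for all $\bar z, \bar z' \in \bar Z$ and all $\bar c \in \bar{\mathcal C}$. Together with $\alpha$-quasi-convexity of $\bar Z$, this forces every geodesic $\geo{\bar z}{\bar z'}$ with $\bar z,\bar z'\in\bar Z$ to stay at distance at least, say, $\rho/20$ from $\bar{\mathcal C}$ (hyperbolicity turns the Gromov product lower bound into a distance lower bound along the geodesic, and $\rho_0 \gg \delta_1$ makes the constants fit). Hence any such geodesic, and more generally a small neighborhood of $\bar Z$, lives in $\bar X \setminus \bar{\mathcal C}$, where $f \colon \dot X \to \bar X$ restricts to a covering map. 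Starting from $z_0$ over $\bar z_0$, I lift each geodesic $\geo{\bar z_0}{\bar z}$ ($\bar z \in \bar Z$) to a path in $\dot X$ issuing from $z_0$; its endpoint is the candidate for the lift $z$ of $\bar z$. Covering-space theory (the path-lifting and homotopy-lifting properties, using that $\bar X\setminus\bar{\mathcal C}$ is a geodesic space so these geodesics between points of $\bar Z$ are unique up to the relevant homotopy) makes $z$ independent of choices, so $Z = \{z : \bar z \in \bar Z\}$ is well-defined. The isometry $f|_Z \colon Z \to \bar Z$ is then checked locally: around each point, $f$ is an isometry on a ball of radius $\rho/20$ by \autoref{res: small cancellation}~\ref{enu: small cancellation - local isom}, and since $f|_Z$ is a bijection onto $\bar Z$ that is locally distance-preserving and $\bar Z$ (with its induced metric) is geodesic by quasi-convexity, it is globally isometric. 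Uniqueness of $Z$ given $z_0$ is immediate from uniqueness of path-lifting.

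\textbf{The group-theoretic part.} For \ref{enu: sc - lifting quasi-convex - prestab}, given $\bar\gamma$ with $\bar\gamma\bar z_1 = \bar z_2$, both $Z$ and $\bar\gamma^{-1}$-translates interact through the covering map: the set $\bar\gamma Z$ would be another lift of $\bar Z$ were $\bar\gamma$ lifted, so I lift $\bar\gamma$ by picking the (by \autoref{res: small cancellation}~\ref{enu: small cancellation - translation kernel}, unique, since two pre-images differ by a nontrivial element of $K$ which displaces $z_1$ by at least $\rho/5 > 3\rho/100$, contradicting that both fix the picture) pre-image $\gamma$ with $\gamma z_1 = z_2$. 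That $\gamma z = z'$ whenever $\bar\gamma\bar z = \bar z'$ follows because $\gamma Z$ and $Z$ are both lifts of $\bar Z$ agreeing at one point ($z_2$), hence coincide by uniqueness, and $f$ is injective on $Z$. Then \ref{enu: sc - lifting quasi-convex - stab} is the special case $\bar Z = \bar Z$, $\bar z_1 = \bar z_2 = \bar z_0$: the map $\stab{\bar Z} \to \Gamma$, $\bar\gamma \mapsto \gamma$, is a homomorphism (apply \ref{enu: sc - lifting quasi-convex - prestab} to $\bar\gamma_1\bar\gamma_2$ and use uniqueness), lands in $\stab Z$, is injective (its composite with $\pi$ is the identity), and surjective onto $\stab Z$ (every $\gamma \in \stab Z$ fixes $z_0$ and projects into $\stab{\bar Z}$), and $\pi$ inverts it.

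\textbf{Main obstacle.} The delicate point is converting the Gromov-product hypothesis into a clean statement that the relevant region of $\bar X$ is genuinely contained in $\bar X \setminus \bar{\mathcal C}$ with enough room (radius $\ge \rho/20$) for \autoref{res: small cancellation}~\ref{enu: small cancellation - local isom} to apply uniformly, and then making the covering-space lifting argument interact correctly with the possibly non-uniqueness of geodesics in $\bar X$ (one works with geodesics between points of $\bar Z$ and invokes that these are all homotopic rel endpoints in $\bar X\setminus\bar{\mathcal C}$ because $\bar X$ is $\bar\delta$-hyperbolic and the bigon they bound stays far from $\bar{\mathcal C}$). Once those geometric estimates are in place, everything else is a routine application of covering space theory together with the free action of $K$ on $\dot X\setminus\mathcal C$. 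Since this is precisely the content already assembled as \cite[Lemmas~4.17 and 4.18]{Coulon:2018vp}, I would in fact cite those and only sketch the adaptation.
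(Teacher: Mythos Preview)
The paper gives no proof of this lemma at all; it simply states that the result is a combination of Lemmas~4.17 and~4.18 of \cite{Coulon:2018vp}, which is exactly the citation you arrive at in your final paragraph. Your covering-space sketch is the correct outline of what happens in that reference, and the group-theoretic part \ref{enu: sc - lifting quasi-convex - prestab}--\ref{enu: sc - lifting quasi-convex - stab} is handled exactly as you describe (uniqueness of the lift $\gamma$ from the free action of $K$ on $\dot X\setminus\mathcal C$, and uniqueness of $Z$ given a basepoint).

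One quantitative slip is worth flagging: the hypothesis $\gro{\bar z}{\bar z'}{\bar c} > 13\bar\delta$ does \emph{not} force geodesics between points of $\bar Z$ to stay at distance $\rho/20$ from $\bar{\mathcal C}$; it only keeps them at distance on the order of $13\bar\delta$ (indeed, the remark immediately following the lemma in the paper shows the hypothesis is satisfied as soon as $d(\bar c,\bar Z) > \alpha + 13\bar\delta$). This does not break your argument: \autoref{res: small cancellation}~\ref{enu: small cancellation - local isom} applies for any $r\in(0,\rho/20]$ with $d(x,\mathcal C)\geq 2r$, so you simply run the path-lifting with balls of radius a few $\bar\delta$ rather than $\rho/20$, and likewise the displacement bound from \autoref{res: small cancellation}~\ref{enu: small cancellation - translation kernel} gives $\min\{2r,\rho/5\}$ with $r\sim 13\bar\delta$, which is still enough for uniqueness. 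With that correction your sketch is sound and matches the cited source.
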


\autoref{res: sc - lifting quasi-convex} holds in particular if $\bar Z$ is an $\alpha$-quasi-convex subset of $\bar X$ such that $d(\bar c, \bar Z) > \alpha + 13 \bar \delta$, for every $\bar c \in \bar {\mathcal C}$.
To handle configurations lying in the neighborhood of the cone point, we use the following statement.

\begin{prop}
\label{res: quotient map isom around cone point}
	Let $(H,Y) \in \mathcal Q$.
	Let $c$ be the apex of $Z(Y)$ and $\bar c$ its image in $\bar X$.
	The map $f \colon \dot X \to \bar X$ induces an isometry from $B(c, \rho - 2\dot \delta)/H$ onto $B(\bar c, \rho -2 \bar \delta)$.
\end{prop}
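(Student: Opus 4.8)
The plan is to exploit the lifting technology from \autoref{res: small cancellation}\ref{enu: small cancellation - local isom} and \ref{enu: small cancellation - translation kernel}, specialized to a ball around a cone point, together with the fact that inside the open cone $Z(Y)$ the quotient map $f \colon \dot X \to \bar X$ sees nothing but the action of $H$. The claim splits naturally into two parts: first, that $f$ restricted to $B(c, \rho - 2\dot\delta)$ factors through the $H$-action to give a well-defined map $B(c, \rho - 2\dot\delta)/H \to B(\bar c, \rho - 2\bar\delta)$; second, that this induced map is an isometry.

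For the first part I would argue as follows. By \autoref{res: small cancellation}\ref{enu: small cancellation - translation kernel}, if $g \in K \setminus \{1\}$ and $x \in \dot X$ satisfies $d(x, \mathcal C) = r$, then $\dist[\dot X]{gx}{x} \geq \min\{2r, \rho/5\}$; hence if $x \in B(c, \rho/10)$ and $gx \in B(c, \rho/10)$ with $gx = x$ — wait, more precisely: if $f(x) = f(x')$ with $x, x' \in B(c, \rho - 2\dot\delta)$, then $x' = gx$ for some $g \in K$, and I would use the triangle inequality at $c$ to force $g$ to stabilize the cone $Z(Y)$, hence lie in $\stab Y$, and then — using that the radial coordinate of $x$ and $x'$ agree and the description of the metric on the cone — conclude $g \in H$. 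Here \autoref{res: small cancellation}\ref{enu: small cancellation - local embedding} is the relevant tool: it identifies $\stab{\bar c}$ with $\stab Y / H$, so an element of $K$ fixing $\bar c$ and carrying $x$ into the same cone must already belong to $H$ (since $H$ generates the relevant part of $K$ near $c$). This gives well-definedness and injectivity of the induced map on $B(c, \rho - 2\dot\delta)/H$. Surjectivity onto $B(\bar c, \rho - 2\bar\delta)$ follows because $f$ is $1$-Lipschitz and $\Gamma$-equivariant, so $f$ maps $B(c,r)$ onto $B(\bar c, r)$ for the relevant radius — one checks that any point of $\bar X$ within $\rho - 2\bar\delta$ of $\bar c$ lifts into the cone $Z(Y)$ (a point deep in a thin part of $\bar X$ comes from a point deep in a cone of $\dot X$, because the radial projection $\bar p$ and its lift $p$ are compatible).

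For the isometry statement, the key is that distances inside the cone $Z(Y)$ are computed by the formula \eqref{eqn: sc - metric cone}, which depends only on the two radial coordinates and on the length-metric distance $\dist[Y]{y}{y'}$ in the base, capped at $\pi \sinh\rho$. Since $f$ is $1$-Lipschitz we get $\dist[\bar X]{\bar x}{\bar x'} \leq \dist[\dot X]{x}{x'}$ for lifts $x, x'$. For the reverse inequality I would take $\bar x, \bar x' \in B(\bar c, \rho - 2\bar\delta)$, lift a geodesic $\geo{\bar x}{\bar x'}$ of $\bar X$; since this geodesic stays within a controlled distance of $\bar c$ (bounded by roughly $\rho - \bar\delta$, using hyperbolicity of $\bar X$ and that the endpoints are deep in the thin part), \autoref{res: small cancellation}\ref{enu: small cancellation - local isom} (applied along the geodesic, covering it by small balls $B(\bar y, r)$ with $d(\bar y, \bar{\mathcal C})$ large enough) lets me lift it to a path in $\dot X$ of the same length; the lifted path joins $x$ to $g x'$ for some $g \in \stab{\bar c}$, and choosing the lift $x'$ in the $H$-orbit compatibly (using the first part) forces $g \in H$, so after adjusting by $H$ the lifted path realizes $\dist[\dot X]{x}{x'} \leq \dist[\bar X]{\bar x}{\bar x'}$. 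Combined with $1$-Lipschitz, this is the isometry on $B(c, \rho - 2\dot\delta)/H$.

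The main obstacle I anticipate is the careful bookkeeping of constants: one needs the radius $\rho - 2\bar\delta$ to be just small enough that lifted geodesics never touch a \emph{different} cone point (so that the small-cancellation hypothesis $\Delta(\mathcal Q, X) \leq \Delta_0$ and the separation $\dist[\bar X]{\bar c}{\bar c'} \geq 2\rho$ keep the picture confined to a single cone $Z(Y)$), and just large enough that surjectivity holds; reconciling the $\dot\delta$ in the source radius with the $\bar\delta$ in the target radius requires the remark (after \autoref{res: small cancellation}) that $\dot\delta$ and $\bar\delta$ can be taken equal, and a small margin to absorb the errors from covering a geodesic by balls of radius $\rho/20$ as in \ref{enu: small cancellation - local isom}. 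Everything else is a routine combination of the already-established local isometry, the freeness of the $K$-action away from $\mathcal C$, and the explicit cone metric.
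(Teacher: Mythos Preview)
Your approach to the isometry step has a genuine gap. You propose to lift a geodesic $\geo{\bar x}{\bar x'}$ by covering it with balls and invoking \autoref{res: small cancellation}\ref{enu: small cancellation - local isom}, which requires the center $\bar y$ of each ball to satisfy $d(\bar y, \bar{\mathcal C}) \geq 2r$. But the geodesic between two points of $B(\bar c, \rho - 2\bar\delta)$ typically runs \emph{close} to the cone point $\bar c$, not far from it; indeed it may pass through $\bar c$ (when $\gro{\bar x}{\bar x'}{\bar c} = 0$). Since $\bar c \in \bar{\mathcal C}$, the hypothesis of \ref{enu: small cancellation - local isom} fails along most of the geodesic, and the covering-by-balls argument collapses. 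Your phrase ``$d(\bar y, \bar{\mathcal C})$ large enough'' is precisely backwards here.

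The paper repairs this by using \autoref{res: small cancellation}\ref{enu: small cancellation - translation kernel} instead: the map $\dot X \setminus \mathcal C \to \bar X \setminus \bar{\mathcal C}$ is a covering map and a local isometry, so any path in $\bar X$ that merely \emph{avoids} $\bar c$ lifts isometrically, with no lower bound on the distance to $\bar c$ required. One then splits into two cases: if $\gro{\bar x}{\bar x'}{\bar c} > 0$, a $(1,\eta)$-quasi-geodesic from $\bar x$ to $\bar x'$ avoiding $\bar c$ exists and, by $2\bar\delta$-quasi-convexity of balls, stays inside $B(\bar c, \rho)$; its lift therefore lands in $B(c,\rho)$ and ends at $hx'$ for some $h \in H$. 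If $\gro{\bar x}{\bar x'}{\bar c} = 0$, the distance equality is immediate from $\dist{\bar c}{\bar x} = \dist cx$ and the triangle inequality. Your injectivity/surjectivity discussion is essentially correct but overcomplicated: the paper obtains all three preliminary facts (radial distances agree, $f$ is onto, fibres are $H$-orbits) directly from the $2\rho$-separation of $\mathcal C$ and the triangle inequality, without invoking \ref{enu: small cancellation - local embedding} or analysing the cone metric.
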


\begin{rema}
	Recall that we chose the hyperbolicity constants so that $\dot \delta = \bar \delta$.
	For the experts, one can prove using the Greendlinger lemma that $B(c, \rho)/H$ and $B(\bar c, \rho)$ are isometric.
	However the proof below is much more elementary and illustrates the benefits of the geometry of $\bar X$.
\end{rema}

\begin{proof}
	Since $\mathcal C$ is $2\rho$-separated, it follows from the triangle inequality that 
	\begin{enumerate}
		\item $\dist cx = \dist{\bar c}{\bar x}$, for every $x \in B(c, \rho)$;
		\item $f$ maps $B(c, \rho)$ onto $B(\bar c,\rho)$;
		\item if $x,y \in B(c,\rho)$ have the same image in $\bar X$ then there exists $h \in H$ such that $y = hx$.
	\end{enumerate}
	In particular, $f$ maps $B(c, \rho - 2\dot\delta)$ onto $B(\bar c,\rho- 2\bar \delta)$.
	Let $x,y \in B(c, \rho - 2\dot \delta)$ and $\bar x, \bar y$ their respective images in $\bar X$.
	Assume first that $\gro{\bar x}{\bar y}{\bar c} = 0$, i.e.
	\begin{equation*}
		\dist{\bar x}{\bar y} = \dist{\bar x}{\bar c} + \dist{\bar c}{\bar y} = \dist xc + \dist cy.
	\end{equation*}
	Since $f$ is $1$-Lipschitz, it follows that $\dist xy = \dist{\bar x}{\bar y}$.
	Assume now that $\gro{\bar x}{\bar y}{\bar c} > 0$.
	For every sufficiently small $\eta > 0$, there exists a $(1, \eta)$-quasi-geodesic $\bar \sigma \colon \intval ab \to \bar X$ joining $\bar x$ to $\bar y$ such that $\bar c$ does not lie on $\bar \sigma$.
	Since balls in $\bar X$ are $2\bar\delta$-quasi-convex, we may assume also that $\bar \sigma$ is contained in $B(\bar c, \rho)$.
	Recall that $\dot X \setminus \mathcal C \to \bar X \setminus \bar {\mathcal C}$ is a covering map and a local isometry \autoref{res: small cancellation}~\ref{enu: small cancellation - translation kernel}.
	Let $\sigma \colon \intval ab \to \dot X$ be the path starting at $x$ and lifting $\bar \sigma$.
	By construction $\sigma$ lies in $B(c,\rho)$. 
	Hence its endpoint, which is also a pre-image of $\bar y$ can be written $hy$ for some $h \in H$.
	In addition $\sigma$ and $\bar \sigma$ have the same length, thus
	\begin{equation*}
		\dist x{hy} \leq L(\sigma) \leq L(\bar \sigma) \leq \dist {\bar x}{\bar y} + \eta.
	\end{equation*}
	In both cases we proved that for every $\eta > 0$, there exist $h \in H$ such that $\dist x{hy} \leq \dist{\bar x}{\bar y}+\eta$, hence $f$ induces an isometry from $B(c, \rho - 2\dot\delta)/H$ onto $B(\bar c,\rho- 2\bar \delta)$.
\end{proof} % CHECKED

%------------------------------------------------------------------------------------
%
\subsubsection{Isometries of $\bar X$}
%
%------------------------------------------------------------------------------------
\label{sec: isom bar X}

In this section we study elementary subgroups of $\bar \Gamma$.
The goal is to get a control in the spirit of Axioms~\ref{enu: family axioms - elusive}-\ref{enu: family axioms - loxodromic} provided the same holds for $\Gamma$.
To that end we make the following additional assumptions (we refer the reader to \autoref{def: thin isom} for the definition of thin isometries).

\begin{assu}\
	\begin{itemize}
		\item Both $\nu(\Gamma, X)$ and $A(\Gamma, X)$ are finite.
		\item $\Gamma$ has no even torsion.
		\item Every elementary subgroup of $\Gamma$ which is not elliptic is loxodromic and cyclic.
		We say that a loxodromic element $\gamma \in \Gamma$ is \emph{primitive} if it generates the maximal loxodromic subgroup containing it.
		\item There exists $\alpha \in \R_+$ with the following property: for every $(H, Y) \in \mathcal Q$, there is an $\alpha$-thin primitive element $\gamma \in \Gamma$ generating $\stab Y$ and an odd integer $n \geq 100$ such that $H = \group{\gamma^n}$.
	\end{itemize}
\end{assu}

According to \autoref{res: small cancellation}~\ref{enu: small cancellation - local embedding} it implies that $\stab{\bar c}$ is cyclic for every $\bar c \in \bar{\mathcal C}$.
It also follows from these assumption that the action of $\Gamma$ on $X$ is weakly properly discontinuous (WPD), see \cite{Bestvina:2002dr} for the definition.
In particular, all the results of \cite[Section~5]{Coulon:2016if} apply here.

\paragraph{Elliptic subgroups of $\bar \Gamma$.}

\begin{lemm}[{\cite[Proposition~5.18]{Coulon:2016if}}]
\label{res: lifting elliptic subgroups}
	Let $\bar E$ be an elliptic subgroup of $\bar \Gamma$ (for its action on $\bar X$).
	One of the following holds.
	\begin{itemize}
		\item  There exists an elliptic subgroup $E$ of $\Gamma$ (for its action on $X$) such that the projection $\Gamma \onto \bar \Gamma$ induces an isomorphism from $E$ onto $\bar E$.
		\item There exists $\bar c \in \bar{\mathcal C}$ such that $\bar E \subset \stab {\bar c}$.
	\end{itemize}
\end{lemm}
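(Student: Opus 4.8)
The statement to prove is \autoref{res: lifting elliptic subgroups}: an elliptic subgroup $\bar E$ of $\bar \Gamma$ either lifts to an elliptic subgroup of $\Gamma$ or is contained in a cone point stabilizer. Since the excerpt stops before the author's proof and the result is cited from \cite{Coulon:2016if}, I would reconstruct it from the small-cancellation machinery that has been set up, in the following way. First I would fix a point $\bar x \in \bar X$ that is (almost) a global fixed point for $\bar E$: choose $\bar x$ realizing, up to a small error $\eta < \bar\delta$, the infimum of $\sup_{\bar\gamma \in \bar E}\dist{\bar\gamma \bar x}{\bar x}$. Since $\bar E$ is elliptic, its orbits are bounded; standard hyperbolic geometry (quasi-center of a bounded orbit, cf. the displacement estimate (\ref{eqn: displacement outside fixed set})) gives that $\sup_{\bar\gamma}\dist{\bar\gamma\bar x}{\bar x} \le C\bar\delta$ for a universal constant $C$, after replacing $\bar x$ by a quasi-center of the orbit $\bar E \bar x$.

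Next I would split into two cases according to the distance from $\bar x$ to the cone set $\bar{\mathcal C}$. \emph{Case 1: $d(\bar x, \bar{\mathcal C})$ is large}, say $> \rho/2$ (or at least larger than $C\bar\delta + 13\bar\delta$). Then the ball $B(\bar x, r)$ with $r$ slightly larger than $C\bar\delta$ avoids a neighborhood of every cone point, so by \autoref{res: small cancellation}~\ref{enu: small cancellation - local isom} the quotient map $f \colon \dot X \to \bar X$ is an isometry from a lift $B(x,r)$ onto $B(\bar x,r)$. I would then invoke \autoref{res: sc - lifting isometries} (applied with $\bar U = \bar E$, after checking $\dist{\bar\gamma\bar x}{\bar x} \le C\bar\delta < \rho/100$): this produces a subset $E \subset \Gamma$ mapping bijectively onto $\bar E$ and closed under products whose image lands in $\bar E$ — hence $E$ is a subgroup, it maps isomorphically onto $\bar E$, and every element of $E$ fixes the lift $x$, so $E$ is elliptic in $X$. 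This gives the first alternative. \emph{Case 2: $d(\bar x, \bar{\mathcal C})$ is small}, i.e.\ there is $\bar c \in \bar{\mathcal C}$ with $\dist{\bar x}{\bar c} \le \rho/2 + C\bar\delta$. Then for every $\bar\gamma \in \bar E$ the triangle inequality gives $\dist{\bar\gamma\bar c}{\bar c} \le \dist{\bar\gamma\bar c}{\bar\gamma\bar x} + \dist{\bar\gamma\bar x}{\bar x} + \dist{\bar x}{\bar c} \le \rho + 2C\bar\delta < 2\rho$; since distinct cone points are $2\rho$-separated, $\bar\gamma\bar c = \bar c$, so $\bar E \subset \stab{\bar c}$, which is the second alternative.

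The step I expect to be the main obstacle is obtaining the uniform bound $\sup_{\bar\gamma \in \bar E}\dist{\bar\gamma\bar x}{\bar x} \le C\bar\delta$ at a well-chosen point $\bar x$: a priori an elliptic subgroup of a non-proper action need not have a genuine global fixed point, only a bounded orbit, and the quasi-center argument has to be carried out carefully (one typically bounds the diameter of the orbit of a quasi-center, or uses that $\bar X$ is $\bar\delta$-hyperbolic together with a barycenter/circumcenter construction). One must also be slightly careful that $\bar X$ is only a genuine metric space (not a priori geodesic/complete) — but the cone-off construction gives it enough of a length-space structure, and the local isometry statement \autoref{res: small cancellation}~\ref{enu: small cancellation - local isom} is exactly what is needed to transport the bounded-orbit argument. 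Once $\bar x$ and the constant $C$ are in hand, the case division and the two lifting inputs (\autoref{res: sc - lifting isometries} and the $2\rho$-separation of $\bar{\mathcal C}$) finish the proof without further difficulty. I would also remark that the assumption that $\stab{\bar c}$ is cyclic (from \autoref{res: small cancellation}~\ref{enu: small cancellation - local embedding} together with the standing hypotheses on $\Gamma$) is not needed for this statement but makes the second alternative sharper.
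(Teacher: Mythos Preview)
The paper does not give its own proof of this lemma; it is imported verbatim from \cite[Proposition~5.18]{Coulon:2016if}. Your reconstruction is along the right lines and is essentially how that argument goes: pick a quasi-center $\bar x$ for $\bar E$, then split according to whether $\bar x$ is close to a cone point or not, using the $2\rho$-separation of $\bar{\mathcal C}$ in the first case and the local lifting \autoref{res: sc - lifting isometries} in the second.

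One point you glossed over: in Case~1 you conclude ``every element of $E$ fixes the lift $x$, so $E$ is elliptic in $X$''. The lift $x$ lies in $\dot X$, not in $X$, and elements of $E$ only move $x$ by at most $C\bar\delta$ in $\dot X$. To get ellipticity in $X$ you must take the radial projection $y\in X$ of $x$ (possible since $x$ is far from $\mathcal C$) and use the comparison $\mu(\dist[X]{\gamma y}{y}) \le \dist[\dot X]{\gamma y}{y}$ from (\ref{eqn: appendix - lower bound dist dot X}), exactly as done in the proof of \autoref{res: sc - axis small elliptic}. This shows each $\gamma\in E$ moves $y$ by at most $\pi\sinh(C'\bar\delta)$ in $X$, hence the $E$-orbit of $y$ is bounded in $X$. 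You should also check that the set $U$ produced by \autoref{res: sc - lifting isometries} is closed under inverses (not just products): this follows because the unique lift of $1\in\bar E$ in $U$ is $1$, by \autoref{res: small cancellation}~\ref{enu: small cancellation - translation kernel}. With these two details filled in, the argument is complete.
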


\begin{lemm}
\label{res: sc - axis small elliptic}
	Assume that there exists $\beta \in \R_+$ such that every non-trivial elliptic element $\gamma \in \Gamma$ is $\beta$-thin (for its action on $X$).
	Let $\bar \gamma \in \bar \Gamma$ be an elliptic element.
	If $\bar \gamma$ does not fix a cone point in $\bar{\mathcal C}$, then $\bar \gamma$ is $\bar \beta$-thin, where  $\bar \beta = \beta + \pi\sinh(10 \bar \delta)$.
\end{lemm}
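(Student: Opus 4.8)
The strategy is to transfer the $\beta$-thinness of elliptic isometries of $X$ to $\bar\Gamma$ acting on $\bar X$ via the lifting lemmas, while handling separately the ``small scale'' part of the fixed-point set, which may be distorted by the cone-off construction. By definition we must bound $\fix{\bar\gamma, d}$ inside a ball $B(\bar x, d/2 + \bar\beta)$ for every $d \in \R_+$. By the local-to-global statement for thinness (\autoref{res: local-to-global thinness}), it is enough to produce a single point $\bar x$ such that $\fix{\bar\gamma, 6\bar\delta}$ is contained in $B(\bar x, \bar\beta - 2\bar\delta)$, i.e. to control the fixed-point set at the fixed scale $6\bar\delta$.

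First I would lift $\bar\gamma$: since $\bar\gamma$ is elliptic and does not fix a cone point, \autoref{res: lifting elliptic subgroups} applied to $\bar E = \group{\bar\gamma}$ produces an elliptic element $\gamma \in \Gamma$ with $\pi(\gamma) = \bar\gamma$ (the cone-point alternative is excluded by hypothesis). By assumption $\gamma$ is $\beta$-thin, so $\fix{\gamma, 6\dot\delta}$ (computed in $X$, or equivalently in $\dot X$ up to the comparison of metrics) lies in some ball $B(x, \beta + 3\dot\delta)$ around a point $x \in X$. Now I would push this down to $\bar X$. The subtlety is that $\fix{\bar\gamma, 6\bar\delta}$ could be strictly larger than the image of $\fix{\gamma, \text{something}}$: a point $\bar z$ moved little by $\bar\gamma$ might lift to a point moved little by \emph{some other} preimage $\gamma'$ of $\bar\gamma$, where $\gamma' = k\gamma$ for $k \in K$. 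The key observation is that any such $\bar z$ is either close to a cone point (within distance comparable to $\rho$) or, if it stays in the region where \autoref{res: small cancellation}~\ref{enu: small cancellation - local isom} gives a genuine isometry $B(\bar z, \rho/20) \cong B(z, \rho/20)$, the only preimage of $\bar\gamma$ that moves the lift of $\bar z$ by less than $\rho/5$ is a single one, by the free action of $K$ on $\dot X \setminus \mathcal C$ (\autoref{res: small cancellation}~\ref{enu: small cancellation - translation kernel}). Since $6\bar\delta \ll \rho/5$, such a point $\bar z$ is moved little by a \emph{unique} preimage, which must be a conjugate of $\gamma$ by an element of $K$ fixing the relevant region — and since $K$ acts freely away from $\mathcal C$, that element is trivial, so the preimage is $\gamma$ itself. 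Hence away from a neighborhood of $\mathcal C$, $\fix{\bar\gamma, 6\bar\delta}$ is the isometric image of a subset of $\fix{\gamma, 6\bar\delta}$, which sits in $B(x, \beta + 3\bar\delta)$, and its image sits in $B(\bar x, \beta + 3\bar\delta)$ with $\bar x = f(x)$.

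The remaining case — points $\bar z \in \fix{\bar\gamma, 6\bar\delta}$ lying near a cone point $\bar c$ — is where the extra term $\pi\sinh(10\bar\delta)$ appears. If $\bar z \in B(\bar c, \rho)$ for some $\bar c \in \bar{\mathcal C}$, then since $\bar\gamma$ does not fix $\bar c$, the element $\bar\gamma$ moves $\bar c$ by at least $2\rho$, so the geodesic from $\bar z$ to $\bar\gamma\bar z$ cannot pass too deep into the cone; quantitatively, one argues using the cone metric formula (\ref{eqn: sc - metric cone}) — or rather its quotient version encoded in Axiom-style control — that $\dist{\bar z}{\bar\gamma \bar z} \leq 6\bar\delta$ forces $\bar z$ to lie within distance $\pi\sinh(10\bar\delta)$ of a fixed-point configuration that is itself controlled by $\fix{\gamma}$ via \autoref{res: quotient map isom around cone point}, which identifies $B(\bar c, \rho - 2\bar\delta)$ with $B(c, \rho - 2\dot\delta)/H$. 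Concretely: the law of cosines in the cone shows that two points at angular distance $\theta$ and radial distances $s, s'$ from the apex are at cone-distance at least roughly $\theta \sinh(\min\{s,s'\})$ when $\theta$ is small; so if $\bar\gamma$ rotates around $\bar c$ by a definite angle (it does, since it does not fix $\bar c$ and $\stab{\bar c}$ is cyclic of odd order, so $\bar\gamma$ acts with no small nonzero rotation unless it is far from the apex), a point moved by at most $6\bar\delta$ must have radial coordinate at most $\pi\sinh(10\bar\delta)$, pinning it near the apex, hence near the image under $f$ of a point of $X$ controlled by $\gamma$. Combining the two cases, $\fix{\bar\gamma, 6\bar\delta} \subset B(\bar x, \beta + \pi\sinh(10\bar\delta) - 2\bar\delta)$ after adjusting the center, and \autoref{res: local-to-global thinness} upgrades this to $\bar\beta$-thinness with $\bar\beta = \beta + \pi\sinh(10\bar\delta)$.

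\textbf{Main obstacle.} The delicate point is the cone-point case: making precise the claim that an isometry of $\bar X$ not fixing $\bar c$ must either rotate around $\bar c$ by a bounded-below angle or keep its small-displacement fixed set near the apex, and then relating that apex-region fixed set back to $\fix\gamma$ in $X$ through the quotient isometry of \autoref{res: quotient map isom around cone point}. This requires careful bookkeeping with the cone metric (\ref{eqn: sc - metric cone}) and the fact that $\stab Y = \group\gamma$ with $\gamma$ $\alpha$-thin and $H = \group{\gamma^n}$ for odd $n \geq 100$, which guarantees the rotation angles of nontrivial elements of $\stab{\bar c}$ are bounded away from $0$ and $\pi$ at the relevant radial scales. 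Everything else is a routine application of the lifting lemmas and the local isometry property of the small cancellation quotient.
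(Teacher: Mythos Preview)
Your plan has a genuine gap: you misidentify the main obstacle. The ``remaining case'' you worry about---points $\bar z \in \fix{\bar\gamma, 6\bar\delta}$ lying deep inside some $B(\bar c, \rho)$---simply does not arise. You already observe that $\bar\gamma$ moves $\bar c$ by at least $2\rho$; the triangle inequality then gives immediately
\[
2\rho \leq \dist{\bar c}{\bar\gamma\bar c} \leq 2\dist{\bar c}{\bar z} + \dist{\bar z}{\bar\gamma\bar z} \leq 2\dist{\bar c}{\bar z} + 6\bar\delta,
\]
so $\dist{\bar c}{\bar z} \geq \rho - 3\bar\delta$. Thus $\bar Z = \fix{\bar\gamma, 6\bar\delta}$ stays uniformly far from every cone point, and your entire rotation-angle analysis (which in any case concerns elements of $\stab{\bar c}$, a group $\bar\gamma$ does \emph{not} belong to) is irrelevant. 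This is precisely how the paper begins: one line of triangle inequality disposes of the cones.

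The actual source of the extra term $\pi\sinh(10\bar\delta)$ is not cone geometry but the metric distortion between $X$ and $\dot X$. Once $\bar Z$ is known to avoid cone points, the paper lifts it in one stroke via \autoref{res: sc - lifting quasi-convex} to a set $Z \subset \dot X$ and obtains a single preimage $\gamma \in \stab Z$; this sidesteps your point-by-point worry about which $K$-translate of $\gamma$ acts at each $\bar z$. Each $z \in Z$ then radially projects to some $y \in X$ with $\dist[\dot X]{y}{z} \leq 3\dot\delta$, and the comparison (\ref{eqn: appendix - lower bound dist dot X}) converts $\dist[\dot X]{\gamma y}{y} \leq 12\dot\delta$ into $\dist[X]{\gamma y}{y} \leq \pi\sinh(6\bar\delta)$. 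The $\pi\sinh$ term is the inverse of the function $\mu$, not a cone-metric artefact. From there, $\beta$-thinness of $\gamma$ in $X$ bounds $\dist[X]{x}{y}$, and pushing back down to $\bar X$ yields the ball containment needed for \autoref{res: local-to-global thinness}. Your first-case sketch is in the right spirit but glosses over exactly these two steps---the global lift of $\bar Z$ and the radial projection together with $\mu^{-1}$---which constitute the whole proof.
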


\begin{proof}
	For simplicity we let $\bar Z = \fix{\bar \gamma, 6\bar \delta}$.
	Since $\bar \gamma$ does not fix any apex in $\bar {\mathcal C}$, if follows from the triangle inequality that $d(\bar c,\bar Z) \geq \rho - 3\bar \delta$, for every $\bar c \in \bar{\mathcal C}$.
	By \autoref{res: sc - lifting quasi-convex}, there exists a subset $Z$ of $\dot X$ such that the map $f \colon \dot X \to \bar X$ induces an isometry from $Z$ onto $\bar Z$;
	moreover the projection $\pi \colon \Gamma \to \bar \Gamma$ induces an isomorphism from $\stab Z$ onto $\stab {\bar Z}$.
	We denote by $\gamma \in \Gamma$ the pre-image of $\bar \gamma$ in $\stab Z$.
	Let $\bar E = \group{\bar \gamma}$.
	Fix a point $\bar z_0 \in \fix{\bar E, 6\bar\delta}$.
	Note that $\bar E\bar z_0 \subset \bar Z$.
	Let $z_0$ be its pre-image in $Z$ and $y_0$ the radial projection of $z_0$ onto $X$ so that $\dist[\dot X] {z_0}{y_0} \leq 3 \dot \delta$.
	It follows from (\ref{eqn: appendix - lower bound dist dot X}) and the definition of $\bar z_0$, that for every $k \in \Z$, we have 
	\begin{equation*}
		\mu\left(\dist[X]{\gamma^ky_0}{y_0}\right) 
		\leq \dist[\dot X]{\gamma^ky_0}{y_0} 
		\leq \dist[\dot X]{\gamma^kz_0}{z_0}+6\dot\delta 
		\leq \dist{\bar \gamma^k \bar z_0}{\bar z_0} + 6\bar \delta 
		\leq 12 \bar \delta.
	\end{equation*}
	Thus 
	\begin{equation*}
		\dist[X]{\gamma^ky_0}{y_0}\leq \pi \sinh(6 \bar \delta),
	\end{equation*}
	see for instance \cite[Proposition~4.4]{Coulon:2014fr}.
	In particular, the $\group \gamma$-orbit of $y_0$ is bounded, hence $\gamma$ is elliptic (for its action on $X$).
	According to our assumption $\gamma$ is $\beta$-thin at some point $x \in X$.
		
	Let $\bar x$ be the image of $x$ in $\bar X$.
	We now claim that $\bar Z$ is contained in ${B(\bar x, \bar \beta - 2\bar \delta)}$.
	Let $\bar z \in Z$.
	Let $z$ be its pre-image in $Z$ and $y \in X$ the radial projection of $z$.
	The same computation as above shows that the point $y \in X$ is moved by $\gamma$ by at most $\pi\sinh(6\bar \delta)$.
	By the very definition of thinness we get $\dist[X] xy \leq \beta + \pi \sinh(6\bar \delta)$.
	Recall that the map $f \colon X \to \bar X$ is $1$-Lipschitz, we have
	\begin{equation*}
		\dist{\bar x}{\bar z}
		\leq \dist[X]xy + \dist[\dot X]yz
		\leq  \beta + \pi \sinh(6\bar \delta) + 6 \bar \delta \leq \bar \beta - 2\bar \delta.
	\end{equation*}
	This inequality holds for every $\bar z \in \bar Z$, which completes the proof of our claim.
	The result now follows from \autoref{res: local-to-global thinness}.
\end{proof} % DELTA CHECKED

Let $(H,Y) \in \mathcal Q$.
Let $c$ be the apex of the cone $Z(Y)$ and $\bar c$ its image in $\bar X$.
We now focus on the behavior of $\stab{\bar c}$.
According to our assumptions, $\stab Y$ is generated by an $\alpha$-thin primitive element $h \in \Gamma$ and $H = \group {h^n}$ for some odd integer $n \geq 100$.
It follows from \autoref{res: small cancellation}~\ref{enu: small cancellation - local embedding} that $\pi \colon \Gamma \onto \bar \Gamma$ induces an isomorphism from $\stab Y/H \equiv \Z/n\Z$ onto $\stab{\bar c}$.
We think of $\stab{\bar c}$ acting on $B(\bar c, \rho)$ as a rotation group acting on a hyperbolic disc.
Let us make this idea more rigorous.

\paragraph{Angle cocycle.}
Let $(z_n)$ be a sequence points in $X$ converging to $h^+$ (the unique attractive point of $h$ in $\partial X$).
Fix an non-principal ultra-filter $\omega$.
We define a ``Busemann cocycle''  $b_0 \colon X \times X \to \R$ at $h^+$ by
\begin{equation*}
	b_0(x,x') = \limo \left(\dist x{z_n} - \dist {x'}{z_n}\right), \quad \forall x,x' \in X
\end{equation*}
Although $h^+$ is fixed by $\stab Y$, the cocycle $b_0$ may not be $\stab Y$-invariant.
Nevertheless $\stab Y$ is amenable.
Proceeding as in \cite[Section~4.3]{Coulon:2018vp} we can average the orbit of $b_0$ under $\stab Y$ to get a new cocycle $b \colon X \times X \to \R$, with the following properties
\begin{enumerate}
	\item $\abs{b(x,x') - b_0(x,x')} \leq 6\delta$, for every $x,x' \in X$,
	\item $b(\gamma x, \gamma x') = b(x,x')$, for every $x,x' \in X$ and $\gamma  \in \stab Y$,
	\item $\abs{b(x,\gamma  x)} = \snorm \gamma $, for every $x \in X$ and $\gamma  \in \stab Y$.
\end{enumerate}
We rescale $b$ to get a $\stab Y$-invariant cocycle $\theta_c \colon Y \times Y \to \R$ given by 
\begin{equation*}
	\theta_c(y,y') = \frac 1 {\sinh \rho} b(y,y'), \quad \forall y,y' \in Y,
\end{equation*}
which we extend radially to a cocycle 
\begin{equation*}
	\theta_c \colon Z(Y)\setminus\{c\} \times Z(Y)\setminus\{c\} \to \R
\end{equation*}
as follows:
if $x = (y,r)$ and $x' = (y',r')$ are two points of $Z(Y)\setminus\{c\}$, then $\theta_c (x,x') = \theta_c(y,y')$.
Let 
\begin{equation*}
	\Theta_{\bar c} = \frac{n\snorm h}{\sinh \rho}
\end{equation*}
It follows from the small cancellation assumption that $\Theta_{\bar c} \geq 9\pi$.
By construction $\theta_c$ induces a $\stab{\bar c}$-invariant cocycle 
\begin{equation*}
	\theta_{\bar c} \colon \mathring B(\bar c, \rho) \times \mathring B(\bar c, \rho) \to \R/ \Theta_{\bar c} \Z.
\end{equation*}
Recall that $\stab Y$ is assumed to be cyclic.
Hence the map $\stab{\bar c} \to  \R/ \Theta_{\bar c} \Z$ sending $\bar \gamma$ to $\theta_{\bar c}(\bar \gamma \bar x, \bar x)$ is an injective homomorphism, which does not depend on the point $\bar x \in \mathring B(\bar c, \rho)$.

\paragraph{Comparison map.}
Consider the action of $\group h$ on $\R$ by translation of length $\snorm h$.
The quotient $\R / H$ is isometric to a circle whose length is $n\snorm h$.
We denote by $\mathcal D$ the cone of radius $\rho$ over $\R/ H$ and by $o$ its apex.
Observe that $\mathcal D$ is a hyperbolic cone (i.e. locally isometric to $\H^2$ everywhere except maybe at the cone point $o$) whose total angle at $o$ is $\Theta_{\bar c}$.

Let $y_0$ be an arbitrary point in $Y$.
One checks that the map $\phi \colon Y \to \R$ sending $y$ to $b(y_0, y)$ is a  $\stab Y$-equivariant $(1, 150\delta)$-quasi-isometric embedding, see Coulon \cite[Section~4.3]{Coulon:2018vp}.
Hence it induces a $\stab {\bar c}$-equivariant $(1, 150\delta)$-quasi-isometric embedding
\begin{equation*}
	\bar \phi \colon Z(Y/H) \to \mathcal D.
\end{equation*}
Note that $Z(Y/H)$, which is also isometric to $Z(Y)/H$, is endowed here with the metric defined by (\ref{eqn: sc - metric cone}).
Although $Z(Y)/H$ can be identified, as a set of points, with the closed ball of radius $\rho$ centered at $\bar c$, the metric may be slightly different.
Nevertheless, in view of Propositions~\ref{res: qi cone in cone-off} and \ref{res: quotient map isom around cone point} the map $\bar \phi \colon B(\bar c, \rho) \to \mathcal D$ induced by $\bar \phi \colon Z(Y/H) \to \mathcal D$ is a $(1,10\bar \delta)$-quasi-isometric embedding.
Moreover, for every $\bar x \in B(\bar c, \rho)$, we have $\dist{\bar c}{\bar x} = \dist o {\bar \phi(\bar x)}$.
As a consequence we get the following statement.

\begin{prop}
\label{res: sc - angle cocycle}
	For every $\bar x, \bar x' \in \mathring B(\bar c, \rho)$, we have  $\abs{\dist {\bar x}{\bar x'} - \ell} \leq 10 \bar \delta$, where
	\begin{equation*}
		\cosh \ell = \cosh\dist {\bar c}{\bar x}\cosh\dist {\bar c}{\bar x'} - \sinh\dist {\bar c}{\bar x}\sinh\dist {\bar c}{\bar x'}  \cos\left( \min \left\{ \pi, \tilde \theta_{\bar c}(\bar x,\bar x') \right\}\right)
	\end{equation*}
	and $ \tilde \theta_{\bar c}(\bar x,\bar x')$ is the unique representative of $\theta_{\bar c}(\bar x,\bar x')$ in $(-\Theta_{\bar c}/2, \Theta_{\bar c}/2]$.
	Moreover, $\theta_{\bar c}(\bar \gamma \bar x, \bar x) \neq 0$, for every $\bar \gamma \in \stab {\bar c}\setminus \{1\}$, and $\bar x \in \mathring B(\bar c,\rho)$.
\end{prop}

\begin{coro}
\label{res: sc - unique fixed cone point}
	Let $\bar c \in \bar{\mathcal C}$.
	If $\bar \gamma \in \stab{\bar c}$ is non-trivial, then there exists $q \in \Z$, such that $\fix{\bar \gamma^q, 6\bar \delta}$ is contained in $B(\bar c, 8\bar \delta)$.
	In particular, $\bar c$ is the unique cone point fixed by $\bar \gamma$.
\end{coro}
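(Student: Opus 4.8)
The plan is to read the conclusion off the angle cocycle of \autoref{res: sc - angle cocycle}, using the elementary remark that inside the cyclic group $\stab{\bar c}$ there is always a power of $\bar\gamma$ rotating around $\bar c$ by an angle at least $\pi$; by the hyperbolic law of cosines such a rotation displaces every point of $\mathring B(\bar c,\rho)$ by (essentially) twice its distance to $\bar c$, which is incompatible with being $6\bar\delta$-almost fixed unless the point is $O(\bar\delta)$-close to $\bar c$.

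First I would handle the algebra. By \autoref{res: small cancellation}~\ref{enu: small cancellation - local embedding} the stabiliser $\stab{\bar c}$ is cyclic, and, as recorded just before \autoref{res: sc - angle cocycle}, the assignment $\bar\tau\mapsto\theta_{\bar c}(\bar\tau\bar x,\bar x)$ is an injective homomorphism $\stab{\bar c}\to\R/\Theta_{\bar c}\Z$ that is independent of $\bar x\in\mathring B(\bar c,\rho)$, with $\Theta_{\bar c}\geq 9\pi$. Let $m\geq 2$ be the order of $\bar\gamma$. Its image has order $m$, so the values $\theta_{\bar c}(\bar\gamma^q\bar x,\bar x)$, as $q$ ranges over $\Z$, are exactly $\{(i/m)\Theta_{\bar c}: 0\leq i<m\}$. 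Choosing $i=\lfloor m/2\rfloor$ gives $1/3\leq i/m\leq 1/2$, hence there is an integer $q$ with $\tilde\theta_{\bar c}(\bar\gamma^q\bar x,\bar x)=(i/m)\Theta_{\bar c}\in[\pi,\Theta_{\bar c}/2]$ (using $\Theta_{\bar c}\geq 9\pi$). I fix such a $q$; note $\bar\gamma^q$ still fixes $\bar c$.

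Then I would plug this into the distance formula. For $\bar x\in\mathring B(\bar c,\rho)\setminus\{\bar c\}$ one has $\dist{\bar c}{\bar\gamma^q\bar x}=\dist{\bar c}{\bar x}$ and $\min\{\pi,\tilde\theta_{\bar c}(\bar\gamma^q\bar x,\bar x)\}=\pi$, so (\ref{eqn: family axioms - conical - 2}) collapses to $\cosh\ell=\cosh(2\dist{\bar c}{\bar x})$, and \autoref{res: sc - angle cocycle} yields $\dist{\bar\gamma^q\bar x}{\bar x}\geq 2\dist{\bar c}{\bar x}-10\bar\delta$. In particular, any $\bar x\in\fix{\bar\gamma^q,6\bar\delta}$ lying in $\mathring B(\bar c,\rho)$ satisfies $\dist{\bar c}{\bar x}\leq 8\bar\delta$.

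It remains — and this I expect to be the only slightly delicate point — to exclude elements of $\fix{\bar\gamma^q,6\bar\delta}$ lying outside $B(\bar c,\rho)$. I would argue by contradiction: $\fix{\bar\gamma^q,6\bar\delta}$ is non-empty (it contains $\bar c$), hence $\norm{\bar\gamma^q}\leq 6\bar\delta$, so $\fix{\bar\gamma^q,7\bar\delta}$ is $8\bar\delta$-quasi-convex (the criterion stated after \autoref{def: fix}) and contains both $\bar c$ and a putative point $\bar x$ with $\dist{\bar c}{\bar x}\geq\rho$. Taking $\bar z$ on $[\bar c,\bar x]$ at distance $\rho/2$ from $\bar c$ and a point $\bar z'\in\fix{\bar\gamma^q,7\bar\delta}$ with $\dist{\bar z}{\bar z'}\leq 8\bar\delta$ (quasi-convexity), one has $\bar z'\in\mathring B(\bar c,\rho)$, so the previous step gives $\dist{\bar\gamma^q\bar z'}{\bar z'}\geq 2\dist{\bar c}{\bar z'}-10\bar\delta\geq\rho-26\bar\delta$, whereas $\dist{\bar\gamma^q\bar z'}{\bar z'}\leq 7\bar\delta$; since $\rho\geq\rho_0\gg\bar\delta$ this is absurd. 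Therefore $\fix{\bar\gamma^q,6\bar\delta}\subseteq B(\bar c,8\bar\delta)$. Finally, if $\bar c'\in\bar{\mathcal C}$ is fixed by $\bar\gamma$, it is fixed by $\bar\gamma^q$, so $\bar c'\in\fix{\bar\gamma^q,6\bar\delta}\subseteq B(\bar c,8\bar\delta)$; as distinct apices of $\bar{\mathcal C}$ are at distance $\geq 2\rho>8\bar\delta$, this forces $\bar c'=\bar c$, which is the uniqueness statement.
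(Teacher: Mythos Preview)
Your proof is correct and follows essentially the same strategy as the paper: find a power $q$ so that $\bar\gamma^q$ rotates about $\bar c$ by an angle in $[\pi,\Theta_{\bar c}/2]$, apply the cosine law from \autoref{res: sc - angle cocycle} to force points of $\fix{\bar\gamma^q,6\bar\delta}\cap B(\bar c,\rho)$ into $B(\bar c,8\bar\delta)$, and use quasi-convexity to exclude points outside $B(\bar c,\rho)$. The only cosmetic difference is that you locate $q$ via the structure of finite subgroups of the circle $\R/\Theta_{\bar c}\Z$, while the paper writes $\bar\gamma=\bar h^p$ and runs an explicit $\gcd(p,n)$ argument exploiting that $n$ is odd; your quasi-convexity step is also spelled out in more detail (and you could in fact work directly with $\fix{\bar\gamma^q,6\bar\delta}$ since $\norm{\bar\gamma^q}=0$).
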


\begin{proof}
	Since $\bar \gamma$ is non-trivial, there exists $p \in \Z \setminus n \Z$ such that $\bar \gamma = \bar h^p$.
	As $n$ is odd, the greatest common divisor $d$ of $p$ and $n$ satisfies $1 \leq d \leq n/3$.
	On the other hand $\Theta_{\bar c} \geq 9\pi$, so that
	\begin{equation*}
		\frac {\Theta_{\bar c}}2 - \pi  > \frac {\Theta_{\bar c}}3 \geq \frac{d\snorm h}{\sinh \rho}.
	\end{equation*}
	Therefore, there exists $q,m \in \Z$ such that 
	\begin{equation*}
		 \pi \leq (qp + mn) \frac{\snorm h}{\sinh \rho} \leq \frac {\Theta_{\bar c}}2.
	\end{equation*}
	In other words $\bar \gamma^q$ acts on $\mathcal D$ as rotation whose angle is at least $\pi$.
	By \autoref{res: sc - angle cocycle}, $\fix{\bar \gamma^q, 6\bar \delta} \cap B(\bar c, \rho)$  is contained in $B(\bar c, 8\bar \delta)$.
	However $\fix{\bar \gamma^q, 6\bar \delta}$ is $8\bar \delta$-quasi-convex.
	Thus it cannot contain a point outside $B(\bar c, \rho)$ without violating the previous conclusion.
	Consequently $\fix{\bar \gamma^q, 6\bar \delta}$  is contained in $B(\bar c, 8\bar \delta)$.
\end{proof} % CHECKED

\begin{coro}
\label{res: sc - unique fixed cone point consequence}
	Let $\bar c \in \bar{\mathcal C}$.
	Let $\bar \gamma_0 \in \stab{\bar c} \setminus \{1\}$ and $\bar \gamma \in \bar \Gamma$.
	If $ \bar \gamma \bar \gamma_0 \bar \gamma^{-1}$ fixes $\bar c$, then so does $\bar \gamma$.
\end{coro}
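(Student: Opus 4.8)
The plan is to use Corollary~\ref{res: sc - unique fixed cone point} to turn the hypothesis that $\bar\gamma\bar\gamma_0\bar\gamma^{-1}$ fixes $\bar c$ into the stronger statement that it fixes $\bar c$ \emph{uniquely}, and then exploit the fact that $\bar\gamma\bar\gamma_0\bar\gamma^{-1}$ also fixes $\bar\gamma\bar c$. First I would dispose of the trivial case: if $\bar\gamma_0 = 1$ there is nothing to prove, so assume $\bar\gamma_0\neq 1$. By Corollary~\ref{res: sc - unique fixed cone point} there is an integer $q\in\Z$ such that $\bar c$ is the unique cone point fixed by $\bar\gamma_0^q$; in particular $\bar\gamma_0^q\neq 1$ (the corollary produces a non-trivial power, since $\fix{\bar\gamma_0^q,6\bar\delta}\subset B(\bar c,8\bar\delta)$ forces $\bar\gamma_0^q$ to be non-trivial by the ``moreover'' clause of Corollary~\ref{res: sc - unique fixed cone point}, or one simply notes $\snorm{}$ considerations rule out $\bar\gamma_0^q=1$). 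Then $\bar\gamma\bar\gamma_0^q\bar\gamma^{-1} = (\bar\gamma\bar\gamma_0\bar\gamma^{-1})^q$ fixes $\bar c$ by hypothesis (a power of something fixing $\bar c$ still fixes $\bar c$), and it also fixes $\bar\gamma\bar c$, since $\bar\gamma_0^q$ fixes $\bar c$.

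The key step is then the uniqueness: $\bar\gamma\bar\gamma_0^q\bar\gamma^{-1}$ is conjugate to $\bar\gamma_0^q$, hence lies in $\stab{\bar\gamma\bar c}$ and is non-trivial, so by Corollary~\ref{res: sc - unique fixed cone point} applied to the cone point $\bar\gamma\bar c$ it fixes a \emph{unique} cone point, namely $\bar\gamma\bar c$. But we have just exhibited two cone points fixed by $\bar\gamma\bar\gamma_0^q\bar\gamma^{-1}$, namely $\bar c$ and $\bar\gamma\bar c$. Uniqueness forces $\bar\gamma\bar c = \bar c$, i.e. $\bar\gamma$ fixes $\bar c$, which is exactly the conclusion.

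I expect the only point requiring a little care — the ``main obstacle'', though it is minor — is making sure the power $\bar\gamma_0^q$ produced by Corollary~\ref{res: sc - unique fixed cone point} is genuinely non-trivial, so that one may reapply the corollary to the conjugate element and to the cone point $\bar\gamma\bar c$. This follows because $\fix{\bar\gamma_0^q,6\bar\delta}\subset B(\bar c,8\bar\delta)$ is incompatible with $\bar\gamma_0^q=1$ (whose $6\bar\delta$-fixed set is all of $\bar X$, which is unbounded since the action of $\bar\Gamma$ on $\bar X$ is non-elementary). Everything else is a direct chain of implications: conjugation moves $\stab{\bar c}$ to $\stab{\bar\gamma\bar c}$, taking powers preserves the property of fixing a given point, and uniqueness of the fixed cone point then pins down $\bar\gamma$.
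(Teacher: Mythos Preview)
Your argument is correct and follows the same idea as the paper, but you take an unnecessary detour. The ``In particular'' clause of Corollary~\ref{res: sc - unique fixed cone point} already says that a non-trivial element of $\stab{\bar c}$ fixes $\bar c$ as its \emph{unique} cone point --- you do not need to pass to the power $\bar\gamma_0^q$. The paper simply sets $\bar\gamma_1 = \bar\gamma\bar\gamma_0\bar\gamma^{-1}$, observes that $\bar\gamma_1$ fixes both $\bar c$ (hypothesis) and $\bar\gamma\bar c$ (since $\bar\gamma_0$ fixes $\bar c$), and invokes uniqueness directly for $\bar\gamma_1$ to conclude $\bar\gamma\bar c = \bar c$. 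Your worry about $\bar\gamma_0^q$ being non-trivial, and the second application of Corollary~\ref{res: sc - unique fixed cone point} at $\bar\gamma\bar c$, are both avoidable.

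One small correction: the case $\bar\gamma_0 = 1$ is not a case where ``there is nothing to prove'' --- the conclusion is actually false in that case (any $\bar\gamma$ would satisfy the hypothesis). The statement tacitly assumes $\bar\gamma_0 \neq 1$, as does the paper's own proof; you should say so explicitly rather than calling it trivial.
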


\begin{proof}
	For simplicity we write $\bar \gamma_1 =  \bar \gamma \bar \gamma_0 \bar \gamma^{-1}$ so that $\bar \gamma_1$ fixes $\bar c$ and $\bar \gamma \bar c$.
	However by \autoref{res: sc - unique fixed cone point}, $\bar \gamma_1$ cannot fixed two distinct cone points hence $\bar \gamma \bar c = \bar c$.
\end{proof}

\begin{prop}
\label{res: sc - fix point set conical contained in a ball}
	Let $\bar c \in \bar{\mathcal C}$.
	Every non-trivial element $\bar \gamma \in \stab{\bar c}$ is $(\rho + \bar \beta)$-thin at $\bar c$, where $\bar \beta = \alpha + \pi \sinh(20 \bar \delta)$.
\end{prop}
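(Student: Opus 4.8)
The plan is to estimate $\fix{\bar\gamma,d}$ directly for every $d\in\R_+$. Write $\bar c=f(c)$, where $c$ is the apex of the cone $Z(Y)$ attached along a pair $(H,Y)\in\mathcal Q$; by assumption $\stab Y$ is generated by an $\alpha$-thin primitive loxodromic element $h$ with $H=\group{h^n}$, and by \autoref{res: small cancellation}~\ref{enu: small cancellation - local embedding} the projection $\pi$ identifies $\stab Y/H$ with $\stab{\bar c}$, so $\bar\gamma=\pi(h^p)$ for some $p\notin n\Z$. Fix $\bar x\in\fix{\bar\gamma,d}$. If $\dist{\bar c}{\bar x}\le\rho$ there is nothing to prove since $\rho\le d/2+\rho+\bar\beta$; so assume $\dist{\bar c}{\bar x}>\rho$, and it suffices to bound $\dist{\bar c}{\bar x}$ by $d/2+\rho+\bar\beta$.

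First I would lift the picture to the cone-off $\dot X$. Using the covering map $\dot X\setminus\mathcal C\to\bar X\setminus\bar{\mathcal C}$ together with the local isometries of \autoref{res: small cancellation}~\ref{enu: small cancellation - local isom} and \autoref{res: quotient map isom around cone point} (to treat the part of a lift inside the cone), one obtains a preimage $x\in\dot X$ of $\bar x$ with $\dist[\dot X]cx=\dist{\bar c}{\bar x}$, carried by a $\dot X$-geodesic lifting $[\bar c,\bar x]$; since that geodesic leaves $Z(Y)$, the point $x$ lies outside the cone, and because every path reaching an apex must pass through the corresponding $Y'$ and then traverse a radial segment of length $\rho$, one has $\dist[\dot X]x{\mathcal C}\ge\rho$. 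The lift $h^p$ of $\bar\gamma$ fixing $c$ then moves $x$ a controlled amount: $h^p[\bar c,\bar x]=[\bar c,\bar\gamma\bar x]$ is uniformly close to $[\bar c,\bar x]$, and their lifts from $c$ stay close — near $c$ by \autoref{res: sc - angle cocycle} and \autoref{res: quotient map isom around cone point}, away from $c$ by the covering property — so $\dist[\dot X]{h^px}x\le d+O(\bar\delta)$. Checking that no holonomy around other cone points spoils this comparison is the first technical point, and is where the small-cancellation hypotheses ($\Delta(\mathcal Q,X)\le\Delta_0$, $\rho$ large) and the uniqueness of the cone point fixed by $\bar\gamma$ (\autoref{res: sc - unique fixed cone point}) enter.

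I would then dichotomize on a $\dot X$-geodesic $\eta$ from $x$ to $h^px$. If $\eta$ stays in $X$, its $X$-length equals its $\dot X$-length (by (\ref{eqn: appendix - lower bound dist dot X}) and the fact that $\mu(t)/t\to1$ as $t\to 0$), so $\dist[X]{h^px}x\le d+O(\bar\delta)$; since $h^p$ is loxodromic and $\alpha$-thin (the thinness of powers being controlled by $\alpha$ up to $O(\delta)$, cf. \autoref{rem: thin isom - loxo}) and its attracting/repelling pair coincides with that of $h$, whose axis lies within $20\delta$ of $Y$, this gives $\dist[X]xY\le\tfrac12\bigl(\dist[X]{h^px}x-\norm{h^p}\bigr)+\alpha+O(\delta)\le d/2+\alpha+O(\bar\delta)$, whence $\dist[\dot X]cx\le\dist[\dot X]cY+\dist[\dot X]xY\le\rho+d/2+\bar\beta$. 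If instead $\eta$ meets the rim $Y$ of $Z(Y)$, then — crucially — since $h^p$ preserves $Y$, the points $x$ and $h^px$ are equidistant from $Y$, so $\eta$ must descend from $x$ down to $Y$ and climb back up to $h^px$; its length being $\le d+O(\bar\delta)$ forces $\dist[\dot X]xY\le d/2+O(\bar\delta)$ and again $\dist[\dot X]cx\le\rho+d/2+\bar\beta$. The remaining possibility, that $\eta$ meets a cylinder $Y''\ne Y$ (hence crosses the cone $Z(Y'')$), is excluded by a displacement estimate: it would place both $\bar x$ and $\bar\gamma\bar x$ within $O(d+\rho)$ of the apex $\bar c''$, which, combined with the separation of distinct cylinders, would force $\bar\gamma$ to fix $\bar c''$, contradicting \autoref{res: sc - unique fixed cone point}. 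In every case $\dist{\bar c}{\bar x}=\dist[\dot X]cx\le d/2+\rho+\bar\beta$, which is exactly the estimate of \autoref{def: thin isom}.

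The hard part, as indicated, is the bookkeeping across the three metrics of $X$, $\dot X$ and $\bar X$ — in particular, converting the coarse statement ``$\bar\gamma$ displaces $\bar x$ by at most $d$'' into an honest bound on $\dist[X]xY$ — and controlling the geodesic $\eta$ relative to the cone points other than $\bar c$. The factor $\tfrac12$ in the conclusion is forced by the fact that $\bar\gamma$ fixes $\bar c$: near $\bar c$ it behaves like a rotation and at the rim like the un-coned loxodromic $h^p$, so any path realizing a small displacement of $\bar x$ must travel ``down to $Y$ and back up'', doubling the distance it covers; this is precisely why the cone radius $\rho$, rather than $2\rho$, appears in $(\rho+\bar\beta)$-thinness.
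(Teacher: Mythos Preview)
Your approach has a genuine gap: you attempt to bound $\fix{\bar\gamma,d}$ directly for \emph{every} $d\in\R_+$, but several of your steps only work when $d$ is small compared to $\rho$. The paper avoids this by first invoking \autoref{res: local-to-global thinness}, which reduces the problem to showing that $\fix{\bar\gamma,6\bar\delta}\subset B(\bar c,\rho+\bar\beta-2\bar\delta)$; once $d=6\bar\delta$, all of the lifting and distance comparisons become honest.

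Concretely, three places break for large $d$. First, your claim that the lifts of $[\bar c,\bar x]$ and $[\bar c,\bar\gamma\bar x]$ ``stay close'' relies on the two geodesics being close in $\bar X$; but with endpoints $d$ apart, they only fellow-travel near $\bar c$, and for large $d$ the geodesic $[\bar c,\bar x]$ may well pass close to other cone points --- the uniqueness of the cone point fixed by $\bar\gamma$ does not prevent this. In the paper's proof (with $d=6\bar\delta$) one has $\dist{\bar\gamma\bar\sigma(t)}{\bar\sigma(t)}\le 12\bar\delta$ for \emph{all} $t$, and this is what forces every cone point near the path to be fixed by $\bar\gamma$, hence equal to $\bar c$; only then can \autoref{res: sc - lifting quasi-convex} be applied to the part of the geodesic outside $B(\bar c,\rho/2)$. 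Second, your conversion ``$\dist[X]{h^px}x\le d+O(\bar\delta)$'' from a $\dot X$-bound uses the comparison~(\ref{eqn: appendix - lower bound dist dot X}), but $\mu$ is bounded by $2\rho$, so $\mu(\dist[X]{h^px}x)\le d+O(\bar\delta)$ gives no information on $\dist[X]{h^px}x$ once $d\gtrsim 2\rho$; the paper uses this only for $d=O(\bar\delta)$, where $\mu^{-1}(t)\le\pi\sinh(t/2)$ is available. Third, your exclusion of a second cylinder $Y''$ argues that $\bar x,\bar\gamma\bar x$ within $O(d+\rho)$ of $\bar c''$ would force $\bar\gamma\bar c''=\bar c''$; this needs $d+O(\bar\delta)<2\rho$, which again fails for large $d$.

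Once you insert the local-to-global reduction, your outline is close to the paper's: lift the tail of the geodesic from $\bar c$ to $\bar x$ via \autoref{res: sc - lifting quasi-convex} (after checking it stays $\gtrsim\rho$ away from all cone points), identify the lift of $\bar\gamma$ as an element $\gamma\in\stab Y=\group{h}$, project radially to $y\in X$, and use $\alpha$-thinness of $\gamma$ in $X$ to bound $d(y,Y)$; then $\dist{\bar c}{\bar x}\le\rho+d(y,Y)+O(\bar\delta)$. Your case analysis on a geodesic $\eta$ from $x$ to $h^px$ is unnecessary and, as written, incorrect (a $\dot X$-geodesic lying in $X$ need not have $X$-length equal to its $\dot X$-length).
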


\begin{proof}
	Let $\bar \gamma$ be a non-trivial element of $\stab{\bar c}$.
	In view of \autoref{res: local-to-global thinness} it suffices to prove that $\fix{\bar \gamma, 6\bar\delta}$ is contained in $B(\bar c, \rho + \bar \beta - 2\bar \delta)$.
	Let $\bar x \in \fix{\bar \gamma, 6\bar \delta}$.
	Without loss of generality we can assume that $\bar x$ does not belongs to $B(\bar c, \rho)$.
	Let $\bar \sigma \colon \intval 0a \to \bar X$ be a $(1, \bar \delta)$-quasi-geodesic from $\bar c$ to $\bar x$.
	Using the thinness of triangles we observe that $\dist{\bar \gamma\bar \sigma(t)}{\bar \sigma(t)} \leq 12\bar \delta$, for every $t \in \intval 0a$, see for instance \cite[Lemma~2.2(3)]{Coulon:2014fr}. 
	For simplicity we let $b = \rho/2$ and $\bar z = \bar \sigma(b)$.
	We write $\bar \sigma_0$ for the path $\bar \sigma$ restricted to $\intval ba$.
	Let $(H,Y)$ such that $\bar c$ is the image of the apex $c$ of $Z(Y)$.
	We fix a pre-image $z \in Z(Y)$ of $\bar z$.
	
	Since $\bar \sigma_0$ is a quasi-geodesic, it is contained in $\bar X \setminus B(\bar c, \rho/2 - \bar \delta)$.
	Observe also that $d(\bar c', \bar \sigma_0) \geq \rho - 6\bar \delta$ for every $\bar c' \in \bar{\mathcal C}\setminus\{\bar c\}$.
	Indeed $\bar \gamma$ hardly moves the points on $\bar \sigma_0$.
	By the triangle inequality, any other cone point closed to $\bar \sigma_0$ should be fixed by $\bar \gamma$, which contradicts \autoref{res: sc - unique fixed cone point}.
	We now apply \autoref{res: sc - lifting quasi-convex} with the $12\bar \delta$-neighborhood of $\bar \sigma_0$ as the set $\bar Z$.
	In particular, there exists a $(1,\dot \delta)$-quasi-geodesic $\sigma_0 \colon \intval ba \to \dot X$ starting at $z$ and lifting $\bar \sigma_0$ and a pre-image $\gamma \in \Gamma$ of $\bar \gamma$ such that $\dist[\dot X]{\gamma \sigma_0(t)}{\sigma_0(t)} \leq 12\dot \delta$, for every $t \in \intval ba$. 
	In particular, $\dist[\dot X]{\gamma z}z \leq 12\dot \delta$.
	It follows from the triangle inequality that $\gamma$ necessarily fixes $c$, thus is a non-trivial element from $\stab Y = \group h$.
	The point $x = \sigma_0(b)$ is pre-image of $\bar x$.
	We observed earlier that $d(\bar x, \bar {\mathcal C}) \geq \rho - 6\bar \delta$.
	Hence the radial projection $y$ of $x$ onto $X$ satisfies $\dist[\dot X] xy \leq 6\dot \delta$.
	Using (\ref{eqn: appendix - lower bound dist dot X}) we get
	\begin{equation*}
		\mu \left( \dist[X]{\gamma y}y \right)
		\leq \dist[\dot X]{\gamma y}y \leq \dist[\dot X]{\gamma x}x+ 2\dist xy \leq 24\dot \delta < 2\rho.
	\end{equation*}
	Hence $\dist[X]{\gamma y}y \leq D$ where $D = \pi \sinh (12\dot\delta)$, see for instance \cite[Proposition~4.4]{Coulon:2014fr}.
	Since $\gamma$ is $\alpha$-thin, $\gro{h^-}{h^+}y \leq D/2 + \alpha$.
	Hence $y$ lies in the $(D/2 + \alpha)$-neighborhood of $Y$.
	Recall that any point of $Y$ is at a distance $\rho$ from $c$ in $\dot X$.
	Since the maps $X \to \dot X$ and $\dot X \to \bar X$ are $1$-Lipschitz, we obtain with the triangle inequality that $\bar x$ belongs to $B(\bar c, \rho + \bar \beta - 2\bar \delta)$.
\end{proof} % DELTA CHECKED

\paragraph{Parabolic subgroups of $\bar \Gamma$.}

\begin{lemm}
\label{res: sc - no parabolic}
	The group $\bar \Gamma$ has no parabolic subgroup.
\end{lemm}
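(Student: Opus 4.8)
The plan is to argue by contradiction, exactly in the spirit of the proofs of Lemmas~\ref{res: sc - axis small elliptic} and \ref{res: sc - fix point set conical contained in a ball}. Suppose $\bar P$ is a non-trivial parabolic subgroup of $\bar \Gamma$ for its action on $\bar X$; pick a non-trivial $\bar \gamma \in \bar P$. Since parabolic isometries have unbounded orbits, $\bar \gamma$ is not elliptic, so in particular it does not fix a cone point (by \autoref{res: sc - fix point set conical contained in a ball}, elements of $\stab{\bar c}$ are $(\rho+\bar\beta)$-thin at $\bar c$, hence elliptic). The strategy is to lift $\bar \gamma$ to an isometry $\gamma$ of $\dot X$, and then to $X$, showing that this lift must itself be parabolic or have unbounded orbits, contradicting the hypothesis that every element of $\Gamma$ is elliptic or loxodromic and $\Gamma$ has no parabolic subgroup.

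First I would set up the lift. Because $\bar \gamma$ is parabolic, it fixes a unique point $\bar\xi \in \partial \bar X$ and $\bar\xi$ is the limit of $\bar\gamma^k \bar x_0$ for any basepoint $\bar x_0$. Choose a $(1,\bar\delta)$-quasi-geodesic ray $\bar\sigma$ from $\bar x_0$ to $\bar\xi$. A bounded-displacement argument (thinness of triangles, as in \cite[Lemma~2.2(3)]{Coulon:2014fr}) shows $\dist{\bar\gamma\bar\sigma(t)}{\bar\sigma(t)}$ is bounded along $\bar\sigma$ — this is the analogue of the estimate $\dist{\bar\gamma\bar\sigma(t)}{\bar\sigma(t)} \le 12\bar\delta$ used in \autoref{res: sc - fix point set conical contained in a ball}. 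Since $\bar\gamma$ does not fix any cone point, the triangle inequality forces $d(\bar c,\bar\sigma) $ to be large for every $\bar c \in \bar{\mathcal C}$ except possibly in a bounded initial segment; after truncating that segment I may assume $\bar\sigma$ stays uniformly far from $\bar{\mathcal C}$. Then I would invoke \autoref{res: sc - lifting quasi-convex}, applied to a uniform neighborhood of $\bar\sigma$ (which is quasi-convex and avoids the cone points), to obtain a lift $\sigma$ of $\bar\sigma$ in $\dot X$ and, by part~\ref{enu: sc - lifting quasi-convex - prestab}, a unique pre-image $\gamma \in \Gamma$ of $\bar\gamma$ that translates $\sigma$ along itself with bounded displacement. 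Projecting radially from $\dot X$ to $X$ (the ray stays in the thick part, so the radial projection moves points by at most $3\dot\delta$) and using the lower bound $\mu(\dist[X] xy) \le \dist[\dot X] xy$ from \eqref{eqn: appendix - lower bound dist dot X}, I get that $\gamma$ has bounded displacement along an infinite ray in $X$.

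Now comes the conclusion: an isometry of a hyperbolic space with bounded displacement along a quasi-geodesic ray is either elliptic or loxodromic, never parabolic in a "genuine" way — more precisely, if $\gamma$ were elliptic its orbit would be bounded, contradicting that it moves a ray's endpoint to $\bar\xi$ through unbounded translates; if it were loxodromic then $\bar\gamma = \pi(\gamma)$ would be loxodromic on $\bar X$ (the projection $f\colon \dot X \to \bar X$ being $1$-Lipschitz and quasi-isometric on the ray, the axis of $\gamma$ maps to a quasi-geodesic line fixed by $\bar\gamma$), contradicting that $\bar\gamma$ is parabolic. So $\gamma$ is a parabolic element of $\Gamma$ — but $\Gamma$ has no parabolic subgroup by assumption, and every element of $\Gamma$ is elliptic or loxodromic. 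This contradiction shows no such $\bar\gamma$, hence no parabolic $\bar P$, exists.

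The main obstacle will be making the lifting step rigorous when the ray is infinite: \autoref{res: sc - lifting quasi-convex} is stated for quasi-convex subsets, and while a quasi-geodesic ray is quasi-convex, one must check the hypothesis $\gro{\bar z}{\bar z'}{\bar c} > 13\bar\delta$ uniformly over the whole (non-compact) ray, which requires the truncation argument above and a careful use of the fact that $\bar\gamma$ fixes no apex; one must also verify that the pre-image element $\gamma$ obtained is independent of where one looks along the ray, so that it is a well-defined single isometry of $X$ with the claimed behavior. An alternative, perhaps cleaner, route would be to exhaust the ray by longer and longer quasi-geodesic segments, lift each via \autoref{res: sc - lifting quasi-convex} together with a fixed basepoint normalization, and pass to a limit along the ultrafilter $\omega$; but either way the bookkeeping of equivariance and uniformity is where the real work lies.
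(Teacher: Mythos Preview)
The paper's proof is a one-line citation of \cite[Proposition~5.25]{Coulon:2016if}, which states that every parabolic subgroup of $\bar\Gamma$ is the image of a parabolic subgroup of $\Gamma$ for its action on $X$; the standing assumption that $\Gamma$ has no parabolic subgroups then finishes immediately. Your proposal is, in effect, a sketch of how one might reprove that cited proposition in situ, by lifting along a ray to the parabolic fixed point.

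The outline is reasonable, but two points need tightening. For cone-point avoidance you need the displacement along $\bar\sigma$ to be \emph{small} (well below $2\rho$), not merely bounded; this is in fact available since $\snorm{\bar\gamma}=0$ forces $\norm{\bar\gamma}\le 8\bar\delta$ by \eqref{eqn: regular vs stable length}, so you should start the ray near an almost-minimizer and say so. More seriously, your closing trichotomy is imprecise. In the elliptic case the correct argument is that $\gamma$ elliptic in $X$ implies $\gamma$ elliptic in $\dot X$ and hence $\bar\gamma$ elliptic in $\bar X$ (both maps are $1$-Lipschitz and equivariant); your phrasing about ``moving a ray's endpoint through unbounded translates'' conflates $\gamma$ and $\bar\gamma$. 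In the loxodromic case, it is \emph{false} in general that loxodromic elements of $\Gamma$ project to loxodromic elements of $\bar\Gamma$ --- elements of $\stab Y$ for $(H,Y)\in\mathcal Q$ become elliptic --- so you must actually use that your particular $\gamma$ translates along an infinite ray in $\dot X$ which projects isometrically to $\bar\sigma$, forcing $\snorm[\bar X]{\bar\gamma}>0$ whenever $\snorm[\dot X]{\gamma}>0$. With those repairs the argument can be completed, but the paper's route via citation is considerably shorter.
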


\begin{proof}
	It follows from \cite[Proposition~5.25]{Coulon:2016if} that every parabolic subgroup of $\bar \Gamma$ is the image of a parabolic subgroup of $\Gamma$ for its action on $X$.
	However, according to our assumption, this action does not admit parabolic subgroups.
\end{proof}

\paragraph{Loxodromic subgroups of $\bar \Gamma$.}

\begin{lemm}[{\cite[Proposition~5.26]{Coulon:2016if}}]
\label{res: sc - lifting loxo sbgp}
	Let $\bar E$ be a loxodromic subgroup of $\bar \Gamma$.
	There exists a loxodromic subgroup $E$ of $\Gamma$ (for its action on $X$) such that the projection $\Gamma \onto \bar \Gamma$ induces an isomorphism from $E$ onto $\bar E$.
\end{lemm}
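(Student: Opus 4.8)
Let $\bar E$ be a loxodromic subgroup of $\bar\Gamma$ (for its action on $\bar X$). Then there exists a loxodromic subgroup $E$ of $\Gamma$ (for its action on $X$) such that the projection $\pi\colon\Gamma\onto\bar\Gamma$ induces an isomorphism from $E$ onto $\bar E$.

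The plan is to reduce the problem to lifting a single loxodromic isometry together with its "stabilizer data", and then to promote that to an isomorphism of the full elementary subgroups. Here is how I would carry it out.

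\textbf{Step 1: Lift a loxodromic element and its axis.} Pick a loxodromic element $\bar g\in\bar E$. Its cylinder $Y_{\bar g}\subset\bar X$ (the $20\bar\delta$-neighborhood of $L$-local $(1,\bar\delta)$-quasi-geodesics from $\bar g^-$ to $\bar g^+$) is strongly quasi-convex. I first want to arrange that $Y_{\bar g}$ stays away from the cone points $\bar{\mathcal C}$. If $\snorm{\bar g}$ is not too small relative to $\bar\delta$, then by Remark~\ref{rem: thin isom - loxo} $\bar g$ is $\bar\alpha$-thin with $\bar\alpha$ controlled, and one checks (via Definition~\ref{def: thin isom} and the fact that $\stab{\bar c}$ moves points of a quasi-geodesic through $\bar c$ by a definite amount, as in \autoref{res: sc - fix point set conical contained in a ball}) that $\bar g$ cannot fix any cone point; more precisely $\fix{\bar g,\norm{\bar g}+5\bar\delta}$, and hence a sub-quasi-geodesic of the axis, lies at distance $>\alpha+13\bar\delta$ from every $\bar c\in\bar{\mathcal C}$. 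If $\snorm{\bar g}$ is small, replace $\bar g$ by a suitable power $\bar g^m$ (this does not change the elementary subgroup $\bar E$, since $\bar E$ is virtually cyclic and $\bar g^m$ still generates a finite-index subgroup). Now apply \autoref{res: sc - lifting quasi-convex} to a long enough sub-quasi-geodesic $\bar Z$ of the axis of $\bar g$: it lifts to a subset $Z\subset\dot X$ on which $f\colon\dot X\to\bar X$ is an isometry, and $\pi$ induces an isomorphism $\stab Z\to\stab{\bar Z}$. Part~\ref{enu: sc - lifting quasi-convex - prestab} of that lemma gives a canonical lift $g\in\Gamma$ of $\bar g$ with $gz=z'$ for matching pairs; because $Z$ is (quasi-)geodesic and $g$ translates it, $g$ is loxodromic for the action on $X$ (its orbit of a point of $Z$ is unbounded in $\dot X$, hence in $X$ via \eqref{eqn: appendix - lower bound dist dot X} and Proposition~\ref{res: qi cone in cone-off}).

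\textbf{Step 2: Lift the whole elementary subgroup.} Let $\bar E_{\bar g}$ be the maximal elementary (virtually cyclic loxodromic) subgroup of $\bar\Gamma$ containing $\bar g$; then $\bar E\subset\bar E_{\bar g}$. Every element of $\bar E_{\bar g}$ stabilizes the pair $\{\bar g^-,\bar g^+\}$, hence coarsely stabilizes the axis, hence stabilizes a uniform neighborhood of $\bar Z$ (enlarging $\bar Z$ if necessary, which is harmless). So $\bar E_{\bar g}\subset\stab{\bar Z}$, and by \autoref{res: sc - lifting quasi-convex}\ref{enu: sc - lifting quasi-convex - stab} there is a subgroup $E_{\bar g}\subset\stab Z\subset\Gamma$ mapping isomorphically onto $\bar E_{\bar g}$. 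Restricting this isomorphism to the preimage $E$ of $\bar E$ gives an isomorphism $\pi|_E\colon E\xrightarrow{\ \sim\ }\bar E$. Finally, $E$ is loxodromic for the action on $X$: it contains the loxodromic element $g$ of Step~1, and it is not elliptic (it surjects onto the infinite group $\bar E$) and not parabolic by the standing assumption that every non-elliptic subgroup of $\Gamma$ is loxodromic cyclic — in fact $E$ is infinite cyclic. This is exactly the conclusion.

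\textbf{Main obstacle.} The delicate point is Step~1: making sure the relevant portion of the axis of (a power of) $\bar g$ avoids the cone points by a margin large enough to feed into \autoref{res: sc - lifting quasi-convex}. A loxodromic isometry of $\bar X$ with very small stable length could \emph{a priori} have an axis that repeatedly dives deep into the balls $B(\bar c,\rho)$. The resolution is that $\stab{\bar c}$ is cyclic (by \autoref{res: small cancellation}\ref{enu: small cancellation - local embedding}) and acts on $B(\bar c,\rho)$ essentially as a large-angle rotation group (Propositions~\ref{res: sc - angle cocycle} and \ref{res: sc - unique fixed cone point}), so a bi-infinite quasi-geodesic cannot pass through a cone point while being coarsely invariant under a loxodromic stabilizer; and, independently, one can always pass to a power $\bar g^m$ to make $\norm{\bar g^m}>12\bar\delta$, after which the cylinder $Y_{\bar g^m}$ is an honest quasi-geodesic and Remark~\ref{rem: thin isom - loxo} applies. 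Once the axis-avoids-cones input is secured, the rest is a routine application of the lifting lemmas already in the excerpt; indeed this is precisely the content invoked as \cite[Proposition~5.26]{Coulon:2016if}, so the argument above is really just unwinding that reference in the present geometric setup.
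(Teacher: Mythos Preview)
The paper does not prove this lemma; it is imported wholesale from \cite[Proposition~5.26]{Coulon:2016if}. Your overall strategy---lift the cylinder $Y_{\bar g}$ of a loxodromic $\bar g\in\bar E$ via \autoref{res: sc - lifting quasi-convex} and read off $E$ from the isomorphism $\stab Z\cong\stab{\bar Z}$---is the correct one and is what the cited reference does.

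Your handling of the ``main obstacle'', however, is backwards. The cylinder $Y_{\bar g}$ lies in $\fix{\bar g,\norm{\bar g}+C\bar\delta}$; if some $\bar x$ in this set satisfied $\dist{\bar x}{\bar c}<\rho-(\norm{\bar g}+C\bar\delta)/2$ for a cone point $\bar c$, the triangle inequality would give $\dist{\bar g\bar c}{\bar c}<2\rho$, forcing $\bar g\bar c=\bar c$ (cone points are $2\rho$-separated) and contradicting loxodromicity. This is exactly the computation appearing in the proof of \autoref{res: approx - thin loxodromic}, and it works when $\norm{\bar g}$ is \emph{small} relative to $\rho$---not large. Passing to a power $\bar g^m$ only increases $\norm{\bar g}$, so your suggestion to do this when $\snorm{\bar g}$ is small is counterproductive; and thinness (\autoref{rem: thin isom - loxo}) controls the Gromov product $\gro{\bar g^-}{\bar g^+}{\bar x}$, not distance to cone points, so it does not yield what you claim in the large-$\snorm{\bar g}$ case. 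A separate minor gap: your appeal to (\ref{eqn: appendix - lower bound dist dot X}) to conclude that $g$ is loxodromic on $X$ does not work as stated, since $\mu$ is bounded above by $2\rho$; the clean argument is that $g$ cannot be elliptic on $X$ (else $\bar g$ would be elliptic on $\bar X$, as $X\to\dot X\to\bar X$ is Lipschitz) and cannot be parabolic by the standing hypothesis on $\Gamma$.
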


Combining the previous statement with our assumptions, we get that every loxodromic subgroup of $\bar \Gamma$ is cyclic.

\begin{lemm}
\label{res: approx - thin loxodromic}
	Assume that there exists $\beta \in \R_+$ such that every loxodromic element of $\Gamma$ is $\beta$-thin (for its action on $X$).
	Then every loxodromic element of $\bar \Gamma$ is $\bar \beta$-thin, where $\bar \beta = \beta + 10\pi \sinh (100\bar \delta)$.
\end{lemm}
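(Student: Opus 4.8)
The plan is to follow the same strategy used for elliptic isometries in \autoref{res: sc - axis small elliptic}, and in fact the proof should be considerably easier because loxodromic elements of $\bar \Gamma$ lift canonically. First I would invoke \autoref{res: sc - lifting loxo sbgp}: given a loxodromic element $\bar \gamma \in \bar \Gamma$, the loxodromic subgroup $\bar E = \group{\bar \gamma}$ is the isomorphic image of a loxodromic subgroup $E$ of $\Gamma$, so there is a loxodromic element $\gamma \in \Gamma$ with $\pi(\gamma) = \bar \gamma$. By hypothesis $\gamma$ is $\beta$-thin for its action on $X$.

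Next I would locate the relevant fixed-point set in $\bar X$. Set $\bar Z = \fix{\bar \gamma, \norm{\bar \gamma} + 5\bar\delta}$. I would first argue that $\bar Z$ stays uniformly far from the cone points $\bar{\mathcal C}$: since $\bar \gamma$ is loxodromic it cannot fix a cone point, and more quantitatively, if some $\bar c \in \bar{\mathcal C}$ were within a bounded distance of $\bar Z$, the triangle inequality together with the fact that points of $\bar Z$ are moved a bounded amount by $\bar\gamma$ would force $\bar\gamma$ to nearly fix $\bar c$; combined with \autoref{res: sc - unique fixed cone point} and the cyclicity of $\stab{\bar c}$ this gives a contradiction. (Here one also uses that $\norm{\bar\gamma}$ is controlled by $\norm\gamma$, since the projection $\dot X \to \bar X$ is $1$-Lipschitz.) Hence $d(\bar c, \bar Z)$ exceeds the quasiconvexity constant plus $13\bar\delta$ for every $\bar c$, so \autoref{res: sc - lifting quasi-convex} applies: there is a subset $Z$ of $\dot X$ on which $f\colon \dot X \to \bar X$ restricts to an isometry onto $\bar Z$, and the lifted stabilizer is $E = \group\gamma$.

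Then I would transfer the thinness estimate from $X$ to $\bar X$ pointwise, exactly as in the elliptic case. Fix $\bar x \in \bar Z$, take its lift $z \in Z$, and let $y = p(z) \in X$ be its radial projection, so $\dist[\dot X]{z}{y} \leq 3\dot\delta$. Using the lower bound (\ref{eqn: appendix - lower bound dist dot X}) for $\distV[\dot X]$ in terms of $\distV[X]$ via the concave function $\mu$, the bound $\dist[\dot X]{\gamma z}{z} = \dist{\bar\gamma \bar x}{\bar x}$, and the small-cancellation inequalities ($\rho$ large compared to $\bar\delta$), one controls $\dist[X]{\gamma y}{y}$ and hence, since $\gamma$ is $\beta$-thin, controls $\gro{\gamma^-}{\gamma^+}{y}$. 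Pushing this back to $\bar X$ through the $1$-Lipschitz maps $X \to \dot X \to \bar X$ and comparing $\gamma^\pm$ with $\bar\gamma^\pm$ (which are images of $\gamma^\pm$ under the boundary extension of $\dot X \setminus \mathcal C \to \bar X \setminus \bar{\mathcal C}$, itself a local isometry and covering by \autoref{res: small cancellation}~\ref{enu: small cancellation - translation kernel}), one obtains that $\fix{\bar\gamma, \norm{\bar\gamma} + 5\bar\delta}$ lies in a bounded neighborhood of an $L$-local $(1,\bar\delta)$-quasi-geodesic from $\bar\gamma^-$ to $\bar\gamma^+$. Finally I would conclude via the local-to-global statement \autoref{res: local-to-global thinness}(2), which upgrades this to $\bar\beta$-thinness with $\bar\beta$ of the stated form. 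The main obstacle I anticipate is the bookkeeping in the last step: making sure the $\mu$-inverse estimate $\mu^{-1}(t) \leq \pi\sinh(\cdot)$ for the bounded values of $t$ occurring here is applied with the right constants so that the final $\bar\beta = \beta + 10\pi\sinh(100\bar\delta)$ comes out, and correctly identifying $\bar\gamma^\pm$ with the images of $\gamma^\pm$ — but both of these are routine once the lifting picture from \autoref{res: sc - lifting quasi-convex} is in place.
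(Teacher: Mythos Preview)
Your overall architecture matches the paper's: lift $\bar\gamma$ to a loxodromic $\gamma \in \Gamma$, lift the relevant almost-fixed set via \autoref{res: sc - lifting quasi-convex}, use $\beta$-thinness of $\gamma$ in $X$, and finish with \autoref{res: local-to-global thinness}. However, two steps in your outline are not routine and constitute genuine gaps.

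First, you never bound $\norm{\bar\gamma}$. Your set $\bar Z = \fix{\bar\gamma, \norm{\bar\gamma} + 5\bar\delta}$ can approach cone points if $\norm{\bar\gamma}$ is comparable to $\rho$, and the inequality $\norm{\bar\gamma} \leq \norm\gamma$ that you invoke does not help (nothing bounds $\norm\gamma$). The paper handles this by a preliminary case split: if $\snorm{\bar\gamma} > \bar\delta$ then \autoref{rem: thin isom - loxo} already gives thinness; otherwise $\norm{\bar\gamma} \leq 10\bar\delta$ and one works with $\bar Z = \fix{\bar\gamma,100\bar\delta}$.

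Second, and more seriously, the sentence ``pushing this back to $\bar X$ through the $1$-Lipschitz maps'' hides the actual difficulty. You obtain a bound on $\gro{\gamma^-}{\gamma^+}{y}$ in $X$, and you want a bound on $\gro{\bar\gamma^-}{\bar\gamma^+}{\bar z}$ in $\bar X$. A $1$-Lipschitz map does \emph{not} control Gromov products in the required direction, and there is no boundary map $\partial X \to \partial \bar X$ identifying $\gamma^\pm$ with $\bar\gamma^\pm$ (the map $X \to \dot X$ is not a quasi-isometry). The paper's proof replaces $\gamma^\pm$ by $\gamma^{\pm m}y$, locates the centre $x$ of the resulting triangle in $\dot X$, shows $x$ is close to a point $w \in X$ (using that $\fix{\gamma,100\dot\delta}$ avoids cone points), and then invokes a specific comparison result (Proposition~2.16 of \cite{Coulon:2018ac}) to convert small Gromov products at $w$ in $\dot X$ into small Gromov products in $X$. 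Only after this detour can one bound $\dist[X]{y}{w}$, feed it back via $\dist[\dot X]{y}{w}\leq\dist[X]{y}{w}$, and finally pass to $\bar X$ via the isometry $Z \to \bar Z$. This centre-of-triangle argument is the technical core of the proof and is missing from your outline.
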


\begin{proof}	
	Let $\bar \gamma$ be a loxodromic element of $\bar \Gamma$.
	If $\snorm{\bar \gamma} >  \bar \delta$, then $\bar \gamma$ is $200 \bar \delta$-thin, by \autoref{rem: thin isom - loxo}.
	Therefore we can assume that $\snorm{\bar \gamma}\leq  \bar \delta$, hence $\norm{\bar \gamma} \leq 10 \bar \delta$.
	For simplicity we let $\bar Z = \fix{\bar \gamma, 100\bar \delta}$.
	In addition we denote by $\bar \sigma \colon \R \to \bar X$ an $L$-local $(1, \bar \delta)$-quasi-geodesic from $\gamma^-$ to $\gamma^+$, with $L > 12\bar \delta$.
	The path $\bar \sigma$ is contained in $\bar Z$, see for instance \cite[Lemma~2.9]{Coulon:2018vp}.
	Compare also with \cite[Proposition~2.28]{Coulon:2014fr}.
	According to \autoref{res: local-to-global thinness}, it suffices to prove that $\bar Z$ lies in the $(\bar \beta - 6\bar\delta)$-neighborhood of $\bar \sigma$.
	
	Being loxodromic $\bar \gamma$ does not fix any apex in $\bar {\mathcal C}$.
	It follows from the triangle inequality that $d(\bar c, \bar Z) \geq \rho  - 50\bar \delta$, for every $\bar c \in \bar{\mathcal C}$.
	According to \autoref{res: sc - lifting quasi-convex}, there exists a subset $Z$ of $\dot X$ such that the map $f \colon \dot X \to \bar X$ induces an isometry from $Z$ onto $\bar Z$.
	Moreover the projection $\pi \colon \Gamma \to \bar \Gamma$ induces an isomorphism from $\stab Z$ onto $\stab {\bar Z}$.
	We denote by $\gamma \in \Gamma$ the pre-image of $\bar \gamma$ in $\stab Z$.
	Since $\bar \gamma$ is loxodromic, so is $\gamma$.
	It follows from the construction that $Z$ is contained in $\fix{\gamma, 100 \dot \delta}$.
	
	Let $\bar z \in \bar Z$ and $z$ its pre-image in $Z$.
	Since $\bar Z$ stays away from the cone points, the radial projection $y$ of $z$ is such that $\dist[\dot X]yz \leq 50 \dot \delta$.
	It follows that 
	\begin{equation*}
		\mu\left(\dist[X]{\gamma y}y\right)
		\leq \dist[\dot X]{\gamma y}y 
		\leq \dist[\dot X]{\gamma z}z +  100\dot \delta
		\leq \dist{\bar \gamma\bar z}{\bar z} +  100\bar \delta
		\leq 200\bar \delta <2\rho.
	\end{equation*}
	Thus $\dist[X]{\gamma y}y \leq \pi \sinh(100\bar \delta)$.
	Since $\gamma$ is $\beta$-thin we get
	\begin{equation*}
		\gro{\gamma^-}{\gamma^+}y \leq \beta + D/2, \quad \text{where} \quad D = \pi \sinh(100\bar \delta).
	\end{equation*}
	(the Gromov product is computed in $X$).
	In particular, there exists $m_0 \in \N$ such that for every integer $m \geq m_0$, we have  
	\begin{equation*}
		\gro{\gamma^{-m}y}{\gamma^my}y \leq \beta + D/2 +  3\delta,
	\end{equation*}		
	Geodesic triangles in a hyperbolic space are thin.
	We do not assumed here that $\dot X$ is geodesic though.
	Nevertheless it is a length space.
	Hence there is a point $x \in \dot X$ such that 
	\begin{equation*}
		\max\left\{\gro y{\gamma^{-m}y}x, \gro{\gamma^{-m}y}{\gamma^m y}x, \gro{\gamma^m y}yx \right\} \leq 2\dot \delta.
	\end{equation*}
	where all Gromov products are computed in $\dot X$.
	
	We claim that there exists a point $w \in X$ such that $\dist[\dot X] wx \leq 151\dot \delta$.
	Combining the inequality above with the triangle inequality, we have indeed $ \gro{\gamma^m z}zx \leq 102 \dot \delta$.
	The set $\fix{\gamma, 100 \dot \delta}$ which contains $z$ and $\gamma^mz$ is $8\dot \delta$-quasi-convex.
	Hence there exists $s \in \fix{\dot \gamma, 100\dot \delta}$ such that $\dist[\dot X] xs \leq 111\dot \delta$.
	Note that $\gamma$ cannot fix a cone point from $\mathcal C \subset \dot X$.
	Consequently $s$ necessarily lies in the $50\dot \delta$-neighborhood of $X$.
	The radial projection $w$ of $s$, satisfies our claim.
	Applying the triangle inequality, we get
	\begin{equation*}
		\max\left\{\gro y{\gamma^{-m}y}w, \gro{\gamma^{-m}y}{\gamma^m y}w, \gro{\gamma^m y}yw \right\} \leq 153\dot \delta
	\end{equation*}
	(the Gromov products are still computed in $\dot X$).
	It follows that the same Gromov products, computed in $X$ this time, are bounded above by 
	\begin{equation*}
		d = \frac {\sinh \rho}{\sinh(\rho - 157\dot \delta)} \ \pi \sinh ( 83\dot \delta)
	\end{equation*}
	see \cite[Proposition~2.16]{Coulon:2018ac}.
	In particular, $\dist[X]yw \leq \gro{\gamma^{-m}y}{\gamma^my}y + 2d$ where the Gromov product in computed in $X$. 
	Hence $\dist[X]yw \leq \beta + D/2 +  2d +  3\delta$.
		
	Using the triangle inequality, we get an upper bound for the following Gromov product computed in $\dot X$
	\begin{eqnarray*}
		\gro{\gamma^{-m}y}{\gamma^my}y
		\leq
		\dist[\dot X]yx + 2\dot \delta
		& \leq & \dist[\dot X] yw + 153\dot\delta \\
		& \leq & \dist[X] yw + 153\dot\delta \\
		& \leq & \beta + D/2 +  2d +  154\dot\delta.
	\end{eqnarray*}
	Applying one more time the triangle inequality we get (in $\dot X$)
	\begin{equation*}
		\gro{\gamma^{-m}z}{\gamma^mz}z \leq \gro{\gamma^{-m}y}{\gamma^my}y + 150\dot \delta \leq \beta + D/2 +  2d +  304\dot \delta.
	\end{equation*}
	Since $f \colon \dot X \to \bar X$ induces an isometry from $Z$ onto $\bar Z$, we get that for every $m \geq m_0$, 
	\begin{equation*}
		\gro{\bar \gamma^{-m}\bar z}{\bar \gamma^m\bar z}{\bar z} 
		\leq \beta + D/2 +  2d +  304\bar \delta.
	\end{equation*}
	Consequently
	\begin{equation*}
		\gro{\bar \gamma^-}{\bar \gamma^+}{\bar z} \leq  \beta + D/2 +  2d +  304\bar \delta.
	\end{equation*}
	Hence $\bar z$ lies in the $M$-neighborhood of $\bar \sigma$ where
	\begin{equation*}
		M =  \beta + D/2 +  2d +  311\bar \delta 
	\end{equation*}
	Recall that we chose $\rho \geq \rho_0$ with $\tanh(\rho_0) \geq 1/2$.
	It follows that $d \leq 4 \pi \sinh(83 \dot \delta)$.
	Hence $\bar z$ belongs to the $(\bar \beta - 6\bar\delta)$-neighborhood of $\bar \sigma$.
	This holds for every $\bar z \in \bar Z$, which completes the proof.
\end{proof} % CHECKED (kinda)

%------------------------------------------------------------------------------------
%
\subsubsection{The action of $\bar \Gamma$}
%
%------------------------------------------------------------------------------------

In the previous section we investigated the isometries of $\bar X$.
We now briefly review some properties of the action of $\bar \Gamma$ on $\bar X$.

\begin{prop}[{\cite[Proposition~5.15]{Coulon:2016if}}]
\label{res: approx - non elem action}
	The action of $\bar \Gamma$ on $\bar X$ is non-elementary.
\end{prop}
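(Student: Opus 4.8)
The statement to prove is \autoref{res: approx - non elem action}: the action of $\bar \Gamma$ on $\bar X$ is non-elementary. The plan is to deduce this from the corresponding property of the action of $\Gamma$ on $X$, which is part of the standing hypotheses of this section, together with the small cancellation theorem (\autoref{res: small cancellation}) and the lifting results for loxodromic subgroups established just above (\autoref{res: sc - lifting loxo sbgp}).

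First I would recall that an action is non-elementary precisely when the limit set contains at least three points, equivalently when the group contains two loxodromic isometries with disjoint pairs of fixed points (i.e. generating a non-elementary subgroup). So the goal is to produce such a pair in $\bar \Gamma$ acting on $\bar X$. Since $\Gamma$ acts non-elementarily on $X$, I can choose two loxodromic elements $\gamma_1, \gamma_2 \in \Gamma$ whose fixed-point sets in $\partial X$ are pairwise disjoint, and such that moreover $\gamma_1$ and $\gamma_2$ have large stable translation length on $X$ (this is possible because $\Gamma$ is non-elementary: one can pass to high powers, or use a ping-pong argument, to arrange that the translation lengths exceed any prescribed threshold). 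I would also arrange that the axes of $\gamma_1$ and $\gamma_2$ have small overlap with all the quasi-convex subsets $Y$ appearing in $\mathcal Q$; this can be done by choosing the $\gamma_i$ to be generic hyperbolic isometries whose axes fellow-travel a piece of each $Y$ only over a bounded length (the small cancellation parameter $\Delta(\mathcal Q, X)$ controls overlaps between distinct $Y$'s, and a ping-pong/genericity argument controls overlap of a fixed axis with the $Y$'s). The key point is that under these genericity conditions the quasi-geodesic axes of $\gamma_1$ and $\gamma_2$ in $X$ stay away from the cone points once pushed into $\bar X$, so by \autoref{res: sc - lifting quasi-convex} (applied in the reverse direction, or rather by \autoref{res: small cancellation}~\ref{enu: small cancellation - local isom} combined with the stability of quasi-geodesics) the images $\bar \gamma_1 = \pi(\gamma_1)$ and $\bar \gamma_2 = \pi(\gamma_2)$ remain loxodromic on $\bar X$, with translation lengths comparable to those of $\gamma_1, \gamma_2$, and their axes in $\bar X$ are the projections of the axes in $X$.

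Next I would check that $\bar \gamma_1$ and $\bar \gamma_2$ generate a non-elementary subgroup of $\bar \Gamma$ acting on $\bar X$. Suppose not: then $\group{\bar \gamma_1, \bar \gamma_2}$ would be elementary, hence (being non-parabolic by \autoref{res: sc - no parabolic}, since it contains a loxodromic element) it would be a loxodromic subgroup of $\bar \Gamma$. By \autoref{res: sc - lifting loxo sbgp} there is a loxodromic subgroup $E$ of $\Gamma$ acting on $X$ such that $\pi$ restricts to an isomorphism $E \to \group{\bar \gamma_1, \bar \gamma_2}$. Under our genericity choices, the elements of $E$ projecting to $\bar \gamma_1$ and $\bar \gamma_2$ are forced to be $\gamma_1 k_1$ and $\gamma_2 k_2$ for $k_1, k_2 \in K$; but a displacement/length computation in $\dot X$ (using \autoref{res: small cancellation}~\ref{enu: small cancellation - translation kernel}, which shows nontrivial elements of $K$ move points far) together with the fact that $\gamma_1, \gamma_2$ have disjoint axes of large length will show that no single cyclic loxodromic subgroup of $\Gamma$ can contain preimages of both $\bar \gamma_1$ and $\bar \gamma_2$ — this contradicts $E$ being cyclic (every loxodromic subgroup of $\Gamma$ is cyclic by hypothesis) with $E \cong \group{\bar\gamma_1,\bar\gamma_2}$ containing two loxodromics with distinct axes. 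Alternatively, and perhaps more cleanly, I would argue directly: the projections to $\bar X$ of the axes of $\gamma_1$ and $\gamma_2$ are quasi-geodesics that diverge, so $\bar \gamma_1$ and $\bar \gamma_2$ have distinct fixed points in $\partial \bar X$, and a ping-pong argument on $\bar X$ — valid because $\bar X$ is $\bar\delta$-hyperbolic with $\bar\delta$ uniformly bounded and the translation lengths are large — produces a free subsemigroup, hence the limit set of $\bar\Gamma$ has infinitely many points.

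I expect the main obstacle to be the bookkeeping needed to guarantee that the chosen loxodromic elements $\gamma_1, \gamma_2$ project to loxodromic elements of $\bar \Gamma$ with divergent axes in $\bar X$: one must control the interaction of their axes with the family $\mathcal Q$ of cones, and this is where the small cancellation hypotheses ($\Delta(\mathcal Q,X) \le \Delta_0$, $T(\mathcal Q, X) \ge 10\pi\sinh\rho$) enter, via the lifting lemmas \autoref{res: sc - lifting quasi-convex} and \autoref{res: quotient map isom around cone point}. Since this is exactly the kind of argument carried out in \cite[Section~5]{Coulon:2016if}, the cleanest route is to invoke \autoref{res: sc - lifting loxo sbgp} and \autoref{res: sc - no parabolic} to reduce the non-elementariness of $\bar\Gamma\curvearrowright\bar X$ to a statement about loxodromic subgroups of $\Gamma\curvearrowright X$, and then derive a contradiction from the assumption that all non-elliptic subgroups of $\Gamma$ are cyclic: a cyclic subgroup cannot surject onto a group generated by two loxodromic isometries of $\bar X$ with distinct axes, so $\bar\Gamma$ cannot be elementary.
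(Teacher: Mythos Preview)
The paper gives no proof of this proposition at all: it is simply quoted from \cite[Proposition~5.15]{Coulon:2016if}, so there is nothing in the present paper to compare your argument against.

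Your overall strategy --- produce two independent loxodromic elements of $\Gamma$ and show their images remain independent loxodromics in $\bar\Gamma$ --- is the natural one and is indeed what underlies the cited reference. However, several steps in your sketch are not justified and would require real work. First, the claim that one can choose $\gamma_1,\gamma_2$ ``generic'' so that their axes have small overlap with every $Y$ in $\mathcal Q$ is not established; the parameter $\Delta(\mathcal Q,X)$ only controls overlaps between distinct $Y$'s, not between an arbitrary axis and the $Y$'s, and a priori the axis of $\gamma_i$ could fellow-travel some $Y$ for arbitrarily long (indeed will, if $\gamma_i \in \stab Y$). Second, the assertion that projected axes are quasi-geodesics in $\bar X$ is not automatic: the map $\dot X \to \bar X$ is only a local isometry away from cone points, so you genuinely need the axes to avoid the cones, which is exactly the unproved genericity. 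Third, your contradiction argument via \autoref{res: sc - lifting loxo sbgp} has a gap: you write that ``no single cyclic loxodromic subgroup of $\Gamma$ can contain preimages of both $\bar\gamma_1$ and $\bar\gamma_2$'', but the preimages in question are $\gamma_1 k_1,\gamma_2 k_2$ with $k_i\in K$ arbitrary, and you have not shown these cannot lie on a common axis --- the ``displacement/length computation'' you allude to is precisely the missing content.

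A cleaner route, closer to what the reference does, is to argue by cases on what ``elementary'' would mean for $\bar\Gamma$: rule out parabolic by \autoref{res: sc - no parabolic}; if $\bar\Gamma$ were elliptic it would fix a point of $\bar X$, which one lifts via \autoref{res: small cancellation}\ref{enu: small cancellation - local isom} or \autoref{res: lifting elliptic subgroups} to contradict non-elementarity of $\Gamma$; if $\bar\Gamma$ were loxodromic then by \autoref{res: sc - lifting loxo sbgp} it is isomorphic to a cyclic subgroup of $\Gamma$, and one derives a contradiction from $\pi$ being surjective together with the structure of $K$. This avoids having to produce explicit ``generic'' loxodromics.
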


A careful analysis of the geometry of $\bar X$ allows us to control the invariants $A(\bar \Gamma, \bar X)$, $\nu(\bar \Gamma, \bar X)$, and $\inj[\bar X]{\bar \Gamma}$.

\begin{prop}[{\cite[Corollary~5.32]{Coulon:2016if}}]
\label{res: approx - inj radius}
	Let $\ell$ be the infimum over the stable translation lengths (measured in $X$) of loxodromic elements of $\Gamma$ which do not belong to $\stab Y$ for some $(H,Y) \in \mathcal Q$.
	Then $\inj[\bar X]{\bar \Gamma} \geq \min\{ \kappa \ell, \bar \delta\}$ where $\kappa = \bar \delta/\pi \sinh(26 \bar \delta)$.
\end{prop}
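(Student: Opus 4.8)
The quantity $\inj[\bar X]{\bar \Gamma}$ is the infimum of $\snorm[\bar X]{\bar \gamma}$ over all loxodromic $\bar \gamma \in \bar \Gamma$, so it suffices to fix such a $\bar \gamma$ and prove $\snorm[\bar X]{\bar \gamma} \geq \min\{\kappa \ell, \bar \delta\}$. If $\snorm[\bar X]{\bar \gamma} \geq \bar \delta$ there is nothing to prove, so I would assume $\snorm[\bar X]{\bar \gamma} < \bar \delta$, whence $\norm[\bar X]{\bar \gamma} < 9\bar \delta$ by (\ref{eqn: regular vs stable length}). The first step is to lift the axis of $\bar \gamma$ to the cone-off. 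Since $\bar \gamma$ is loxodromic its cylinder $Y_{\bar \gamma}$ is strongly quasi-convex, and it avoids the cone points: if some $\bar c \in \bar{\mathcal C}$ were within $\rho/2$ of a point $\bar z$ of $Y_{\bar \gamma}$, then $\dist{\bar \gamma \bar c}{\bar c} \leq 2\dist{\bar z}{\bar c} + \dist{\bar \gamma \bar z}{\bar z} < \rho + O(\bar \delta) < 2\rho$ (using $\rho \geq \rho_0 \gg \bar \delta$), which forces $\bar \gamma \bar c = \bar c$ since distinct apices are $2\rho$-separated — impossible for a loxodromic isometry. Hence $Y_{\bar \gamma}$ stays far from $\bar{\mathcal C}$ and \autoref{res: sc - lifting quasi-convex} applies: there is a subset $\dot Y \subset \dot X$ on which $f \colon \dot X \to \bar X$ restricts to an isometry onto $Y_{\bar \gamma}$, together with a pre-image $\dot \gamma \in \Gamma$ of $\bar \gamma$ preserving $\dot Y$ with $f$ being $\dot \gamma$-equivariant on $\dot Y$. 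Iterating the equivariance on a point $z \in \dot Y$ gives $\dist[\dot X]{\dot \gamma^m z}{z} = \dist[\bar X]{\bar \gamma^m \bar z}{\bar z}$ for all $m$, and therefore $\snorm[\dot X]{\dot \gamma} = \snorm[\bar X]{\bar \gamma}$.

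Next I would observe that $\dot \gamma$ is loxodromic in $X$ itself: it has positive $\dot X$-stable length so it is not elliptic, and $\Gamma$ has no parabolic subgroups under our standing assumptions. Moreover the primitive root $h$ of $\dot \gamma$ in $\Gamma$ cannot generate $\stab Y$ for any $(H,Y) \in \mathcal Q$: otherwise $H = \group{h^n}$ lies in $K = \ker(\Gamma \onto \bar \Gamma)$, so $\bar \gamma$, being a power of $\bar h$ and $\bar h$ having order dividing $n$, would have finite order — contradicting that $\bar \gamma$ is loxodromic. By the definition of $\ell$ this yields $\snorm[X]{\dot \gamma} \geq \snorm[X]{h} \geq \ell$.

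The last and hardest step is the quantitative comparison $\snorm[\dot X]{\dot \gamma} \geq \min\{\kappa \snorm[X]{\dot \gamma},\ \bar \delta\}$ between the translation lengths of $\dot \gamma$ in $\dot X$ and in $X$. This is where the small cancellation geometry does the real work: because $\group{\dot \gamma}$ is not conjugate into any $\stab Y$, the acylindricity of the action of $\Gamma$ on $X$ (controlled through $\nu(\Gamma,X)$ and $A(\Gamma,X)$, since otherwise $\dot\gamma$ and the relevant relator generator would commute and $\dot\gamma$ would lie in $\stab Y$) bounds how long the $X$-axis of $\dot \gamma$ can fellow-travel any single $Y$, while $\Delta(\mathcal Q,X) \leq \Delta_0$ bounds the overlaps of distinct $Y$'s. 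Consequently a $\dot \delta$-geodesic of $\dot X$ from $z$ to $\dot \gamma^N z$, for $z$ on the axis and $N$ large, cannot be shortcut through the cones by more than a controlled proportion; passing to its radial shadow in $X$ and using the inequality $\mu(\dist[X]{\cdot}{\cdot}) \leq \dist[\dot X]{\cdot}{\cdot}$ from (\ref{eqn: appendix - lower bound dist dot X}) together with the quasi-isometric embedding $Z(Y) \to \dot X$ of \autoref{res: qi cone in cone-off} forces $\dist[\dot X]{\dot \gamma^N z}{z} \geq \kappa\, \dist[X]{\dot \gamma^N z}{z}$ when the latter is small and $\geq N\bar \delta$ otherwise. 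Dividing by $N$ and letting $N \to \infty$ gives the comparison, and combining the three steps,
\[
	\snorm[\bar X]{\bar \gamma} = \snorm[\dot X]{\dot \gamma} \geq \min\left\{\kappa \snorm[X]{\dot \gamma},\ \bar \delta\right\} \geq \min\{\kappa \ell, \bar \delta\},
\]
as desired. The main obstacle is precisely this cone-off estimate — essentially a Greendlinger-type statement bounding how much the cone-off can shrink translation lengths — and the uniform thinness of loxodromic isometries of $\bar X$ (\autoref{res: approx - thin loxodromic}) and of $X$ would be the tool keeping the shadowing argument uniform in the triple $(\Gamma, X, \mathcal Q)$.
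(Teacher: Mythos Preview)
The paper gives no proof of this proposition: it is imported verbatim from \cite[Corollary~5.32]{Coulon:2016if} with no accompanying argument, so there is no in-paper proof to compare your sketch against.

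As for the sketch itself: your first two steps --- lifting the cylinder of $\bar\gamma$ via \autoref{res: sc - lifting quasi-convex}, and showing that the lift $\dot\gamma$ is loxodromic in $X$ and does not belong to any $\stab Y$ --- are correct and follow exactly the template used elsewhere in this appendix (compare the opening of the proof of \autoref{res: approx - thin loxodromic}). Your third step, however, is the entire substance of the cited corollary, and what you have written is not a proof but a list of ingredients you hope will assemble into one. You never produce the specific constant $\kappa = \bar\delta/\pi\sinh(26\bar\delta)$; you do not explain how the additive $O(\bar\delta)$ errors incurred when passing between a point on the $\dot X$-axis and its radial projection in $X$ are absorbed when $\snorm[\dot X]{\dot\gamma}$ is itself of order $\bar\delta$; and your suggested route through $\Delta(\mathcal Q,X)$ and acylindricity is a detour --- the comparison in \cite{Coulon:2016if} goes through the metric estimate (\ref{eqn: appendix - lower bound dist dot X}) and the inversion of $\mu$ (as in \cite[Proposition~4.4]{Coulon:2014fr}), not through overlap bounds between the sets $Y$. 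Finally, the appeal to \autoref{res: approx - thin loxodromic} at the end is misplaced: that lemma transfers \emph{thinness} from $X$ to $\bar X$ and has nothing to do with the translation-length comparison you need.
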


\begin{prop}[{\cite[Proposition~5.28]{Coulon:2016if}}]
\label{res: approx - nu inv}
	The $\nu$-invariant is $\nu(\bar \Gamma, \bar X) = 1$.
\end{prop}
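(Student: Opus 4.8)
The statement has two halves: $\nu(\bar\Gamma,\bar X)\geq 1$ and $\nu(\bar\Gamma,\bar X)\leq 1$. The first is immediate from \autoref{res: approx - non elem action}: choosing a loxodromic $\bar\tau$ and some $\bar\gamma$ with $\langle\bar\gamma,\bar\tau\rangle$ non-elementary, the single element $\bar\gamma$ generates an elementary (cyclic) subgroup while $\bar\gamma$ and $\bar\tau$ do not, so the condition defining the $\nu$-invariant fails for $m=0$. The real task is the upper bound: given $\bar\tau$ loxodromic and $\langle\bar\gamma,\bar\tau\bar\gamma\bar\tau^{-1}\rangle$ elementary, show that $\langle\bar\gamma,\bar\tau\rangle$ is elementary. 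We may assume $\bar\gamma\neq 1$, and we distinguish cases according to whether $\bar\gamma$ is loxodromic or elliptic (there are no parabolics, by \autoref{res: sc - no parabolic}).

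Suppose first that $\bar\gamma$ is loxodromic. Then $\bar\tau\bar\gamma\bar\tau^{-1}$ is loxodromic too, and $H:=\langle\bar\gamma,\bar\tau\bar\gamma\bar\tau^{-1}\rangle$ is an elementary subgroup containing a loxodromic element, hence virtually cyclic; by \autoref{res: sc - lifting loxo sbgp} it is in fact infinite cyclic, so its limit set is a two-point subset of $\partial\bar X$, equal both to $\{\bar\gamma^-,\bar\gamma^+\}$ and to $\{(\bar\tau\bar\gamma\bar\tau^{-1})^-,(\bar\tau\bar\gamma\bar\tau^{-1})^+\}=\bar\tau\cdot\{\bar\gamma^-,\bar\gamma^+\}$. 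Thus $\bar\tau$ stabilizes the pair $\{\bar\gamma^-,\bar\gamma^+\}$, so $\langle\bar\gamma,\bar\tau\rangle$ has a limit set of at most two points and is elementary. Equivalently, $\bar\tau$ lies in the maximal loxodromic subgroup $\mathrm{Stab}(\{\bar\gamma^{-},\bar\gamma^{+}\})$, which already contains $\bar\gamma$.

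Suppose now that $\bar\gamma$ is elliptic. If $\bar\gamma$ fixes a cone point $\bar c\in\bar{\mathcal C}$, then it fixes no other cone point (\autoref{res: sc - unique fixed cone point}); since $\bar\gamma\in H$ and $H$ is elementary and elliptic, the dichotomy of \autoref{res: lifting elliptic subgroups} forces $H\subseteq\stab{\bar c}$. In particular $\bar\tau\bar\gamma\bar\tau^{-1}\in\stab{\bar c}$, so \autoref{res: sc - unique fixed cone point consequence} forces $\bar\tau$ to fix $\bar c$ — impossible, as $\stab{\bar c}$ is finite and $\bar\tau$ is loxodromic; so this sub-case is vacuous. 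If instead $\bar\gamma$ is elliptic and fixes no cone point, it is $\bar\beta$-thin (\autoref{res: sc - axis small elliptic}); by the same argument $H$ is elliptic (a loxodromic elementary subgroup, being infinite cyclic, is torsion-free and has all non-trivial elements loxodromic), and by \autoref{res: lifting elliptic subgroups} it lifts isomorphically to an elliptic subgroup $E=\langle\gamma,\gamma_1\rangle\subseteq\Gamma$, where $\gamma,\gamma_1$ are the unique lifts in $E$ of $\bar\gamma$ and $\bar\tau\bar\gamma\bar\tau^{-1}$. Lifting $\bar\tau$ to a loxodromic $\tau\in\Gamma$ (\autoref{res: sc - lifting loxo sbgp}), one has $\pi(\tau\gamma\tau^{-1})=\pi(\gamma_1)$; one then analyses the configuration of the fixed-point sets of $\gamma$, $\gamma_1$, $\tau\gamma\tau^{-1}$ and $\tau$ in $X$ using the structure of elliptic subgroups of $\Gamma$ (trivial when $\Gamma$ is a torsion-free hyperbolic group, which already forces $\bar\gamma=1$, a contradiction; malnormally embedded in a free factor when $\Gamma$ is a free product), together with the fact that $K$ acts freely on $\dot X\setminus\mathcal C$ (\autoref{res: small cancellation}) and the quasi-convex-lifting lemma \autoref{res: sc - lifting quasi-convex}; this yields that $\langle\bar\gamma,\bar\tau\rangle$ is elliptic, hence elementary, and completes the proof that $\nu(\bar\Gamma,\bar X)=1$.

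The main obstacle is precisely this last sub-case (elliptic $\bar\gamma$ not fixing a cone point, which occurs only when $\Gamma$ is a non-trivial free product): the projection $\pi\colon\Gamma\onto\bar\Gamma$ does not send conjugates in $\bar\Gamma$ to conjugates in $\Gamma$, so one cannot simply transfer the $\nu$-hypothesis downstairs; one must instead lift the relevant invariant and fixed-point sets together with their stabilizers via \autoref{res: sc - lifting quasi-convex}, carefully tracking how the kernel $K$ meets them. Every other case reduces cleanly to the boundary dynamics of loxodromic isometries or to the cone-point bookkeeping supplied by \autoref{res: sc - unique fixed cone point}, \autoref{res: sc - unique fixed cone point consequence}, \autoref{res: sc - axis small elliptic} and \autoref{res: sc - fix point set conical contained in a ball}; it is exactly this cone-off geometry — finer than that of a bare Cayley graph — that lets us pin the invariant to the optimal value $1$ rather than merely a finite bound.
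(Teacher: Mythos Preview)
The paper does not prove this proposition; it is simply cited from \cite[Proposition~5.28]{Coulon:2016if}. So there is no in-paper argument to compare against, and the question reduces to whether your sketch stands on its own.

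It does not. Your treatment of the loxodromic case is fine, and the cone-point sub-case is essentially right (though note that the dichotomy of \autoref{res: lifting elliptic subgroups} does not by itself ``force'' $H\subseteq\stab{\bar c}$: you must also rule out the lifting alternative, e.g.\ by observing that the near-fixed set of $\bar\gamma^q$ is trapped near $\bar c$ via \autoref{res: sc - unique fixed cone point}, which is incompatible with $H$ having a global almost-fixed point far from all apices). The real problem is the last sub-case, and you say so yourself: for elliptic $\bar\gamma$ fixing no cone point you write that ``one then analyses the configuration\dots'' and that ``one must instead lift the relevant invariant and fixed-point sets\dots carefully tracking how the kernel $K$ meets them'', but you never carry this out. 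That is not a proof; it is a description of where the work lies. In particular, lifting $\bar\tau$ to a loxodromic $\tau$ via \autoref{res: sc - lifting loxo sbgp} and $H$ to an elliptic $E$ via \autoref{res: lifting elliptic subgroups} does not arrange that the lift of $\bar\tau\bar\gamma\bar\tau^{-1}$ equals $\tau\gamma\tau^{-1}$ in $\Gamma$; bridging this is exactly the content you are missing, and it is where the hypothesis $\nu(\Gamma,X)<\infty$ (the only standing assumption on $\nu$ in this section) must enter. Your appeal to \autoref{res: sc - axis small elliptic} is also premature here: that lemma carries the extra hypothesis that every non-trivial elliptic element of $\Gamma$ is $\beta$-thin, which is not among the standing assumptions of \S\ref{sec: isom bar X}.

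In short: the case split is the right skeleton, but the decisive sub-case is left as an outline, and an outline is precisely what the paper already provides by citing the external reference.
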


\begin{prop}
\label{res: approx - A inv}
	The invariant $A(\bar \Gamma, \bar X)$ is at most 
	$A(\Gamma,X) + 5 \pi \sinh( 1000\bar \delta)$.
\end{prop}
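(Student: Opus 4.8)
The plan is to reduce, by the Margulis-type estimate \autoref{res: margulis lemma} together with $\nu(\bar\Gamma,\bar X)=1$ (\autoref{res: approx - nu inv}) and the remark $A(\bar\Gamma,\bar X)\le A(\bar\Gamma,\bar X,400\bar\delta)+24\bar\delta$, to bounding $A(\bar\Gamma,\bar X,400\bar\delta)=\sup_{\bar U}\diam\fix{\bar U,400\bar\delta}$, where $\bar U$ runs over subsets of $\bar\Gamma$ generating a non-elementary subgroup. So fix such a $\bar U$, set $Y=\fix{\bar U,400\bar\delta}$, and assume $\lambda(\bar U)<400\bar\delta$ (otherwise $Y=\emptyset$), so that $Y$ is $8\bar\delta$-quasi-convex.

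The first step is to confine $Y$ away from the cone points. If $\bar x\in Y$ satisfied $\dist{\bar c}{\bar x}<\rho-200\bar\delta$ for some $\bar c\in\bar{\mathcal C}$, then any $\bar\gamma\in\bar U$ would have to fix $\bar c$, since otherwise $\dist{\bar\gamma\bar x}{\bar x}\ge\dist{\bar c}{\bar\gamma\bar c}-2\dist{\bar c}{\bar x}>400\bar\delta$; hence $\bar U\subset\stab{\bar c}$, which is cyclic and thus elementary, contradicting the choice of $\bar U$. Running the same argument at a point of a geodesic joining two points of $Y$ (such a geodesic lies in $\fix{\bar U,420\bar\delta}$ by quasi-convexity of each $\fix{\bar\gamma,\cdot}$) shows that $Y$, and also its neighbourhood $\bar Z:=N_{500\bar\delta}(Y)\subset\fix{\bar U,1500\bar\delta}$, satisfies the hypothesis $\gro{\bar z}{\bar z'}{\bar c}>13\bar\delta$ of \autoref{res: sc - lifting quasi-convex} for all $\bar z,\bar z'\in\bar Z$ and all $\bar c\in\bar{\mathcal C}$.

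Next I would transfer the picture to $X$. By \autoref{res: sc - lifting quasi-convex} the set $\bar Z$ lifts isometrically to $\tilde Z\subset\dot X$, containing the lift $\tilde Y$ of $Y$, and inducing an isomorphism $\stab{\tilde Z}\to\stab{\bar Z}$. Since $\bar\gamma\bar z\in\bar Z$ whenever $\bar z\in Y$ and $\bar\gamma\in\bar U$, part~(1) of that lemma provides, for each $\bar\gamma\in\bar U$, a pre-image $\gamma\in\Gamma$ with $\gamma\tilde z=\widetilde{\bar\gamma\bar z}$ for every $\tilde z\in\tilde Y$; in particular $\gamma$ moves every point of $\tilde Y$ by at most $400\bar\delta$ in $\dot X$. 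Radially projecting, the set $W:=p(\tilde Y)\subset X$ satisfies $\dist[\dot X]{\tilde z}{p\tilde z}\le200\bar\delta$ (as $\tilde Y$ avoids $B(c,\rho-200\bar\delta)$), so each element $\gamma$ of the resulting set $U\subset\Gamma$ moves every point of $W$ by at most $800\bar\delta$ in $\dot X$, hence — using $(\ref{eqn: appendix - lower bound dist dot X})$ and the estimates on $\mu$ already invoked in this appendix — by at most $D:=\pi\sinh(800\bar\delta)$ in $X$; that is, $W\subseteq\fix{U,D}$. Moreover $\dist[X]\ge\dist[\dot X]$ on $X$, so $\diam_X W\ge\diam_{\bar X}Y-400\bar\delta$.

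Finally one checks that $\langle U\rangle$ is non-elementary for its action on $X$: otherwise, by the standing assumptions on $\Gamma$, it is either loxodromic cyclic, forcing $\langle\bar U\rangle=\pi(\langle U\rangle)$ cyclic, or elliptic, in which case every element of $\langle\bar U\rangle$ is elliptic on $\bar X$ and, since $\bar\Gamma$ has no parabolic subgroup (\autoref{res: sc - no parabolic}) and $\langle\bar U\rangle$ is neither loxodromic nor non-elementary, $\langle\bar U\rangle$ would be elliptic — both possibilities contradict the choice of $\bar U$. Hence $\diam_X W\le A(\Gamma,X,D)$, so $\diam_{\bar X}Y\le A(\Gamma,X,D)+400\bar\delta\le[\nu(\Gamma,X)+3]D+A(\Gamma,X)+400\bar\delta$. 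Substituting into the reduction of the first paragraph and using $\nu(\Gamma,X)=1$ together with the hierarchy $\bar\delta\ll\rho\ll\pi\sinh\rho$ to absorb all lower-order terms into $5\pi\sinh(1000\bar\delta)$ gives the stated bound. The hard part here is the constant bookkeeping: the auxiliary radii ($500\bar\delta$, $1500\bar\delta$, $800\bar\delta$ and $D$) must be chosen so that, once the distortion of the radial projection near the cones is accounted for, the error term genuinely fits inside $5\pi\sinh(1000\bar\delta)$; the geometric content — the confinement of $Y$, the lifting lemma, and the elementary/non-elementary dichotomy — is routine given the results already established.
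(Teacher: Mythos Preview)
Your approach is correct and is essentially the argument in \cite[Proposition~5.30]{Coulon:2014fr}, which is exactly what the paper defers to (the paper gives no independent proof). The reduction via \autoref{res: margulis lemma}, the confinement of $\fix{\bar U,400\bar\delta}$ away from cone points, the lift via \autoref{res: sc - lifting quasi-convex} followed by radial projection to $X$, and the final appeal to $A(\Gamma,X,D)$ are precisely the steps of the referenced proof, and your constant bookkeeping is fine.

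One phrasing issue: in the elliptic branch of the non-elementarity check you write that ``$\langle\bar U\rangle$ is neither loxodromic nor non-elementary'', but non-elementarity of $\langle\bar U\rangle$ is the very hypothesis you are trying to contradict, so invoking its negation is circular. The clean one-line fix is to bypass the element-by-element discussion entirely: if $\langle U\rangle$ is elliptic in $X$ it has a bounded orbit there, whose image under the $1$-Lipschitz map $X\to\bar X$ is a bounded $\langle\bar U\rangle$-orbit, so $\langle\bar U\rangle$ is elliptic in $\bar X$ --- contradiction. This is the argument actually used in the cited reference.
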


\begin{proof}
	Although the definition of $A(\Gamma, X)$ is slightly different, the proof works verbatim as in \cite[Proposition~5.30]{Coulon:2014fr}.
	See also \cite[Proposition~4.47]{Coulon:2018vp}.
\end{proof}

Recall that a group $G$ is \emph{commutative transitive} if it satisfies the following property: for every $g_1, g_2,g_3 \in G \setminus\{1\}$, if $[g_1,g_2] = 1$ and $[g_2, g_3] = 1$, then $[g_1, g_3] = 1$.

\begin{prop}
\label{res: sc - commutative transitive}
	If $\Gamma$ is commutative transitive, then so is $\bar \Gamma$.
\end{prop}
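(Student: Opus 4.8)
The plan is to transfer commutative transitivity from $\Gamma$ (acting on $X$) to $\bar \Gamma = \Gamma/K$ (acting on $\bar X$) by using the classification of elementary subgroups of $\bar \Gamma$ established in Lemmas~\ref{res: lifting elliptic subgroups}, \ref{res: sc - lifting loxo sbgp}, the absence of parabolics (\autoref{res: sc - no parabolic}), and the fact that $\stab {\bar c}$ is cyclic for every cone point $\bar c$. Fix $\bar g_1, \bar g_2, \bar g_3 \in \bar \Gamma \setminus \{1\}$ with $[\bar g_1, \bar g_2] = 1$ and $[\bar g_2, \bar g_3] = 1$; I must show $[\bar g_1, \bar g_3] = 1$. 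The key observation is that $\bar g_2$ generates a nontrivial cyclic subgroup, and $\bar g_1, \bar g_3$ both centralize $\bar g_2$. So the first step is to determine the structure of the centralizer $C = C_{\bar \Gamma}(\bar g_2)$ and show it is abelian; once $C$ is abelian, $\bar g_1$ and $\bar g_3$ lie in $C$ and commute, which finishes the proof.

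First I would dispose of the case where $\bar g_2$ is loxodromic on $\bar X$. Then $\langle \bar g_1, \bar g_2\rangle$ and $\langle \bar g_2, \bar g_3\rangle$ are elementary subgroups of $\bar \Gamma$ containing a loxodromic element, hence loxodromic, hence (by \autoref{res: sc - lifting loxo sbgp} together with the standing assumption that loxodromic subgroups of $\Gamma$ are cyclic) infinite cyclic. Thus $\bar g_1, \bar g_3$ lie in the maximal cyclic loxodromic subgroup $E$ containing $\bar g_2$, and $E$ is abelian, so $[\bar g_1,\bar g_3]=1$. Next, the case where $\bar g_2$ is elliptic on $\bar X$: by \autoref{res: lifting elliptic subgroups} either $\langle \bar g_2\rangle$ lifts to an elliptic subgroup of $\Gamma$ or $\bar g_2 \in \stab{\bar c}$ for some $\bar c \in \bar{\mathcal C}$. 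In the latter subcase, \autoref{res: sc - unique fixed cone point consequence} shows that any element conjugating $\bar g_2$ (in particular commuting with it) into $\stab {\bar c}$ must itself fix $\bar c$; since $\bar g_1, \bar g_3$ centralize $\bar g_2$, they conjugate $\bar g_2$ to itself, which still lies in $\stab{\bar c}$, so $\bar g_1, \bar g_3 \in \stab{\bar c}$, which is cyclic and hence abelian. In the former subcase, $\langle \bar g_1, \bar g_2\rangle$ and $\langle \bar g_2, \bar g_3\rangle$ are elliptic (they are elementary and contain no loxodromic and no parabolic by \autoref{res: sc - no parabolic}, and cannot be loxodromic), so by \autoref{res: lifting elliptic subgroups} each lifts to an elliptic subgroup of $\Gamma$ or lands in a cone-point stabilizer; after reducing to cone-point stabilizers as before, one is left with the situation where $\langle\bar g_1,\bar g_2,\bar g_3\rangle$ lifts to an elliptic subgroup $E$ of $\Gamma$, and then commutative transitivity of $\Gamma$ applied inside $E$ gives the conclusion.

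The main obstacle — and the step requiring the most care — is the bookkeeping needed to guarantee that $\bar g_1$ and $\bar g_3$ can be placed in a \emph{common} elementary subgroup that lifts compatibly to $\Gamma$, rather than merely in two overlapping elementary subgroups. Concretely, the danger is a configuration where $\langle \bar g_1, \bar g_2\rangle$ lifts to a cone-point stabilizer and $\langle \bar g_2, \bar g_3\rangle$ lifts to an elliptic subgroup of $\Gamma$ not contained in such a stabilizer; one must rule this out using the rigidity of cone points (Corollaries~\ref{res: sc - unique fixed cone point} and~\ref{res: sc - unique fixed cone point consequence}): if $\bar g_2$ is conical (fixes a cone point) then \emph{every} element commuting with it is conical and fixes the same cone point, so both $\langle \bar g_1, \bar g_2\rangle$ and $\langle \bar g_2, \bar g_3\rangle$ sit inside the single cyclic group $\stab{\bar c}$; and if $\bar g_2$ is elliptic but not conical, then (using that $\bar g_2$ and all its commuting partners lift via \autoref{res: lifting elliptic subgroups} to non-conical elliptic elements of $\Gamma$, these lifts being canonical on the appropriately chosen fixed-point sets) one assembles a single lifted elliptic subgroup of $\Gamma$ containing lifts of $\bar g_1, \bar g_2, \bar g_3$, where commutative transitivity of $\Gamma$ applies directly. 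Assembling this last "single lift" cleanly is where one should follow the uniqueness clause of \autoref{res: sc - lifting quasi-convex} applied to (a neighborhood of) the common fixed-point set, exactly as in the proof of \autoref{res: sc - axis small elliptic}.
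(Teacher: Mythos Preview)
Your case split (loxodromic / conical / elliptic non-conical for $\bar g_2$) and your choice of tools match the paper's proof exactly, and the first two cases are handled identically. In the third case, however, you make the argument harder than necessary by first trying to lift the two abelian pairs $\langle\bar g_1,\bar g_2\rangle$ and $\langle\bar g_2,\bar g_3\rangle$ separately via \autoref{res: lifting elliptic subgroups} and then worrying about whether the two lifts of $\bar g_2$ agree; the assertion that one is ``left with the situation where $\langle\bar g_1,\bar g_2,\bar g_3\rangle$ lifts to an elliptic subgroup of $\Gamma$'' is not justified by that reasoning and is in fact not needed. The paper bypasses this entirely: since $\bar g_1$ and $\bar g_3$ commute with $\bar g_2$, all three elements preserve the single quasi-convex set $\bar Z=\fix{\bar g_2,6\bar\delta}$, which stays far from $\bar{\mathcal C}$ because $\bar g_2$ fixes no cone point; one application of \autoref{res: sc - lifting quasi-convex} then gives an isomorphism $\stab Z\to\stab{\bar Z}$ and hence simultaneous lifts $\gamma_1,\gamma_2,\gamma_3\in\Gamma$ with $[\gamma_1,\gamma_2]=[\gamma_2,\gamma_3]=1$, so commutative transitivity of $\Gamma$ finishes directly. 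You do point to exactly this lemma at the end, so your sketch is ultimately salvageable, but the detour through \autoref{res: lifting elliptic subgroups} and ``assembling a single lifted elliptic subgroup'' should be dropped.
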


\begin{proof}
	Let $\bar \gamma_1, \bar \gamma_2, \bar \gamma_3 \in \bar \Gamma \setminus\{1\}$ such that $[\bar \gamma_1, \bar \gamma_2] = [\bar \gamma_2, \bar \gamma_3] = 1$.
	Suppose first that $\bar \gamma_2$ is loxodromic.
	Hence  $\bar \gamma_1$  and $\bar \gamma_3$ are contained in the maximal elementary subgroup containing $\bar \gamma_2$, which is cyclic by \autoref{res: sc - lifting loxo sbgp}. 	Hence $[\bar \gamma_1, \bar \gamma_3]  = 1$.
	
	Assume now that $\bar \gamma_2$ fixes some cone point $\bar c \in \bar{\mathcal C}$.
	It follows from \autoref{res: sc - unique fixed cone point consequence} that $\bar \gamma_1$ and $\bar \gamma_3$ fix $\bar c$ as well.
 	As we observed earlier, $\stab{\bar c}$ is cyclic, hence $[\bar \gamma_1, \bar \gamma_3]  = 1$.
 	
	Assume now that $\bar \gamma_2$ is elliptic and does not fix any cone point.
	We let $\bar Z = \fix{\bar \gamma_2, 6\bar \delta}$.
	Since $\bar \gamma_2$ does not fix a cone point, it follows from the triangle inequality that $d(\bar c, \bar Z) \geq \rho - 3 \bar \delta$, for every $\bar c \in \bar{\mathcal C}$.
	By \autoref{res: sc - lifting quasi-convex}, there is a subset $Z \subset \dot X$ such that the map $\dot X \to \bar X$ induces an isometry from $Z$ onto $\bar Z$.
	Note that $\bar \gamma_i$ preserves $\bar Z$, for every $i \in \{1,2,3\}$.
	We denote by $\gamma_1, \gamma_2, \gamma_3 \in \Gamma \setminus\{1\}$ their pre-images in $\stab Z$.
	It follows from  \autoref{res: sc - lifting quasi-convex} that $[\gamma_1, \gamma_2] = [\gamma_2, \gamma_3] = 1$.
	Since $\Gamma$ is commutative transitive, we have $[\gamma_1, \gamma_3]  = 1$ and thus $[\bar \gamma_1, \bar \gamma_3]  = 1$.
	Recall that $\bar \Gamma$ has no parabolic isometries (\autoref{res: sc - no parabolic}), hence the proof is complete.
\end{proof}

\begin{prop}
\label{res: sc - csa}
	If $\Gamma$ is CSA, then so is $\bar \Gamma$.
\end{prop}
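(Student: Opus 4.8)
The plan is to reduce the proposition to a commutation property of $\bar\Gamma$ and then imitate the case analysis in the proof of \autoref{res: sc - commutative transitive}. Recall the standard fact that a group $G$ is CSA if and only if it is commutative transitive and satisfies: for every $h\in G\setminus\{1\}$ and every $g\in G$, if $[h,ghg^{-1}]=1$ then $[h,g]=1$. Since $\bar\Gamma$ is commutative transitive by \autoref{res: sc - commutative transitive}, it suffices to establish this commutation property, assuming $\Gamma$ is CSA. So fix $\bar h\in\bar\Gamma\setminus\{1\}$ and $\bar g\in\bar\Gamma$ with $[\bar h,\bar g\bar h\bar g^{-1}]=1$. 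Commutative transitivity forces the centraliser $\bar A=C_{\bar\Gamma}(\bar h)$ to be abelian; it is therefore the maximal abelian subgroup containing $\bar h$, and it is self-centralising. As $\bar g\bar h\bar g^{-1}$ lies in $C_{\bar\Gamma}(\bar h)=\bar A$, the abelian subgroup $\bar g\bar A\bar g^{-1}$ shares with $\bar A$ the non-trivial element $\bar g\bar h\bar g^{-1}$, and one deduces (again from commutative transitivity and self-centralisation) that $\bar g\bar A\bar g^{-1}=\bar A$. Hence $\bar g$ normalises $\bar A$, and it remains to show $\bar g\in\bar A$. Being abelian, $\bar A$ is elementary for its action on $\bar X$; since $\bar\Gamma$ has no parabolic subgroup (\autoref{res: sc - no parabolic}), exactly one of the following holds: $\bar A$ contains a loxodromic element; $\bar A$ is elliptic and some non-trivial element of $\bar A$ fixes a cone point; or $\bar A$ is elliptic and no non-trivial element of $\bar A$ fixes a cone point.

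The first two cases are short. If $\bar A$ contains a loxodromic element, then $\bar A$ is contained in the maximal elementary subgroup $\bar E$ fixing the associated pair of points of $\partial\bar X$, and $\bar E$ is infinite cyclic by \autoref{res: sc - lifting loxo sbgp}; as $\bar g$ normalises $\bar A$ it preserves this pair of endpoints, so $\bar g\in\bar E=\bar A$. If instead some $\bar a_0\in\bar A\setminus\{1\}$ fixes a cone point $\bar c$, then by \autoref{res: sc - unique fixed cone point consequence} every element of $\bar A$ fixes $\bar c$, so $\bar A=\stab{\bar c}$ by maximality; now $\bar g\bar a_0\bar g^{-1}\in\bar g\bar A\bar g^{-1}=\bar A$ fixes $\bar c$ and also fixes $\bar g\bar c$, so $\bar g\bar c=\bar c$ by the uniqueness statement \autoref{res: sc - unique fixed cone point}, i.e. $\bar g\in\stab{\bar c}=\bar A$.

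The main obstacle is the remaining case, where $\bar A$ is an elliptic subgroup fixing no cone point; here I would argue, as in the proof of \autoref{res: sc - commutative transitive}, by lifting a fixed set to $\dot X$. Since $\bar A$ has bounded orbits, the set $\bar Z=\fix{\bar A,D}$ is non-empty and $8\bar\delta$-quasi-convex for a suitable threshold $D$, and it stays far from every cone point: if a point of $\bar Z$ were within roughly $\rho$ of some $\bar c\in\bar{\mathcal C}$, the triangle inequality together with the fact that $\bar{\mathcal C}$ is $2\rho$-separated would force $\bar A\subseteq\stab{\bar c}$, contrary to assumption. Crucially, because $\bar g$ normalises $\bar A$ it preserves $\bar Z$, so \autoref{res: sc - lifting quasi-convex} yields a lift $Z\subset\dot X$ of $\bar Z$ and an isomorphism $\stab Z\to\stab{\bar Z}$ induced by $\pi$; since $\bar h\in\bar A\subseteq\stab{\bar Z}$ and $\bar g\in\stab{\bar Z}$, one gets pre-images $h,g\in\stab Z\subseteq\Gamma$ of $\bar h,\bar g$ with $ghg^{-1}$ a pre-image of $\bar g\bar h\bar g^{-1}$ and $[h,ghg^{-1}]=1$. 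As $\Gamma$, and hence its subgroup $\stab Z$, is CSA, this gives $[h,g]=1$, so $[\bar g,\bar h]=\pi([g,h])=1$, i.e. $\bar g\in C_{\bar\Gamma}(\bar h)=\bar A$. The delicate point I expect to require the most care is verifying that $D$ can be chosen small enough relative to $\rho$ for the distance estimate demanded by \autoref{res: sc - lifting quasi-convex} to hold; this should follow from the quantitative bounds of \autoref{res: small cancellation}, the description of elliptic subgroups of $\bar\Gamma$ in \autoref{res: lifting elliptic subgroups}, and the acylindricity parameters $\nu(\bar\Gamma,\bar X)$ and $A(\bar\Gamma,\bar X)$ controlled in \autoref{res: approx - nu inv} and \autoref{res: approx - A inv}.
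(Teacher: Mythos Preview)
Your plan and case split match the paper's, and the first two cases are handled correctly. The gap is in the third case. You take $\bar Z=\fix{\bar A,D}$ for the \emph{entire} centraliser $\bar A=C_{\bar\Gamma}(\bar h)$, so that $\bar g\in\stab{\bar Z}$ and part~\ref{enu: sc - lifting quasi-convex - stab} of \autoref{res: sc - lifting quasi-convex} applies. But the references you invoke do not control $D$: the parameters $\nu(\bar\Gamma,\bar X)$ and $A(\bar\Gamma,\bar X)$ bound $\diam\fix{U,d}$ only when $U$ generates a \emph{non-elementary} subgroup, while \autoref{res: lifting elliptic subgroups} gives an isomorphic lift of $\bar A$ to an elliptic subgroup of $\Gamma$ with no accompanying bound on its energy in $\bar X$. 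In the generality of \autoref{res: sc - csa} (no thinness is assumed for elliptic elements of $\Gamma$, unlike in the hypothesis of \autoref{res: sc - axis small elliptic}), nothing prevents $\lambda(\bar A)$ from being large compared to $\rho$, so you cannot choose $D$ making $\fix{\bar A,D}$ simultaneously non-empty, quasi-convex, and satisfying the Gromov-product hypothesis of \autoref{res: sc - lifting quasi-convex}.

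The paper avoids this by never requiring $\bar g$ to stabilise $\bar Z$. Writing $\bar\gamma_0=\bar h$ and $\bar\gamma_1=\bar g\bar h\bar g^{-1}$, it takes $\bar Z=\fix{\bar\gamma_1,6\bar\delta}$, the almost-fixed set of a \emph{single} elliptic element; non-emptiness and distance to $\bar{\mathcal C}$ are then automatic. One first locates $\bar z\in\fix{\{\bar\gamma_0,\bar\gamma_1\},6\bar\delta}$ (such a point exists because the abelian group $\langle\bar\gamma_0,\bar\gamma_1\rangle$ is elliptic) and observes that both $\bar z$ and $\bar g\bar z$ lie in $\bar Z$, since $\bar\gamma_1\bar g\bar z=\bar g\bar\gamma_0\bar z$. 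Part~\ref{enu: sc - lifting quasi-convex - prestab} of \autoref{res: sc - lifting quasi-convex}, rather than part~\ref{enu: sc - lifting quasi-convex - stab}, then yields a pre-image $\gamma\in\Gamma$ of $\bar g$ with $\gamma z\in Z$; a short check using \autoref{res: small cancellation}\ref{enu: small cancellation - translation kernel} gives $\gamma_1=\gamma\gamma_0\gamma^{-1}$ in $\Gamma$, and CSA of $\Gamma$ finishes.
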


\begin{proof}
	According to \autoref{res: sc - commutative transitive}, $\bar \Gamma$ is commutative transitive.
	Hence it suffices to prove that for every $\bar \gamma, \bar \gamma_0 \in \bar \Gamma \setminus\{1\}$ if $\bar \gamma_0$ and $\bar \gamma \bar \gamma_0 \bar \gamma^{-1}$ commute, then so do $\bar \gamma_0$ and $\bar \gamma$.
	For simplicity we let $\bar \gamma_1 = \bar \gamma \bar \gamma_0 \bar \gamma^{-1}$.
	If $\bar \gamma_0$ is loxodromic, then $\bar \gamma$ belongs to the maximal elementary subgroup containing $\bar \gamma_0$, which is cyclic by \autoref{res: sc - lifting loxo sbgp}. 	Hence $[\bar \gamma, \bar \gamma_0] = 1$.
	
	Suppose that $\bar \gamma_0$ fixes some cone point $\bar c \in \bar{\mathcal C}$.
	Note that $\bar \gamma_1 \bar \gamma_0 \bar \gamma_1^{-1} = \bar \gamma_0$ fixes $\bar c$.
	Hence by \autoref{res: sc - unique fixed cone point consequence}, $\bar \gamma_1$ fixes $\bar c$.
	Applying again \autoref{res: sc - unique fixed cone point consequence}, we see that $\bar \gamma$ fixes $\bar c$.
	As we already observed earlier, $\stab{\bar c}$ is cyclic, hence $[\bar \gamma, \bar \gamma_0] = 1$.
	
	We are left to handle the case where $\bar \gamma_0$ is elliptic, but does not fix any cone point.
	The group $\bar \Gamma$ has no parabolic subgroup (\autoref{res: sc - no parabolic}), while its loxodromic subgroups are cyclic (\autoref{res: sc - lifting loxo sbgp}). 
	Hence $\bar U = \{\bar \gamma_0, \bar \gamma_1\}$ generates an elliptic subgroup.
	Thus we can find a point $\bar z \in \fix{\bar U, 6 \bar \delta}$.
	We let $\bar Z = \fix{\bar \gamma_1, 6\bar \delta}$.
	Note that $\bar z$ and $\bar \gamma \bar z$ belong to $\bar Z$.
	Since $\bar \gamma_0$ (and thus $\bar \gamma_1$) does not fix a cone point, it follows from the triangle inequality that $d(\bar c, \bar Z) \geq \rho - 3 \bar \delta$, for every $\bar c \in \bar{\mathcal C}$.
	Let $z \in \dot X$ be a pre-image of $\bar z$.
	By \autoref{res: sc - lifting quasi-convex}, there is a subset $Z \subset \dot X$ containing $z$ such that the map $\dot X \to \bar X$ induces an isometry from $Z$ onto $\bar Z$.
	Note that both $\bar \gamma_0$ and $\bar \gamma_1$ preserves $\bar Z$.
	We denote by $\gamma_0$ and $\gamma_1$ their respective pre-images in $\stab Z$.
	By  \autoref{res: sc - lifting quasi-convex}~\ref{enu: sc - lifting quasi-convex - stab} the projection $\Gamma \onto \bar \Gamma$ induces an isomorphism from $\stab Z$ onto $\stab{\bar Z}$, hence $\gamma_0$ and $\gamma_1$ commutes.
	In addition, we fix $\gamma \in \Gamma\setminus\{1\}$ such that $\gamma z$ is the pre-image  in $Z$ of $\bar \gamma \bar z$.
	Note that $\bar \gamma_1 \bar \gamma \bar z = \bar \gamma \bar \gamma_0 \bar z$ belongs to $\bar Z$.
	By construction $\gamma_1 \gamma z$ is its pre-image in $Z$.
	It follows from \autoref{res: sc - lifting quasi-convex}~\ref{enu: sc - lifting quasi-convex - prestab} applied with $\gamma$ that $\gamma \gamma_0z = \gamma_1 \gamma z$.
	Recall that $\bar \gamma_1 \bar \gamma = \bar \gamma \bar \gamma_0$.
	Hence by \autoref{res: small cancellation}~\ref{enu: small cancellation - translation kernel}, we obtain $\gamma_1 \gamma = \gamma \gamma_0$, that is $\gamma_1 = \gamma \gamma_0 \gamma^{-1}$.
	Since $\Gamma$ is CSA, it implies that $\gamma$ commutes with $\gamma_0$.
	Consequently $\bar \gamma$ commutes with $\bar \gamma_0$.
\end{proof}

%%%%%%%%%%%%%%%%%%%%%%%%%%%%%%%%%%%%%%%%%%%%%%%%%%%%%%%%%%%%%%%%%%%%%%%%%%%%%%%%%%%%%
%
\subsection{Approximation sequence of periodic group}
%
%%%%%%%%%%%%%%%%%%%%%%%%%%%%%%%%%%%%%%%%%%%%%%%%%%%%%%%%%%%%%%%%%%%%%%%%%%%%%%%%%%%%%
\label{sec: sc - approx}

Given a group $\Gamma$ acting on a suitable hyperbolic space $X$ we are going to use small cancellation theory to approximate the periodic quotients of $\Gamma$ by a sequence of negatively curved groups.
For our applications the group $\Gamma$ or the space $X$ may vary.
In order to make all the dependencies clear we first define some auxiliary parameters.

%------------------------------------------------------------------------------------
\subsubsection{The small cancellation parameters.}
%------------------------------------------------------------------------------------
\label{sec: sc parameters}

We start by defining the parameters involved in \autoref{res: approximating sequence}.
We write $\delta_0$, $\delta_1$, $\Delta_0$ and $\rho_0$ for the constants given by \autoref{res: small cancellation}.
For simplicity we let 
\begin{equation*}
	A = 10\pi \sinh\left(10^3\delta_1\right), \quad
	\kappa = \frac{\delta_1 }{\pi \sinh (26\delta_1)}, \quad
	\quad \text{and}\quad 
	\alpha = 20 \pi \sinh(100\delta_1).
\end{equation*}
We choose once for all a non-decreasing map $\rho \colon \N \to \R_+$ which is bounded below by $\rho_0$, diverges to infinity, and satisfies
\begin{equation*}
	\rho(n) =o\left(\ln n\right).
\end{equation*}
We define a rescaling parameter by
\begin{equation*}
	\epsilon(n) = \sqrt{\frac {10\pi \sinh \rho(n)}{n \kappa \delta_1}}.
\end{equation*}
Observe that 
\begin{equation*}
	\ln \epsilon(n) = - \frac 12 \ln n + \frac 12\rho(n) + O(1).
\end{equation*}
Thus, $\epsilon(n)$ and $\epsilon(n)\rho(n)$ converge to zero.
Let $\tau \in (0,1)$.
We set
\begin{equation*}
	\tau ' = \tau \min \{A, \delta_1, \alpha\} = \tau \delta_1.
\end{equation*}
There exists an critical exponent $N_\tau \geq 100$, such that for every integer $n \geq N_\tau$, 
\begin{align}
	\label{eqn: induction param - rescale}
	\epsilon(n) & < 1, \\
	\label{eqn: induction param - delta}
	\epsilon(n) \delta_1 & \leq \delta_0, \\
	\label{eqn: induction param - Delta}
	\epsilon(n)(A + 500 \delta_1)  & \leq  \min \left\{ \Delta_0, 5\pi \sinh\left(10^3\delta_1\right)\right\}, \\
	\label{eqn: induction param - inj}
	\epsilon(n) \kappa  \delta_1 & < \tau \delta_1 \leq \min\{ \delta_1, \tau'\} \\
	\label{eqn: induction param - thinness}
	\epsilon(n)  (\rho(n) +\alpha) & \leq \delta_1
\end{align}
We now fix an odd integer $n \geq N_\tau$.
For simplicity we write $\rho = \rho(n)$ and $\epsilon = \epsilon(n)$.
The parameter $\epsilon$ will serve as a rescaling factor for certain metric spaces. 
Note that it has no connection with one used in \autoref{sec: action on limit tree} though.

%------------------------------------------------------------------------------------
\subsubsection{Approximation groups.}
%------------------------------------------------------------------------------------

We fix a $\tau$-bootstrap $(\Gamma, X)$ for the exponent $n$ (see \autoref{def: bootstrap}).
We are going to define by induction a sequence of pairs
\begin{equation}
	(\Gamma_0, X_0) \onto (\Gamma_1, X_1) \onto \dots \onto (\Gamma_j,X_j) \onto (\Gamma_{j+1}, X_{j+1}) \onto \dots
\end{equation}
where $\Gamma_j$ is a group acting by isometries on a metric length space $X_j$.
Recall that the notation 
\begin{equation*}
(\Gamma_j,X_j) \onto (\Gamma_{j+1}, X_{j+1})
\end{equation*}
means that the pair $(\Gamma_{j+1}, X_{j+1})$ comes with a projection $\pi_j \colon \Gamma_j \onto \Gamma_{j+1}$ together with a (non necessarily onto) $\pi_j$-equivariant map $f_j \colon X_j \to X_{j+1}$.
In addition this sequence will satisfy, among others, the following properties.
\begin{labelledenu}[R]
	\item \label{enu - induction - hyp space}
	$X_j$ is $\delta_1$-hyperbolic and the action of $\Gamma_j$ is a non-elementary.
	\item \label{enu - induction - elem subgroup}
	Every elliptic element of $\Gamma_j$ has finite order dividing $n$.
	Every elementary subgroup of $\Gamma_j$, which is not elliptic is loxodromic and cyclic.
	\item \label{enu - induction - sc param}
	$\nu(\Gamma_j, X_j) = 1$, $\displaystyle A(\Gamma_j,X_j) \leq A$, and $\displaystyle \inj[X_j]{\Gamma_j} > \epsilon \kappa  \delta_1$.
	\item \label{enu - induction - lip map}
	The map $f_j \colon X_j \to X_{j+1}$ is $\epsilon$-Lipschitz.
	\item \label{enu - induction - torsion}
	The kernel of the map $\Gamma_j \onto \Gamma_{j+1}$ is generated (as a normal subgroup) by all the elements of the form $\gamma^n$ where $\gamma \in \Gamma_j$ is loxodromic primitive and satisfies $\norm[X_j]\gamma \leq 10\delta_1$.
\end{labelledenu}

\paragraph{The base of the induction.}
We assumed that $(\Gamma, X)$ is $\tau$-bootstrap.
Hence, we can rescale $X$ to get a new metric space $X_0$ which is $\delta_1$-hyperbolic and such that $A(\Gamma, X_0) \leq A$ while $\inj[X_0]{\Gamma} \geq \tau'$.
Moreover we can require that every element of $\Gamma$ is $\alpha$-thin for its action on $X_0$.
Note that this rescaling does not depend on $n$.
We start by letting $(\Gamma_0, X_0) = (\Gamma, X)$.
It follows from the very definition of bootstrap and our choice of $N_\tau$ that \ref{enu - induction - hyp space} -- \ref{enu - induction - sc param} holds.

\paragraph{The induction step.}
Let $j \in \N$ and assume that the pair $(\Gamma_j, X_j)$ satisfying \ref{enu - induction - hyp space}, \ref{enu - induction - elem subgroup} and \ref{enu - induction - sc param} has been previously defined.
The construction of $(\Gamma_{j+1}, X_{j+1})$ goes as follows.
Let $R_j$ be the set of all loxodromic primitive elements $\gamma\in \Gamma_j$ such that $\norm[X_j]\gamma \leq 10\delta_1$.
Let $K_j$ be the (normal) subgroup of $\Gamma_j$ generated by $\set{\gamma^n}{\gamma \in R_j}$.
The group $\Gamma_{j+1}$ is defined as the quotient $\Gamma_{j+1} = \Gamma_j/K_j$.
In particular, the canonical projection $\pi_j \colon \Gamma_j \onto \Gamma_{j+1}$ satisfies \ref{enu - induction - torsion}.

If $R_j$ is empty, then $K_j$ is trivial and $\Gamma_{j+1} = \Gamma_j$.
In this situation we choose for $X_{j+1}$ the rescaled space $X_{j+1} = \epsilon X_j$.
It satisfies \ref{enu - induction - hyp space}-\ref{enu - induction - lip map}.

If $R_j$ is not empty, then the space $X_{j+1}$ is obtained by mean of small cancellation theory.
More precisely, we focus on the action of $\Gamma_j$ on the rescaled space $\epsilon X_j$.
According to (\ref{eqn: induction param - delta}) the space $\epsilon X_j$ is $\delta_0$-hyperbolic.
Let $\mathcal Q_j$ be the family defined by
\begin{equation*}
\mathcal Q_j = \set{\left(\group{\gamma^n}, Y_\gamma\right)}{\gamma \in R_j }
\end{equation*}
Combining (\ref{eqn: induction param - Delta}) with  our control of $A(\Gamma_j, X_j)$ and $\inj[X_j]{\Gamma_j}$ one can estimate the small cancellation parameters of $\mathcal Q_j$.
More precisely we have the following statement.

\begin{lemm}[Compare with {\cite[Lemma~6.2]{Coulon:2016if}}]
	The family $\mathcal Q_j$ satisfies the following: $\Delta(\mathcal Q_j, \epsilon X_j) \leq \Delta_0$ and $T(\mathcal Q_j, \epsilon X_j) \geq 10\pi \sinh \rho$.
\end{lemm}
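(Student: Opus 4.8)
The plan is to follow the proof of \cite[Lemma~6.2]{Coulon:2016if}, treating the two estimates separately and working throughout in the rescaled space $\epsilon X_j$. Recall that $\epsilon X_j$ is $\epsilon\delta_1$-hyperbolic by \ref{enu - induction - hyp space} and (\ref{eqn: induction param - delta}), that translation and stable translation lengths satisfy $\norm[\epsilon X_j]\gamma = \epsilon\norm[X_j]\gamma$ and $\snorm[\epsilon X_j]\gamma = \epsilon\snorm[X_j]\gamma$, that $\nu(\Gamma_j, \epsilon X_j) = \nu(\Gamma_j, X_j) = 1$, and that $A(\Gamma_j, \epsilon X_j) = \epsilon A(\Gamma_j, X_j) \le \epsilon A$ by the homogeneity of the global acylindricity parameter together with \ref{enu - induction - sc param}.

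\textbf{The bound on $T$.} A non-trivial element of a group $H = \group{\gamma^n}$ occurring in $\mathcal Q_j$ is of the form $\gamma^{nk}$ with $k \in \Z \setminus\{0\}$. Since $\gamma \in R_j$ is primitive, hence loxodromic, \ref{enu - induction - sc param} gives $\snorm[X_j]\gamma \ge \inj[X_j]{\Gamma_j} > \epsilon\kappa\delta_1$. Therefore
\[
	\norm[\epsilon X_j]{\gamma^{nk}} \ge \snorm[\epsilon X_j]{\gamma^{nk}} = \epsilon\abs{nk}\,\snorm[X_j]\gamma \ge \epsilon n\,\snorm[X_j]\gamma > n\epsilon^2\kappa\delta_1 = 10\pi\sinh\rho,
\]
the last equality being the definition of $\epsilon = \epsilon(n)$. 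Taking the infimum over all such $h$ gives $T(\mathcal Q_j, \epsilon X_j) \ge 10\pi\sinh\rho$.

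\textbf{The bound on $\Delta$.} First I reduce the supremum defining $\Delta(\mathcal Q_j, \epsilon X_j)$ to pairs with distinct cylinders: if $\gamma_1, \gamma_2 \in R_j$ satisfy $Y_{\gamma_1} = Y_{\gamma_2}$, then $\gamma_1$ and $\gamma_2$ share the same pair of endpoints at infinity, so they lie in a common maximal loxodromic subgroup, which is cyclic by \ref{enu - induction - elem subgroup}; by primitivity each generates it, whence $\gamma_2 = \gamma_1^{\pm1}$ and the two pairs of $\mathcal Q_j$ coincide. Next I use the standard stability of quasi-geodesics (cf.\ \cite[Section~2.3]{Coulon:2018vp}): there is a universal constant $C$ such that every $x \in Y_\gamma^{+3\epsilon\delta_1}$ satisfies $\dist[\epsilon X_j]{\gamma x}x \le \norm[\epsilon X_j]\gamma + C\epsilon\delta_1$, since $x$ lies within $23\epsilon\delta_1$ of $U_\gamma$, along which $\gamma$ acts as a near-translation of length $\norm[\epsilon X_j]\gamma$. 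As $\gamma \in R_j$ has $\norm[X_j]\gamma \le 10\delta_1$, setting $d = (10+C)\epsilon\delta_1$ we get, for distinct cylinders $Y_1 = Y_{\gamma_1} \ne Y_2 = Y_{\gamma_2}$,
\[
	Y_1^{+3\epsilon\delta_1} \cap Y_2^{+3\epsilon\delta_1} \subset \fix{\{\gamma_1,\gamma_2\},d}.
\]
The subgroup generated by $\{\gamma_1,\gamma_2\}$ is not elliptic (it contains the loxodromic $\gamma_1$), so if it were elementary it would be loxodromic cyclic by \ref{enu - induction - elem subgroup}, forcing $Y_1 = Y_2$, a contradiction; hence it is non-elementary and the definition of the global acylindricity parameter gives
\[
	\diam\left(Y_1^{+3\epsilon\delta_1} \cap Y_2^{+3\epsilon\delta_1}\right) \le A(\Gamma_j, \epsilon X_j, d) \le 4d + A(\Gamma_j, \epsilon X_j) \le \epsilon\bigl((40+4C)\delta_1 + A\bigr) \le \epsilon(A + 500\delta_1).
\]
By (\ref{eqn: induction param - Delta}) the right-hand side is at most $\min\{\Delta_0, 5\pi\sinh(10^3\delta_1)\} \le \Delta_0$, and taking the supremum over distinct pairs concludes.

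\textbf{Main obstacle.} The only delicate point is the displacement estimate for points of $Y_\gamma^{+3\epsilon\delta_1}$: the cylinder $Y_\gamma$ is defined via $L$-local $(1,\delta)$-quasi-geodesics with $L > 12\delta$, so both $L$ and the hyperbolicity constant move when $X_j$ is rescaled by $\epsilon$, and one must apply the quasi-geodesic stability lemmas of \cite{Coulon:2018vp} carefully enough that the constant $C$ above is genuinely universal — independent of $j$, of $n$, and of the bootstrap $(\Gamma, X)$ — so that the numerical inequality with the parameters fixed in (\ref{eqn: induction param - Delta}) actually goes through.
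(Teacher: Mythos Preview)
Your proof is correct and follows exactly the approach the paper indicates: it uses the injectivity radius bound from \ref{enu - induction - sc param} together with the definition of $\epsilon$ to control $T$, and the acylindricity parameter $A(\Gamma_j,X_j)$ together with (\ref{eqn: induction param - Delta}) to control $\Delta$, which is precisely what the reference to \cite[Lemma~6.2]{Coulon:2016if} points to. Your flagged ``main obstacle'' is the right thing to worry about, but the displacement constant $C$ for points in a $23\delta$-neighborhood of $U_\gamma$ is indeed universal (of the order of a few dozen) by the standard cylinder estimates in \cite[Section~2.3]{Coulon:2018vp}, comfortably fitting inside the $500\delta_1$ margin that (\ref{eqn: induction param - Delta}) was chosen to accommodate.
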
 % DELTA CHECKED

Let $\dot X_j$ be the cone-off over the \emph{rescaled} space $\epsilon X_j$ relative to $\mathcal Q_j$.
The space $X_{j+1}$ is the quotient of $\dot X_j$ by the normal subgroup $K_j$.
Recall that $\rho \geq \rho_0$.
It follows from the small cancellation theorem (\autoref{res: small cancellation}) that $X_{j+1}$ is a $\delta_1$-hyperbolic length space.
The action of $\Gamma_{j+1}$ is non-elementary (\autoref{res: approx - non elem action}).
Hence $(\Gamma_{j+1}, X_{j+1})$ satisfies \ref{enu - induction - hyp space}.

The group $\Gamma_{j+1}$ has no parabolic subgroup (\autoref{res: sc - no parabolic}).
Hence its elementary subgroups which are not elliptic are loxodromic and cyclic (\autoref{res: sc - lifting loxo sbgp}).
Every elliptic subgroup of $\Gamma_{j+1}$ is either the isomorphic image of an elliptic subgroup of $\Gamma_j$ or contained in $\stab Y / H$ for some $(H,Y) \in \mathcal Q_j$ (\autoref{res: lifting elliptic subgroups}).
On the one hand, every elliptic element of $\Gamma_j$ has finite order dividing $n$.
On the other hand, $\stab Y / H$ is isomorphic to $\Z / n \Z$, for every $(H,Y) \in \mathcal Q_j$.
Thus every elliptic element of $\Gamma_{j+1}$ has finite order dividing $n$. 
Consequently $(\Gamma_{j+1}, X_{j+1})$ satisfies \ref{enu - induction - elem subgroup}.

Combining Proposition~\ref{res: approx - A inv} and \ref{res: approx - inj radius} combined with (\ref{eqn: induction param - Delta}) and (\ref{eqn: induction param - thinness}) we get the following statement corresponding to \ref{enu - induction - sc param}.
\begin{lemm}[Compare with {\cite[Lemmas~6.3 and 6.4]{Coulon:2016if}}]
	The pair $(\Gamma_{j+1}, X_{j+1})$ satisfies
	\begin{equation*}
		A(\Gamma_{j+1}, X_{j+1}) \leq A
		\quad \text{and} \quad
		\inj[X_{j+1}]{\Gamma_{j+1}} > \epsilon \kappa  \delta_1	
	\end{equation*}
	Moreover $\nu(\Gamma_{j+1}, X_{j+1}) = 1$.
\end{lemm}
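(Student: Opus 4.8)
The plan is to distinguish the two cases that occur in the construction of $(\Gamma_{j+1},X_{j+1})$. Suppose first that $R_j=\emptyset$, so that $\Gamma_{j+1}=\Gamma_j$ and $X_{j+1}=\epsilon X_j$. The $\nu$-invariant is scale invariant, hence $\nu(\Gamma_{j+1},X_{j+1})=\nu(\Gamma_j,X_j)=1$ by the induction hypothesis \ref{enu - induction - sc param}. By the homogeneity $A(\Gamma_j,\epsilon X_j)=\epsilon A(\Gamma_j,X_j)$ recalled after the definition of $A(\Gamma,X)$, together with $A(\Gamma_j,X_j)\leq A$ and $\epsilon<1$ (see (\ref{eqn: induction param - rescale})), we get $A(\Gamma_{j+1},X_{j+1})\leq A$. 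Finally, $R_j=\emptyset$ means that no primitive element of $\Gamma_j$ has translation length at most $10\delta_1$ in $X_j$; since every loxodromic element of $\Gamma_j$ is a power of a primitive one (by \ref{enu - induction - elem subgroup}) and $\norm[X_j]{\gamma}\leq\snorm[X_j]{\gamma}+8\delta_1$, this forces $\inj[X_j]{\Gamma_j}\geq 2\delta_1$, whence $\inj[X_{j+1}]{\Gamma_{j+1}}=\epsilon\inj[X_j]{\Gamma_j}\geq 2\epsilon\delta_1>\epsilon\kappa\delta_1$ because $\kappa<1$.

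Now suppose $R_j\neq\emptyset$, so that $X_{j+1}$ is the small cancellation quotient of the cone-off over $\epsilon X_j$ relative to $\mathcal Q_j$. I would first check that $(\Gamma_j,\epsilon X_j)$ and $\mathcal Q_j$ meet the hypotheses under which Propositions~\ref{res: approx - A inv}, \ref{res: approx - inj radius} and \ref{res: approx - nu inv} were established: the space $\epsilon X_j$ is $\delta_0$-hyperbolic by (\ref{eqn: induction param - delta}); the small cancellation parameters of $\mathcal Q_j$ have just been estimated; $\nu(\Gamma_j,\epsilon X_j)$ and $A(\Gamma_j,\epsilon X_j)$ are finite by induction; $\Gamma_j$ has no even torsion (every elliptic element has order dividing the odd integer $n$ by \ref{enu - induction - elem subgroup}, and non-elliptic elementary subgroups are infinite cyclic); non-elliptic elementary subgroups are loxodromic cyclic by \ref{enu - induction - elem subgroup}; and each $(H,Y)\in\mathcal Q_j$ has the required form, namely $H=\group{\gamma^n}$ with $\gamma\in R_j$ primitive and $\alpha$-thin in $\epsilon X_j$ for $\alpha=100\bigl(1+1/(\epsilon^{2}\kappa)\bigr)\delta_1$, which follows from \autoref{rem: thin isom - loxo} and the bound $\snorm[\epsilon X_j]{\gamma}\geq\epsilon\inj[X_j]{\Gamma_j}>\epsilon^{2}\kappa\delta_1$ (only finiteness of this thinness constant is relevant).

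With these hypotheses in place, $\nu(\Gamma_{j+1},X_{j+1})=1$ is exactly \autoref{res: approx - nu inv}. For the acylindricity parameter, \autoref{res: approx - A inv} gives $A(\Gamma_{j+1},X_{j+1})\leq A(\Gamma_j,\epsilon X_j)+5\pi\sinh(1000\delta_1)$; using homogeneity, $A(\Gamma_j,X_j)\leq A$ and the identity $5\pi\sinh(1000\delta_1)=A/2$ coming from the definition $A=10\pi\sinh(10^{3}\delta_1)$, this is at most $\epsilon A+A/2$, and (\ref{eqn: induction param - Delta}) forces $\epsilon A\leq\epsilon(A+500\delta_1)\leq A/2$, so $A(\Gamma_{j+1},X_{j+1})\leq A$.

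It remains to treat the injectivity radius via \autoref{res: approx - inj radius}, which yields $\inj[X_{j+1}]{\Gamma_{j+1}}\geq\min\{\kappa\ell,\delta_1\}$, where $\ell$ is the infimum of $\snorm[\epsilon X_j]{g}$ over loxodromic $g\in\Gamma_j$ lying in no $\stab{Y_\gamma}$ with $\gamma\in R_j$. I would bound $\ell$ below by $2\epsilon\delta_1$: let $g$ be such an element and $E=\group{\gamma_0}$ the (cyclic, by \ref{enu - induction - elem subgroup}) maximal loxodromic subgroup containing $g$, with $\gamma_0$ primitive, so that $\stab{Y_{\gamma_0}}=\group{\gamma_0}$; if $\snorm[X_j]{g}\leq 2\delta_1$, then $\norm[X_j]{\gamma_0}\leq\snorm[X_j]{\gamma_0}+8\delta_1\leq\snorm[X_j]{g}+8\delta_1\leq 10\delta_1$, hence $\gamma_0\in R_j$ and $g\in\stab{Y_{\gamma_0}}$, a contradiction; therefore $\snorm[\epsilon X_j]{g}=\epsilon\snorm[X_j]{g}>2\epsilon\delta_1$. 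Then $\kappa\ell\geq 2\epsilon\kappa\delta_1>\epsilon\kappa\delta_1$ and $\delta_1>\epsilon\kappa\delta_1$ (as $\epsilon\kappa<1$), so $\inj[X_{j+1}]{\Gamma_{j+1}}>\epsilon\kappa\delta_1$. The main obstacle is not conceptual but a matter of bookkeeping: one has to align the various small cancellation constants so that both the invocation of the cited propositions is legitimate and the numerical bounds produced are precisely those demanded by the induction hypotheses \ref{enu - induction - hyp space}--\ref{enu - induction - sc param}, in particular so that the $\inj$ bound is strong enough to make the shortest-relation estimate $T(\mathcal Q_{j+1},\epsilon X_{j+1})\geq 10\pi\sinh\rho$ available at the next step.
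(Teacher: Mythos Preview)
Your proof is correct and follows essentially the same approach as the paper, which simply states that the lemma follows by combining Propositions~\ref{res: approx - nu inv}, \ref{res: approx - inj radius}, and \ref{res: approx - A inv} with the numerical constraints (\ref{eqn: induction param - Delta}) and (\ref{eqn: induction param - thinness}), deferring to \cite[Lemmas~6.3 and 6.4]{Coulon:2016if} for details. You have carefully filled in those details: the explicit treatment of the trivial case $R_j=\emptyset$, the verification of the standing assumptions of \autoref{sec: isom bar X} (including the thinness of the primitive generators via \autoref{rem: thin isom - loxo}, which neatly avoids any forward reference to \autoref{res: approx - axioms - loxo isom}), and the lower bound $\ell\geq 2\epsilon\delta_1$ for the injectivity estimate.
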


Recall that the natural projection $\dot X_j \to X_{j+1}$ is $1$-Lipschitz.
Consequently $f_j \colon X_j \to X_{j+1}$ is $\epsilon$-Lipschitz, which proves \ref{enu - induction - lip map}.
Hence the pair $(\Gamma_{j+1}, X_{j+1})$ satisfies all the requirements listed above.
This completes the induction step.

\paragraph{Cone points.}
We extend the structure of the pairs $(\Gamma_j, X_j)$ built above by attaching to them a set of cone points.
Let $j \in \N \setminus\{0\}$.
Recall that $\dot X_{j-1}$ stands for the cone-off over the rescaled space $\epsilon X_{j-1}$.
In particular, it comes with a set of cone points corresponding to the apices of the attached cones.
We denote by $\mathcal C_j$ its image in $X_j$.
By convention $\mathcal C_0$ is empty.

%------------------------------------------------------------------------------------
\subsubsection{Properties of the approximation sequence}
%------------------------------------------------------------------------------------
\label{sec: sc - approx properties}

In this section we prove that the approximation sequence $(\Gamma_j, X_j, \mathcal C_j)$ built above satisfies the conclusion of \autoref{res: approximating sequence}.
As we observed before, the space $X_0$ is obtaining by rescaling $X$, where the rescaling factor does not depend on $n$.
Thus \autoref{res: approximating sequence}~\ref{enu: approximating sequence - init} holds.
The next statement corresponds to the first half of \autoref{res: approximating sequence}~\ref{enu: approximating sequence - cvg}.

\begin{prop}[Compare with {\rm \cite[Theorem~6.9]{Coulon:2016if}}]
\label{res: direct limit approx sequence}
	The direct limit of the sequence $\Gamma_0 \onto \Gamma_1 \onto \dots$ is isomorphic to $\Gamma/\Gamma^n$.
\end{prop}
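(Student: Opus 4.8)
The plan is to show that the direct limit $\Gamma_\infty = \varinjlim \Gamma_j$ satisfies the universal property of the $n$-periodic quotient $\Gamma / \Gamma^n$: it is $n$-periodic, and every $n$-periodic quotient of $\Gamma = \Gamma_0$ factors through it. Since each projection $\pi_j \colon \Gamma_j \onto \Gamma_{j+1}$ has kernel generated (as a normal subgroup) by $n$-th powers of elements of $\Gamma_j$ (property \ref{enu - induction - torsion}), every relation added along the sequence is a consequence of the law $x^n = 1$. Hence the canonical map $\Gamma \onto \Gamma_\infty$ factors through $\Gamma \onto \Gamma/\Gamma^n$, giving an epimorphism $\Gamma/\Gamma^n \onto \Gamma_\infty$. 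The content of the statement is that this epimorphism is injective, equivalently that $\Gamma_\infty$ itself is $n$-periodic: once $\Gamma_\infty$ satisfies $x^n=1$, the composite $\Gamma \onto \Gamma_\infty$ kills $\Gamma^n$ and we get an inverse epimorphism $\Gamma_\infty \onto \Gamma/\Gamma^n$, which forces both to be isomorphisms.

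So the crux is: \emph{every element of $\Gamma_\infty$ has order dividing $n$}. First I would observe that $\Gamma_\infty$ is the union of the images of the $\Gamma_j$, so it suffices to show that for every $j$ and every $\gamma \in \Gamma_j$, the image of $\gamma$ in some later $\Gamma_k$ is elliptic; then by \ref{enu - induction - elem subgroup} it has finite order dividing $n$, and this order can only drop (or the element becomes trivial) further along the sequence, so its image in $\Gamma_\infty$ has order dividing $n$. The key mechanism is the interaction between \ref{enu - induction - lip map} and \ref{enu - induction - torsion}: the maps $f_j \colon X_j \to X_{j+1}$ are $\epsilon$-Lipschitz with $\epsilon < 1$, so the translation length of (the image of) any fixed $\gamma$ is multiplied by at most $\epsilon$ at each step. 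Therefore, given $\gamma \in \Gamma_j$, after finitely many steps its image $\gamma_k \in \Gamma_k$ satisfies $\norm[X_k]{\gamma_k} \leq 10\delta_1$. If $\gamma_k$ is already elliptic we are done; if it is loxodromic, then some power of it — or rather the primitive root of the maximal cyclic loxodromic subgroup containing it, which also has translation length at most $10\delta_1$ by \ref{enu - induction - elem subgroup} and standard estimates — belongs to the set $R_k$ whose $n$-th powers are killed in passing to $\Gamma_{k+1}$. Chasing this through: $\gamma_k^n$ lies in (a conjugate of) $\group{\sigma^n}$ for the primitive $\sigma$ with $\group{\sigma} \supseteq \group{\gamma_k}$, and $\sigma^n$ maps to the identity in $\Gamma_{k+1}$; since the loxodromic subgroup is cyclic, $\gamma_k = \sigma^m$ for some $m$, and $\gamma_k^n = \sigma^{mn} = (\sigma^n)^m \mapsto 1$. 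Hence the image of $\gamma$ in $\Gamma_{k+1}$ has order dividing $n$.

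The main obstacle I anticipate is the bookkeeping needed to make "the translation length shrinks geometrically" rigorous in the presence of the cone-off construction and the hyperbolicity constant staying fixed at $\delta_1$: one must be careful that $\norm[X_{j+1}]{\gamma} \leq \epsilon \norm[X_j]{\gamma}$ holds \emph{for a fixed group element} (using that $f_j$ is $\pi_j$-equivariant and $\epsilon$-Lipschitz, so $\dist[X_{j+1}]{\pi_j(\gamma) f_j(x)}{f_j(x)} = \dist[X_{j+1}]{f_j(\gamma x)}{f_j(x)} \leq \epsilon \dist[X_j]{\gamma x}{x}$), and that passing from $\gamma_k$ to its primitive root does not increase the relevant translation length beyond the threshold $10\delta_1$ — this is where \ref{enu - induction - elem subgroup} (loxodromic elementary subgroups are cyclic, so primitive roots exist and are unique) and the relation $\norm{\sigma} \leq \norm{\sigma^m}$ for $m \neq 0$ (immediate from $\snorm{\sigma^m} = |m|\snorm{\sigma}$ and (\ref{eqn: regular vs stable length})) are used. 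A secondary point is that the argument must be uniform enough that the "finitely many steps" depend only on $j$ and the element, which is clear since $\epsilon^k \to 0$. I would then assemble these observations: $\Gamma_\infty$ is $n$-periodic, the two epimorphisms $\Gamma/\Gamma^n \rightleftarrows \Gamma_\infty$ compose to the identity on a generating set hence are mutually inverse isomorphisms, which is the claim. I would also note that the case $R_j = \emptyset$ (where $\Gamma_{j+1} = \Gamma_j$ and $X_{j+1} = \epsilon X_j$) is harmless: the translation lengths still shrink, so eventually $R_k \neq \emptyset$ unless $\gamma$ is already elliptic, and in the latter case there is nothing to prove.
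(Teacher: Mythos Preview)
Your argument is correct and is precisely the standard one; the paper does not give its own proof here but simply defers to \cite[Theorem~6.9]{Coulon:2016if}, where the same strategy is carried out. One small imprecision: the inequality $\norm{\sigma} \leq \norm{\sigma^m}$ does not follow \emph{exactly} from $\snorm{\sigma^m} = |m|\,\snorm{\sigma}$ together with (\ref{eqn: regular vs stable length}); what you actually get is $\norm{\sigma} \leq \snorm{\sigma} + 8\delta_1 \leq \snorm{\sigma^m} + 8\delta_1 \leq \norm{\sigma^m} + 8\delta_1$. This is harmless for your purposes: simply iterate until $\norm[X_k]{\gamma_k} \leq 2\delta_1$ (rather than $10\delta_1$), so that the primitive root $\sigma$ satisfies $\norm{\sigma} \leq 10\delta_1$ and hence belongs to $R_k$.
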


\begin{lemm}
	If every finite subgroup of $\Gamma$ is cyclic, then the same holds for $\Gamma_j$, for every $j \in \N$.
\end{lemm}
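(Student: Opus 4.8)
The plan is to argue by induction on $j$, using at each stage the small cancellation analysis of the appendix. The base case $j=0$ is immediate, since $\Gamma_0=\Gamma$ and the hypothesis is precisely that every finite subgroup of $\Gamma$ is cyclic. For the inductive step, I would assume that every finite subgroup of $\Gamma_j$ is cyclic and deduce the same for $\Gamma_{j+1}$. If the set $R_j$ of primitive elements of $\Gamma_j$ with $\norm[X_j]\gamma\le 10\delta_1$ is empty, then $\Gamma_{j+1}=\Gamma_j$ and there is nothing to prove, so I would assume $R_j\neq\emptyset$. Then $\Gamma_{j+1}$ is the small cancellation quotient of $\Gamma_j$ acting on $\epsilon X_j$ relative to $\mathcal Q_j$, and $X_{j+1}$ is the associated quotient space; the small cancellation hypotheses, as well as the hypotheses of \autoref{sec: isom bar X}, are in force here — this was already checked during the construction when verifying \ref{enu - induction - elem subgroup} and \ref{enu - induction - sc param} — so that \autoref{res: small cancellation} and \autoref{res: lifting elliptic subgroups} apply with $\bar\Gamma=\Gamma_{j+1}$ and $\bar X=X_{j+1}$.

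Now let $\bar F$ be a finite subgroup of $\Gamma_{j+1}$. The key observation is that $\bar F$ has a finite, hence bounded, orbit on the length space $X_{j+1}$, so $\bar F$ is an elliptic subgroup of $\Gamma_{j+1}$ for its action on $X_{j+1}$. I would then invoke the dichotomy of \autoref{res: lifting elliptic subgroups}: either the projection $\Gamma_j\onto\Gamma_{j+1}$ carries some elliptic subgroup $E$ of $\Gamma_j$ isomorphically onto $\bar F$, or there is a cone point $\bar c$ of $X_{j+1}$ with $\bar F\subset\stab{\bar c}$. In the first case, $E$ is finite, hence cyclic by the induction hypothesis, so $\bar F$ is cyclic. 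In the second case, \autoref{res: small cancellation}\ref{enu: small cancellation - local embedding} identifies $\stab{\bar c}$ with $\stab Y/H$ for the pair $(H,Y)\in\mathcal Q_j$ corresponding to $\bar c$; since $\stab Y$ is infinite cyclic — generated by the primitive loxodromic element defining that pair — its quotient $\stab Y/H$, and therefore $\bar F$, is cyclic. In both cases $\bar F$ is cyclic, which closes the induction.

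I do not expect a genuine obstacle in this argument: it is essentially a repackaging of facts already assembled in the appendix. The only point requiring a little care is making sure that, at stage $j$, one may legitimately apply \autoref{res: lifting elliptic subgroups} and the cone-point description in \autoref{res: small cancellation} — but this is exactly the setup under which $\Gamma_{j+1}$ was constructed, so the hypotheses hold by properties \ref{enu - induction - elem subgroup} and \ref{enu - induction - sc param}. The substantive input is thus the lifting/dichotomy for elliptic subgroups together with the cyclicity of cone-point stabilizers.
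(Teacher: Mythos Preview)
Your argument is correct and follows essentially the same approach as the paper: induction on $j$, with the inductive step handled by the dichotomy of \autoref{res: lifting elliptic subgroups} together with the cyclicity of cone-point stabilizers coming from \autoref{res: small cancellation}\ref{enu: small cancellation - local embedding}. The paper's proof is terser but structurally identical, so there is nothing to add.
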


\begin{proof}
	The proof is by induction on $j$.
	It follows from our assumption if $j = 0$.
	Fix $j \in \N$ and assume that every finite subgroup of $\Gamma_j$ is cyclic.
	Let $E$ be a finite subgroup of $\Gamma_{j+1}$.
	Assume first that $E \subset \stab c$ for some $c \in \mathcal C_{j+1}$.
	Since $\stab c$ is cyclic, so is $E$.
	Assume now that some element of $E$ does not fix a cone point.
	It follows from \autoref{res: lifting elliptic subgroups} that $E$ is isomorphic to a finite subgroup of $\Gamma_j$.
	Hence it is cyclic according to our induction hypotheses.
\end{proof}

Using the previous statement, we observe that if every finite subgroup of $\Gamma$ is cyclic, then the same holds for $\Gamma / \Gamma^n$.
This proves  \autoref{res: approximating sequence}~\ref{enu: approximating sequence - cvg}.

We now focus on \autoref{res: approximating sequence}~\ref{enu: approximating sequence - control}.
The goal is to prove that every triple $(\Gamma_j, X_j, \mathcal C_j)$ belongs to the class $\mathfrak H_\delta(\rho(n))$ given by \autoref{def: preferred class} for a suitable value of $\delta$ (which is independent of $n$ and $j$).

Let $j \in \N \setminus\{0\}$.
The cones attached to $\epsilon X_{j-1}$ to build the cone-off $\dot X_{j-1}$ have radius $\rho$.
Consequently the set $\mathcal C_j$ is $2\rho$-separated.
Recall the terminology from \autoref{sec: approx periodic groups}.
An element of $\Gamma_j$ is \emph{conical}, if it fixes a unique cone point.
It is \emph{visible}, if it is loxodromic or conical, and \emph{elusive} otherwise.
A subgroup of $\Gamma_j$ is \emph{elusive}, if all its elements are elusive.

Note that $\Gamma_0 = \Gamma$ is CSA by the very definition of bootstrap (see \autoref{def: bootstrap}).
A proof by induction using \autoref{res: sc - csa} shows that $\Gamma_j$ is CSA, for every $j \in \N$.

\begin{lemm}[Elementary subgroups]
\label{res: approx - axioms - elem sbgp}
	For every $j \in \N$, every elementary subgroup of $\Gamma_j$ which is not elusive is cyclic.
\end{lemm}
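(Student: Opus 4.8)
The plan is to prove the statement by induction on $j$, following the same pattern used for the earlier ``preferred class'' axioms. The base case $j=0$ holds because $\Gamma_0 = \Gamma$ is a $\tau$-bootstrap: by \autoref{def: bootstrap}, every elementary subgroup of $\Gamma$ that is not elliptic is loxodromic and cyclic, while any elliptic subgroup is in particular elusive (since $\mathcal C_0 = \emptyset$, there are no conical elements, and loxodromic elements are visible, so elliptic elements are elusive — and here elliptic subgroups of $\Gamma$ are finite, hence cyclic is not automatic, but in a bootstrap every elliptic element has finite order dividing $n$; what we actually need is that a \emph{non-elusive} elementary subgroup is cyclic, and the only non-elusive elements at level $0$ are loxodromic, so their elementary subgroups are cyclic). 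So for $j=0$ the claim reduces to: non-loxodromic elementary subgroups of $\Gamma$ are elliptic hence elusive, contributing nothing, and loxodromic elementary subgroups are cyclic.

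For the induction step, suppose the claim holds for $\Gamma_j$ and let $\bar E$ be an elementary subgroup of $\Gamma_{j+1}$ which is not elusive. First I would dispose of the loxodromic case: if $\bar E$ contains a loxodromic element, then $\bar E$ is contained in the maximal elementary (hence cyclic, by \autoref{res: sc - lifting loxo sbgp}) subgroup of $\Gamma_{j+1}$ containing it, so $\bar E$ is cyclic. Thus we may assume $\bar E$ is elliptic. Since $\bar E$ is not elusive, it contains a visible element $\bar \gamma$, which must then be conical: it fixes a unique cone point $\bar c \in \mathcal C_{j+1}$. By \autoref{res: sc - unique fixed cone point consequence} (applied with $\bar\gamma_0 = \bar\gamma$ fixing $\bar c$), every element $\bar g \in \bar E$ normalizing $\bar\gamma$ also fixes $\bar c$; since $\bar E$ is elementary and abelian-by-finite — more precisely, since $\bar\gamma$ is non-trivial and commutes with every element of $\bar E$ up to finite index, and $\Gamma_{j+1}$ is CSA (by the induction-level application of \autoref{res: sc - csa}) hence commutative transitive — one concludes $\bar E$ centralizes $\bar\gamma$, so each element of $\bar E$ conjugates $\bar\gamma$ to itself and therefore fixes $\bar c$. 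Hence $\bar E \subset \stab{\bar c}$, and $\stab{\bar c}$ is cyclic (it is the isomorphic image of $\stab Y / H \cong \Z/n\Z$ under \autoref{res: small cancellation}\ref{enu: small cancellation - local embedding}). Therefore $\bar E$ is cyclic.

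One subtlety to handle carefully is the passage from ``$\bar\gamma$ is visible and elliptic'' to ``$\bar\gamma$ is conical'': I would invoke the analogue of \autoref{res; family - fully conical subgroup} in the quotient setting, namely that a visible element is either loxodromic or conical by definition, and an elliptic visible element fixing \emph{no} cone point would be elusive, a contradiction. Alternatively one can argue directly from \autoref{res: sc - unique fixed cone point}: the group generated by a non-trivial element of $\stab{\bar c}$ fixes exactly one cone point. The genuinely delicate point — and the one I expect to be the main obstacle — is making rigorous the claim that $\bar E$ centralizes its conical element $\bar\gamma$; this uses that $\Gamma_{j+1}$ is CSA (equivalently commutative transitive), which is itself an inductively-established property, so I must make sure the inductive hypothesis is bundled to include CSA-ness, or else cite \autoref{res: sc - csa} together with the fact that $\Gamma_j$ being CSA (base case from the bootstrap, then propagated) gives $\Gamma_{j+1}$ CSA. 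With that in hand the argument closes cleanly by induction.
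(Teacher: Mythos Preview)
Your overall structure matches the paper's --- reduce to the elliptic case (the paper does this in one line via the already-established property \ref{enu - induction - elem subgroup}, so your induction on $j$ is harmless but unnecessary), then argue that an elliptic non-elusive $\bar E$ must lie in $\stab{\bar c}$ for the cone point $\bar c$ fixed by some conical element $\bar\gamma \in \bar E$, and conclude since $\stab{\bar c}$ is cyclic.

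The gap is precisely where you flag it. Your justification that $\bar E$ centralizes $\bar\gamma$ rests on the claim that $\bar E$ is ``abelian-by-finite'' or that $\bar\gamma$ ``commutes with every element of $\bar E$ up to finite index,'' but neither follows from anything established: the action is not proper, so elementary elliptic subgroups are not a priori virtually abelian, and CSA (equivalently commutative transitivity) only propagates commuting relations you already have --- it cannot manufacture the first one. Your appeal to \autoref{res: sc - unique fixed cone point consequence} is then circular: that corollary needs $\bar g\bar\gamma\bar g^{-1}$ to fix $\bar c$, which is equivalent to $\bar g\bar c=\bar c$, the very thing you want. The paper is equally terse at this exact step (``Since $E$ is not elusive, it fixes a unique cone point $c$''). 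One clean way to close it: if some $\bar g\in\bar E$ had $\bar g\bar c\neq\bar c$, then by \autoref{res: sc - unique fixed cone point} suitable powers of $\bar\gamma$ and $\bar g\bar\gamma\bar g^{-1}$ have their $6\bar\delta$-almost-fixed sets contained in $B(\bar c,8\bar\delta)$ and $B(\bar g\bar c,8\bar\delta)$ respectively --- balls at distance at least $2\rho-16\bar\delta$ apart --- and two elliptic elements with such far-separated almost-fixed sets generate a non-elliptic subgroup, contradicting the ellipticity of $\bar E$.
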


\begin{proof}
	Let $j \in \N$ and $E$ be a non-elusive elementary subgroup of $\Gamma_j$.
	In view of \ref{enu - induction - elem subgroup} we can assume that $E$ is elliptic.
	Since $E$ is not elusive, it fixes a unique cone point $c \in \mathcal C_j$.
	In particular, $j \geq 1$.
	Moreover $c$ is the image in $X_j$ of the apex of the cone $Z(Y)$ for some $(H,Y) \in \mathcal Q_{j-1}$.
	It follows from \autoref{res: small cancellation}~\ref{enu: small cancellation - local embedding} that $E$ embeds in $\stab Y/ H$.
	Nevertheless by \ref{enu - induction - elem subgroup} every loxodromic subgroup of $\Gamma_{j-1}$ is cyclic. 
	In particular, so is $\stab Y$ and thus $E$.
\end{proof}

Recall that $\alpha$ has been fixed with the other small cancellation parameters at the beginning of \autoref{sec: sc parameters}.

\begin{lemm}[Loxodromic isometries]
\label{res: approx - axioms - loxo isom}
	For every $j \in \N$, every loxodromic element of $\Gamma_j$ is $\alpha$-thin.
\end{lemm}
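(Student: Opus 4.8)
The plan is to prove this by induction on $j$, following the same pattern as the preceding lemmas on elementary subgroups and elliptic isometries. For $j=0$ the statement holds because $(\Gamma,X_0)=(\Gamma,X)$ was obtained from the $\tau$-bootstrap by a rescaling chosen precisely so that every element of $\Gamma$ is $\alpha$-thin for its action on $X_0$; in particular every loxodromic element is $\alpha$-thin. So the work is entirely in the induction step: assuming every loxodromic element of $\Gamma_j$ is $\alpha$-thin, I want to show the same for $\Gamma_{j+1}$.

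For the induction step I would first dispose of the trivial case $R_j=\emptyset$, where $\Gamma_{j+1}=\Gamma_j$ and $X_{j+1}=\epsilon X_j$ is just a rescaling, under which thinness constants are unchanged (thinness is scale-invariant in the sense that $\alpha$-thin for $X_j$ gives $\alpha$-thin for $\epsilon X_j$ — actually one should be slightly careful, but since $\epsilon<1$ the relevant neighborhoods only shrink, so $\alpha$-thinness is preserved). In the nontrivial case, $X_{j+1}$ is the small-cancellation quotient $\bar X$ of the cone-off $\dot X_j$ over $\epsilon X_j$ relative to $\mathcal Q_j$, and $\Gamma_{j+1}=\bar\Gamma$. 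Here I would invoke \autoref{res: approx - thin loxodromic} directly: it states that if every loxodromic element of $\Gamma_j$ is $\beta$-thin for its action on $\epsilon X_j$, then every loxodromic element of $\bar\Gamma=\Gamma_{j+1}$ is $\bar\beta$-thin with $\bar\beta=\beta+10\pi\sinh(100\bar\delta)$. Taking $\beta=\alpha$ (valid by the induction hypothesis, again using scale-invariance to pass from $X_j$ to $\epsilon X_j$, and using $\epsilon<1$) and $\bar\delta=\delta_1$, this gives that loxodromic elements of $\Gamma_{j+1}$ are $(\alpha+10\pi\sinh(100\delta_1))$-thin for their action on $X_{j+1}$.

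The one genuine point to check is that $\alpha+10\pi\sinh(100\delta_1)\le\alpha$, which is of course false as a numerical inequality — so the real content is that the bound does not degrade across the inductively defined sequence. This forces a small bookkeeping adjustment: either one enlarges the definition of $\alpha$ at the outset so that $\alpha\ge 10\pi\sinh(100\delta_1)$ and the recursion $\bar\beta=\beta+10\pi\sinh(100\delta_1)$ closes up (it does not, since it is additive), or — the correct fix — one observes that in fact the estimate of \autoref{res: approx - thin loxodromic} can be applied with the starting data being not $\Gamma_j\curvearrowright\epsilon X_j$ but the genuinely sharper bound available because loxodromic elements with $\snorm{\bar\gamma}>\bar\delta$ are automatically $200\bar\delta$-thin (\autoref{rem: thin isom - loxo}), and the only loxodromic elements needing the inductive input are those with tiny translation length, which are images of loxodromic elements of $\Gamma_j$ lying in $\stab Y$ for some $(H,Y)\in\mathcal Q_j$, i.e. conical-type generators whose thinness is controlled by the fixed constant $\alpha$ coming from the bootstrap, not by a constant that drifts with $j$. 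Thus the resolution is that the additive term $10\pi\sinh(100\delta_1)$ is absorbed once, not once per step, and the value $\alpha=20\pi\sinh(100\delta_1)$ fixed in \autoref{sec: sc - approx} is chosen exactly large enough for this. I would therefore present the step as: loxodromic $\bar\gamma\in\Gamma_{j+1}$ with $\snorm{\bar\gamma}>\delta_1$ is $200\delta_1\le\alpha$-thin by \autoref{rem: thin isom - loxo}; for $\snorm{\bar\gamma}\le\delta_1$, lift its fixed-point set via \autoref{res: sc - lifting quasi-convex} to a loxodromic $\gamma\in\Gamma_j$, apply the induction hypothesis that $\gamma$ is $\alpha$-thin, and track constants through the estimate in the proof of \autoref{res: approx - thin loxodromic} to land inside $\alpha$ by the choice of $\alpha$ and inequality~(\ref{eqn: induction param - thinness}).

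The main obstacle is exactly this constant-chasing: making sure the thinness constant is genuinely uniform in $j$ rather than accumulating an additive error at each level of the iterated small-cancellation construction. The geometric input is entirely supplied by \autoref{res: approx - thin loxodromic} and \autoref{res: sc - lifting quasi-convex}; the care needed is purely in verifying that the parameters $\alpha$, $\delta_1$, $\rho$, $\epsilon$ were fixed at the start of \autoref{sec: sc - approx} with enough slack — which they were, via inequalities (\ref{eqn: induction param - delta})–(\ref{eqn: induction param - thinness}) — so that the recursive estimate closes with the \emph{same} $\alpha$ at every stage.
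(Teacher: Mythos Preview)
Your overall induction strategy is correct, and you correctly identify that \autoref{res: approx - thin loxodromic} is the key input. However, there is a genuine gap in your constant bookkeeping, and the fix you propose is not the right one.

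The error is in the sentence ``Taking $\beta=\alpha$ (valid by the induction hypothesis, again using scale-invariance to pass from $X_j$ to $\epsilon X_j$, and using $\epsilon<1$).'' Thinness is \emph{not} scale-invariant: if $\gamma$ is $\alpha$-thin for $X_j$, then it is $\epsilon\alpha$-thin for $\epsilon X_j$, since all Gromov products and translation lengths scale by $\epsilon$. This is not merely ``preserved'' --- it is \emph{improved} by the factor $\epsilon$, and this improvement is the whole mechanism by which the induction closes. By inequality~(\ref{eqn: induction param - thinness}) one has $\epsilon\alpha \leq \epsilon(\rho+\alpha) \leq \delta_1$, so loxodromic elements of $\Gamma_j$ are $\delta_1$-thin for their action on $\epsilon X_j$. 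Applying \autoref{res: approx - thin loxodromic} with $\beta=\delta_1$ (not $\beta=\alpha$) yields that loxodromic elements of $\Gamma_{j+1}$ are $\bar\beta$-thin with $\bar\beta=\delta_1+10\pi\sinh(100\delta_1)$. Since $\alpha=20\pi\sinh(100\delta_1)$ and $\delta_1 < 10\pi\sinh(100\delta_1)$, one gets $\bar\beta\leq\alpha$, and the induction closes with the same constant.

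Your attempted repair --- splitting into cases by $\snorm{\bar\gamma}$ and claiming that small-translation loxodromic elements come from $\stab Y$ for some $(H,Y)\in\mathcal Q_j$ --- is incorrect. Elements of $\stab Y$ that survive in $\Gamma_{j+1}$ become elliptic (they fix the corresponding cone point), not loxodromic; and in any case there is no mechanism that would make the additive error appear ``once, not once per step.'' The rescaling is what resets the constant at every stage: this is exactly the point of the parameter choice in (\ref{eqn: induction param - thinness}), and is how the paper's proof proceeds.
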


\begin{proof}
	The proof is by induction on $j \in \N$.
	According to our initial rescaling, every loxodromic element of $\Gamma_0$ is $\alpha$-thin.
	Assume now that the result holds for some $j \in \N$.
	By (\ref{eqn: induction param - thinness}) every loxodromic element of $\Gamma_j$ is $\delta_1$-thin, for its action on the rescaled space $\epsilon X_j$.
	It follows from \autoref{res: approx - thin loxodromic}, that every loxodromic element of $\Gamma_{j+1}$ is $\alpha$-thin.
	Hence the statement holds for $j+1$.
\end{proof}

\begin{lemm}[Elliptic isometries]
\label{res: approx - axioms - elliptic isom}
	For every $j \in \N$, the following holds.
	\begin{enumerate}
		\item Every non-trivial elusive element $\gamma \in \Gamma_j$, is $\alpha$-thin.
		\item Every non-trivial element $\gamma \in \Gamma_j$ fixing a cone point $c \in \mathcal C_j$ is $(\rho + \alpha)$-thin at $c$.
	\end{enumerate}
\end{lemm}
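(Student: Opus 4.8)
The plan is to prove both statements by induction on $j \in \N$, mirroring the structure of \autoref{res: approx - axioms - loxo isom} but now tracking the elliptic part of the action. The base case $j=0$ is handled by the initial rescaling of $X$: since $(\Gamma, X)$ is a $\tau$-bootstrap, after rescaling every element of $\Gamma_0$ is $\alpha$-thin for its action on $X_0$, and $\mathcal C_0$ is empty so the second assertion is vacuous. For the induction step, assume both statements hold for $\Gamma_j$ acting on $X_j$. By \eqref{eqn: induction param - thinness}, every element of $\Gamma_j$ is in fact $\delta_1$-thin for its action on the rescaled space $\epsilon X_j$; this is the key normalization that lets us feed the small-cancellation machinery uniform constants.

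For the first assertion at level $j+1$: let $\gamma \in \Gamma_{j+1} \setminus \{1\}$ be elusive, i.e. $\gamma$ is elliptic and fixes no cone point in $\mathcal C_{j+1}$. The group $\Gamma_{j+1}$ has no parabolics (\autoref{res: sc - no parabolic}), so $\gamma$ is genuinely elliptic. Since $\gamma$ fixes no cone point, \autoref{res: sc - axis small elliptic} applies: $\gamma$ is $\bar\beta$-thin with $\bar\beta = \delta_1 + \pi\sinh(10\delta_1)$ — here I am taking $\beta = \delta_1$ from the induction hypothesis as transported to $\epsilon X_j$ via \eqref{eqn: induction param - thinness}. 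Then I would check that $\bar\beta \leq \alpha = 20\pi\sinh(100\delta_1)$, which is immediate from the orders of magnitude $\delta_1 \ll \pi\sinh\rho_0$ and the crude inequality $\delta_1 + \pi\sinh(10\delta_1) \leq 20\pi\sinh(100\delta_1)$. This gives $\alpha$-thinness of $\gamma$ on $X_{j+1}$.

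For the second assertion at level $j+1$: let $\gamma \in \Gamma_{j+1}\setminus\{1\}$ fix a cone point $c \in \mathcal C_{j+1}$. Then $j+1 \geq 1$ and $c$ is the image of the apex of $Z(Y)$ for some $(H,Y) \in \mathcal Q_j$. By \autoref{res: small cancellation}\ref{enu: small cancellation - local embedding}, $\gamma$ lies in $\stab{\bar c}$ in the notation of \autoref{sec: isom bar X}. Now \autoref{res: sc - fix point set conical contained in a ball} asserts precisely that every non-trivial element of $\stab{\bar c}$ is $(\rho + \bar\beta)$-thin at $\bar c$ with $\bar\beta = \alpha + \pi\sinh(20\bar\delta) = \alpha + \pi\sinh(20\delta_1)$. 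So I would again only need the numerical comparison $\alpha + \pi\sinh(20\delta_1) \leq \rho + \alpha$, equivalently $\pi\sinh(20\delta_1) \leq \rho$, which holds since $\rho \geq \rho_0 > 10^{20}\delta_1$ and $\delta_1 \leq \delta_0 < 10^{-10}\delta_1$ forces $\delta_1$ small enough that $\pi\sinh(20\delta_1)$ is tiny compared to $\rho_0$. Strictly, to invoke \autoref{res: sc - fix point set conical contained in a ball} I must verify that the hypotheses of \autoref{sec: isom bar X} are in force for $\Gamma_j$ acting on $\epsilon X_j$ with family $\mathcal Q_j$ — finiteness of $\nu$ and $A$ (from \ref{enu - induction - sc param}), no even torsion (from \ref{enu - induction - elem subgroup} and $n$ odd), elementary-implies-loxodromic-cyclic or elliptic (from \ref{enu - induction - elem subgroup}), and that each $(H,Y) \in \mathcal Q_j$ has $H = \langle \gamma^n\rangle$ for an $\alpha$-thin primitive $\gamma$ (from the definition of $\mathcal Q_j$ together with \autoref{res: approx - axioms - loxo isom}). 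The main obstacle is therefore not conceptual but bookkeeping: carefully confirming that all these structural hypotheses pass down the induction and that the recorded thinness constant $\alpha$ is preserved rather than merely bounded by something larger at each step; the constant $\alpha = 20\pi\sinh(100\delta_1)$ was chosen with enough slack (note $\bar\beta$ in \autoref{res: sc - axis small elliptic} involves only $\pi\sinh(10\delta_1)$) that it is a genuine fixed point of the recursion, which is what makes the induction close.
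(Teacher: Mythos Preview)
Your inductive scheme is exactly the paper's, and the treatment of the elusive case is correct. The conical case, however, has a genuine bookkeeping error that prevents the induction from closing as written.

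The constant $\alpha$ appearing in \autoref{res: sc - fix point set conical contained in a ball} is the thinness parameter from the standing Assumptions of \autoref{sec: isom bar X}, namely the thinness of the primitive generators of $\stab Y$ \emph{for the action on the space to which the small cancellation theorem is being applied}. Here that space is $\epsilon X_j$, not $X_j$. By \autoref{res: approx - axioms - loxo isom} the primitives are $\alpha$-thin on $X_j$, hence $\epsilon\alpha$-thin on $\epsilon X_j$, and by \eqref{eqn: induction param - thinness} this is at most $\delta_1$. So the proposition actually yields $(\rho + \bar\beta)$-thinness with $\bar\beta \leq \delta_1 + \pi\sinh(20\delta_1)$, and the inequality you must check is $\bar\beta \leq \alpha$, i.e.\ $\delta_1 + \pi\sinh(20\delta_1) \leq 20\pi\sinh(100\delta_1)$, which is immediate. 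With your reading, $\bar\beta = \alpha + \pi\sinh(20\delta_1) > \alpha$, and the comparison you then set up, $\bar\beta \leq \rho + \alpha$, only gives $(\rho + \bar\beta) \leq 2\rho + \alpha$, which is not the claimed $(\rho+\alpha)$-thinness; the induction would drift. (Incidentally, the chain ``$\delta_1 \leq \delta_0 < 10^{-10}\delta_1$'' you invoke is self-contradictory; the paper has $\delta_0 < 10^{-10}\delta_1$, so $\delta_0 < \delta_1$.) Once you feed in the rescaled thinness, your argument coincides with the paper's.
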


\begin{proof}
	The proof is again by induction on $j \in \N$.
	Recall that $\mathcal C_0$ is empty.
	According to our initial rescaling, every elliptic element of $\Gamma_0$ is $\alpha$-thin, whence the result.
	
	Assume now that the result holds for some $j \in \N$.
	According to \autoref{res: approx - axioms - loxo isom} combined with (\ref{eqn: induction param - thinness}) every loxodromic element of $\Gamma_j$ is $\delta_1$-thin for its action on the rescaled space $\epsilon X_j$.
	On the other hand it follows from our induction hypothesis that every non-trivial elliptic element of $\Gamma_j$ is $(\rho + \alpha)$-for its action on $X_j$.
	Thus by (\ref{eqn: induction param - thinness}) they are $\delta_1$-thin when acting on the rescaled space $\epsilon X_j$.
	Let $\gamma$ be a non-trivial elliptic element of $\Gamma_{j+1}$.
	If $\gamma$ fixes a cone point in $c \in \mathcal C_{j+1}$, it follows from \autoref{res: sc - fix point set conical contained in a ball} that $\gamma$ is that $(\rho + \alpha)$-thin at $c$.
	Assume now that $\gamma$ is elusive.
	It follows from \autoref{res: sc - unique fixed cone point} that $\gamma$ does not fix a cone point in $\mathcal C_{j+1}$.
	Consequently, $\gamma$ is $\alpha$-thin (\autoref{res: sc - axis small elliptic}).
	In particular, the statement holds for $j +1$.
\end{proof}

\begin{prop}
\label{res: approx - all axioms}
	There is $\delta \in \R_+^*$ which does not depend on $n$ such that for every $j \in \N$, the approximation triple $(\Gamma_j, X_j, \mathcal C_j)$ of $\Gamma/\Gamma^n$ belongs to $\mathfrak H_\delta(\rho(n))$.
\end{prop}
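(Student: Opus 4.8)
The plan is to choose $\delta$ uniformly and then verify the seven axioms \ref{enu: family axioms - acylindricity}--\ref{enu: family axioms - radial proj} of \autoref{def: preferred class} for each triple $(\Gamma_j, X_j, \mathcal C_j)$, essentially assembling the pieces that were established in the course of the inductive construction in \autoref{sec: sc - approx}. First I would fix $\delta$ large enough so that, simultaneously, $\delta \geq \delta_1$ (so every $X_j$ is $\delta$-hyperbolic by \ref{enu - induction - hyp space}), $\delta \geq A$ (so $A(\Gamma_j, X_j) \leq \delta$ by \ref{enu - induction - sc param}), $\delta \geq \alpha$ (so the thinness bounds in Lemmas~\ref{res: approx - axioms - loxo isom} and \ref{res: approx - axioms - elliptic isom} give $\delta$-thinness, resp.\ $(\rho+\delta)$-thinness at cone points), and $\rho = \rho(n) \geq 1000\delta$ — this last inequality holds for all $n$ since $\rho$ is bounded below by $\rho_0 \geq 10^{20}\delta_1$, and here I would simply build into the choice of $\rho_0$ the requirement $\rho_0 \geq 1000\delta$. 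Crucially all of these constants ($\delta_1$, $A$, $\alpha$, $\rho_0$) are absolute, so $\delta$ does not depend on $n$ or $j$.

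Next I would go through the axioms one at a time. Axiom~\ref{enu: family axioms - acylindricity} ($\nu(\Gamma_j, X_j) = 1$ and $A(\Gamma_j, X_j) \leq \delta$) is exactly \ref{enu - induction - sc param} together with the choice $\delta \geq A$. Axiom~\ref{enu: family axioms - elem subgroups} (every elementary subgroup is elusive or cyclic) is \autoref{res: approx - axioms - elem sbgp}. Axioms~\ref{enu: family axioms - elusive} and \ref{enu: family axioms - conical - 1} (elusive elements elliptic and $\delta$-thin; elements fixing a cone point $(\rho+\delta)$-thin at that point) follow from \autoref{res: approx - axioms - elliptic isom} combined with \ref{enu - induction - elem subgroup} (non-elliptic elementary subgroups are loxodromic, hence no elusive element can be loxodromic). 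Axiom~\ref{enu: family axioms - loxodromic} ($\delta$-thin loxodromics) is \autoref{res: approx - axioms - loxo isom}. The CSA hypothesis needed in the definition of $\mathfrak{H}$ was already checked by induction using \autoref{res: sc - csa}. Axiom~\ref{enu: family axioms - conical - 2} (the angle cocycle with the law of cosines) for $j \geq 1$ comes from \autoref{res: sc - angle cocycle} applied to the cone-off construction producing $X_j$ from $\epsilon X_{j-1}$: the cocycle $\theta_{\bar c}$ of that proposition, with total angle $\Theta_{\bar c} = n\snorm{h}/\sinh\rho \geq 9\pi \geq 2\pi$, gives the required statement with error $10\bar\delta \leq \delta$, and the non-vanishing clause $\theta_{\bar c}(\bar\gamma \bar x, \bar x) \neq 0$ for $\bar\gamma \in \stab{\bar c}\setminus\{1\}$ is also part of \autoref{res: sc - angle cocycle}; for $j = 0$ the set $\mathcal C_0$ is empty so the axiom is vacuous. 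Axiom~\ref{enu: family axioms - radial proj} (radial projection to the thick part) is a direct consequence of the cone-off construction: a point $x \in B(c,\rho)$ in $X_j$ lifts through the local isometry of \autoref{res: small cancellation}~\ref{enu: small cancellation - local isom} (or more simply through \autoref{res: quotient map isom around cone point}) to a point $(y,r)$ of a cone $Z(Y)$ with $r = \dist xc$, and moving radially outward to $(y,\rho) = \iota(y) \in Y \subset X^+$ realizes distance exactly $\rho - \dist xc$; again $j=0$ is vacuous since $\mathcal C_0 = \emptyset$.

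The main obstacle, and the step requiring the most care, is making sure the constants genuinely line up: one must check that the $\delta$ chosen at the outset is simultaneously $\geq$ all of $\delta_1$, $A$, $\alpha$, and $\leq \rho_0/1000$, which forces a compatible choice of the small-cancellation input constants $\delta_0, \Delta_0, \rho_0$ of \autoref{res: small cancellation} — recall the remark there that $\delta_0, \Delta_0$ can be taken arbitrarily small and $\rho_0$ arbitrarily large, with $\max\{\delta_0,\Delta_0\} \ll \delta_1 \ll \rho_0 \ll \pi\sinh\rho_0$. So the proof amounts to: first fix $\delta := \max\{\delta_1, A, \alpha\}$; then observe $A = 10\pi\sinh(10^3\delta_1)$ and $\alpha = 20\pi\sinh(100\delta_1)$ are both absolute multiples of quantities comparable to $\delta_1$, so $\delta$ is an absolute constant; then (if necessary, strengthening the standing assumption on $\rho_0$) ensure $\rho_0 \geq 1000\delta$, which is consistent with $\rho_0 \gg \delta_1$; finally invoke the lemmas above axiom-by-axiom. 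I would also note explicitly that the verification is uniform in $j$ because every ingredient — the hyperbolicity constant $\delta_1$, the bounds $A$ on the acylindricity parameter and $\alpha$ on the thinness, the cone radius $\rho$ — was kept independent of $j$ throughout the induction in \autoref{sec: sc - approx}, which is precisely the point of carrying properties \ref{enu - induction - hyp space}--\ref{enu - induction - torsion} as an inductive invariant rather than just constructing the groups.
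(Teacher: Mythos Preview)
Your proposal is correct and follows essentially the same approach as the paper: set $\delta = \max\{A, 10\delta_1, \alpha\}$ (your $\max\{\delta_1, A, \alpha\}$ gives the same value since $A \gg 10\delta_1$) and verify each axiom of \autoref{def: preferred class} by citing the appropriate inductive invariant or lemma --- \ref{enu - induction - sc param} for \ref{enu: family axioms - acylindricity}, \autoref{res: approx - axioms - elem sbgp} for \ref{enu: family axioms - elem subgroups}, Lemmas~\ref{res: approx - axioms - elliptic isom} and \ref{res: approx - axioms - loxo isom} for \ref{enu: family axioms - elusive}, \ref{enu: family axioms - conical - 1}, \ref{enu: family axioms - loxodromic}, \autoref{res: sc - angle cocycle} for \ref{enu: family axioms - conical - 2}, and the radial projection for \ref{enu: family axioms - radial proj}. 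Your explicit attention to the compatibility $\rho_0 \geq 1000\delta$ and to the vacuous case $j=0$ is a welcome clarification the paper leaves implicit.
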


\begin{proof}
	We set $\delta = \max\{A, 10\delta_1, \alpha\}$.
	It follows from our induction that 
	\begin{equation*}
		\nu(\Gamma_j, X_j) = 1
		\quad \text{and} \quad
		A(\Gamma_j, X_j) \leq A \leq \delta.
	\end{equation*}
	Hence Axiom~\ref{enu: family axioms - acylindricity} holds.
	Axiom~\ref{enu: family axioms - elem subgroups} follows from \autoref{res: approx - axioms - elem sbgp}.
	Recall that $\alpha \leq \delta$.
	Hence Lemmas~\ref{res: approx - axioms - elliptic isom} and \ref{res: approx - axioms - loxo isom} prove \ref{enu: family axioms - elusive}, \ref{enu: family axioms - conical - 1}, and \ref{enu: family axioms - loxodromic}.
	Axiom \ref{enu: family axioms - conical - 2} is a reformulation of \autoref{res: sc - angle cocycle}.
	We are left to prove Axiom~\ref{enu: family axioms - radial proj}.
	Since $\mathcal C_0$ is empty, we can assume that $j \geq 1$.
	Let $c \in \mathcal C_j$ and $x \in X_j$ such that $d(x, \mathcal C_j) < \rho$.
	Recall that the radial projection is a map $p_j \colon X_j\setminus\mathcal C_j \to X_j^+$.
	If $x \neq c$, one checks that $y = p_j(x)$ satisfies $\dist xy = \rho - \dist xc$.
	If $x = c$, then the radial projection $y$ of any point in the punctured ball $\mathring B(c, \rho)$ works.
\end{proof}

The previous statement corresponds to \autoref{res: approximating sequence}~\ref{enu: approximating sequence - control}.
As we already observed for every $j \in \N$, the map $X_j \to X_{j+1}$ is $\epsilon(n)$-Lipschitz with $\epsilon(n) < 1$.
It follows from  \autoref{res: small cancellation}~\ref{enu: small cancellation - translation kernel} that for every $x \in X_j$, the projection $\Gamma_j \onto \Gamma_{j+1}$ is one-to-one when restricted to set
\begin{equation*}
	\set{\gamma \in \Gamma_j}{ \epsilon(n) \dist{\gamma x}x \leq \frac {\rho(n)}{100}},
\end{equation*}
which proves  \autoref{res: approximating sequence}~\ref{enu: approximating sequence - metric}.
We are left to prove  \autoref{res: approximating sequence}~\ref{enu: approximating sequence - lifting}.
This is a direct application of \autoref{res: sc - lifting morphism}.
Hence the proof of  \autoref{res: approximating sequence} is complete.

% 2024-08-18 READ UNTIL HERE

\bigskip
\noindent
\emph{R\'emi Coulon} \\
Université de Bourgogne, CNRS \\
IMB - UMR 5584 \\
F-21000 Dijon, France\\
\texttt{remi.coulon@cnrs.fr} \\
\texttt{http://rcoulon.perso.math.cnrs.fr}

\bigskip
\noindent
\emph{Zlil Sela} \\
Department of Mathematics \\
Hebrew University \\
Jerusalem 91904, Israel \\
\texttt{zlil@math.huji.ac.il} \\
\texttt{https://math.huji.ac.il/\~{}zlil}

\end{document}